\renewcommand{\mathbf}{\mathbold}
\renewcommand{\@secnumfont}{\bfseries}
\def\l@subsection{\@tocline{2}{0pt}{2.5pc}{5pc}{}}
\numberwithin{equation}{section}
\newcommand{\mylabel}[2]{#2\def\@currentlabel{#2}\label{#1}}
\newcommand{\ps}[1]{\mkern-.25mu\mathbin{\left(\mkern-3.5mu\left({#1}\right)\mkern-3.5mu\right)}}
\newcommand{\Z}{\mathbb{Z}}
\newcommand{\N}{\mathbb{N}}
\newcommand{\R}{\mathbb{R}}
\newcommand{\C}{\mathbb{C}}
\newcommand{\K}{\mathbb{K}}
\newcommand{\RP}{\mathbb{RP}}
\newcommand{\bx}{\mathbf{x}}
\newcommand{\bX}{\mathbf{X}}
\newcommand{\bY}{\mathbf{Y}}
\newcommand{\bZ}{\mathbf{Z}}
\newcommand{\by}{\mathbf{y}}
\newcommand{\bz}{\mathbf{z}}
\newcommand{\bq}{\mathbf{q}}
\newcommand{\bT}{\mathbf{T}}
\newcommand{\bv}{\mathbf{v}}
\newcommand{\bw}{\mathbf{w}}
\newcommand{\bp}{\mathbf{p}}
\newcommand{\ba}{\mathbf{a}}
\newcommand{\bd}{\mathbf{d}}
\newcommand{\be}{\mathbf{e}}
\newcommand{\bc}{\mathbf{c}}
\newcommand{\bb}{\mathbf{b}}
\newcommand{\bS}{\mathbf{S}}
\newcommand{\bW}{\mathbf{W}}
\newcommand{\bu}{\mathbf{u}}
\newcommand{\bF}{\mathbf{F}}
\newcommand{\bpi}{\boldsymbol{\pi}}
\newcommand{\bC}{\mathbf{C}}
\newcommand{\balpha}{{\boldsymbol{\alpha}}}
\newcommand{\cH}{\mathcal{H}}
\newcommand{\cE}{\mathcal{E}}
\newcommand{\cX}{\mathcal{X}}
\newcommand{\cS}{\mathcal{S}}
\newcommand{\fg}{\mathfrak{g}}
\newcommand{\fh}{\mathfrak{h}}
\newcommand{\fk}{\mathfrak{k}}
\newcommand{\fa}{\mathfrak{a}}
\newcommand{\fX}{\mathfrak{X}}
\newcommand{\gl}{\mathfrak{gl}}
\newcommand{\fsl}{\mathfrak{sl}}
\newcommand{\fcone}{\mathfrak{c}}
\newcommand{\fsusp}{\mathfrak{s}}
\newcommand{\hbX}{\widehat{\bX}}
\newcommand{\hC}{\widehat{C}}
\newcommand{\hlambda}{\widehat{\lambda}}
\newcommand{\hL}{\widehat{L}}
\newcommand{\tbb}{\tilde{\bb}}
\newcommand{\tbX}{\tilde{\bX}}
\newcommand{\tX}{\tilde{X}}
\newcommand{\hW}{\widehat{W}}
\newcommand{\hepsilon}{\widehat{\epsilon}}
\newcommand{\tf}{\tilde{f}}
\newcommand{\tFr}{\widetilde{\Fr}}
\newcommand{\hw}{\hat{w}}
\newcommand{\hb}{\hat{b}}
\newcommand{\hrho}{\hat{\rho}}
\newcommand{\tF}{\tilde{F}}
\newcommand{\txi}{\tilde{\xi}}
\newcommand{\cmG}{\mathbf{G}}
\newcommand{\cmK}{\mathbf{K}}
\newcommand{\cmH}{\mathbf{H}}
\newcommand{\cmg}{{\boldsymbol{\mathfrak{g}}}}
\newcommand{\cmh}{{\boldsymbol{\mathfrak{h}}}}
\newcommand{\cmk}{{\boldsymbol{\mathfrak{k}}}}
\newcommand{\gt}{\vartriangleright} %
\newcommand{\gtd}{\gtrdot}
\newcommand{\cmb}{\delta} %
\newcommand{\concat}{\star} %
\newcommand{\Set}{\mathsf{Set}}
\newcommand{\Grp}{\mathsf{Grp}}
\newcommand{\Grpd}{\mathsf{Grpd}}
\newcommand{\Lie}{\mathsf{Lie}}
\newcommand{\Vect}{\mathsf{Vect}}
\newcommand{\Aff}{\mathsf{Aff}}
\newcommand{\Rep}{\mathsf{Rep}}
\newcommand{\XGrp}{\mathsf{XGrp}}
\newcommand{\XLGrp}{\mathsf{XLGrp}}
\newcommand{\XLie}{\mathsf{XLie}}
\newcommand{\XMod}{\mathsf{XMod}}
\newcommand{\DGrp}{\mathsf{DGrp}}
\newcommand{\For}{\mathsf{For}}
\newcommand{\FL}{\mathsf{FL}}
\newcommand{\FMon}{\mathsf{FMon}}
\newcommand{\Fr}{\mathsf{Fr}}
\newcommand{\Mod}{\mathsf{Mod}}
\newcommand{\sE}{\mathsf{E}}
\newcommand{\sQ}{\mathsf{Q}}
\newcommand{\slice}[2]{#1_{/#2}}
\newcommand{\VL}{\mathsf{VL}}
\newcommand{\SG}{\mathsf{SG}}
\newcommand{\FG}{\mathsf{FG}}
\newcommand{\sF}{\mathsf{F}}
\newcommand{\bsF}{\boldsymbol{\mathsf{F}}}
\newcommand{\thinhom}{\operatorname{th}}
\newcommand{\trans}{\operatorname{trans}}
\newcommand{\rank}{\operatorname{rank}}
\newcommand{\Lip}{\operatorname{Lip}}
\newcommand{\im}{\operatorname{im}}
\newcommand{\Aut}{\operatorname{Aut}}
\newcommand{\Ad}{\operatorname{Ad}}
\newcommand{\ad}{\operatorname{ad}}
\newcommand{\ab}{{\operatorname{ab}}}
\newcommand{\SPAN}{\operatorname{span}}
\newcommand{\PL}{\operatorname{PL}}
\newcommand{\cmPL}{{\boldsymbol{\operatorname{PL}}}}
\newcommand{\cl}{\operatorname{cl}}
\newcommand{\id}{\operatorname{id}}
\newcommand{\GL}{{\operatorname{GL}}}
\newcommand{\coker}{{\operatorname{coker}}}
\newcommand{\End}{{\operatorname{End}}}
\newcommand{\Prim}{{\operatorname{Prim}}}
\newcommand{\Der}{{\operatorname{Der}}}
\newcommand{\Pf}{{\operatorname{Pf}}}
\newcommand{\LCS}{{\operatorname{LCS}}}
\newcommand{\Kite}{{\operatorname{Kite}}}
\newcommand{\Cone}{{\operatorname{Cone}}}
\newcommand{\Susp}{{\operatorname{Susp}}}
\newcommand{\Pair}{{\operatorname{Pair}}}
\newcommand{\Loop}{{\operatorname{Loop}}}
\newcommand{\SL}{{\operatorname{SL}}}
\newcommand{\V}{V}
\newcommand{\supp}{{\operatorname{supp}}}
\newcommand{\planarloop}{{\operatorname{PlanarLoop}}}
\newcommand{\thingroupoid}{\mathfrak{T}}
\newcommand{\thingroup}{\tau}
\newcommand{\cmthingroup}{{\boldsymbol{\tau}}}
\newcommand{\realization}{R}
\newcommand{\cmrealization}{\mathbf{R}}
\newcommand{\cmfundgroup}{{\boldsymbol{\pi}}}
\newcommand{\hol}{F_0} %
\newcommand{\Hol}{F_1} %
\newcommand{\sig}{S_0} %
\newcommand{\sigPL}{S_{\PL,0}}
\newcommand{\Sig}{S_1} %
\newcommand{\SigPL}{S_{\PL,1}}
\newcommand{\tPL}{\widetilde{\PL}}
\newcommand{\con}{\gamma_0}
\newcommand{\Con}{\gamma_1}
\newcommand{\curv}{\kappa}
\newcommand{\Curv}{\mathcal{K}}
\newcommand{\conk}{\zeta_0}
\newcommand{\Conk}{\zeta_1}
\newcommand{\gtrans}{\theta}
\newcommand{\Gtrans}{\Theta}
\newcommand{\hurewicz}{\mathcal{H}}
\newcommand{\poly}[1]{\overline{#1}} %
\newcommand{\com}[1]{\hat{#1}} %
\newcommand{\andd}{\quad \text{and} \quad}
\newtheorem{counter}{Counter}[section]
\newtheorem{lemma}[counter]{Lemma}
\newtheorem{proposition}[counter]{Proposition}
\newtheorem{theorem}[counter]{Theorem}
\newtheorem{corollary}[counter]{Corollary}
\theoremstyle{definition}
\newtheorem{definition}[counter]{Definition}
\newtheorem{example}[counter]{Example}
\newtheorem{remark}[counter]{Remark}
\title{Thin Homotopy and the Signature of Piecewise Linear Surfaces}
\date{\today}
\author{Francis Bischoff}
\email{francis.bischoff@uregina.ca}
\address{Department of Mathematics and Statistics, University of Regina, Regina, SK S4S 0A2, Canada}
\author{Darrick Lee}
\email{darrick.lee@ed.ac.uk}
\address{School of Mathematics and Maxwell Institute, University of Edinburgh, Edinburgh EH9 3FD, Scotland}
\begin{document}

\begin{abstract}
    We introduce a crossed module of piecewise linear surfaces and study the signature homomorphism, defined as the surface holonomy of a universal translation invariant $2$-connection. This provides a transform whereby surfaces are represented by formal series of tensors. Our main result is that the signature uniquely characterizes a surface up to translation and thin homotopy, also known as tree-like equivalence in the case of paths. This generalizes a result of Chen and positively answers a question of Kapranov in the setting of piecewise linear surfaces. As part of this work, we provide several equivalent definitions of thin homotopy, generalizing the plethora of definitions which exist in the case of paths. Furthermore, we develop methods for explicitly and efficiently computing the surface signature.  
\end{abstract}

\maketitle
\vspace{40pt}
\begin{figure}[!h]
    \includegraphics[width=\linewidth]{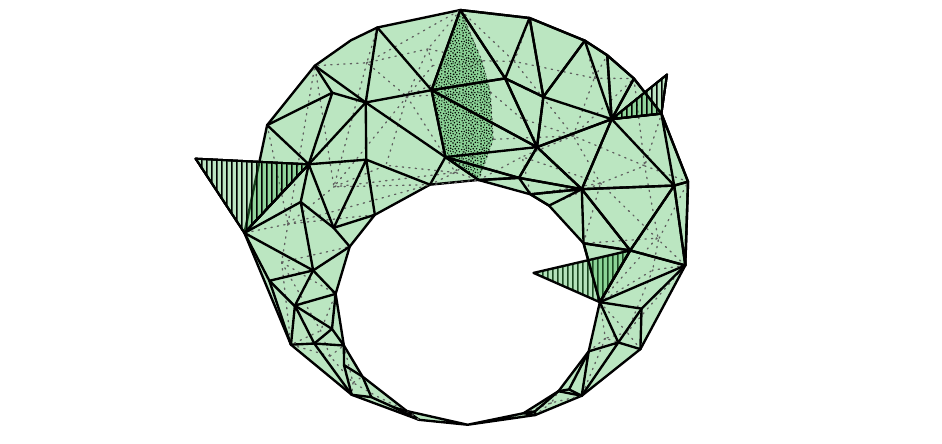}
\end{figure}

\clearpage
$\,$\vspace{5pt}
{\small \tableofcontents}

\clearpage

\section{Introduction}

The signature of a path $\bx = (\bx^1, \ldots, \bx^n) \in C^1([0,1], \R^n)$ is a non-commutative power series 
\begin{align}
\sig(\bx) = \sum_{I} \sig^{I}(\bx) \,e^{I} \in  \prod_{k=0}^\infty (\R^n)^{\otimes k}
\end{align}
whose coefficients are obtained by taking iterated integrals of the path: 
\begin{align}
\sig^I(\bx) = \int_{0 \leq t_{1} \leq ... \leq t_{k} \leq 1} d\bx^{i_1}_{t_1} \cdots d\bx^{i_k}_{t_k}
\end{align}
where $I = (i_{1}, ..., i_{k}) \in [n]^k$ is a multi-index and $e^I = e^{i_1} \otimes \ldots \otimes e^{i_k}$, where $\{e^1, \ldots, e^n\}$ is a basis of $\R^n$. The path signature was introduced by Chen \cite{chen_iterated_1954}, who proved that this invariant uniquely characterizes a path up to translation and \emph{thin homotopy}~\cite{chen_integration_1958}. Thin homotopy is an equivalence relation on paths that essentially consists in two basic equivalences: reparametrizations, and cancellation of retracings.
This allows paths to be treated analogously to words in a free group, a fact which plays a crucial role in Chen's proof of the injectivity of the signature. \medskip

Thin homotopy is a significant strengthening of the conventional homotopy relation in algebraic topology, from which the fundamental group of a manifold $M$ arises. This leads to the \emph{thin} fundamental group, $\pi_1^{\thinhom}(M)$, which is defined as the group of loops in a manifold modulo the thin homotopy equivalence relation. This notion first arose in differential geometry and physics through the study of connections and the invariances inherent in their parallel transport. The thin fundamental group plays a key role in the generalization of the Riemann-Hilbert correspondence to the setting of non-flat connections due to ~\cite{barrett_holonomy_1991, caetano_axiomatic_1994}. Given a Lie group $G$, this states that there is an equivalence between the category of all $G$-connections on $M$ and the category of smooth $G$-representations of $\pi_1^{\thinhom}(M)$. In fact, the path signature can be seen as the holonomy of a universal translation-invariant connection on $\mathbb{R}^n$ valued in the free Lie algebra generated by $\mathbb{R}^n$. \medskip

Since their introduction, Chen's iterated integrals have become highly influential in many areas of geometry and topology (eg. \cite{MR482748, hain_iterated_1984,block_higher_2014,arias_abad_Ainfty_2013,kohno_formal_2022,komendarczyk_diagram_2020, MR1975178, MR992201, MR2543329}). More recently, the path signature was foundational in developing the theory of rough paths by Lyons in \cite{lyons_differential_1998,lyons_differential_2007}, which has  played a prominent role in the areas of stochastic analysis~\cite{friz_course_2020} and machine learning~\cite{mcleod_signature_2025, lee_signature_2023}. In \cite{hambly_uniqueness_2010}, Hambly and Lyons extended the injectivity of the path signature to bounded variation paths, while in \cite{boedihardjo_signature_2016}, Boedihardjo et al.~generalized this further to highly irregular \emph{rough paths}. 
In this context, the thin homotopy equivalence relation is known as \emph{tree-like} equivalence. \medskip 

The motivation for the present paper is to generalize Chen's injectivity result in another direction to the setting of surfaces. 
Surface holonomy is the generalization of parallel transport to surfaces, originally developed to study higher gauge theory~\cite{baez_higher_2004,martins_two-dimensional_2010,schreiber_smooth_2011}.
The \emph{surface signature} was introduced by Kapranov in \cite{kapranov_membranes_2015} as the surface holonomy of a universal translation-invariant $2$-connection on $\mathbb{R}^n$. This can be formulated as a homomorphism of crossed modules $\bS: \cmthingroup(\mathbb{R}^n) \to \com{\cmK}(\mathbb{R}^n)$, between a crossed module $\cmthingroup(\mathbb{R}^n)$ of thin equivalence classes of surfaces in Euclidean space, and a crossed module $\com{\cmK}(\mathbb{R}^n)$ of `formal surfaces', which is defined by a formal integration of a free crossed module of Lie algebras $\com{\cmk}(\R^n)$. %
This construction has recently received interest in the rough paths literature \cite{lee_surface_2024,chevyrev_multiplicative_2024-1} where it forms the basis of a theory of rough surfaces. One of the key advantages of Kapranov's notion of the surface signature over alternate approaches to generalizing the signature~\cite{giusti_topological_2025,diehl_two-parameter_2022,diehl_signature_2024} is the fact that it preserves the concatenation structure of surfaces. This is crucial to proving the extension results in \cite{lee_surface_2024,chevyrev_multiplicative_2024-1}, and leads to parallelizable computations for potential applications in machine learning. \medskip

In \cite[Question 2.5.6]{kapranov_membranes_2015}, Kapranov poses the question of whether the signature characterizes a surface uniquely up to translation and thin homotopy. Thin homotopy for surfaces, first introduced by~\cite{caetano_family_1998}, is the equivalence relation whereby two surfaces are thinly equivalent if there is a homotopy between them which does not sweep out any volume. As in the case of paths, this includes generalized reparametrizations and cancellation of folds. However, there are also more general `non-local' thin homotopies. For example, in~\Cref{prop:rp2_example} we construct a closed surface which factors through the real projective plane
\begin{align}
f: S^2 \to \mathbb{RP}^2 \to \mathbb{R}^n
\end{align}
which is thinly null homotopic, even though it does not exhibit any folds to cancel. This example illustrates the crucial fact that surfaces do not admit unique reductions via folds. In contrast, every path can be uniquely reduced, up to reparametrization, to a path which does not contain any retracings. As the proofs of injectivity of the path signature rely on this property, these approaches cannot be immediately generalized to the case of surfaces. \medskip

In this paper, we focus on the special case of piecewise linear surfaces in order to avoid the analytical subtleties and focus on the underlying algebraic structures. In~\Cref{sec:PL} we define a crossed module of piecewise linear surfaces $\cmPL(V)$, which is functorial in the vector space $V$. Because the formal integration $\com{\cmK}(V)$ of the free crossed module $\com{\cmk}(V)$ is also functorial in $V$, this allows us to define the surface signature $\bS_{\PL}: \cmPL \Rightarrow \com{\cmK}$ as a natural transformation. This level of abstraction is immediately paid off by the following remarkable uniqueness result. 

\begin{theorem} \label{thm:intro_unique_signature}
    The piecewise linear surface signature $\bS_{\PL}: \cmPL \Rightarrow \com{\cmK}$ is the unique natural transformation extending the piecewise linear path signature. Furthermore, the smooth surface signature $\bS  : \cmthingroup \Rightarrow \com{\cmK}$ is the unique \emph{continuous} natural transformation extending the smooth path signature. 
\end{theorem}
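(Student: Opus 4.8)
The plan is to prove the piecewise linear statement by a generation-plus-grading argument, and then derive the smooth statement from it by continuity together with the density of piecewise linear surfaces. Write a candidate natural transformation extending the path signature as $\bS' = (\bS'_1, \sigPL)$, so that only the surface component $\bS'_1$ is unknown, and let $\partial$ denote the crossed module boundary maps of $\cmPL$ and of $\com{\cmK}$. By the construction of $\cmPL$ in \Cref{sec:PL}, the group $\cmPL_1(V)$ is generated, as a group equipped with its $\cmPL_0(V)$-action, by the affine parallelograms in $V$; since translations can be absorbed into the $\cmPL_0(V)$-action (by translation invariance) and every based affine parallelogram has the form $\cmPL(L)(\sigma_0)$ for a linear map $L\colon\R^2\to V$ and the standard parallelogram $\sigma_0 = [0,e_1,e_1+e_2,e_2]\subset\R^2$, the whole of $\cmPL_1(V)$ is generated under multiplication and the $\cmPL_0(V)$-action by the elements $\cmPL(L)(\sigma_0)$. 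Naturality forces $\bS'_1(\cmPL(L)(\sigma_0)) = \com{\cmK}(L)(c)$ with $c := \bS'_1(\sigma_0)$, and then $\cmPL_0$-equivariance (whose ``group of operators'' is governed by the already-fixed $\sigPL$) together with multiplicativity determine $\bS'_1$ from $c$. So it suffices to show that $c$ is unique.

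To pin down $c$, pass to Lie algebras: set $\xi := \log c$ in the top component $\com{\cmk}(\R^2)$ of the free crossed module of Lie algebras, and $\beta := \log\sigPL(\partial\sigma_0)$. The crossed module condition gives $\partial\xi = \beta$, and since $\partial$ is homogeneous for the polynomial grading in which $V$ sits in degree $1$ (hence $\Lambda^2 V$ in degree $2$), this reads $\partial\xi_n = \beta_n$ degree by degree. In degree $2$ the map $\partial\colon \Lambda^2\R^2 = \com{\cmk}_2(\R^2)\to (\text{degree-}2\text{ part of the free Lie algebra on }\R^2)$, $e_1\wedge e_2\mapsto[e_1,e_2]$, is an isomorphism and $\beta_2 = (\text{signed area})\cdot[e_1,e_2]$ is nonzero, so $\xi_2$ is determined. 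For $n\geq 3$ we use a subdivision relation from \Cref{sec:PL}: the doubled parallelogram $\cmPL(2\,\mathrm{id}_{\R^2})(\sigma_0)$ is thinly equivalent to the product of four copies of $\sigma_0$, all carrying the same orientation and whiskered to the basepoint along fixed piecewise linear paths $p_i$. Applying $\bS'$, taking logarithms and expanding by Baker--Campbell--Hausdorff gives, in degree $n$,
\begin{align}
2^n\,\xi_n \;=\; 4\,\xi_n \;+\; C_n\!\left(\xi_2,\dots,\xi_{n-1};\ \{\log\sigPL(p_i)\}\right),
\end{align}
because $\com{\cmk}(2\,\mathrm{id})$ acts on $\com{\cmk}_n(\R^2)$ by multiplication by $2^n$, each $\log\sigPL(p_i)$ has lowest degree $1$, the Lie algebra $\com{\cmk}(\R^2)$ starts in degree $2$, and every bracket produced by the expansion strictly raises total degree; hence $C_n$ depends only on the lower-degree parts of $\xi$ and on the (fixed) path signatures of the $p_i$. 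Since $2^n - 4 \neq 0$ for $n\geq 3$, the element $\xi_n$ is uniquely determined by $\xi_{<n}$ and $\sigPL$, so by induction $\xi$, and therefore $c$, is unique. Existence is witnessed by the surface signature itself, which extends the path signature by construction. This proves the first assertion.

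For the smooth statement, there is a morphism of crossed modules $\iota\colon\cmPL\Rightarrow\cmthingroup$ identifying piecewise linear surfaces with smooth ones and intertwining $\bS_{\PL}$ with $\bS$, and its image is dense in the topology with respect to which $\bS$ --- and, by hypothesis, any competing continuous natural transformation --- is continuous, since every smooth surface is a limit of piecewise linear ones obtained by triangulating with mesh tending to zero. Thus if $\Phi\colon\cmthingroup\Rightarrow\com{\cmK}$ is continuous, natural, and has path component the smooth path signature, then $\Phi\circ\iota\colon\cmPL\Rightarrow\com{\cmK}$ is natural with path component $\sigPL$, so $\Phi\circ\iota = \bS_{\PL} = \bS\circ\iota$ by the first part; as $\Phi$ and $\bS$ are continuous and agree on a dense subset, $\Phi = \bS$.

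I expect the main obstacle to be the first step: verifying, from the construction in \Cref{sec:PL}, that $\cmPL_1(V)$ is indeed generated as a $\cmPL_0(V)$-group by the single functorial generator $\sigma_0$, and that the four-piece subdivision identity holds in $\cmPL_1(\R^2)$ in exactly the form used above --- in particular that all four pieces carry the same orientation (so that no orientation-reversed copies enter the relation) and that the whiskering bookkeeping contributes only in strictly higher degree. The grading and induction step, and the reduction of the smooth case to the piecewise linear one, are then comparatively routine.
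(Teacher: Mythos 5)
Your overall architecture is the same as the paper's: use naturality to reduce everything to a single generator over $\R^2$, determine the value there, and handle the smooth case by density in the Lipschitz topology plus continuity (the paper's Theorem 5.22 does exactly your last paragraph, via the realization $\cmrealization:\cmPL\Rightarrow\cmthingroup$). However, your first step contains a genuine gap: the claim that $\PL_1(V)$ is generated, under multiplication and the $\PL_0(V)$-action, by affine parallelograms is false. The paper's generators are kites whose loop parts are arbitrary planar loops, which reduce to \emph{triangles} (Lemma C.1), and a triangle is not in the normal subgroup generated by parallelogram loops. To see this, pass to the abelianization: by Theorems 4.11 and 5.33 the abelianized class of a planar loop $\bb$ is the $2$-current of the region it encloses, so membership would force $\mathbf{1}_T=\sum_i n_i\mathbf{1}_{P_i}$ a.e.\ for a triangle $T$ and parallelograms $P_i$. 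This is impossible: the assignment $f\mapsto\sum_{p}\overline{\mathrm{Jump}}_p(f)\otimes p\in\Z[\RP^1]\otimes_{\Z}\R^2$, where $\overline{\mathrm{Jump}}_p(f)$ records the jump directions (modulo the straight-edge relation $[\theta]\sim[\theta+\pi]$) of the angular density of $f$ at $p$, is additive and well defined on a.e.-classes; it vanishes on every parallelogram (opposite vertices contribute equal cone classes and $v_1-v_2+v_3-v_4=0$) but is nonzero on a nondegenerate triangle. The fix is easy: run your argument with the standard triangle $\tau_0$ in place of $\sigma_0$; then generation is exactly what the paper's universal property of $\cmPL(V)$ (Theorem 5.7 / Corollary 5.10) packages, and the rest of your reduction is sound.

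Your second step is correct but circuitous, and reveals a misapprehension that the paper exploits to shortcut the whole computation: over a $2$-dimensional space $U$ the boundary $\delta:\com{K}_1(U)\to\com{K}_0(U)$ is injective in \emph{every} degree, not just degree $2$, because $\ker(\delta:\fk_1(U)\to\fk_0(U))=\fa_1(U)\cong\poly{\Gamma}_2^{\cl}(U)=(\poly{\Omega}^{3,\cl}(U))^{\vee}=0$ when $\dim U=2$. Hence $c$ is forced in one line by $\delta(c)=\sigPL(\partial\tau_0)$, and your degree-by-degree dilation/subdivision induction (which is itself valid: the four-piece identity does hold in $\PL_1(\R^2)\cong\PL_0^{\cl}(\R^2)$ with all pieces equally oriented, and the BCH bookkeeping is as you describe) is unnecessary. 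This "planarity" of both $\cmPL$ and $\com{\cmK}$ — triviality in dimensions $0,1$ and injectivity of $\delta$ in dimension $2$ — is the paper's key Lemma 5.17, which also handles the point you gloss over slightly: uniqueness across \emph{all} of $\PL_1(V)$ requires checking naturality for arbitrary linear maps, not only for the maps $L:\R^2\to V$ parametrizing generators, and the paper does this via the universal property rather than by a generators-and-relations consistency check. Your smooth reduction is correct and is the paper's argument verbatim.
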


Our main result is the injectivity of the surface signature for piecewise linear surfaces. 

\begin{theorem}
    Let $\bX$ be a piecewise linear surface such that its signature is trivial, $\Sig(\bX) = 0.$ Then $\bX$ is thinly homotopic to the constant surface. 
\end{theorem}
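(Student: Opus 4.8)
The plan is to exploit the crossed-module structure on both $\cmPL$ and $\com{\cmK}$ together with the fact that $\bS_{\PL}$ intertwines the boundary maps, reducing the problem first to a closed surface and then to an injectivity statement about the free crossed module of Lie algebras.

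\emph{Step 1: reduce to trivial boundary.} Let $\partial\bX$ denote the boundary loop of $\bX$, an element of the path part $\cmPL_1$ of the crossed module. Since $\bS_{\PL}$ is a homomorphism of crossed modules it commutes with the boundary maps, so that $\partial\,\Sig(\bX) = 0$; that is, the path signature of the loop $\partial\bX$ is trivial. By Chen's injectivity theorem for paths (in its piecewise linear form), $\partial\bX$ is thinly null-homotopic, hence trivial in $\cmPL_1$. Attaching a thin tube realizing this null-homotopy — equivalently, using the $\cmPL_1$-action on $\cmPL_2$ to ``cap off'' the boundary — replaces $\bX$ by a thinly equivalent surface lying in $\ker(\partial\colon \cmPL_2 \to \cmPL_1)$. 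By the Peiffer identities this kernel is an \emph{abelian} group, which is what makes the hypothesis $\Sig(\bX) = 0$ meaningful additively; from now on $\bX$ is a closed PL surface and we must show it is thinly null.

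\emph{Step 2: a filtered comparison.} Next I would put compatible filtrations on $\cmPL_2$ and on $\com{\cmK}_2$: on the target, the filtration induced by the weight grading of the free crossed module of Lie algebras $\com{\cmk}_2(V)$ and its formal completion; on the source, the lower-central-series-type filtration of the crossed module $\cmPL(V)$. The key structural input is that the associated graded of $\cmPL(V)$ is exactly $\com{\cmk}(V)$ — the crossed-module analogue of ``the associated graded of a free group is the free Lie algebra'' — that $\bS_{\PL}$ is filtered, and that it induces on associated graded the canonical map $\com{\cmk}(V) \to \com{\cmK}(V)$ into the formal completion. The latter is injective by a Poincar\'{e}--Birkhoff--Witt-type theorem for crossed modules of Lie algebras, together with an explicit description of $\ker(\partial)$ for the free crossed module of Lie algebras (governed by the Jacobi relations, whose lowest-weight classes are detected by the signed volumes of affine tetrahedra). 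Granting this, the theorem follows by the standard lowest-degree argument: if $\bX \neq 0$ in $\ker(\partial)$, then its leading term in the associated graded is nonzero and maps to the leading term of $\Sig(\bX)$, contradicting $\Sig(\bX) = 0$.

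\emph{Where the difficulty lies.} The crux is identifying the associated graded of $\cmPL(V)$, and this is precisely where one must confront the phenomenon flagged in the introduction: closed PL surfaces, unlike paths, admit no unique fold-reduction (witness the $\RP^2$ surface of \Cref{prop:rp2_example}). One needs a sufficiently concrete handle on closed PL surfaces modulo thin homotopy — presumably a decomposition of each closed surface, after triangulating so that each $2$-cell is affine, into a finite sum of whiskered boundaries of affine tetrahedra — and one must verify that all the ``non-local'' thin homotopies are accounted for. These turn out to be consequences of the Peiffer relations, hence already present in the free crossed module, so they only constrain the normal form and do not obstruct the comparison. Chen's path result enters only in Step 1; all of the genuinely new content is two-dimensional, in the analysis of $\ker(\partial)$.
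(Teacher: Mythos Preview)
Your Step~1 matches the paper exactly: reduce to $\bX \in \ker(\delta)$ via Chen's path injectivity. After that, however, your filtered-comparison strategy has a genuine gap. The claim that ``the associated graded of $\cmPL(V)$ is exactly $\cmk(V)$'' is doing all the work and is never established; worse, it is essentially circular. On the abelian group $\ker(\delta)$ the ordinary lower central series is trivial, so you must mean the filtration by the $\PL_0$-action (the group analogue of the $\fk_0$-lower central series). But proving that this filtration is \emph{separated} on $\PL_1(V)$ --- i.e.\ that $\bigcap_n F_n = 0$ --- is precisely what injectivity of $\SigPL$ would give you, and there is no independent Magnus-type theorem available here: $\PL_1(V)$ is not a free crossed module, it is a quotient of one by the relations \ref{PL1.1}--\ref{PL1.3}, and residual nilpotence is not known a priori. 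Your proposed ``decomposition into whiskered boundaries of affine tetrahedra'' would amount to exhibiting an explicit basis of $\PL_1^{\cl}(V)$, which is exactly the hard part; the $\RP^2$ example already shows that fold-reductions alone cannot produce such a normal form.

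The paper takes a completely different, topological route. Given a closed $\bX$ with trivial signature, it first invokes the abelianization of the universal $2$-connection (\Cref{thm:ssig_equiv_to_abelianization}) to deduce that $\int_\bX \omega = 0$ for every compactly supported $2$-form. It then builds a compatible $2$-dimensional simplicial complex $C$ carrying $\bX$ and lifts $\bX$ to an element $Y \in \pi_2(C)$ using Whitehead's theorem that $\bpi(C,C_1)$ is a \emph{free} crossed module. The vanishing of all integrals forces the Hurewicz image $\hurewicz(Y) \in H_2(C)$ to be zero (by testing against bump forms supported on individual $2$-simplices). The obstruction to concluding $Y = 0$ is $\pi_1(C)$; the paper kills it by attaching extra $2$-cells to obtain a simply connected $\hat{C} \supset C$, whereupon Hurewicz gives $\pi_2(\hat{C}) \cong H_2(\hat{C})$ and $Y$ dies in $\pi_2(\hat{C})$, hence $W_1(Y) = \bX = 0$ in $\PL_1(V)$. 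In short: the key input you are missing is not a PBW statement but the combination of Whitehead's free-crossed-module theorem and the Hurewicz theorem, applied after enlarging the image complex to make it simply connected.
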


Our proof of this result makes use of two main ideas:
\begin{itemize}
    \item[a)] First, because of the injectivity of the path signature, any smooth surface $\bX$ with vanishing signature can be assumed to be closed. In~\cite{kapranov_membranes_2015}, Kapranov suggests that the signature of a closed surface is given by its associated current. In~\Cref{sec:abelianization}, we use a gauge transformation to abelianize the universal $2$-connection and give a proof of this fact in~\Cref{thm:ssig_equiv_to_abelianization}. As a result, we show in~\Cref{cor:trivial_ssig_current} that any closed surface $\bX$ with vanishing signature has the property that
    \begin{align} \label{eq:intro_zero_integral}
        \int_{\bX} \omega = 0 \quad \text{for all compactly support $2$-forms} \quad \omega \in \Omega^2_c(V).
    \end{align}
    \item[b)] In~\Cref{sec:PL_thin_homotopy}, we complete the proof of injectivity. A closed surface $\bX : S^2 \to V$ which satisfies~\eqref{eq:intro_zero_integral} has vanishing homology in its image $C = \im(\bX)$,
    \begin{align}
        H_2(\bX)([S^2]) = 0 \in H_2(C).
    \end{align}
    To conclude that $\bX$ is thinly null homotopic, it suffices to show that $[\bX] = 0 \in \pi_{2}(C)$, and by Hurewicz, this would be true if $\pi_{1}(C) = 0$. In other words, the obstruction is the fundamental group of $C$. Hence, by attaching sufficiently many discs to $C$, we can kill the fundamental group and thereby produce the desired thin null homotopy. In order to implement this, we make use of Whitehead's theorem~\cite{whitehead_combinatorial_1949} that the fundamental crossed module of a $2$-dimensional CW complex is free. 
\end{itemize}
We remark that the above proof works for surfaces that are much more general than piecewise linear ones, but fails to work for general smooth surfaces because of the complicated nature of the image of a general smooth map. Our focus on piecewise linear surfaces is partially justified by the fact that they can be used to approximate general smooth surfaces. We hope to make use of this fact to prove a general injectivity result in future work. 

\medskip

For the remainder of this introduction, we highlight further results obtained in our paper.

\medskip

\textbf{Algebraic Models of Piecewise Linear Paths and Surfaces.}
Elements of the free group on $n$ letters can be viewed as thin homotopy equivalence classes of lattice paths on $\Z^n$: word reduction coincides with path reduction via retracing. In~\Cref{ssec:pl_paths}, we construct the group $\PL_0(V)$ of piecewise linear paths on $V$ as the quotient of the free group on $V$, by certain relations to account for retracings. We show that this satisfies several properties analogous to free groups, such as the existence of minimal words and a universal property. %

\begin{figure}[!h]
    \includegraphics[width=\linewidth]{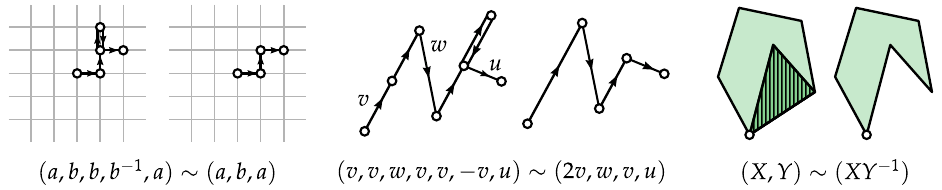}
\end{figure}

In~\Cref{ssec:pl_surfaces}, we extend this construction to define a crossed module of piecewise linear surfaces $\cmPL(V)$. The distinction between local and non-local cancellations is reflected in our construction.
We first construct a pre-crossed module $\widetilde{\cmPL}(V)$ as the free group of planar regions, quotiented by relations which account for local fold cancellations, similar to the case of paths. Next, we quotient out the Peiffer subgroup $\mathrm{Peiff}(V)$ to obtain a crossed module, which accounts for the remaining non-local cancellations. This crossed module also satisfies a universal property, which can then be used to prove~\Cref{thm:intro_unique_signature}. We remark that the injectivity of the signature further implies that the thin homotopy relation for piecewise linear surfaces is a formal consequence of the local fold cancellations and the Peiffer identity. Therefore, the normal subgroup $\mathrm{Peiff}(V)$ contains within it the surfaces which are thinly null homotopic in a non-trivial way. 
\medskip

\textbf{Computational Methods for Surface Signature.}
In recent years, the path signature has been used as a rich feature set for sequential and time series data~\cite{mcleod_signature_2025}. Theoretical properties guarantee its effectiveness in approximating functions and characterizing measures on path space~\cite{chevyrev_signature_2022}, thereby justifying its use in machine learning tasks. 
Furthermore, because the signature preserves concatenation of paths, it lends itself to parallelizable algorithms~\cite{kidger_signatory_2021,toth_seq2tens_2020}. While the study of the surface signature is still fairly new, recent work has studied theoretical properties of surface holonomy valued in matrix groups to provide features for 2-dimensional data~\cite{lee_random_2023}. However, there are two significant challenges in applying the surface signature itself. These are the lack of established computational methods and the absence of a canonical coordinate representation for surface signatures due to the opaque nature of the Peiffer identity\footnote{This should be compared with the path signature, which is valued in the tensor algebra: by choosing a basis on the underlying vector space, this induces a basis on the tensor powers.}.
 \medskip

In~\Cref{ssec:computational}, we develop a natural decomposition of the surface signature into boundary and abelian components in both the piecewise linear and smooth settings. In particular, we find that the linear structure of a vector space induces canonical splittings of the crossed modules $\cmPL(V)$, $\cmthingroup(V)$, and $\com{\cmK}(V)$, and show that the surface signature preserves the resulting decompositions. The main result of this section is~\Cref{thm:main_computation_result}, which gives an explicit formula for the signature of a surface. Given a surface $\bX$, the boundary component simply records the path signature of the boundary, while the abelian component records the integrals of all monomial 2-forms over the closed surface $\mathcal{C}(\bX)$ obtained by coning off the boundary of $\bX$. Schematically, this abelian component is given by the following formula
\[
    \Sig^{\Gamma}(\bX) = \sum_{k \geq 0} \frac{1}{k!} \int_{\mathcal{C}(\bX)} (\mathrm{id}_V)^{k},
\]
where $(\mathrm{id}_V)^{k}$ is viewed as a function on $V$ valued in the symmetric power $S^k(V)$ (see Remark \ref{rem:canonicalexp}). This elucidates the information contained in the surface signature, suggests methods for computing the signature, and provides explicit coordinates to represent the signature. \medskip

\textbf{Characterizing Thin Homotopy.}
Thin homotopy equivalence for paths has been studied widely, and a variety of equivalent definitions have been proposed. Chen's definition~\cite{chen_integration_1958} involved reducing the path along retracings, which was generalized by Tlas~\cite{tlas_holonomic_2016} to $C^1$ paths. Geometric definitions in terms of the existence of homotopies which satisfy additional conditions were given in~\cite{caetano_axiomatic_1994,barrett_holonomy_1991}, and a characterization based on holonomy was considered in~\cite{tlas_holonomic_2016,meneses_thin_2021}. On the analytic side, the equivalent notion of tree-like equivalence was defined in terms of factorization through a tree or the existence of a height function~\cite{hambly_uniqueness_2010}. Finally, due to the injectivity of the path signature, we obtain yet another definition. We summarize these results in~\Cref{thm:path_thin_homotopy}. \medskip

This collection of equivalent definitions provides a plethora of distinct ways to understand thin homotopy of paths. While some of these definitions can be easily generalized to the case of surfaces, others require modification due to fundamental differences in the two dimensional setting: we must take into account non-local cancellations. In~\Cref{thm:surface_thin_homotopy}, we propose a generalization of each definition, and show that they are equivalent in the piecewise linear setting.\medskip

Finally, in~\Cref{sec:group_homology}, we highlight a connection between thin homotopies and group homology. This allows us to further classify the non-local cancellations into those due to conjugation by elements in the fundamental group $G = \pi_1(C)$ of the image $C = \im(\bX)$ of a surface $\bX$, and those which have further complexity. We show that thinly null homotopic surfaces in this latter case are classified by $H_3(G)$, leading to a new geometric interpretation of group homology. \medskip

\textbf{Acknowledgments.}
We are very grateful to Camilo Arias Abad, who has met with us to discuss signatures and surface holonomy for countless hours over the past few years, has contributed several important ideas, and has significantly influenced the direction of this project. We would also like to thank Harald Oberhauser for several insightful discussions at the beginning of this project. The first author wishes to thank Tim Porter for introducing him to crossed modules and explaining various fundamental aspects of the theory. Furthermore, numerous stimulating conversations with Martin Frankland, Marco Gualtieri, and Jim Stasheff have influenced this work. 
F.B. is supported by an NSERC Discovery grant. 
D.L. was supported by the Hong Kong Innovation and Technology Commission (InnoHK Project CIMDA) during part of this work. 

\section{Thin Homotopy and the Path Signature}

In this section, we provide some background on parallel transport and the path signature. We consider thin homotopy of paths, and relate several known definitions of thin homotopy. We then focus on the piecewise linear setting: while the signature of piecewise linear paths is well understood, we provide a novel functorial construction of the piecewise linear signature, which arises naturally using universal properties. This section serves as a guide to the results we will generalize to surfaces in the remainder of this article.
Throughout this article, $\V$ denotes a finite-dimensional real vector space $\V \cong \R^d$, and we let $\Vect \label{pg:vect}$ denote the category of finite dimensional vector spaces and linear maps. Furthermore, we always consider paths with \emph{sitting instants}. 

\begin{definition} \label{def:1d_sitting_instants}
    A path $\bx \in C([0,1], V)$ has \emph{sitting instants} if there exists some $\epsilon > 0$ such that
\begin{align}
    \bx_{s} = \bx_0 \andd \bx_{1-s} = \bx_1 \quad \text{for all $s \in [0, \epsilon]$}.
\end{align}
    Unless otherwise specified, we assume all paths have sitting instants.
\end{definition}
This ensures that when we consider \emph{composable} paths $\bx, \by \in C^1([0,1],V)$, such that $\bx_1 = \by_0$, with some smoothness condition, the \emph{concatenation} \label{pg:path_concat}
\begin{align}
    (\bx \concat \by)_s \coloneqq \left\{ \begin{array}{cl}
        \bx_{2s} & : s \in [0,\frac12] \\
        \by_{2s-1} & : s \in [\frac12, 1]
    \end{array}\right.
\end{align}
preserves the smoothness condition $\bx \concat \by \in C^1([0,1], V)$.

\subsection{Parallel Transport and Path Signature} \label{ssec:path_signature}
In this section, we define the path signature as the parallel transport of the universal translation invariant connection. Since we are working over a vector space $V$, all principal bundles can be assumed trivial and as a result, we use the following simplified definition of a connection.  

\begin{definition} \label{def:con}
    Let $\mathfrak{g}$ be a Lie algebra. A \emph{$\mathfrak{g}$-connection} on $V$ is a Lie algebra valued $1$-form $\con \in \Omega^1(V, \fg)$. A connection is \emph{translation-invariant} if it has the form 
    \begin{align}
        \con = \sum_{i=1}^d \con^i \, dz_i,
    \end{align}
    where $\con^i \in \fg$ and $z_i$ are linear coordinates on $V$. 
    The \emph{curvature of $\con$}, denoted $\curv^{\con} \in \Omega^2(V, \fg)$, is defined by the following formula
    \begin{align} \label{eq:1_curvature}
        \curv^{\con} \coloneqq d\con + \frac{1}{2}[\con, \con].
    \end{align}
\end{definition}
\begin{definition} \label{def:ph}
    Let $G$ be a Lie group with Lie algebra $\fg$. Let $\con$ be a $\fg$-connection and $\bx \in C^1([0,1], V)$ a path. Consider the following differential equation for $\hol^{\con}(\bx): [0,1] \to G$,
    \begin{align}
        \frac{d\hol^{\con}(\bx)_t}{dt} = dL_{\hol^{\con}(\bx)_t} \con\left(\frac{d\bx_t}{dt}\right), \quad \hol^{\con}(\bx)_0 = e.
    \end{align}
    The \emph{parallel transport of $\con$ along $\bx$} is defined to be
    \begin{align}
        \hol^{\con}(\bx) \coloneqq \hol^{\con}(\bx)_1.
    \end{align}
\end{definition}

The path signature is the parallel transport of the \emph{universal translation-invariant connection}. This is a connection $\conk$ valued in the free Lie algebra generated by $V$, $\fk_0 \coloneqq \FL(\V)$, and it can be understood as the identity endomorphism of $V$
\begin{align}
    \conk = \mathrm{id}_V \in V^* \otimes V \subset \Omega^1(\V, \fk_0),
\end{align}
where we view $V^*$ as the subspace of translation invariant $1$ forms, and $V$ as the subspace of generators of the free Lie algebra. If $\{ e_i \}_{i = 1}^{d}$ form a basis of $V$, with dual basis $\{ z_i \}_{i = 1}^{d} \in V^*$, viewed as coordinates on $V$, then the connection has the following explicit expression 
\begin{align} \label{eq:univ_connection}
    \conk = \sum_{i=1}^d  dz_i e_i \in \Omega^1(\V, \fk_0).
\end{align}
Because the free Lie algebra is infinite dimensional, it is convenient to consider its truncations. These can be formally expressed via the lower central series of $\fk_0$, defined recursively as
\begin{align}
    \LCS_1(\fk_0) \coloneqq \fk_0 \andd \LCS_r(\fk_0) \coloneqq [\fk_0, \LCS_{r-1}(\fk_0)].
\end{align}
Then, we define the \emph{$n$-truncated free Lie algebra} $\fk_0^{(\leq n)}$ as
\begin{align}
    \fk_0^{(\leq n)} \coloneqq \fk_0/ \LCS_{n+1}(\fk_0).
\end{align}
As we assume that $V$ is finite dimensional, $\fk_0^{(\leq n)}$ is a finite-dimensional Lie algebra. Then, we can explicitly integrate $\fk_0^{(\leq n)}$ to the Lie group of exponential elements $K_0^{(\leq n)}$ of the truncated tensor algebra $T^{(\leq n)}_0(V) = \prod_{k=0}^n V^{\otimes k}$. This is defined as follows
\begin{align}
    K_0^{(\leq n)}(V) \coloneqq \left\{ \exp(x) \in T^{(\leq n)}(V) \, : \, x \in \fk_0^{(\leq n)}\right\}.
\end{align}
We denote the projective limit of these Lie groups, and their corresponding Lie algebras, by 
\begin{align} \label{eq:k0_completion}
    \com{K}_0(V) \coloneqq \lim_{\longleftarrow} K_0^{(\leq n)}(V) \subset T\ps{V} = \prod_{k=0}^\infty V^{\otimes k} \andd \com{\fk}_0(V) = \lim_{\longleftarrow} \fk_0^{(\leq n)}(V).
\end{align}
Given a linear map $f: V \to W$, there is an induced group homomorphism $K_0^{(\leq n)}(V) \to K_0^{(\leq n)}(W)$ for all $n \in\N$. This induces a group homomorphism $\com{K}_0(V) \to \com{K}_0(W)$. Hence, we get a well-defined functor
\begin{align}
    \com{K}_0: \Vect \to \Grp.
\end{align}

We now provide the standard definition of the path signature, and return to this in~\Cref{ssec:pl_paths} on piecewise linear paths. 

\begin{definition} \label{def:psig}
    Let $\bx \in C^1([0,1], V)$. For $n \in \N$, consider the parallel transport of the universal connection $\conk$ of Equation~\eqref{eq:univ_connection} projected onto $\fk_0^{(\leq n)}$. In particular, consider the following differential equation for $\sig^{(n)}(\bx): [0,1] \to K_0^{(\leq n)}$,
    \begin{align}
        \frac{d\sig^{(n)}(\bx)_t}{dt} = \sig^{(n)}(\bx)_t \otimes \frac{d\bx_t}{dt}, \quad \sig^{(n)}(\bx)_0 = 1.
    \end{align}
    The \emph{$n$-truncated path signature of $\bx$} is defined to be $\sig^{(n)}(\bx) \coloneqq \sig^{(n)}(\bx)_1$. Then, we define the \emph{path signature of $\bx$} to be the projective limit 
    \begin{align}
        \sig(\bx) \coloneqq \lim_{\longleftarrow} \sig^{(n)}(\bx) \in \com{K}_0.
    \end{align}
\end{definition}

\begin{remark}
    The completion of the tensor algebra is often considered analytically by defining appropriate norms on $T^{(\leq n)}(V)$, or by considering a family of seminorms, as in~\cite{chevyrev_characteristic_2016}. In this article, as we do not need these analytic properties, we will consider formal completions. 
\end{remark}

\subsection{The Thin Path Group} \label{ssec:thin_path_group}
Parallel transport, and in particular the path signature, is invariant under \emph{thin homotopy equivalence}. In this section, we construct a group of paths up to translation and thin homotopy equivalence, and show that the signature defines a homomorphism out of this group. The meaning of thin homotopy will be explained further in the next section. 
\begin{definition}
    Two smooth paths $\bx, \by \in C^{1}([0,1], V)$ are \emph{thin homotopy equivalent}, denoted $\bx \sim_{\thinhom} \by$, if there exists an endpoint preserving smooth homotopy $h \in C^{1}([0,1]^2, V)$ between $\bx$ and $\by$ such that 
    \begin{itemize}
        \item \textbf{(homotopy condition)} $h_{0,t} = \bx_t$ and $h_{1,t} = \by_t$; 
        \item \textbf{(thinness condition)} $\rank(dh) \leq 1$, where $dh$ is the differential of $h$. 
    \end{itemize}
\end{definition}
Thin homotopy defines an equivalence relation on paths which is compatible with concatenation. The \emph{thin fundamental groupoid} of $V$ is defined to be the set of equivalence classes
\begin{align}
        \thingroupoid_1(V) \coloneqq C^1([0,1],V)/\sim_{\thinhom}.
\end{align}
It is a groupoid over $V$ with product given by the concatenation of paths. Given a Lie group $G$ with Lie algebra $\fg$, the parallel transport of a connection $\con \in \Omega^1(V,\fg)$ defines a groupoid homomorphism 
\begin{align}
    \hol^{\con} : \thingroupoid_1(V) \to G.
\end{align}
There is a natural action of the additive group $V$ on $\thingroupoid_1(V)$ by groupoid automorphisms given by translating paths. We define the \emph{thin path group} to be the quotient 
\begin{align}
        \thingroup_1(\V) \coloneqq \thingroupoid_1(V)/\sim_{\trans}
\end{align}
and we note that it is a group. Our convention will be to use paths starting at the origin, $\bx_0 = 0$, as representatives of the translation equivalence classes. There is a well-defined group homomorphism $t: \thingroup_1(\V) \to \V$ given by sending a path $\bx$ to its displacement $\bx_1 - \bx_0 \in V$. The thin fundamental groupoid can then be recovered as the corresponding action groupoid 
\begin{align}
    \thingroupoid_1(V) \cong \V \rtimes \thingroup_1(\V). 
\end{align}
Given a linear map $\phi: V \to W$ and a path $\bx \in C^1([0,1], \V)$, we can define a new path $\phi(\bx) \in C^1([0,1], W)$ by $\phi(\bx)_t = \phi(\bx_t)$. This preserves thin homotopy classes of maps, and is equivariant with respect to translation. Thus, we obtain a \emph{thin path group functor}
\begin{align}
    \thingroup_1 : \Vect \to \Grp.
\end{align}
Because the universal connection $\conk$ is translation invariant, its parallel transport is invariant under both thin homotopy and translations. Therefore, the path signature defines a homomorphism $\sig : \thingroup_1(V) \to \com{K}_0(V)$. Furthermore, by~\cite[Proposition 7.52]{friz_multidimensional_2010}, this fits into a natural transformation
\begin{align} \label{eq:psig_nt}
    \sig: \thingroup_1 \Rightarrow \com{K}_0. 
\end{align}

\subsection{Thin Homotopy of Paths} \label{ssec:thin_homotopy_paths} There are several different equivalent definitions for thin homotopy equivalence of paths. In the signatures and analysis literature, it is known as \emph{tree-like equivalence} and has been extended to the setting of \emph{rough paths}. In this section, we will review the equivalent definitions in the setting of $C^1$ paths. In particular, we will see that the path signature characterizes this equivalence relation. Roughly speaking, thin homotopy equivalence captures two main types of behavior:
\begin{enumerate}
    \item \textbf{Reparametrizations:} Given a path $\bx \in C^1([0,1], V)$ and a reparametrization $\phi: [0,1] \to [0,1]$, then $\bx \sim_{\thinhom} \bx \circ \phi$. 
    \item \textbf{Retracings:} Given paths $\bx, \by, \bz \in C^1([0,1], V)$, we say that a \emph{retracing} is a path segment of the form $\bz \concat \bz^{-1}$. Paths which differ by retracings are thin homotopy equivalent: \begin{align}\bx \concat \bz \concat \bz^{-1} \concat \by \sim_{\thinhom} \bx \concat \by.\end{align} The path $\bx \concat \by$ is called a \emph{reduction} of $\bx \concat \bz \concat \bz^{-1} \concat \by$.
\end{enumerate}

\begin{figure}[!h]
    \includegraphics[width=\linewidth]{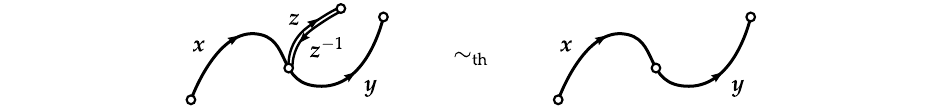}
\end{figure}

We will focus mainly on thinly null-homotopic paths: paths that are thin homotopy equivalent to the constant path. This is because two paths $\bx$ and $\by$ can then be defined to be thin homotopy equivalent if $\bx \concat \by^{-1}$ is thinly null-homotopic. Chen~\cite{chen_integration_1958} originally defined thinly null-homotopic paths in the piecewise regular setting via path reductions as shown above. Tlas~\cite{tlas_holonomic_2016} generalized this definition to $C^1$ paths, where there may be infinitely many retracings, using the notion of transfinite words.

\begin{definition}{\cite[Definition 3.1]{cannon_combinatorial_2000}} \label{def:word}
    Let $E$ be a set consisting of an alphabet, and let $E^{-1}$ denote a formal inverse set. A \emph{transfinite word} over $E$ is a function $W: B \to E \cup E^{-1}$, where $B$ is a totally ordered set, such that $W^{-1}(e)$ is finite for each $e \in E \cup E^{-1}$. A word is \emph{reducible to the trivial word} if and only if every finite truncation (mapping all but finitely many letters in the alphabet to the identity) reduces to the trivial word.
\end{definition}

\begin{theorem}{\cite[Theorem 1]{tlas_holonomic_2016}} \label{thm:tlas_result}
    Let $\bx: [0,1] \to V$ be a $C^1$ path. There exists a collection of mutually disjoint subsets $\{A_n\}_{n=0}^\infty$ such that
    \begin{enumerate}
        \item $A_0$ is closed, $A_n$ is open for $n > 0$, and $\bigcup_{n=0}^\infty A_n = [0,1]$.
        \item If $t \in A_n$, then the set inverse $\bx^{-1}(\bx_t) \subset A_n$ and for $n > 0$, then $|\bx^{-1}(\bx_t)| = n$.%
        \item $\bx'_t \neq 0$ if $t \in A_n$ for $n > 0$, while $\bx'_t = 0$ for $t \in A_0^\circ$ (the interior).
        \item Each $A_n$ for $n>0$ is a union of disjoint open intervals. The path $\bx$ restricted to any such interval is an embedding. The image of any two such embeddings are either disjoint or identical.%
    \end{enumerate}
\end{theorem}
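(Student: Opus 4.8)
The natural plan is to decompose $[0,1]$ according to the \emph{multiplicity function} $N(t) := |\bx^{-1}(\bx_t)| \in \{1,2,\dots\}\cup\{\infty\}$, sweeping every ``non-generic'' time into $A_0$. Let $Z := \{t : \bx'_t = 0\}$, which is closed since $\bx\in C^1$. For $n\geq 1$ I would (roughly) let $A_n$ be the set of times $t\notin Z$ such that $N$ is locally constant with value $n$ in a neighborhood of $t$ and of every other preimage of $\bx_t$, and then set $A_0 := [0,1]\setminus\bigcup_{n\geq 1}A_n$. With this definition several of the assertions are bookkeeping: the $A_n$ are pairwise disjoint and cover $[0,1]$; $A_0$ is closed because each $A_n$ with $n\geq1$ is open; since $\bx^{-1}(\bx_t)$ depends only on the point $\bx_t$, membership $t\in A_n$ forces $\bx^{-1}(\bx_t)\subseteq A_n$ with $|\bx^{-1}(\bx_t)| = n$, which is condition (2); and $A_n\subseteq[0,1]\setminus Z$ gives $\bx'\neq0$ on $A_n$ for $n\geq1$. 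One also checks that $Z$, and more generally every time equivalent to a zero of $\bx'$, lies in $A_0$, and that every time of infinite multiplicity lies in $A_0$ — for the last point, a point with infinitely many preimages has an accumulation preimage, at which $\bx$ is not locally injective and hence at which $\bx'=0$.

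The real content is (a) openness of the $A_n$ for $n\geq1$, and (b) the assertion $\bx'=0$ on $A_0^\circ$. On $[0,1]\setminus Z$ the velocity is nonzero, so by the inverse function theorem each such time has an open neighborhood on which $\bx$ is a $C^1$ embedding. For (a): given $t_0\in A_n$ with preimages $s_1,\dots,s_n$, choose pairwise disjoint embedded neighborhoods $J_i\ni s_i$ on which $N$ is locally constant equal to $n$ and $\bx'\neq0$; compactness of $[0,1]\setminus\bigcup_i J_i$ then shows that for $t$ near $t_0$ every preimage of $\bx_t$ lies in $\bigcup_i J_i$, whence $t\in A_n$. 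For (b): if $t\in A_0$ but $\bx'_t\neq0$, then $\bx$ is an embedding near $t$, and it suffices to show that $t$ is a limit of times in $\bigcup_{n\geq1}A_n$; this reduces to showing that inside the embedded arc through $t$ the exceptional times — where $N$ fails to be locally constant, or where a strand is only partially retraced by another — have empty interior. For condition (4): each $A_n$ with $n\geq1$, being open in $[0,1]$, is a countable disjoint union of open intervals, and cutting each component at the closed self-intersection set of $\bx$ restricted to it yields subintervals on which $\bx$ is an embedding; if two such embedded pieces have images meeting at a point $p$, the $n$ preimages of $p$ meet both pieces, and propagating along the strands forces the images to agree wherever the strands persist, so to be identical, and otherwise disjoint.

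I expect the main obstacle to be the local analysis of finitely many $C^1$ arcs through a common point $p$: unlike smooth curves, two such arcs can coincide on a closed subarc and then separate, so a priori $N$ is only upper semicontinuous at self-intersections. I would handle this via a local normal form that sorts the strands through $p$ into maximal ``bundles'' agreeing on a neighborhood of $p$ versus strands meeting $p$ in isolation, and then propagate this structure globally; this single ingredient underlies the openness of the $A_n$, the vanishing of $\bx'$ on $A_0^\circ$, and the disjoint-or-identical dichotomy. An alternative is an induction on $n$ — define $A_1,A_2,\dots$ successively, peeling off strands of increasing multiplicity, and take $A_0$ as the intersection of the complements — which trades some of the semicontinuity bookkeeping for a cleaner recursive description at the cost of symmetry.
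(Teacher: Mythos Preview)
The paper does not contain a proof of this theorem: it is quoted verbatim as \cite[Theorem 1]{tlas_holonomic_2016} and used as a black box in the discussion leading up to Theorem~\ref{thm:path_thin_homotopy}. There is therefore no ``paper's own proof'' to compare your proposal against.

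That said, your outline is a reasonable sketch of the argument in Tlas' paper. The decomposition by multiplicity, with the zero-velocity set $Z$ and all non-generic times absorbed into $A_0$, is indeed the organizing principle. You have correctly isolated the two nontrivial points: openness of the $A_n$ for $n\geq 1$ (your compactness argument is the right idea) and the claim $\bx'=0$ on $A_0^\circ$. Your description of the ``main obstacle''---the local analysis of finitely many $C^1$ arcs through a common point, where coincidence sets can be complicated---is exactly where the work lies, and your proposed resolution via bundling strands and propagating is in the spirit of the original. The one place your sketch is thin is the assertion that the exceptional times within an embedded arc have empty interior; this requires a genuine argument (essentially a Baire-type or measure-zero statement about where multiplicity jumps), and you would need to spell it out to make the proof complete.
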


Here, the set $A_0$ is used to ``collect'' all regions of the interval in which the path is constant. 
Let $B$ denote the set of intervals from $A_n$ for all $n > 0$, equipped with a linear order inherited from $\R$. Let $E = \{\bx(a) \, : \, a \in B\}$ be the set of embedded arcs (see (4) above) where we remove repetitions (up to reparametrization and switch in orientation). 
Let $E^{-1}$ be a copy of $E$ denoting formal inverses. Then, we define the \emph{transfinite word associated to $\bx$} to be the mapping $W_\bx : B \to E \cup E^{-1}$ sending each interval to its corresponding arc, taking into account the orientation. Any arc $e \in E \cup E^{-1}$ has finite length, and thus $W_{\bx}^{-1}(e)$ must be finite because $\bx$ is $C^1$.  %

\begin{definition}
    The path $\bx \in C^1([0,1], V)$ is \emph{word reduced} if $W_\bx$ is reducible to the trivial word. 
\end{definition}

This definition is instructive as it explicitly specifies a (possibly infinite) partition of the path, then matches up each constituent arc with an adjacent arc with the same image and opposite orientation. We will use a variation of this idea in our main injectivity proof for the surface signature later in the article. We now turn to other equivalent definitions of thin homotopy. %

\begin{definition} \label{def:real_tree}
    A metric space $T$ is an \emph{$\R$-tree} if for any pair of points $x,y \in T$, all topological embeddings $\sigma : [0,1] \to T$ where $\sigma_0 = x$ and $\sigma_1 = y$ have the same image. 
\end{definition}

The following theorem collects known results relating various definitions of thin homotopy.

\begin{theorem} \label{thm:path_thin_homotopy}
    Let $\bx: [0,1] \to V$ be a $C^1$ path with vanishing derivative at the endpoints. 
    It is \emph{thinly null-homotopic} or \emph{tree-like} if any of the following equivalent definitions hold:
    \begin{enumerate}
        \item[\mylabel{W1}{(\textbf{W1})}] \textbf{Word Condition.} The path $\bx$ is word reduced.
        \item[\mylabel{H1}{(\textbf{H1}$_G$)}] \textbf{Holonomy Condition.} For a semi-simple Lie group $G$ with Lie algebra $\mathfrak{g}$, the parallel transport of every smooth $\mathfrak{g}$-connection $\omega \in \Omega^1(V, \fg)$ along $\bx$ is trivial\footnote{Each semi-simple $G$ is treated as an independent condition.}.
        \item[\mylabel{R1}{(\textbf{R1})}] \textbf{Rank Condition.} There exists an endpoint-preserving $C^1$ homotopy $h: [0,1]^2 \to V$ from $\bx$ to the constant path at $0$ such that the rank of $dh$ is $\leq 1$ everywhere.
        \item[\mylabel{I1}{(\textbf{I1})}] \textbf{Image Condition.}~\cite{barrett_holonomy_1991} There exists an endpoint-preserving homotopy $h: [0,1]^2 \to V$ from $\bx$ to the constant path at $0$ such that $\im(h) \subset \im(\bx)$. 
        \item[\mylabel{F1}{(\textbf{F1})}] \textbf{Factorization Condition.}~\cite{boedihardjo_signature_2016} There exists a factorization of $\bx$ through an $\R$-tree $T$, namely $\bx: [0,1] \to T \to V$. 
        \item[\mylabel{A1}{(\textbf{A1})}] \textbf{Analytic Condition.}~\cite{hambly_uniqueness_2010} There exists a Lipschitz function $h:[0,1] \to \R$ such that $h(t) \ge 0$ for all $t \in [0,1]$, $h(0) = h(1)$ and if $h(s) = h(t) = \inf_{s \leq u \leq t} h(u)$, then $\bx(s) = \bx(t)$. The function $h$ is called the \emph{height function}.
        \item[\mylabel{S1}{(\textbf{S1})}] \textbf{Path Signature Condition.} The path signature $S_0$ of $\bx$ is trivial, $S_0(\bx) = 1$.
    \end{enumerate}
\end{theorem}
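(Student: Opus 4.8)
The plan is to assemble \Cref{thm:path_thin_homotopy} from results that are, individually, known, by arranging them into the cycle of implications \ref{S1} $\Leftrightarrow$ \ref{A1} $\Leftrightarrow$ \ref{F1} $\Rightarrow$ \ref{I1} $\Rightarrow$ \ref{H1} $\Rightarrow$ \ref{W1} $\Leftrightarrow$ \ref{R1} $\Rightarrow$ \ref{S1}, together with the remark that, for \emph{each} semisimple $G$, \ref{H1} is sandwiched as \ref{R1} $\Rightarrow$ \ref{H1} $\Rightarrow$ \ref{W1} and thus joins the common equivalence class. The two deep inputs, which I would invoke as black boxes, are the Hambly--Lyons theorem \cite{hambly_uniqueness_2010} (extended to rough paths in \cite{boedihardjo_signature_2016}), which yields \ref{A1} $\Leftrightarrow$ \ref{S1} for bounded variation paths and hence a fortiori for $C^1$ paths; and Tlas's theorem \cite{tlas_holonomic_2016}, whose transfinite-word machinery --- built on the structural \Cref{thm:tlas_result} --- supplies \ref{H1} $\Leftrightarrow$ \ref{W1} $\Leftrightarrow$ \ref{R1}, including the sharp statement that a single compact semisimple group such as $SU(2)$ already detects failure of word-reducedness; this last point is exactly what makes \ref{H1} equivalent to the others for every semisimple $G$ separately. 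The two easy holonomic arrows are classical: \ref{R1} $\Rightarrow$ \ref{H1} (for arbitrary $G$) is the non-abelian Stokes theorem, since a homotopy with $\rank(dh) \le 1$ pulls back every curvature $2$-form to zero; and \ref{I1} $\Rightarrow$ \ref{H1} is the observation of \cite{barrett_holonomy_1991, caetano_axiomatic_1994} that a loop contracting inside its own (one-dimensional) image bounds a thin disc and hence has trivial holonomy.

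\textbf{Bridging arguments.} It then remains to supply the glue joining the analytic cluster \ref{S1}, \ref{A1}, \ref{F1} to the geometric one. For \ref{A1} $\Leftrightarrow$ \ref{F1} I would use the standard dendrite construction: from a height function $h$, form the pseudometric $d_h(s,t) = h(s) + h(t) - 2\min_{u \in [s \wedge t,\, s \vee t]} h(u)$ on $[0,1]$; its quotient $T = [0,1]/\{d_h = 0\}$ is an $\R$-tree, and since $d_h(s,t) = 0$ precisely when $h(s) = h(t) = \inf_{[s,t]} h$, the height-function hypothesis is exactly what allows $\bx$ to descend to a map $\bar{\bx}\colon T \to V$, yielding $\bx\colon [0,1] \to T \to V$; conversely, $h(t) \coloneqq d_T(p(0), p(t))$ is a height function for any factorization $\bx = \iota \circ p$. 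For \ref{F1} $\Rightarrow$ \ref{I1} I would exploit that $p([0,1]) \subseteq T$ is a \emph{subtree}, hence geodesically convex: the geodesic retraction $r_s$ moving each point a fraction $s$ of the way toward $p(0)$ stays inside $p([0,1])$, so $h(s,t) \coloneqq \iota(r_s(p(t)))$ is an endpoint-preserving homotopy --- here using that $\bx$ is a loop, $p(0) = p(1)$ --- from $\bx$ to the constant path with $\im(h) \subseteq \iota(p([0,1])) = \im(\bx)$. Finally, \ref{R1} $\Rightarrow$ \ref{S1} is immediate from \Cref{ssec:thin_path_group}: the path signature is the holonomy of the universal translation-invariant connection and is invariant under rank-$\le 1$ homotopy, so a rank-$\le 1$ homotopy from $\bx$ to the constant path forces $S_0(\bx) = 1$.

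\textbf{Main obstacle.} The main obstacle is not any single arrow but reconciling the regularity hypotheses across the three source papers: Hambly--Lyons works at bounded variation, Tlas at $C^1$, and Barrett and Caetano--Picken with (piecewise) smooth homotopies, so I would need to check that a $C^1$ loop with vanishing endpoint derivatives is admissible in each setting and that the ambient notions of ``thin'' homotopy there coincide with \ref{R1} and \ref{I1} as stated here. For any arrow routed through the non-abelian Stokes theorem, one must additionally know that the competing homotopy may be taken Lipschitz --- and, for \ref{I1} $\Rightarrow$ \ref{H1}, that its image remains $1$-rectifiable --- so that the relevant $2$-form pullbacks vanish almost everywhere; and in \ref{F1} $\Rightarrow$ \ref{I1} it is essential to retract within the subtree $p([0,1])$ rather than within all of $T$, or the homotopy's image need not lie in $\im(\bx)$. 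Modulo these bookkeeping matters, the genuinely hard work is quarantined inside \cite{hambly_uniqueness_2010} and \cite{tlas_holonomic_2016}, and the cycle closes.
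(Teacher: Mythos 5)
Your proof takes a genuinely different route from the paper's. Both arguments quarantine the hard work inside Tlas \cite{tlas_holonomic_2016} and Hambly--Lyons \cite{hambly_uniqueness_2010}, but the \emph{bridge} that joins the two clusters is different. The paper goes \ref{W1} $\Rightarrow$ \ref{F1} (observing that Tlas's notion of tree is a special case of \Cref{def:real_tree}), and then closes the cycle with a custom implication \ref{S1} $\Rightarrow$ \ref{H1} for $G = \SL_2(\mathbb{R})$, using Chen's result that $\mathrm{SL}_2$-holonomy can be expressed in terms of iterated integrals, hence in terms of signature coefficients. You instead close the cycle with \ref{F1} $\Rightarrow$ \ref{I1} via a geodesic retraction of the image subtree, followed by \ref{I1} $\Rightarrow$ \ref{H1} via the Barrett--Caetano--Picken argument. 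Your route avoids any explicit Lie-theoretic computation and makes the tree geometry do the work; the paper's route avoids any regularity bookkeeping on the retraction homotopy. Each is a reasonable choice.

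That said, the obstacle you flag at the end is a real one, and in fact it is sharper than you suggest: for the arrow \ref{I1} $\Rightarrow$ \ref{H1}, the non-abelian Stokes argument genuinely needs the homotopy to be (at least) Lipschitz so that curvature can be pulled back, but the homotopy produced by your geodesic retraction $h(s,t) = \iota(r_{s}(p(t)))$ is, on the face of it, only continuous. (The paper sidesteps this by routing through Tlas's \emph{explicit} $C^1$ construction in the proof of \ref{W1} $\Rightarrow$ \ref{I1}.) You would need either to show that the retraction inherits Lipschitz regularity from $p$ and $\iota$, or to smooth the homotopy while keeping its image inside $\im(\bx)$. There is a second, smaller gap: in \ref{F1} $\Rightarrow$ \ref{I1} you use $p(0) = p(1)$, but the bare factorization $\bx = \iota \circ p$ only gives $\iota(p(0)) = \iota(p(1))$, and $\iota$ need not be injective. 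This is harmless if you first pass through \ref{A1} and use the dendrite construction (where $d_h(0,1) = 0$ does force $p(0) = p(1)$, at least after normalizing the height function to vanish at the endpoints), but as stated it is a step you should not omit. Neither gap invalidates the approach, but both need to be closed before the cycle actually closes.
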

\begin{proof}
    In~\cite[Theorem 3]{tlas_holonomic_2016} it is proved that \ref{W1}, \ref{H1} for all semi-simple $G$, and \ref{R1} are equivalent. Furthermore, in the proof of~\cite[Theorem 2]{tlas_holonomic_2016} which shows \ref{W1} $\implies$ \ref{R1}, a homotopy is constructed which satisfies the \ref{I1} (see the top of~\cite[page 18]{tlas_holonomic_2016}), so \ref{W1}, \ref{H1}, or \ref{R1} also imply \ref{I1}. Furthermore, if there exists a homotopy which satisfies \ref{I1}, this homotopy must also satisfy the rank condition \ref{R1}. Thus the first four conditions are equivalent. Furthermore, we note that the definition of a \emph{tree} in~\cite[Definition 4]{tlas_holonomic_2016} is a special case of our definition. Then, \ref{W1}, \ref{H1}, \ref{R1} or \ref{I1} all imply \ref{F1}. \medskip

    Now,~\cite{hambly_uniqueness_2010} shows the equivalence of \ref{F1}, \ref{A1} and \ref{S1} in the case of bounded variation paths, which includes $C^1$ paths. Note that these three conditions do not use a homotopy (whose regularity we would need to consider). Thus, this implies these are equivalent in the $C^1$ setting.\medskip

    In order to connect these two classes of results, we show that \ref{S1} implies \ref{H1} for the specific case of $G = \SL_2(\R)$. Suppose $\bx \in C^1([0,1],V)$ such that $\sig(\bx) = 1$. Suppose to the contrary that there exists a smooth connection 
    \begin{align}
        \omega = \begin{pmatrix} \omega_1 & \omega_2 \\
            \omega_3 & -\omega_1 \end{pmatrix} \in \Omega^1(V, \fsl_2(\R)),
    \end{align}
    where $\omega_i \in \Omega^1(V)$, such that the holonomy (with initial condition at the identity) is not trivial $\hol^{\omega}(\bx) \neq I$. The holonomy can be expressed in terms of the iterated integrals of $\omega$, where specific entries are iterated integrals of $\omega_1, \omega_2, \omega_3$. This implies that there exists such an iterated integral which is nontrivial, and by following the proof of~\cite[Lemma 4.1]{chen_integration_1958}, this implies that the signature is nontrivial, which is a contradiction. 
\end{proof}

While these equivalent definitions are stated for $C^1$ paths, the tree condition~\ref{F1} can be extended to highly irregular \emph{rough paths}, where this theorem has been generalized to show that the kernel of the path signature consists of tree-like rough paths~\cite{boedihardjo_signature_2016}. We do not consider such rough paths in this article. Instead, we focus our attention on the piecewise linear setting where analogous questions regarding thin homotopy for 2-dimensional surfaces are still largely unexplored.

\subsection{Group of Piecewise Linear Paths} \label{ssec:pl_paths}
In this section, we give an algebraic construction of the group of piecewise linear paths and the path signature. Given a vector space $V$, consider the free monoid $(\FMon(V), \concat)$ on the underlying set of $V$. We define the \emph{group of piecewise-linear paths} as
\begin{align}
    \PL_0(V) \coloneqq \FMon(V)/ \sim
\end{align}
subject to the following relations:
\begin{enumerate}[label=(\textbf{PL0.\arabic*})]
    \item \label{PL0.1} $(v,w) \sim (v + w)$ if $v$ and $w$ are linearly dependent;
    \item \label{PL0.2} $(0) \sim \emptyset$, where $0 \in V$ and $\emptyset$ is the identity in $\FMon(V)$ (the empty word). 
\end{enumerate}

With these relations $\PL_0(V)$ is a group. Indeed, the inverse of $(v_1, \ldots, v_k) \in \PL_0(V)$ is
\begin{align}
    (v_1, \ldots, v_k)^{-1} \coloneqq (-v_k, \ldots, -v_1).
\end{align}
An important property of $\PL_0(V)$ is the existence of unique minimal representatives, analogous to the reduced word in a free group. The following result is proved in~\Cref{apxsec:minimal_pl_paths}.
\begin{proposition} \label{prop:minimal_pl_paths}
    An element $\bx \in \PL_0(V)$ has a unique \emph{minimal representative}
    \begin{align}
        \bx = (v_1, \ldots, v_n)_{\min}.
    \end{align}
    This is a word with the property that all $v_i \neq 0$, and every consecutive pair $(v_i, v_{i+1})$ is linearly independent in $V$. We use the subscript $(\cdot)_{\min}$ to denote the minimal representative.
\end{proposition}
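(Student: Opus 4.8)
The plan is to prove existence of a minimal representative by a reduction procedure, and then prove uniqueness by showing that any two minimal representatives of the same element must coincide.

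\textbf{Existence.} Start with any word $(v_1, \ldots, v_k)$ representing $\bx$. First apply \ref{PL0.2} to delete every letter equal to $0$. Then, as long as some consecutive pair $(v_i, v_{i+1})$ is linearly dependent, apply \ref{PL0.1} to merge it into the single letter $(v_i + v_{i+1})$; if the result is $0$, delete it via \ref{PL0.2}. Each such move strictly decreases the number of letters, so the process terminates, leaving a word with all letters nonzero and all consecutive pairs linearly independent. (One subtlety: after a merge, the new letter $v_i + v_{i+1}$ could be linearly dependent with its new neighbor $v_{i-1}$ or $v_{i+2}$, but this is fine --- we simply continue the loop; termination is guaranteed by the strictly decreasing length.)

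\textbf{Uniqueness.} This is the main obstacle, and the natural approach is via the path signature, or more elementarily via a direct combinatorial argument. I would use the signature: by the naturality and explicit formula for the piecewise linear path signature (developed in this subsection), the map $\PL_0(V) \to \com{K}_0(V)$ is injective, so it suffices to show that two distinct minimal words have distinct signatures. In fact a cleaner route avoids even needing injectivity of the full signature: project to the truncated signature $\sig^{(2)}$, landing in $V \oplus (V \otimes V)$. The degree-one part of $\sig((v_1,\ldots,v_n))$ is $v_1 + \cdots + v_n$ and the degree-two part is $\sum_{i \le j} v_i \otimes v_j - \tfrac12\sum_i v_i\otimes v_i$ (the iterated integral over the piecewise linear path), which only depends on the partial sums $p_j = v_1 + \cdots + v_j$. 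So instead I would argue directly about the sequence of partial sums (vertices of the path): a word $(v_1,\ldots,v_n)$ corresponds to the sequence of vertices $0 = p_0, p_1, \ldots, p_n$ with $p_i - p_{i-1} = v_i$, and the minimality conditions say exactly that $p_i \neq p_{i-1}$ for all $i$ and that no three consecutive vertices $p_{i-1}, p_i, p_{i+1}$ are collinear with $p_i$ between or beyond in the degenerate way that \ref{PL0.1} detects --- more precisely that $p_i - p_{i-1}$ and $p_{i+1} - p_i$ are linearly independent.

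The key lemma for uniqueness is then: if two minimal words $\bx = (v_1, \ldots, v_n)$ and $\by = (w_1, \ldots, w_m)$ represent the same element of $\PL_0(V)$, then $n = m$ and $v_i = w_i$ for all $i$. I would prove this by showing that the relations \ref{PL0.1} and \ref{PL0.2} generate a \emph{confluent} rewriting system (after orienting them as reductions, i.e. length-decreasing), so that every element has a unique normal form; Newman's lemma reduces this to checking local confluence, i.e. that the finitely many overlap patterns between two applicable reductions can be resolved to a common word. The overlaps to check are: two merges sharing a letter (positions $i,i+1$ and $i+1,i+2$ both dependent), and a merge interacting with a $0$-deletion. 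Each overlap is resolved by a short explicit computation using that linear dependence among the relevant vectors forces them all to lie on a common line, so the order of merging does not matter. Once confluence is established, the normal form is unique, and minimal representatives are precisely the normal forms, giving the result. The main work --- and the step I expect to require the most care --- is the case analysis verifying local confluence, particularly handling the situation where a merge produces the zero vector and the subsequent deletion then allows a previously-blocked merge.
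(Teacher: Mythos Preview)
Your proposal is correct and takes essentially the same approach as the paper: the paper also sets up the length-decreasing rewriting system, proves local confluence by the same case analysis on overlapping reductions (Lemma~\ref{lem:paths_rewritestep}), and deduces uniqueness of the normal form. The only cosmetic difference is that you invoke Newman's lemma, whereas the paper proves full confluence directly by induction on word length (Lemma~\ref{lem:path_strongrewrite}); also, your initial detour through the path signature should be dropped, since the minimal-representative result is used \emph{before} the signature is defined and before its injectivity is established, so that route would be circular.
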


Consider the map of sets $\eta_V : V \to \PL_0(V)$ given by the inclusion $\eta_V(v) = (v)$. This has the property that if we restrict to a 1-dimensional subspace $U \subset V$, then $\eta_V$ is a group homomorphism. 
This gives rise to a universal property for $\PL_0(V)$.

\begin{lemma} \label{lem:PL0_univ_property}
   Let $V$ be a vector space and let $G$ be a group. Let $f: V \to G$ be a map which restricts to a group homomorphism on subspaces $U \subset V$ of dimension 1, and satisfies $f(0) = e$, where $e \in G$ is the identity. Then, there exists a unique group homomorphism $F: \PL_0(V) \to G$ such that $F \circ \eta_V = f$. 
\end{lemma}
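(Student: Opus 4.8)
The plan is to use the universal property of the free monoid to get a candidate homomorphism, and then verify that it descends through the two defining relations of $\PL_0(V)$. First I would apply the universal property of the free monoid $(\FMon(V), \concat)$: since $\FMon(V)$ is free on the underlying set of $V$, the set map $f: V \to G$ (composed with the forgetful functor sending $G$ to its underlying multiplicative monoid) extends uniquely to a monoid homomorphism $\tilde{F}: \FMon(V) \to G$, determined by $\tilde{F}(v_1, \ldots, v_k) = f(v_1) \cdots f(v_k)$. Since $G$ is a group, $\tilde{F}$ automatically lands in the group $G$.

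Next I would check that $\tilde{F}$ respects the relations \ref{PL0.1} and \ref{PL0.2}, so that it factors through the quotient $\PL_0(V) = \FMon(V)/\!\sim$. For \ref{PL0.2}, $\tilde{F}(0) = f(0) = e$ by hypothesis, so the relation $(0) \sim \emptyset$ is preserved. For \ref{PL0.1}, suppose $v, w \in V$ are linearly dependent; then $v, w$ and $v+w$ all lie in a common $1$-dimensional subspace $U \subset V$ (taking $U = \R v$ if $v \neq 0$, or $U = \R w$, or handling the trivial case $v = w = 0$ via \ref{PL0.2}). By hypothesis $f|_U$ is a group homomorphism $U \to G$, so $f(v) f(w) = f(v+w)$, which is exactly $\tilde{F}(v,w) = \tilde{F}(v+w)$. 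Hence $\tilde{F}$ descends to a group homomorphism $F: \PL_0(V) \to G$ with $F \circ \eta_V = f$, since $F(\eta_V(v)) = F((v)) = f(v)$.

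Finally, uniqueness: any group homomorphism $F': \PL_0(V) \to G$ with $F' \circ \eta_V = f$ must satisfy $F'((v)) = f(v)$ for all $v \in V$, and since the elements $(v)$ generate $\PL_0(V)$ as a group (every word $(v_1, \ldots, v_k)$ is the product $(v_1) \cdots (v_k)$ in $\PL_0(V)$), $F'$ is determined on all of $\PL_0(V)$ and equals $F$.

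The argument is essentially routine; the only point requiring a moment's care is the degenerate case in relation \ref{PL0.1} where one of $v, w$ is zero (so that "the $1$-dimensional subspace containing both" may fail to exist), which is cleanly handled by first invoking \ref{PL0.2} to delete the zero letter and then noting the remaining identity is trivial. I do not anticipate any genuine obstacle.
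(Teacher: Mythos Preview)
The proposal is correct and follows essentially the same approach as the paper: both use the universal property of the free monoid to produce a monoid map $\FMon(V)\to G$, then verify that the two defining relations \ref{PL0.1} and \ref{PL0.2} are preserved using the hypotheses $f(0)=e$ and that $f$ is additive on one-dimensional subspaces, and finally deduce uniqueness from the fact that $V$ generates $\PL_0(V)$. Your extra care with the degenerate case $v=0$ or $w=0$ is a minor elaboration beyond what the paper writes, but not a different argument.
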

\begin{proof}
    Uniqueness of $F$ is immediate because $\PL_0(V)$ is generated by $V$. For existence, by the universal property of free monoids, there exists a unique monoid morphism $\tf: \FMon(V) \to G$. First, note that $\tf(0) = f(0) = e = \tf(\emptyset)$, where the second equality holds by assumption. 
    Second, if $v, w \in V$ are contained in a 1-dimensional subspace, we have 
    \begin{align}
        \tf(v, w) = \tf(v) \cdot \tf(w) = f(v) \cdot f(w) = f(v+w) = \tf(v+w).
    \end{align}
    Therefore, this descends to a homomorphism $F: \PL_0(V) \to G$ such that $F \circ \eta_V = f$.
    
\end{proof}

\begin{corollary} \label{cor:PL_functor}
The group of piecewise linear paths defines a functor
\begin{align}
    \PL_0: \Vect \to \Grp.
\end{align}
\end{corollary}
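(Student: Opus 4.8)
The plan is to package \Cref{lem:PL0_univ_property} into functoriality by checking the two things a functor requires: that a linear map induces a group homomorphism on the path groups, and that this assignment respects identities and composition. Both will follow formally from the universal property, once we exhibit the appropriate maps satisfying its hypotheses.

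First I would fix a linear map $\phi: V \to W$ and consider the composite of sets $V \xrightarrow{\phi} W \xrightarrow{\eta_W} \PL_0(W)$, call it $f_\phi \coloneqq \eta_W \circ \phi$. I need to check that $f_\phi$ satisfies the hypotheses of \Cref{lem:PL0_univ_property}: namely $f_\phi(0) = \eta_W(\phi(0)) = \eta_W(0) = (0) = e$ by relation \ref{PL0.2}, and that $f_\phi$ restricts to a group homomorphism on any $1$-dimensional subspace $U \subset V$. For the latter, note $\phi(U)$ is either $\{0\}$ or a $1$-dimensional subspace of $W$; in either case $\eta_W$ is a group homomorphism when restricted to $\phi(U)$ (this is exactly the property of $\eta_W$ noted just before \Cref{lem:PL0_univ_property}, together with the $\{0\}$ case which is trivial via \ref{PL0.2}), and $\phi|_U$ is linear hence additive, so the composite $f_\phi|_U = \eta_W \circ \phi|_U$ is a group homomorphism. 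Thus \Cref{lem:PL0_univ_property} yields a unique group homomorphism $\PL_0(\phi): \PL_0(V) \to \PL_0(W)$ with $\PL_0(\phi) \circ \eta_V = f_\phi = \eta_W \circ \phi$.

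Next I would verify the functor axioms. For the identity $\id_V: V \to V$, the map $\id_{\PL_0(V)}$ is a group homomorphism satisfying $\id_{\PL_0(V)} \circ \eta_V = \eta_V = \eta_V \circ \id_V$, so by the uniqueness clause of \Cref{lem:PL0_univ_property} we get $\PL_0(\id_V) = \id_{\PL_0(V)}$. For composition, given $\phi: V \to W$ and $\psi: W \to U$, both $\PL_0(\psi) \circ \PL_0(\phi)$ and $\PL_0(\psi \circ \phi)$ are group homomorphisms $\PL_0(V) \to \PL_0(U)$; computing on $\eta_V$ gives $\PL_0(\psi)\circ\PL_0(\phi)\circ\eta_V = \PL_0(\psi)\circ\eta_W\circ\phi = \eta_U\circ\psi\circ\phi = \PL_0(\psi\circ\phi)\circ\eta_V$, so again uniqueness forces $\PL_0(\psi)\circ\PL_0(\phi) = \PL_0(\psi\circ\phi)$.

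There is no real obstacle here — this is a formal consequence of the universal property, and the only point requiring a moment's care is the degenerate case where $\phi$ collapses a line to $0$, handled by relation \ref{PL0.2}. (Concretely, $\PL_0(\phi)$ sends the class of a word $(v_1,\dots,v_k)$ to the class of $(\phi(v_1),\dots,\phi(v_k))$, which one could also check respects the relations \ref{PL0.1}–\ref{PL0.2} directly, but routing everything through \Cref{lem:PL0_univ_property} keeps the argument short.)
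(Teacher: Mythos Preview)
Your proof is correct and follows essentially the same approach as the paper: you build $\PL_0(\phi)$ by applying \Cref{lem:PL0_univ_property} to $\eta_W \circ \phi$ and then deduce functoriality from the uniqueness clause. The paper's proof is terser (it just says ``This uniqueness implies functoriality''), whereas you spell out the identity and composition checks and handle the degenerate case where $\phi$ collapses a line, but the underlying argument is identical.
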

\begin{proof}
    Suppose $\phi: V \to W$ is a linear map. Then $\eta_W \circ \phi : V \to \PL_0(W)$ restricts to a group homomorphism on each one-dimensional subspace. By the universal property in~\Cref{lem:PL0_univ_property}, there is a unique group homomorphism $\PL_0(\phi): \PL_0(V) \to \PL_0(W)$ such that $\PL_0(\phi) \circ \eta_{V} = \eta_W \circ \phi$. This uniqueness implies functoriality. 
\end{proof}

For later use, we will also consider the notion of the span of a path.
\begin{corollary} \label{cor:span_pres}
    Let $\cS(V)$ be the set of linear subspaces of $V$. There is a well-defined map
    \begin{align} \label{eq:span_def}
        \SPAN: \PL_0(V) \to \cS(V) \quad \text{given by} \quad \bx \mapsto \SPAN(v_1, \ldots, v_k),
    \end{align}
    where $(v_1, \ldots, v_k)_{\min}$ is the minimal representative of $\bx \in \PL_0(V)$. If $\phi: V \to W$ is a linear map, then
    \begin{align}
        \SPAN(\PL_0(\phi)(\bx)) \subset \phi(\SPAN(\bx)).
    \end{align}
\end{corollary}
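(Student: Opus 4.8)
The plan is to dispose of well-definedness at once and then reduce the inclusion to a single monotonicity observation about word reduction in $\PL_0(V)$.

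\emph{Well-definedness.} By \Cref{prop:minimal_pl_paths}, every $\bx \in \PL_0(V)$ has a \emph{unique} minimal representative $(v_1, \ldots, v_k)_{\min}$, so the assignment $\bx \mapsto \SPAN\{v_1, \ldots, v_k\}$ is a well-defined function $\PL_0(V) \to \cS(V)$ with nothing further to check. It is worth pointing out that one really must pass to the minimal representative first: the word $(v, -v)$ and the empty word represent the same class, yet $\SPAN\{v, -v\} = \SPAN\{v\}$ while the empty word has span $\{0\}$, so $\SPAN$ is not invariant over all representing words.

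\emph{A monotonicity lemma.} The crux is the claim that if $(a_1, \ldots, a_p) \in \FMon(V)$ is \emph{any} word representing $\bx$, with minimal representative $(v_1, \ldots, v_k)_{\min}$, then $\SPAN\{v_1, \ldots, v_k\} \subseteq \SPAN\{a_1, \ldots, a_p\}$. To prove this I would run the obvious greedy reduction: repeatedly (i) delete any entry equal to $0 \in V$ (using \ref{PL0.2}), and (ii) replace a consecutive linearly dependent pair $(\dots, v, w, \dots)$ by $(\dots, v+w, \dots)$ (using \ref{PL0.1}). Each move strictly decreases the word length, so the procedure terminates; a terminal word has no zero entries and no consecutive linearly dependent pair, i.e.\ it is a minimal representative, hence \emph{the} minimal representative $(v_1, \ldots, v_k)_{\min}$ by the uniqueness in \Cref{prop:minimal_pl_paths}. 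Move (i) leaves the span unchanged, and move (ii) can only shrink it since $v + w \in \SPAN\{v, w\}$; composing all the moves gives the asserted inclusion.

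\emph{The inclusion.} Now let $\bx$ have minimal representative $(v_1, \ldots, v_k)_{\min}$. Since $\PL_0(\phi)$ is a group homomorphism satisfying $\PL_0(\phi) \circ \eta_V = \eta_W \circ \phi$ (see the proof of \Cref{cor:PL_functor}), the element $\PL_0(\phi)(\bx)$ is represented by the word $(\phi(v_1), \ldots, \phi(v_k))$. Applying the monotonicity lemma to this word together with linearity of $\phi$ yields
\[
    \SPAN(\PL_0(\phi)(\bx)) \subseteq \SPAN\{\phi(v_1), \ldots, \phi(v_k)\} = \phi\big(\SPAN\{v_1, \ldots, v_k\}\big) = \phi(\SPAN(\bx)),
\]
which is the claim. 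I do not anticipate a genuine obstacle; the only subtle point is that the identification of the terminal word of the greedy reduction with the object defining $\SPAN(\bx)$ rests essentially on the uniqueness half of \Cref{prop:minimal_pl_paths}, so that lemma cannot be bypassed.
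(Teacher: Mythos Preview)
Your argument is correct; the paper states this corollary without proof, and your approach via the monotonicity of span under the rewriting steps of \ref{PL0.1}--\ref{PL0.2} (together with the uniqueness in \Cref{prop:minimal_pl_paths}) is exactly the natural way to justify it.
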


The universal property of~\Cref{lem:PL0_univ_property} provides an effective method for constructing homomorphisms out of $\PL_{0}(V)$. For example, the identity map $\mathrm{id}_{V}: V \to V$ automatically extends to a homomorphism $t: \PL_0(V) \to V$, and the corresponding action groupoid $\V \rtimes \PL_0(V)$ is the piecewise linear analogue of $\thingroupoid_1(V)$. Next, consider the map $r_0: V \to \thingroup_1(V)$ defined by $r_0(v) \coloneqq [\bx^v]$, where $[\bx^v]$ is the equivalence class of the path 
\[
\bx^v(t) \coloneqq  \psi(t) \cdot v,
\]
where $\psi \in C^\infty([0,1],[0,1])$ is a surjective map with sitting instants. Since $r_0$ restricts to a homomorphism on $1$-dimensional subspaces of $V$, by~\Cref{lem:PL0_univ_property}, it induces a group homomorphism,
\begin{align} \label{eq:realization0}
    \realization_0: \PL_0(V) \to \thingroup_1(V),
\end{align}
which we call the \emph{realization map}. In fact, as we vary $V$, these maps fit into a natural transformation
\begin{align} \label{eq:realization0_nt}
    \realization_0: \PL_0 \Rightarrow \thingroup_1.
\end{align}

\begin{lemma} \label{lem:realization0_injective}
    For all $V \in \Vect$, the realization map $\realization_0: \PL_0(V) \to \thingroup_1(V)$ is injective.
\end{lemma}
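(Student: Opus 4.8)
The plan is to reduce the claim to the classical fact that the fundamental group of a finite graph is free, using the image characterization of thin null-homotopy from \Cref{thm:path_thin_homotopy}. Since $\realization_0$ is a group homomorphism, injectivity amounts to showing that $\realization_0(\bz) = e$ implies $\bz = e$ for $\bz \in \PL_0(V)$. Applying the displacement homomorphism $\thingroup_1(V) \to V$, which pulls back along $\realization_0$ to the map $\PL_0(V) \to V$ sending $(v_1, \dots, v_n) \mapsto v_1 + \dots + v_n$, we find that $\realization_0(\bz) = e$ forces $v_1 + \dots + v_n = 0$, where $\bz = (v_1, \dots, v_n)_{\min}$ is the minimal representative of \Cref{prop:minimal_pl_paths}. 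Hence the PL path $\gamma \coloneqq \realization_0(\bz)$, obtained by concatenating the linear segments $S_i = [p_{i-1}, p_i]$ with $p_i = v_1 + \dots + v_i$, is a loop based at $0$, and the equality $\realization_0(\bz) = e$ says precisely that $\gamma$ is thinly null-homotopic. It remains to show that $n = 0$.

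Being a concatenation of linear paths reparametrized to have sitting instants, $\gamma$ is a $C^1$ loop with vanishing derivative at its endpoint, so \Cref{thm:path_thin_homotopy} applies. In particular, the image condition \ref{I1} furnishes an endpoint-preserving homotopy $h \colon [0,1]^2 \to V$ from $\gamma$ to the constant loop at $0$ with $\im(h) \subseteq C \coloneqq \im(\gamma) = S_1 \cup \dots \cup S_n$. Thus $[\gamma] = 1$ in $\pi_1(C, 0)$. The space $C$ is a finite graph: any two of the segments $S_i$ meet in the empty set, a single point, or a common subsegment, so subdividing each $S_i$ at the finitely many points where another $S_j$ meets it turns $C$ into a finite $1$-dimensional CW complex. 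Consequently $\pi_1(C, 0)$ is free, and an edge-loop in $C$ is null-homotopic if and only if it reduces to the trivial loop by successively deleting backtracks of the form $e e^{-1}$.

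The crux is that the edge-loop traced by $\gamma$ has \emph{no} backtracks, and this is precisely where the minimality of the word is used. Along a single segment $S_i$ the path moves monotonically, so two consecutive edges never reverse; and at a corner $p_i$ the last edge of $S_i$ runs in the direction of $v_i$ while the first edge of $S_{i+1}$ runs in the direction of $v_{i+1}$, so a backtrack there would force $v_i$ and $v_{i+1}$ to be linearly dependent, contradicting the defining property \ref{PL0.1}--\ref{PL0.2} of the minimal representative that $v_i \neq 0$ and that consecutive $v_i, v_{i+1}$ are linearly independent. Since each $S_i$ contributes at least one edge, if $n \geq 1$ then $\gamma$ is a nonempty reduced edge-loop, hence nontrivial in $\pi_1(C, 0)$, contradicting $[\gamma] = 1$. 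Therefore $n = 0$, i.e.\ $\bz = e$.

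I expect the main obstacle to be bookkeeping rather than conceptual: one must carefully endow the possibly self-overlapping union of segments $C = \im(\gamma)$ with a finite CW structure and check that $\gamma$ is regular enough for condition \ref{I1} of \Cref{thm:path_thin_homotopy} to be invoked. Once the translation ``minimal word $\Rightarrow$ reduced edge-loop'' is pinned down, freeness of the fundamental group of a graph finishes the argument at once. (An alternative route uses the path signature directly: $\realization_0(\bz) = e$ implies $\exp(v_1) \cdots \exp(v_n) = 1$ by condition \ref{S1}, and this is impossible for a nonempty minimal word by a classical lemma of Chen; I prefer the graph-theoretic argument as it is self-contained.)
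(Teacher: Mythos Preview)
Your proof is correct and is essentially a rigorous expansion of the paper's own argument. The paper's proof is a two-line sketch: a minimal representative has no retracings, hence cannot be thinly null-homotopic unless empty. You make this precise by invoking condition \ref{I1} of \Cref{thm:path_thin_homotopy} to contract the loop within its image, putting a graph structure on that image, and translating ``minimality of $(v_1,\dots,v_n)$'' into ``the edge-loop is reduced'' so that freeness of $\pi_1$ of a graph finishes the job. This is exactly the mechanism the paper deploys later in the proof of \Cref{lem:W0_injective}, so your approach is entirely in the paper's spirit---you have simply front-loaded the careful version of the argument rather than deferring it.

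One small point worth noting: minimality only constrains \emph{consecutive} pairs $(v_i,v_{i+1})$, not the cyclic pair $(v_n,v_1)$. Your argument is unaffected, since reduction in the free group only cancels adjacent letters in the word (not cyclically), and you have already shown there are no such adjacent cancellations; but it may be worth saying this explicitly to preempt the obvious question.
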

\begin{proof}
    Let $\bv = (v_1, \ldots, v_k)_{\min} \in \PL_0(V)$ be a minimal representative, and suppose $\realization_0(\bv)$ is trivial. The path $\realization_0(v_1,\ldots, v_k)$ can only be thinly null homotopic if there are retracings. Since the representative is minimal, this cannot occur if $k > 0$. Hence $\bv = \emptyset_0$.
\end{proof}

Finally, consider the map $s_0 \coloneqq \sig \circ r_0 : V \to \com{K}_0(V)$, which can be expressed explicitly as
\begin{align}
    s_0(v) = \sum_{k=0}^\infty \frac{v^{\otimes k}}{k!}.
\end{align}
Again, $s_0$ restricts to a group homomorphism on 1-dimensional subspaces of $V$, and so by~\Cref{lem:PL0_univ_property}, we obtain a homomorphism
\begin{align} 
    S_{\PL,0}: \PL_0(V) \to \com{K}_0(V),
\end{align}
which we call the \emph{piecewise linear path signature}. It factors through the path signature by definition, and this fact holds at the level of natural transformations. 
\begin{proposition} \label{prop:psigpl_nt}
    The maps $\realization_0$, $S_0$, and $S_{\PL,0}$ are natural transformations which factor as
    \begin{align}
        S_{\PL,0}: \PL_0 \xRightarrow{\realization_0} \thingroup_1 \xRightarrow{S_0} \com{K}_0.
    \end{align}
\end{proposition}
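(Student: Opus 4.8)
The statement to prove is \Cref{prop:psigpl_nt}: the maps $\realization_0$, $S_0$, and $S_{\PL,0}$ are natural transformations which factor as $S_{\PL,0}: \PL_0 \xRightarrow{\realization_0} \thingroup_1 \xRightarrow{S_0} \com{K}_0$.

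\textbf{Approach.} The plan is to verify the two components of the claim in turn: first that each of the three arrows is a genuine natural transformation between the indicated functors $\Vect \to \Grp$, and second that the triangle of natural transformations commutes. Most of the ingredients have already been assembled in the preceding text, so the proof is largely a matter of organizing them.

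\textbf{Naturality.} For $S_0$, naturality as a transformation $\thingroup_1 \Rightarrow \com{K}_0$ is exactly \eqref{eq:psig_nt}, cited from \cite[Proposition 7.52]{friz_multidimensional_2010}. For $\realization_0$, I would note that the pointwise maps $\realization_0 \colon \PL_0(V) \to \thingroup_1(V)$ were constructed via the universal property of \Cref{lem:PL0_univ_property} applied to $r_0 \colon V \to \thingroup_1(V)$, and naturality was already asserted at \eqref{eq:realization0_nt}; to prove it cleanly I would chase the square: given a linear $\phi \colon V \to W$, both $\thingroup_1(\phi) \circ \realization_0$ and $\realization_0 \circ \PL_0(\phi)$ are group homomorphisms $\PL_0(V) \to \thingroup_1(W)$, so by the uniqueness clause of \Cref{lem:PL0_univ_property} it suffices to check they agree after precomposition with $\eta_V \colon V \to \PL_0(V)$. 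On generators $(v)$ this reduces to the identity $\thingroup_1(\phi)([\bx^v]) = [\bx^{\phi(v)}] = \realization_0(\PL_0(\phi)(\eta_V(v)))$, which holds because $\phi$ is linear and hence carries the straight-line path $\psi(t) v$ to $\psi(t)\phi(v)$, and because $\PL_0(\phi)\circ\eta_V = \eta_W \circ \phi$ by \Cref{cor:PL_functor}. That $S_{\PL,0}$ is a natural transformation will then follow immediately from the factorization, since a vertical composite of natural transformations is a natural transformation.

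\textbf{Factorization.} It remains to show $S_{\PL,0} = S_0 \circ \realization_0$ as maps $\PL_0(V) \to \com{K}_0(V)$ for each $V$. Both sides are group homomorphisms out of $\PL_0(V)$ — the left side by its defining construction via \Cref{lem:PL0_univ_property}, the right side as a composite of homomorphisms — so by the uniqueness clause of the universal property it is enough to verify equality on the generating set, i.e.\ after precomposing with $\eta_V$. By definition $S_{\PL,0} \circ \eta_V = s_0 = \sig \circ r_0$, and on a generator $(v)$ we have $(S_0 \circ \realization_0)(\eta_V(v)) = S_0([\bx^v]) = \sig(\bx^v)$, while $r_0(v) = [\bx^v]$, so the two sides agree by construction. (One may additionally remark that $\sig(\bx^v) = \sum_{k\ge 0} v^{\otimes k}/k!$, reproducing the explicit formula for $s_0$, by solving the defining ODE of \Cref{def:psig} for the straight-line path, but this is not logically needed.) Finally, since the three maps agree as natural transformations at every object and commute with the structure maps, the displayed factorization of natural transformations holds.

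\textbf{Main obstacle.} There is no serious obstacle here; the statement is essentially bookkeeping. The only point requiring a little care is making sure that the universal property \Cref{lem:PL0_univ_property} is invoked correctly — in particular that both maps being compared really are group homomorphisms on all of $\PL_0(V)$ (not merely set maps), so that uniqueness applies and checking on generators suffices. A secondary subtlety is that naturality of $S_0$ is imported as a black box from \cite{friz_multidimensional_2010}; if one wanted a self-contained argument one would have to reprove that iterated integrals transform correctly under linear maps, but given the conventions of the paper it is legitimate to cite it.
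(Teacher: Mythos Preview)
Your proposal is correct and is essentially the argument the paper has in mind; indeed, the paper states the proposition without a formal proof, relying on the fact that $s_0 \coloneqq \sig \circ r_0$ by definition, so that the factorization on generators is built into the construction and the uniqueness clause of \Cref{lem:PL0_univ_property} does the rest. Your write-up simply makes this explicit, and there is nothing to add.
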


The construction of $\PL_0(V)$ can be viewed as a linear algebraic analogue of the construction of a free group. In fact, the following result shows that free groups can be embedded into $\PL_0(V)$.

\begin{proposition} \label{prop:embedding_free_group}
    Let $P: A \to V$ be a map of sets with the property that $P(a)$ and $P(a')$ are linearly independent if $a \neq a'$ (in particular, $P(a) \neq 0$ for all $a \in A$). Then the induced homomorphism $\tilde{P}: \FG(A) \to \PL_0(V)$ is injective.  
\end{proposition}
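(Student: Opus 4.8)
The plan is to identify the image under $\tilde P$ of a \emph{normal form} in the free group with an explicit minimal representative in $\PL_0(V)$, and then invoke the uniqueness of minimal representatives from~\Cref{prop:minimal_pl_paths}. Recall that $\FG(A)$ is the free product $\ast_{a \in A} \Z$, so every nontrivial element $w \in \FG(A)$ has a unique normal form
\begin{align}
    w = b_1^{m_1} b_2^{m_2} \cdots b_r^{m_r}, \qquad b_j \in A, \quad b_j \neq b_{j+1}, \quad m_j \in \Z \setminus \{0\}.
\end{align}
By construction $\tilde P : \FG(A) \to \PL_0(V)$ is the homomorphism sending each generator $a \in A$ to the length-one path $(P(a))$, so that $\tilde P(a^{-1}) = (-P(a))$ via the inverse formula in $\PL_0(V)$.

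First I would compute $\tilde P$ on a single power $b^m$ with $m \neq 0$. Since $P(b)$ is (trivially) linearly dependent with itself, relation~\ref{PL0.1} gives $(P(b), \dots, P(b)) \sim (mP(b))$ for $m > 0$, and for $m < 0$ the inverse formula together with the same relation yields the same conclusion; thus $\tilde P(b^m) = (mP(b))$ for all $m \in \Z \setminus \{0\}$. Concatenating over the syllables of $w$, we obtain
\begin{align}
    \tilde P(w) = (m_1 P(b_1)) \concat \cdots \concat (m_r P(b_r)) = (m_1 P(b_1), m_2 P(b_2), \ldots, m_r P(b_r)) \in \PL_0(V).
\end{align}

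Next I would verify that the word on the right is a minimal representative in the sense of~\Cref{prop:minimal_pl_paths}: each entry $m_j P(b_j)$ is nonzero because $m_j \neq 0$ and $P(b_j) \neq 0$; and every consecutive pair $(m_j P(b_j), m_{j+1} P(b_{j+1}))$ is linearly independent, because $b_j \neq b_{j+1}$ forces $P(b_j)$ and $P(b_{j+1})$ to be linearly independent by hypothesis, and rescaling by the nonzero scalars $m_j, m_{j+1}$ preserves independence. By the uniqueness clause of~\Cref{prop:minimal_pl_paths}, this word is therefore \emph{the} minimal representative of $\tilde P(w)$. Consequently, if $w \neq e$ then $r \geq 1$, so the minimal representative of $\tilde P(w)$ is a nonempty word and hence $\tilde P(w) \neq e$ in $\PL_0(V)$. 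This shows $\ker \tilde P = \{e\}$, i.e.\ $\tilde P$ is injective.

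The single idea the argument rests on — and the only mild subtlety — is that $\tilde P$ does not send a reduced word of $\FG(A)$ directly to a minimal word of $\PL_0(V)$: a maximal run of a repeated generator is collapsed by relation~\ref{PL0.1} into one scaled vector. Passing to the free-product normal form makes this collapsing explicit, and it simultaneously guarantees the consecutive-independence condition through the separation hypothesis on $P$. Everything else is routine bookkeeping.
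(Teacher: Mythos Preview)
The proposal is correct and follows essentially the same approach as the paper: both pass to the syllable normal form $b_1^{m_1}\cdots b_r^{m_r}$, observe that $\tilde P$ sends it to $(m_1 P(b_1),\ldots,m_r P(b_r))$, and verify that this word is minimal in the sense of \Cref{prop:minimal_pl_paths} using the linear independence hypothesis on $P$. Your write-up is slightly more detailed in justifying the collapse of each syllable via \ref{PL0.1}, but the argument is the same.
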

\begin{proof}
    Let $\tilde{P}: \FG(A) \to \PL_0(V)$ be the unique homomorphism that restricts to $\eta_{V} \circ P$ on $A$. Let $\bw \in \FG(A)$ be an element, which can be expressed uniquely as a reduced word in $A \cup A^{-1}$: 
    \begin{align}
    \bw = (a_1^{n_1}, a_{2}^{n_2}, ..., a_{r}^{n_{r}}), 
    \end{align}
    where all $n_{i}$ are non-zero integers, and $a_{i} \neq a_{i+1}$ for $1 \leq i \leq r-1$. Then 
    \begin{align}
    \tilde{P}(\bw) = (n_1 P(a_1), n_2 P(a_2), ..., n_r P(a_r)). 
    \end{align}
    By assumption, each vector in the list is non-zero, and consecutive pairs are linearly independent. Hence, $\tilde{P}(\bw)$ is a minimal representative by~\Cref{prop:minimal_pl_paths}. This implies that the kernel of $\tilde{P}$ consists of the empty word, implying that the homomorphism is injective. 
\end{proof}

We end this section by observing that the piecewise linear signature is unique, once we take into account the starting point of a path. Let $\Aff$ be the category of affine spaces, whose objects are finite dimensional vector spaces, but whose morphisms are \emph{affine-linear maps}. Given an affine space $V$, define action groupoids 
\begin{align}
    \Pi_{1, \PL}(V) = V \rtimes \PL_{0}(V), \qquad \widehat{\Pi}_1(V) = V \rtimes \com{K}_0(V),
\end{align}
where the actions are respectively defined by the homomorphism $t: \PL_0(V) \to V$ and the truncation $t: \com{K}_0(V) \to K_0^{(\leq 1)}(V) \cong V$, via the additive action of $\V$ on itself. Along with the thin fundamental groupoid, these define functors from the category of affine spaces to the category of groupoids
\begin{align}
    \Pi_{1, \PL}, \ \ \thingroupoid_1, \ \ \widehat{\Pi}_1 : \Aff \to \Grpd. 
\end{align}
Furthermore, the maps $\realization_0$, $S_0$, and $S_{\PL,0}$ give rise to natural transformations 
 \begin{align}
        \tilde{S}_{\PL,0}: \Pi_{1, \PL} \xRightarrow{\tilde{\realization}_0} \thingroupoid_1 \xRightarrow{\tilde{S}_0} \widehat{\Pi}_1
\end{align}
which restrict to the identity on the objects of the groupoids. 
\begin{proposition} \label{prop:groupoid_sig_uniqueness}
    There is a unique natural transformation 
    \begin{align}
        \tilde{S}_{\PL,0} : \Pi_{1, \PL} \xRightarrow{} \widehat{\Pi}_1,
    \end{align}
    called the \emph{piecewise linear groupoid signature}, which restricts to the identity on objects. 
\end{proposition}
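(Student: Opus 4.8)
The plan is to construct the natural transformation $\tilde{S}_{\PL,0}$ directly using the universal property of $\PL_0$ established in \Cref{lem:PL0_univ_property}, suitably adapted to the action-groupoid setting, and then to verify uniqueness by observing that $\Pi_{1,\PL}(V)$ is generated by its objects $V$ together with the arrows coming from $\PL_0(V)$.

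First I would spell out what a natural transformation $\tilde{S}_{\PL,0}: \Pi_{1,\PL} \Rightarrow \widehat{\Pi}_1$ restricting to the identity on objects must do. For each affine space $V$, it is a groupoid morphism $\Pi_{1,\PL}(V) \to \widehat{\Pi}_1(V)$ which is the identity on the object set $V$, and which is compatible with affine-linear maps $\phi: V \to W$. Since $\Pi_{1,\PL}(V) = V \rtimes \PL_0(V)$, every arrow can be written as a pair $(p, \bx)$ with source $p \in V$ and $\bx \in \PL_0(V)$; being the identity on objects pins down the source and target, so the content of $\tilde{S}_{\PL,0}$ on arrows is a map $\PL_0(V) \to \com{K}_0(V)$ for each basepoint, subject to a cocycle-type compatibility. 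This is where the affine (rather than merely linear) category matters: translating the basepoint is an affine-linear automorphism of $V$, and naturality under these translations forces the basepoint-$p$ component to be the conjugate/translate of the basepoint-$0$ component. Hence the data reduces to a single group homomorphism $\PL_0(V) \to \com{K}_0(V)$, functorial in $V \in \Vect$.

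Next I would invoke the universal property. The homomorphism must send a generator $(v) \in \PL_0(V)$ — realized as the straight segment from $0$ to $v$ — to an element of $\com{K}_0(V)$, and naturality under linear maps, restricted to a one-dimensional subspace $U = \R v$, forces this element to be the signature of that straight segment, namely $s_0(v) = \sum_{k \geq 0} v^{\otimes k}/k!$. (Concretely: applying the functor to the inclusion $\R \hookrightarrow V$, $1 \mapsto v$, reduces the question to $V = \R$, where $\Pi_{1,\PL}(\R) = \R \rtimes \R$ and $\widehat\Pi_1(\R) = \R \rtimes \com{K}_0(\R)$, and the only basepoint-preserving, translation-equivariant homomorphism taking the one-parameter subgroup $\PL_0(\R) \cong \R$ into $\com{K}_0(\R)$ compatibly with rescaling is the exponential $t \mapsto \exp(t) = \sum t^{\otimes k}/k!$.) Since $s_0$ visibly restricts to a homomorphism on one-dimensional subspaces and sends $0$ to $1$, \Cref{lem:PL0_univ_property} produces a unique homomorphism $\PL_0(V) \to \com{K}_0(V)$ extending it, which is precisely $S_{\PL,0}$ by its definition in the excerpt; functoriality in $V$ is \Cref{cor:PL_functor} combined with the functoriality of $\com{K}_0$. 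Assembling the basepoint-$p$ components by translation as above gives the candidate $\tilde{S}_{\PL,0}$; that it is genuinely a groupoid morphism (respects composition $(p,\bx)(t(\bx)\!+\!p, \by) = (p, \bx\concat\by)$) is exactly the statement that $S_{\PL,0}$ is a group homomorphism and that the $\com{K}_0$-action on $V$ factors through $t$, so this is a routine check rather than the crux.

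The main obstacle, and the step I would be most careful about, is the uniqueness argument: showing that \emph{every} natural transformation restricting to the identity on objects coincides with this one. The key point is that the affine morphisms, not just the linear ones, must be used. Given an arbitrary such $\tilde S$, naturality under the affine translation automorphisms of $V$ reduces its value on any arrow $(p,\bx)$ to its value on $(0,\bx)$; naturality under linear embeddings $\R \hookrightarrow V$ reduces the value on $(0,(v))$ to the one-dimensional case; and in the one-dimensional case the value on the generating one-parameter family is forced by translation-equivariance together with the rescaling maps $\R \to \R$, $t \mapsto \lambda t$, which act on $\PL_0(\R) \cong \R$ by scaling and on $\com{K}_0(\R) \subset T\ps{\R}$ by the induced algebra map — the only continuous one-parameter subgroups of $\com K_0(\R)$ intertwining all these scalings form a one-parameter family, and the normalization "identity on objects" (equivalently, the correct value of the degree-one truncation $t$) picks out $\exp$. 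Since $\PL_0(V)$ is generated by the elements $(v)$, this determines $\tilde S$ on all of $\Pi_{1,\PL}(V)$, so $\tilde S = \tilde S_{\PL,0}$. I would present this as: reduce to arrows of the form $(0,(v))$ by naturality and the identity-on-objects condition, then reduce to $V = \R$, then pin down the one-parameter subgroup.
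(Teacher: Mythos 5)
Your overall reduction matches the paper's: use naturality for affine translations and the identity-on-objects condition to reduce to arrows $(0, \eta_V(u))$ based at the origin, then reduce to $V = \mathbb{R}$ via the affine map $t \mapsto v + tu$. That is exactly the structure of the paper's argument. However, two points should be flagged.

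First, the existence part of your proposal, while harmless, is unnecessary: $\tilde{S}_{\PL,0}$ has already been constructed (it factors through $\thingroupoid_1$), so the proof here is purely about uniqueness.

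Second, and more seriously, your handling of the one-dimensional case has a gap. You argue via scaling maps and "the only continuous one-parameter subgroups of $\com{K}_0(\mathbb{R})$ intertwining all these scalings," but continuity is not a hypothesis of this proposition, so an argument leaning on it is not admissible without further justification. The observation you are missing — and the one that makes the paper's proof immediate — is that $\FL(\mathbb{R}) = \mathbb{R}$ (the free Lie algebra on one generator is abelian), so every truncation $\fk_0^{(\leq n)}(\mathbb{R}) = \mathbb{R}$ and the truncation map $t : \com{K}_0(\mathbb{R}) \to \mathbb{R}$ is an \emph{isomorphism}. Consequently $\widehat{\Pi}_1(\mathbb{R}) \cong \mathrm{Pair}(\mathbb{R})$, the pair groupoid, which is the terminal object in the category of groupoids over $\mathbb{R}$ with identity-on-objects morphisms. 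Any two such morphisms into the pair groupoid agree automatically — there is literally only one arrow between any two objects — so uniqueness in dimension one is forced with no scaling, equivariance, or continuity argument at all. If you want to salvage your scaling route, note that $\GL(\mathbb{R})$-equivariance plus additivity already yields $\mathbb{R}$-linearity without continuity; but this still implicitly requires the identification $\com{K}_0(\mathbb{R}) \cong \mathbb{R}$, at which point the terminal-object argument is shorter.
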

\begin{proof}
    It suffices to show that any natural transformation $F: \Pi_{1, \PL} \xRightarrow{} \widehat{\Pi}_1$ which is the identity on objects must be given by $\tilde{S}_{\PL,0}$. First, let $U$ be a $1$-dimensional vector space. Then $t: \com{K}_0(U) \to U$ is an isomorphism and hence $\widehat{\Pi}_1(U) \cong \mathrm{Pair}(U)$, the terminal object in the category of groupoids over $U$. Therefore, we have equality of components $F_{U} = (\tilde{S}_{\PL,0})_{U}$. Now let $V$ be a general vector space. Elements of $\Pi_{1, \PL}(V)$ can be factored into products of elements of the form $(v, \eta_{V}(u))$, for $v, u \in V$. Hence, it suffices to show that $F_V(v,\eta_{V}(u)) = (\tilde{S}_{\PL,0})_{V}(v, \eta_{V}(u))$ for all such pairs. Given $(v,u)$, define the affine linear map 
    \begin{align}
        f_{(v,u)} : \mathbb{R} \to V, \qquad t \mapsto v + tu.
    \end{align}
    Then $\Pi_{1, \PL}(f_{(v,u)})(0, \eta_{\mathbb{R}}(1)) = (v, \eta_{V}(u))$, where $(0, \eta_{\mathbb{R}}(1)) \in \Pi_{1, \PL}(\mathbb{R})$. Therefore, using the naturality of $F$ and $\tilde{S}_{\PL,0}$, we obtain 
    \begin{align}
        F_V(v,\eta_{V}(u)) &= F_V \circ \Pi_{1, \PL}(f_{(v,u)})(0, \eta_{\mathbb{R}}(1)) = \widehat{\Pi}_1(f_{(v,u)}) \circ F_{\mathbb{R}}(0, \eta_{\mathbb{R}}(1)) \\
        &= \widehat{\Pi}_1(f_{(v,u)}) \circ (\tilde{S}_{\PL,0})_{\mathbb{R}}(0, \eta_{\mathbb{R}}(1)) \\
        &= (\tilde{S}_{\PL,0})_{V}(v,\eta_{V}(u)),
    \end{align}
    where the equality in the second line uses $F_{\mathbb{R}} = (\tilde{S}_{\PL,0})_{\mathbb{R}}$.
\end{proof}
\begin{remark}
    Piecewise linear paths are dense in $C^1_0([0,1],V)$, the $C^1$ paths starting at the origin, equipped with the Lipschitz topology. Therefore, the groupoid signature $\tilde{S}_{0} : \thingroupoid_1 \xRightarrow{} \widehat{\Pi}_1$ is the unique \emph{continuous} natural transformation which is the identity on objects. 
\end{remark}

\section{Surface Holonomy and the Surface Signature}

In this section, we introduce surface holonomy in the smooth setting, and discuss the surface signature introduced in~\cite{kapranov_membranes_2015}, and further developed in~\cite{lee_surface_2024,chevyrev_multiplicative_2024-1}. 
We begin with some conventions and notation for surfaces. 
\begin{definition} \label{def:2d_sitting_instants}
    A surface $\bX \in C([0,1]^2, V)$ has \emph{sitting instants} if there exists some $\epsilon > 0$ such that
\begin{align}
    \bX_{u,t} = \bX_{0,t}, \quad \bX_{1-u, t} = \bX_{1,t}, \quad \bX_{s,u} = \bX_{s,0}, \quad \bX_{s,1-u} = \bX_{s,1} \quad \text{for all} \quad u \in [0,\epsilon],\, s,t \in [0,1].
\end{align}
    Unless otherwise specified, we assume all surfaces have sitting instants.
\end{definition} 
For a surface $\bX \in C([0,1]^2, V)$, the \emph{bottom, right, top, and left boundary paths of $\bX$} are
\begin{align} \label{eq:individual_boundary_paths}
    (\partial_b\bX)_s \coloneqq \bX_{s,0}, \quad (\partial_r\bX)_s \coloneqq \bX_{1,s}, \quad (\partial_t\bX)_s \coloneqq \bX_{s, 1}, \quad (\partial_l \bX)_s \coloneqq \bX_{0,s}.
\end{align}
Furthermore, we define\footnote{Note that since the surface has sitting instants, the resulting boundary paths and boundary loop also have sitting instants.} the \emph{boundary loop of $\bX$} using the counter-clockwise convention to be
\begin{align} \label{eq:surface_boundary}
    \partial\bX = (\partial_b\bX \concat \partial_r \bX) \concat ((\partial_t \bX)^{-1} \concat (\partial_l \bX)^{-1}).
\end{align}

\begin{figure}[!h]
    \includegraphics[width=\linewidth]{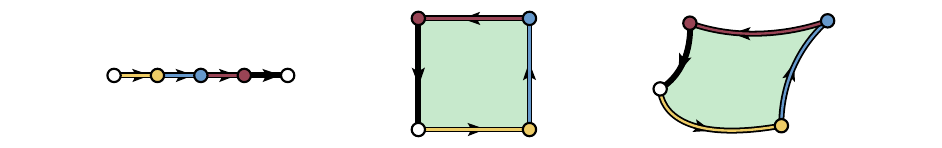}
\end{figure}

The sitting instants ensure that for horizontally and vertically composable smooth surfaces $\bX, \bY, \bZ \in C^1([0,1]^2, V)$, such that $\partial_r\bX = \partial_l \bY$ and $\partial_t \bX = \partial_b \bZ$, their horizontal and vertical compositions 
\begin{align} \label{eq:strict_surface_concatenation}
    (\bX \concat_h \bY)_{s,t} \coloneqq \left\{ \begin{array}{cl}
        \bX_{2s,t} & : s \in [0, \frac12] \\
        \bY_{2s-1, t} & : s \in [\frac12, 1] 
    \end{array} \right.
    \andd 
    (\bX \concat_v \bZ)_{s,t} \coloneqq \left\{ \begin{array}{cl}
        \bX_{s,2t} & : t\in [0,\frac12] \\
        \bZ_{s,2t-1} & : t \in [\frac12,1]
    \end{array}\right.
\end{align}
are also smooth, so that $(\bX \concat_h \bY), (\bX \concat_v \bZ) \in C^1([0,1]^2, V)$. 

\subsection{Thin Homotopy of Surfaces}
We begin by considering the 2-dimensional notion of thin homotopy defined by generalizing the rank condition~\ref{R1}. 

\begin{definition} \label{def:2d_thin_homotopy}
    Two smooth surfaces $\bX, \bY \in C^1([0,1]^2, V)$ are \emph{thin homotopy equivalent}, denoted $\bX \sim_{\thinhom} \bY$, if there exists a corner-preserving smooth homotopy $H \in C^1([0,1]^3, V)$ between $\bX$ and $\bY$ such that
    \begin{itemize}
        \item \textbf{(homotopy condition)} $H_{0,s,t} = \bX_{s,t}$ and $H_{1,s,t} = \bY_{s,t}$;
        \item \textbf{(thin homotopic boundaries)} the four sides of the homotopy $H_{u,s,0}, H_{u,s,1}, H_{u,0,t}, H_{u,1,t}$ are thin homotopies between the four boundary paths of $\bX$ and $\bY$,
        \item \textbf{(thinness condition)} $\rank(dH) \leq 2$, where $dH$ is the differential of $H$.
    \end{itemize}
\end{definition}

\begin{remark}
    In this article, we use $[0,1]^2$ to parametrize surfaces, which allows for simple definitions of horizontal and vertical concatenation, as in Equation~\eqref{eq:strict_surface_concatenation}. 
    In~\Cref{def:2d_thin_homotopy}, we use \emph{corner-preserving} maps. This will allow us, as in~\eqref{eq:horizontal_composition}, to retain the simple formulation of concatenation for thin equivalence classes. We emphasize that we have chosen to use corner preserving maps because it simplifies the definition of the algebraic operations, but that it should be possible to relax this requirement. 
\end{remark}

Similar to the case of paths, thin homotopy equivalence encodes reparametrizations and local cancellations such as folding:
\begin{enumerate}
    \item \textbf{Reparametrization.} Given a surface $\bX \in C^1([0,1]^2, V)$, and a corner and edge preserving reparametrization $\phi: [0,1]^2 \to [0,1]^2$, then $\bX \sim_{\thinhom} \bX \circ \phi$.
    \item \textbf{Folding.} Given a surface $\bX \in C^1([0,1]^2, V)$, we say that a \emph{fold} is a region of a surface of the form $\bX \concat_h \bX^{-h}$ or $\bX \concat_v \bX^{-v}$. Two surfaces which differ by folds are thin homotopy equivalent.
\end{enumerate}

\begin{figure}[!h]
    \includegraphics[width=\linewidth]{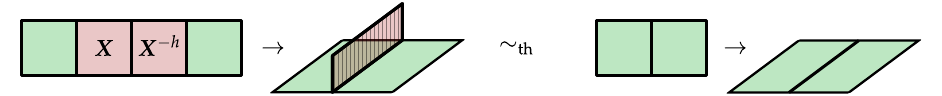}
\end{figure}

\noindent However, thin homotopy of surfaces may yield \emph{non-local} cancellations, as the following shows.

\begin{proposition} \label{prop:rp2_example}
    Consider a map $\bX^{RP} \in C^1([0,1]^2, \R^n)$ which factors as 
    \begin{align} \label{eq:rp2_example_factorization}
        \bX^{RP}: [0,1]^2 \xrightarrow{q} S^2 \xrightarrow{Y} \RP^2 \xrightarrow{g} \R^n,
    \end{align}
    where $q$ is a map which covers the sphere and sends the boundary of $[0,1]^2$ to a basepoint $* \in S^2$, $Y$ is the double cover obtained by identifying antipodal points, and $g$ is a smooth map. Then $\bX^{RP}$ is thinly null homotopic.
\end{proposition}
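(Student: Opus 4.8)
The plan is to exploit that $\bX^{RP}$ factors through the $2$-dimensional space $\RP^2$. Although the map $S^2\to\RP^2$ is \emph{not} null-homotopic inside $\RP^2$, it becomes null-homotopic after attaching a single $2$-cell to $\RP^2$; since that cell can be filled in the contractible target $\R^n$, the resulting null-homotopy factors through a $2$-dimensional complex and is therefore thin.

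First I would write $\bX^{RP}=g\circ\bar Y$ with $\bar Y:=Y\circ q\colon[0,1]^2\to\RP^2$, which collapses $\partial[0,1]^2$ to the basepoint and hence determines a based class $[\bar Y]\in\pi_2(\RP^2)\cong\Z$. Choose a loop $\alpha\colon S^1\to\RP^2$ generating $\pi_1(\RP^2)\cong\Z/2$ (say the inclusion of $\RP^1$) and form the $2$-dimensional CW complex $Z:=\RP^2\cup_\alpha D^2$, with inclusion $\iota\colon\RP^2\hookrightarrow Z$. Then $\pi_1(Z)$ is the quotient of $\pi_1(\RP^2)$ by the normal closure of $[\alpha]$, hence trivial, so by Hurewicz $h_Z\colon\pi_2(Z)\to H_2(Z;\Z)$ is an isomorphism. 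By naturality of the Hurewicz map applied to $\iota$, $h_Z(\iota_*[\bar Y])=\iota_*\bigl(h_{\RP^2}([\bar Y])\bigr)=0$ since $h_{\RP^2}$ takes values in $H_2(\RP^2;\Z)=0$; as $h_Z$ is injective, $\iota_*[\bar Y]=0$ in $\pi_2(Z)$. Consequently there is a based homotopy $\tilde H\colon[0,1]^3\to Z$ from $\iota\circ\bar Y$ to the constant map at the basepoint, with $\partial[0,1]^2$ mapped constantly to the basepoint throughout.

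Next I would push this down to $\R^n$. Since $\R^n$ is contractible, the loop $g\circ\alpha$ bounds a smooth disc, so $g$ extends over the attached $2$-cell to a map $\tilde g\colon Z\to\R^n$, smooth on each cell, with $\tilde g\circ\iota=g$. Set $H:=\tilde g\circ\tilde H\colon[0,1]^3\to\R^n$. This is a homotopy from $\tilde g\circ\iota\circ\bar Y=g\circ\bar Y=\bX^{RP}$ to the constant surface; the boundary paths of every intermediate surface are the constant path at $\tilde g(*)$, so the homotopy is corner-preserving and the boundary conditions of \Cref{def:2d_thin_homotopy} hold trivially. Finally, choosing $\tilde H$ simplicial and $\tilde g$ piecewise linear with respect to sufficiently fine triangulations, $H$ restricts on each top-dimensional simplex of $[0,1]^3$ to an affine map with image in a plane of dimension $\le 2$, so $\rank(dH)\le 2$; hence $\bX^{RP}$ is thinly null-homotopic.

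The main obstacle is a regularity issue rather than a conceptual one: $Z$ is not a manifold — three sheets of $2$-cells run along $\alpha$ (two coming from the single $2$-cell of $\RP^2$, one from the new cell) — so the construction naturally yields only a piecewise linear $H$, whereas \Cref{def:2d_thin_homotopy} asks for a $C^1$ homotopy. I would dispose of this by building $Z$, $\tilde g$, and $\tilde H$ explicitly with sitting instants inserted along the cell structure, then smoothing the resulting PL map within its $2$-dimensional plates, which does not increase the rank of the differential; one may also use the explicit model $S^2=D^2\cup_{z\sim -z}D^2$ on which $Y$ restricts to the quotient $D^2\to\RP^2$ on each disc, making $[\bar Y]$ and the null-homotopy in $Z$ completely explicit. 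It is worth remarking that the argument uses only $H_2(\RP^2;\Z)=0$ and the contractibility of $\R^n$, so it applies verbatim with $\RP^2$ replaced by the image $C=g(\RP^2)$ — which is precisely the mechanism behind the injectivity theorem stated in the introduction.
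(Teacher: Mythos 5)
Your proof is correct and takes a genuinely different route from the paper's proof of this proposition. The paper argues by an explicit geometric modification: it collapses the boundary circle of $\RP^2$ to obtain a degree-$0$ map $S^2\to S^2$, then modifies $g$ on a collar neighbourhood so that the modified map factors through this collapse, and finally builds the two relevant thin homotopies (a fold-cancellation and a homotopy constrained to the image) by hand. You instead attach a single $2$-cell to $\RP^2$ along a generator of $\pi_1$, kill $\pi_1$, and use naturality of Hurewicz together with $H_2(\RP^2)=0$ to conclude $\iota_*[\bar Y]=0$ in $\pi_2(Z)$. This is essentially the mechanism the paper deploys later for the main injectivity theorem (\Cref{lem:cw_complex_with_trivial_pi1} and \Cref{prop:kerW1_equal_hurewicz}), so you have in effect recognised that \Cref{prop:rp2_example} is a special case of that machinery. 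Your version is more conceptual and generalises more cleanly (it works with $\RP^2$ replaced by any finite $2$-complex with vanishing $H_2$ on the relevant class), whereas the paper's version is more self-contained at the point where it appears and exhibits the thin homotopy explicitly in terms of folds and an image-constrained homotopy.

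One inaccuracy in your final paragraph: you cannot take $\tilde g$ to be piecewise linear, since $\tilde g$ must restrict to the \emph{given} smooth map $g$ on $\RP^2\subset Z$, and $g$ is arbitrary. The salvageable point is that the rank bound does not require linearity: if $\tilde H\colon[0,1]^3\to Z$ is chosen with sitting instants along (a neighbourhood of) the preimage of the $1$-skeleton, then away from that preimage $\tilde H$ maps an open set into the interior of a single $2$-cell, where $\tilde g$ is smooth with $2$-dimensional domain, so $dH=d\tilde g\circ d\tilde H$ has rank at most $2$; and on the preimage of the $1$-skeleton $H$ is locally constant in the normal directions, so the rank bound holds there too. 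This is the same order of unspelled detail as in the paper's own proof, so the gap is acceptable, but the phrase ``choosing $\tilde g$ piecewise linear'' should be replaced by ``$\tilde g$ smooth on each cell of $Z$, with $\tilde H$ factoring locally through single cells away from a set where $H$ is locally constant.''
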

\begin{proof}
We represent $\mathbb{RP}^2 = D^2/\sim$ as the quotient of a disc $D^2$ by the antipodal identification on the boundary. We compose the antipodal projection map $Y$ with a map $c: \RP^2 \to S^2$ which collapses the boundary circle to a single point, to obtain $c \circ Y: S^2 \to S^2$ as in Figure \ref{fig:RP2fig}.

\begin{figure}[!h]   \includegraphics[width=\linewidth]{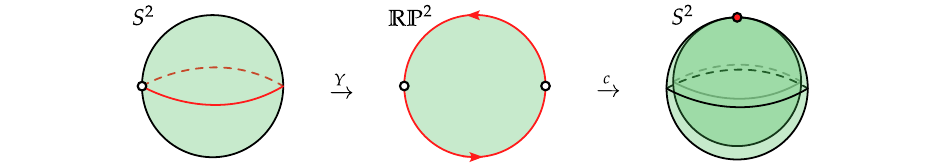}
 \caption{Collapsing map}
\label{fig:RP2fig}
\end{figure}

The map $c \circ Y$ has degree $0$ and hence is null-homotopic by the Hurewicz theorem. Therefore, given a map $g' : S^2 \to \mathbb{R}^n$, the map $g' \circ c \circ Y \circ q$ is thinly null-homotopic. Indeed, this homotopy is a `fold-cancellation' given by `opening' the hole at the top of the sphere in order to contract the map to a point. 
Thus, to show that $\bX^{RP}$ is thinly null-homotopic, it suffices to construct a thin homotopy between $g$ and $g' \circ c$, for some map $g'$. To this end, let $p: [0,1] \to D^2$ be the map parametrizing the top  boundary of the disc, and let $\gamma : [0,1] \to \mathbb{RP}^2$ be the corresponding loop in $\mathbb{RP}^2$, which generates the fundamental group $\pi_{1}(\mathbb{RP}^2) \cong \mathbb{Z}/2$. The loop $g \circ \gamma: [0,1] \to \mathbb{R}^n$ is contractible. Let $h : [0,1]^2 \to \mathbb{R}^n$ be a smooth homotopy that contracts $\gamma$ to a point. 

It is possible to modify $g$ on a collar neighbourhood of the boundary of $D^2$, so that its restriction to both the top half and to the bottom half of the disc agrees with $h$. This can be pictured as in Figure \ref{fig:homotopyg1}
\begin{figure}[!h]
\includegraphics[width=\linewidth]{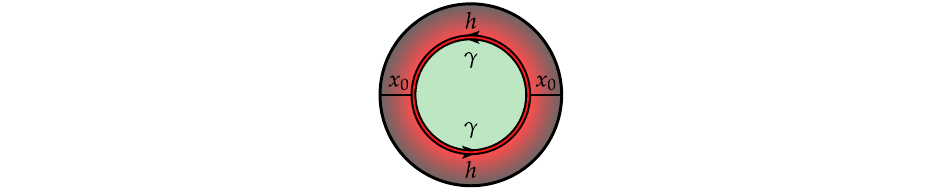}
\caption{The map $g_1$}
\label{fig:homotopyg1}
\end{figure}
\medskip

Let $g_{1} : \mathbb{RP}^2 \to \mathbb{R}^n$ denote the new map. It is obtained by adding a surface, namely the image of $h$, to the original map $g$. Since it sends the boundary of the disc $D^2$ to a single point $x_0 \in \mathbb{R}^n$ it factors as $g_{1} = g' \circ c$. Furthermore, the new map $g_{1}$ is homotopic to $g$ through a homotopy which is restricted to lie within the image of $g_{1}$. Hence $g$ and $g_{1}$ are thinly homotopic to each other. This completes the proof. 
 \end{proof}

Proposition \ref{prop:rp2_example} shows that, in contrast to the case of paths, it is possible for an \emph{immersed surface} to be thinly null homotopic. Indeed, it suffices to take the map $g$ from the Proposition to be an embedding. Hence, there are thin homotopies that do not only involve reparametrizations and fold cancellations. Furthermore, Proposition \ref{prop:rp2_example} also shows that, unlike in the case of paths, surfaces do not admit unique reductions via local cancellations. As a result, in order to see that a surface is thinly null-homotopic, it may be necessary to `backtrack' by introducing new surfaces. This means that techniques used to show the injectivity of the path signature, which usually involve taking the reduced form of a path, will not generalize to surfaces.  

 Finally, one upshot of the present discussion is that generalizing the equivalent definitions of thin homotopy for paths from~\Cref{thm:path_thin_homotopy} to the case of surfaces is not entirely straightforward. Indeed, the naive generalizations may not be true. For example, the straightforward analog of the image condition \ref{I1} fails, as the following corollary shows.

 \begin{corollary} \label{cor:rp2_nonexist}
    Let $X^{RP}: [0,1]^2 \to \R^n$ be defined as in~\Cref{prop:rp2_example}, with $g$ an embedding. There does not exist a null-homotopy $H: [0,1]^3 \to \R^n$ of $X^{RP}$ such that $\im(H) \subseteq \im(X^{RP})$. 
\end{corollary}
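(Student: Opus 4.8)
The plan is to argue by contradiction: suppose such a null-homotopy $H : [0,1]^3 \to \R^n$ exists with $\im(H) \subseteq \im(X^{RP})$. Since $g$ is an embedding, $\im(X^{RP}) = g(\RP^2) \subseteq \R^n$ is a homeomorphic copy of $\RP^2$, so $H$ factors through a null-homotopy $\widetilde H : [0,1]^3 \to \RP^2$ of the map $\widetilde X := Y \circ q : [0,1]^2 \to \RP^2$ (using that $g$ is a homeomorphism onto its image, we can pull $H$ back along $g^{-1}$). Concretely, $\widetilde X$ sends $\partial([0,1]^2)$ to the basepoint and covers $\RP^2$ via the composite $q$ (covering $S^2$) followed by the antipodal double cover $Y : S^2 \to \RP^2$; thus $\widetilde X$ represents an element of $\pi_2(\RP^2)$.

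The key computation is to identify this element of $\pi_2(\RP^2)$ and observe it is non-trivial, which contradicts the existence of $\widetilde H$. First I would note that $\pi_2(\RP^2) \cong \pi_2(S^2) \cong \Z$, the isomorphism being induced by the universal covering map $Y : S^2 \to \RP^2$. The class $[\widetilde X] \in \pi_2(\RP^2)$ corresponds, under this isomorphism, to the degree of a lift of $\widetilde X \circ (\text{identification } [0,1]^2/\partial \cong S^2)$ to the universal cover $S^2$. But $\widetilde X$ already comes with such a lift, essentially by construction: $q : [0,1]^2/\partial \to S^2$ is the map covering the sphere once (degree $\pm 1$), and it is a lift of $\widetilde X$ along $Y$ (since $\widetilde X = Y \circ q$). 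Hence $[\widetilde X]$ is a generator of $\pi_2(\RP^2) \cong \Z$, in particular non-zero. A null-homotopy $\widetilde H$ of $\widetilde X$ would force $[\widetilde X] = 0$, a contradiction.

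The only subtlety — and the step I expect to require the most care — is the factorization of $H$ through $\RP^2$. One must use that $g$ is an \emph{embedding}, hence a homeomorphism onto the compact subspace $C = g(\RP^2) \subseteq \R^n$, so that $g^{-1} : C \to \RP^2$ is continuous; then $\widetilde H := g^{-1} \circ H$ is a well-defined continuous null-homotopy of $g^{-1} \circ X^{RP} = \widetilde X$. The hypothesis $\im(H) \subseteq \im(X^{RP}) = C$ is exactly what makes $g^{-1} \circ H$ defined on all of $[0,1]^3$; without it the composite would not exist. Once this reduction is in place, the rest is the standard covering-space computation of $\pi_2(\RP^2)$ recalled above. (One may alternatively phrase the contradiction homologically: $\widetilde X$ is degree $\pm 2$ onto $\RP^2$ after lifting, equivalently $H_2$ of the map is non-trivial with $\Z/2$ coefficients, but a null-homotopic map induces zero on $H_2$; either formulation works, and I would present whichever reads more cleanly alongside \Cref{prop:rp2_example}.)
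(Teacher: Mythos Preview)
Your proposal is correct and follows essentially the same route as the paper: lift $H$ through the embedding $g$ to obtain a null-homotopy in $\RP^2$ of $Y\circ q$, then observe that $Y\circ q$ represents a generator of $\pi_2(\RP^2)\cong\Z$, yielding a contradiction. The paper's proof is simply a terse two-line version of what you wrote, without spelling out the covering-space justification for why $[Y\circ q]$ is a generator.
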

\begin{proof}
    Assume that such a homotopy $H$ exists. Since it is contained in the embedded surface $\im(X^{RP})$, it lifts to a null homotopy $\hat{H} : [0,1]^3 \to \RP^2$ of $f \circ q : [0,1]^2 \to \RP^2$. This is a contradiction since $f \circ q$ is a generator for $\pi_{2}(\RP^2) = \Z$. 
\end{proof}

In addition, it is not immediately clear how to generalize the word condition \ref{W1}, the tree condition \ref{F1}, and the analytic condition \ref{A1}. One of the contributions of this article is to suggest a way to adapt all the conditions from Theorem \ref{thm:path_thin_homotopy} to the two-dimensional setting, which is stated in~\Cref{thm:surface_thin_homotopy}.
However, our first task will be to study the generalization of the signature condition \ref{S1} by first introducing the \emph{surface signature}. We will begin by introducing the required algebraic structures and the concept of surface holonomy.

\subsection{Double Groupoids and Crossed Modules of Groups}
The algebraic structure inherent in the composition of surfaces is most naturally encoded using the formalism of double groupoids. However, in this paper, we prefer to use the equivalent concept of crossed modules, since they are more convenient to work with algebraically. In this section, we briefly recall this equivalence and construct the thin crossed module of surfaces.

\begin{definition}{\cite[Theorem 2.13]{martins_surface_2011}}
    The \emph{thin fundamental double groupoid} $\thingroupoid(V)$ is an edge symmetric double groupoid where
    \begin{align}
        \thingroupoid_0(V) \coloneqq V, \quad \thingroupoid_1(V) \coloneqq C^1([0,1], V)/\sim_{\thinhom}, \andd \thingroupoid_2(V) \coloneqq C^1([0,1]^2, V)/\sim_{\thinhom}.
    \end{align}
    The \emph{thin double group} $\thingroup^{\square}(V)$ is an edge symmetric double group where
    \begin{align}
        \thingroup^\square_0(V) = \{*\}, \quad \thingroup^\square_1(V) \coloneqq \thingroupoid_1(V)/\sim_{\trans}, \andd \thingroup_2^{\square}(V) \coloneqq \thingroupoid_2(V)/\sim_{\trans}.
    \end{align}
   
\end{definition}

As in the case of paths, our convention will be to use surfaces based at the origin as representatives of the translation equivalence classes. Two thin homotopy classes $[\bX], [\bY] \in \thingroup^\square_2(V)$ with representatives $\bX, \bY \in C^1([0,1]^2, V)$, where $\bX_{0,0} = 0$ and $\bY_{0,0} = \bX_{1,0}$, are \emph{horizontally composable} if $\partial_r \bX \sim_{\thinhom} \partial_l \bY$. Let $h \in C^1([0,1]^2, V)$ be a thin homotopy between $\partial_r \bX$ and $\partial_l \bY$. We define the \emph{horizontal composition} of $[\bX]$ and $[\bY]$ to be 
\begin{align} \label{eq:horizontal_composition}
    [\bX] \concat_h [\bY] \coloneqq [ \bX \concat_h h \concat_h \bY].
\end{align}
Next, we define the \emph{horizontal inverse} of a surface in $\thingroup^\square_2(V)$ by
\begin{align} \label{eq:horizontal_inverse}
    \bX^{-1}_{s,t} \coloneqq \bX_{1-s, t} - \bX_{1,0} \andd [\bX]^{-1} \coloneqq [\bX^{-1}].
\end{align}
The vertical operations can be defined in an analogous manner; see~\cite[Section 2.3.3]{martins_surface_2011} for details. 
In this article, we will primarily work with a related algebraic structure called a crossed module. 

\begin{definition}
    \label{def:GCM}
        A \emph{pre-crossed module of groups},
        \begin{align}
            \cmG = \left(\cmb: G_1\rightarrow G_0,\  \gt: G_0 \rightarrow \Aut(G_1) \right)
        \end{align}
        is given by two groups $(G_0, \cdot), (G_1, *)$, a group morphism $\cmb: G_1 \rightarrow G_0$ and a left action of $G_0$ on $G_1$ by group automorphisms, denoted elementwise by $g \gt \cdot : G_1 \rightarrow G_1$ for $g \in G$. These data are required to satisfy 
        \begin{align}
            \cmb(g \gt E) = g \cdot \cmb(E) \cdot g^{-1} \quad \text{for} \quad  g \in G_0 \andd E \in G_1.
        \end{align}
        We say $\cmG$ is a \emph{crossed module of groups} if it also satisfies the \emph{Peiffer identity}
        \begin{align}
            \cmb(E_1) \gt (E_2) = E_1 * E_2 *E_1^{-1} \quad \text{for} \quad E_1, E_2 \in G_1.
        \end{align}
        A \emph{(pre-)crossed module of Lie groups} is the same as above, except $G_0$ and $G_1$ are Lie groups, and all morphisms are smooth.
        Given another (pre-)crossed module $\cmH = (\cmb: H_1 \to H_0, \gt)$, a \emph{morphism of (pre-)crossed modules} $f = (f_1, f_0) : \cmG \to \cmH$ consists of group homomorphisms $f_0: G_0 \to H_0$ and $f_1 : G_1 \to H_1$ such that, for all $g \in G_0$ and $E \in G_1$, we have
        \begin{align}\nonumber
            \delta \circ f_1(E) = f_0 \circ \delta(E) \andd f_1(g \gt E) = f_0(g) \gt f_1(E).
        \end{align}
        The categories of crossed modules of groups and Lie groups are respectively denoted $\XGrp \label{pg:XGrp}$ and $\XLGrp \label{pg:XLGrp}$.
\end{definition}

Crossed modules of groups are equivalent to \emph{double groups}, which are defined to be edge-symmetric double groupoids with thin structure and with a single object. We denote the category of double groups by $\DGrp$.

\begin{theorem}{\cite[Section 6.6]{brown_nonabelian_2011}} \label{thm:equiv_xmod_dgrp}
    There is an equivalence of categories between $\DGrp$ and $\XGrp$. 
\end{theorem}

Here we will consider two examples of crossed modules which will be used later.

\begin{definition}
    \label{ex:thin_cm}
    The \emph{thin crossed module} of a vector space $V$ is the crossed module associated to the thin double group $\thingroup^\square(V)$. In particular, let %
    \begin{align} \label{eq:thin_group}
        \thingroup_1(V) \coloneqq \thingroup^\square_1(V) \andd 
        \thingroup_2(V) \coloneqq \left\{ \bX \in \thingroup^\square_2(V) \, : \, \bX_{s,0} = \bX_{0, t} = \bX_{1,t} \sim_{\thinhom} 0 \right\}.
    \end{align}
    The group operation $\concat$ in $\thingroup_2(V)$ is defined as the \emph{reversed}\footnote{The group operation is defined via the equivalence between $\DGrp$ and $\XGrp$ in~\cite[Section 6.6]{brown_nonabelian_2011}, and is determined by the convention (starting point and orientation) used for the boundary $\partial$ given in~\Cref{eq:surface_boundary}. We use the same convention as~\cite{martins_surface_2011,lee_random_2023, lee_surface_2024, chevyrev_multiplicative_2024-1}, which results in the reversed ordering. } horizontal composition \eqref{eq:horizontal_composition},  \label{pg:thin_cm_composition}
    \begin{align} \label{eq:thin_cm_composition}
        \bX \concat \bY \coloneqq \bY \concat_h \bX
    \end{align}
    and the inverse is given by the horizontal inverse \eqref{eq:horizontal_inverse}.
    The \emph{crossed module boundary map} $\delta: \thingroup_2(V) \to \thingroup_1(V)$ is given by the boundary $\partial$ of the surface from~\eqref{eq:surface_boundary}
    \begin{align}
        \delta(\bX) \coloneqq \partial(\bX) = \partial_t(\bX)^{-1}.
    \end{align}
    For a path $\bx \in C^1([0,1],V)$ with $\bx_0 = 0$, we define the degenerate surface $\sigma^\bx : [0,1]^2 \to V$ by
    \begin{align} \label{eq:degenerate_surface}
        \sigma^\bx_{s,t} \coloneqq \bx_{s}.
    \end{align}
    Finally, we define a left action of $\thingroup_1(V)$ on $\thingroup_2(V)$ by 
    \begin{align} \label{eq:thin_group_action}
        \bx \gt \bX \coloneqq  \sigma^\bx \concat_h (\bx_1 + \bX) \concat_h (\bx_1 + \sigma^{\bx^{-1}}),
    \end{align}
    where we must translate the surfaces such that they are composable.
\end{definition}
This indeed forms a crossed module by~\cite[Proposition 6.2.4]{brown_nonabelian_2011}. Furthermore, linear maps $\phi: V \to W$ preserve thin homotopy equivalence of maps, and thus we obtain a functor
\begin{align}
    \cmthingroup: \Vect \to \XGrp.
\end{align}

\begin{definition}{\cite[Section 2.2]{brown_nonabelian_2011}} \label{ex:fund_crossed_module}
    Let $(C, c_0)$ be a based 2-dimensional CW complex.
    The \emph{fundamental crossed module of $(C,c_0)$} is given by
    \begin{align}
        \bpi(C, C_1) \coloneqq \left( \partial: \pi_2(C, C_1), \to \pi_1(C_1), \gt\right),
    \end{align}
    where $\pi_1$ is the fundamental group of the $1$-skeleton $C_1$, and $\pi_2(C,C_1)$ is the relative homotopy group of $C$ with respect to the 1-skeleton $C_1$.
    The later is defined to be the group of homotopy classes of maps $\bX: [0,1]^2 \to C$ such that the boundary of $[0,1]^2$ is sent to $C_{1}$, the corners are sent to $c_{0}$, and such that the left, bottom and right boundary paths are null homotopic in $C_1$, following the convention of~\eqref{eq:thin_group}.
    The multiplication $\concat$ in $\pi_2(C,C_1)$ is given by reversed horizontal concatenation $\bX \concat \bY = \bY \concat_h \bX$, as in~\eqref{eq:thin_cm_composition}. This is equipped with the boundary map $\partial: \pi_2(C, C_1) \to \pi_1(C_1)$ from~\eqref{eq:surface_boundary} and an action of $\pi_1(C_1)$ on $\pi_2(C, C_1)$ given by
    \begin{align}
        \bx \gt \bX \coloneqq \sigma^{\bx} \concat_h \bX \concat_h \sigma^{\bx^{-1}},
    \end{align}
    where $\sigma^\bx$ is the degenerate surface defined in~\eqref{eq:degenerate_surface}.
\end{definition}

\subsection{Crossed Module of Lie Algebras and 2-Connections}

In order to define surface holonomy and the surface signature, we will require the infinitesimal version of crossed modules. 

\begin{definition}
    A \emph{pre-crossed module of Lie algebras}
    \begin{align}\nonumber
        \cmg = (\delta: \fg_1 \to \fg_0, \gt)
    \end{align}
    consists of Lie algebras $(\fg_0, [\cdot, \cdot]_0)$ and $(\fg_1, [\cdot, \cdot]_1)$, a morphism $\delta: \fg_1 \to \fg_0$, and an action $\gt$ of $\fg_0$ on $\fg_1$ by derivations. In other words, the action satisfies
    \begin{align}\nonumber
        x \gt [E, F]_1 = [x \gt E, F]_1 + [E, x \gt F]_1 \andd [x, y]_0 \gt E = x \gt (y \gt E) - y \gt (x \gt E)
    \end{align}
    for all $x,y \in \fg_0$ and $E, F \in \fg_1$. Furthermore, these data are required to satisfy 
    \begin{align}
        \delta(x \gt E) = [x, \delta(E)]_0 \quad \text{for all } \quad x \in \fg_0 \andd E \in \fg_1.
    \end{align}
    We say $\cmg$ is a \emph{crossed module of Lie algebras} if it also satisfies the \emph{Peiffer identity}
    \begin{align}
        \delta(E) \gt E' = [E, E']_1 \quad \text{for all} \quad E,E' \in \fg_1.
    \end{align}
    Suppose $\cmh = (\delta : \fh_1 \to \fh_0, \gt)$ is another (pre-)crossed module of Lie algebras. A \emph{morphism of (pre-)crossed modules} $f = (f_0, f_1) : \cmg \to \cmh$ consists of two Lie algebra morphisms $f_0 : \fg_0 \to \fh_0$ and $f_1 : \fg_1 \to \fh_1$ such that for all $x \in \fg_0$ and $E \in \fg_1$, 
    \begin{align} \label{eq:cmla_morphism}
        \delta \circ f_1(E) = f_0 \circ \delta(E)  \andd f_1 ( x \gt E) = f_0(x) \gt f_1(E).
    \end{align}
    The category of crossed modules of Lie algebras is denoted $\XLie$. \label{pg:XLie}
\end{definition}

The notion of a 2-connection is defined in terms of a crossed module of Lie algebras.

\begin{definition}{\cite[Definition 2.16, Proposition 2.17]{martins_two-dimensional_2010}} \label{def:Con}
    Let $\cmg = (\delta: \fg_1 \to \fg_0)$ be a crossed module of Lie algebras. A \emph{2-connection} valued in $\cmg$ over $V$ is a pair $(\con, \Con)$, such that $\con \in \Omega^1(V, \fg_0)$ is a $\fg_0$-valued $1$-form and $\Con \in \Omega^2(V, \fg_1)$ is a $\fg_1$-valued $2$-form. The 1-curvature $\curv^{\con, \Con} \in \Omega^2(V, \fg_0)$ and 2-curvature $\Curv^{\con, \Con} \in \Omega^3(V, \fg_1)$ are respectively defined as follows \label{pg:Curv}
    \begin{align} 
        \curv^{\con, \Con} = \curv^{\con} - \delta \Con \andd \Curv^{\con, \Con} \coloneqq d\Con + \con \wedge^\gt \Con,
    \end{align}
     where $\curv^{\con}$ is the curvature of $\con$ from~\eqref{eq:1_curvature} and $\con \wedge^\gt \Con \in \Omega^3(V, \fg_1)$ is defined by
    \begin{align} \label{eq:wedge_action}
        (\con \wedge^\gt \Con)(X,Y,Z) \coloneqq \con(X) \gt \Con(Y,Z) - \con(Y) \gt \Con(X,Z) + \con(Z) \gt \Con(X,Y).
    \end{align}
    The 2-connection $(\con, \Con)$ is called \emph{fake-flat} (or semi-flat) if its 1-curvature vanishes, $\curv^{\con, \Con} = 0$. We will always work with fake-flat 2-connections.     
\end{definition}

We say that a 2-connection $(\con, \Con)$ is \emph{translation-invariant} if it has the form
\begin{align}
    \con = \sum_{i=1}^d \con^i dz_i \andd \Con = \sum_{i < j} \Con^{i,j} dz_i \wedge dz_j,
\end{align}
where $\con^i \in \fg_0$, $\Con^{i,j} \in \fg_1$, and $z_i$ are linear coordinates on $V$. In this case, we can represent the 2-connection as a pair of linear maps 
\begin{align}
    \con \in L(V, \fg_0) \andd \Con \in L(\Lambda^2 V, \fg_1).
\end{align}
The curvature forms then simplify to
\begin{align}
    \curv^{\con, \Con} = \frac{1}{2}[\con, \con] - \delta\Con \andd \Curv^{\con, \Con} = \con \wedge^{\gt} \Con.
\end{align}

\subsection{Surface Holonomy}

Here, we discuss surface holonomy, the 2-dimensional generalization of parallel transport.
For a surface $\bX \in C^1([0,1]^2, V)$, the \emph{$(s,t)$-tail path of $\bX$} is
\begin{align} \label{eq:tail_path}
    \bx^{s,t}_u \coloneqq \left\{ \begin{array}{cl}
        \bX_{0,2ut} & : u \in [0,1/2] \\
        \bX_{(2u-1)s, t } & : u \in (1/2, 1]
    \end{array} \right.
\end{align}
\begin{figure}[!h]
    \includegraphics[width=\linewidth]{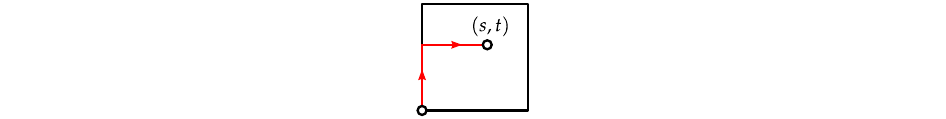}
\end{figure}

\begin{definition}{\cite[Equation 2.13]{martins_surface_2011}} \label{def:sh}
    Let $\cmG = (\delta: G_1 \to G_0, \gt)$ be a crossed module of Lie groups with associated crossed module of Lie algebras $\cmg = (\delta: \fg_1 \to \fg_0, \gt)$. Let $(\con, \Con)$ be a 2-connection valued in $\cmg$ and let $\bX \in C^1([0,1]^2, V)$. Consider the following differential equation for $\Hol^{\con, \Con}(\bX): [0,1]^2 \to G_1$
    \begin{align}\label{eq:general_sh}
        \frac{\partial \Hol^{\con, \Con}(\bX)_{s,t}}{\partial t} = dL_{\Hol^{\con, \Con}(\bX)_{s,t}} \int_0^s \hol^{\con}(\bx^{s',t}) \gt \Con\left( \frac{\partial \bX_{s',t}}{\partial s}, \frac{\partial \bX_{s',t}}{\partial t}\right) ds', \quad \Hol^{\con, \Con}(\bX)_{s,0} = e_{G_1},
    \end{align}
    where the tail path $\bx^{s,t}: [0,1] \to V$ is defined in~\eqref{eq:tail_path} and $\hol^{\con}(\bx^{s',t})$ is the parallel transport of $\con$ along $\bx^{s',t}$, as given in \Cref{def:ph}.
    We define the \emph{surface holonomy of $(\con, \Con)$ along $\bX$} to be
    \begin{align}
        \Hol^{\con, \Con}(\bX) \coloneqq \Hol^{\con, \Con}(\bX)_{1,1}.
    \end{align}
\end{definition}

In~\cite[Theorem 2.32]{martins_surface_2011}, it is shown that the surface holonomy gives rise to a morphism of double groupoids. When the 2-connection is translation-invariant, we apply ~\Cref{thm:equiv_xmod_dgrp} to state a version of the theorem in terms of the crossed module $\cmthingroup(V)$.

\begin{theorem}{\cite[Theorem 2.32]{martins_surface_2011}} \label{thm:sh_cm_morphism}
    Let $(\con, \Con)$ be a translation-invariant 2-connection valued in $\cmg$. Then the maps
    \begin{align}
        F^{\con, \Con} = (\Hol^{\con, \Con}, \hol^{\con}) : \cmthingroup(V) \to \cmG
    \end{align}
    define a morphism of crossed modules. %
\end{theorem}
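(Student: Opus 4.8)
The plan is to deduce the statement from the corresponding result for double groupoids, which is what~\cite[Theorem 2.32]{martins_surface_2011} actually proves: for a fake-flat $2$-connection $(\con,\Con)$ valued in $\cmg$ over a manifold $M$, the pair of holonomy maps assembles into a morphism of double groupoids from $\thingroupoid(M)$ to the double groupoid (with a single object) associated to $\cmG$ under~\Cref{thm:equiv_xmod_dgrp}. Specializing to $M = V$, there are then only two things left to check that are specific to the present setup: first, that when $(\con,\Con)$ is translation-invariant the resulting morphism descends along the quotient by translations, so that it becomes a morphism of double groups $\thingroup^\square(V) \to \cmG^\square$; and second, that the conventions used in~\Cref{ex:thin_cm} to produce the crossed module $\cmthingroup(V)$ from $\thingroup^\square(V)$ (orientation of the boundary, choice of basepoint corner, reversed horizontal composition) match those of~\cite{martins_surface_2011} when passing from $\cmG^\square$ back to $\cmG$ via~\Cref{thm:equiv_xmod_dgrp}.

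For the descent, recall from~\Cref{def:ph} and~\Cref{def:sh} that $\hol^{\con}(\bx)$ and $\Hol^{\con,\Con}(\bX)$ solve differential equations whose right-hand sides depend only on the pullbacks $\bx^{*}\con$ and $\bX^{*}(\con,\Con)$ of the connection forms. Since $(\con,\Con)$ is translation-invariant, the forms $\con$ and $\Con$ are fixed by every translation $v \mapsto v + w$ of $V$, so $\hol^{\con}(w + \bx) = \hol^{\con}(\bx)$ and $\Hol^{\con,\Con}(w + \bX) = \Hol^{\con,\Con}(\bX)$. Hence both maps are constant on translation orbits; together with the thin-homotopy invariance already established in~\cite{martins_surface_2011}, this makes $\hol^{\con}$ and $\Hol^{\con,\Con}$ well-defined on $\thingroup_1(V)$ and $\thingroup_2(V)$.

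It then remains to unwind the crossed-module morphism axioms from the double-group morphism. That $\hol^{\con}$ is a group homomorphism is the multiplicativity of parallel transport under concatenation. That $\Hol^{\con,\Con}$ is a group homomorphism for the operation $\bX \concat \bY = \bY \concat_h \bX$ of~\eqref{eq:thin_cm_composition} is exactly compatibility of surface holonomy with horizontal composition, applied to surfaces whose left, bottom and right boundaries are thinly trivial (so that horizontal composition in $\cmG^\square$ reduces to the product in $G_1$); the order reversal in~\eqref{eq:thin_cm_composition} is matched by the orientation convention $\delta = \partial_t(\cdot)^{-1}$ of~\Cref{eq:surface_boundary}. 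The boundary relation $\delta \circ \Hol^{\con,\Con}(\bX) = \hol^{\con}(\delta\bX)$ is the part of the double-groupoid structure relating a square to its horizontal/vertical edges, and fake-flatness of $(\con,\Con)$ is what forces this to land in $G_1$. Finally, equivariance $\Hol^{\con,\Con}(\bx \gt \bX) = \hol^{\con}(\bx) \gt \Hol^{\con,\Con}(\bX)$ follows by applying the double-group morphism to the whiskering formula~\eqref{eq:thin_group_action}, using that a degenerate surface $\sigma^{\bx}$ from~\eqref{eq:degenerate_surface} integrates $\Con$ over a rank $\leq 1$ source and so has trivial surface holonomy, while horizontal composition with such degenerate squares implements the $G_0$-action on $G_1$ through the thin structure.

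I expect the main obstacle to be bookkeeping rather than anything conceptual: carefully aligning the basepoint-corner, orientation, and left-versus-right conventions of~\Cref{ex:thin_cm} with those in~\cite{martins_surface_2011}, so that the cited double-groupoid morphism is literally a crossed-module morphism in the normalization used here. The genuinely technical input — well-posedness of the defining equation~\eqref{eq:general_sh} and invariance of its solution under thin homotopies with thinly homotopic boundaries — is already supplied by the cited work; the translation-invariance argument above is the only new ingredient needed for this $V$-specific statement.
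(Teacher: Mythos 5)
Your proposal is correct and takes essentially the same route as the paper, which in fact gives no proof at all beyond citing the double-groupoid theorem of Martins--Picken and invoking the equivalence $\DGrp \simeq \XGrp$ (\Cref{thm:equiv_xmod_dgrp}); the preceding paragraph in the paper explicitly says exactly this. Your write-up fleshes out the two points the paper leaves implicit — the descent along translations (a one-line consequence of translation-invariance of the forms $\con$, $\Con$) and the convention-matching between the $\thingroup^\square(V)$ and $\cmG^\square$ sides (which the paper handles by a footnote at~\Cref{ex:thin_cm} noting it adopts the same reversed-ordering convention as~\cite{martins_surface_2011}). The only minor imprecision is the remark that fake-flatness "forces this to land in $G_1$"; more accurately, $\Hol^{\con,\Con}(\bX)$ lands in $G_1$ by construction, and fake-flatness is what guarantees the compatibility $\delta \circ \Hol^{\con,\Con} = \hol^{\con}\circ\partial$ needed for the double-groupoid (hence crossed-module) morphism axioms.
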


This theorem has two main implications.
\begin{enumerate}
    \item Surface holonomy is invariant with respect to thin homotopies of surfaces.
    \item Surface holonomy respects concatenation of surfaces.
\end{enumerate}

\subsection{Free Crossed Module of Lie Algebras} \label{ssec:free_xlie}

For the next two sections, we discuss the \emph{surface signature}, which was originally introduced by Kapranov~\cite{kapranov_membranes_2015}, and recently studied from the analytic perspective of irregular surfaces in~\cite{lee_surface_2024,chevyrev_multiplicative_2024-1}. Generalizing the path signature, the surface signature is defined to be the surface holonomy of a universal $2$-connection valued in a free crossed module. Here, we begin with an overview of the essential properties of free crossed modules of Lie algebras, and refer the reader to~\Cref{apx:xlie} for further details and proofs. \medskip

Let $\VL$ \label{pg:VL} denote the comma category $(\id \downarrow \For)$ associated to the functors $\id: \Vect \to \Vect$ and $\For: \Lie \to \Vect$. An object of $\VL$ is given by the data of a vector space $V$, a Lie algebra $\fg$, and a linear map $s : V \to \fg$. A morphism $f = (f_1, f_0): (s : V \to \fg) \to (t: W \to \fh)$ consists of a linear map $f_1: V \to W$ and a Lie algebra morphism $f_0: \fg \to \fh$ such that $t \circ f_1 = f_0 \circ s$ as linear maps. There exists a natural forgetful functor
\begin{align}
\For : \XLie \to \VL.
\end{align}
The free crossed module functor 
\begin{align}
    \Fr: \VL \to \XLie
\end{align}
is defined to be the left adjoint. In the following, we describe this functor in detail. Given an object $s: V \to \fg$ of $\VL$, we first build the free $\fg$-representation on $V$, which is $U(\fg) \otimes V$, where $U(\fg)$ is the universal enveloping algebra of $\fg$, where the $\fg$ action is given by
\begin{align}
    x \gt (x_1 \otimes\ldots \otimes x_k) \otimes v = (x \otimes x_1 \otimes \ldots \otimes x_k) \otimes v,
\end{align}
and where the linear map $s$ is lifted to a morphism of $\fg$-representations as follows
\begin{align}
    \delta : U(\fg) \otimes V \to \fg, \quad (x_1 \otimes \ldots \otimes x_k) \otimes v \mapsto [x_1, \ldots [x_k, s(v)]\ldots].
\end{align}
We define the \emph{Peiffer subspace} of $U(\fg) \otimes V$ to be
\begin{align}
    \Pf(U(\fg) \otimes V) \coloneqq \SPAN\left\{\delta(X) \gt Y + \delta(Y) \gt X \, : \, X, Y \in U(\fg) \otimes V\right\}.
\end{align}
By taking the quotient with respect to this subspace, we obtain the \emph{free crossed module of Lie algebras generated by $s$}, 
\begin{align}
    \Fr(s) \coloneqq \left( \delta: (U(\fg) \otimes V)/\Pf(U(\fg) \otimes V) \to \fg, \gt \right),
\end{align}
where the Lie bracket of $X, Y \in (U(\fg) \otimes V)/\Pf(U(\fg) \otimes V)$ is defined by
\begin{align}
    [X,Y] = \delta(X) \gt Y = -\delta(Y) \gt X.
\end{align}
This satisfies the following universal property, which is proved in~\Cref{apxthm:free_xlie_global}.

\begin{theorem} \label{thm:free_xlie}
    Let $(s: V \to \fh) \in \VL$, $\cmg = (\delta: \fg_1 \to \fg_0, \gt) \in \XLie$, and 
    \begin{align}
    f = (f_1, f_0) : (s: V \to \fh) \to \For(\delta: \fg_1 \to \fg_0, \gt).
    \end{align}
    Then there is a unique morphism of crossed modules 
    \begin{align}
    F = (F_1, F_0): \Fr(s) \to \cmg,
    \end{align}
    such that $F_1 \circ \eta_s = f_1$ and $F_0 = f_0$.
\end{theorem}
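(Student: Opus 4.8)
The plan is to construct the crossed module morphism $F = (F_1, F_0)$ explicitly, with $F_0 := f_0$ forced, and $F_1$ obtained by first lifting to the unquotiented free module $U(\fh) \otimes V$ and then checking that the lift annihilates the Peiffer subspace. Uniqueness is immediate: every element of the underlying vector space $(U(\fh) \otimes V)/\Pf(U(\fh) \otimes V)$ of $\Fr(s)$ is a linear combination of classes of the form $[(x_1 \otimes \cdots \otimes x_k) \otimes v] = x_1 \gt (x_2 \gt \cdots (x_k \gt \eta_s(v)))$, so $\Fr(s)$ is generated over its base Lie algebra $\fh$ by the image of $\eta_s$; since any crossed module morphism must satisfy $F_0 = f_0$, $F_1 \circ \eta_s = f_1$, and $F_1(x \gt E) = F_0(x) \gt F_1(E)$, the value $F_1([(x_1 \otimes \cdots \otimes x_k) \otimes v]) = f_0(x_1) \gt (\cdots (f_0(x_k) \gt f_1(v)))$ is determined.

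For existence, I would use that the derivation action $\gt$ of $\fg_0$ on $\fg_1$ makes $\fg_1$ a representation of $\fg_0$, hence (via the Lie algebra morphism $f_0$) a representation of $\fh$, and therefore a module over the universal enveloping algebra $U(\fh)$. Define the linear map $\tilde{F}_1 : U(\fh) \otimes V \to \fg_1$ by $u \otimes v \mapsto u \cdot f_1(v)$ using this $U(\fh)$-module structure. It is then straightforward that $\tilde{F}_1$ is $\fh$-equivariant (where $\fh$ acts on $\fg_1$ through $f_0$), and evaluating on generators while repeatedly applying the crossed module relation $\delta(x \gt E) = [x, \delta(E)]_0$ in $\cmg$, the identity $\delta \circ f_1 = f_0 \circ s$ (built into $f$ being a morphism in $\VL$), and the fact that $f_0$ preserves brackets, one obtains $\delta \circ \tilde{F}_1 = f_0 \circ \delta$.

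The crux of the argument is that $\tilde{F}_1$ vanishes on $\Pf(U(\fh) \otimes V)$. For $X, Y \in U(\fh) \otimes V$, combining equivariance with the previous identity yields
\[
\tilde{F}_1(\delta(X) \gt Y) = f_0(\delta(X)) \gt \tilde{F}_1(Y) = \delta(\tilde{F}_1(X)) \gt \tilde{F}_1(Y) = [\tilde{F}_1(X), \tilde{F}_1(Y)]_1,
\]
where the last equality is the Peiffer identity in the \emph{crossed module} $\cmg$ --- this is the one place where the hypothesis that $\cmg$ is a crossed module, not merely a pre-crossed module, is used. By antisymmetry of the bracket $\tilde{F}_1(\delta(X) \gt Y + \delta(Y) \gt X) = 0$, so $\tilde{F}_1$ descends to $F_1 : (U(\fh) \otimes V)/\Pf(U(\fh) \otimes V) \to \fg_1$. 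Setting $F_0 := f_0$, the two displayed relations descend to give $\delta \circ F_1 = F_0 \circ \delta$ and $F_1(x \gt E) = F_0(x) \gt F_1(E)$, and the same computation shows $F_1([X,Y]) = F_1(\delta(X) \gt Y) = [F_1(X), F_1(Y)]_1$, so $F_1$ is also a Lie algebra morphism; hence $(F_1, F_0)$ is a morphism of crossed modules, and $F_1(\eta_s(v)) = \tilde{F}_1(1 \otimes v) = f_1(v)$ by construction. The only genuinely delicate point is the Peiffer-vanishing step above, which I would isolate as a short lemma, since it simultaneously yields the well-definedness of $F_1$ and the fact that $F_1$ respects brackets; everything else is routine bookkeeping with the $U(\fh)$-module structure and the morphism axioms in $\VL$ and $\XLie$.
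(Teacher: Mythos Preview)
Your proof is correct, but it takes a different route from the paper. The paper first establishes the \emph{local} universal property of $\Fr(s)$ as a left adjoint to the forgetful functor $\XLie(\fh) \to \slice{\Vect}{\fh}$ (obtained by composing the extension-of-scalars adjunction $\sE$ with the Peiffer-quotient adjunction $\sQ$), and then upgrades this to the \emph{global} statement via a pullback construction: given $f_0 : \fh \to \fg_0$, one forms the pullback crossed module $f_0^*\cmg \in \XLie(\fh)$, lifts $f$ to a map $u(f) : (s : V \to \fh) \to \For(f_0^*\cmg)$ in $\slice{\Vect}{\fh}$, applies the local universal property to get $g : \Fr(s) \to f_0^*\cmg$, and finally postcomposes with the canonical morphism $\tf : f_0^*\cmg \to \cmg$. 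Uniqueness is then argued by showing any competing $G$ must factor through the same pullback. Your direct approach --- building $\tilde{F}_1$ from the $U(\fh)$-module structure on $\fg_1$ and checking Peiffer-vanishing by hand --- is more elementary and self-contained, essentially carrying out the local and global steps simultaneously; the paper's approach is more modular, isolating the pullback as a reusable lemma and cleanly separating the change-of-base from the adjunction itself. Either way the crux is the same Peiffer computation you highlighted.
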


Now consider the functor
\begin{align}
    \wedge: \Vect \to \VL,
\end{align}
which sends a vector space $V \in \Vect$ to
\begin{align}
    s_V: \Lambda^2 V \to \FL(V), \quad u \wedge v \mapsto [u, v].
\end{align}
Composing with the free crossed module functor gives 
\begin{align} \label{eq:free_cmla_vector_space_functor}
    \cmk = \Fr \circ \wedge : \Vect \to \XLie.
\end{align}
Given a vector space $V$, the \emph{free crossed module of Lie algebras generated by $V$} is the result of applying this functor, and is given by
\begin{align} \label{eq:free_cmla_vector_space}
    \cmk(V) \coloneqq \left( \delta: \fk_1(V) \to \fk_0(V), \gt\right),
\end{align}
where
\begin{align}
    \fk_1(V) = (T(V) \otimes \Lambda^2 V)/\Pf(T(V) \otimes \Lambda^2 V).
\end{align}

\subsection{The Surface Signature}

Following Kapranov~\cite{kapranov_membranes_2015}, we consider the \emph{universal translation-invariant 2-connection}. This is a $2$-connection $(\conk, \Conk)$ over $V$ valued in $\cmk(V)$. The translation invariant $\fk_0(V)$-valued 1-form $\conk \in \Omega^1(V, \fk_0(V))$ is the one defined in~\eqref{eq:univ_connection}, and hence corresponds to the identity endomorphism of $V$, $\conk = \mathrm{id}_V$. Similarly, the translation invariant $\fk_1(V)$-valued 2-form $\Conk$ can be understood as the identity endomorphism of $\Lambda^2V$ 
\begin{align}
    \Conk = \mathrm{id}_{\Lambda^2V} \in \Lambda^2V^* \otimes \Lambda^2V \subset \Omega^2(V, \fk_{1}(V)). 
\end{align} 
Alternatively, $\conk$ and $\Conk$ can be viewed as the linear maps given by including generators 
\begin{align}
    \conk \in L(V, \fk_0(V)), \qquad \Conk \in L(\Lambda^2V, \fk_1(V)).
\end{align}
In coordinates, they are given by 
\begin{align} \label{eq:univ_2con}
    \conk = \sum_{i=1}^n  dz_i e_i \in \Omega^1(V, \fk_0) \andd \Conk = \sum_{i < j}  dz_i \wedge dz_j \, (e_i \wedge e_j) \in \Omega^2(V, \fk_1).
\end{align}
We follow a similar procedure as for path signatures and consider truncations of $\fk_1$ via the $\fk_0$-lower central series of $\fk_1$, defined by 
\begin{align}
    \LCS_1(\fk_0, \fk_1) = \fk_1 \andd \LCS_r(\fk_0,\fk_1) = \fk_0 \gt \LCS_{r-1}(\fk_0, \fk_1).
\end{align}
This allows us to define the $n$-truncated free crossed module as follows
\begin{align}
    \cmk^{(n)} \coloneqq (\delta: \fk_1^{(n)} \to \fk_0^{(n)}, \gt) \ \ \text{ where }  \ \ \fk_1^{(n)} \coloneqq \fk_1 / \LCS_{n+1}(\fk_0, \fk_1).
\end{align}
We can integrate $\fk_1^{(n)}$ as the Lie group of formal exponentials $K_1^{(n)}$, and consider the projective limit $\com{K}_1$, where
\begin{align}
    K_1^{(n)} \coloneqq \{ \exp(x) \, : \, x \in \fk_1^{(n)}\} \andd \com{K}_1 \coloneqq \lim_{\longleftarrow} K_1^{(n)}.
\end{align}
This defines a crossed module of groups \label{pg:com_cmK}
\begin{align} 
    \com{\cmK}(V) = (\delta: \com{K}_1(V) \to \com{K}_0(V), \gt),
\end{align}
with a corresponding crossed module of Lie algebras \label{pg:com_cmk}
\begin{align} 
    \com{\cmk}(V) = (\delta: \com{\fk}_1(V) \to \com{\fk}_0(V), \gt), \quad \com{\fk}_1(V) \coloneqq \lim_{\longleftarrow} \fk_1^{(n)}(V).
\end{align}

\begin{remark}
    In~\cite{lee_surface_2024}, it is shown that by considering the free crossed module of associative algebras, one can construct the completion $\com{K}_1(V)$ in a way that is similar to the construction of $\com{K}_0(V)$ in the completed tensor algebra. Furthermore, one can also construct analytic completions in this setting by using Banach or topological algebras. As we do not require these details here, we will work with formal completions. 
\end{remark}

\begin{definition} \label{def:ssig}
    Let $\bX \in C^1([0,1]^2, V)$. For $n \in \N$, the \emph{$n$-truncated surface signature of $\bX$}, denoted $\Sig^{(n)}(\bX)$, is defined to be the surface holonomy, as defined in~\Cref{def:sh}, of the 2-connection obtained by projecting the universal 2-connection $(\conk, \Conk)$ from~\eqref{eq:univ_2con} onto $\cmk^{(n)}(V)$.
    The \emph{surface signature of $\bX$} is defined to be the projective limit
    \begin{align}
        \Sig(\bX) \coloneqq \lim_{\longleftarrow} \Sig^{(n)}(\bX) \in \com{K}_1(V). 
    \end{align}
\end{definition}

As in the case of the path signature, there is a universal property ~\cite[Theorem 4.29]{lee_surface_2024} which implies that the surface signature fits into a natural transformation
\begin{align} \label{eq:ssig_nt}
    \bS = (\Sig, \sig) : \cmthingroup \Rightarrow \com{\cmK}.
\end{align}
We note that the $n$-truncated group $K_1^{(n)}$ is equipped with a natural topology induced by the Lie algebra $\fk_1^{(n)}$ via the exponential. The continuity of the surface extension theorem from~\cite[Theorem 5.40]{lee_surface_2024} implies that the surface signature is continuous. 

\begin{proposition}{\cite[Theorem 5.40]{lee_surface_2024}} \label{prop:surface_signature_continuous}
    Let $C^1_0([0,1]^2, V)$ be the space of based smooth surfaces $\bX$ such that $\bX_{0,0} = 0$. The $n$-truncated surface signature
    \begin{align}
        \Sig^{(n)} : C^1_0([0,1]^2, V) \to K_1^{(n)}(V),
    \end{align}
    where $C^1_0([0,1]^2,V)$ is equipped with the Lipschitz norm, is continuous.
\end{proposition}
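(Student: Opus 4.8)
The plan is to reduce continuity of $\Sig^{(n)}$ to continuity of finitely many iterated integrals of $\bX$, and then to control these by powers of the Lipschitz norm. Since $\fk_1^{(n)}(V)$ is finite dimensional and nilpotent of step $n$, the exponential $\exp : \fk_1^{(n)}(V) \to K_1^{(n)}(V)$ is a diffeomorphism, so it is enough to prove that $\bX \mapsto \log \Sig^{(n)}(\bX)$ is continuous into $\fk_1^{(n)}(V)$, equivalently that each coordinate of $\Sig^{(n)}(\bX)$ relative to a basis of $\fk_1^{(n)}(V)$ depends continuously on $\bX$ in the Lipschitz topology.

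First I would make the surface holonomy equation \eqref{eq:general_sh}, for the $n$-truncated universal $2$-connection of~\Cref{def:ssig}, explicit. Because the structure Lie algebra $\fk_1^{(n)}(V)$ is nilpotent, Picard iteration applied to \eqref{eq:general_sh} terminates after finitely many steps: $\Hol^{\conk, \Conk}(\bX)_{1,1}$ becomes a finite sum of iterated integrals over simplices in the variables $(s',t)$ whose integrands are built from (i) the $n$-truncated path signature $\sig^{(n)}(\bx^{s',t})$ of the tail paths, which is itself a finite sum of iterated integrals of $d\bX$ along $\bx^{s',t}$ (cf.~\Cref{def:psig}), and (ii) the evaluation $\Conk\!\left(\partial_s\bX_{s',t}, \partial_t\bX_{s',t}\right)$ of the constant $2$-form $\Conk$. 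In particular, every coordinate of $\Sig^{(n)}(\bX)$ is a finite sum of integrals over products of simplices of multilinear expressions in the first partial derivatives of $\bX$, against bounded kernels.

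It then remains to check that these integrals depend continuously on $\bX$. The tail path $\bx^{s',t}$ of \eqref{eq:tail_path} depends continuously on $(\bX, s', t)$ in the Lipschitz topology, with $\|\bx^{s',t}\|_{\Lip}$ bounded by a constant multiple of $\|\bX\|_{\Lip}$ uniformly in $(s',t)$, since it is obtained from $\bX$ by restriction and affine reparametrization. Combined with the fact, underlying the natural transformation \eqref{eq:psig_nt}, that the $n$-truncated path signature is locally Lipschitz on $C^1$ paths in the Lipschitz norm, the full integrand of \eqref{eq:general_sh} is, for each fixed $t$, a continuous function of $(\bX, s')$, bounded locally uniformly in $\bX$ by a polynomial in $\|\bX\|_{\Lip}$. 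Integrating in $s'$, and then solving the (explicitly iterated) ODE in $t$, are continuous operations on spaces of bounded continuous functions; performing them finitely often yields continuity, in fact local Lipschitz continuity, of $\bX \mapsto \Sig^{(n)}(\bX)$ from $C^1_0([0,1]^2, V)$ with the Lipschitz norm to $K_1^{(n)}(V)$.

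The main obstacle is the bookkeeping: writing the Picard expansion of \eqref{eq:general_sh} explicitly enough to extract the uniform bounds, and verifying that the passage $\bX \mapsto \bx^{s',t}$ does not degrade the Lipschitz constants, in particular near the gluing parameter $u = 1/2$. A cleaner route, which circumvents much of this, is to pass to the free crossed module of associative algebras of~\cite{lee_surface_2024}: there $\Sig^{(n)}(\bX)$ is literally a finite sum of iterated surface integrals of $\conk$ and $\Conk$ over $[0,1]^2$, and continuity reduces to the classical statement that iterated integrals depend continuously on the driving path in the bounded variation topology. The full argument, with the required estimates, is carried out in~\cite[Theorem 5.40]{lee_surface_2024}.
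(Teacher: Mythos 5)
The paper states this proposition as a citation to \cite[Theorem 5.40]{lee_surface_2024} and does not prove it internally, so there is no paper argument to compare against. Your sketch is correct in outline. Nilpotency of $\fk_1^{(n)}(V)$ makes the Picard iteration of \eqref{eq:general_sh} terminate after finitely many steps, expressing each coordinate of $\Sig^{(n)}(\bX)$ as a finite sum of integrals of bounded multilinear expressions in the first derivatives of $\bX$ together with the $n$-truncated path signatures of tail paths; since Lipschitz convergence on the compact square is precisely $C^1$ convergence, and since the $n$-truncated path signature is locally Lipschitz on Lipschitz paths, these integrals converge. The potential kink of the tail path at $u=1/2$ that you flag is harmless: the tail path remains Lipschitz with $\|\bx^{s',t}\|_{\Lip}$ controlled by $\|\bX\|_{\Lip}$ uniformly in $(s',t)$, and the local Lipschitz estimate for the truncated path signature is exactly what gives the uniformity in $(s',t)$ needed to pass to the limit inside the iterated integrals and then in the ODE in $t$. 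You also correctly point to the free crossed module of associative algebras in \cite{lee_surface_2024}, where $\Sig^{(n)}$ is given directly by finitely many iterated surface integrals; this is the route the cited theorem actually takes, and it avoids the Picard bookkeeping entirely.
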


\section{Abelianization} \label{sec:abelianization}

In this section, we establish our first characterization of the kernel of the surface signature. This is done in two steps. First, we consider a gauge transformation which converts the universal 2-connection of~\eqref{eq:univ_2con} into an \emph{abelianized} 2-connection. This is a connection that is valued in the center of $\com{\fk}_1(V)$. Using the representation theory of $\GL(V)$, we produce an explicit expression for the $2$-curvature of this abelianized $2$-connection. Second, we relate the surface signature to an integral of this abelianized 2-curvature. Using these two results, we show in~\Cref{thm:ssig_equiv_to_abelianization} that the surface signature of a closed surface is encoded by integration over all polynomial 2-forms. 

\subsection{Gauge Transformations and Abelianization} \label{ssec:abelianization}
We begin with the general definition of gauge transformations for 2-connections.

\begin{definition}
    Let $\cmG \in \XLGrp$ %
    and $\cmg \in \XLie$ be its associated crossed module of Lie algebras. A \emph{gauge transformation} is a pair $(\gtrans, \Gtrans)$, where $\gtrans \in C^\infty(V, G_0)$ and $\Gtrans \in \Omega^1(V, \fg_1)$. The two components act on a 2-connection $(\con, \Con)$ valued in $\cmg$ in the following way
    \begin{align}
        \gtrans \cdot (\con, \Con) &= (\gtrans d(\gtrans^{-1}) + \gtrans \con \gtrans^{-1}, \gtrans \gt \Con)\\
        \Gtrans \cdot (\con, \Con) &= \Big(\con + \delta(\Gtrans), \Con - d\Gtrans - \con \wedge^{\gt} \Gtrans - \frac{1}{2} [\Gtrans, \Gtrans]\Big),
    \end{align}
    where $\wedge^{\gt}$ is defined in~\eqref{eq:wedge_action}. 
    As a convention, $(\gtrans, \Gtrans)$ acts first by $\gtrans$, and then by $\Gtrans$:
    \begin{align}
        (\gtrans, \Gtrans) \cdot (\con, \Con) = \Gtrans \cdot \big(\gtrans \cdot (\con, \Con)\big).
    \end{align}
\end{definition}

By direct computation, the two components of the curvature are given by
\begin{align}
    \curv^{\gtrans \cdot(\con, \Con)} = \gtrans \curv^{\con, \Con}\gtrans^{-1} \andd \Curv^{\gtrans \cdot(\con, \Con)} = \theta \gt \Curv^{\con, \Con}
\end{align}
and
\begin{align}
    \curv^{\Gtrans \cdot(\con, \Con)} = \curv^{\con, \Con} \andd \Curv^{\Gtrans \cdot(\con,\Con)} = \Curv^{\con, \Con} - \curv^{\con, \Con} \gt \Gtrans.
\end{align}
In particular, for a fake-flat 2-connection, where $\curv^{\con, \Con} = 0$, we obtain
\begin{align} \label{eq:gauge_trans_fake_flat_curvature}
    \curv^{(\gtrans, \Gtrans) \cdot (\con, \Con)} = 0 \andd \Curv^{(\gtrans, \Gtrans)\cdot (\con, \Con)} = \theta \gt \Curv^{\con, \Con}.
\end{align}

An interesting feature of higher gauge theory is that given a fake flat 2-connection $(\con, \Con)$ over a contractible space, one can always find a gauge transformation such that $(\gtrans, \Gtrans) \cdot (\con, \Con) = (0, \Con^{\ab})$, with $\Con^{\ab}$ valued in the kernel of $\delta: \fg_1 \to \fg_0$~\cite[Theorem 4.3]{samann_towards_2020} (see also ~\cite{demessie_higher_2015} and ~\cite{voronov_non-abelian_2012} for similar results). We consider this construction for the universal 2-connection. \medskip
 
Let $(\conk, \Conk)$ be the universal 2-connection from~\eqref{eq:univ_2con}. First, we define $\gtrans \in C^\infty(V, \com{K}_0)$ by exponentiating a Lie algebra valued function $\eta \in C^\infty(V, \fk_0)$ 
\begin{align} \label{eq:kap_gtrans}
    \gtrans(x) = \exp_\otimes(\eta(x)) \quad \text{where} \quad \eta(x) = \sum_{i=1}^d z_i e_i.
\end{align}
Note that $\conk = d\eta$. Then, we have
\begin{align}
    \gtrans \cdot \conk &= \Ad_{\exp(\eta)}(d\eta) + \exp(\eta) d\exp(-\eta) \\
    & = \left(\exp(\ad_\eta) + \frac{1- \exp(\ad_\eta)}{\ad_\eta}\right) (d\eta) \\
    & = \left(\sum_{k=0}^\infty \frac{\ad_\eta^k}{k!} - \sum_{k=1}^\infty \frac{\ad_\eta^{k-1}}{k!}\right)(d\eta) \\
    & = \sum_{k=0}^\infty \frac{k+1}{(k+2)!} \ad_\eta^k [\eta, d\eta] \\
    & = \delta \left(\sum_{k=0}^\infty \frac{k+1}{(k+2)!} \ad_\eta^k U\right),
\end{align}
where
\begin{align}
    U = \sum_{i < j} e_i \wedge e_j (z_i dz_j - z_j dz_i) \in \Omega^1(V, \fk_1).
\end{align}
Note that $U$ has the property that  
\begin{align}
    \delta(U) = [\eta, d\eta] \andd dU = 2 \Conk.
\end{align}
Now, if we let
\begin{align} \label{eq:kap_Gtrans}
    \Gtrans = -\sum_{k=0}^\infty \frac{k+1}{(k+2)!} \ad_\eta^k U,
\end{align}
then we have
\begin{align}
    (\gtrans, \Gtrans) \cdot \conk = 0.
\end{align}
Thus, the transformed 2-connection is
\begin{align} \label{eq:abelianized_2_con}
    \Conk^\ab \coloneqq (\gtrans, \Gtrans) \cdot \Conk = \gtrans \gt \Conk - d\Gtrans  + \frac{1}{2} [\Gtrans, \Gtrans].
\end{align} 
Note that because fake-flatness is preserved by gauge transformations, $\Conk^{\ab}$ is valued in the completion of $\fa_1 \coloneqq \ker(\delta: \fk_1 \to \fk_0)$, which is an abelian Lie algebra.\medskip

While the expression~\eqref{eq:abelianized_2_con} for the abelianized 2-connection $(0, \Conk^{\ab})$ is quite complicated, its 2-curvature can be computed explicitly. The 2-curvature of the universal 2-connection is
\begin{align} \label{eq:kapranov_2_curvature}
     \Curv^{\conk, \Conk} = \sum_{i < j < k} B_{i,j,k} dz_i \wedge dz_j \wedge dz_k \in \fa_1 \otimes \Omega^{3, \cl}(V)
\end{align}
where
\begin{align} \label{eq:jacobi_elements}
    B_{i,j,k} = [e_i, e_{j}\wedge e_{k}] - [e_j, e_{i}\wedge e_{k}] + [e_k, e_{i}\wedge e_{j}] \in \fa_1 \subset \fk_1.
\end{align}
Then, applying the gauge transformation to the 2-curvature using~\eqref{eq:gauge_trans_fake_flat_curvature}, we obtain
\begin{align} \label{eq:abelianized_2curv}
    \Curv^{\ab} \coloneqq \Curv^{0, \Conk^{\ab}} &= \gtrans \gt \Curv^{\conk, \Conk} = \sum_{m=0}^\infty \frac{1}{m!} \ad_{\eta}^m\left(\sum_{1 \leq i < j < k \leq n} B_{i,j,k} \, dz_i \wedge dz_j \wedge dz_k\right).
\end{align}
Furthermore, because the abelianized 1-connection is trivial, the Bianchi identity for the 2-connection~\cite[Proposition 2.20]{martins_surface_2011} shows that $d\Curv^{\ab} = 0$, and thus
\begin{align}
    \Curv^{\ab} \in \com{\fa}_1 \,\com{\otimes} \,\com\Omega^{3,\cl}(V). 
\end{align}

\subsection{Polynomial Differential Forms and Currents} \label{ssec:forms_and_currents}
To understand $\Curv^{\ab}$, we make use of the isomorphism between $\fa_1$ and the space of closed polynomial currents established by Kapranov~\cite{kapranov_membranes_2015}. In this section, we expand on the exposition in~\cite{kapranov_membranes_2015} and describe the spaces of polynomial differential forms and currents as $\GL(V)$-representations.
Here, we will consider $V$ to be a finite-dimensional $\K$-vector space, for $\K = \R$ or $\K = \C$. Polynomial differential forms and their completions are naturally graded by their total \emph{weight} $r$. We define  \label{eq:poly_forms}  
\begin{align}
    \poly{\Omega}^m(V)_r = S^{r-m}(V^*) \otimes \Lambda^m(V^*), \quad 
    \poly{\Omega}^m(V) \coloneqq \bigoplus_{r=m}^\infty \poly{\Omega}^m(V)_r , \quad  \com{\Omega}^m(V) \coloneqq \prod_{r=m}^\infty \poly{\Omega}^m(V)_r,
\end{align}
equipped with the usual exterior differential $d^m: \poly{\Omega}^m(V) \to \poly{\Omega}^{m+1}(V)$. The polynomial currents and their completions are defined by taking the graded dual of $\poly{\Omega}^\bullet$ as follows \label{eq:poly_currents}
\begin{align}
    \poly{\Gamma}_m(V)_r \coloneqq \poly{\Omega}^m(V)_r^* = S^{r-m}(V) \otimes \Lambda^m(V), \hspace{7pt}
    \poly{\Gamma}_m(V) \coloneqq \bigoplus_{r=m}^\infty \poly{\Gamma}_m(V)_r, \hspace{7pt}
    \com{\Gamma}_m(V) \coloneqq \prod_{r=m}^\infty \poly{\Gamma}_m(V)_r.
\end{align}
To simplify notation, we will often write $\poly{\Gamma}_\bullet \coloneqq \poly{\Gamma}_\bullet(V)$ and $\poly{\Omega}^\bullet \coloneqq \poly{\Omega}^\bullet(V)$. 
Differential forms and currents are naturally equipped with the structure of $\GL(V)$-representations. An element $g \in \GL(V)$ acts on a current $\alpha = u_1 \cdots u_r \otimes (v_1 \wedge \ldots \wedge v_k) \in \poly{\Gamma}_k(V)_{k+r}$ as follows
\begin{align}
    g \gtd \alpha = (g \cdot u_1) \cdots (g \cdot u_r) \otimes \left( (g\cdot v_1) \wedge \ldots \wedge (g \cdot v_k)\right),
\end{align}
and acts on $ \omega \in \poly{\Omega}^\bullet(V)$ by pullback as follows
\begin{align}
    g \gtd \omega = (g^{-1})^*\omega. 
\end{align}
The exterior differential is $\GL(V)$-equivariant. Note also that the action of the subgroup of scalars $\K^* \subseteq \GL(V)$ induces a grading on differential forms and currents which agrees with the weight grading defined above.

Although the polynomial forms $\poly{\Omega}^\bullet$ and currents $\poly{\Gamma}_\bullet$ are naturally dual to each other, the naive pairing between them is not $\GL(V)$-equivariant. Here, we will use the Cartan calculus to define the ``correct'' pairing which is implicitly used in~\cite{kapranov_membranes_2015}.
In the following, we view $V \subset \fX(V)$ as constant vector fields on $V$; we note that these vector fields commute. These vector fields act on differential forms in two ways. First, for any $v \in V$, we can take the Lie derivative $L_v: \poly{\Omega}^m(V) \to \poly{\Omega}^m(V)$, and for constant vector fields, this is commutative,
\begin{align}
    [L_v, L_u] = L_{[v,u]} = 0.
\end{align}
Second, we can take the interior product $\iota_v: \poly{\Omega}^m(V) \to \poly{\Omega}^{m-1}(V)$, which satisfies
\begin{align}
    \iota_v \iota_u = - \iota_u \iota_v.
\end{align}
For $k \leq m$, we use these operations to define a map $P_{k,m}: \poly{\Gamma}_k(V) \otimes \poly{\Omega}^m(V) \to \poly{\Omega}^{m-k}(V)$ where
\begin{align} \label{eq:Pkm_def}
    P_{k,m}(u_1\cdots u_r \otimes (v_1 \wedge \ldots \wedge v_k) \otimes \omega) \coloneqq L_{u_1} \ldots L_{u_r} \iota_{v_1} \ldots \iota_{v_k} \omega. 
\end{align}

\begin{lemma}
    For $k \leq m$, the map $P_{k,m}: \poly{\Gamma}_k(V) \otimes \poly{\Omega}^m(V) \to \poly{\Omega}^{m-k}(V)$ is $\GL(V)$-equivariant. In particular, for $g \in \GL(V)$, $\alpha \in \poly{\Gamma}_k(V)$, $\omega \in \poly{\Omega}^m(V)$, we have
    \begin{align}
        g \gtd P_{k,m}(\alpha \otimes \omega) = P_{k,m}((g \gtd \alpha) \otimes (g \gtd \omega)). 
    \end{align}
\end{lemma}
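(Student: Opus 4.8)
The plan is to reduce the statement to the naturality of the Cartan calculus under diffeomorphisms, together with the elementary observation that a linear automorphism pushes constant vector fields to constant vector fields. Concretely, regard $g \in \GL(V)$ as a diffeomorphism of $V$. Since $g$ is linear, the pushforward $g_* v$ of the constant vector field $v \in V \subset \fX(V)$ is again the constant vector field with value $g \cdot v \in V$. The key point is that the $\GL(V)$-action on forms is by pullback, $g \gtd \omega = (g^{-1})^* \omega$, which is precisely the operation with respect to which the Lie derivative and interior product are natural.

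First I would record the two naturality identities, specialized to this situation. Applying the standard naturality of the Lie derivative and the interior product to the diffeomorphism $g^{-1}$ (whose pushforward sends the constant field $g \cdot v$ to the constant field $v$), one obtains, as operators on $\poly{\Omega}^\bullet(V)$,
\begin{align}
    (g^{-1})^* \circ L_v = L_{g\cdot v} \circ (g^{-1})^* \andd (g^{-1})^* \circ \iota_v = \iota_{g\cdot v} \circ (g^{-1})^*.
\end{align}
These identities hold for all smooth forms and in particular on the polynomial subspaces, since $L_v$ maps $\poly{\Omega}^m(V)_r$ to $\poly{\Omega}^m(V)_{r-1}$ and $\iota_v$ maps $\poly{\Omega}^m(V)_r$ to $\poly{\Omega}^{m-1}(V)_{r-1}$.

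Then, for a simple tensor $\alpha = u_1 \cdots u_r \otimes (v_1 \wedge \ldots \wedge v_k) \in \poly{\Gamma}_k(V)$ and $\omega \in \poly{\Omega}^m(V)$, I would compute
\begin{align}
    g \gtd P_{k,m}(\alpha \otimes \omega) &= (g^{-1})^* \big( L_{u_1} \cdots L_{u_r}\, \iota_{v_1} \cdots \iota_{v_k}\, \omega \big)\\
    &= L_{g\cdot u_1} \cdots L_{g \cdot u_r}\, \iota_{g \cdot v_1} \cdots \iota_{g \cdot v_k}\, \big( (g^{-1})^* \omega \big)\\
    &= P_{k,m}\big( (g \gtd \alpha) \otimes (g \gtd \omega) \big),
\end{align}
where the middle equality is obtained by commuting $(g^{-1})^*$ past the $r + k$ operators one at a time using the identities above, and the last equality uses the explicit description of $g \gtd \alpha$ and $g \gtd \omega$ as $\GL(V)$-representations. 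Since both sides are linear in $\alpha \otimes \omega$, this proves the claim.

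I do not expect a serious obstacle: the only real content is the pushforward observation, and the single place requiring care is the bookkeeping of conventions — namely that $g$ acts on forms by $(g^{-1})^*$, so that the pullback direction matches the direction in which naturality is stated, and that $P_{k,m}$ is well defined, which follows from the commutation relations $[L_{u_i}, L_{u_j}] = L_{[u_i, u_j]} = 0$ and $\iota_{v_i} \iota_{v_j} = -\iota_{v_j} \iota_{v_i}$ recorded above matching the symmetric and antisymmetric structure of $\poly{\Gamma}_k(V)$.
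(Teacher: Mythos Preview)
Your proof is correct and follows essentially the same approach as the paper: both reduce to the naturality of $\iota_v$ and $L_v$ under pullback by the linear diffeomorphism $g$, then commute the pullback through the string of operators. The only cosmetic difference is that the paper works with $g^*$ (obtaining the identity for $g^{-1}\gtd$) and derives the $L_v$-naturality from the Cartan formula $L_v = d\iota_v + \iota_v d$, whereas you work directly with $(g^{-1})^*$ and cite the naturality of $L_v$ as a standard fact.
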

\begin{proof}
    First, given $g \in \GL(V)$ and $v \in V$, the pullback satisfies
    \begin{align}
        g^*d\omega = d(g^*\omega) \andd g^* (\iota_v \omega) = \iota_{g^{-1}(v)}(g^*\omega). 
    \end{align}
    Then, by the Cartan formula $L_v = d \iota_v + \iota_v d$, we have
    \begin{align}
        g^* (L_v \omega) = L_{g^{-1}(v)}(g^* \omega). 
    \end{align}
    Thus, by the definition of $P_{k,m}$ in~\eqref{eq:Pkm_def}, we have
    \begin{align}
        g^*P_{k,m}(u_1\cdots u_r \otimes (v_1 \wedge \ldots v_k) \otimes \omega) &= L_{g^{-1}(u_1)} \ldots L_{g^{-1}(u_r)} \iota_{g^{-1}(v_1)} \ldots \iota_{g^{-1}(v_k)} g^*\omega \\
        & = P_{k,m}\left(g^{-1} \gtd (u_1\cdots u_r \otimes (v_1 \wedge \ldots v_k)) \otimes (g^{-1} \gtd \omega)\right). \nonumber
    \end{align}
\end{proof}

We also record a formula relating the exterior derivative and $P_{k,m}$. 

\begin{lemma}
    For $u \otimes (v_1 \wedge \ldots \wedge v_k) \in \poly{\Gamma}_k(V)$, where $u \in S(V)$,  and $\omega \in \poly{\Omega}^m(V)$, we have
    \begin{align}
        dP_{k,m}(u \otimes (v_1 \wedge \ldots  \wedge v_k) \otimes \omega) & = \sum_{i=1}^k (-1)^{i-1} P_{k,m}(u v_i \otimes (v_1 \wedge \ldots \hat{v}_i  \ldots \wedge v_k) \otimes \omega) \\
        & \quad + (-1)^k P_{k,m}(u \otimes (v_1 \wedge \ldots \wedge v_k) \otimes d\omega) \nonumber
    \end{align}
\end{lemma}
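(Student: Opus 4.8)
The plan is to reduce, by linearity in the current, to the case $u = u_1 \cdots u_r \in S(V)$ a monomial, so that by definition
$P_{k,m}(u \otimes (v_1 \wedge \ldots \wedge v_k) \otimes \omega) = L_{u_1} \cdots L_{u_r} \iota_{v_1} \cdots \iota_{v_k}\omega$. The whole computation then rests on three elementary facts of Cartan calculus on $V$, where $v_1,\dots,v_k, u_1,\dots,u_r$ are viewed as constant (hence mutually commuting) vector fields in $\fX(V)$: for any vector field $w$ one has $d L_w = L_w d$ (immediate from $L_w = d\iota_w + \iota_w d$ and $d^2 = 0$) and $d\iota_w = L_w - \iota_w d$; moreover $\iota_{v_i}\iota_{v_j} = -\iota_{v_j}\iota_{v_i}$, while $[L_{v_i}, \iota_{v_j}] = \iota_{[v_i,v_j]} = 0$ since the constant vector fields commute and $L_{v_i}$ has even degree.

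First I would move $d$ past all the Lie derivatives, giving $d P_{k,m}(u \otimes (v_1 \wedge \ldots \wedge v_k) \otimes \omega) = L_{u_1}\cdots L_{u_r}\, d\, \iota_{v_1}\cdots\iota_{v_k}\omega$. Then I would peel the interior products off one at a time using $d\iota_{v_1} = L_{v_1} - \iota_{v_1}d$, which by an obvious induction yields
\[
d\,\iota_{v_1}\cdots\iota_{v_k}\omega = \sum_{i=1}^k (-1)^{i-1}\,\iota_{v_1}\cdots\iota_{v_{i-1}} L_{v_i}\iota_{v_{i+1}}\cdots\iota_{v_k}\omega \;+\; (-1)^k\,\iota_{v_1}\cdots\iota_{v_k}\,d\omega.
\]
Since $L_{v_i}$ commutes with every $\iota_{v_j}$ and is even (so no sign is picked up when it is pulled to the front), the $i$-th summand equals $L_{v_i}\,\iota_{v_1}\cdots\hat{\iota}_{v_i}\cdots\iota_{v_k}\,\omega$.

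Finally I would reassemble: applying $L_{u_1}\cdots L_{u_r}$ to the $i$-th term gives $L_{u_1}\cdots L_{u_r} L_{v_i}\,\iota_{v_1}\cdots\hat{\iota}_{v_i}\cdots\iota_{v_k}\,\omega$, which, reading $u v_i = u_1\cdots u_r v_i$ as the symmetric product of $r+1$ vectors, is exactly $P_{k,m}(u v_i \otimes (v_1 \wedge \ldots \hat{v}_i \ldots \wedge v_k)\otimes \omega)$; applying it to the last term gives $(-1)^k P_{k,m}(u \otimes (v_1 \wedge \ldots \wedge v_k)\otimes d\omega)$. Summing over $i$ produces precisely the claimed identity, and the statement for general $u \in S(V)$ follows by linearity. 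The only thing demanding any care is the sign bookkeeping in the two displays — the alternating signs from stripping off interior products, and the evenness of $L_{v_i}$ that lets it commute to the front cleanly — but there is no genuine obstacle; it is the vanishing of $[v_i,v_j]$ for constant vector fields that makes the formula this clean.
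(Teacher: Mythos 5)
Your proposal is correct and follows essentially the same route as the paper, which likewise cites $dL_v = L_v d$, $d\iota_v = L_v - \iota_v d$, and $L_v\iota_u = \iota_u L_v$ and declares the identity immediate. You simply spell out the induction that peels off the interior products and tracks the signs, which the paper leaves implicit.
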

\begin{proof}
    This is immediate by the properties $dL_v = L_v d$, $d\iota_v = -\iota_v d + L_v$, and $L_v \iota_u = \iota_u L_v$. 
\end{proof}

Next, we assemble all of the $P_{k,m}$ into a map 
\begin{align}
    P : \poly{\Gamma}_\bullet(V) \otimes \poly{\Omega}^\bullet(V) \to \poly{\Omega}^\bullet(V).
\end{align}
For the final step in defining the equivariant pairing, consider the inclusion of the origin $i : 0 \to V$, which is trivially $\GL(V)$-equivariant. This induces a trivially $\GL(V)$-equivariant pullback
\begin{align}
    i^* : \poly{\Omega}^\bullet(V) \to \poly{\Omega}^\bullet(0) = \K.
\end{align}

\begin{proposition} \label{prop:equivariant_pairing}
    The pairing
    \begin{align} \label{eq:pairing}
        \langle \cdot, \cdot\rangle \coloneqq i^* P_{k,m} : \poly{\Gamma}_k(V) \otimes \poly{\Omega}^m(V) \to \K,
    \end{align}
     is non-degenerate for $k = m$, vanishes for $k \neq m$, and is $\GL(V)$-equivariant. 
\end{proposition}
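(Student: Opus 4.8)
The plan is to verify the three assertions separately, each reducing to a formal property of the Cartan calculus together with one short coordinate computation. Equivariance is essentially free; the vanishing for $k \neq m$ is a degree count; and non-degeneracy for $k = m$ comes from computing the Gram matrix of $\langle\cdot,\cdot\rangle$ on monomial bases of the weight-homogeneous pieces and finding it diagonal.

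\textbf{Equivariance.} This is immediate from what precedes. The map $P_{k,m}$ is $\GL(V)$-equivariant by the preceding lemma, and the inclusion of the origin induces a $\GL(V)$-equivariant pullback $i^* : \poly{\Omega}^{m-k}(V) \to \poly{\Omega}^{m-k}(0)$ for the trivial action on the point; composing, $\langle\cdot,\cdot\rangle = i^* \circ P_{k,m}$ intertwines the $\GL(V)$-actions with the trivial action on $\K$, i.e. $\langle g \gtd \alpha, g \gtd \omega\rangle = \langle\alpha,\omega\rangle$. Restricting to the scalar subgroup $\K^* \subset \GL(V)$ shows in addition that the pairing vanishes on $\poly{\Gamma}_k(V)_r \otimes \poly{\Omega}^m(V)_{r'}$ unless $r = r'$, since $\lambda \in \K^*$ acts by $\lambda^r$ on the first factor and by $\lambda^{-r'}$ on the second; this weight-homogeneity is recorded for use below.

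\textbf{Vanishing for $k \neq m$.} Since $P_{k,m}$ is only defined for $k \leq m$, the relevant case is $k < m$. In that range the image of $P_{k,m}$ lies in $\poly{\Omega}^{m-k}(V)$, a space of forms of positive degree $m-k > 0$. But $\poly{\Omega}^{j}(0) = 0$ for $j > 0$, so $i^*$ annihilates $\poly{\Omega}^{m-k}(V)$ and hence $\langle\cdot,\cdot\rangle \equiv 0$.

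\textbf{Non-degeneracy for $k = m$.} By weight-homogeneity it suffices, for each $r \geq m$, to show that the induced pairing $\poly{\Gamma}_m(V)_r \otimes \poly{\Omega}^m(V)_r \to \K$ is non-degenerate; since $\poly{\Gamma}_m(V)_r = S^{r-m}(V) \otimes \Lambda^m(V)$ and $\poly{\Omega}^m(V)_r = S^{r-m}(V^*) \otimes \Lambda^m(V^*)$ are finite-dimensional of equal dimension, it is enough to exhibit bases in which the Gram matrix is invertible. Fix a basis $e_1, \ldots, e_d$ of $V$ with dual coordinates $z_1, \ldots, z_d$. For a multi-index $\beta$ with $|\beta| = r-m$ and an increasing tuple $I = (i_1 < \cdots < i_m)$ set $e^\beta \otimes e_I \coloneqq e_1^{\beta_1}\cdots e_d^{\beta_d} \otimes (e_{i_1} \wedge \cdots \wedge e_{i_m})$; these form a basis of $\poly{\Gamma}_m(V)_r$, and the $z^\alpha\, dz_J \coloneqq z_1^{\alpha_1}\cdots z_d^{\alpha_d}\, dz_{j_1} \wedge \cdots \wedge dz_{j_m}$ (with $|\alpha| = r-m$ and $J = (j_1 < \cdots < j_m)$) form a basis of $\poly{\Omega}^m(V)_r$. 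Using that interior products are $\poly{\Omega}^0(V)$-linear, that $\iota_{e_a} dz_b = \delta_{ab}$, and that $L_{e_a}$ acts on functions as $\partial/\partial z_a$, one computes
\begin{align}
\langle e^\beta \otimes e_I,\ z^\alpha\, dz_J \rangle \;=\; i^*\!\Big( \partial^\beta\big( z^\alpha \cdot \iota_{e_{i_1}} \cdots \iota_{e_{i_m}}(dz_{j_1}\wedge\cdots\wedge dz_{j_m}) \big)\Big) \;=\; (-1)^{m(m-1)/2}\, \delta_{IJ}\, \delta_{\alpha\beta}\, \alpha!,
\end{align}
where $\alpha! = \prod_a \alpha_a!$ and we used $\partial^\beta z^\alpha\big|_0 = \delta_{\alpha\beta}\,\alpha!$. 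Thus the Gram matrix is diagonal with non-zero entries, so the pairing restricted to weight $r$ is non-degenerate; letting $r$ vary gives non-degeneracy of $\langle\cdot,\cdot\rangle$ on $\poly{\Gamma}_m(V) \otimes \poly{\Omega}^m(V)$.

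\textbf{Expected obstacle.} There is no substantive difficulty here; the only points needing care are (i) first reducing to the finite-dimensional weight-homogeneous pieces before invoking non-degeneracy, since $\poly{\Gamma}_m(V)$ and $\poly{\Omega}^m(V)$ themselves are infinite-dimensional, and (ii) bookkeeping the signs from the chain of interior products, which assemble into the single global factor $(-1)^{m(m-1)/2}$ and therefore do not affect invertibility.
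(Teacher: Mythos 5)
Your proof is correct and takes the same route the paper implicitly uses: equivariance follows by composing the two equivariant maps established just before, vanishing for $k<m$ is the degree count at the origin, and non-degeneracy is the diagonal Gram-matrix computation in monomial bases. The paper states the proposition without proof and then records your coordinate formula verbatim as Equation~\eqref{eq:dualbasisfactorial}, so your argument is exactly the one the authors have in mind.
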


If necessary, we use $\langle \cdot , \cdot \rangle_m : \poly{\Gamma}_m \otimes \poly{\Omega}^m \to \K$ to specify the degree. We can describe this pairing explicitly in coordinates. Let $(z_{1}, ..., z_{n})$ be a basis of $V^*$ and let $(e_{1}, ..., e_{n})$ be the dual basis of $V$. Given $\alpha = (\alpha_1, \ldots, \alpha_n) \in \N^n$, we define $z^\alpha \coloneqq z_1^{\alpha_1} \cdots z_n^{\alpha_n} \in S(V^*)$ and similarly for $e^{\alpha} \in S(V)$. Given an increasing index set $I = (i_{1} < ... < i_{m})$, we define $dz_{I} = dz_{i_{1}} \wedge ... \wedge dz_{i_{m}} \in \wedge^{m}V^*$. Similarly, we define $e_{I} = e_{i_{1}} \wedge ... \wedge e_{i_{m}}\in \wedge^{m}V$. In this way, we obtain the bases 
\begin{align}
z^{\alpha} \otimes dv_{I} \andd e^{\alpha} \otimes e_{I}
\end{align}
of $\poly{\Omega}^m(V)$ and $\poly{\Gamma}_m(V)$, respectively.
In terms of these coordinates, the pairing satisfies
\begin{align} \label{eq:dualbasisfactorial}
\langle e^{\alpha} \otimes e_{I} , z^{\beta} \otimes dz_{J} \rangle_{m} = (-1)^{\frac{m(m-1)}{2}} \alpha!\, \delta_{\alpha, \beta} \delta_{I,J} \quad \text{where} \quad \alpha! = \prod \alpha_{i}!.
\end{align}

Using the pairing, we can now define the codifferential 
\begin{align} \label{eq:codifferential}
    \partial_m : \poly{\Gamma}_m(V) \to \poly{\Gamma}_{m-1}(V) \quad \text{by} \quad \langle \partial_m \alpha, \omega \rangle = (-1)^m\langle \alpha, d^{m-1}\omega\rangle
\end{align}
for any $\alpha \in \poly{\Gamma}_m(V)$ and $\omega \in \poly{\Omega}^{m-1}(V)$. It is $\GL(V)$-equivariant. 
The \emph{closed forms} and \emph{closed currents} are defined as usual to be
\begin{align}
    \poly{\Omega}^{m, \cl}(V) \coloneqq \ker(d^m: \poly{\Omega}^m \to \poly{\Omega}^{m+1}) \andd \poly{\Gamma}^{\cl}_m(V) \coloneqq \ker( \partial_m : \poly{\Gamma}_m \to \poly{\Gamma}_{m-1})
\end{align}
respectively. By the Poincare lemma, the cochain complex of differential forms is acyclic, and thus by duality, the chain complex of currents is also acyclic. Hence, using the pairing, we have  
\begin{align}
    \poly{\Gamma}^{\cl}_m \cong \im( \partial_{m+1}: \poly{\Gamma}_{m+1} \to \poly{\Gamma}_{m}) \cong \coker( \partial_{m+2} : \poly{\Gamma}_{m+2} \to \poly{\Gamma}_{m+1}) \cong (\poly{\Omega}^{m+1, \cl})^{\vee},
\end{align}
where $(\cdot)^{\vee}$ denotes the graded dual. 
Given $\alpha = \partial_{m+1}(\beta) \in \poly{\Gamma}^{\cl}_m$, where $\beta \in \poly{\Gamma}_{m+1}$, and $\omega = d^m(\eta) \in \poly{\Omega}^{m+1, \cl}$, where $ \eta \in \poly{\Omega}^m$, the pairing between closed forms and closed currents is 
\begin{align} \label{eq:closed_pairing}
    \langle \alpha, \omega \rangle_{\cl} \coloneqq (-1)^{m+1} \langle \alpha, \eta\rangle_{m} = \langle \beta, \omega\rangle_{m+1}.
\end{align} 

In what follows, we will need the duality between closed $2$-currents and closed $3$-forms and it will be useful to have explicit dual bases. Given index list $\bq = (q_1, \ldots, q_r)$, and indices $i, j, k$, such that $q_1 \geq \ldots \geq q_r \geq i < j < k$, define 
\begin{align}
\gamma_{\bq, ijk} = - e_{q_{1}}...e_{q_{r}} \otimes e_{i} \wedge e_{j} \wedge e_{k} \in (\poly{\Gamma}_{3}(V))_{r+3}, \qquad \omega_{\bq,ijk} = \frac{z^{\alpha}}{\alpha!} dz_{j} \wedge dz_{k} \in (\poly{\Omega}^{2})_{r+3},
\end{align}
where $\balpha = (\alpha_1, \ldots, \alpha_n)$, and $\alpha_s$ denotes the number of times the index $s$ appears in $(\bq, i)$. 

\begin{lemma} \label{lem:closed_current}
    There is a natural isomorphism 
\begin{align}
    \poly{\Gamma}^{\cl}_2 \cong (\poly{\Omega}^{3, \cl})^{\vee}
\end{align}
where, on the right-hand side, we are taking the graded dual. The closed currents $\partial_3(\gamma_{\bq, ijk})$ and closed forms $d^2(\omega_{\bq,ijk})$, for $q_1 \geq \ldots \geq q_r \geq i < j < k$, give dual bases in weight $r+3$. 
\end{lemma}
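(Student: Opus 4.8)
The plan is to deduce the stated isomorphism from the machinery already in place, and then to establish the explicit dual bases by a single pairing computation together with a dimension count. For the isomorphism: by the Poincar\'e lemma the cochain complex $\poly{\Omega}^{\bullet}$ is acyclic in each positive weight, hence by duality so is the chain complex $\poly{\Gamma}_{\bullet}$, and the pairing $\langle\,\cdot\,,\,\cdot\,\rangle_{\cl}$ of \eqref{eq:closed_pairing} is a non-degenerate $\GL(V)$-equivariant pairing between $\poly{\Gamma}^{\cl}_2$ and $\poly{\Omega}^{3,\cl}$; thus $\poly{\Gamma}^{\cl}_2 \cong (\poly{\Omega}^{3,\cl})^{\vee}$ is exactly the $m = 2$ instance of the chain of isomorphisms recorded just above the statement. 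Since both sides are graded by weight and finite-dimensional in each weight, it suffices to exhibit, for each $r \geq 0$, mutually dual bases of the weight-$(r+3)$ components $(\poly{\Gamma}^{\cl}_2)_{r+3}$ and $(\poly{\Omega}^{3,\cl})_{r+3}$; the elements $\partial_3(\gamma_{\bq,ijk})$ and $d^2(\omega_{\bq,ijk})$ indexed by the admissible tuples $q_1 \geq \dots \geq q_r \geq i < j < k$ lie in these components and are the proposed bases.

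The first step is to check that these two families pair orthonormally. Applying \eqref{eq:closed_pairing} (which in particular is independent of the chosen primitives) gives $\langle \partial_3(\gamma_{\bq,ijk}),\, d^2(\omega_{\bq',i'j'k'})\rangle_{\cl} = \langle \gamma_{\bq,ijk},\, d^2(\omega_{\bq',i'j'k'})\rangle_3$, so it remains to expand $d^2(\omega_{\bq',i'j'k'})$ in the monomial basis of $\poly{\Omega}^3$ and apply the coordinate formula \eqref{eq:dualbasisfactorial}. Computing $d\bigl(\tfrac{z^{\alpha}}{\alpha!}\,dz_{j'}\wedge dz_{k'}\bigr)$ term by term, a summand can pair nontrivially with $\gamma_{\bq,ijk}$ only when $\{j',k'\}\subseteq\{i,j,k\}$ and, writing $l$ for the complementary index $\{i,j,k\}\setminus\{j',k'\}$, the monomials on the two sides match. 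The admissibility constraints then force the diagonal: the possibilities $\{j',k'\} = \{i,k\}$ and $\{j',k'\} = \{i,j\}$ are excluded because each would require $i' < j' = i$, which is impossible since $i'$ is the smallest index occurring in the monomial attached to $\omega_{\bq',i'j'k'}$; and $\{j',k'\} = \{j,k\}$ forces $l = i$ and then $\bq = \bq'$, $i = i'$, $j = j'$, $k = k'$. The factorial normalization in the definition of $\omega_{\bq,ijk}$ is chosen precisely so that this surviving term evaluates to $1$, whence $\langle \partial_3(\gamma_{\bq,ijk}),\, d^2(\omega_{\bq',i'j'k'})\rangle_{\cl} = \delta_{(\bq,ijk),(\bq',i'j'k')}$. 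Consequently both families are linearly independent in $\poly{\Gamma}^{\cl}_2$ and $\poly{\Omega}^{3,\cl}$, respectively.

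It remains to show that the families span; by the perfect pairing established above it is equivalent to verify that, in weight $r+3$, the number of admissible tuples equals $\dim (\poly{\Omega}^{3,\cl})_{r+3}$. Counting tuples by first choosing $i$ (then a monomial of degree $r$ in $z_i,\dots,z_n$ and a pair $i < j < k$) yields $\sum_{p=0}^{n-1}\binom{p}{2}\binom{r+p}{p}$, while acyclicity of the complex $\poly{\Omega}^{\bullet}_{r+3}$ gives $\dim (\poly{\Omega}^{3,\cl})_{r+3} = \dim \poly{\Omega}^2_{r+3} - \dim \poly{\Omega}^1_{r+3} + \dim \poly{\Omega}^0_{r+3} = \binom{n}{2}\binom{r+n}{n-1} - n\binom{r+n+1}{n-1} + \binom{r+n+2}{n-1}$. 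The equality of these two expressions is an elementary binomial identity (e.g.\ by induction on $n$). I expect this accounting---equivalently, the verification that the admissible tuples are exactly the right number and not merely a linearly independent set---to be the only non-formal point. It can alternatively be handled without the identity by an explicit rewriting that expresses any monomial generator $e^{\mu}\otimes e_a\wedge e_b\wedge e_c$ of $\poly{\Gamma}_3$, modulo $\ker\partial_3 = \im\partial_4$, as a combination of the $\gamma_{\bq,ijk}$ (reducing on the smallest index appearing), thereby showing directly that $\{\partial_3(\gamma_{\bq,ijk})\}$ spans $\poly{\Gamma}^{\cl}_2 = \im\partial_3$.
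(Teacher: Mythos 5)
Your proof is essentially correct in its logic but diverges from the paper at the key step of showing the proposed family of closed currents is actually a basis (not merely a linearly independent set).

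The paper's route is representation-theoretic: it identifies $(\poly{\Gamma}^{\cl}_2)_{r+3}$ as the irreducible $\GL(V)$-representation attached to the hook Young diagram $(r+1,1,1)$, citing \cite[Section 8.2, Theorem 2]{fulton_young_1996}. The admissible tuples $q_1 \geq \cdots \geq q_r \geq i < j < k$ are in bijection with semistandard Young tableaux of that hook shape with entries in $[n]$ (the strictly increasing first column is $(i,j,k)$ and the weakly increasing first row is $(i, q_r, \ldots, q_1)$), so the count of admissible tuples equals the dimension with no further computation; combined with the orthonormal pairing, this yields the basis claim at once. Your route replaces the appeal to \cite{fulton_young_1996} with a hands-on dimension count, which reduces to the binomial identity $\sum_{p=0}^{n-1}\binom{p}{2}\binom{r+p}{p} = \binom{n}{2}\binom{r+n}{n-1} - n\binom{r+n+1}{n-1} + \binom{r+n+2}{n-1}$; this identity does hold (it is the hook-content formula in disguise, and I verified a few small cases), but you leave it as an expectation rather than proving it. Your alternative spanning argument---reducing a monomial generator $e^\mu \otimes e_a \wedge e_b \wedge e_c$ modulo $\im(\partial_4)$ by applying $\partial_4(e^{\mu - e_s} \otimes e_s \wedge e_a \wedge e_b \wedge e_c)$ where $s$ is the \emph{smallest} index in the support of $\mu$---does work as a one-step reduction (after the substitution every new term has all monomial indices at least as large as its smallest wedge index), and is a clean, self-contained route avoiding both the binomial identity and the representation theory. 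Your pairing computation is more detailed than the paper's and correct, though the phrase ``each would require $i' < j' = i$, which is impossible'' is a bit loose: $i' < j'$ is automatic from admissibility, and what is impossible is that the index $i' < i$ then survives in $\alpha' - e_l$ while being absent from the multi-index of $\bq$. On balance: same conclusion, more elementary but longer route at the crucial step, and you should either supply a proof of the binomial identity or carry out the reduction argument in full to close the gap you flagged.
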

\begin{proof}
    The space of closed weight $r+3$ currents $(\poly{\Gamma}_2^{\cl}(V))_{r+3}$ is the irreducible representation of $\GL(V)$ corresponding to the hook Young diagram of size $(r+1, 1, 1)$. It follows that the currents $\partial_3(\gamma_{\bq, ijk})$ give a basis. Taking the pairing in~\eqref{eq:closed_pairing}, we have 
    \begin{align}
        \langle \partial_3(\gamma_{\bq, ijk}), d^2(\omega_{\bp,abc}) \rangle_{\cl} &= \langle \gamma_{\bq, ijk}, d^2(\omega_{\bp,abc}) \rangle_{3} \\
        &= \frac{-\alpha_{a}}{\alpha !} \langle e_{q_{1}}...e_{q_{r}} \otimes e_{i} \wedge e_{j} \wedge e_{k}, z_{p_1} ... z_{p_{r}} dz_{a} \wedge dz_{b} \wedge dz_{c} \rangle_{3} \\ 
        &= \delta_{(\bq, ijk),(\bp,abc)}.
    \end{align}
\end{proof}

Finally, we conclude this section with the following relationship between the free crossed module from~\eqref{eq:free_cmla_vector_space} and closed $2$-currents; see~\cite[Appendix D]{chevyrev_multiplicative_2024-1} for further exposition.

\begin{theorem}{\cite{kapranov_membranes_2015}} \label{thm:rho_isomorphism}
    The symmetrization map $\rho' : T(V) \otimes \Lambda^2 V \to S(V) \otimes \Lambda^2 V = \poly{\Gamma}_2(V)$ is a $\GL(V)$-equivariant map which descends to define a Lie algebra map 
    \begin{align}
        \rho : \fk_1(V) \to \poly{\Gamma}_2(V)
    \end{align}
    which identifies the abelianization 
    \begin{align}
        \frac{\fk_1(V)}{[\fk_1(V), \fk_1(V)]} \cong \poly{\Gamma}_2(V). 
    \end{align}
     Furthermore, the restriction of $\rho$ to $\fa_1(V) \coloneqq \ker(\delta: \fk_1(V) \to \fk_0(V))$ defines an isomorphism of $\GL(V)$-representations
    \begin{align} 
        \rho:  \fa_1(V) \to \poly{\Gamma}^{\cl}_2(V). 
    \end{align}
\end{theorem}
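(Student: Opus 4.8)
The plan is to establish the three assertions in turn; the first two are formal once one sees how the symmetrization map interacts with the construction of the free crossed module, and the third is the substantive point, following Kapranov. Equivariance of $\rho'$ is immediate, since it is the abelianization $T(V)\to S(V)$ tensored with $\mathrm{id}_{\Lambda^2V}$ and $T(V)\to S(V)$ is $\GL(V)$-equivariant. The key observation for descent is that for a generator $X=(x_1\otimes\cdots\otimes x_k)\otimes(u\wedge v)$ of $T(V)\otimes\Lambda^2V$ one has $\delta(X)=[x_1,[\ldots,[x_k,[u,v]]\ldots]]\in[\FL(V),\FL(V)]$, so $\delta(X)\cdot w$ maps to $\overline{\delta(X)}\cdot\bar w=0$ in $S(V)$ for every $w\in T(V)$, because $T(V)\to S(V)$ is an algebra map and $S(V)$ is commutative. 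Hence $\rho'(\delta(X)\gt Y)=0$ for all $X,Y$, so $\rho'$ annihilates $\Pf(T(V)\otimes\Lambda^2V)$ and descends to $\rho\colon\fk_1(V)\to\poly{\Gamma}_2(V)$. The same computation shows $\rho([X,Y])=\rho'(\delta(X)\gt Y)=0$, so $\rho$ is a morphism of Lie algebras onto $\poly{\Gamma}_2(V)$ with the abelian bracket, and $\rho([\fk_1,\fk_1])=0$.

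For the abelianization statement, $\delta(\fk_1(V))$ is a Lie ideal of $\FL(V)$ containing every $[u,v]$, hence equals $[\FL(V),\FL(V)]$; thus $[\fk_1,\fk_1]=\delta(\fk_1)\gt\fk_1$ and its preimage in $T(V)\otimes\Lambda^2V$ is $\Pf+\big([\FL(V),\FL(V)]\cdot T(V)\big)\otimes\Lambda^2V$. A PBW argument for $T(V)=U(\FL(V))$ — ordering a homogeneous basis of $\FL(V)$ so that the degree-one generators come last — identifies $\ker(T(V)\to S(V))$ with the left ideal $[\FL(V),\FL(V)]\cdot T(V)$, whence $\ker\rho'=\big([\FL(V),\FL(V)]\cdot T(V)\big)\otimes\Lambda^2V$ equals the preimage of $[\fk_1,\fk_1]$ (using $\Pf\subseteq\ker\rho'$), i.e.\ $\ker\rho=[\fk_1,\fk_1]$. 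Since $\rho'$ is onto, $\rho$ induces the isomorphism $\fk_1(V)/[\fk_1(V),\fk_1(V)]\xrightarrow{\ \sim\ }\poly{\Gamma}_2(V)$.

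The heart is the claim $\rho\colon\fa_1(V)\xrightarrow{\ \sim\ }\poly{\Gamma}^{\cl}_2(V)$. First I would verify $\partial_2\circ\rho=\rho_0\circ\delta$, where $\rho_0\colon\FL(V)\to\poly{\Gamma}_1(V)$ abelianizes all but the last tensor slot; then $\partial_2\rho|_{\fa_1}=0$ since $\delta|_{\fa_1}=0$, so $\rho(\fa_1)\subseteq\poly{\Gamma}^{\cl}_2$. Next, $\fa_1=\ker\delta$ is central in $\fk_1$ (a general feature of crossed modules), so the $\fk_0$-action on $\fa_1$ annihilates $\delta(\fk_1)=[\FL(V),\FL(V)]$ and factors through $\fk_0^{\ab}=V$; thus $\fa_1(V)$ is a graded $S(V)$-submodule of $\fk_1(V)$ and $\rho|_{\fa_1}$ is $S(V)$-linear for the action on the first tensor factor of $\poly{\Gamma}_2(V)$. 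In weight $3$ there are no Peiffer relations; $\fa_1(V)_3$ is the copy of $\Lambda^3V$ spanned by the Jacobi elements $B_{i,j,k}$ of \eqref{eq:jacobi_elements}, and $\rho$ carries it equivariantly and non-trivially onto $\poly{\Gamma}^{\cl}_2(V)_3\cong\Lambda^3V$, hence isomorphically. Since $\poly{\Gamma}^{\cl}_2=\im(\partial_3)$ is generated over $S(V)$ by its weight-$3$ part (because $\poly{\Gamma}_3$ is $S(V)$-generated in weight $3$ and $\partial_3$ is $S(V)$-linear), $S(V)$-linearity gives $\rho(\fa_1)=S(V)\cdot\poly{\Gamma}^{\cl}_2(V)_3=\poly{\Gamma}^{\cl}_2$, so $\rho|_{\fa_1}$ is surjective.

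Injectivity of $\rho|_{\fa_1}$ then follows from the weight-by-weight equality $\dim\fa_1(V)_w=\dim\fk_1(V)_w-\dim\FL^w(V)=\dim\poly{\Gamma}^{\cl}_2(V)_w$, which forces the surjection $\fa_1(V)_w\twoheadrightarrow\poly{\Gamma}^{\cl}_2(V)_w$ to be an isomorphism and identifies each $\fa_1(V)_w$ with the hook representation of \Cref{lem:closed_current}. I expect the only real obstacle to be exactly this dimension equality, equivalently the computation of $\dim\fk_1(V)_w$, equivalently an analysis of the Peiffer subspace $\Pf$ in each weight — the $\GL(V)$-character computation for the free crossed module carried out by Kapranov. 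The structural remarks above (centrality of $\fa_1$, its $S(V)$-module structure, generation in weight $3$, and the codifferential identity $\partial_2\rho=\rho_0\delta$) reduce the theorem to this single input; everything else is routine.
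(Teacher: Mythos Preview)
The paper does not give its own proof of this theorem: it is stated with attribution to Kapranov, with a pointer to \cite[Appendix D]{chevyrev_multiplicative_2024-1} for further exposition. So there is no in-paper argument to compare against.

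Evaluating your sketch on its own merits: the descent of $\rho'$ through the Peiffer quotient, the Lie-algebra-map property, and the abelianization isomorphism via the PBW identification $\ker(T(V)\to S(V))=[\FL(V),\FL(V)]\cdot T(V)$ are all correct and cleanly argued. For the isomorphism $\fa_1\cong\poly{\Gamma}_2^{\cl}$, your surjectivity argument --- centrality of $\fa_1$ gives an $S(V)$-module structure on which $\rho|_{\fa_1}$ is $S(V)$-linear, $\poly{\Gamma}_2^{\cl}=\im\partial_3$ is $S(V)$-generated in weight $3$, and weight $3$ is handled explicitly via the Jacobi elements $B_{i,j,k}$ --- is sound. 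You correctly isolate injectivity as the genuine content, equivalent to $\fa_1\cap[\fk_1,\fk_1]=0$ and hence to the dimension equality $\dim\fk_1(V)_w=\dim\FL(V)_w+\dim\poly{\Gamma}_2^{\cl}(V)_w$ for $w\geq 2$, and you are honest that this is the input you are importing from Kapranov. The identity $\partial_2\circ\rho=\rho_0\circ\delta$ deserves a slightly more careful definition of $\rho_0$ (it is the linear map $T(V)^{\geq 1}\to S(V)\otimes V$ symmetrizing all but the last slot, restricted to $[\FL(V),\FL(V)]$), but the induction you indicate, using that $\overline{\delta(X)}=0$ in $S(V)$, does go through. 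In short: your reduction is correct and complete up to the one deferred character computation, which is precisely the non-formal part of the cited result.
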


Given indices $\bq = (q_1, \ldots, q_r)$, and $i, j, k$, such that $q_1 \geq \ldots \geq q_r \geq i < j < k$, define 
\begin{align}
    B_{\bq,ijk} \coloneqq [e_{q_1}, \ldots, [e_{q_r}, B_{ijk}]\ldots] \in \fa_1(V).
\end{align}
Under the isomorphism $\rho$ from Theorem \ref{thm:rho_isomorphism}, we see that 
\begin{align}
\rho(B_{\bq,ijk}) = \partial_3(\gamma_{\bq, ijk}).
\end{align}
Hence, we immediately conclude that $B_{\bq,ijk}$ define a basis of $\fa_1(V)$.

\subsection{Computing the Abelianized Curvature} 
We will use the relationship between $\fa_1$ and closed polynomial currents, along with their representation-theoretic properties, to explicitly compute the abelianized curvature $\Curv^{\ab}$. As we will be using Schur's lemma for complex representations, it is convenient to immediately consider the complexification $V_\C \coloneqq V \otimes_\R \C$. Because $\rho$ is defined over $\R$, this will not affect our final calculation of $\Curv^{\ab}$. 

Denote the complexified space of currents and forms by $(\poly{\Gamma}_{\bullet})_{\C}$ and $\poly{\Omega}^\bullet_\C$, respectively. Because the isomorphisms from ~\Cref{lem:closed_current} and ~\Cref{thm:rho_isomorphism} are $\GL(V)$-equivariant, they preserve the weight grading, and therefore extend to the completions. Hence, we obtain an isomorphism of $\GL(V_\C)$-representations $\rho: \com{\fa}_1(V_\C) \to (\poly{\Omega}^{3,\cl}(V_\C))^*$, which can be extended to 
\begin{align}
    \hrho = \rho \otimes \id : \com{\fa}_1(V_\C) \,\com{\otimes}\, \com{\Omega}^{3,\cl}_\C \to (\poly{\Omega}^{3,\cl}_\C)^* \,\com{\otimes}\, \com{\Omega}^{3,\cl}_\C.
\end{align}

Under this isomorphism, the curvature from~\eqref{eq:kapranov_2_curvature}, which we denote by $\Curv = \Curv^{\conk, \Conk}$, is sent to
\begin{align}
    \hrho(\Curv) = \sum_{i < j < k} \rho(B_{i,j,k}) dz_i \wedge dz_j \wedge dz_k \in (\poly{\Omega}^{3,\cl}_\C)^* \,\com{\otimes}\, \com{\Omega}^{3,\cl}_\C.
\end{align}
By Lemma \ref{lem:closed_current}, the closed currents $\rho(B_{ijk}) = \partial_{3}(\gamma_{ijk})$ and closed $3$-forms $dz_{i} \wedge dz_{j} \wedge dz_{k} = d\omega_{ijk}$ form dual bases. Hence, $\hrho(\Curv)$ is identified with the identity map for the weight $3$ closed forms
\begin{align}
    \hrho(\Curv) = \id_{(\poly{\Omega}^{3,\cl}_\C)_3} \in \End_{\GL(V_\C)}((\poly{\Omega}^{3,\cl}_\C)_3) \subset (\poly{\Omega}^{3,\cl}_\C)^* \,\com{\otimes}\, \com{\Omega}^{3,\cl}_\C.
\end{align}
Here, $\End_{\GL(V_\C)}(W)$ denotes the space of equivariant morphisms of $W$, a $\GL(V_\C)$-representation. 
Recalling that this is precisely the $\GL(V_\C)$-invariant subspace of the representation $\End(W)$, we conclude that $\hrho(\Curv)$ (and hence $\Curv$) is $\GL(V_\C)$-invariant. \medskip

\begin{lemma} \label{curvaturesubspace}
    The abelianized curvature $\Curv^{\ab}$ is $\GL(V_\C)$-invariant. Therefore 
    \begin{align}
    \hrho(K^{\ab}) \in \prod_{r=3}^\infty \End_{\GL(V_{\C})}\left((\poly{\Omega}^{3,\cl}_\C)_r\right).
\end{align}
\end{lemma}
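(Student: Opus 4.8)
The plan is to reduce the $\GL(V_\C)$-invariance of $\Curv^{\ab}$ to that of $\Curv = \Curv^{\conk, \Conk}$, which was established in the paragraph preceding the statement, by exploiting the fact that the gauge transformation relating the two is built entirely from $\GL(V)$-invariant data. Concretely, by~\eqref{eq:gauge_trans_fake_flat_curvature} and~\eqref{eq:abelianized_2curv} we have $\Curv^{\ab} = \gtrans \gt \Curv^{\conk, \Conk}$, with $\gtrans = \exp_\otimes(\eta)$ and $\eta(x) = \sum_i z_i e_i$.

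First I would observe that $\eta \in C^\infty(V, \fk_0)$ is the \emph{tautological element}: it is the identity map $V \to V$ followed by the inclusion $V \hookrightarrow \fk_0 = \FL(V)$. Since $\GL(V)$ acts on $C^\infty(V, \fk_0)$ by transporting both the source and the target, and on $\fk_0$ by the functorial extension of its defining action on $V$, one gets $(g \gtd \eta)(x) = g \gtd (g^{-1} x) = x = \eta(x)$, so $\eta$ --- and hence $\gtrans = \exp_\otimes(\eta)$ --- is $\GL(V)$-invariant. It follows that the operator $A \mapsto \gtrans \gt A$ on $\com{\fk}_1(V_\C)$-valued forms is $\GL(V_\C)$-equivariant: $g \gtd (\gtrans \gt A) = (g \gtd \gtrans) \gt (g \gtd A) = \gtrans \gt (g \gtd A)$. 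Applying this with $A = \Curv^{\conk, \Conk}$, which is $\GL(V_\C)$-invariant by the computation just above, yields the $\GL(V_\C)$-invariance of $\Curv^{\ab}$. (Alternatively one can read this off directly from the series in~\eqref{eq:abelianized_2curv}, since each $\ad_\eta$ is $\GL(V_\C)$-equivariant.)

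For the second assertion I would use that $\Curv^{\ab}$ lies in $\com{\fa}_1(V_\C) \,\com{\otimes}\, \com{\Omega}^{3,\cl}_\C$ (established immediately before the lemma from fake-flatness and the Bianchi identity) and that $\hrho = \rho \otimes \id$ is a $\GL(V_\C)$-equivariant isomorphism onto $(\poly{\Omega}^{3,\cl}_\C)^* \,\com{\otimes}\, \com{\Omega}^{3,\cl}_\C$. An equivariant isomorphism sends invariants to invariants, so $\hrho(\Curv^{\ab})$ is $\GL(V_\C)$-invariant. Decomposing the target along the weight grading --- which is precisely the grading by the scalar torus $\K^* \subseteq \GL(V_\C)$ --- a $\K^*$-invariant element is supported on the components $\big((\poly{\Omega}^{3,\cl}_\C)_r\big)^* \otimes (\poly{\Omega}^{3,\cl}_\C)_r$ of total weight zero, i.e.\ in $\prod_{r \geq 3} \End\big((\poly{\Omega}^{3,\cl}_\C)_r\big)$, these spaces being finite-dimensional; and on each factor the remaining $\GL(V_\C)$-invariance picks out exactly $\End_{\GL(V_\C)}\big((\poly{\Omega}^{3,\cl}_\C)_r\big)$. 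This gives the stated membership.

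The only real content is the first step: recognizing $\eta$ as the invariant tautological element, so that the gauge transformation intertwines the $\GL(V_\C)$-actions. After that the argument is formal --- a weight-grading argument together with the elementary identification of the invariants in $W^* \otimes W$ with $\End_{\GL(V_\C)}(W)$. I do not foresee a genuine obstacle; the only point requiring a little care is that $\gtrans \gt (\cdot)$ is well defined on the completed spaces, but since it is $\K^*$-equivariant it preserves the weight filtration and restricts to each finite-weight piece, so passing to completions is harmless.
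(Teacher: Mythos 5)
Your proof is correct and takes essentially the same approach as the paper: both identify $\eta$ as the $\GL(V_\C)$-invariant tautological element corresponding to $\id_V$, invoke the equivariance of the crossed-module action (the paper cites Corollary~\ref{cor:free_cm_equivariant} for this) to conclude that applying $\exp(\ad_\eta)$, equivalently $\gtrans\gt(\cdot)$, preserves invariance, and then read off the membership in $\prod_r \End_{\GL(V_\C)}\bigl((\poly{\Omega}^{3,\cl}_\C)_r\bigr)$ via $\hrho$ and the weight grading. Your unpacking of the final step (the $\K^*$-weight argument identifying invariants of $W^*\otimes W$ with $\End_{\GL}(W)$) is slightly more explicit than the paper's one-line "in other words," but it is the same argument.
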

\begin{proof}
    We recall from Equation~\eqref{eq:abelianized_2curv} that 
    \begin{align}
        \Curv^{\ab} = \sum_{m=0}^\infty \frac{1}{m!} \ad_{\eta}^m(\Curv).
    \end{align}
    Consider $\ad$ as the action of $S(V^*) \otimes  \fk_0 \subset C^\infty(V, \fk_0)$ on $\fa_1(V_\C) \otimes \Omega^{3}_\C$. By ~\Cref{cor:free_cm_equivariant}, we note that $\GL(V_{\C})$ acts on $\cmk$ in a way which preserves the crossed module structure. Therefore, $\GL(V_{\C})$ acts on both $S(V^*) \otimes  \fk_0$ and $\fa_1(V_\C) \otimes \Omega^{3}_\C$, and the action $\ad$ is $\GL(V_{\C})$-equivariant: 
    \begin{align}
    g \gtd \ad_{s}(\omega) = \ad_{g \gtd s} (g \gtd \omega),
    \end{align}
    where $g \in \GL(V_{\C})$, $s \in S(V^*) \otimes  \fk_0$ and $\omega \in \fa_1(V_\C) \otimes \Omega^{3}_\C$. The element $\eta = \sum_{i = 1} ^{d} v_{i} e_{i}$ is $\GL(V_{\C})$-invariant, since it corresponds to the identity $\id_{V_{\C}} \in V^* \otimes V \subset S(V^*) \otimes  \fk_0$. Because $\Curv$ is also $\GL(V_{\C})$-invariant, the same is true for $\ad_{\eta}^m(\Curv)$ and thus for $\Curv^{\ab}$ as well. In other words,
    \begin{align}
        \Curv^{\ab} \in \prod_{r=3}^\infty \End_{\GL(V_{\C})}\left((\poly{\Omega}^{3,\cl}_\C)_r\right).
    \end{align}
\end{proof}

The upshot of Lemma \ref{curvaturesubspace} is that the abelianized curvature can be decomposed as 
\begin{align}
\hrho(K^{\ab}) = \sum_{r = 3}^{\infty} \hrho(K^{\ab})_{r},
\end{align}
where each $\hrho(K^{\ab})_{r} \in \End_{\GL(V_{\C})}\left((\poly{\Omega}^{3,\cl}_\C)_r\right)$. By~\cite[Section 8.2, Theorem 2]{fulton_young_1996}, each representation $(\poly{\Omega}^{3,\cl}_\C)_r$ is irreducible. Therefore, by Schur's lemma, 
\begin{align}
\hrho(K^{\ab})_{r} = \lambda_{r} \id_{(\poly{\Omega}^{3,\cl}_\C)_r},
\end{align}
for a constant $\lambda_{r} \in \C$. In the following theorem, we verify that this constant is $\lambda_{r} = 1$.

\begin{theorem} \label{thm:abelianized_curvature}
    The abelianized curvature is
    \begin{align}
        \hrho(\Curv^{\ab}) = \id \in \End(\poly{\Omega}^{3,\cl}).
    \end{align}
\end{theorem}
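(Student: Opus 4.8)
The plan is to use the reduction already in place. By Schur's lemma (as just observed), $\hrho(\Curv^{\ab})$ decomposes as $\sum_{r\geq 3}\lambda_r\,\id_{(\poly{\Omega}^{3,\cl}_\C)_r}$ for scalars $\lambda_r\in\C$, so it suffices to show $\lambda_r=1$ for every $r$. I would do this by regarding $\hrho(\Curv^{\ab})$ as a map $\poly{\Omega}^{3,\cl}\to\com{\Omega}^{3,\cl}$ through the closed pairing $\langle\cdot,\cdot\rangle_{\cl}$ of~\eqref{eq:closed_pairing} and evaluating it, in each weight $r$, on one conveniently chosen closed $3$-form.

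The one structural input I need beyond the excerpt is that, under the isomorphism $\rho$ of~\Cref{thm:rho_isomorphism}, the $\fk_0$-action $[e_s,-]$ on $\fa_1$ corresponds to multiplication by $e_s$ in the symmetric-algebra factor of $\poly{\Gamma}^{\cl}_2\subseteq S(V)\otimes\Lambda^2V$; this is immediate from the fact that $\rho$ is induced by the symmetrization map $\rho'$ (prepending a tensor factor becomes multiplication in $S(V)$), and is consistent with the identity $\rho(B_{\bq,ijk})=\partial_3(\gamma_{\bq,ijk})$ recorded after~\Cref{thm:rho_isomorphism}. Iterating, $\rho([e_{s_1},[\ldots,[e_{s_m},B_{ijk}]\ldots]])=\partial_3(-e_{s_1}\cdots e_{s_m}\otimes e_i\wedge e_j\wedge e_k)$ for all indices.

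Next I would unwind~\eqref{eq:abelianized_2curv}. With $\Curv$ as in~\eqref{eq:kapranov_2_curvature} and $\eta=\sum_s z_s e_s$ one has $\ad_\eta(E\otimes\omega)=\sum_s[e_s,E]\otimes(z_s\omega)$, hence $\ad_\eta^m(\Curv)=\sum_{i<j<k}\sum_{s_1,\dots,s_m}[e_{s_1},[\ldots,[e_{s_m},B_{ijk}]\ldots]]\otimes(z_{s_1}\cdots z_{s_m}\,dz_i\wedge dz_j\wedge dz_k)$. Applying $\hrho$ and the previous paragraph, $\hrho(\Curv^{\ab})$ is a sum indexed by $(m,\,i<j<k,\,s_1,\dots,s_m)$ of the rank-one operators
\[
\omega\ \longmapsto\ \frac{1}{m!}\,\big\langle\,\partial_3(-e_{s_1}\cdots e_{s_m}\otimes e_i\wedge e_j\wedge e_k)\,,\,\omega\,\big\rangle_{\cl}\;\cdot\;z_{s_1}\cdots z_{s_m}\,dz_i\wedge dz_j\wedge dz_k .
\]
Now fix $r\geq3$, let $\bq_0$ be the index list consisting of $r-3$ copies of $1$, and test this operator against $\omega':=d^2(\omega_{\bq_0,123})=\tfrac{z_1^{r-3}}{(r-3)!}\,dz_1\wedge dz_2\wedge dz_3\in(\poly{\Omega}^{3,\cl})_r$. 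By~\eqref{eq:closed_pairing} the pairing above equals the degree-$3$ pairing $\langle-e_{s_1}\cdots e_{s_m}\otimes e_i\wedge e_j\wedge e_k,\omega'\rangle_3$, and by the explicit formula~\eqref{eq:dualbasisfactorial} this vanishes unless $m=r-3$, every $s_\ell=1$, and $(i,j,k)=(1,2,3)$, in which case it equals $1$. The unique surviving summand therefore contributes $\tfrac1{(r-3)!}\cdot 1\cdot z_1^{r-3}\,dz_1\wedge dz_2\wedge dz_3=\omega'$, so $\hrho(\Curv^{\ab})(\omega')=\omega'$ and $\lambda_r=1$; for $r=3$ only the $m=0$ term survives and one recovers $\hrho(\Curv)=\id$ in weight $3$. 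Since $\rho$ and $\Curv^{\ab}$ are defined over $\R$, this gives $\hrho(\Curv^{\ab})=\id\in\End(\poly{\Omega}^{3,\cl})$.

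I expect the main obstacle to be making the first two steps rigorous on the completed spaces: one must check that $\ad_\eta$ genuinely acts term-by-term as above on $\com{\fa}_1\,\com{\otimes}\,\com{\Omega}^{3,\cl}$ — the individual summands $\ad_\eta^m(\Curv)$ are not closed forms, and only their total sum lands in $\com{\Omega}^{3,\cl}$ (this is the Bianchi identity invoked in~\eqref{eq:abelianized_2curv}) — and that $\rho$ intertwines the $\fk_0$-action with polynomial multiplication. Granting these, the evaluation against $\omega'$ is short, its only delicacy being the signs and factorials in~\eqref{eq:dualbasisfactorial}–\eqref{eq:closed_pairing}.
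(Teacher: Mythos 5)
Your proof is correct and follows essentially the same route as the paper: reduce by Schur's lemma to computing scalars $\lambda_r$, then determine each $\lambda_r$ by evaluating against the test form $\tfrac{z_1^{r-3}}{(r-3)!}\,dz_1\wedge dz_2\wedge dz_3 = d\omega_{1,\ldots,1,2,3}$, and recognize that the only surviving current is $\partial_3(\gamma_{1,\ldots,1,2,3})$, the dual basis element from Lemma~\ref{lem:closed_current}. The one obstacle you flag --- that the individual summands $\ad_\eta^m(\Curv)$ do not have closed form component --- is resolved automatically: $d$ preserves the weight grading, so the Bianchi identity $d\Curv^{\ab}=0$ forces each weight-homogeneous piece $\tfrac{1}{m!}\ad_\eta^m(\Curv)$ (which is the full weight-$(m+3)$ component, as $\ad_\eta$ raises the form weight by exactly one) to be individually closed.
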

\begin{proof}
    The element $\hrho(K^{\ab})_{m}$ has weight $m$ in the form component. Hence, it is given by 
    \begin{align}
    \hrho\left(\frac{1}{m!} \ad_{\eta}^m(\Curv)\right) = \frac{1}{m!} \hrho \ad_{\eta}^m\left(\sum_{1 \leq i < j < k \leq n} B_{i,j,k} \, dz_i \wedge dz_j \wedge dz_k \right).
    \end{align}
    To determine the constant $\lambda_{m}$, it suffices to compute the coefficient of 
    \begin{align}
        d \omega_{1, ..., 1, 2, 3} = \frac{z_{1}^m}{m!} dz_{1} \wedge dz_{2} \wedge dz_{3}.
    \end{align}
    This is easily seen to be 
    \begin{align}
        \rho(\ad_{e_1}^{m}(B_{123}))= \rho(B_{1,...,1,2,3}) = \partial_{3}(\gamma_{1,...,1,2,3}).
    \end{align}
    By Lemma \ref{lem:closed_current}, this is the closed current dual to $d \omega_{1, ..., 1, 2, 3}$. Hence, $\lambda_{m} = 1$. 
\end{proof}

\begin{corollary} \label{cor:abelianized_curavture_explicit}
    The abelianized curvature is
    \begin{align}
        \Curv^{\ab} = \sum_{\bq\geq i < j < k} B_{\bq, ijk}  \otimes d(\omega_{\bq, ijk}) \in \com{\fa}_1(V) \com{\otimes} \com{\Omega}^{3,\cl}.
    \end{align}
\end{corollary}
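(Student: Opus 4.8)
The plan is to read the statement off from~\Cref{thm:abelianized_curvature} by inverting the isomorphism $\hrho = \rho \otimes \id$. Since $\hrho$ respects the weight grading and each graded piece $(\poly{\Omega}^{3,\cl}_\C)_r$ is finite-dimensional, it suffices to express, weight by weight, the identity endomorphism as an explicit element of the completed tensor product. Recall that for a finite-dimensional space $W$ with basis $\{b_a\}$ and dual basis $\{b_a^\vee\}$ one has $\id_W = \sum_a b_a^\vee \otimes b_a$ inside $W^* \otimes W$; I would apply this with $W = (\poly{\Omega}^{3,\cl})_{r+3}$.

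First I would invoke~\Cref{lem:closed_current}: in weight $r+3$, the closed $3$-forms $d(\omega_{\bq,ijk})$, indexed by $q_1 \geq \cdots \geq q_r \geq i < j < k$, form a basis of $(\poly{\Omega}^{3,\cl})_{r+3}$, and under the pairing $\langle \cdot, \cdot\rangle_{\cl}$ their dual basis is the set of closed currents $\partial_3(\gamma_{\bq,ijk})$. Summing over all weights therefore gives
\begin{align}
    \id_{\poly{\Omega}^{3,\cl}} = \sum_{\bq \geq i < j < k} \partial_3(\gamma_{\bq,ijk}) \otimes d(\omega_{\bq,ijk}) \in (\poly{\Omega}^{3,\cl}_\C)^* \,\com{\otimes}\, \com{\Omega}^{3,\cl}_\C.
\end{align}
Next I would apply $\hrho^{-1} = \rho^{-1} \otimes \id$ to both sides, using the identity $\rho(B_{\bq,ijk}) = \partial_3(\gamma_{\bq,ijk})$ recorded just after~\Cref{thm:rho_isomorphism}, which yields $\rho^{-1}(\partial_3(\gamma_{\bq,ijk})) = B_{\bq,ijk}$. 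Combined with~\Cref{thm:abelianized_curvature}, namely $\hrho(\Curv^{\ab}) = \id$, this produces
\begin{align}
    \Curv^{\ab} = \hrho^{-1}(\id) = \sum_{\bq \geq i < j < k} B_{\bq,ijk} \otimes d(\omega_{\bq,ijk}).
\end{align}

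Finally I would observe that no complexification is actually needed: $\rho$ is defined over $\R$ by~\Cref{thm:rho_isomorphism}, the elements $B_{\bq,ijk}$ lie in $\fa_1(V)$, and the forms $\omega_{\bq,ijk}$ lie in $\poly{\Omega}^2(V)$, so the equality already holds in $\com{\fa}_1(V) \,\com{\otimes}\, \com{\Omega}^{3,\cl}(V)$, as claimed. The only step requiring genuine care is the duality bookkeeping in the middle paragraph — confirming that $\{\partial_3(\gamma_{\bq,ijk})\}$ and $\{d(\omega_{\bq,ijk})\}$ are honestly dual bases, with no stray sign or factorial, so that the identity endomorphism is the naive sum of their tensor products — but this is exactly the content of~\Cref{lem:closed_current}, and granting it the corollary is essentially a transcription of~\Cref{thm:abelianized_curvature}.
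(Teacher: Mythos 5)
Your proposal is correct and is exactly the derivation the paper has in mind: the paper states the corollary with no separate proof, treating it as an immediate read-off from \Cref{thm:abelianized_curvature} via \Cref{lem:closed_current} and the identity $\rho(B_{\bq,ijk}) = \partial_3(\gamma_{\bq,ijk})$, and your writeup simply makes that dual-basis bookkeeping explicit, including the correct observation that no complexification is needed since $\rho$ is defined over $\R$.
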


\subsection{Abelianized Surface Signature}
Now, we use the surface holonomy with respect to the abelianized 2-connection to compute the signature of closed surfaces,
\begin{align}
    C^{1}_{\cl}([0,1]^2, V) \coloneqq \left\{ \bX \in C^1([0,1]^2, V) \, : \, \partial \bX = 0\right\}.
\end{align}
To this end, we make use of the relatively simple expression for the abelianized $2$-curvature from \Cref{cor:abelianized_curavture_explicit} in order to simplify this calculation. The following lemma tells us that it suffices to integrate this $2$-curvature over a volume.

\begin{lemma} \label{lem:abelian_sh_from_2curv}
    Let $\bX \in C^1_{\cl}([0,1]^2, V)$ and let $\cX \in C^1([0,1]^3, V)$ be a volume such that all boundaries are sent to $0$ except the top, which is equal to $\bX$,
    \begin{align}
        \cX_{0,t,u} = \cX_{1,t,u} = \cX_{s,0,u} = \cX_{s,1,u} = \cX_{s,t,0} = 0 \andd \cX_{s,t,1} = \bX_{s,t}.
    \end{align}
    Let $(0, \Con^{\ab})$ be an abelian 2-connection valued in $\cmg$, with 2-curvature $\Curv^{\ab}$. Then, the surface holonomy is given by 
    \begin{align} \label{eq:abelian_sh}
        \Hol^{0, \Con^{ab}}(\bX) = \int_{\cX} \Curv^{\ab} = \int_{[0,1]^3} \Curv^{\ab}\left(\frac{\partial \cX_{s,t,u}}{\partial s}, \frac{\partial \cX_{s,t,u}}{\partial t}, \frac{\partial \cX_{s,t,u}}{\partial u}\right)  \, ds \, dt \, du.
    \end{align}
\end{lemma}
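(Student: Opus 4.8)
The plan is to recognize that surface holonomy of an abelian $2$-connection reduces to ordinary integration, and then apply a Stokes-type argument. First I would observe that since $(0, \Con^{\ab})$ has trivial $1$-form component $\con = 0$, the parallel transport $\hol^{\con}$ along any path is trivial, and consequently the defining differential equation \eqref{eq:general_sh} for the surface holonomy simplifies dramatically: the action $\hol^{\con}(\bx^{s',t}) \gt \Con^{\ab}(\cdot, \cdot)$ becomes just $\Con^{\ab}(\cdot, \cdot)$, and since $\Con^{\ab}$ is valued in the abelian Lie algebra $\fa_1 = \ker(\delta)$, the left-translation term $dL$ is trivial as well. Hence the ODE becomes $\partial_t \Hol^{0, \Con^{\ab}}(\bX)_{s,t} = \int_0^s \Con^{\ab}(\partial_s \bX_{s',t}, \partial_t \bX_{s',t}) \, ds'$, which integrates to give $\Hol^{0, \Con^{\ab}}(\bX) = \int_{[0,1]^2} \bX^* \Con^{\ab} = \int_{\bX} \Con^{\ab}$, the naive integral of the pulled-back $2$-form over the square. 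In other words, for abelian $2$-connections, surface holonomy is literally the integral of the $2$-form.

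Next I would invoke the relation between $\Con^{\ab}$ and its curvature. Since the $1$-connection is trivial, $\Curv^{\ab} = d\Con^{\ab}$ (the wedge-action term $\con \wedge^\gt \Con^{\ab}$ vanishes when $\con = 0$). Now apply Stokes' theorem to the volume $\cX : [0,1]^3 \to V$:
\begin{align}
    \int_{[0,1]^3} \cX^* \Curv^{\ab} = \int_{[0,1]^3} \cX^* d\Con^{\ab} = \int_{[0,1]^3} d(\cX^* \Con^{\ab}) = \int_{\partial([0,1]^3)} \cX^* \Con^{\ab}.
\end{align}
The boundary of the cube consists of six faces; by the hypothesis on $\cX$, five of them are sent to the single point $0 \in V$, so $\cX^* \Con^{\ab}$ restricts to zero on those faces (a constant map pulls back any form to zero). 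The only surviving contribution is the top face $u = 1$, where $\cX_{s,t,1} = \bX_{s,t}$, contributing $\int_{[0,1]^2} \bX^* \Con^{\ab}$. Combining with the first step, $\int_{\cX} \Curv^{\ab} = \int_{\bX} \Con^{\ab} = \Hol^{0, \Con^{\ab}}(\bX)$, which is the claim; the final expression in \eqref{eq:abelian_sh} is just this integral written out in coordinates on the cube.

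The main subtlety — not a real obstacle but the point requiring care — is keeping track of orientations and the sign of Stokes' theorem relative to the outward-normal convention on $\partial([0,1]^3)$, together with checking that the chosen sign convention for $\partial \bX$ and the counter-clockwise boundary loop from \eqref{eq:surface_boundary} is consistent with the way the top face inherits its orientation from the cube. One should verify that, with the conventions already fixed in the paper (in particular the reversed horizontal composition \eqref{eq:thin_cm_composition} and the boundary orientation in \eqref{eq:surface_boundary}), all signs work out to $+1$ as stated; alternatively one simply absorbs any overall sign into the definition. A secondary point worth a remark is that these are formal (infinite) sums in $\com{\fa}_1(V) \com{\otimes} \com{\Omega}^{3,\cl}$, so the integral should be understood componentwise at each truncation level $\fk_1^{(n)}$, where everything is finite-dimensional and the above smooth-manifold arguments apply verbatim; the projective limit then gives the stated identity in $\com{\fa}_1(V)$.
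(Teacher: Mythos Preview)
Your argument is correct. The paper's own proof is a one-line citation: it invokes \cite[Theorem 2.30]{martins_surface_2011}, a general non-abelian Stokes-type result expressing the change of surface holonomy along a homotopy as an integral of the $2$-curvature, and then specializes to the present boundary conditions. Your route is different and more elementary: you first exploit the abelianness of the $2$-connection to collapse the surface-holonomy ODE \eqref{eq:general_sh} to the naive integral $\int_{\bX}\Con^{\ab}$, and then apply ordinary Stokes to the volume $\cX$ using $\Curv^{\ab}=d\Con^{\ab}$. This has the virtue of being entirely self-contained and of making explicit why the abelian hypothesis is what allows the reduction; the paper's citation, by contrast, covers the general (possibly non-abelian) case but hides the mechanism. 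Your remarks on signs and on reading the identity levelwise in the truncations $\fk_1^{(n)}$ are appropriate and address the only points where one should be careful.
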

\begin{proof}
    This is a direct consequence of~\cite[Theorem 2.30]{martins_surface_2011}. 
\end{proof}

Next, the following result shows how the gauge transformation acts on the surface holonomy. 

\begin{proposition}{\cite[Corollary 4.9]{martins_surface_2011}} \label{prop:sh_under_gtrans}
    Let $\bX \in C^1_{\cl}([0,1]^2, V)$ be a smooth closed surface based at the origin, let $(\con, \Con)$ be a $2$-connection and let $(\gtrans, \Gtrans)$ be a gauge transformation. Then,
    \begin{align}
        \Hol^{(\gtrans, \Gtrans) \cdot (\con, \Con)}(\bX) = (\gtrans(0))^{-1} \gt \Hol^{\con, \Con}(\bX).
    \end{align}
\end{proposition}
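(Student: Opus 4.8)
The plan is to reduce to the two elementary kinds of gauge transformation and treat each with the surface-holonomy differential equation~\eqref{eq:general_sh}. By the convention $(\gtrans,\Gtrans)\cdot(\con,\Con) = \Gtrans\cdot\big(\gtrans\cdot(\con,\Con)\big)$, it suffices to prove two statements for a closed, origin-based surface $\bX$: (i) $\Hol^{\gtrans\cdot(\con,\Con)}(\bX) = \gtrans(0)^{-1}\gt\Hol^{\con,\Con}(\bX)$ for any $2$-connection $(\con,\Con)$ and any $\gtrans\in C^\infty(V,G_0)$; and (ii) $\Hol^{\Gtrans\cdot(\con,\Con)}(\bX) = \Hol^{\con,\Con}(\bX)$ for any $2$-connection $(\con,\Con)$ and any $\Gtrans\in\Omega^1(V,\fg_1)$. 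Applying (i) to $(\con,\Con)$ and then (ii) to $\gtrans\cdot(\con,\Con)$ — noting that the basepoint factor $\gtrans(0)^{-1}$ produced in (i) is untouched by the subsequent $\Gtrans$-transformation — gives the claim. Conceptually this is inevitable: by~\Cref{thm:sh_cm_morphism} surface holonomy is a morphism of crossed modules $F^{\con,\Con}\colon\cmthingroup(V)\to\cmG$, equivalently a double functor; a gauge transformation realizes a (pseudo)natural equivalence $F^{\con,\Con}\simeq F^{(\gtrans,\Gtrans)\cdot(\con,\Con)}$; and a closed origin-based surface is a $2$-cell whose source and target $1$-cells are both identities, so the naturality constraint on it collapses to exactly the stated whiskering by the component $\gtrans(0)$ of the transformation at the basepoint, with the purely $1$-cell datum $\Gtrans$ contributing nothing.

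For a direct proof of (i), substitute into~\eqref{eq:general_sh} the transformation law for parallel transport of a $1$-connection under $\con\mapsto\gtrans\cdot\con$, namely $\hol^{\gtrans\cdot\con}(\bx) = \gtrans(\bx_0)\,\hol^{\con}(\bx)\,\gtrans(\bx_1)^{-1}$, evaluated on the tail path $\bx^{s',t}$ of~\eqref{eq:tail_path}, which runs from $\bX_{0,0}=0$ to $\bX_{s',t}$, together with the pointwise identity $(\gtrans\gt\Con)\big|_{\bX_{s',t}} = \gtrans(\bX_{s',t})\gt\big(\Con\big|_{\bX_{s',t}}\big)$. Using $(abc)\gt E = a\gt(b\gt(c\gt E))$ and $g^{-1}\gt(g\gt E) = E$, the varying interior factors $\gtrans(\bX_{s',t})$ cancel, so the integrand of the transformed equation equals $\gtrans(0)\gt(\text{original integrand})$. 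Since $\gtrans(0)$ is constant in $(s,t)$ and acts on $G_1$ by a group automorphism (whose derivative at the identity is the Lie-algebra action on $\fg_1$), one checks that $\gtrans(0)\gt\Hol^{\con,\Con}(\bX)_{s,t}$ solves the transformed $t$-ODE with the same initial condition $e_{G_1}$; uniqueness of solutions and evaluation at $(s,t)=(1,1)$ then give the formula. (The exponent on the basepoint factor is tied to the orientation conventions for $\delta$ and for the double-groupoid composition; with the conventions fixed in~\Cref{ex:thin_cm} and~\eqref{eq:thin_cm_composition}, matching~\cite{martins_surface_2011}, this reads $\gtrans(0)^{-1}$, and in any case it is irrelevant in all applications below, where $\gtrans(0)=e$.)

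For (ii), the $\Gtrans$-transformation changes $\con$ only by $\delta\Gtrans$ and $\Con$ by $-d\Gtrans-\con\wedge^\gt\Gtrans-\tfrac12[\Gtrans,\Gtrans]$; the resulting change in surface holonomy — computed in~\cite[Section 4]{martins_surface_2011} — is a whiskering of $\Hol^{\con,\Con}(\bX)$ by the $G_1$-element built from $\Gtrans$ and $\hol^{\con}$ along the boundary loop $\partial\bX$, and this is trivial because $\partial\bX=0$. Alternatively one can argue via curvatures: by~\eqref{eq:gauge_trans_fake_flat_curvature} the $2$-curvature is unchanged and fake-flatness is preserved, so the non-abelian analogue of the integral formula of~\Cref{lem:abelian_sh_from_2curv}, applied to any volume filling $\bX$, shows $\Hol$ is unchanged. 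Combining (i) and (ii) completes the proof. I expect the main obstacle to be precisely this non-abelian bookkeeping: the claim that the $\Gtrans$-part acts trivially on closed surfaces, and the well-posedness and uniqueness manipulations with the surface-transport ODE, are delicate in the $G_1$-nonabelian setting and are really the content imported from~\cite{martins_surface_2011}. A fully self-contained treatment would either reprove~\cite[Theorem 2.32 and Corollary 4.9]{martins_surface_2011}, or — more cleanly — establish surface holonomy as a double functor, observe that a gauge transformation is a (pseudo)natural transformation between double functors, and read off the formula from the defining naturality condition on a closed based surface, as indicated above.
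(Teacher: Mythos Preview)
The paper does not prove this proposition at all: it is stated with a citation to \cite[Corollary 4.9]{martins_surface_2011} and used as a black box. Your proposal, by contrast, sketches an actual argument, so there is nothing in the paper to compare against beyond the bare citation.

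Your decomposition into the $\gtrans$-part and the $\Gtrans$-part is the natural one, and the computation for (i) is essentially correct: plugging $\hol^{\gtrans\cdot\con}(\bx^{s',t}) = \gtrans(0)\,\hol^{\con}(\bx^{s',t})\,\gtrans(\bX_{s',t})^{-1}$ and $(\gtrans\gt\Con)|_{\bX_{s',t}} = \gtrans(\bX_{s',t})\gt\Con|_{\bX_{s',t}}$ into \eqref{eq:general_sh} does collapse the integrand to $\gtrans(0)\gt(\text{original integrand})$, and the automorphism argument pushes this through the ODE. Note, however, that this computation literally produces $\gtrans(0)\gt\Hol^{\con,\Con}(\bX)$, not $\gtrans(0)^{-1}$; your appeal to ``orientation conventions'' to flip the exponent is not a proof. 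You are right that this is immaterial for the paper's applications (where $\gtrans(0)=e$), but if you want a self-contained argument you should either track the conventions of \cite{martins_surface_2011} precisely or simply state the formula with the exponent your computation actually yields.

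For (ii) your first argument --- that the $\Gtrans$-correction to surface holonomy is a boundary term which vanishes for closed $\bX$ --- is correct but is exactly the content you are importing from \cite[Section 4]{martins_surface_2011}; it is not a new proof. Your ``alternative'' via \Cref{lem:abelian_sh_from_2curv} does not work as stated: that lemma is for abelian $2$-connections, and there is no ``non-abelian analogue'' established in the paper (indeed, writing surface holonomy as a volume integral of $2$-curvature in the non-abelian case requires precisely the kind of path-ordered machinery whose gauge covariance you are trying to establish). So (ii) is not self-contained. This is fine given that the paper itself is content to cite the result, but you should be honest that your argument for (ii) is a reference to \cite{martins_surface_2011}, not an independent derivation.
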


Putting this all together, we obtain the following result. 
\begin{theorem} \label{thm:ssig_equiv_to_abelianization}
    For any smooth closed surface $\bX \in C^1_{\cl}([0,1]^2, V)$ based at the origin, the surface signature is given by
    \begin{align}
        \Sig(\bX) = \sum_{\bq \geq i < j < k} B_{\bq, ijk} \int_{\bX} \omega_{\bq, ijk} \in \com{\fa}_1(V).
    \end{align}
    Furthermore, using $\rho$ to embed $\com{\fa}_1(V)$ into the space of formal $2$-currents $\com{\Gamma}_2(V)$, the surface signature is given by the following expression 
    \[
        \Sig(\bX) = \sum_{\alpha \in \N^n, \ i < j} \frac{1}{\alpha !} e^{\alpha} \otimes e_{i} \wedge e_{j} \int_{\bX} z^{\alpha} dz_{i} \wedge dz_{j} \in \com{\Gamma}_2(V). 
    \]
\end{theorem}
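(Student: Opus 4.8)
The plan is to use the gauge transformation of~\Cref{ssec:abelianization} to replace the universal $2$-connection $(\conk,\Conk)$ by the abelianized connection $(0,\Conk^{\ab})$, whose surface holonomy is a single volume integral of the $2$-curvature, and then to apply Stokes' theorem; I would carry everything out at each truncation $\cmk^{(n)}(V)$ and pass to the projective limit, which causes no difficulty since the gauge transformation $(\gtrans,\Gtrans)$ of~\eqref{eq:kap_gtrans}--\eqref{eq:kap_Gtrans} and the curvature formula of~\Cref{cor:abelianized_curavture_explicit} are both compatible with the truncations. As $(\gtrans,\Gtrans)\cdot(\conk,\Conk)=(0,\Conk^{\ab})$ by~\eqref{eq:abelianized_2_con}, \Cref{prop:sh_under_gtrans} gives $\Hol^{0,\Conk^{\ab}}(\bX)=(\gtrans(0))^{-1}\gt\Hol^{\conk,\Conk}(\bX)=(\gtrans(0))^{-1}\gt\Sig(\bX)$; and since $\eta(0)=\sum_i z_i(0)e_i=0$, one has $\gtrans(0)=\exp_\otimes(0)=1$, so the action of $(\gtrans(0))^{-1}$ is trivial and $\Sig(\bX)=\Hol^{0,\Conk^{\ab}}(\bX)$. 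Because $\Conk^{\ab}$ is valued in the abelian Lie algebra $\com{\fa}_1(V)$, I identify the corresponding abelian Lie group with $\com{\fa}_1(V)$, consistently with $\Sig(\bX)\in\com{\fa}_1(V)$ in the statement.

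\textbf{Stokes.} Since $\bX$ is closed, after a thin homotopy — which alters neither side of the claimed identity, as both are thin-homotopy invariants of closed surfaces — its four boundary paths may be assumed constant at the origin, so the cone $\cX_{s,t,u}\coloneqq\psi(u)\,\bX_{s,t}$, with $\psi\in C^\infty([0,1],[0,1])$ surjective with sitting instants, is a smooth volume all of whose faces are constant except the top, where $\cX_{s,t,1}=\bX_{s,t}$. Applying~\Cref{lem:abelian_sh_from_2curv} and inserting the formula of~\Cref{cor:abelianized_curavture_explicit} for $\Curv^{\ab}$,
\begin{align}
    \Sig(\bX)=\int_{\cX}\Curv^{\ab}=\sum_{\bq\geq i<j<k}B_{\bq,ijk}\int_{\cX}d(\omega_{\bq,ijk}).
\end{align}
By Stokes' theorem on $[0,1]^3$, the five constant faces of $\cX$ pull the $2$-form $\omega_{\bq,ijk}$ back to zero, while the top face contributes $\int_{\bX}\omega_{\bq,ijk}$ with the orientation conventions of~\Cref{lem:abelian_sh_from_2curv}; hence $\int_{\cX}d(\omega_{\bq,ijk})=\int_{\bX}\omega_{\bq,ijk}$, which is the first formula.

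\textbf{The coordinate formula.} Next I would apply the embedding $\rho\colon\com{\fa}_1(V)\hookrightarrow\com{\Gamma}_2(V)$ of~\Cref{thm:rho_isomorphism}, whose image is the space of closed formal $2$-currents, together with $\rho(B_{\bq,ijk})=\partial_3(\gamma_{\bq,ijk})$, to rewrite the first formula as $\rho(\Sig(\bX))=\sum_{\bq\geq i<j<k}\big(\int_{\bX}\omega_{\bq,ijk}\big)\,\partial_3(\gamma_{\bq,ijk})$. Writing $T\coloneqq\sum_{\alpha\in\N^n,\ i<j}\frac{1}{\alpha!}\big(\int_{\bX}z^\alpha\,dz_i\wedge dz_j\big)\,e^\alpha\otimes e_i\wedge e_j$ for the claimed answer, I would first verify that $T$ is a closed current: for $\beta\in\poly{\Omega}^1(V)$, formulas~\eqref{eq:codifferential} and~\eqref{eq:dualbasisfactorial} identify $\langle\partial_2 T,\beta\rangle$ with a scalar multiple of $\int_{\bX}d\beta=\int_{\partial\bX}\beta=0$. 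By~\Cref{lem:closed_current} a closed $2$-current is determined, weight by weight, by its $\langle\cdot,\cdot\rangle_{\cl}$-pairings against the basis $\{d(\omega_{\bp,abc})\}$ of closed $3$-forms; using~\eqref{eq:closed_pairing}, \eqref{eq:dualbasisfactorial} and the definition of $\omega_{\bp,abc}$ one computes $\langle T,d(\omega_{\bp,abc})\rangle_{\cl}=-\langle T,\omega_{\bp,abc}\rangle_2=\int_{\bX}\omega_{\bp,abc}$, and the duality of $\{\partial_3(\gamma_{\bq,ijk})\}$ with $\{d(\omega_{\bq,ijk})\}$ from~\Cref{lem:closed_current} gives the same value for $\rho(\Sig(\bX))$. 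Hence $\rho(\Sig(\bX))=T$, and injectivity of $\rho$ on $\com{\fa}_1(V)$ yields the second formula.

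\textbf{Expected main obstacle.} The conceptual work — converting the surface-holonomy ODE into a volume integral — is already in place via~\Cref{ssec:abelianization} and~\Cref{thm:abelianized_curvature}, so the present argument is essentially an assembly of cited results. The part I expect to demand the most care is the bookkeeping: matching orientation conventions in the Stokes step so that no spurious sign enters the first formula, and, for the second formula, tracking the sign $(-1)^{m(m-1)/2}$ and the factor $\alpha!$ in the equivariant pairing~\eqref{eq:dualbasisfactorial} alongside the sign in~\eqref{eq:closed_pairing}; these are routine but must be executed consistently with the conventions fixed in~\Cref{ssec:forms_and_currents}.
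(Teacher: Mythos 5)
Your proof is correct and follows essentially the same route as the paper: gauge-transform to the abelianized connection and use $\gtrans(0)=1$ with \Cref{prop:sh_under_gtrans}, express the holonomy as a volume integral of $\Curv^{\ab}$ via \Cref{lem:abelian_sh_from_2curv} and \Cref{cor:abelianized_curavture_explicit}, apply Stokes, and then pass to the coordinate formula using the duality of \Cref{lem:closed_current}. Your derivation of the second formula (verifying that $T$ is closed and then pairing against the basis $\{d\omega_{\bp,abc}\}$ of closed $3$-forms) is a slightly more explicit unpacking of the paper's dual-basis argument — same idea, just spelled out — and your explicit cone $\cX_{s,t,u}=\psi(u)\bX_{s,t}$ supplies the volume that the paper's proof invokes without constructing.
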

\begin{proof}
    Let $(\conk, \Conk)$ denote the universal $2$-connection and recall that its surface holonomy is denoted $\Sig(\bX) = \Hol^{\conk, \Conk}(\bX)$. Let $(\gtrans, \Gtrans)$ denote the abelianization gauge transformation from~\eqref{eq:kap_gtrans} and~\eqref{eq:kap_Gtrans} so that $\gtrans(0) = 1$. Given the surface $\bX$, let $\cX$ be a volume as in \Cref{lem:abelian_sh_from_2curv}. Then 
    \begin{align}
        \Sig(\bX) &= \Hol^{0, \Conk^{\ab}}(\bX) =  \int_{[0,1]^3} \Curv^{\ab}\left(\frac{\partial \cX_{s,t,u}}{\partial s}, \frac{\partial \cX_{s,t,u}}{\partial t}, \frac{\partial \cX_{s,t,u}}{\partial u}\right)  \, ds \, dt \, du  \\
        &= \sum_{\bq\geq i < j < k} B_{\bq, ijk}  \int_{\cX} d(\omega_{\bq, ijk}) = \sum_{\bq\geq i < j < k} B_{\bq, ijk}  \int_{\bX} \omega_{\bq, ijk},
    \end{align}
    where the first equality follows from ~\Cref{prop:sh_under_gtrans} and the fact that $\theta(0) = 1$, the second equality follows from \Cref{lem:abelian_sh_from_2curv}, the third equality follows from ~\Cref{cor:abelianized_curavture_explicit}, and the fourth equality follows from Stokes' theorem. Note that this expression for the signature has the form $(\mathrm{id} \otimes \int_{\bX})( \sum_{l} b^l \otimes b_{l})$, for $(b^{l}, b_{l})$ a dual pair of bases of $\com{\fa}_1(V)$ and $\com{\Omega}^{3,\cl}$. Because $\bX$ is closed, we have $\int_{\bX} d\mu = 0$ for all $\mu \in \Omega^1$ by Stokes' theorem. Therefore, using $\rho$ to embed $\com{\fa}_1(V)$ into $\com{\Gamma}_2(V)$, our expression for $\Sig(\bX)$ also has the same form where now $(b^l, b_{l})$ are a dual pair of bases of $\com{\Gamma}_2(V)$ and $\com{\Omega}^{2}$. Our final expression for the signature then follows by using the dual bases from Equation \ref{eq:dualbasisfactorial}. 
\end{proof}

\begin{corollary} \label{cor:trivial_ssig_current}
    Let $\bX \in C^1_{\cl}([0,1]^2, V)$. Then, $\Sig(\bX) = 0$ if and only if
    \begin{align}
        \int_\bX \omega = 0
    \end{align}
    for all compactly supported 2-forms $\omega \in \Omega^2_c(V)$. 
\end{corollary}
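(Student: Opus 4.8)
The plan is to reduce everything to \Cref{thm:ssig_equiv_to_abelianization}, which writes
\begin{align}
    \Sig(\bX) = \sum_{\alpha \in \N^n, \ i < j} \frac{1}{\alpha !}\, e^{\alpha} \otimes e_{i} \wedge e_{j} \int_{\bX} z^{\alpha}\, dz_{i} \wedge dz_{j} \in \com{\Gamma}_2(V).
\end{align}
Since the elements $\tfrac{1}{\alpha!}\, e^\alpha \otimes e_i \wedge e_j$, for $\alpha \in \N^n$ and $i < j$, form a basis of $\com{\Gamma}_2(V)$, the condition $\Sig(\bX) = 0$ is equivalent to the vanishing of $\int_\bX z^\alpha\, dz_i \wedge dz_j$ for every multi-index $\alpha$ and every pair $i < j$, i.e.\ to the vanishing of $\int_\bX \omega$ for every polynomial $2$-form $\omega$. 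So the task is to bridge the gap between polynomial $2$-forms and arbitrary compactly supported ones, using that $\im(\bX)$ is compact.

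For the ``if'' direction, I would argue as follows. Suppose $\int_\bX \omega = 0$ for all $\omega \in \Omega^2_c(V)$. The image $\im(\bX)$ is compact, so choose a bump function $\chi \in C^\infty_c(V)$ with $\chi \equiv 1$ on an open neighbourhood of $\im(\bX)$. For any polynomial $2$-form $\omega = z^\alpha\, dz_i \wedge dz_j$, the form $\chi\,\omega$ is compactly supported and, because $\int_\bX$ only depends on the restriction of a form to $\im(\bX)$ (indeed $\int_\bX \chi\omega = \int_{[0,1]^2} \bX^*(\chi\omega) = \int_{[0,1]^2} \bX^*\omega$ since $\chi \circ \bX \equiv 1$), the hypothesis gives $\int_\bX z^\alpha\, dz_i \wedge dz_j = 0$ for all $\alpha$ and $i<j$. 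By the reformulation of \Cref{thm:ssig_equiv_to_abelianization} above, this forces $\Sig(\bX) = 0$.

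For the ``only if'' direction, suppose $\Sig(\bX) = 0$, so $\int_\bX z^\alpha\, dz_i \wedge dz_j = 0$ for all $\alpha$, $i<j$. Let $\omega = \sum_{i<j} f_{ij}\, dz_i \wedge dz_j$ be any compactly supported $2$-form. Writing $\int_\bX \omega = \int_{[0,1]^2} \bX^*\omega$ and expanding the pullback, one gets a sum of terms $(f_{ij}\circ\bX)$ multiplied by $2\times 2$ Jacobian minors of $\bX$; since $\bX$ is $C^1$ on the compact square $[0,1]^2$, these minors are bounded, so the functional $\omega \mapsto \int_\bX \omega$ is continuous with respect to the supremum norm of the coefficient functions $f_{ij}$ on the compact set $\im(\bX)$. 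By the Stone--Weierstrass theorem, polynomials are dense in $C(\im(\bX))$, so I can pick polynomials $p^{(m)}_{ij}$ converging uniformly on $\im(\bX)$ to $f_{ij}$. Each $\int_\bX \sum_{i<j} p^{(m)}_{ij}\, dz_i \wedge dz_j$ is a finite linear combination of the vanishing integrals $\int_\bX z^\alpha\, dz_i \wedge dz_j$, hence is zero, and passing to the limit yields $\int_\bX \omega = 0$. The main (and quite mild) obstacle is exactly this last step: one must observe that $\int_\bX(\cdot)$ factors through restriction to the compact set $\im(\bX)$ and is continuous there, which is the one point where $C^1$-regularity of $\bX$ enters; everything else is a direct consequence of \Cref{thm:ssig_equiv_to_abelianization}.
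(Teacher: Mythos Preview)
Your proof is correct and follows essentially the same route as the paper: reduce to \Cref{thm:ssig_equiv_to_abelianization} to get the equivalence with vanishing of all polynomial $2$-form integrals, then bridge to compactly supported forms by a density/approximation argument. The paper compresses this bridge into a single sentence (``polynomial $2$-forms are dense in the compactly supported $2$-forms''), whereas you spell it out carefully via a bump function for one direction and Stone--Weierstrass on the compact image for the other; your version is in fact the more honest of the two.
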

\begin{proof}
    By ~\Cref{thm:ssig_equiv_to_abelianization}, the condition that $\Sig(\bX) = 0$ is equivalent to $\int_{\bX} \omega = 0$ for all polynomial $2$-forms. Since polynomial 2-forms $\poly{\Omega}^2$ are dense in the compactly supported 2-forms $\Omega_c^2$, we obtain the desired result.
\end{proof}

\begin{remark} \label{rem:analyticcond}Let $\bx \in C^1([0,1], V)$ be a thinly null-homotopic path. The existence of a height function $h: [0,1] \to \R$ from the analytic condition \ref{A1} is used to detect cancellations: whenever $s < t$ with $h(s) = h(t) = \inf_{s \leq u \leq t} h(u)$, the restriction $\bx|_{[s,t]}$ is also thinly null homotopic. Due to the fact that cancellations in 2-dimensional thin homotopy can be non-local, a direct analogue of the height function does not exist. Instead, the condition in~\Cref{cor:trivial_ssig_current} uses the integration of compactly supported forms to detect both local and non-local cancellations which occur on a surface $\bX \in C^1_{\cl}([0,1]^2, V)$. Thus, we interpret this condition to be the generalization of \ref{A1}.
\end{remark}

\section{Piecewise Linear Surface Signature and Decompositions} \label{sec:PL}

In this section, we algebraically construct a crossed module of piecewise linear surfaces, extending $\PL_0(V)$ from~\Cref{ssec:pl_paths}. This leads to an algebraic definition of the surface signature, and furthermore, a decomposition of the signature into abelian and boundary components.

\subsection{Free Crossed Modules of Groups}
We will begin by discussing free crossed modules of groups. This will be used in~\Cref{sec:PL_thin_homotopy}, but it also serves as motivation for the construction of the piecewise linear crossed module in the following section.
Our aim is to show a global universal property, analogous to that for Lie algebras in~\Cref{thm:free_xlie}. \medskip%

Given a group $G$, let $\XGrp(G)$ be the subcategory of crossed modules
\begin{align}
    \cmG = (\delta: H \to G, \gt)
\end{align}
where $G$ is fixed. Let $\slice{\Set}{G}$ denote the category of set functions $\rho: L \to G$, where $L$ is a set and $G$ is the fixed group. There is a natural forgetful functor, along with a free functor as a left adjoint,
\begin{align}
    \For: \XGrp(G) \to \slice{\Set}{G} \andd \Fr: \slice{\Set}{G} \to \XGrp(G). 
\end{align}

The free crossed module generated by $\rho: L \to G$ can be constructed as follows~\cite[Proposition 3.4.3]{brown_nonabelian_2011}. Let $\tFr(\rho)$ be the free group generated by $G \times L$. It is equipped with a left $G$-action by automorphisms which is defined on the generators by $g' \gt (g, \lambda) = (g'g, \lambda)$. Define a homomorphism $\delta: \tFr(\rho) \to G$ by the following map on generators
\begin{align}
    \delta(g, \lambda) = g \cdot \rho(\lambda) \cdot g^{-1}.
\end{align}
This is the \emph{free pre-crossed module on $\rho$}.
Then, the free crossed module $\Fr(\rho) \coloneqq \tFr(\rho)/\sim_{\Pf}$ is the quotient by the Peiffer identity,
\begin{align}
    \delta(g, \lambda) \gt (g',\lambda') \sim_{\Pf} (g,\lambda) \cdot (g',\lambda') \cdot (g,\lambda)^{-1}.
\end{align}
The unit of the adjunction provides a map $\eta_\rho: L \to \Fr_1(\rho)$ given by sending $\lambda \in L$ to the equivalence class of $(1,\lambda)$. \medskip

Next, let $\SG$ \label{pg:SG} denote the comma category $(\id \downarrow \For)$ associated to the functors $\id: \Set \to \Set$ and $\For: \Grp \to \Set$. An object of $\SG$ is given by the data of a set $L$, a group $G$, and a map of sets $\rho: L \to G$. A morphism $f = (f_{L}, f_{G}) : (\rho_{1}: L_{1} \to G_{1}) \to (\rho_{2} : L_{2} \to G_{2})$ consists of a set map $f_L: L_1 \to L_2$ and a group homomorphism $f_G : G_1 \to G_2$ such that the following diagram commutes 
\begin{equation}
    \begin{tikzcd}
        L_1 \ar[r, "\rho_1"] \ar[d,swap, "f_L"] &  G_1\ar[d, "f_G"]  \\
        L_2 \ar[r, "\rho_2"]  & G_2.
    \end{tikzcd}
\end{equation}
There exists a natural forgetful functor $\For: \XGrp \to \SG$. Free crossed modules of groups satisfy the following universal property. 

\begin{theorem} \label{thm:free_xgrp}
    Let $(\rho: L \to H) \in \SG$, $\cmG = (\delta: G_1 \to G_0, \gt) \in \XGrp$, and 
    \begin{align}
    f = (f_1, f_0) : (\rho: L \to H) \to \For(\delta: G_1 \to G_0, \gt).
    \end{align}
    Then there is a unique morphism of crossed modules 
    \begin{align}
    F = (F_1, F_0): \Fr(\rho) \to \cmG,
    \end{align}
    such that $F_1 \circ \eta_\rho = f_1$ and $F_0 = f_0$.
\end{theorem}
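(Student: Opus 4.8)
The plan is to bootstrap the global universal property (Theorem \ref{thm:free_xgrp}) from the relative one recalled just above, which holds in each fixed fibre $\XGrp(G_0)$. Given the data $f = (f_1, f_0): (\rho: L \to H) \to \For(\cmG)$, the first step is to push $\rho$ forward along $f_0$: the composite $f_0 \circ \rho : L \to G_0$ is an object of $\slice{\Set}{G_0}$, and $f_1: L \to G_1$ together with the identity on $G_0$ is a morphism in $\slice{\Set}{G_0}$ from $f_0 \circ \rho$ to $\For_{G_0}(\cmG)$ (using that $\delta \circ f_1 = f_0 \circ \rho$, which is exactly the commuting square defining a morphism in $\SG$). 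By the relative universal property, there is a unique morphism of crossed modules over $G_0$, call it $\bar F = (\bar F_1, \id_{G_0}) : \Fr(f_0 \circ \rho) \to \cmG$, with $\bar F_1 \circ \eta_{f_0\circ\rho} = f_1$.

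The second step is to relate $\Fr(f_0 \circ \rho)$ to $\Fr(\rho)$. The construction recalled above is manifestly functorial in the pair $(\rho, G_0)$: a morphism $(f_L, f_G)$ in $\SG$ induces a homomorphism $\tFr(\rho_1) \to \tFr(\rho_2)$ on the free groups on $G_1 \times L_1 \to G_2 \times L_2$ sending $(g,\lambda) \mapsto (f_G(g), f_L(\lambda))$, which is $f_G$-equivariant, commutes with the two $\delta$ maps, and descends through the Peiffer quotients. Applying this to the morphism $(\id_L, f_0) : \rho \to f_0\circ\rho$ in $\SG$ gives a crossed module morphism $\Fr(\id_L, f_0) : \Fr(\rho) \to \Fr(f_0 \circ \rho)$ whose degree-zero component is $f_0$ and which intertwines the units, $\Fr(\id_L,f_0)_1 \circ \eta_\rho = \eta_{f_0\circ\rho}$. (Strictly, one should note $\Fr$ here is shorthand for this two-step construction; its functoriality is immediate from the explicit formulas.) Composing, we set $F \coloneqq \bar F \circ \Fr(\id_L, f_0) : \Fr(\rho) \to \cmG$; then $F_0 = f_0$ and $F_1 \circ \eta_\rho = \bar F_1 \circ \eta_{f_0 \circ \rho} = f_1$, giving existence.

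For uniqueness, suppose $F' = (F_1', F_0') : \Fr(\rho) \to \cmG$ also satisfies $F_1' \circ \eta_\rho = f_1$ and $F_0' = f_0$. Since $F_0' = f_0$, the morphism $F'$ factors as a morphism over $G_0$ out of the pushforward: concretely, $F'$ restricted along $\Fr(\id_L, f_0)$ would have to equal $\bar F$ by the uniqueness clause of the relative universal property, \emph{provided} $\Fr(\id_L, f_0)$ is surjective enough to detect this — which it is, because $\Fr_1(\rho)$ is generated as a group by the $G_0$-orbit (via $f_0$, i.e. the $G_1$-action pulled back) of $\eta_\rho(L)$, and $F_1'$ is determined on these generators by $f_1$ and the equivariance condition $F_1'(x \gt E) = F_0'(x) \gt F_1'(E)$. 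The key point making this rigorous is that $\Fr_1(\rho)$ is generated by $\{\, x \gt \eta_\rho(\lambda) : x \in G_0,\ \lambda \in L \,\}$ — indeed $\tFr(\rho)$ is free on $G_0 \times L$ and $(g,\lambda) = g \gt (1,\lambda)$ — so any crossed module morphism out of it is pinned down by its values on $\eta_\rho(L)$ and its degree-zero part. The main obstacle is purely bookkeeping: one must check carefully that the induced map on the free-group-and-Peiffer-quotient construction is well defined (respects the Peiffer relations) and is a crossed module morphism in the full sense of Definition \ref{def:GCM}, i.e. intertwines both $\delta$ and the actions; all of this is routine from the explicit generators-and-relations description, so no genuinely hard step arises — the content is entirely in organizing the two-variable functoriality of $\Fr$ correctly.
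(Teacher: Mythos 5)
Your proof is correct, and it takes a genuinely different route from the paper's. The paper's proof keeps the base group $H$ fixed and pulls back $\cmG$ along $f_0$ to the pullback crossed module $f_0^*\cmG$ over $H$, then invokes the relative universal property of $\Fr(\rho)$ in $\XGrp(H)$, and handles uniqueness by appealing to the uniqueness of the factorization through the pullback (Brown's Theorem 5.1.2). You go the dual way: you push $\rho$ forward along $f_0$ to an object $f_0 \circ \rho \in \slice{\Set}{G_0}$, apply the relative universal property in the fibre $\XGrp(G_0)$, and then transport back along the base-change morphism $\Fr(\id_L, f_0) : \Fr(\rho) \to \Fr(f_0 \circ \rho)$, which requires establishing (or at least invoking) the two-variable functoriality $\Fr : \SG \to \XGrp$. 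This is a legitimate alternative; the functoriality check is indeed routine once you notice that the commuting square defining morphisms in $\SG$ is exactly what makes the map on generators $(g,\lambda) \mapsto (f_G(g), f_L(\lambda))$ intertwine the $\delta$'s and preserve Peiffer relators. Your approach makes the adjunction $\Fr \dashv \For : \XGrp \to \SG$ explicit as a two-step composite, which is clean conceptually; the paper's approach is slightly more self-contained because it only invokes constructions already recorded in Brown's book.

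Two small things to tighten. First, in the uniqueness paragraph you write that $\tFr(\rho)$ is free on $G_0 \times L$ and that $\Fr_1(\rho)$ is generated by the $G_0$-orbit of $\eta_\rho(L)$; this should be $H \times L$ and the $H$-orbit — the acting group in $\Fr(\rho)$ is the base $H$, not $G_0$. The argument is unaffected: $\Fr_1(\rho)$ is generated by $\{\,h \gt \eta_\rho(\lambda) : h \in H,\ \lambda \in L\,\}$, and any crossed module morphism $F' = (F_1', F_0')$ is forced to send $h \gt \eta_\rho(\lambda) \mapsto F_0'(h) \gt F_1'(\eta_\rho(\lambda)) = f_0(h) \gt f_1(\lambda)$, which pins $F_1'$ down. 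This direct generator argument is arguably more elementary than the paper's appeal to the pullback factorization theorem. Second, you should state explicitly (it is easy, but it is a needed lemma) that the base-change map intertwines the units, $\Fr(\id_L, f_0)_1 \circ \eta_\rho = \eta_{f_0 \circ \rho}$, which follows because $\eta_\rho(\lambda)$ is the class of $(1_H, \lambda)$ and $(1_H, \lambda) \mapsto (1_{G_0}, \lambda)$.
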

\begin{proof}
    Given the group homomorphism $f_0: H \to G_0$, we consider the pullback crossed module~\cite[Definition 5.1.1]{brown_nonabelian_2011}
    \begin{align}
        f_0^* \cmG \coloneqq (\delta: f_0^* G_1 \to H, \gt), \quad f_0^* G_1  \coloneqq \{ (h,g) \in H \times G_1 \, : \, f_0(h) = \delta(g)\},
    \end{align}
    and define the map $\tf_1 : f_0^* G_1 \to G_1$ given by $\tf_1(h,g) = g$, which gives a morphism of crossed modules 
    \begin{align}
        \tf = (\tf_1, f_0): f_0^* \cmG \to \cmG.
    \end{align}
    We define a map $u_1(f): L \to f_0^* G_1$ by $u_1(f)(\lambda) = (\rho(\lambda), f_1(\lambda))$. This defines a map
    \begin{align}
        u(f) = (u_1(f), \id_H) : (\rho: L \to H) \to \For(f_0^* \cmG)
    \end{align}
    in $\slice{\Set}{H}$ such that $\For(\tf) \circ u(f) = f$. Then, using the universal property of $\Fr(\rho)$ in $\XGrp(H)$, there is a unique map
    \begin{align}
        g: \Fr(\rho) \to f_0^*\cmG
    \end{align}
    which satisfies $\For(g) \circ \eta_\rho = u(f)$ in $\slice{\Set}{H}$. We define
    \begin{align}
        F = \Fr(\rho) \xrightarrow{g} f_0^* \cmG \xrightarrow{\tf} \cmG,
    \end{align}
    which is a morphism of crossed modules and it satisfies
    \begin{align}
        \For(F) \circ \eta_\rho = \For(\tf) \circ \For(g) \circ \eta_\rho = \For(\tf) \circ u(f) = f,
    \end{align}
    so $F$ is the desired morphism. The map $F$ is unique because the map $g$ coincides with the unique factorization of the morphism $F$ through the pullback crossed module~\cite[Theorem 5.1.2]{brown_nonabelian_2011}.
\end{proof}

\subsection{Crossed Module of Piecewise Linear Surfaces}\label{ssec:pl_surfaces}
Here, we will generalize the construction in~\Cref{ssec:pl_paths} and define a linear algebraic version of the free crossed module construction. Consider $\PL_0(V)$, and define the group homomorphism
\begin{align}
    t: \PL_0(V) \to V, \quad (v_1, \ldots, v_k) \mapsto \sum_{i=1}^k v_i
\end{align}
which defines the \emph{endpoint} of a PL path. We define the \emph{group of piecewise linear loops} by $\PL_0^{\cl}(V) \coloneqq \ker(t)$. Furthermore, we define the set of \emph{planar loops} by \label{pg:planarloop}
\begin{align}
    \planarloop(V) \coloneqq \{ \bx\in \PL_0^{\cl}(V) \, : \, \dim( \SPAN(\bx)) \leq 2\}, 
\end{align}
where we use the notion of span from~\eqref{eq:span_def} using the minimal representative of $\bx$.
We note that the span of any nontrivial planar PL loop must be two-dimensional. There is a natural inclusion
\begin{align}
    \iota: \planarloop(V) \hookrightarrow \PL_0(V). 
\end{align}

\begin{definition} \label{def:kite}
    A \emph{kite} is a pair $(\bw, \bb)$, consisting of a \emph{tail path} $\bw \in \PL_0(V)$ and a planar loop $\bb \in \planarloop(V)$. We define the set of kites by
\begin{align} \label{eq:kites}
    \Kite(V) \coloneqq \PL_0(V) \times \planarloop(V).
\end{align}
\end{definition} 
The \emph{pre-crossed module of piecewise linear surfaces} is defined to be
\begin{align}
    \tPL(V) \coloneqq (\delta: \tPL_1(V) \to \PL_0(V), \gt) \quad \text{where} \quad \tPL_1(V) \coloneqq \FG\big(\Kite(V)\big)/\sim
\end{align} 
is the quotient of the free group generated by kites subject to the following relations:
\begin{enumerate}[label=(\textbf{PL1.\arabic*})]
    \item \label{PL1.1} $(\bw_1, \bb_1) \concat (\bw_2, \bb_2) \sim (\bw_1, \bb_1 \concat \bu \concat \bb_2 \concat \bu^{-1})$, where $\bu = \bw_1^{-1} \concat \bw_2$, if 
    \begin{align}\bb_1 \concat \bu \concat \bb_2 \concat \bu^{-1} \in \planarloop(V);\end{align}
    \begin{figure}[!h]
    \includegraphics[width=\linewidth]{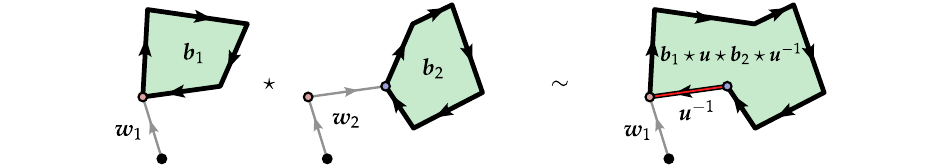}
    \end{figure}

    \item \label{PL1.2} $(\bw, \emptyset_0) \sim \emptyset_1$, where $\emptyset_0,\emptyset_1$ denote the empty words in $\PL_0(V)$ and $\FG\big(\Kite(V)\big)$ respectively; and
    \item \label{PL1.3} $(\bw, \bx \concat \bb \concat \bx^{-1}) \sim (\bw \concat \bx, \bb)$ for any $ \bx \in \PL_0(V)$ such that $\bx \concat \bb \concat \bx^{-1} \in \planarloop(V)$. 
\end{enumerate}

\begin{remark}
    Here, we do not prescribe the planes on which the kites and their compositions are defined. However, assuming that the loops are non-trivial, in~\Cref{cor:composition_of_kites}, we show that if \ref{PL1.1} holds, then $\SPAN(\bu) \subseteq \SPAN(\bb_1) = \SPAN(\bb_2)$. Furthermore, in~\Cref{lem:tail_of_planar_kite_is_planar}, we show that if \ref{PL1.3} holds, then $\SPAN(\bx) \subseteq \SPAN(\bb)$.
\end{remark}

With these relations, we note that $(\bw, \bb)^{-1} = (\bw, \bb^{-1})$, since
\begin{align}
    (\bw, \bb) \concat (\bw, \bb^{-1}) \sim (\bw, \bb \concat \bb^{-1}) = (\bw, \emptyset_0) \sim \emptyset_1,
\end{align}
where here, $\bu = \bw^{-1} \concat \bw = \emptyset_0$. 

\begin{remark}
    As with the 1-dimensional case, we can represent elements of $\tPL_1(V)$ as words in $\Kite(V)$, without considering the formal inverses. In particular, we can make the equivalent definition of $\tPL_1(V)$ in terms of the free monoid, which we will use interchangeably,
    \begin{align} \label{eq:tpl_free_monoid}
        \tPL_1(V) = \FMon(\Kite(V))/\sim.    
    \end{align}
\end{remark}
Next, we define the boundary map $\delta: \tPL_1(V) \to \PL_0(V)$ on the generators by
\begin{align}
    \delta(\bw, \bb) \coloneqq \bw \concat \bb \concat \bw^{-1},
\end{align}
and the action of $\PL_0(V)$ on the generators of $\tPL_1(V)$ by
\begin{align}
    \bx \gt (\bw, \bb) \coloneqq (\bx \concat \bw, \bb). 
\end{align}

\begin{lemma}
    The structure $\widetilde{\cmPL}(V) = (\delta: \tPL_1(V) \to \PL_0(V), \gt)$ is a pre-crossed module of groups.
\end{lemma}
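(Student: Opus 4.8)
The plan is to verify the three axioms in Definition~\ref{def:GCM} for pre-crossed modules: that $\delta$ is a well-defined group homomorphism, that $\gt$ is a well-defined left action of $\PL_0(V)$ on $\tPL_1(V)$ by group automorphisms, and that the compatibility identity $\delta(\bx \gt E) = \bx \cdot \delta(E) \cdot \bx^{-1}$ holds. Since $\tPL_1(V)$ is presented as a quotient of a free group (equivalently free monoid) on the generating set $\Kite(V)$, the main work is checking that the maps defined on generators respect the relations~\ref{PL1.1}, \ref{PL1.2}, \ref{PL1.3}, after which the universal property of the free group gives well-definedness automatically.

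First I would check that $\delta$ descends to the quotient. The map $\bw \concat \bb \concat \bw^{-1}$ takes values in $\PL_0^{\cl}(V)$ since $t(\bb) = 0$, and it sends the identity to the identity, so it extends to a homomorphism $\FG(\Kite(V)) \to \PL_0(V)$; I must then verify it kills each relation. For \ref{PL1.2}, $\delta(\bw, \emptyset_0) = \bw \concat \emptyset_0 \concat \bw^{-1} = \emptyset_0$. For \ref{PL1.3}, with $\bx \concat \bb \concat \bx^{-1}$ a planar loop, $\delta(\bw \concat \bx, \bb) = \bw \concat \bx \concat \bb \concat \bx^{-1} \concat \bw^{-1} = \delta(\bw, \bx \concat \bb \concat \bx^{-1})$, which is immediate. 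For \ref{PL1.1}, writing $\bu = \bw_1^{-1} \concat \bw_2$, one computes
\begin{align}
    \delta(\bw_1, \bb_1) \concat \delta(\bw_2, \bb_2) &= \bw_1 \concat \bb_1 \concat \bw_1^{-1} \concat \bw_2 \concat \bb_2 \concat \bw_2^{-1} \\
    &= \bw_1 \concat \bb_1 \concat \bu \concat \bb_2 \concat \bu^{-1} \concat \bw_1^{-1} = \delta(\bw_1, \bb_1 \concat \bu \concat \bb_2 \concat \bu^{-1}),
\end{align}
using $\bw_2^{-1} = \bu^{-1} \concat \bw_1^{-1}$. So $\delta$ is a well-defined homomorphism.

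Next I would treat the action. For fixed $\bx \in \PL_0(V)$, the assignment $(\bw,\bb) \mapsto (\bx \concat \bw, \bb)$ extends to an endomorphism of $\FG(\Kite(V))$; I check it preserves the three relations (e.g. for \ref{PL1.1} it replaces $\bw_1, \bw_2$ by $\bx\concat\bw_1, \bx\concat\bw_2$ while leaving $\bu = \bw_1^{-1}\concat\bw_2$ unchanged; for \ref{PL1.3} it sends $(\bw,\bx'\concat\bb\concat\bx'^{-1})$ and $(\bw\concat\bx',\bb)$ to the corresponding pair with $\bw$ replaced by $\bx\concat\bw$; \ref{PL1.2} is clear). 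Thus each $\bx\gt(-)$ descends to an endomorphism of $\tPL_1(V)$, and $(\bx_1 \concat \bx_2) \gt (\bw,\bb) = \bx_1 \gt (\bx_2 \gt (\bw,\bb))$ and $\emptyset_0 \gt (\bw,\bb) = (\bw,\bb)$ hold on generators, so $\gt$ is a left action; invertibility of each $\bx \gt (-)$ follows from the action property with inverse $\bx^{-1}\gt(-)$, making it an automorphism. Finally the compatibility identity is checked on a generator: $\delta(\bx \gt (\bw,\bb)) = \delta(\bx\concat\bw, \bb) = \bx\concat\bw\concat\bb\concat\bw^{-1}\concat\bx^{-1} = \bx \cdot \delta(\bw,\bb)\cdot\bx^{-1}$, and since both sides are homomorphisms in the $\tPL_1(V)$ argument this extends to all of $\tPL_1(V)$. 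The one point requiring a little care — and the closest thing to an obstacle — is purely bookkeeping: ensuring that in relation \ref{PL1.1} all the intermediate concatenations (such as $\bb_1 \concat \bu \concat \bb_2 \concat \bu^{-1}$) are genuinely planar loops when the relation is invoked, so that the symbols on the right-hand side actually name elements of $\Kite(V)$; this is exactly the content flagged in the remark following the relations and in Corollaries~\ref{cor:composition_of_kites} and~\ref{lem:tail_of_planar_kite_is_planar}, and it does not affect the homomorphism or equivariance computations, which are formal identities in $\PL_0(V)$.
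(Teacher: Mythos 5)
The paper states this lemma without proof, so your proposal fills a genuine gap rather than duplicating an existing argument. Your direct verification is correct and complete: you check that $\delta$ respects each of the defining relations~\ref{PL1.1}--\ref{PL1.3} of $\tPL_1(V)$, that the assignment $(\bw,\bb)\mapsto(\bx\concat\bw,\bb)$ does likewise (so that $\gt$ is a well-defined action by automorphisms), and that the pre-crossed-module identity $\delta(\bx\gt E)=\bx\cdot\delta(E)\cdot\bx^{-1}$ holds on generators and hence everywhere. The algebraic manipulations in the \ref{PL1.1} check (using $\bw_2 = \bw_1\concat\bu$, hence $\bw_2^{-1}=\bu^{-1}\concat\bw_1^{-1}$) and the observation that the action leaves $\bu=\bw_1^{-1}\concat\bw_2$ unchanged are both right. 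One small remark on your closing paragraph: the worry about whether $\bb_1\concat\bu\concat\bb_2\concat\bu^{-1}$ is genuinely a planar loop is not actually an obstacle here, because~\ref{PL1.1} is a conditional relation whose hypothesis is unchanged by the action (the $\bb_i$ and $\bu$ are the same before and after), so if the relation was legitimately invoked before applying $\bx\gt(-)$ it remains legitimately invoked afterward; the references to \Cref{cor:composition_of_kites} and \Cref{lem:tail_of_planar_kite_is_planar} are needed elsewhere in the paper (e.g.\ in the proof of \Cref{thm:univ_prop_plcm_local}) but not for this lemma.
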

In order to obtain a crossed module, we quotient by the standard Peiffer identity
\begin{align} \label{eq:PL_peiffer}
    \delta(\bw_1, \bb_1) \gt (\bw_2, \bb_2) \sim_{\Pf} (\bw_1, \bb_1) \concat (\bw_2, \bb_2) \concat (\bw_1, \bb_1^{-1}).
\end{align}

\begin{definition} \label{def:cmpl}
    The \emph{piecewise linear crossed module} is defined by
    \begin{align}
        \cmPL(V) \coloneqq (\delta: \PL_1(V) \to \PL_0(V), \gt),  \quad \text{where} \quad \PL_1(V) \coloneqq \tPL_1(V)/\sim_{\Pf}. 
    \end{align}
\end{definition}

There is a natural inclusion $\xi_{V,1}: \planarloop(V) \to \PL_1(V)$ defined by 
\begin{align} \label{eq:xi_V}
    \xi_{V,1}(\bb) \coloneqq (\emptyset_0, \bb). 
\end{align}
In fact, for 2-dimensional vector spaces, this is an isomorphism. 

\begin{lemma} \label{lem:dim2_planar_loop_iso}
    If $U \in \Vect$ is 2-dimensional, then $\planarloop(U) \cong \tPL_1(U) \cong \PL_1(U)$ are isomorphic as groups.
\end{lemma}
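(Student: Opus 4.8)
The plan is to realise the claimed isomorphisms concretely: the boundary map $\delta$ on one side, and the inclusion of planar loops as kites with empty tail on the other, exploiting that in dimension two every planarity side condition in \ref{PL1.1}--\ref{PL1.3} and in the Peiffer identity is automatically satisfied. First I would record the relevant simplifications. Since $\dim U = 2$, every loop in $\PL_0(U)$ has span contained in $U$, so $\planarloop(U) = \PL_0^{\cl}(U) = \ker(t: \PL_0(U) \to U)$ is just the group of PL loops; and for all $\bw, \bx \in \PL_0(U)$ and $\bb, \bb_1, \bb_2 \in \planarloop(U)$, the words $\bb_1 \concat \bu \concat \bb_2 \concat \bu^{-1}$ (where $\bu = \bw_1^{-1}\concat\bw_2$) and $\bx\concat\bb\concat\bx^{-1}$ are again loops, hence planar. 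So in this case the relations \ref{PL1.1}--\ref{PL1.3} hold unconditionally.

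Next I would introduce two homomorphisms. Let $\iota_U : \planarloop(U) \to \tPL_1(U)$ be $\iota_U(\bb) = (\emptyset_0, \bb)$. Relation \ref{PL1.1} with $\bw_1 = \bw_2 = \emptyset_0$ (so $\bu = \emptyset_0$) gives $(\emptyset_0, \bb_1)\concat(\emptyset_0, \bb_2) = (\emptyset_0, \bb_1\concat\bb_2)$ in $\tPL_1(U)$, and \ref{PL1.2} gives $\iota_U(\emptyset_0) = \emptyset_1$, so $\iota_U$ is a group homomorphism. On the other side, $\delta(\bw, \bb) = \bw\concat\bb\concat\bw^{-1}$ is always a loop, so the pre-crossed module boundary map of the preceding lemma corestricts to a homomorphism $\delta : \tPL_1(U) \to \planarloop(U)$. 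Immediately $\delta\circ\iota_U = \id_{\planarloop(U)}$. For the reverse composite, $\iota_U\circ\delta$ is an endomorphism of $\tPL_1(U)$, so it is enough to evaluate it on the generating kites: $\iota_U(\delta(\bw,\bb)) = (\emptyset_0, \bw\concat\bb\concat\bw^{-1})$, which by \ref{PL1.3} (with its tail-variable $\emptyset_0$ and its $\bx$-variable $\bw$) equals $(\bw, \bb)$. Hence $\iota_U$ and $\delta$ are mutually inverse, so $\planarloop(U) \cong \tPL_1(U)$.

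It then remains to identify $\PL_1(U) = \tPL_1(U)/\sim_{\Pf}$ with $\tPL_1(U)$, i.e. to see that the Peiffer subgroup of $\tPL_1(U)$ is already trivial. Since $\delta$ is now a bijection, this follows from a one-line check: applying $\delta$ to the two sides $\delta(E_1)\gt E_2$ and $E_1\concat E_2\concat E_1^{-1}$ of the Peiffer identity yields $\delta(E_1)\,\delta(E_2)\,\delta(E_1)^{-1}$ in both cases — using the pre-crossed module axiom $\delta(g\gt E) = g\,\delta(E)\,g^{-1}$ on one side and multiplicativity of $\delta$ on the other — so injectivity of $\delta$ forces the two sides to coincide already in $\tPL_1(U)$. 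Therefore $\planarloop(U) \cong \tPL_1(U) \cong \PL_1(U)$. I do not expect a genuine obstacle here: the whole content is that $\delta$ admits the two-sided inverse $\iota_U$ in dimension two, and the only place where 2-dimensionality is used essentially — rather than merely to void side conditions — is the appeal to \ref{PL1.3} to absorb the tail of a kite, which is exactly the step (and hence the statement) that fails once $\dim V \geq 3$.
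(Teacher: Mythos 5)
Your proof is correct and essentially identical to the paper's: both establish that $\xi_{U,1}\colon \planarloop(U)\to\tPL_1(U)$, $\bb\mapsto(\emptyset_0,\bb)$, is a homomorphism (via \ref{PL1.1} with trivial tails and \ref{PL1.2}), that $\delta$ is a two-sided inverse (with $\xi_{U,1}\circ\delta=\id$ from \ref{PL1.3}), and that the Peiffer relation is already satisfied in $\tPL_1(U)$ because its two sides have equal image under the now-injective $\delta$. Your last step merely spells out what the paper summarizes as ``the Peiffer identity is already implied because it holds in $\PL_0^{\cl}(U)$.''
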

\begin{proof}
    Since $U$ is two-dimensional, all loops are planar, and so $\planarloop(U)= \PL_0^{\cl}(U)$, which is a group. The map $\txi_{U,1}: \planarloop(U) \to \tPL_1(U)$ is a group homomorphism by~\ref{PL1.1}. In fact, it is an isomorphism with inverse given by the boundary map $\delta : \tPL_1(U) \to \PL_0^{\cl}(U)$. Indeed, the identity $\delta \circ \txi_{U,1} = \id$ is immediate, and $\txi_{U,1} \circ \delta = \id$ follows by \ref{PL1.3}. Furthermore, the Peiffer identity~\eqref{eq:PL_peiffer} is already implied because it holds in $\PL_0^{\cl}(U)$.
\end{proof}

For $V \in \Vect$ where $\dim(V) \geq 3$, $\planarloop(V)$ is not a group. However, by~\Cref{lem:dim2_planar_loop_iso}, the restriction to a 2-dimensional subspace $U \subset V$ is a group $\planarloop(U) \cong \PL_1(U)$. For the remainder of the article, we will make the identification of crossed modules
\begin{align}
    (\iota: \planarloop(U) \to \PL_0(U), \gt) \cong \cmPL(U) = (\delta: \PL_1(U) \to \PL_0(U), \gt).
\end{align}
Furthermore, the restriction of the inclusion $\xi_{V,1}$ in~\eqref{eq:xi_V} yields a morphism of crossed modules
\begin{align} \label{eq:xi_inclusion}
    \xi^U_V = (\xi^U_{V,1}, \xi^U_{V,0}): \cmPL(U) \to \cmPL(V),
\end{align}
where $\xi^U_{V,0}: \PL_0(U) \to \PL_0(V)$ is the inclusion induced by $U \hookrightarrow V$.
This leads to the following universal property.

\begin{theorem} \label{thm:univ_prop_plcm_local}
    Let $V$ be a vector space, and suppose
    \begin{align}
        \cmG = (\delta: G \to \PL_0(V), \gt)
    \end{align}
    is a crossed module of groups. Let $f: \planarloop(V) \to G$ be a function such that $f(\emptyset_0) = 1$ and for any 2-dimensional subspace $U \subset V$, the restriction of $f$ to $U$ yields a morphism of crossed modules, 
    \begin{align}
        (f|_U, \xi^U_{V,0}): \cmPL(U) \to \cmG,
    \end{align}
    where $\xi^U_{V,0}: \PL_0(U) \hookrightarrow \PL_0(V)$ is the inclusion from~\eqref{eq:xi_inclusion}.
    Then, there exists a unique group homomorphism $F: \PL_1(V) \to G$ such that
    \begin{align}
        (F, \id): \cmPL(V) \to \cmG
    \end{align}
    is a morphism of crossed modules and $F \circ \xi_{V,1} = f$. 
\end{theorem}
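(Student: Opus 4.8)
The plan is to build $F$ by combining the universal property of the pre-crossed module $\tPL_1(V)$ (as a free pre-crossed module on the inclusion $\rho: \Kite(V) \to \PL_0(V)$, $(\bw,\bb) \mapsto \bw \concat \bb \concat \bw^{-1}$) with a careful check that the relations \ref{PL1.1}--\ref{PL1.3} and the Peiffer identity are all respected. First I would note that $\tPL_1(V) = \FG(\Kite(V))/\!\sim$ carries the natural $\PL_0(V)$-action and boundary map making it the free pre-crossed module on $\rho$ \emph{modulo} the relations \ref{PL1.1}--\ref{PL1.3}; by~\Cref{thm:free_xgrp} applied over the fixed base group $\PL_0(V)$, a morphism out of the free pre-crossed module is determined by where the generating kites go. So I would define a candidate map on generators by
\begin{align}
    \tilde F(\bw,\bb) \coloneqq \big(t(\bw) \gt f\big)\big((-t(\bw)) + \bb\big),
\end{align}
that is: translate the planar loop $\bb$ (which lives in some 2-dimensional subspace through the origin) so that it sits based at $t(\bw)$, apply $f$ restricted to that translated 2-plane—wait, more carefully: $f$ is only defined on $\planarloop(V)$, i.e.\ on loops \emph{based at the origin}. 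The correct prescription is $\tilde F(\bw,\bb) \coloneqq (\realization_0(\bw)) \gt f(\bb)$ where I use the $\PL_0(V)$-action on $G$ through $\delta$; since $\delta(f(\bb)) = \iota(\bb) = \bb$ for $\bb$ in a 2-plane $U$ (by the crossed-module morphism hypothesis on $f|_U$), one gets $\delta(\tilde F(\bw,\bb)) = \bw \cdot \bb \cdot \bw^{-1} = \delta(\bw,\bb)$, and equivariance $\tilde F(\bx \gt (\bw,\bb)) = \bx \gt \tilde F(\bw,\bb)$ is immediate from associativity of the action.

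Next I would verify that $\tilde F$ descends through the relations. For \ref{PL1.2}, $\tilde F(\bw,\emptyset_0) = \bw \gt f(\emptyset_0) = \bw \gt 1 = 1$. For \ref{PL1.3}, given $\bx \concat \bb \concat \bx^{-1} \in \planarloop(V)$, one has $\SPAN(\bx) \subseteq \SPAN(\bb)$ (cited from~\Cref{lem:tail_of_planar_kite_is_planar}), so $\bx$, $\bb$, and $\bx \concat \bb \concat \bx^{-1}$ all lie in one 2-plane $U$; then the equation $\tilde F(\bw, \bx \concat \bb \concat \bx^{-1}) = \tilde F(\bw \concat \bx, \bb)$ reduces to $(\realization_0(\bw)) \gt f(\bx \concat \bb \concat \bx^{-1}) = (\realization_0(\bw \concat \bx)) \gt f(\bb)$, i.e.\ to $f(\bx \concat \bb \concat \bx^{-1}) = \bx \gt f(\bb)$, which is exactly the equivariance of the crossed-module morphism $f|_U$ (using $\delta(f(\bb)) = \bb$ inside $\cmPL(U)$). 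For \ref{PL1.1}, with $\bu = \bw_1^{-1}\concat\bw_2$, the hypothesis that $\bb_1 \concat \bu \concat \bb_2 \concat \bu^{-1}$ is planar forces (by~\Cref{cor:composition_of_kites}) all of $\bb_1,\bb_2,\bu$ into a common 2-plane $U$; then $\tilde F(\bw_1,\bb_1) \tilde F(\bw_2,\bb_2) = (\bw_1 \gt f(\bb_1))(\bw_2 \gt f(\bb_2))$, and since $\bw_2 = \bw_1 \concat \bu$ one rewrites the second factor as $\bw_1 \gt (\bu \gt f(\bb_2)) = \bw_1 \gt f(\bu \concat \bb_2 \concat \bu^{-1})$ (again by equivariance of $f|_U$), so the product becomes $\bw_1 \gt \big(f(\bb_1) \cdot f(\bu \concat \bb_2 \concat \bu^{-1})\big) = \bw_1 \gt f(\bb_1 \concat \bu \concat \bb_2 \concat \bu^{-1})$, using that $f|_U$ is a group homomorphism on the 2-plane $U$. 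This is exactly $\tilde F$ of the right-hand side of \ref{PL1.1}. Hence $\tilde F$ factors through $\tPL_1(V)$, giving a pre-crossed module morphism $(\tilde F, \id): \tPL(V) \to \cmG$.

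Finally, to pass to the crossed module quotient $\PL_1(V) = \tPL_1(V)/\!\sim_{\Pf}$, I would check that $\tilde F$ kills the Peiffer subgroup. But $\cmG$ is assumed to be a genuine crossed module, so it satisfies the Peiffer identity $\delta(E_1) \gt E_2 = E_1 E_2 E_1^{-1}$; since $\tilde F$ is a pre-crossed module morphism, it maps the generating Peiffer relators $\delta(\bw_1,\bb_1)\gt(\bw_2,\bb_2) \cdot \big((\bw_1,\bb_1)(\bw_2,\bb_2)(\bw_1,\bb_1)^{-1}\big)^{-1}$ of $\tPL_1(V)$ to the corresponding Peiffer relator in $G$, which is trivial. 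Therefore $\tilde F$ descends to $F: \PL_1(V) \to G$ with $(F,\id)$ a morphism of crossed modules; and $F \circ \xi_{V,1}(\bb) = F(\emptyset_0,\bb) = \realization_0(\emptyset_0)\gt f(\bb) = f(\bb)$, as required. Uniqueness follows because $\PL_1(V)$ is generated as a group-with-$\PL_0(V)$-action by the kites $(\bw,\bb)$, and each such kite equals $\bw \gt (\emptyset_0,\bb) = \bw \gt \xi_{V,1}(\bb)$, so any crossed-module morphism $(F',\id)$ with $F'\circ\xi_{V,1}=f$ must satisfy $F'(\bw,\bb) = \bw \gt f(\bb) = F(\bw,\bb)$. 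The main obstacle I anticipate is purely bookkeeping: making sure that in each relation the relevant tail paths and loops genuinely lie in a \emph{common} 2-dimensional subspace (so that the single-subspace hypothesis on $f$ can be invoked), which is precisely what the cited span lemmas~\Cref{cor:composition_of_kites} and~\Cref{lem:tail_of_planar_kite_is_planar} are there to supply; once those are in hand the verification is mechanical.
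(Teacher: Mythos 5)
Your construction is essentially identical to the paper's: define $\tF$ on generating kites by $\tF(\bw,\bb) = \bw \gt f(\bb)$, verify relations \ref{PL1.1}--\ref{PL1.3} using \Cref{lem:tail_of_planar_kite_is_planar} and \Cref{cor:composition_of_kites} to place everything in a common $2$-plane, then quotient by the Peiffer subgroup (which $\tF$ kills because $\cmG$ is a genuine crossed module), and read off uniqueness from the fact that every kite is $\bw \gt \xi_{V,1}(\bb)$. The one slip is the formula $\tF(\bw,\bb) \coloneqq (\realization_0(\bw)) \gt f(\bb)$: the map $\realization_0$ lands in $\thingroup_1(V)$, which does not act on the abstract group $G$, so it does not belong here; the $\PL_0(V)$-action on $G$ is part of the crossed-module data $\cmG$ and the correct expression is simply $\bw \gt f(\bb)$, which is evidently what you mean throughout.
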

\begin{remark}
    Note that if $\dim(V) \leq 2$ then $F = f$ and there is nothing to prove. 
\end{remark}
\begin{proof}
    First, we define a map $\tF: \FG(\Kite(V)) \to G$ on the generators by
    \begin{align}
        \tF(\bw, \bb) = \bw \gt f(\bb).
    \end{align}
    We verify that this descends to $\tPL_1(V)$ by checking the relations \ref{PL1.1}-\ref{PL1.3}. For \ref{PL1.2}, we have
    \begin{align}
        \tF(\bw, \emptyset_0) = \bw \gt f(\emptyset_0) = 1
    \end{align}
    by assumption. %
    Next, for \ref{PL1.3}, suppose $\bw \in \PL_0(V)$, $\bx \in \PL_0(V)$ and $\bb \in \planarloop(U)$ is a nontrivial planar loop with span $U$ such that $\bx \concat \bb \concat \bx^{-1} \in \planarloop(V)$. By~\Cref{lem:tail_of_planar_kite_is_planar}, $\bx \in \PL_0(U)$. 
    Then, since $f$ restricts to a morphism of crossed modules on $U$, 
    \begin{align} \label{eq:univ_property_showing_PL1.1}
        \tF(\bw, \bx \concat \bb \concat \bx^{-1}) = \bw \gt f(\bx \gt \bb) = \bw \gt (\bx \gt f(\bb)) = (\bw \concat \bx) \gt f(\bb) =  \tF(\bw \concat \bx, \bb).
    \end{align}
    Finally, we verify \ref{PL1.1}. Suppose $(\bw_1, \bb_1), (\bw_2, \bb_2) \in \Kite(V)$ are nontrivial kites such that $\bb_1 \concat \bu \concat \bb_2 \concat \bu^{-1} \in \planarloop(V)$, where $\bu = \bw_1^{-1} \concat \bw_2$. By~\Cref{cor:composition_of_kites}, there exists $U \subset V$ with $\dim(U) = 2$ such that $\bb_1, \bb_2, \bu \in \PL_0(U)$. 
    Then, using $\bw_2 = \bw_1 \concat \bu$, equation ~\eqref{eq:univ_property_showing_PL1.1}, and the assumption that $f$ restricts to a homomorphism on $\PL_{0}^{\cl}(U)$, we have
    \begin{align}
        \tF(\bw_1, \bb_1) \cdot \tF(\bw_2, \bb_2)  &= \tF(\bw_1, \bb_1) \cdot \tF(\bw_1 \concat \bu, \bb_2)\\
        &= (\bw_1 \gt f(\bb_1)) \cdot (\bw_1 \gt f(\bu \concat \bb_2 \concat \bu^{-1}))\\
        & = \bw_1 \gt f(\bb_1 \concat \bu \concat \bb_2 \concat \bu^{-1}) \\
        & = \tF(\bw_1, \bb_1 \concat \bu \concat \bb_2 \concat \bu^{-1}).
    \end{align}
    Therefore, the map $\tF$ descends to a map $\tF: \tPL_1(V) \to G$, which is the unique morphism of pre-crossed modules which satisfies $\tF \circ \txi = f$, where $\txi: \planarloop(V) \to \tPL_1(V)$ is defined by $\txi(\bb) = (\emptyset_0, \bb)$. The final step is applying the crossed module functor by quotienting out the Peiffer identity to obtain the desired unique morphism $F: \PL_1(V) \to G$. 
\end{proof}

We can also enhance this universal property by varying the base.

\begin{corollary} \label{cor:cmPL_enhanced_univ_property}
    Let $V$ be a vector space and suppose $\cmG = (\delta: G_1 \to G_0, \gt)$ is a crossed module of groups. Let $f_0: \PL_0(V) \to G_0$ be a group homomorphism and let $f_1: \planarloop(V) \to G_1$ be a function such that  $f_1(\emptyset_0) = 1$ and for any 2-dimensional subspace $U \subset V$, the restriction of $(f_1, f_0)$ to $U$ is a morphism of crossed modules, 
    \begin{align}
        (f_{U,1}, f_{U,0}): \cmPL(U) \to \cmG.
    \end{align}
    Then, there exists a unique group homomorphism $F_1: \PL_1(V) \to G_1$ such that
    \begin{align}
        (F_1, f_0): \cmPL(V) \to \cmG
    \end{align}
    is a morphism of crossed modules and $F_1 \circ \xi_{V,1} = f_1$.
\end{corollary}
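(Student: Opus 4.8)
The plan is to deduce this from \Cref{thm:univ_prop_plcm_local} by the device used in the proof of \Cref{thm:free_xgrp}: replace $\cmG$ by a crossed module whose base group is $\PL_0(V)$. Concretely, I would form the pullback crossed module of $\cmG$ along $f_0$,
\begin{align*}
    f_0^*\cmG = \big(\delta: f_0^* G_1 \to \PL_0(V),\ \gt\big), \qquad f_0^* G_1 = \{(\bw, g) \in \PL_0(V) \times G_1 \ :\ f_0(\bw) = \delta(g)\},
\end{align*}
(see~\cite[Definition 5.1.1]{brown_nonabelian_2011}), with boundary $\delta(\bw,g) = \bw$, action $\bx \gt (\bw, g) = (\bx \concat \bw \concat \bx^{-1}, f_0(\bx) \gt g)$, and canonical morphism $\tf = (\tf_1, f_0): f_0^*\cmG \to \cmG$, $\tf_1(\bw, g) = g$.

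Next I would introduce the function $\tf_1' : \planarloop(V) \to f_0^* G_1$, $\tf_1'(\bb) = (\iota(\bb), f_1(\bb))$, and verify that it meets the hypotheses of \Cref{thm:univ_prop_plcm_local} for the crossed module $f_0^*\cmG$. That $\tf_1'(\bb)$ actually lies in $f_0^* G_1$ (i.e.\ $f_0(\iota(\bb)) = \delta(f_1(\bb))$), and the normalization $\tf_1'(\emptyset_0) = 1$, follow by applying the hypothesis on $(f_1, f_0)$ to any two-dimensional subspace $U \supseteq \SPAN(\bb)$, together with the identification $\cmPL(U) \cong (\iota: \planarloop(U) \to \PL_0(U), \gt)$ from \Cref{lem:dim2_planar_loop_iso}. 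For each two-dimensional $U \subset V$ I would then check that $(\tf_1'|_U, \xi^U_{V,0})$ is a morphism of crossed modules $\cmPL(U) \to f_0^*\cmG$: it is a group homomorphism since both of its coordinate maps are; it intertwines boundaries since $\delta_{f_0^*\cmG}(\tf_1'(\bb)) = \iota(\bb) = \xi^U_{V,0}(\delta_{\cmPL(U)}(\bb))$; and it intertwines actions, because for $\bx \in \PL_0(U)$, $\bb \in \planarloop(U)$ the pullback action formula and the relation $f_1(\bx \gt \bb) = f_0(\bx) \gt f_1(\bb)$ (part of the hypothesis on $U$) give $\tf_1'(\bx \gt \bb) = \xi^U_{V,0}(\bx) \gt \tf_1'(\bb)$.

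With this in hand, \Cref{thm:univ_prop_plcm_local} produces a unique homomorphism $\tF_1 : \PL_1(V) \to f_0^* G_1$ with $(\tF_1, \id): \cmPL(V) \to f_0^*\cmG$ a morphism of crossed modules and $\tF_1 \circ \xi_{V,1} = \tf_1'$; I would then set $F_1 = \tf_1 \circ \tF_1$, so that $(F_1, f_0) = \tf \circ (\tF_1, \id): \cmPL(V) \to \cmG$ is a morphism of crossed modules and $F_1 \circ \xi_{V,1} = \tf_1 \circ \tf_1' = f_1$. For uniqueness, any morphism $(F_1', f_0): \cmPL(V) \to \cmG$ with $F_1' \circ \xi_{V,1} = f_1$ factors through the pullback, by its universal property~\cite[Theorem 5.1.2]{brown_nonabelian_2011}, as $\tf \circ (\tF_1', \id)$ with $\tF_1'(e) = (\delta_{\cmPL(V)}(e), F_1'(e))$; since $\delta_{\cmPL(V)}(\xi_{V,1}(\bb)) = \bb = \iota(\bb)$ this gives $\tF_1' \circ \xi_{V,1} = \tf_1'$, so $\tF_1' = \tF_1$ by the uniqueness clause of \Cref{thm:univ_prop_plcm_local} and hence $F_1' = \tf_1 \circ \tF_1' = F_1$. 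Every step here is routine bookkeeping; the point that needs care — and where the hypothesis on $(f_1, f_0)$ is actually used — is the verification that $\tf_1'$ satisfies the hypotheses of \Cref{thm:univ_prop_plcm_local} relative to the pullback crossed module $f_0^*\cmG$.
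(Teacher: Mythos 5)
Your argument is correct and is the same strategy the paper uses: replace $\cmG$ by the pullback crossed module $f_0^*\cmG$ over $\PL_0(V)$, push $f_1$ into the pullback via $\bb\mapsto(\iota(\bb),f_1(\bb))$, apply \Cref{thm:univ_prop_plcm_local}, and project back down. The only cosmetic difference is that you check the hypothesis of \Cref{thm:univ_prop_plcm_local} for $\tf_1'|_U$ by a direct computation with the pullback's boundary and action formulas, whereas the paper obtains it at once by factoring the assumed morphism $(f_{U,1},f_{U,0})$ through the pullback via \cite[Theorem 5.1.2]{brown_nonabelian_2011}; you also spell out the uniqueness step which the paper leaves implicit. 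Both are fine.
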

\begin{proof}
    We consider the pullback crossed module of $\cmG$ with respect to $f_0$, defined by
    \begin{align}
        f_0^*\cmG \coloneqq (\delta: f_0^* G_1 \to \PL_0(V), \gt), \quad f_0^*G_1 \coloneqq \{ (\bw, g) \in \PL_0(V) \times G_1 \, : \, f_0(\bw) = \delta(g)\}.
    \end{align}
    It is equipped with a morphism of crossed modules $p = (p_1, f_0) : f_0^*\cmG \to \cmG$. There is a well-defined function $\tf: \planarloop(V) \to f_0^*G_1$ given by sending a planar loop $\bb$ to $\tf(\bb) = (\iota(\bb), f_1(\bb))$. To see that this is well-defined, note first that $\tf(\emptyset_0) = (\emptyset_0, 1)$. Furthermore, if $\bb \in \planarloop(V)$ is non-trivial, it determines a $2$-dimensional subspace $U$ such that $\bb \in \PL_{0}^{\cl}(U)$. Then, since $(f_{1}, f_0)$ restricts to a morphism of crossed modules on $\cmPL(U)$, we see that $f_0 (\iota(\bb)) = \delta(f_1 (\bb))$. Note also that $p_1 \circ \tf = f_1$. 

    We wish to apply the universal property of ~\Cref{thm:univ_prop_plcm_local}, and for this we must verify that for every $2$-dimensional subspace $U$, the restriction $\tf|_{U}$ defines a morphism of crossed modules $\cmPL(U) \to f_0^*\cmG$. Now, because $(f_{U,1}, f_{U,0})$ is assumed to be a morphism of crossed modules, by~\cite[Theorem 5.1.2]{brown_nonabelian_2011} there exists a unique factorization through the pullback crossed module $f_0^*\cmG$ 
    \begin{align}
        (f_{U,1}, f_{U,0}): \cmPL(U) \xrightarrow{(\tf_{U,1}, \xi_{V,0}^{U})} f_0^*\cmG \xrightarrow{p} \cmG.
    \end{align}
    But a simple calculation shows that $\tf|_{U} = \tf_{U,1}$, thus verifying the condition. As a result, by the universal property, there is a unique homomorphism $\tF: \PL_{1}(V) \to f_0^* G_1$ such that 
    \begin{align}
        (\tF, \id): \cmPL(V) \to f_0^*\cmG
    \end{align}
    is a homomorphism of crossed modules and $\tF \circ \xi_{V,1} = \tf$. Finally, we define $F_1 = p_1 \circ \tF$ to obtain our desired morphism $(F_1, f_0)$.

\end{proof}

\begin{corollary}
    The piecewise linear crossed module defines a functor
    \begin{align}
        \cmPL: \Vect \to \XGrp.
    \end{align}
\end{corollary}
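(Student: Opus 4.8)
The plan is to deduce this from the enhanced universal property \Cref{cor:cmPL_enhanced_univ_property}, together with the fact, already established in \Cref{cor:PL_functor}, that $\PL_0 : \Vect \to \Grp$ is a functor. Fix a linear map $\phi : V \to W$. On base groups I take $\PL_0(\phi) : \PL_0(V) \to \PL_0(W)$. To define the top component, I first note that $\PL_0(\phi)$ carries planar loops to planar loops: it preserves endpoints, hence sends loops to loops, and by \Cref{cor:span_pres} one has $\SPAN(\PL_0(\phi)(\bb)) \subseteq \phi(\SPAN(\bb))$, so the span of the image is still at most $2$-dimensional. Thus $f_1 : \planarloop(V) \to \PL_1(W)$ defined by $f_1(\bb) \coloneqq \xi_{W,1}(\PL_0(\phi)(\bb))$ makes sense, and $f_1(\emptyset_0) = \xi_{W,1}(\emptyset_0) = 1$ by \ref{PL1.2}.

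Next I would check the hypothesis of \Cref{cor:cmPL_enhanced_univ_property}: for each $2$-dimensional subspace $U \subseteq V$, the pair $(f_1|_U, \PL_0(\phi)|_U)$ is a morphism of crossed modules $\cmPL(U) \to \cmPL(W)$. Under the identification of \Cref{lem:dim2_planar_loop_iso}, the boundary of $\cmPL(U)$ is the inclusion $\planarloop(U) \hookrightarrow \PL_0(U)$, while $\delta \circ \xi_{W,1}$ is the inclusion $\planarloop(W) \hookrightarrow \PL_0(W)$; so $\delta \circ f_1 = \PL_0(\phi) \circ \delta$ is immediate. For equivariance, given $\bx \in \PL_0(U)$ and a planar loop $\bb \in \planarloop(U)$, expanding with the homomorphism property of $\PL_0(\phi)$ gives $f_1(\bx \gt \bb) = \xi_{W,1}\big(\PL_0(\phi)(\bx) \concat \PL_0(\phi)(\bb) \concat \PL_0(\phi)(\bx)^{-1}\big)$, whereas $\PL_0(\phi)(\bx) \gt f_1(\bb) = (\PL_0(\phi)(\bx), \PL_0(\phi)(\bb))$; these two elements of $\PL_1(W)$ coincide by relation \ref{PL1.3}, since the conjugated loop is again planar. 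I expect this last point --- the invocation of \ref{PL1.3} inside the $2$-dimensional verification --- to be the only step with real content; everything else is bookkeeping.

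With the hypothesis verified, \Cref{cor:cmPL_enhanced_univ_property} yields a unique homomorphism $\PL_1(\phi) : \PL_1(V) \to \PL_1(W)$ such that $(\PL_1(\phi), \PL_0(\phi))$ is a morphism of crossed modules and $\PL_1(\phi) \circ \xi_{V,1} = f_1$, and I set $\cmPL(\phi) \coloneqq (\PL_1(\phi), \PL_0(\phi))$. Functoriality then follows purely formally from the uniqueness clause: for $\phi = \id_V$ the identity morphism satisfies the characterizing conditions, so $\cmPL(\id_V) = \id$; and for composable $\phi : V \to W$, $\psi : W \to X$, the composite $(\PL_1(\psi)\circ\PL_1(\phi),\, \PL_0(\psi)\circ\PL_0(\phi))$ is a crossed module morphism with base component $\PL_0(\psi\circ\phi)$ by functoriality of $\PL_0$, and its top component precomposed with $\xi_{V,1}$ equals $\xi_{X,1}\circ\PL_0(\psi\circ\phi)$, which is precisely the defining property of $\PL_1(\psi\circ\phi)$; uniqueness then forces $\PL_1(\psi)\circ\PL_1(\phi) = \PL_1(\psi\circ\phi)$. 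The only delicate point in the whole argument is the crossed-module check on $2$-dimensional subspaces, and there the substantive ingredient is relation \ref{PL1.3}.
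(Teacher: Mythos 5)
Your proposal is correct and follows the same route as the paper: obtain $\PL_0(\phi)$ from \Cref{cor:PL_functor}, use \Cref{cor:span_pres} to see that $\PL_0(\phi)$ carries planar loops to planar loops, define $f_1 = \xi_{W,1} \circ \PL_0(\phi)|_{\planarloop(V)}$, invoke \Cref{cor:cmPL_enhanced_univ_property} to build $\PL_1(\phi)$, and derive functoriality from the uniqueness clause. The only difference is that you explicitly verify that $(f_1|_U, \PL_0(\phi)|_U)$ is a crossed-module morphism on each $2$-dimensional $U$ (boundary commutativity from the identification of \Cref{lem:dim2_planar_loop_iso}, equivariance via \ref{PL1.3}), a step the paper states without spelling out.
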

\begin{proof}
    Let $\phi: V \to W$ be a linear map. Then, by~\Cref{cor:PL_functor}, there exists a homomorphism $\PL_0(\phi): \PL_0(V) \to \PL_0(W)$. By ~\Cref{cor:span_pres}, $\PL_0(\phi)$ preserves the span of paths. Hence, if a path $\bx$ is planar, then so is $\PL_0(\phi)(\bx)$. As a result, we can also define a map $f_{\phi, 1}: \planarloop(V) \to \PL_1(W)$ by
    \begin{align}
        f_{\phi,1} : \planarloop(V) \xrightarrow{\PL_0(\phi)} \planarloop(W) \xrightarrow{\xi_{W,1}} \PL_1(W).
    \end{align}
      Given a 2-dimensional subspace $U \subset V$, the restriction of the pair $(f_{\phi, 1}, \PL_0(\phi))$ to $U$ is a morphism of crossed modules $\cmPL(U) \to \cmPL(W)$. Hence, by~\Cref{cor:cmPL_enhanced_univ_property}, there is a unique homomorphism $\PL_1(\phi): \PL_{1}(V) \to \PL_{1}(W)$ such that
    \begin{align}
        \cmPL(\phi) = (\PL_1(\phi), \PL_0(\phi)) : \cmPL(V) \to \cmPL(W)
    \end{align}
    is a morphism of crossed modules and such that $\PL_1(\phi) \circ \xi_{V,1} = \xi_{W,1} \circ \PL_0(\phi)|_{\planarloop(V)}$. The component $\PL_0(\phi)$ is functorial by ~\Cref{cor:PL_functor}, and the component $\PL_1(\phi)$ is functorial because of the uniqueness.  
\end{proof}

\begin{remark}
    Let $\phi: U \hookrightarrow V$ be the inclusion of a 2-dimensional subspace into $V$. Then the restriction of $\xi_{V,1}^U$ from~\eqref{eq:xi_inclusion} is $\PL_1(\phi)$.
\end{remark}

\subsection{Piecewise Linear Surface Signature}
In this section, we continue the generalization of~\Cref{ssec:pl_paths} by extending the realization and piecewise linear signature natural transformations to the piecewise linear crossed module using the universal property in~\Cref{cor:cmPL_enhanced_univ_property}.
In fact, we will show that for a certain class of \emph{planar} functors, there is a unique natural transformation which extends the path constructions. 

\begin{definition}
    A functor $\bsF = (\sF_1, \sF_0): \Vect \to \XGrp$ is \emph{planar} if
    \begin{itemize}
        \item $\bsF(U)$ is trivial when $\dim(U) = 0$,
        \item $\sF_1(U)$ is trivial when $\dim(U) = 1$, and
        \item $\delta^{\sF}_U : \sF_1(U) \to \sF_0(U)$ is injective when $\dim(U) = 2$. 
    \end{itemize}
\end{definition}

\begin{lemma}
    The piecewise linear crossed module $\cmPL: \Vect \to \XMod$ is a planar functor.
\end{lemma}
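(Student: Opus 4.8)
The plan is to check the three defining conditions of a planar functor for $\cmPL$ one dimension at a time. Only the $\dim(U)=2$ case carries any real content, and it is already supplied by \Cref{lem:dim2_planar_loop_iso}; the other two cases amount to bookkeeping with the defining relations \ref{PL0.2} and \ref{PL1.2}.

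First, suppose $\dim(U)=0$, so $U=\{0\}$. Then $\FMon(U)$ is the free monoid on a one-element set, and relation \ref{PL0.2} forces $(0)\sim\emptyset_0$, so $\PL_0(U)$ is trivial. Consequently $\PL_0^{\cl}(U)$, and hence $\planarloop(U)$, is trivial, $\Kite(U)$ is a singleton, and relation \ref{PL1.2} collapses $\tPL_1(U)$ — and therefore $\PL_1(U)$ — to the trivial group. Thus $\cmPL(U)$ is trivial.

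Next, suppose $\dim(U)=1$. Here I would first note that, by \Cref{prop:minimal_pl_paths}, every minimal representative in $\PL_0(U)$ is either $\emptyset_0$ or a single letter $(v)$ with $v\neq 0$, since a one-dimensional space contains no linearly independent pair. Hence the endpoint homomorphism $t:\PL_0(U)\to U$ is an isomorphism and $\PL_0^{\cl}(U)=\ker(t)$ is trivial; as every span in $U$ has dimension $\le 1\le 2$, this gives $\planarloop(U)=\PL_0^{\cl}(U)=\{\emptyset_0\}$. Then $\Kite(U)=\PL_0(U)\times\{\emptyset_0\}$, and relation \ref{PL1.2} sends each generator $(\bw,\emptyset_0)$ to $\emptyset_1$, so $\tPL_1(U)$ is trivial and therefore so is $\PL_1(U)$, as required.

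Finally, for $\dim(U)=2$ the claim is exactly \Cref{lem:dim2_planar_loop_iso}: all loops in $U$ are planar, so $\planarloop(U)$ is a group; the map $\txi_{U,1}:\planarloop(U)\to\tPL_1(U)$ is an isomorphism with inverse the boundary $\delta$; the Peiffer identity holds automatically, giving $\tPL_1(U)\cong\PL_1(U)$; and under these identifications the crossed-module boundary $\delta:\PL_1(U)\to\PL_0(U)$ becomes the inclusion $\PL_0^{\cl}(U)\hookrightarrow\PL_0(U)$, which is visibly injective. This verifies the third condition, completing the proof that $\cmPL$ is planar. The only step requiring actual work is the $\dim(U)=2$ case, and that work has already been carried out in \Cref{lem:dim2_planar_loop_iso}; everything else is a direct reading of the defining relations.
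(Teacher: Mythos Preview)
Your proof is correct and follows exactly the same approach as the paper: the cases $\dim(U)=0,1$ are dispatched as immediate consequences of the defining relations, and the $\dim(U)=2$ case is handled by \Cref{lem:dim2_planar_loop_iso}. You have simply spelled out the ``immediate'' cases in more detail than the paper does.
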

\begin{proof}
    The cases of $\dim(U) = 0,1$ are immediate, and the condition for $\dim(U) = 2$ is given by~\Cref{lem:dim2_planar_loop_iso}.
\end{proof}

The following lemma is the main result used in this section. 

\begin{lemma} \label{lem:unique_extension_of_nt}
    Let $\bsF = (\sF_1, \sF_0): \Vect \to \XGrp$ be a planar functor 
    and let $\alpha_0 : \PL_0 \Rightarrow F_0$ be a natural transformation with the property that $\alpha_{U,0}(\PL_0^{\cl}(U)) \subseteq \im(\delta^{\sF}_{U})$ when $\dim(U) = 2$.
    Then, there exists a unique natural transformation which extends $\alpha_0$,
    \begin{align}
        \alpha = (\alpha_1, \alpha_0) : \cmPL \Rightarrow \bsF.
    \end{align}
\end{lemma}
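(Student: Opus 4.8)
I would construct the component $\alpha_{V,1}\colon \PL_1(V)\to\sF_1(V)$ for each $V$ via the universal property of \Cref{cor:cmPL_enhanced_univ_property}, reducing every verification to the two‑dimensional case, where planarity of $\bsF$ forces the answer. \emph{Step 1 (the $2$‑dimensional case).} For $\dim U=2$, \Cref{lem:dim2_planar_loop_iso} tells us that the boundary $\delta\colon\PL_1(U)\to\PL_0^{\cl}(U)$ is an isomorphism (with inverse $\xi_{U,1}$), while planarity of $\bsF$ says $\delta^{\sF}_U\colon\sF_1(U)\to\sF_0(U)$ is injective; since $\alpha_{U,0}(\PL_0^{\cl}(U))\subseteq\im(\delta^{\sF}_U)$ by hypothesis, I define the group homomorphism $\alpha_{U,1}\coloneqq(\delta^{\sF}_U)^{-1}\circ\alpha_{U,0}\circ\delta$. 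Using injectivity of $\delta^{\sF}_U$ together with the pre‑crossed‑module identity $\delta(\bx\gt\bX)=\bx\,\delta(\bX)\,\bx^{-1}$ in $\cmPL(U)$ and in $\bsF(U)$, one checks that $(\alpha_{U,1},\alpha_{U,0})\colon\cmPL(U)\to\bsF(U)$ is a morphism of crossed modules, and it is the unique such morphism with $\sF_0$‑component $\alpha_{U,0}$.

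\emph{Step 2 (general $V$).} Every nontrivial $\bb\in\planarloop(V)$ has a \emph{unique} $2$‑dimensional span $U=\SPAN(\bb)$ (a planar loop whose span has dimension $\le1$ is trivial), so I may set $f_1^V(\bb)\coloneqq\sF_1(\iota_U)\big(\alpha_{U,1}(\xi_{U,1}(\bb))\big)$ for $\bb\ne\emptyset_0$ and $f_1^V(\emptyset_0)\coloneqq1$, where $\iota_U\colon U\hookrightarrow V$. For any $2$‑dimensional $W\subseteq V$, using naturality of $\alpha_0$ along $\iota_W$ and the relation $\PL_1(\iota_W)\circ\xi_{W,1}=\xi_{V,1}|_{\planarloop(W)}$, one sees that $(f_1^V|_{\planarloop(W)},\ \alpha_{V,0}|_{\PL_0(W)})$ is precisely $\bsF(\iota_W)\circ(\alpha_{W,1},\alpha_{W,0})$, hence a morphism of crossed modules $\cmPL(W)\to\bsF(V)$. \Cref{cor:cmPL_enhanced_univ_property}, applied with $f_0=\alpha_{V,0}$ and $f_1=f_1^V$, then produces a unique $\alpha_{V,1}\colon\PL_1(V)\to\sF_1(V)$ making $(\alpha_{V,1},\alpha_{V,0})\colon\cmPL(V)\to\bsF(V)$ a morphism of crossed modules with $\alpha_{V,1}\circ\xi_{V,1}=f_1^V$; in particular $\alpha=(\alpha_1,\alpha_0)$ extends $\alpha_0$.

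\emph{Step 3 (naturality).} Given a linear map $\phi\colon V\to W$, both $\bsF(\phi)\circ\alpha_V$ and $\alpha_W\circ\cmPL(\phi)$ are morphisms $\cmPL(V)\to\bsF(W)$ with the same $\sF_0$‑component (naturality of $\alpha_0$), so by the uniqueness clause of \Cref{cor:cmPL_enhanced_univ_property} it suffices to show they agree after precomposing the $\PL_1$‑parts with $\xi_{V,1}$, i.e.\ that $\sF_1(\phi)\circ f_1^V=f_1^W\circ\PL_0(\phi)|_{\planarloop(V)}$. I would evaluate on $\bb\in\planarloop(V)$ with $U=\SPAN(\bb)$: if $\dim\phi(U)\le1$ both sides equal $1$ (the left since $\sF_1$ is trivial on spaces of dimension $\le1$, the right since then $\PL_0(\phi)(\bb)$ has span of dimension $\le1$, hence is trivial); if $\dim\phi(U)=2$, factoring $\phi\iota_U=\iota_{\phi(U)}\circ(\phi|_U)$ through the isomorphism $\phi|_U\colon U\to\phi(U)$, the identity reduces to naturality of $\alpha_1$ for isomorphisms of $2$‑dimensional spaces, which follows directly from the formula in Step 1 and the naturality of $\delta$, $\alpha_0$, and $\delta^{\sF}$.

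\emph{Step 4 (uniqueness) and the hard part.} If $\beta=(\beta_1,\alpha_0)$ is another extension, then for $\dim U=2$ injectivity of $\delta^{\sF}_U$ forces $\beta_{U,1}=\alpha_{U,1}$ (both have boundary $\alpha_{U,0}\circ\delta$), and then naturality of $\beta$ along $\iota_U$ gives $\beta_{V,1}\circ\xi_{V,1}=f_1^V=\alpha_{V,1}\circ\xi_{V,1}$, whence $\beta_{V,1}=\alpha_{V,1}$ by the uniqueness in \Cref{cor:cmPL_enhanced_univ_property}. I expect the main obstacle to be Step 3: verifying naturality requires the case analysis on $\dim\phi(\SPAN(\bb))$ and genuinely uses the planarity of $\bsF$ to kill the dimension‑dropping case, together with the observation that a nontrivial planar loop pins down a unique $2$‑plane, which is what makes $f_1^V$ well defined in the first place.
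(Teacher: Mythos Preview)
Your proposal is correct and follows essentially the same approach as the paper: define $\alpha_{U,1}$ for $\dim U=2$ via injectivity of $\delta^{\sF}_U$, extend to general $V$ using the universal property of \Cref{cor:cmPL_enhanced_univ_property}, and verify naturality by reducing to the case of $2$-dimensional subspaces where uniqueness forces agreement. The only organizational difference is that the paper first establishes naturality within the subcategory $\Vect_{\leq 2}$ (handling maps between low-dimensional spaces in one block), and then treats arbitrary $\phi\colon V\to W$ by assembling the morphisms $p_U=\bsF(\phi\circ\iota_U)\circ\alpha_U$ over $2$-dimensional $U\subset V$ and invoking the uniqueness clause of \Cref{cor:cmPL_enhanced_univ_property}; your Step~3 instead does the case split on $\dim\phi(\SPAN(\bb))$ directly at the level of planar loops, which is equivalent and arguably more transparent about where planarity of $\bsF$ is used to kill the dimension-dropping case.
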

\begin{proof}
    Let $\Vect_{\leq 2}$ denote the subcategory of vector spaces of dimension at most $2$. We will first construct $\alpha$ on this subcategory. When $\dim(U) \leq 1$, the components $\alpha_{U,1}$ are uniquely determined because $\PL_1(U)$ is trivial. Now assume that $\dim(U) = 2.$ In this case, the component $\alpha_{U,1}$ must make the following diagram commute
    \begin{equation}\label{eq:alpha_dim2_commutative}
    \begin{tikzcd}
        \PL_1(U) \ar[r, dashed,"\alpha_{U,1}"] \ar[d,swap, "\delta^{\PL}_U"] &  \sF_1(U)\ar[d, "\delta^{\sF}_U"]  \\
        \PL_0(U) \ar[r, "\alpha_{U,0}"]  & \sF_0(U).
    \end{tikzcd}
    \end{equation}
The image of the map $\alpha_{U,0} \circ \delta^{\PL}_U$ is contained in the image of the injective map $\delta^{\sF}_{U}$. Hence, the desired map $\alpha_{U,1}$ exists and it is unique. The pair $\alpha_{U} = (\alpha_{U,1}, \alpha_{U,0})$ is automatically a map of crossed modules because $\delta^{\PL}_U$ and $\delta^{\sF}_{U}$ are injective.

To check naturality, let $\phi: U \to V$ be a linear map. We need to verify that $\sF_{1}(\phi) \circ \alpha_{U,1} = \alpha_{V,1} \circ \PL_{1}(\phi)$. Because $\cmPL$ and $\bsF$ are planar functors, this is trivially satisfied when either $U$ or $V$ has dimension less than $2$. Hence, we assume that $\dim(U) = \dim(V) = 2$. Because $\alpha_0: \PL_0 \Rightarrow \sF_0$ is a natural transformation, we have $F_0(\phi) \circ \alpha_{U,0} = \alpha_{V,0} \circ \PL_0(\phi)$. Then, using the fact that $\cmPL(\phi), \bsF(\phi), \alpha_{U}$ and $\alpha_{V}$ are morphisms of crossed modules, we have 
\begin{align}
        \delta^{\sF}_V \circ (\sF_1(\phi) \circ \alpha_{U,1}) = (\sF_0(\phi) \circ \alpha_{U,0}) \circ \delta^{\PL}_U = (\alpha_{V,0} \circ \PL_0(\phi)) \circ \delta^{\PL}_U = \delta^{\sF}_V \circ (\alpha_{V,1} \circ \PL_1(\phi)).
    \end{align}
Hence, by injectivity of $\delta^{\sF}_V$, we have $\sF_{1}(\phi) \circ \alpha_{U,1} = \alpha_{V,1} \circ \PL_{1}(\phi)$.

Next, we extend $\alpha$ to vector spaces $V$ of dimension at least $3$. We will do this by applying the universal property of~\Cref{cor:cmPL_enhanced_univ_property}. Given the inclusion $\iota_U: U \hookrightarrow V$ of a 2-dimensional subspace $U$, we define the following morphism of crossed modules
    \begin{align}
        s_U = (s_{U,1}, s_{U,0}) : \cmPL(U) \xrightarrow{\alpha_U} \bsF(U) \xrightarrow{\bsF(\iota_U) = \xi^U_V} \bsF(V).
    \end{align}
    We assemble the $s_{U,1}$ over all 2-dimensional subspaces $U \subset V$ to obtain a map $s_1 : \planarloop(V) \to \sF_1(V)$. Along with the homomorphism $\alpha_{V,0} : \PL_0(V) \to \sF_0(V)$, this map satisfies the hypotheses of~\Cref{cor:cmPL_enhanced_univ_property}. Hence we obtain a unique homomorphism $\alpha_{V,1} : \PL_1(V) \to \sF_1(V)$ such that 
    \begin{align}
        \alpha_V = (\alpha_{V,1}, \alpha_{V,0}): \cmPL(V) \to \bsF(V)
    \end{align}
    is a morphism of crossed modules and such that $\alpha_{V,1} \circ \xi_{V,1} = s_1$. In particular, $\alpha_V \circ \cmPL(\iota_U) = \bsF(\iota_U) \circ \alpha_U$, meaning that $\alpha$ is natural with respect to inclusions of $2$-dimensional subspaces. Since this property characterizes $\alpha_{V,1}$, we see that any natural transformation extending $\alpha_{0}$ must be unique. Furthermore, it is straightforward to see that $\alpha$ is natural with respect to linear maps $\phi: U \to V$ where $\dim(U) \leq 2$. Indeed, this is immediate if $\dim(U) \leq 1$ because then $\PL_1(U)$ is trivial. Assuming that $\dim(U) = 2$, factor $\phi$ as  
     \begin{align}
        \phi : U \xrightarrow{\psi} W \xrightarrow{\iota_W} V,
    \end{align}
    where $W = \im(\phi)$. Therefore, 
    \begin{align}
        \ \alpha_{V,1} \circ \PL_{1}(\iota_{W}) \circ \PL_{1}(\psi) = \sF_1(\iota_W) \circ \alpha_{W,1} \circ \PL_{1}(\psi) = \sF_1(\iota_W) \circ \sF_{1}(\psi) \circ \alpha_{U,1},
    \end{align}
    where the first equality follows from naturality with respect to inclusions and the second equality follows from naturality in $\Vect_{\leq 2}$. 

The final step in the proof is to verify naturality of $\alpha$ for arbitrary linear maps $\phi: V \to W$. We may assume that $\dim(V) \geq 3$. For each 2-dimensional subspace $U \subset V$, let $\iota_U: U \hookrightarrow V$ be the inclusion and consider the following diagram
    \begin{equation}
        \begin{tikzcd}
        \cmPL(U) \ar[r, "\alpha_U"] \ar[d,swap, "\cmPL(\iota_U) = \xi^U_V"] &  \bsF(U)\ar[d, "\bsF(\iota_U)"]  \\
        \cmPL(V) \ar[r, "\alpha_V"]  \ar[d,swap, "\cmPL(\phi)"] & \bsF(V) \ar[d, "\bsF(\phi)"]\\
        \cmPL(W) \ar[r, "\alpha_W"]& \bsF(W).
        \end{tikzcd}
    \end{equation}
    We know that the top and outer squares commute, and we wish to show that the bottom square commutes. For each $U \subset V$, consider the morphism of crossed modules
    \begin{align}
        p_U = (p_{U,1}, p_{U,0}) : \cmPL(U) \xrightarrow{\alpha_U} \bsF(U) \xrightarrow{\bsF(\phi \circ \iota_U)} \bsF(W).
    \end{align}
    We assemble the $p_{U,1}$ over all 2-dimensional subspaces into a function $p_1: \planarloop(V) \to \sF_1(W)$. Along with the homomorphism $\sF_{0}(\phi) \circ \alpha_{V,0} : \PL_0(V) \to \sF_0(W)$, this map satisfies the hypotheses of~\Cref{cor:cmPL_enhanced_univ_property}. Hence, there is a unique homomorphism $P: \PL_1(V) \to \sF_1(W)$ such that $(P, \sF_{0}(\phi) \circ \alpha_{V,0})$ is a morphism of crossed modules, and $P \circ \xi_{V,1} = p_1$. Hence, because both $\sF_{1}(\phi) \circ \alpha_{V,1}$ and $\alpha_{W,1} \circ \PL_{1}(\phi)$ satisfy these conditions, they must be equal. 
\end{proof}

To use this result to extend the realization functor, we will require the following lemma.

\begin{lemma} \label{lem:unique_thin_surface_2d}
    Let $\bX, \bY \in C^1([0,1]^2, \R^2)$ be a pair of maps with equal corners $\bX_{i,j} = \bY_{i,j} = c_{i,j}$ for $i, j =\{0,1\}$ and such that each boundary path \eqref{eq:individual_boundary_paths} is thin homotopy equivalent $\partial_i \bX \sim_{\thinhom} \partial_i \bY$ for $i \in \{l,b,r,t\}$. Then $\bX \sim_{\thinhom} \bY$.
\end{lemma}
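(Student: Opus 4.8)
The plan is to exploit the fact that the target $\R^2$ is two-dimensional, so that the thinness condition is automatic: for any map $H \in C^1([0,1]^3, \R^2)$ and any point $p$, the differential $dH_p : \R^3 \to \R^2$ has rank at most $\dim \R^2 = 2$. Consequently, to exhibit a thin homotopy $\bX \sim_{\thinhom} \bY$ it suffices to produce \emph{any} corner-preserving $C^1$ homotopy $H \in C^1([0,1]^3, \R^2)$ with $H_{0,\cdot,\cdot} = \bX$, $H_{1,\cdot,\cdot} = \bY$, whose four side faces are thin homotopies of the corresponding boundary paths. I would construct such an $H$ by first prescribing it on the boundary two-sphere of the cube and then extending over the cube, using that $\R^2$ is contractible.

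First, by hypothesis each $\partial_i\bX \sim_{\thinhom} \partial_i\bY$ for $i \in \{l,b,r,t\}$, so there are thin homotopies of paths $h_i \in C^1([0,1]^2, \R^2)$, and being endpoint-preserving each $h_i$ is constant, equal to the appropriate corner $c_{j,k}$, on the two edges of its square corresponding to the endpoints of $\partial_i\bX$. Writing $[0,1]^3$ with coordinates $(u,s,t)$, I would define a map $G$ on $\partial([0,1]^3)$ by setting $G_{0,s,t} = \bX_{s,t}$ and $G_{1,s,t} = \bY_{s,t}$ on the two horizontal faces, and $G_{u,0,t} = (h_l)_{u,t}$, $G_{u,1,t} = (h_r)_{u,t}$, $G_{u,s,0} = (h_b)_{u,s}$, $G_{u,s,1} = (h_t)_{u,s}$ on the four vertical faces. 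One checks edge compatibility: on the eight horizontal edges (where $u \in \{0,1\}$) the two prescriptions agree because, for instance, $h_l$ is a homotopy \emph{from} $\partial_l\bX = \bX_{0,\cdot}$ \emph{to} $\partial_l\bY = \bY_{0,\cdot}$, and $\bX_{i,j} = \bY_{i,j} = c_{i,j}$; on the four vertical edges (where $s,t \in \{0,1\}$) every prescription is the constant path at the relevant corner $c_{i,j}$, again using that the $h_i$ are endpoint-preserving and that $\bX$ and $\bY$ share corners. Hence the prescriptions glue to a continuous map $G : \partial([0,1]^3) \to \R^2$, which, after a standard collaring near the edges using the sitting instants of $\bX$, $\bY$ and the $h_i$, may be taken of class $C^1$ with sitting instants.

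Since $\R^2$ is convex, hence contractible, and $\partial([0,1]^3) \cong S^2$, the map $G$ extends to a map $H \in C^1([0,1]^3, \R^2)$ with $H|_{\partial([0,1]^3)} = G$: concretely, fix a point $p_0 \in \R^2$, and along each radial ray from the center $m$ of the cube to a boundary point $\pi(x)$, with $x = m + \lambda(x)(\pi(x) - m)$, set $H(x) = (1 - \psi(\lambda(x)))\, G(\pi(x)) + \psi(\lambda(x))\, p_0$ for a smooth $\psi : [0,1] \to [0,1]$ with $\psi(0) = 1$, $\psi(1) = 0$ that is constant near $0$ and near $1$; this is $C^1$, constant equal to $p_0$ near $m$, agrees with $G$ on the boundary, and retains sitting instants. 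By construction $H$ is corner-preserving (constant along the four vertical edges), $H_{0,\cdot,\cdot} = \bX$, $H_{1,\cdot,\cdot} = \bY$, and its four side faces are exactly the thin homotopies $h_l, h_b, h_r, h_t$; and $\rank(dH) \leq 2$ automatically. Thus $H$ is a thin homotopy, so $\bX \sim_{\thinhom} \bY$.

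The only genuinely delicate point is the analytic bookkeeping in the last two paragraphs: arranging that the glued boundary map $G$ and its radial extension $H$ are of class $C^1$ and carry sitting instants. This is routine — one uses the sitting instants of the given data to make $G$ locally product-like near each edge of the cube, and reparametrizes the radial coordinate by a smooth cutoff so that no regularity is lost at the center $m$ or along $\partial([0,1]^3)$. The topological content (vanishing of the obstruction to extending over the cube, and the automatic thinness) requires no work, which is exactly why the two-dimensional target is essential.
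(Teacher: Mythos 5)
Your proof is correct and rests on the same key observation as the paper's: that in $\R^2$ the rank-$\leq 2$ condition is automatic, so it suffices to produce any corner-preserving $C^1$ homotopy whose four side faces are the path thin-homotopies $h_i$. The paper's extension step is slightly cleaner than your radial cone: rather than coning the prescribed boundary map $G$ to a point (which requires handling the non-smoothness of the radial projection near the edges of the cube), it first glues the $h_i$ onto $\bX$ to obtain a surface $\tbX$ with the same boundary paths as $\bY$, and then uses the linear homotopy $(1-u)\tbX_{s,t} + u\bY_{s,t}$, which is manifestly $C^1$, fixes the boundary, and has rank $\leq 2$ for free.
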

\begin{proof}
    Let $h_i : [0,1]^2 \to \R^2$ by the thin homotopy between $\partial_i \bX$ and $\partial_i \bY$ for $i \in \{l,b,r,t\}$. Then $\bX$ is thin homotopy equivalent to the surface $\tbX$, where we glue the four thin path homotopies $h_i$ along the boundary as follows.

    \begin{figure}[!h]
        \includegraphics[width=1.0\linewidth]{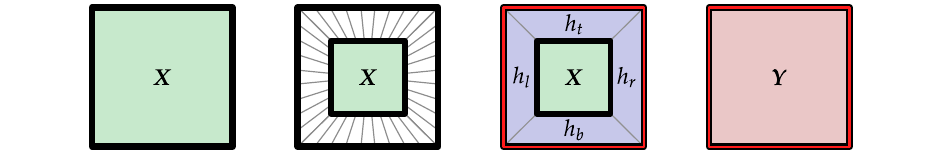}
    \end{figure}

    \noindent Then, the linear homotopy between $\tbX$ and $\bY$ is thin as it must have rank at most $2$ and does not change the boundaries. Hence $\tbX \sim_{\thinhom} \bY$. 
\end{proof}

\begin{lemma} \label{lem:surface_realization_planar_loop}
    For every planar loop $\bb = (b_1, \ldots, b_m)_{\min} \in \PL_0^{\cl}(U) \subset \planarloop(V)$, there exists a surface $\bX^\bb \in C^1([0,1]^2, U)$ whose boundary is in the thin homotopy class of $\realization_0(\bb)$.
\end{lemma}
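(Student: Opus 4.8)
The plan is to exploit the fact that $U$, being a $2$-dimensional vector space, is contractible, and simply to ``cone off'' an explicit polygonal representative of $\realization_0(\bb)$ radially to the origin. First I would fix the based representative $\bx \coloneqq \realization_0(\bb) \in C^1([0,1], U)$: by the construction of the realization map in~\eqref{eq:realization0}, this is the closed polygonal path through the vertices $0,\ b_1,\ b_1 + b_2,\ \ldots,\ \sum_{i=1}^m b_i = 0$, where each straight segment is reparametrized by the sitting-instant function, so that $\bx$ is $C^1$, has sitting instants, takes values in $U = \SPAN(\bb)$, and satisfies $\bx_0 = \bx_1 = 0$.

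Next I would write down the surface. Choosing a smooth function $\lambda \colon [0,1] \to [0,1]$ with $\lambda \equiv 1$ on a neighbourhood of $0$ and $\lambda \equiv 0$ on a neighbourhood of $1$, I set
\begin{align}
    \bX^\bb_{s,t} \coloneqq \lambda(t)\,\bx_s \in U.
\end{align}
I would then check the routine admissibility conditions: $\bX^\bb$ is $C^1$ as a product of the smooth $\lambda$ and the $C^1$ path $\bx$, and it is valued in the subspace $U$; and it has sitting instants, since $\bx_s \equiv 0$ near $s = 0,1$, while $\lambda(t) \equiv 1$ near $t = 0$ (so $\bX^\bb_{s,t} = \bx_s = \bX^\bb_{s,0}$ there) and $\lambda(t) \equiv 0$ near $t = 1$ (so $\bX^\bb_{s,t} = 0 = \bX^\bb_{s,1}$ there). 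All four corners equal $0$. I do not expect a genuine obstacle in this lemma: the only real input is the contractibility of $U$, which is what allows the naive radial homotopy to exist at all, and the only care required is bookkeeping the sitting-instant conditions in both variables so that the explicit formula is honestly $C^1$.

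Finally I would compute the boundary using~\eqref{eq:individual_boundary_paths} and~\eqref{eq:surface_boundary}. The bottom boundary is $(\partial_b\bX^\bb)_s = \bX^\bb_{s,0} = \bx_s$, while the right, top, and left boundaries are $(\partial_r\bX^\bb)_s = \lambda(s)\bx_1 = 0$, $(\partial_t\bX^\bb)_s = \lambda(1)\bx_s = 0$, and $(\partial_l\bX^\bb)_s = \lambda(s)\bx_0 = 0$, each the constant path $e$ at the origin. Hence
\begin{align}
    \partial \bX^\bb = (\bx \concat e) \concat (e^{-1} \concat e^{-1}) \sim_{\thinhom} \bx = \realization_0(\bb),
\end{align}
since a constant path is thinly null-homotopic and appending or cancelling it (together with reparametrization) does not change the thin homotopy class. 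If one instead prefers $\bX^\bb$ to lie in $\thingroup_2(V)$ with $\delta(\bX^\bb) = \realization_0(\bb)$, the identical construction applied to the reversed loop placed on the \emph{top} edge, namely $\bX^\bb_{s,t} = \lambda(1-t)\,\bx^{-1}_s$, works: then the bottom, left, and right boundaries are constant at $0$ and $\partial_t\bX^\bb = \realization_0(\bb)^{-1}$, so that $\delta(\bX^\bb) = \partial_t(\bX^\bb)^{-1} = \realization_0(\bb)$.
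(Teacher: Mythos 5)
Your construction is correct, but it goes by a genuinely different route than the paper. The paper triangulates the polygonal disc into a fan: it defines cone pieces $\bC^{u,v}_{s,t} = \psi(t)(u + \psi(s)(v-u))$ over each segment of $\realization_0(\bb)$ and then stitches them together with horizontal concatenation $\concat_h$; the nontrivial boundary then lands on the top edge as the reversed loop. You instead take the already-realized $C^1$ polygonal loop $\bx \in \realization_0(\bb)$ in one piece and cone it off radially with a single formula $\lambda(t)\,\bx_s$. This is tidier, and it is worth noting that it is exactly the same idea the paper itself later packages as the $\Cone$ map $\Cone(\bx)_{s,t} = t\bx_s$ in~\eqref{eq:smooth_cone}, except that you are careful to insert the sitting-instant reparametrization $\lambda$ so that~\Cref{def:2d_sitting_instants} holds, whereas the paper's piecewise version automatically inherits sitting instants from each $\bC^{u,v}$. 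The one thing to flag is the placement of the nontrivial boundary edge: the lemma as literally stated only requires $\partial\bX^\bb \sim_{\thinhom} \realization_0(\bb)$, so your first formula (loop on the bottom) is admissible, but the way this lemma is actually used in~\Cref{prop:cmrealization} is to exhibit $\realization_0(\bb)$ as an element of $\im(\partial: \thingroup_2(U) \to \thingroup_1(U))$, which needs $\bX^\bb \in \thingroup_2(U)$ in the sense of~\eqref{eq:thin_group}, i.e.\ bottom, left and right boundaries thinly null. Your alternate version $\lambda(1-t)\,\bx^{-1}_s$ (reversed loop on top) handles this and reproduces the paper's convention $\delta(\bX^\bb) = \partial_t(\bX^\bb)^{-1} = \realization_0(\bb)$, so the proposal is complete; just be aware that the second variant, not the first, is the one that actually feeds into the argument that follows.
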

\begin{figure}[!h]
    \includegraphics[width=\linewidth]{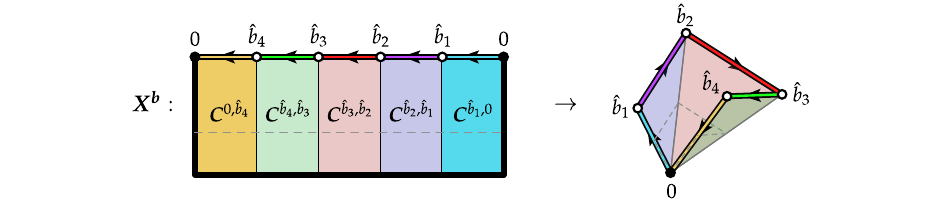}
\end{figure}
\begin{proof}
For $u,v \in U$, define the function
\begin{align}
    \bC^{u,v}: [0,1]^2 \to U \quad \text{by} \quad \bC^{u,v}_{s,t} \coloneqq \psi(t) \cdot (u + \psi(s) \cdot (v-u)),
\end{align}
where $\psi: [0,1] \to [0,1]$ is a reparametrization with sitting instants. 
Define $\bX^\bb \in C^1([0,1]^2, U)$ by
\begin{align} \label{eq:Xb}
    \bX^{\bb} = \bC^{0,\hb_{m-1}} \concat_h \bC^{\hb_{m-1}, \hb_{m-2}} \concat_h \ldots \concat_h \bC^{\hb_2, \hb_1} \concat_h \bC^{\hb_1,0},
\end{align}
where $\concat_h$ is defined in~\eqref{eq:strict_surface_concatenation}, and the concatenations are defined from left to right (as $\concat_h$ is not associative). 
Note that the left, bottom, and right boundaries of $\bX^{\bb}$ are trivial, and it can be verified that $\bX^\bb$ has the correct boundary. 
\end{proof}

\begin{proposition} \label{prop:cmrealization}
    There exists a unique natural transformation 
    \begin{align}
        \cmrealization = (\realization_1, \realization_0) : \cmPL \Rightarrow \cmthingroup,
    \end{align}
    called \emph{realization}, which extends $\realization_0: \PL_0 \Rightarrow \thingroup_1$ from~\eqref{eq:realization0_nt}. 
\end{proposition}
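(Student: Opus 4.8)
The plan is to obtain the statement as a direct application of the general extension principle, \Cref{lem:unique_extension_of_nt}, with $\bsF = \cmthingroup = (\thingroup_2, \thingroup_1)$ and $\alpha_0 = \realization_0 : \PL_0 \Rightarrow \thingroup_1$. For this, two hypotheses must be verified: that $\cmthingroup$ is a planar functor, and that $\realization_{U,0}(\PL_0^{\cl}(U)) \subseteq \im(\delta^{\thingroup}_U)$ whenever $\dim U = 2$.

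First I would check planarity of $\cmthingroup$. When $\dim U = 0$ all maps into $U$ are constant, so $\cmthingroup(U)$ is trivial. When $\dim U = 1$, a representative $\bX$ of a class in $\thingroup_2(U)$ has all four corners equal to the basepoint $0$: the bottom, left and right boundary paths are thinly null and start at $\bX_{0,0}=0$, which forces $\bX_{1,0} = \bX_{0,1} = \bX_{1,1} = 0$. Hence the rescaling homotopy $H_{u,s,t} = (1-u)\bX_{s,t}$ is corner-preserving, its four sides are thin homotopies between the boundary paths of $\bX$ and the constant ones, and since it takes values in a $1$-dimensional space it automatically satisfies $\rank(dH) \le 1 \le 2$; thus $\bX$ is thinly null and $\thingroup_2(U)$ is trivial. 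When $\dim U = 2$, injectivity of $\delta^{\thingroup}_U : \thingroup_2(U) \to \thingroup_1(U)$ reduces, as $\delta$ is a group homomorphism, to triviality of its kernel: if $\delta(\bX) = (\partial_t \bX)^{-1}$ is thinly null, then (using the defining constraints of $\thingroup_2(U)$ from \eqref{eq:thin_group}) all four boundary paths of $\bX$ are thinly null, so $\bX$ and the constant surface have equal corners and pairwise thinly homotopic boundary paths, whence $\bX \sim_{\thinhom} \text{const}$ by \Cref{lem:unique_thin_surface_2d}.

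Next I would verify the image condition. Given a $2$-dimensional subspace $U$ and a loop $\bb \in \PL_0^{\cl}(U) = \planarloop(U)$, \Cref{lem:surface_realization_planar_loop} produces a surface $\bX^{\bb} \in C^1([0,1]^2, U)$ with trivial left, bottom and right boundaries and with $\partial \bX^{\bb}$ in the thin homotopy class of $\realization_0(\bb)$. In particular $\bX^{\bb}$ represents an element of $\thingroup_2(U)$ for which $\delta(\bX^{\bb}) = (\partial_t \bX^{\bb})^{-1} = \partial \bX^{\bb} = \realization_0(\bb)$ in $\thingroup_1(U)$, so $\realization_{U,0}(\bb) \in \im(\delta^{\thingroup}_U)$. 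With both hypotheses of \Cref{lem:unique_extension_of_nt} in hand, that lemma yields a unique natural transformation $\cmrealization = (\realization_1, \realization_0) : \cmPL \Rightarrow \cmthingroup$ extending $\realization_0$, which is exactly the assertion.

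The main obstacle is the planarity of $\cmthingroup$, and within it the injectivity of $\delta$ in dimension two; but this is precisely what \Cref{lem:unique_thin_surface_2d} delivers once one observes that $\delta(\bX) = 0$, together with the constraints defining $\thingroup_2(U)$, makes every boundary path of $\bX$ thinly null. The remaining verifications — the cases $\dim U \in \{0,1\}$ and the image condition — are routine once the boundary conventions of \eqref{eq:surface_boundary} and \eqref{eq:thin_group} and the explicit surface of \Cref{lem:surface_realization_planar_loop} are unwound.
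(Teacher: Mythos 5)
Your proof is correct and follows essentially the same route as the paper's: verify that $\cmthingroup$ is a planar functor (the $\dim U \le 1$ cases via linear/rescaling homotopies, and injectivity of $\delta^{\thingroup}_U$ in dimension two via \Cref{lem:unique_thin_surface_2d}), check the image condition via \Cref{lem:surface_realization_planar_loop}, then apply \Cref{lem:unique_extension_of_nt}. You simply spell out the details that the paper's terse proof leaves implicit, including the observation that the defining constraints of $\thingroup_2(U)$ force all four corners to coincide.
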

\begin{proof}
    First, $\cmthingroup(U)$ is trivial when $\dim(U) = 0$, and $\thingroup_2(U)$ is trivial when $\dim(U) = 1$, as every surface in $U$ is thinly null homotopic by the linear homotopy. 
    By~\Cref{lem:unique_thin_surface_2d}, the crossed module boundary $\delta: \thingroup_2(U) \to \thingroup_1(U)$ is injective when $\dim(U) = 2$. Therefore, $\cmthingroup$ is a planar functor. Furthermore, $\realization_0(\PL_0^{\cl}(U)) \subseteq \im(\partial: \thingroup_2(U) \to \thingroup_1(U))$ when $\dim(U) = 2$ as every loop in $U$ can be filled in to obtain a surface by~\Cref{lem:surface_realization_planar_loop}. 
    Thus, by~\Cref{lem:unique_extension_of_nt}, there exists a unique extension $\cmrealization = (\realization_1, \realization_0)$.
\end{proof}

Next, we will show that the path signature also extends uniquely to a natural transformation. 

\begin{theorem} \label{thm:unique_surface_signature}
    There exists a unique natural transformation
    \begin{align}
        \bS_{\PL} = (\SigPL, \sigPL): \cmPL \Rightarrow \com{\cmK},
    \end{align}
    called the \emph{piecewise linear signature}, which extends $\sigPL: \PL_0 \Rightarrow \com{K}_0$ from~\Cref{prop:psigpl_nt}.
\end{theorem}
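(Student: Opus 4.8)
The plan is to obtain both existence and uniqueness from Lemma~\ref{lem:unique_extension_of_nt}, applied with the planar functor $\bsF = \com{\cmK}$ and the natural transformation $\alpha_0 = \sigPL : \PL_0 \Rightarrow \com{K}_0$ (which is a natural transformation by Proposition~\ref{prop:psigpl_nt}). To invoke that lemma I would check its two hypotheses: (i) that $\com{\cmK} : \Vect \to \XGrp$ is a planar functor, and (ii) that $\sigPL\big(\PL_0^{\cl}(U)\big) \subseteq \im\big(\delta : \com{K}_1(U) \to \com{K}_0(U)\big)$ whenever $\dim U = 2$. As a preliminary sanity check, existence is in fact visible directly: the composite $\bS \circ \cmrealization : \cmPL \Rightarrow \com{\cmK}$ of the realization $\cmrealization$ from Proposition~\ref{prop:cmrealization} with $\bS = (\Sig, \sig)$ from~\eqref{eq:ssig_nt} is a natural transformation whose $0$-component is $\sig \circ \realization_0 = S_0 \circ \realization_0 = \sigPL$; so the genuine content of the theorem is the uniqueness clause, which Lemma~\ref{lem:unique_extension_of_nt} supplies once (i) and (ii) are in place.

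To check (i), the cases $\dim U \le 1$ are immediate: then $\Lambda^2 U = 0$, so $\fk_1(U) = (T(U) \otimes \Lambda^2 U)/\Pf(T(U) \otimes \Lambda^2 U) = 0$, hence $\com{K}_1(U)$ is trivial (and $\com{\cmK}(0)$ is trivial altogether). The substantive case is $\dim U = 2$, where one must show $\delta : \com{K}_1(U) \to \com{K}_0(U)$ is injective. Since $\com{K}_i(U)$ is the group of exponentials of the pro-nilpotent Lie algebra $\com{\fk}_i(U)$ and the group map $\delta$ is the exponential of the Lie-algebra map $\delta : \com{\fk}_1(U) \to \com{\fk}_0(U)$, it suffices to see the latter is injective; and because $\fk_0(U)$ and $\fk_1(U)$ are weight-graded, the lower-central-series filtrations defining their completions are the associated weight filtrations, and $\delta$ is weight-preserving, this in turn follows from injectivity of $\delta : \fk_1(U) \to \fk_0(U)$, i.e.\ from $\fa_1(U) = \ker(\delta : \fk_1(U)\to\fk_0(U)) = 0$. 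Finally, Theorem~\ref{thm:rho_isomorphism} together with the duality recorded in Section~\ref{ssec:forms_and_currents} gives $\fa_1(U) \cong \poly{\Gamma}^{\cl}_2(U) \cong \big(\poly{\Omega}^{3,\cl}(U)\big)^{\vee}$, and $\poly{\Omega}^3(U) = 0$ because $\dim U = 2$; so $\fa_1(U) = 0$, as needed.

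For (ii), I would read the containment off the natural transformation $\bS \circ \cmrealization$ produced above: for $\dim U = 2$ its $U$-component is a morphism of crossed modules $\cmPL(U) \to \com{\cmK}(U)$ lying over $\sigPL$, so $\delta \circ (\text{surface component}) = \sigPL \circ \delta^{\PL}_U$ on $\PL_1(U)$; since $\delta^{\PL}_U : \PL_1(U) \to \PL_0^{\cl}(U)$ is an isomorphism by Lemma~\ref{lem:dim2_planar_loop_iso}, and in particular surjective, this forces $\sigPL\big(\PL_0^{\cl}(U)\big) \subseteq \im(\delta)$. With (i) and (ii) verified, Lemma~\ref{lem:unique_extension_of_nt} yields the unique natural transformation $\bS_{\PL} = (\SigPL, \sigPL) : \cmPL \Rightarrow \com{\cmK}$ extending $\sigPL$, which is the assertion.

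I expect the main obstacle to be the dimension-$2$ injectivity of $\delta$ in step (i): that is where the representation-theoretic identification of $\fa_1(U)$ with the closed polynomial $2$-currents is genuinely used, together with the --- routine but not purely formal --- bookkeeping that injectivity of a weight-preserving map on a graded Lie algebra persists through the truncation, completion and exponential constructions used to build $\com{\cmK}$. Everything else is either immediate or is already encoded in the existence of $\bS \circ \cmrealization$.
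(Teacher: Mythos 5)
Your proposal is correct, and the overall strategy matches the paper exactly: both derive the result from Lemma~\ref{lem:unique_extension_of_nt} applied to the planar functor $\com{\cmK}$ and the natural transformation $\alpha_0 = \sigPL$. Your verification of planarity (hypothesis (i)) is the same argument the paper sketches, though you are more explicit about why the Lie-algebra-level injectivity of $\delta:\fk_1(U)\to\fk_0(U)$ in dimension two (via $\fa_1(U)\cong(\poly{\Omega}^{3,\cl}(U))^{\vee}=0$) persists through the weight-graded truncation, completion and exponentiation, which is careful bookkeeping the paper elides.

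Where you genuinely diverge is in checking hypothesis (ii), that $\sigPL(\PL_0^{\cl}(U))\subseteq\im(\delta)$ for $\dim U=2$. The paper proves this by a direct Lie-algebraic computation: a closed loop has vanishing degree-one signature, so $\log\sigPL(\bx)\in\widehat{\LCS}_2(\fk_0(U))=\im(\delta:\com{\fk}_1(U)\to\com{\fk}_0(U))$. You instead read the containment off the already-constructed natural transformation $\bS\circ\cmrealization$, using that its $U$-component is a morphism of crossed modules over $\sigPL$ and that $\delta^{\PL}_U:\PL_1(U)\to\PL_0^{\cl}(U)$ is surjective by Lemma~\ref{lem:dim2_planar_loop_iso}. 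Both arguments are valid; yours is slicker but imports the smooth surface signature $\bS$ and the realization $\cmrealization$ as prerequisites, whereas the paper's version is self-contained and purely algebraic, keeping $\cmPL\Rightarrow\com{\cmK}$ logically independent of the analytic machinery (which is arguably cleaner since Proposition~\ref{prop:factor_ssig_nt} is then a consequence of uniqueness rather than an input). Either way, the result follows.
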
 
\begin{proof} 
    First, $\cmk(U)$ is trivial when $\dim(U) = 0$, and $\fk_1(U)$ is trivial when $\dim(U) = 1$ since $\Lambda^2 U = 0$. 
    Now, consider a 2-dimensional vector space $U$. By~\Cref{lem:closed_current}, we note that the kernel $\ker(\delta: \fk_1(U) \to \fk_0(U))$ is trivial. Thus
\begin{align}
    \fk_1(U) \cong [\fk_0(U), \fk_0(U)] = \LCS_2(\fk_0(U))
\end{align}
as Lie algebras and the boundary map on groups $\delta:\com{K}_1(U) \hookrightarrow \com{K}_0(U)$ is injective, so $\com{\cmK}$ is a planar functor. 
Furthermore, we claim that $\sigPL(\PL_0^{\cl}(U)) \subset \im(\delta: \com{K}_1(U) \to \com{K}_0(U))$. Indeed, the signature of a closed loop $\bx \in \PL_0^{\cl}(U)$ must be trivial in degree $1$, implying that $\log(\sigPL(\bx)) \in \widehat{\LCS}_2(\fk_0(U))$. But we have just seen that $\im(\delta: \com{\fk}_1(U) \to \com{\fk}_0(U)) = \widehat{\LCS}_2(\fk_0(U))$. Therefore, $\sigPL(\bx) \in \im(\delta: \com{K}_1(U) \to \com{K}_0(U))$. Thus, by~\Cref{lem:unique_extension_of_nt}, there exists a unique extension $\bS_{\PL} = (\SigPL, \sigPL)$.
\end{proof}

By the uniqueness of these constructions, these natural transformations factor in the same way as for paths in~\Cref{prop:psigpl_nt}.

\begin{proposition} \label{prop:factor_ssig_nt}
    The maps $\cmrealization$, $\bS$, and $\bS_{\PL}$ are natural transformations which factor as
    \begin{align}
        \bS_{\PL}: \cmPL \xRightarrow{\cmrealization} \cmthingroup \xRightarrow{\bS}  \com{\cmK}.
    \end{align}
\end{proposition}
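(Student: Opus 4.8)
The plan is to deduce the factorization directly from the uniqueness clauses already established, with essentially no new computation. First, recall that $\cmrealization : \cmPL \Rightarrow \cmthingroup$ is a natural transformation by \Cref{prop:cmrealization}, and that $\bS : \cmthingroup \Rightarrow \com{\cmK}$ is a natural transformation by \eqref{eq:ssig_nt}. Their vertical composite, written as $\cmPL \xRightarrow{\cmrealization} \cmthingroup \xRightarrow{\bS} \com{\cmK}$, is therefore a natural transformation $\cmPL \Rightarrow \com{\cmK}$; its component at a vector space $V$ is the composite $\bS_V \circ \cmrealization_V$, which is again a morphism of crossed modules since morphisms of crossed modules are closed under composition.

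Second, I would compute the base (degree-zero) component of this composite. By construction the base component of $\cmrealization$ is $\realization_0 : \PL_0 \Rightarrow \thingroup_1$ and the base component of $\bS$ is the path signature $\sig : \thingroup_1 \Rightarrow \com{K}_0$. By \Cref{prop:psigpl_nt} these satisfy $\sig \circ \realization_0 = S_{\PL,0} = \sigPL$ as natural transformations. Hence $\bS \circ \cmrealization$ is a natural transformation $\cmPL \Rightarrow \com{\cmK}$ whose base component is $\sigPL$, i.e.\ it is an extension of the piecewise linear path signature in precisely the sense of \Cref{thm:unique_surface_signature}.

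Third, \Cref{thm:unique_surface_signature} asserts that there is a \emph{unique} such extension and that it is $\bS_{\PL}$. Therefore $\bS \circ \cmrealization = \bS_{\PL}$, which is the claimed factorization; the statements that $\cmrealization$, $\bS$, and $\bS_{\PL}$ are themselves natural transformations are simply recollections of \Cref{prop:cmrealization}, \eqref{eq:ssig_nt}, and \Cref{thm:unique_surface_signature}.

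There is no serious obstacle here: the argument is a purely formal consequence of the uniqueness results obtained via \Cref{lem:unique_extension_of_nt}. The only points needing a moment's verification are that vertical composition of natural transformations valued in $\XGrp$ is computed componentwise — so that the base component of the composite is the composite of base components — and that the composite of two morphisms of crossed modules is again one; both are immediate from the definitions. An equally short alternative would be to apply \Cref{lem:unique_extension_of_nt} directly to the planar functor $\com{\cmK}$ with base natural transformation $\sigPL$, and observe that both $\bS \circ \cmrealization$ and $\bS_{\PL}$ are extensions of $\sigPL$, hence equal.
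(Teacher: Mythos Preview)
Your argument is correct and is essentially the same as the paper's: the paper simply remarks that ``by the uniqueness of these constructions, these natural transformations factor in the same way as for paths in~\Cref{prop:psigpl_nt},'' which is exactly your reasoning via \Cref{thm:unique_surface_signature}. You have just written out in full the one-line justification the paper gives.
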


This factorization implies that the smooth surface signature is uniquely determined.

\begin{theorem} \label{thm:unique_smooth_surface_signature}
    The smooth signature is the unique natural transformation
    \begin{align}
        \bS = (\Sig, \sig): \cmthingroup \Rightarrow \com{\cmK},
    \end{align}
    which extends $\sig: \thingroup_0 \Rightarrow \com{K}_0$ from~\eqref{eq:psig_nt} and such that $\Sig: \thingroup_2(V) \to \com{K}_1(V)$ is continuous with respect to the quotient topology on $\thingroup_2(V)$ induced by the Lipschitz topology on $C^1_0([0,1]^2, V)$.
\end{theorem}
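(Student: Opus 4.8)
The plan is to deduce uniqueness in the smooth setting from the piecewise linear uniqueness result (\Cref{thm:unique_surface_signature}) together with the continuity hypothesis, by a density argument exactly parallel to the remark (following \Cref{prop:groupoid_sig_uniqueness}) that the groupoid path signature is the unique continuous natural transformation which is the identity on objects. Since the hypothesis pins down the path component, it suffices to prove that the surface component $\Sig \colon \thingroup_2(V) \to \com{K}_1(V)$ is uniquely determined. So let $\bS' = (\Sig', \sig)$ be any natural transformation $\cmthingroup \Rightarrow \com{\cmK}$ whose path component is the path signature $\sig$ and whose surface component $\Sig'$ is continuous in the stated sense; the goal is $\Sig' = \Sig$.

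\emph{Step 1: reduction to piecewise linear surfaces.} Precomposing $\bS'$ with the realization natural transformation $\cmrealization = (\realization_1, \realization_0) \colon \cmPL \Rightarrow \cmthingroup$ of \Cref{prop:cmrealization} yields a natural transformation $\bS' \circ \cmrealization \colon \cmPL \Rightarrow \com{\cmK}$. Its path component is $\sig \circ \realization_0 = \sigPL$ by \Cref{prop:psigpl_nt}, so $\bS' \circ \cmrealization$ is an extension of the piecewise linear path signature. By the uniqueness asserted in \Cref{thm:unique_surface_signature} we get $\bS' \circ \cmrealization = \bS_{\PL}$, while $\bS \circ \cmrealization = \bS_{\PL}$ by \Cref{prop:factor_ssig_nt}. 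Comparing surface components, $\Sig'$ and $\Sig$ agree on the subgroup $\realization_1(\PL_1(V)) \subseteq \thingroup_2(V)$.

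\emph{Step 2: passage to the limit.} It remains to see that $\realization_1(\PL_1(V))$ is dense in $\thingroup_2(V)$ for the quotient topology induced by the Lipschitz topology on $C^1_0([0,1]^2,V)$. Granting this, the two maps $\Sig, \Sig' \colon \thingroup_2(V) \to \com{K}_1(V)$ agree on a dense subset; since $\com{K}_1(V) = \lim_{\longleftarrow} K_1^{(n)}(V)$ is a projective limit of finite-dimensional, hence Hausdorff, Lie groups, and $\Sig$ is continuous by \Cref{prop:surface_signature_continuous} while $\Sig'$ is continuous by hypothesis, we conclude $\Sig' = \Sig$, proving the theorem. For the density, given a class in $\thingroup_2(V)$ with $C^1$ representative $\bX$ (which may be taken with trivial bottom, left, and right boundary paths up to a small modification near $\partial[0,1]^2$), triangulate $[0,1]^2$ ever more finely and take the piecewise linear interpolant of $\bX$ at the vertices; because $\bX$ is $C^1$ these interpolants converge to $\bX$ in the Lipschitz topology, and — after a smooth reparametrization with sitting instants and after decomposing into a product of kites (\Cref{def:kite}) — each interpolant represents an element of $\PL_1(V)$ whose image under $\realization_1$ is thinly homotopic to it. This is the two-dimensional analogue of the density of (smoothed) piecewise linear paths in $C^1_0([0,1],V)$.

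\emph{Main obstacle.} The technical heart of the proof is precisely this density claim. One must verify that the kite decomposition of a finely triangulated interpolant genuinely lands in $\PL_1(V)$, that its realization under $\cmrealization$ lies in the intended thin homotopy class in $\thingroup_2(V)$ (in particular that the constraints $\bX_{s,0} = \bX_{0,t} = \bX_{1,t} \sim_{\thinhom} 0$ defining $\thingroup_2(V)$ are respected in the limit), and that the Lipschitz convergence survives passage to the quotient topology. Because, unlike paths, piecewise linear surfaces admit no unique reduction via local cancellations, there is no normal form to fall back on; the argument must instead be carried out through the explicit kite model underlying $\cmPL(V)$ and the construction of $\cmrealization$ from \Cref{prop:cmrealization}.
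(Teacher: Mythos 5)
Your proposal is correct and follows essentially the same route as the paper's proof: compose with $\cmrealization$ and invoke the PL uniqueness theorem (\Cref{thm:unique_surface_signature}) together with the factorization (\Cref{prop:factor_ssig_nt}) to get agreement on PL surfaces, then pass to arbitrary smooth surfaces via Lipschitz density of piecewise linear interpolants on ever finer triangulations and the continuity hypothesis, using \Cref{prop:surface_signature_continuous} for the known signature. The paper states the density step somewhat more casually than you do (it works with $\bX \in C^1_0([0,1]^2,V)$ and a mesh-refinement argument without dwelling on sitting instants, boundary constraints, or the quotient topology), so your ``main obstacle'' discussion is a reasonable amplification of a step the paper treats briefly rather than a divergence in method.
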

\begin{proof}
    Let $\bT = (T_1, T_0): \cmthingroup \Rightarrow \com{\cmK}$ be a continuous natural transformation which extends $S_0$. Then $\bT \circ \cmrealization: \cmPL \to \com{\cmK}$ is a natural transformation which extends $\sigPL$. By ~\Cref{thm:unique_surface_signature}, $\bT \circ \cmrealization = \bS_{\PL}$. Therefore, $\bT$ and $\bS$ agree on piecewise linear surfaces. Now let $\bX \in C^1_0([0,1]^2, V)$. There exists a sequence of piecewise linear surfaces $\bX^k$ such that $\bX^k \xrightarrow{\Lip} \bX$ in the Lipschitz topology. Indeed, let $T^k$ be a triangulation of $[0,1]^2$ such that the mesh size approaches $0$. Define $\bX^k_{s,t} = \bX_{s,t}$ on all vertices $(s,t) \in T_k$, and extend linearly. Then, because $\bX$ is smooth and $[0,1]^2$ is compact, the derivatives of $\bX^{k}$ converge to those of $\bX$, and thus $\bX^k \xrightarrow{\Lip} \bX$.
    Because $T_{1}(\bX^k) = S_1(\bX^k)$ for all $k$, we must have $T_1(\bX) = \Sig(\bX)$ by continuity from~\Cref{prop:surface_signature_continuous}.
\end{proof}
\begin{remark}
    The uniqueness results of~\Cref{thm:unique_surface_signature} and~\Cref{thm:unique_smooth_surface_signature} can be strengthen by keeping track of basepoints as in ~\Cref{prop:groupoid_sig_uniqueness}. Indeed, by upgrading $\cmPL$, $\cmthingroup$, and $\com{\cmK}$ to crossed modules of \emph{groupoids} defined on the category $\Aff$ of affine spaces, we can show that the signature is uniquely characterized as the natural transformation extending the identity on objects.  
\end{remark}

\subsection{Computational Methods for the Surface Signature} \label{ssec:computational}
There are two main challenges in developing computational methods for the surface signature. 
\begin{enumerate}
    \item First, the standard definition of the surface signature via the surface holonomy equation in~\Cref{def:sh} requires the solution to a complicated differential equation.
    \item Second, the surface signature is valued in a group (or algebra) which lacks an evident natural choice of basis. This is in contrast to the path signature, which is valued in the completed tensor algebra and thus is equipped with a natural basis induced by a basis on the underlying vector space $V$. Indeed, in~\cite{lee_surface_2024} the surface signature is valued in a free crossed module of associative algebras. Even though this algebra is built out of tensor algebras, one must quotient by the Peiffer subspace. As a result, the choice of bases employed in ~\cite{lee_surface_2024} is non-canonical. 
\end{enumerate}

In the proof of~\Cref{thm:unique_surface_signature}, we observe through~\Cref{lem:unique_extension_of_nt} that the  signature of a planar surface is entirely determined by the path signature of its boundary. 
This already resolves the first problem mentioned above in the setting of piecewise linear surfaces: by leveraging the algebraic structure, we can compute the surface signature through composition of basic building blocks made up of the signatures of planar loops and their tail paths. In this section, we extend this approach by applying our previous results to develop a decomposition of the surface signature that enables tractable computational methods. Specifically, we demonstrate in~\Cref{thm:main_computation_result} how the signature can be decomposed into a boundary component, valued in a subset of the usual completed tensor algebra, and an abelian component, valued in a vector space of formal currents. This resolves the second problem, as we can equip these vector spaces with canonical bases. \medskip %

\subsubsection{Decomposition of the PL Crossed Module.}
We begin by constructing a decomposition of the piecewise linear crossed module $\cmPL(V)$. To do this, we will make use of a universal property for the group of piecewise linear loops $\PL_0^{\cl}(V)$, which we obtain by expressing $\PL_0^{\cl}(V)$ as a quotient of a free group generated by triangular loops. 

First, recall that the pair groupoid of $V$, $\Pair(V) \rightrightarrows V$, \label{pg:pair_gpd} is the groupoid whose set of objects is $V$ and such that there is a unique morphism between any two objects. Hence, the space of morphisms is $\Pair(V) = V \times V$. By sending the pair $(v, u) \in V \times V$ to the triangular loop with vertices $(0, v, u)$ we obtain a map of sets 
\begin{align} \label{eq:eta_V_loops}
   \eta_V: \Pair(V) \to \PL_0^{\cl}(V), \quad  \quad \eta_V(v,u) = (v, u-v, -u). 
\end{align}
A simple computation shows that $\eta_{V}$ evaluates to the identity on every pair $(v,u)$ for which $v$ and $u$ are colinear, and is a groupoid homomorphism when restricted to any affine line $\ell \subset V$. We use these two properties to define a monoid generated by the set of triangular loops. 
To this end, consider the quotient  \label{pg:loop}
\begin{align}
    \Loop(V) \coloneqq \FMon(V^2)/\sim
\end{align}
of the free monoid by the relations
\begin{enumerate}[label=(\textbf{L.\arabic*})]
    \item \label{L1} $(v, u) \concat (u, r) \sim (v, r)$ if $v, u, r$ lie on an affine line, and
    \item \label{L2} $(v,u) \sim \emptyset$ if $v$ and $u$ are linearly dependent.
\end{enumerate}
Note that with these relations, $\Loop(V)$ is a group. Indeed, the inverse of the generator $(v,u)$ is $(u,v)$. The following theorem is proved in~\Cref{apxsec:pl_loops}.

\begin{theorem} \label{thm:pl_loop_iso}
    There is a group isomorphism $\Loop(V) \cong \PL_0^{\cl}(V)$.
\end{theorem}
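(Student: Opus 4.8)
The plan is to produce mutually inverse group homomorphisms $\Phi : \Loop(V) \to \PL_0^{\cl}(V)$ and $\Psi : \PL_0^{\cl}(V) \to \Loop(V)$. The map $\Phi$ comes directly from the construction: the assignment $(v,u) \mapsto (v, u-v, -u)$ on generators of $\FMon(V^2)$ respects both relations \ref{L1} and \ref{L2} — indeed, \ref{L2} sends $(v,u)$ with $v,u$ colinear to the word $(v, u-v, -u)$ which collapses under \ref{PL0.1}--\ref{PL0.2} to $\emptyset_0$, and \ref{L1} reflects the fact that $\eta_V$ is a groupoid homomorphism on affine lines, so that $(v,u)\concat(u,r)$ and $(v,r)$ have the same image when $v,u,r$ are collinear. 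Hence $\Phi$ is well defined on $\Loop(V)$, and it lands in $\PL_0^{\cl}(V)$ since each triangular loop is a loop. The first real task is to build $\Psi$. Given a minimal representative $\bx = (v_1, \ldots, v_k)_{\min} \in \PL_0^{\cl}(V)$, set $p_0 = 0$ and $p_i = v_1 + \cdots + v_i$, so $p_k = 0$ since $\bx$ is a loop; then define
\begin{align}
    \Psi(\bx) \coloneqq (p_1, p_2) \concat (p_2, p_3) \concat \cdots \concat (p_{k-2}, p_{k-1}) \in \Loop(V),
\end{align}
a ``fan triangulation'' of the polygon $0, p_1, \ldots, p_{k-1}, 0$ based at $0$. (When $k \le 2$ this is the empty word.) One must check this is independent of the choice of representative: it suffices to verify invariance under the two generating moves \ref{PL0.1} and \ref{PL0.2} relating words in $\FMon(V)$, each of which either deletes a vertex $p_i$ lying on the segment $[p_{i-1},p_{i+1}]$ or inserts such a vertex; in $\Loop(V)$ the corresponding change is $(p_{i-1},p_i)\concat(p_i,p_{i+1}) \leftrightarrow (p_{i-1},p_{i+1})$, which is exactly relation \ref{L1} (the three points are collinear), together with a \ref{L2}-collapse when the inserted or deleted vertex coincides with $0$. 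So $\Psi$ descends to a well-defined map on $\PL_0^{\cl}(V)$.

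Next I would check $\Psi$ is a group homomorphism. Concatenation $\bx \concat \by$ of loops through $0$ corresponds, on the level of partial sums, to laying the partial-sum sequence of $\by$ after that of $\bx$ (both return to $0$), and the fan triangulation of the concatenated polygon is the concatenation of the two fan triangulations, up to \ref{L1}/\ref{L2} moves at the shared vertex $0$; so $\Psi(\bx \concat \by) = \Psi(\bx)\concat\Psi(\by)$ in $\Loop(V)$. That $\Phi$ is a homomorphism is immediate since it is induced from a monoid map. It then remains to show $\Phi$ and $\Psi$ are inverse. For $\Phi \circ \Psi = \id$: starting from $\bx = (v_1,\ldots,v_k)_{\min}$, the word $\Phi(\Psi(\bx))$ is the concatenation of triangular loops $(p_i, p_{i+1}-p_i, -p_{i+1})$, and a telescoping computation in $\PL_0(V)$ — each internal back-and-forth edge $-p_{i+1}$ followed by $p_{i+1}$ is a retracing killed by \ref{PL0.1}--\ref{PL0.2} — collapses this to $(v_1, v_2, \ldots, v_k) = \bx$. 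For $\Psi \circ \Phi = \id$: since $\Loop(V)$ is generated by the classes of the pairs $(v,u)$, it is enough to evaluate $\Psi(\Phi(v,u)) = \Psi((v,u-v,-u))$; this word has partial sums $0, v, u, 0$, so its fan triangulation is the single generator $(v,u)$, giving back the original generator. (A degenerate check is needed when $v,u$ are colinear, but then both sides are $\emptyset$.)

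The main obstacle I anticipate is the well-definedness of $\Psi$ — verifying carefully that the fan-triangulation recipe is insensitive to which representative word in $\FMon(V)$ one picks, including the bookkeeping when an intermediate partial sum $p_i$ happens to equal the basepoint $0$ or when consecutive $p_i$'s coincide, so that the collapsing relations \ref{L1} and \ref{L2} of $\Loop(V)$ exactly absorb every elementary move among representatives. Everything else is telescoping algebra of the kind already used in \Cref{prop:minimal_pl_paths} and \Cref{prop:embedding_free_group}. It may be cleanest to organize this by first defining $\Psi$ on all of $\FMon(V)$ by the same fan formula on partial sums (ignoring minimality), checking invariance under \ref{PL0.1} and \ref{PL0.2} there, and only afterward restricting to loops; this avoids having to track how the minimal representative changes under concatenation.
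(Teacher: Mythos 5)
Your approach is correct and takes a genuinely different route from the paper's. The paper defines the same homomorphism $F: \Loop(V) \to \PL_0^{\cl}(V)$ on generators and proves bijectivity in two separate steps: surjectivity by factoring a PL loop into triangular loops, and injectivity by comparing lengths of minimal representatives in $\Loop(V)$ versus $\PL_0^{\cl}(V)$ — which in turn requires first developing a rewriting theory for $\Loop(V)$ to establish unique minimal representatives there (\Cref{lem:loop_minimal}). You instead construct an explicit two-sided inverse $\Psi$ via the fan triangulation, sidestepping \Cref{lem:loop_minimal} entirely; the only minimal-representative machinery you need is \Cref{prop:minimal_pl_paths} for $\PL_0(V)$, which is already a prerequisite. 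The trade is exactly the bookkeeping you flag as the main obstacle: verifying that $\Psi$ is well-defined and a homomorphism. Both routes work; yours is more constructive (it exhibits the inverse), while the paper's injectivity argument is shorter once its appendix lemma is in place.

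One correction to your suggested simplification of defining $\Psi$ on all of $\FMon(V)$: you should use the full fan $(p_1,p_2)\concat(p_2,p_3)\concat\cdots\concat(p_{k-1},p_k)$ rather than the truncated $(p_1,p_2)\concat\cdots\concat(p_{k-2},p_{k-1})$. The truncated formula is not invariant under a \ref{PL0.1} move at the last position $j=k-1$ on a general word: removing the term $(p_{k-2},p_{k-1})$ via \ref{L2} requires $p_{k-2}$ and $p_{k-1}$ to be linearly dependent, and the linear dependence of $v_{k-1},v_k$ only forces this when $p_k=0$. The full fan, by contrast, is invariant under every \ref{PL0.1}/\ref{PL0.2} move on $\FMon(V)$ — each move is absorbed by a single application of \ref{L1}, with \ref{L2} handling the endpoint terms because $p_0=0$ — and it agrees with your truncated formula on loops since $(p_{k-1},0)\sim\emptyset$ by \ref{L2}. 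Even so, the resulting map on $\FMon(V)$ is not a monoid homomorphism (the fan is anchored at $0$, so $\Psi(\bx\concat\by)$ and $\Psi(\bx)\concat\Psi(\by)$ agree only when $\bx$ returns to $0$); the homomorphism property must still be checked after restricting to loops, as your main argument already does.
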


As a result, we obtain the following universal property for the group $\PL_{0}^{\cl}(V)$, which is analogous to ~\Cref{lem:PL0_univ_property} and~\Cref{thm:univ_prop_plcm_local}. 
\begin{proposition} \label{prop:univ_property_loops}
    Let $V$ be a vector space and let $G$ be a group. Let $f: \Pair(V) \to G$ be a map which 
    \begin{enumerate}
        \item restricts to a groupoid homomorphism on $\Pair(\ell)$ for every affine line $\ell \subset V$, and
        \item restricts to the trivial homomorphism on $\Pair(L)$ for every subspace $L \subset V$ with $\dim(L) \leq 1$.
    \end{enumerate}
    Then there exists a unique group homomorphism $F: \PL_0^{\cl}(V) \to G$ such that $F \circ \eta_V = f$. 
\end{proposition}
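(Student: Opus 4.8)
The plan is to reduce the statement to the free‑monoid presentation of $\PL_0^{\cl}(V)$ provided by \Cref{thm:pl_loop_iso}. Under the isomorphism $\Loop(V) \cong \PL_0^{\cl}(V)$, the map $\eta_V$ of \eqref{eq:eta_V_loops} corresponds to the canonical inclusion of generators $V^2 \hookrightarrow \Loop(V) = \FMon(V^2)/\!\sim$ (this identification is how the isomorphism is built in the appendix). Since $\Loop(V)$, hence $\PL_0^{\cl}(V)$, is generated by the triangular loops, uniqueness of $F$ is immediate: any homomorphism $F$ with $F\circ\eta_V = f$ is determined on a generating set. So the content of the proposition is existence.

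For existence, first apply the universal property of the free monoid to the set map $f : V^2 \to G$ to obtain the unique monoid homomorphism $\tf : \FMon(V^2) \to G$ with $\tf|_{V^2} = f$. It then suffices to check that $\tf$ respects the relations \ref{L1} and \ref{L2}, after which it factors through $\Loop(V) \cong \PL_0^{\cl}(V)$; we define $F$ to be the induced map, and $F\circ\eta_V = f$ holds by construction on generators. For \ref{L2}: if $v,u$ are linearly dependent they lie in a subspace $L$ with $\dim L \le 1$, and the second hypothesis gives $f(v,u) = e$, so $\tf((v,u)) = e = \tf(\emptyset)$. For \ref{L1}: if $v,u,r$ lie on a common affine line $\ell$, then $(v,u)$, $(u,r)$, $(v,r)$ are all morphisms of the pair groupoid $\Pair(\ell)$ with $(v,r)$ their composite; since the first hypothesis says $f|_{\Pair(\ell)}$ is a groupoid homomorphism to the one‑object groupoid $G$, we obtain $f(v,u)\,f(u,r) = f(v,r)$, i.e.\ $\tf((v,u)\star(u,r)) = \tf((v,r))$. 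Finally, one notes $F$ is a group homomorphism because $\PL_0^{\cl}(V)$ is a group and $F$ is a monoid homomorphism out of it.

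There is essentially no obstacle here beyond bookkeeping, granted \Cref{thm:pl_loop_iso}. The only point deserving a moment of care is matching composition conventions: one must verify that, under $\eta_V$, the concatenation of triangular loops appearing in \ref{L1} corresponds to groupoid composition in $\Pair(\ell)$ taken in the correct order, so that the groupoid‑homomorphism hypothesis applies verbatim. This should be checked against the formula $\eta_V(v,u) = (v, u-v, -u)$ and the concatenation convention for $\PL_0(V)$; it is also worth recording explicitly (as part of justifying the identification above) that $\eta_V$ indeed corresponds to the generator inclusion under the isomorphism of \Cref{thm:pl_loop_iso}.
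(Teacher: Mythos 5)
The proposal is correct and takes the same route the paper intends: the appendix (\Cref{apxsec:pl_loops}) explicitly records that the universal property holds for $\tilde\eta_V : \Pair(V) \to \Loop(V)$ ``because of the defining relations,'' and then \Cref{thm:pl_loop_iso} transfers it to $\PL_0^{\cl}(V)$ via the isomorphism $F$ satisfying $F \circ \tilde\eta_V = \eta_V$. Your verification that \ref{L2} matches hypothesis (2) (linear dependence $\Leftrightarrow$ containment in a subspace of dimension $\leq 1$) and that \ref{L1} matches the groupoid-homomorphism hypothesis (1) on $\Pair(\ell)$ is exactly the bookkeeping the paper elides, and the convention check you flag does go through: composing $(v,u)$ then $(u,r)$ in $\Pair(\ell)$ yields $(v,r)$, mirroring $(v,u)\concat(u,r)\sim(v,r)$.
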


We can use this universal property to define a section of $\delta: \PL_1(V) \to \PL_0^{\cl}(V)$. Indeed, consider the map $c: \Pair(V) \to \PL_1(V)$ defined by $c(v,u) = (\emptyset_0, \eta_V(v,u)) \in \PL_1(V)$. This map verifies the equation $\delta \circ c = \eta_{V}$ and satisfies the two conditions in~\Cref{prop:univ_property_loops}.
Therefore, it induces a homomorphism $\Cone_{\PL}: \PL_0^{\cl}(V) \to \PL_1(V)$\label{pg:PLcone} which satisfies $\delta \circ \Cone_{\PL} = \id$. As a result, the following short exact sequence splits
\begin{align}
    1 \rightarrow \PL_1^{\cl}(V) \hookrightarrow \PL_1(V) \xrightarrow{\delta} \PL_0^{\cl}(V) \to 1.
\end{align}

\begin{corollary} \label{cor:PL_decomposition}
    The map
    \begin{align}
        \Phi : \PL_1(V) \to \PL_1^{\cl}(V) \times\PL_0^{\cl}(V), \quad \Phi(\bX) = (\bX \concat (\Cone_{\PL} \circ \delta(\bX))^{-1}, \delta(\bX))
    \end{align}
    is a group isomorphism.
\end{corollary}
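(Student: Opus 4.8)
The plan is to recognize the short exact sequence
\begin{align*}
1 \to \PL_1^{\cl}(V) \hookrightarrow \PL_1(V) \xrightarrow{\delta} \PL_0^{\cl}(V) \to 1
\end{align*}
as a \emph{split central extension}: once we know that $\PL_1^{\cl}(V) = \ker\delta$ is central in $\PL_1(V)$ and that the sequence splits via $\Cone_{\PL}$, the map $\Phi$ is precisely the canonical isomorphism exhibiting such an extension as a direct product, and the proof reduces to routine verifications. Throughout I would abbreviate $s \coloneqq \Cone_{\PL}$ and $p \coloneqq \delta$, so that $p \circ s = \id$ by the construction preceding the statement.

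First I would record the centrality of $\PL_1^{\cl}(V)$. Since the boundary of any element of $\PL_1(V)$ is a loop, $\PL_1^{\cl}(V)$ coincides with $\ker(\delta : \PL_1(V) \to \PL_0(V))$, and for $E \in \ker\delta$ and arbitrary $E' \in \PL_1(V)$ the Peiffer identity in the crossed module $\cmPL(V)$ gives
\begin{align*}
E \concat E' \concat E^{-1} = \delta(E) \gt E' = \emptyset_0 \gt E' = E',
\end{align*}
so $E$ is central. Next, well-definedness of $\Phi$ is a one-line check: since $p$ is a group homomorphism, $p\big(\bX \concat s(p(\bX))^{-1}\big) = p(\bX) \concat p(\bX)^{-1} = \emptyset_0$, so the first component genuinely lands in $\PL_1^{\cl}(V)$, and $\Phi$ takes values in $\PL_1^{\cl}(V) \times \PL_0^{\cl}(V)$.

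The homomorphism property is the only place centrality enters: expanding, using that $s$ and $p$ are homomorphisms,
\begin{align*}
\Phi(\bX \concat \bY) = \big(\bX \concat \bY \concat s(p(\bY))^{-1} \concat s(p(\bX))^{-1},\ p(\bX) \concat p(\bY)\big),
\end{align*}
and comparing with $\Phi(\bX) \concat \Phi(\bY)$ (componentwise product), the second components agree, and equality of the first components is equivalent to $s(p(\bX))^{-1}$ commuting with $\bY \concat s(p(\bY))^{-1}$; the latter lies in $\ker p = \PL_1^{\cl}(V)$, which is central, so this holds. Finally, bijectivity follows by exhibiting the explicit inverse $\Psi(\bY, \bx) \coloneqq \bY \concat s(\bx)$ and checking $\Psi \circ \Phi = \id$ and $\Phi \circ \Psi = \id$ directly, using $p \circ s = \id$ and that $\delta$ restricts to the trivial map on $\PL_1^{\cl}(V)$. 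I do not anticipate any genuine obstacle: the single conceptual ingredient is the centrality of $\ker\delta$, which is immediate from the crossed module axioms, and the splitting is already in hand, so the remaining content is bookkeeping.
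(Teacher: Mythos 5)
Your proposal is correct and takes essentially the same approach as the paper's proof: both rely on the splitting $\Cone_{\PL}$ together with the centrality of $\PL_1^{\cl}(V) = \ker\delta$ (which follows from the Peiffer identity) to pass from a semidirect to a direct product. You merely unpack the standard fact that a split central extension is a direct product into explicit verifications, whereas the paper invokes it in one step.
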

\begin{proof}
    Because the above splitting, $\PL_1(V)$ is isomorphic to the semidirect product $\PL_1^{\cl}(V) \rtimes\PL_0^{\cl}(V)$. However, the conjugation action of $\PL_0^{\cl}(V)$ on $\PL_1^{\cl}(V)$ is trivial since $\PL_1^{\cl}(V)$ is in the center of $\PL_1(V)$. Thus $\PL_1(V)$ is isomorphic to the direct product.
\end{proof}

The crossed module action in $\cmPL(V)$ induces an action of $\PL_0(V)$ on $\PL_1^{\cl}(V) \times\PL_0^{\cl}(V)$ under the isomorphism $\Phi$. 
In particular, for $\bX \in \PL_1(V)$ and $\bx \in \PL_0(V)$, we have
\begin{align}
    \Phi(\bx \gt \bX) &= \Big( (\bx \gt \bX) \concat \Cone_{\PL} ( \delta(\bx \gt \bX)))^{-1}, \bx \concat \delta(\bX) \concat \bx^{-1}\Big),
\end{align}
and the induced action on $(\bZ, \bb) \in \PL_1^{\cl}(V) \times\PL_0^{\cl}(V)$, where $\bX = \Phi^{-1}(\bZ, \bb) = \bZ \concat \Cone_{\PL}(\bb)$, is
\begin{align}
    \bx \gt (\bZ, \bb) &= \Big( (\bx \gt \bZ) \concat (\bx \gt \Cone_{\PL}(\bb)) \concat \Cone_{\PL}(\bx \concat \bb \concat \bx^{-1})^{-1}, \bx \concat \bb \concat \bx^{-1}\Big).
\end{align}
Define the \emph{suspension}, $\Susp_{\PL} : \PL_0(V) \times \PL_0^{\cl}(V) \to \PL_1^{\cl}(V)$, by
\begin{align}
    \Susp_{\PL}(\bx, \bb) = (\bx \gt \Cone_{\PL}(\bb)) \concat \Cone_{\PL}(\bx \concat \bb \concat \bx^{-1})^{-1}.
\end{align}
Using the Peiffer identity, we can verify that, if $\bc \in \PL_{0}^{\cl}(V)$, then 
\begin{align}
    \Susp_{\PL}(\bc \concat \bx, \bb) = \Susp_{\PL}(\bx \concat \bc, \bb) = \Susp_{\PL}(\bx, \bc \concat \bb \concat \bc^{-1}) = \Susp_{\PL}(\bx, \bb) %
\end{align}
In particular, $\Susp_{\PL}(\bx, \bb)$ only depends on the path $\bx$ through its endpoint $t(\bx) \in V$. 
If $\bx = \delta(\Cone_{\PL}(\bx)) \in \PL_{0}^{\cl}(V)$, then by the Peiffer identity we have
\begin{align}
    \bx \gt (\bZ, \bb) = (\emptyset_1, \bx) \cdot (\bZ, \bb) \cdot (\emptyset_1, \bx)^{-1} = (\bZ, \bx \concat \bb \concat \bx^{-1}).
\end{align}
Hence, the action of $\PL_{0}^{\cl}(V)$ on the $\PL_1^{\cl}(V)$-component is trivial. As a result, we obtain the following expression for the action. 

\begin{lemma}
    The action of $\PL_{0}(V)$ on $\PL_1^{\cl}(V) \times\PL_0^{\cl}(V)$ is given by the following formula
    \begin{align}
        \bx \gt (\bZ, \bb) = \Big( (t(\bx) \gt \bZ) \concat \Susp_{\PL}(t(\bx), \bb), \bx \concat \bb \concat \bx^{-1}\Big),
    \end{align}
    where $(\bZ, \bb) \in \PL_1^{\cl}(V) \times\PL_0^{\cl}(V)$, $\bx \in \PL_0(V)$, and we view $t(\bx) \in \PL_0(V)$ as the linear path from the origin to the endpoint of $\bx$.
   \end{lemma}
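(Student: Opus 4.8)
The plan is to start from the transported-action formula already recorded just above the statement, namely
\begin{align}
    \bx \gt (\bZ, \bb) = \Big( (\bx \gt \bZ) \concat (\bx \gt \Cone_{\PL}(\bb)) \concat \Cone_{\PL}(\bx \concat \bb \concat \bx^{-1})^{-1},\ \bx \concat \bb \concat \bx^{-1}\Big),
\end{align}
and to rewrite its first component. By the definition of $\Susp_{\PL}$, that component is exactly $(\bx \gt \bZ) \concat \Susp_{\PL}(\bx, \bb)$, so the lemma reduces to two claims: (i) $\Susp_{\PL}(\bx, \bb) = \Susp_{\PL}(t(\bx), \bb)$, and (ii) $\bx \gt \bZ = t(\bx) \gt \bZ$ for every $\bZ \in \PL_1^{\cl}(V)$, where in both cases $t(\bx) \in \PL_0(V)$ denotes the straight path from the origin to the endpoint of $\bx$.

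Claim (i) was already verified in the discussion preceding the lemma: writing $\bx = \bc \concat t(\bx)$ with $\bc = \bx \concat t(\bx)^{-1} \in \PL_0^{\cl}(V)$, the identity $\Susp_{\PL}(\bc \concat \by, \bb) = \Susp_{\PL}(\by, \bb)$ gives $\Susp_{\PL}(\bx, \bb) = \Susp_{\PL}(t(\bx), \bb)$. For claim (ii), I would first observe that the crossed module action preserves $\PL_1^{\cl}(V) = \ker(\delta)$, since $\delta(\bx \gt \bZ) = \bx \concat \delta(\bZ) \concat \bx^{-1}$ is trivial whenever $\delta(\bZ)$ is. Next, $\PL_1^{\cl}(V)$ is central in $\PL_1(V)$: the Peiffer identity gives $\bW \concat E \concat \bW^{-1} = \delta(\bW) \gt E = E$ for any $\bW \in \ker\delta$ and $E \in \PL_1(V)$. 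Consequently, for $\bc \in \PL_0^{\cl}(V)$ and $\bW \in \PL_1^{\cl}(V)$, using $\bc = \delta(\Cone_{\PL}(\bc))$ and Peiffer once more, $\bc \gt \bW = \Cone_{\PL}(\bc) \concat \bW \concat \Cone_{\PL}(\bc)^{-1} = \bW$, so the $\PL_0^{\cl}(V)$-action on $\PL_1^{\cl}(V)$ is trivial. Decomposing $\bx = \bc \concat t(\bx)$ as before then yields $\bx \gt \bZ = \bc \gt (t(\bx) \gt \bZ) = t(\bx) \gt \bZ$, the last step using that $t(\bx) \gt \bZ$ again lies in $\PL_1^{\cl}(V)$ while $\bc \in \PL_0^{\cl}(V)$.

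Substituting (i) and (ii) into the first component gives $(t(\bx) \gt \bZ) \concat \Susp_{\PL}(t(\bx), \bb)$, which together with the unchanged second component $\bx \concat \bb \concat \bx^{-1}$ is the asserted formula. I do not expect a genuine obstacle here: the argument is bookkeeping that combines the Peiffer identity, the splitting $\delta \circ \Cone_{\PL} = \id$, and the earlier reduction for $\Susp_{\PL}$; the only point needing mild care is tracking which factors are central, so that the order of concatenation in the first component (and hence the use of the already-derived formula) is unambiguous.
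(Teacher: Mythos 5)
Your proof is correct and follows essentially the same route as the paper: start from the transported-action formula $\Phi(\bx \gt \bX)$ already recorded, identify the middle part of the first component as $\Susp_{\PL}(\bx, \bb)$, invoke the observation that $\Susp_{\PL}$ depends on $\bx$ only through $t(\bx)$, and reduce the remaining factor using the triviality of the $\PL_0^{\cl}(V)$-action on $\PL_1^{\cl}(V)$. The only difference is that you re-derive the latter fact from centrality of $\ker\delta$ and the Peiffer identity, whereas the paper had already noted in the proof of \Cref{cor:PL_decomposition} that $\PL_1^{\cl}(V)$ is central and directly reads off $\bc \gt (\bZ, \bb) = (\bZ, \bc \concat \bb \concat \bc^{-1})$ by conjugating in the direct product; this is a cosmetic difference, not a different argument.
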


\subsubsection{Decomposition of the Kapranov Crossed Module.} \label{ssec:kapranovdecompsection}
Next, we study a canonical splitting of Kapranov's Lie algebra $\fk_1(V)$ in order to obtain a decomposition of the group $\com{K}_1(V)$.
We use the definitions in~\Cref{ssec:forms_and_currents} for polynomial differential forms and currents. 
Let
\begin{align}
    E = \sum_{i} x_i \partial_{x_i}
\end{align}
be the Euler vector field on the vector space $V$. Let $\iota_E: \poly{\Omega}^{k+1}(V) \to \poly{\Omega}^k(V)$ be the derivation of the de Rham complex given by interior product with $E$. This operator is $\gl(V)$-equivariant and satisfies $\iota_E^2 = 0$. The Lie derivative $L_{E} = [d, \iota_E]$ acts on the weight $r$ subcomplex $\poly{\Omega}^k(V)_{r}$ by multiplication by $r$. Recall that we can extend the de Rham complex by putting a copy of the base field $\mathbb{R}$ in degree $-1$. We can then extend $\iota_{E}$ by sending an element $f \in \poly{\Omega}^{0}(V)$ to the value $f(0)$. As part of the proof of the Poincare lemma, we obtain the following decomposition.

\begin{lemma}
    For each $k \in \N$, we have a direct sum decomposition of $\gl(V)$-represenations,
    \begin{align}
        \poly{\Omega}^k(V) = \ker(d: \poly{\Omega}^k(V) \to \poly{\Omega}^{k+1}(V)) \oplus \ker(\iota_E: \poly{\Omega}^k(V) \to \poly{\Omega}^{k-1}(V)).
    \end{align}
\end{lemma}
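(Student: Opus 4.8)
The plan is to reduce the claim to a statement on each weight-homogeneous piece and then apply the Cartan homotopy formula, exactly as in the usual proof of the Poincar\'e lemma. First I would record that $d$ and $\iota_E$ are both homogeneous of weight $0$: the exterior derivative maps $\poly{\Omega}^k(V)_r$ into $\poly{\Omega}^{k+1}(V)_r$, and contraction with the Euler field maps $\poly{\Omega}^k(V)_r$ into $\poly{\Omega}^{k-1}(V)_r$, since in both cases the form degree changes by $\pm 1$ and the polynomial degree of the coefficients changes by $\mp 1$, leaving the total weight unchanged. Hence $\ker d$ and $\ker \iota_E$ are graded subspaces of $\poly{\Omega}^k(V)$, compatibly with its weight decomposition, and because both operators are $\gl(V)$-equivariant, the two kernels are $\gl(V)$-subrepresentations. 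It therefore suffices to verify, for each $r$, that
\[
\poly{\Omega}^k(V)_r = \big(\ker d \cap \poly{\Omega}^k(V)_r\big) \oplus \big(\ker \iota_E \cap \poly{\Omega}^k(V)_r\big)
\]
as vector spaces.

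Next I would use the Cartan magic formula $L_E = d\iota_E + \iota_E d$ together with the fact recorded before the lemma that $L_E$ acts on $\poly{\Omega}^k(V)_r$ as multiplication by $r$. When $r \geq 1$, for every $\omega \in \poly{\Omega}^k(V)_r$ we may write
\[
\omega = \frac{1}{r}\, d(\iota_E \omega) \;+\; \frac{1}{r}\, \iota_E(d\omega),
\]
where the first summand lies in $\ker d$ because $d^2 = 0$ and the second lies in $\ker \iota_E$ because $\iota_E^2 = 0$; this shows the two kernels span. For directness, if $\omega$ lies in both kernels then $r\omega = L_E \omega = d\iota_E \omega + \iota_E d\omega = 0$, so $\omega = 0$. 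The only remaining weight is $r = 0$, which (since $\poly{\Omega}^k(V)_r$ is nonzero only for $r \geq k$) forces $k = 0$; there $\poly{\Omega}^0(V)_0 = \R$, on which $d$ vanishes identically while the extended operator $\iota_E$ sends $f$ to $f(0)$ and hence is the identity. Thus $\ker d = \R$ and $\ker \iota_E = 0$, and the decomposition holds trivially.

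I do not anticipate a genuine obstacle: the argument is precisely the homotopy-operator proof of the Poincar\'e lemma, and the only points requiring a little care are the bookkeeping for the degree $-1$ extension of $\iota_E$ in the $k = 0$ case, so that $\ker \iota_E$ is computed correctly, and the observation that, because everything is weight-graded, dividing by $r$ is legitimate once $r \geq 1$.
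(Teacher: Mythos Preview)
Your argument is correct and is exactly the standard Poincar\'e-lemma homotopy argument that the paper invokes without spelling out: the paper simply states that the decomposition is obtained ``as part of the proof of the Poincare lemma,'' and your use of $L_E = d\iota_E + \iota_E d$ acting by the weight $r$ is precisely that argument. Your handling of the $r=0$ case via the degree~$-1$ extension of $\iota_E$ matches the paper's conventions.
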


We dualize this to get a decomposition for currents. Recall that $\partial: \poly{\Gamma}_k(V) \to \poly{\Gamma}_{k-1}(V)$ is the codifferential from~\eqref{eq:codifferential}. Similarly, the dual of $\iota_{E}$ is the operator $e: \poly{\Gamma}_k(V) \to \poly{\Gamma}_{k+1}(V)$ defined by 
\begin{align}
\langle e(\alpha), \omega \rangle = (-1)^k\langle \alpha, \iota_E \omega\rangle
\end{align}
for $\alpha \in \poly{\Gamma}_k(V)$ and $\omega \in \poly{\Omega}^{k+1}(V)$. It can be expressed explicitly as 
\begin{align}
    e(u_1 \cdots u_r \otimes v_1 \wedge \ldots \wedge v_k) = \sum_{i=1}^r u_1 \cdots \hat{u}_i \cdots u_r \otimes u_i \wedge v_1 \wedge \ldots \wedge v_k .
\end{align}
Because $e$ is the dual of $\iota_E$, we have $e^2 = 0$. Furthermore, $\ell = [e, \partial]$ acts on $\poly{\Gamma}_k(V)_{r}$ by multiplication by the (negative) weight $-r$. By the same argument as above, we obtain the following.
\begin{lemma}
    For each $k \in \N$, we have a direct sum decomposition of $\gl(V)$-representations,
    \begin{align}
        \poly{\Gamma}_k(V) = \ker(\partial: \poly{\Gamma}_k(V) \to \poly{\Gamma}_{k-1}(V)) \oplus \ker(e: \poly{\Gamma}_k(V) \to \poly{\Gamma}_{k+1}(V)).
    \end{align}
\end{lemma}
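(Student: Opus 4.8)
The plan is to run the Poincar\'e-lemma homotopy argument used for $\poly{\Omega}^\bullet$ directly on the current complex, decomposing $\poly{\Gamma}_k(V)$ one weight at a time.

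First I would collect the structural facts already at hand: $e^2 = 0$ (as $e$ is dual to $\iota_E$ and $\iota_E^2 = 0$), $\partial^2 = 0$ (the codifferential is a differential), and $\ell = [e,\partial] = e\partial + \partial e$ acts on $\poly{\Gamma}_k(V)_r$ by multiplication by $-r$. I would also note that $e$ and $\partial$ preserve the weight grading (being duals of the weight-preserving $\iota_E$ and $d$) and are $\gl(V)$-equivariant; hence it suffices to decompose each finite-dimensional $\gl(V)$-subrepresentation $\poly{\Gamma}_k(V)_r$ individually, and for $k \geq 1$ every such $r$ satisfies $r \geq k \geq 1$.

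On a weight-$r$ piece with $r \neq 0$, I would introduce $\pi_{\partial} \coloneqq -\tfrac{1}{r}\,\partial e$ and $\pi_{e} \coloneqq -\tfrac{1}{r}\, e\partial$. Then $\pi_\partial + \pi_e = -\tfrac1r \ell = \id$ on this piece; moreover $\pi_\partial \pi_e = \tfrac{1}{r^2}\partial e^2 \partial = 0$ and $\pi_e \pi_\partial = \tfrac{1}{r^2} e \partial^2 e = 0$, so both are idempotents. Their images lie in $\im(\partial) \subseteq \ker(\partial)$ and $\im(e)\subseteq\ker(e)$ respectively, and conversely any $\alpha \in \ker(\partial)$ is fixed by $\pi_\partial$ (since $\pi_e \alpha = -\tfrac1r e\partial\alpha = 0$), with the symmetric statement for $\ker(e)$; hence $\im(\pi_\partial) = \ker(\partial)\cap\poly{\Gamma}_k(V)_r$ and $\im(\pi_e) = \ker(e)\cap\poly{\Gamma}_k(V)_r$. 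Their intersection is trivial because $\ell = -r\cdot\id$ is invertible on this piece, so $\poly{\Gamma}_k(V)_r = \ker(\partial) \oplus \ker(e)$ there; equivariance of the splitting is automatic since $\pi_\partial$ and $\pi_e$ are built from the $\gl(V)$-equivariant maps $e,\partial$ and a scalar. Summing over all weights $r$ yields the lemma.

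The only subtle point — the main, though mild, obstacle — is the degenerate weight $r=0$, which arises solely when $k = 0$: there $\poly{\Gamma}_0(V)_0 \cong \R$, the operator $e$ vanishes on it, and $\ell = 0$, so the projector argument does not apply. As in the differential-forms case this is handled by passing to the complex extended by a copy of $\R$ in degree $-1$ (dual to the extension $\iota_E(f) = f(0)$), which makes $\partial$ injective in weight $0$; then $\ker(\partial)\cap\poly{\Gamma}_0(V)_0 = 0$ while $\ker(e)$ contains all of it, in agreement with the statement. The sign in $\ell = -r\cdot\id$, versus $+r$ for forms, is forced by the sign conventions in the definitions of $\partial$ and $e$ and is used only to guarantee the relevant scalar is nonzero.
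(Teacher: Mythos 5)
Your proposal is correct and takes essentially the same approach as the paper: the paper's proof consists of the remark that ``by the same argument as above'' (i.e.\ the Poincar\'e-lemma homotopy applied to the complex of currents), and you have simply carried that argument out in full, including the weight-by-weight projectors $\pi_\partial = -\tfrac{1}{r}\partial e$, $\pi_e = -\tfrac{1}{r}e\partial$ and the weight-zero degeneracy handled by passing to the complex extended by $\R$ in degree $-1$. One very minor imprecision: the extension of $\partial$ to a nonzero map $\poly{\Gamma}_0(V)_0 \to \R$ is dual to the \emph{inclusion of constants} $\R \hookrightarrow \poly{\Omega}^0$ (the degree $-1$ arrow of the extended form complex), not to the extended $\iota_E$; the latter instead dualizes to $e$ out of the degree $-1$ term. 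This does not affect your conclusion.
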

As a particular case of this Lemma, we have the decomposition $\poly{\Gamma}_2(V) = \poly{\Gamma}_{2}^{\cl}(V) \oplus \ker(e)$. Recall~\Cref{thm:rho_isomorphism}, which states that the abelianization map $\rho: \fk_1(V) \to \poly{\Gamma}_2(V)$ sends $\fa_1(V) = \ker(\delta)$ isomorphically to $\poly{\Gamma}_{2}^{\cl}(V)$. Therefore, the ideal $\rho^{-1}(\ker(e)) \subset \fk_1(V)$ is a complement to $\fa_1(V)$ and hence is isomorphic to $\LCS_2(\fk_0(V)) = [\fk_0(V), \fk_0(V)]$ via the map $\delta$ (cf. \cite{MR1252663}). Working with completions, we define 
\begin{align}
    \com{\cE}(V) \coloneqq \rho^{-1}\Big(\ker(e: \com{\Gamma}_2(V) \to \com{\Gamma}_{3}(V))\Big) \quad \text{where} \quad \rho: \com{\fk}_1(V) \to \com{\Gamma}_2(V).
\end{align}

Then $\delta: \com{\cE}(V) \to \widehat{\LCS}_2(\fk_0(V))$ is an isomorphism, and we denote the inverse morphism as
\begin{align}
    \fcone: \widehat{\LCS}_2(\fk_0(V)) \xrightarrow{\cong} \com{\cE}(V) \subset \com{\fk}_1(V).
\end{align}
In particular, the map $\fcone$ satisfies $\delta \circ \fcone = \id$, and should be viewed as the Lie algebraic analogue of the $\Cone_{\PL}$ construction. 
Thus, we obtain the following decomposition. %

\begin{corollary} \label{cor:kapranov_decomposition_E}
    There exists a canonical isomorphism of Lie algebras
    \begin{align}
        \Psi: \com{\fk}_1(V) \xrightarrow{\cong} \com{\Gamma}_2^{\cl}(V) \oplus \widehat{\LCS}_2(\fk_0(V)) \quad \text{defined by} \quad \Psi(A) = \Big(\rho(A - \fcone\circ \delta(A)), \delta(A) \Big).
    \end{align}
\end{corollary}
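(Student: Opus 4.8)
The plan is to realize $\Psi$ as the composite of the splitting of the short exact sequence of Lie algebras $0 \to \com{\fa}_1(V) \to \com{\fk}_1(V) \xrightarrow{\delta} \widehat{\LCS}_2(\fk_0(V)) \to 0$ (whose surjectivity onto $\widehat{\LCS}_2(\fk_0(V))$ was recorded just before the statement) with the two isomorphisms $\rho\colon \com{\fa}_1(V) \xrightarrow{\cong} \com{\Gamma}_2^{\cl}(V)$ and $\delta\colon \com{\cE}(V) \xrightarrow{\cong} \widehat{\LCS}_2(\fk_0(V))$. Since $\fcone$ is a section of $\delta$, we obtain a vector-space decomposition $\com{\fk}_1(V) = \com{\fa}_1(V) \oplus \com{\cE}(V)$ with $\com{\cE}(V) = \im(\fcone)$, and the two components of $A$ are precisely $A - \fcone\circ\delta(A) \in \ker(\delta) = \com{\fa}_1(V)$ and $\fcone\circ\delta(A) \in \com{\cE}(V)$. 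Under this decomposition $\Psi$ is $(\rho|_{\com{\fa}_1(V)}) \oplus (\delta|_{\com{\cE}(V)})$; as $\rho|_{\com{\fa}_1(V)}$ is an isomorphism by \Cref{thm:rho_isomorphism} (extended to the completions, since it is $\GL(V)$-equivariant and preserves the weight grading) and $\delta|_{\com{\cE}(V)}$ is the inverse of $\fcone$, the map $\Psi$ is a linear isomorphism, with inverse $(\gamma, w) \mapsto (\rho|_{\com{\fa}_1(V)})^{-1}(\gamma) + \fcone(w)$.

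It then remains to check that $\Psi$ intertwines the Lie brackets, where $\com{\Gamma}_2^{\cl}(V)$ carries the abelian bracket, $\widehat{\LCS}_2(\fk_0(V)) \subseteq \com{\fk}_0(V)$ its natural bracket, and the right-hand side is the direct product of the two. I would start from the Peiffer identity $[A, B] = \delta(A) \gt B$ valid in the crossed module of Lie algebras $\com{\cmk}(V)$. This immediately shows that $\com{\fa}_1(V)$ is central: for $E \in \com{\fa}_1(V)$ one has $[E, A] = \delta(E)\gt A = 0$, hence $[A, E] = 0$ as well. Consequently, writing $A = E_1 + \fcone(w_1)$ and $B = E_2 + \fcone(w_2)$ with $E_i \in \com{\fa}_1(V)$ and $w_i = \delta(A), \delta(B) \in \widehat{\LCS}_2(\fk_0(V))$, all mixed brackets drop out and $[A,B] = [\fcone(w_1), \fcone(w_2)]$.

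The crucial point is then that $[\fcone(w_1), \fcone(w_2)] = \fcone([w_1,w_2])$, i.e.\ that $\fcone$ is a Lie algebra homomorphism (equivalently, that $\com{\cE}(V)$ is a subalgebra). Decompose $[\fcone(w_1), \fcone(w_2)] = E + \fcone(w)$ with $E \in \com{\fa}_1(V)$ and $w = \delta([\fcone(w_1),\fcone(w_2)]) = [w_1,w_2]$ (using $\delta\circ\fcone = \id$ and that $\delta$ is a Lie algebra map). Applying $\rho$, and using that $\rho$ annihilates all brackets (it realizes the abelianization of $\com{\fk}_1(V)$, again extended to the completions), we get $0 = \rho(E) + \rho(\fcone([w_1,w_2]))$; since $\rho(E) \in \com{\Gamma}_2^{\cl}(V)$ while $\rho(\fcone(\cdot)) \in \ker(e)$, the direct sum $\com{\Gamma}_2(V) = \com{\Gamma}_2^{\cl}(V) \oplus \ker(e)$ forces $\rho(E) = 0$, hence $E = 0$ by injectivity of $\rho$ on $\com{\fa}_1(V)$. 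Thus $[\fcone(w_1),\fcone(w_2)] = \fcone([w_1,w_2])$, which yields $\Psi([A,B]) = (0, [w_1,w_2]) = [\Psi(A), \Psi(B)]$. I expect the main obstacle to be exactly this last step — controlling the $\com{\fa}_1(V)$-component of a bracket of elements of $\com{\cE}(V)$ — and it is the one place where the $\iota_E$-induced splitting of polynomial currents is genuinely used; the remaining verifications (well-definedness over the completion and naturality of the construction in $V$) follow routinely from $\GL(V)$-equivariance and the weight grading.
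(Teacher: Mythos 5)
Your proof is correct and takes essentially the same route as the paper. The paper packs the argument into the paragraph preceding the corollary: it observes that $\rho^{-1}(\ker(e))$ is an \emph{ideal} complementary to $\fa_1(V)$ (and that $\fa_1(V)$ is central by the Peiffer identity), from which the direct product decomposition of Lie algebras and the homomorphism property of $\Psi$ are immediate. You arrive at the same key fact by a slightly more circuitous route: rather than noting directly that $\rho$ annihilates all brackets and hence $[\com{\fk}_1,\com{\fk}_1] \subseteq \rho^{-1}(0) \subseteq \rho^{-1}(\ker(e)) = \com{\cE}(V)$ (so that $[\fcone(w_1),\fcone(w_2)] \in \com{\cE}(V)$ automatically, forcing $E = 0$), you first decompose the bracket, apply $\rho$, and split in $\com{\Gamma}_2 = \com{\Gamma}_2^{\cl} \oplus \ker(e)$. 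The two arguments are equivalent; yours simply verifies the ideal property in the one case where you need it rather than stating it up front.
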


Under this canonical decomposition of $\com{\fk}_1(V)$, the crossed module action of $\com{\cmk}(V)$ induces an action of $\com{\fk}_0(V)$ on $\com{\Gamma}_2^{\cl}(V) \oplus \widehat{\LCS}_2(\fk_0(V))$, which we can write out explicitly. For $x \in \com{\fk}_0(V)$ and $A \in \com{\fk}_1(V)$, we have
\begin{align}
    \Psi(x \gt A) = \Big( \rho(x \gt A - \fcone \circ \delta (x \gt A)), \delta(x \gt A) \Big).
\end{align}
From the construction of $\com{\cmk}(V)$ outlined in ~\Cref{apx:xlie}, we observe that the abelianization map $\rho$ satisfies $\rho(x \gt A) = \pi(x) \rho(A)$, where $V$ acts via the product in the symmetric algebra $\com{S}(V)$ of $\com{\Gamma}_2 = \com{S}(V) \otimes \Lambda^2(V)$. Therefore, the induced action on $(\gamma, y) \in \com{\Gamma}_2^{\cl}(V) \oplus \widehat{\LCS}_2(\fk_0(V))$, where $A = \Psi^{-1}(\gamma, y) = \rho^{-1}(\gamma) + c(y)$, is
\begin{align}
    x \gt (\gamma, y) = \Big(\pi(x)\gamma + \rho(x \gt \fcone(y) - \fcone([x,y])), [x,y]\Big).
\end{align}
We define the Lie algebraic analogue of the suspension $\fsusp: \com{\fk}_0(V) \times \widehat{\LCS}_2(\fk_0(V)) \to \com{\Gamma}_2^{\cl}(V)$ by 
\begin{align}
    \fsusp(x,y) = \rho(x \gt \fcone(y) - \fcone([x,y])),
\end{align} 
which is the projection of $\rho(x \gt \fcone(y)) \in \com{\Gamma}_2$ onto $\com{\Gamma}_2^{\cl}$.
By applying the Peiffer identity, we can verify that for $r \in \widehat{\LCS}_2(\fk_0(V))$, we have
\begin{align}
    \fsusp(r+x, y) = \fsusp(x,y).
\end{align}
Therefore, $\fsusp(x,y)$ only depends on $v = \pi(x) \in V$. As a result, we obtain the following expression for the crossed module action.

\begin{lemma}
    The action of $\com{\fk}_0(V)$ on $\com{\Gamma}_2^{\cl}(V) \oplus \widehat{\LCS}_2(\fk_0(V))$ is given by the following formula 
    \begin{align}
        x \gt (\gamma, y) = (\pi(x) \gamma + \fsusp(\pi(x), y), [x,y]),
    \end{align}
    where $(\gamma, y) \in \com{\Gamma}_2^{\cl}(V) \oplus \widehat{\LCS}_2(\fk_0(V))$ and $x \in \com{\fk}_0(V)$.
\end{lemma}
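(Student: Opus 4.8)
The plan is to assemble the computations carried out in the paragraphs immediately preceding the statement; the lemma is essentially a repackaging of them. First I would recall the explicit formula for the induced action already derived above: writing $A = \Psi^{-1}(\gamma, y) = \rho^{-1}(\gamma) + \fcone(y)$ under the isomorphism $\Psi$ of \Cref{cor:kapranov_decomposition_E}, one has
\begin{align*}
    x \gt (\gamma, y) = \Big(\pi(x)\gamma + \rho\big(x \gt \fcone(y) - \fcone([x,y])\big),\ [x,y]\Big).
\end{align*}
The second component requires no further work: by the pre-crossed module identity $\delta(x \gt A) = [x, \delta(A)]$ applied to the crossed module $\com{\cmk}(V)$, together with $\delta \circ \fcone = \id$, we get $\delta(x \gt A) = [x, \delta(\rho^{-1}(\gamma)) + \delta(\fcone(y))] = [x, y]$, using also that $\rho^{-1}(\gamma) \in \fa_1 = \ker(\delta)$.

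For the first component I would verify two points. One: the element $x \gt \fcone(y) - \fcone([x,y])$ lies in $\fa_1 = \ker(\delta)$, since $\delta(x \gt \fcone(y)) = [x, \delta(\fcone(y))] = [x, y]$ and $\delta(\fcone([x,y])) = [x,y]$, so the difference has vanishing $\delta$; consequently $\rho$ sends it into $\com{\Gamma}_2^{\cl}(V)$ (using that $\rho$ restricts to an isomorphism $\com{\fa}_1(V) \xrightarrow{\cong} \com{\Gamma}_2^{\cl}(V)$ by \Cref{thm:rho_isomorphism}), and by definition this closed current is exactly $\fsusp(x,y)$. Two: $\fsusp(x,y)$ depends only on $\pi(x)$. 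This follows from the invariance $\fsusp(r + x, y) = \fsusp(x, y)$ for $r \in \widehat{\LCS}_2(\fk_0(V))$ established just before the lemma, combined with the standard vector space splitting $\com{\fk}_0(V) = V \oplus \widehat{\LCS}_2(\fk_0(V))$ of the (completed) free Lie algebra, under which $\pi$ is the projection onto $V$: writing $x = \pi(x) + (x - \pi(x))$ with $x - \pi(x) \in \widehat{\LCS}_2(\fk_0(V))$ gives $\fsusp(x, y) = \fsusp(\pi(x), y)$. Substituting into the displayed formula yields the claimed identity.

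There is no real obstacle in this final assembly; the one genuinely substantive input is not proved here but imported, namely the identity $\rho(x \gt A) = \pi(x)\,\rho(A)$ expressing the fact that the crossed module action of $V \subset \com{\fk}_0(V)$ corresponds, after abelianization, to multiplication in the symmetric algebra $\com{S}(V)$ on $\com{\Gamma}_2(V) = \com{S}(V) \otimes \Lambda^2(V)$. This is what forces the first coordinate of $x \gt (\gamma, y)$ to take the form $\pi(x)\gamma$ plus the correction term $\fsusp(\pi(x),y)$, and I would cite the construction of $\com{\cmk}(V)$ (as in \Cref{apx:xlie}) rather than reprove it. Everything else — the bookkeeping with the two $\gl(V)$-equivariant decompositions of $\poly{\Omega}^\bullet$ and $\poly{\Gamma}_\bullet$, and the section $\fcone$ of $\delta$ — is routine.
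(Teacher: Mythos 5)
Your proof is correct and follows essentially the same route as the paper: the lemma there is simply a repackaging of the inline computations that immediately precede it (the explicit formula for $\Psi(x \gt A)$, the identity $\rho(x \gt A) = \pi(x)\rho(A)$, the definition of $\fsusp$ as the closed-current projection, and the Peiffer-identity invariance $\fsusp(r+x,y)=\fsusp(x,y)$). Your only addition is to spell out explicitly that the difference $x \gt \fcone(y) - \fcone([x,y])$ lands in $\fa_1$ and that the splitting $\com{\fk}_0(V) = V \oplus \widehat{\LCS}_2(\fk_0(V))$ converts the invariance into dependence on $\pi(x)$ alone, which is correct but left implicit in the paper.
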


\begin{remark}
    The suspension map $\fsusp$ in fact only depends on $y \in \widehat{\LCS}_2(\fk_0(V))$ through its abelianization. To see this, consider the operator $\ell = [e, \partial]$ acting on currents $\com{\Gamma}_2$. It satisfies
    \begin{align}
        \id = \ell^{-1} e \partial + \ell^{-1} \partial e. 
    \end{align}
    Since $\ell$ acts as multiplication by the negative weight $-r$, we can identify $\ell^{-1} e \partial$ as the projection onto $\ker(e)$, and $\ell^{-1} \partial e$ as the projection onto $\ker(\partial) = \com{\Gamma}_2^{\cl}$. Thus, we can express $\fsusp$ as
    \begin{align}
        \fsusp(x,y) = \ell^{-1} \circ \partial \circ e \big(\pi(x) \rho(\fcone(y))\big),
    \end{align}
    so that $\fsusp(x,y)$ depends only on the projection $\pi(x)$ and the abelianization $\rho(\fcone(y))$.
\end{remark}

Finally, the decomposition of ~\Cref{cor:kapranov_decomposition_E} induces a decomposition of the group $\com{K}_1(V)$
\begin{align} \label{eq:kapranov_decomposition}
    \com{K}_1(V) = \com{K}_1^{\Gamma}(V) \times \com{K}_1^{\cE}(V),
\end{align}
where $\com{K}_1^{\Gamma}(V) \coloneqq \ker(\delta: \com{K}_1(V) \to \com{K}_0(V))$. Note that because $\com{K}_1^{\Gamma}(V)$ is abelian, we may identify it with $\com{\Gamma}_2^{\cl}(V)$ with its additive group structure. Finally, because the constructions in this section are functorial, given a linear map $\phi: V \to W$, the induced group homomorphism $\com{K}_1(\phi) : \com{K}_1(V) \to \com{K}_1(W)$ preserves decomposition in \Cref{eq:kapranov_decomposition}. \medskip

\subsubsection{Decomposition of PL Surface Signature.}
Next, we study how the surface signature factors with respect to the decompositions of both $\PL_1(V)$ and $\com{K}_1(V)$. We consider
\begin{align} \label{eq:decomposed_sigpl}
    \SigPL \circ \Phi^{-1}: \PL_1^{\cl}(V) \times \PL_0^{\cl}(V) \to \com{K}_1^{\Gamma}(V) \times \com{K}_1^{\cE}(V),
\end{align}
where $\Phi^{-1}: \PL_1^{\cl}(V) \times \PL_0^{\cl}(V) \to \PL_1(V)$ is the isomorphism from~\Cref{cor:PL_decomposition} given by $\Phi^{-1}(\bX, \bb) = \bX \concat \Cone_{\PL}(\bb)$.

\begin{lemma}
    The homomorphism $\SigPL \circ \Phi^{-1}$ preserves the decompositions of both $\PL_1(V)$ and $\com{K}_1(V)$. 
\end{lemma}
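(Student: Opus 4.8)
The plan is to reduce the claim to two facts about $\SigPL$: (i) it maps $\PL_1^{\cl}(V)$ into $\com{K}_1^{\Gamma}(V)$; and (ii) it maps every element of the form $\Cone_{\PL}(\bb)$, with $\bb\in\PL_0^{\cl}(V)$, into $\com{K}_1^{\cE}(V)$. Granting these, since $\Phi^{-1}(\bZ,\bb)=\bZ\concat\Cone_{\PL}(\bb)$ and $\SigPL$ is a group homomorphism, $\SigPL(\Phi^{-1}(\bZ,\bb))=\SigPL(\bZ)\cdot\SigPL(\Cone_{\PL}(\bb))$ is already displayed as a product of a member of $\com{K}_1^{\Gamma}(V)$ with a member of $\com{K}_1^{\cE}(V)$; by uniqueness of such factorizations in the direct product $\com{K}_1(V)=\com{K}_1^{\Gamma}(V)\times\com{K}_1^{\cE}(V)$, the $\com{K}_1^{\Gamma}$-component is $\SigPL(\bZ)$ and the $\com{K}_1^{\cE}$-component is $\SigPL(\Cone_{\PL}(\bb))$. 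Thus $\SigPL\circ\Phi^{-1}$ is block-diagonal, i.e.\ it respects both decompositions, which is the assertion.

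Fact (i) is immediate from the fact that $\bS_{\PL}=(\SigPL,\sigPL)$ is a morphism of crossed modules, so that $\delta\circ\SigPL=\sigPL\circ\delta$: for $\bZ\in\PL_1^{\cl}(V)=\ker(\delta\colon\PL_1(V)\to\PL_0^{\cl}(V))$ we get $\delta(\SigPL(\bZ))=\sigPL(\emptyset_0)=1$, whence $\SigPL(\bZ)\in\ker(\delta)=\com{K}_1^{\Gamma}(V)$.

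For fact (ii), the idea is to exploit that coning off a loop to the origin is \emph{local}. Because $\Cone_{\PL}$ and $\SigPL$ are homomorphisms, $\com{K}_1^{\cE}(V)$ is a subgroup, and $\PL_0^{\cl}(V)$ is generated by the triangular loops $\eta_V(v,u)$ (via the isomorphism $\PL_0^{\cl}(V)\cong\Loop(V)$ of \Cref{thm:pl_loop_iso}), it is enough to prove $\SigPL(\Cone_{\PL}(\eta_V(v,u)))\in\com{K}_1^{\cE}(V)$ for each pair $v,u\in V$. If $v$ and $u$ are colinear then $\eta_V(v,u)=\emptyset_0$ and there is nothing to prove; otherwise put $U=\SPAN(v,u)$, a $2$-dimensional subspace, with inclusion $\iota_U\colon U\hookrightarrow V$. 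By the definition of $\Cone_{\PL}$ on generators we have $\Cone_{\PL}(\eta_V(v,u))=(\emptyset_0,\eta_V(v,u))$, and since $\eta_V(v,u)$ is a planar loop with span $U$, this element lies in the image of $\PL_1(\iota_U)\colon\PL_1(U)\to\PL_1(V)$ (it is $\xi^U_{V,1}$ applied to the element of $\PL_1(U)\cong\planarloop(U)$ corresponding to it, using \Cref{lem:dim2_planar_loop_iso} and the identification $\xi^U_{V,1}=\PL_1(\iota_U)$). Naturality of $\bS_{\PL}\colon\cmPL\Rightarrow\com{\cmK}$ then places $\SigPL(\Cone_{\PL}(\eta_V(v,u)))$ in the image of $\com{K}_1(\iota_U)\colon\com{K}_1(U)\to\com{K}_1(V)$. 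Now $\dim U=2$, so exactly as in the proof of \Cref{thm:unique_surface_signature} one has $\ker(\delta\colon\fk_1(U)\to\fk_0(U))=0$, hence $\com{K}_1^{\Gamma}(U)$ is trivial and $\com{K}_1(U)=\com{K}_1^{\cE}(U)$; since $\com{K}_1(\iota_U)$ preserves the decomposition~\eqref{eq:kapranov_decomposition}, its image lies inside $\com{K}_1^{\cE}(V)$, and in particular $\SigPL(\Cone_{\PL}(\eta_V(v,u)))\in\com{K}_1^{\cE}(V)$. This establishes fact (ii).

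The crux is fact (ii): the point is that a cone surface makes no contribution to the abelian factor $\com{K}_1^{\Gamma}(V)$. The argument above sidesteps any direct computation with the surface holonomy equation by reducing to triangular loops --- where the cone is a planar surface supported on a $2$-dimensional subspace, on which the abelian factor collapses --- and then transporting along the subspace inclusion using functoriality of the Kapranov decomposition. The steps demanding care are the identification of $\Cone_{\PL}$ on triangular loops with the subspace map $\PL_1(\iota_U)$ (so that naturality of the signature applies), verifying that $\com{K}_1^{\cE}(V)$ is genuinely a subgroup (so the reduction to generators is valid), and isolating the colinear degenerate case.
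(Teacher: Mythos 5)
Your proof is correct and its computational core matches the paper's: part (i) uses the crossed module identity $\delta \circ \SigPL = \sigPL \circ \delta$, and part (ii) reduces to triangular loops $\eta_V(v,u)$, passes by naturality of $\bS_{\PL}$ to the $2$-plane $U = \SPAN(v,u)$, uses the vanishing of $\com{K}_1^{\Gamma}(U)$ when $\dim U = 2$, and transports back along the decomposition-preserving map $\com{K}_1(\iota_U)$. The one place you differ from the paper is in justifying the reduction to triangular loops: the paper defines $s_V = \SigPL \circ \Cone_{\PL} \circ \eta_V$, verifies the hypotheses of \Cref{prop:univ_property_loops}, and then identifies $\SigPL \circ \Cone_{\PL}$ with the unique lift $F$, whereas you sidestep the universal property entirely by observing that $\SigPL \circ \Cone_{\PL}$ is already a group homomorphism, $\com{K}_1^{\cE}(V)$ is a subgroup of the direct product, and $\PL_0^{\cl}(V)$ is generated by triangular loops via \Cref{thm:pl_loop_iso}, so checking generators suffices. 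That shortcut is sound and a touch more economical than re-running the universal-property machinery when the lift is already in hand.
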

\begin{proof}
    The restriction of $\SigPL $ to $\PL_{1}^{\cl}(V)$ must be valued in $\com{K}_1^{\Gamma}(V)$ since $\bS_{\PL}$ is a morphism of crossed modules.
    To understand the restriction of $\SigPL \circ \Phi^{-1}$ to $\PL_0^{\cl}(V)$, consider the function
\begin{align} \label{eq:sv_definition}
    s_V: \Pair(V) \xrightarrow{\eta_V} \PL_0^{\cl}(V) \xrightarrow{\Cone_{\PL}} \PL_1(V) \xrightarrow{\SigPL} \com{K}_1(V).
\end{align}
Let $(v,u) \in \Pair(V)$, let $U \subset V$ be a 2-dimensional subspace such that $v,u \in U$, and let $\phi_U : U \hookrightarrow V$ denote the inclusion. Because the PL surface signature is a natural transformation, we have
\begin{align}
    s_V(v,u) = \com{K}_1(\phi_U) \circ s_U(v,u). 
\end{align}
Furthermore, the factor $\com{K}_1^{\Gamma}(U) = 0$ is trivial since $U$ is 2-dimensional, and this implies that $s_U(v,u) \in \com{K}_1^{\cE}(U)$. Then, because $\com{K}_1(\phi_U)$ preserves the decomposition in~\eqref{eq:kapranov_decomposition}, we conclude that $s_V(v,u) \in \com{K}_1^{\cE}(V)$. The map $s_V$ satisfies the hypotheses of~\Cref{prop:univ_property_loops} and so by the universal property, there is a unique map 
\begin{align}
    F : \PL_0(V) \to \com{K}_1^{\cE}(V) \subset \com{K}_1(V)
\end{align}
such that $F \circ \eta_V = s_V$. Recalling the definition of $s_V$ from ~\eqref{eq:sv_definition}, we see that $(\SigPL \circ \Cone_{\PL}) \circ \eta_V = s_{V}$ and hence 
\begin{align}
    F = \SigPL \circ \Cone_{\PL} = (\SigPL \circ \Phi^{-1})|_{\PL_0^{\cl}(V)},
\end{align}
implying that $\SigPL \circ \Cone_{\PL}$ is valued in $\com{K}_1^{\cE}(V)$. As a result, we conclude that $\SigPL \circ \Phi^{-1}$ respects the decompositions of both $\PL_1(V)$ and $\com{K}_1(V)$.
\end{proof}

The signature satisfies the following equation
\begin{align}
   \delta \circ \SigPL \circ \Cone_{\PL} = \sigPL \circ \delta \circ \Cone_{\PL} = \sigPL. 
\end{align}
Hence, using the identification between $\com{K}_1^{\cE}(V)$ and its image under $\delta$ in $\com{K}_0(V)$, we conclude that $(\SigPL \circ \Phi^{-1})|_{\PL_0^{\cl}(V)} = \sigPL$. This implies the following decomposition of the surface signature.

\begin{proposition} \label{prop:PLsig_decomposition}
    The piecewise linear surface signature decomposes as 
    \begin{align}
        \SigPL = (\SigPL^{\Gamma}, \SigPL^{\cE}): \PL_1(V) \to   \com{K}_1^{\Gamma}(V) \times \com{K}_1^{\cE}(V),
    \end{align}
    where
    \begin{align}
        \SigPL^{\Gamma}(\bX) = \SigPL(\bX \concat \Cone_{\PL}(\delta(\bX)^{-1})) \andd \SigPL^{\cE}(\bX) = \sigPL(\delta(\bX)).
    \end{align}
\end{proposition}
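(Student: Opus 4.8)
The plan is to assemble the statement from two ingredients already in hand: the preceding lemma, which asserts that $\SigPL \circ \Phi^{-1}$ respects the product decompositions $\PL_1(V) \cong \PL_1^{\cl}(V) \times \PL_0^{\cl}(V)$ of \Cref{cor:PL_decomposition} and $\com{K}_1(V) = \com{K}_1^{\Gamma}(V) \times \com{K}_1^{\cE}(V)$ of \eqref{eq:kapranov_decomposition} (the latter coming from \Cref{cor:kapranov_decomposition_E}); and the identity $\delta \circ \SigPL \circ \Cone_{\PL} = \sigPL$ recorded just above the proposition.

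First I would use the elementary fact that a group homomorphism between internal direct products which carries each factor into the corresponding factor is the product of its restrictions. Applying this to $\SigPL \circ \Phi^{-1}$ shows it splits as $g_1 \times g_2$, with $g_1 \colon \PL_1^{\cl}(V) \to \com{K}_1^{\Gamma}(V)$ and $g_2 \colon \PL_0^{\cl}(V) \to \com{K}_1^{\cE}(V)$. I would then compute $g_1$ and $g_2$ by restriction: since $\Phi^{-1}(\bZ, \emptyset_0) = \bZ \concat \Cone_{\PL}(\emptyset_0) = \bZ$ and $\Phi^{-1}(\emptyset_1, \bb) = \Cone_{\PL}(\bb)$, we obtain $g_1 = \SigPL|_{\PL_1^{\cl}(V)}$ and $g_2 = \SigPL \circ \Cone_{\PL}$.

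Next I would identify $g_2$ with $\sigPL$ under the identification of $\com{K}_1^{\cE}(V)$ with its image $\delta(\com{K}_1^{\cE}(V)) \subset \com{K}_0(V)$: because $\bS_{\PL}$ is a morphism of crossed modules and $\delta \circ \Cone_{\PL} = \id$, we have $\delta \circ g_2 = \delta \circ \SigPL \circ \Cone_{\PL} = \sigPL \circ \delta \circ \Cone_{\PL} = \sigPL$. Finally, composing with $\Phi$ and using $\Phi(\bX) = \big(\bX \concat (\Cone_{\PL}\circ\delta)(\bX)^{-1}, \delta(\bX)\big)$, the two coordinates of $\SigPL(\bX) = (g_1 \times g_2)(\Phi(\bX))$ read off as
\[
    \SigPL^{\Gamma}(\bX) = \SigPL\big(\bX \concat \Cone_{\PL}(\delta(\bX)^{-1})\big), \qquad \SigPL^{\cE}(\bX) = \sigPL(\delta(\bX)),
\]
where I use that $\Cone_{\PL}$ is a homomorphism (so $(\Cone_{\PL}\circ\delta)(\bX)^{-1} = \Cone_{\PL}(\delta(\bX)^{-1})$) and that $\delta(\PL_1(V)) \subseteq \PL_0^{\cl}(V)$ so that the first argument indeed lies in the domain of $\SigPL$.

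No step here is a genuine obstacle: all the real content sits in the two decompositions and in the preceding lemma. The only points demanding care are the routine observation that a factor-preserving homomorphism of direct products factors as a product, and keeping the identification $\com{K}_1^{\cE}(V) \cong \delta(\com{K}_1^{\cE}(V))$ consistent so that the $\cE$-component is presented as $\sigPL \circ \delta$ rather than $\SigPL \circ \Cone_{\PL} \circ \delta$.
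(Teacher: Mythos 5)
Your argument is correct and matches the paper's: both rely on the preceding lemma that $\SigPL \circ \Phi^{-1}$ preserves the two product decompositions, the identity $\delta \circ \SigPL \circ \Cone_{\PL} = \sigPL$, and the identification $\com{K}_1^{\cE}(V) \cong \delta(\com{K}_1^{\cE}(V))$. You have merely spelled out the routine step (factor-preserving homomorphism of direct products splits as a product of restrictions) that the paper leaves implicit.
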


\subsubsection{Decomposition of Smooth Surface Signature}

We can also consider a decomposition for the smooth thin crossed module
\begin{align}
    \cmthingroup(V) = (\partial: \thingroup_2(V) \to \thingroup_1(V), \gt).
\end{align}
We denote the kernel by $\thingroup_2^{\cl}(V) \coloneqq \ker(\partial: \thingroup_2(V) \to \thingroup_1(V))$, and the group of thin homotopy classes of loops by $\thingroup_1^{\cl}(V) \coloneqq \im(\partial: \thingroup_2(V) \to \thingroup_1(V))$. Recall our convention that these groups consist of thin homotopy classes of paths and surfaces based at the origin, so $\bx_0 = \bX_{0,0} = 0$. Now, we define a section of $\partial: \thingroup_2(V) \to \thingroup^{\cl}_1(V)$
\label{pg:smooth_cone} by
\begin{align} \label{eq:smooth_cone}
    \Cone: \tau_1^{\cl}(V) \to \tau_2(V), \quad \Cone(\bx) = \Big( (s,t) \mapsto t \bx_{s}\Big).
\end{align}
This is well-defined under thin homotopy. Indeed, suppose $\bx \sim_{\thinhom} \by$, and let $h: [0,1]^2 \to V$ be a thin homotopy between $\bx$ and $\by$ where $h_{0,s} = \bx_s$ and $h_{1,s} = \by_s$. Define a map $H: [0,1]^3 \to V$ by
\begin{align}
    H_{u,s,t} = t h_{u,s}, \quad \text{where} \quad H_{0,s,t} = \Cone(\bx)_{s,t} \andd H_{1,s,t} = \Cone(\by)_{s,t}.
\end{align}
Because $\rank(dh) \leq 1$, we must have $\rank(dH) \leq 2$. Hence, $\Cone(\bx) \sim_{\thinhom} \Cone(\by)$.
Then, by the same arguments as the PL setting in~\Cref{cor:PL_decomposition}, we have the following.

\begin{lemma}
    The map
    \begin{align} \label{eq:smooth_surface_decomposition}
        \Phi: \thingroup_2(V) \to \thingroup^{\cl}_2(V) \times \thingroup^{\cl}_1(V), \quad \Phi(\bX) = \Big(\bX \concat \Cone(\partial(\bX))^{-1}, \partial(\bX)\Big)
    \end{align}
    is a group isomorphism.
\end{lemma}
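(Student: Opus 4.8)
The plan is to repeat, almost verbatim, the argument used in~\Cref{cor:PL_decomposition} for the piecewise linear crossed module, with $\Cone$ of~\eqref{eq:smooth_cone} playing the role of $\Cone_{\PL}$. The first step is to record the two properties of $\Cone$ that make the argument run: that it is a group homomorphism $\thingroup^{\cl}_1(V) \to \thingroup_2(V)$, and that $\partial \circ \Cone = \id$. Well-definedness on thin homotopy classes is already established above; the homomorphism property follows from the explicit formula $\Cone(\bx)_{s,t} = t\bx_s$, since $\Cone(\bx \concat \by)$ differs from the concatenation of $\Cone(\bx)$ and $\Cone(\by)$ in $\thingroup_2(V)$ only by a corner- and edge-preserving reparametrization; and $\partial \circ \Cone = \id$ follows from~\eqref{eq:surface_boundary} because the bottom, left and right boundary paths of $\Cone(\bx)$ are constant at the origin (using that $\bx$ is a based loop), while the top boundary path is $\bx$.

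The second step is purely formal. Since $\thingroup^{\cl}_2(V) = \ker(\partial)$ by definition and $\Cone$ is a homomorphic section of $\partial$, the short exact sequence $1 \to \thingroup^{\cl}_2(V) \hookrightarrow \thingroup_2(V) \xrightarrow{\partial} \thingroup^{\cl}_1(V) \to 1$ splits, so $\thingroup_2(V) \cong \thingroup^{\cl}_2(V) \rtimes \thingroup^{\cl}_1(V)$. The conjugation action here is trivial: applying the Peiffer identity $\partial(E_1) \gt E_2 = E_1 \concat E_2 \concat E_1^{-1}$ with $E_1 \in \thingroup^{\cl}_2(V)$ and using that $\partial(E_1)$ is the identity, hence acts trivially, gives $E_1 \concat E_2 \concat E_1^{-1} = E_2$ for all $E_2 \in \thingroup_2(V)$. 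Thus $\thingroup^{\cl}_2(V)$ is central in $\thingroup_2(V)$ and the semidirect product is in fact a direct product $\thingroup^{\cl}_2(V) \times \thingroup^{\cl}_1(V)$.

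For the last step I would identify $\Phi$ with the isomorphism coming from this splitting: its inverse is $(\bZ, \bx) \mapsto \bZ \concat \Cone(\bx)$, and $\Phi \circ \Phi^{-1} = \id$ and $\Phi^{-1} \circ \Phi = \id$ are then immediate from $\partial \circ \Cone = \id$, from the fact that $\bX \concat \Cone(\partial \bX)^{-1} \in \ker(\partial) = \thingroup^{\cl}_2(V)$, and from centrality of $\thingroup^{\cl}_2(V)$, which renders the order of concatenations irrelevant. I expect the only genuine friction to be the first step: one must pin down the corner-preserving conventions and the orientation in~\eqref{eq:surface_boundary} carefully enough to be certain that $\Cone$ really is a homomorphic section with $\partial \circ \Cone = \id$ (rather than, say, the inversion map), and hence that $\Phi$ takes exactly the stated form; once these conventions are fixed, every remaining step is formal and identical to the piecewise linear case.
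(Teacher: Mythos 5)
Your proposal is correct and follows the same route as the paper, which simply defers to ``the same arguments as the PL setting in~\Cref{cor:PL_decomposition}''; your argument makes that explicit (a homomorphic section splits the sequence $1 \to \thingroup_2^{\cl}(V) \to \thingroup_2(V) \to \thingroup_1^{\cl}(V) \to 1$, and the Peiffer identity makes the kernel central, so the semidirect product is in fact direct). Your caution about the orientation conventions is warranted and points at a genuine slip in the paper's definitions rather than in your reasoning: with $\delta(\bX) = \partial_t(\bX)^{-1}$ as in Definition~\ref{ex:thin_cm} and $\Cone(\bx)_{s,t} = t\bx_s$, the top boundary of $\Cone(\bx)$ is $\bx$ itself, so $\partial \circ \Cone$ is the inversion map rather than the identity; by the same token, $\Cone(\bx \concat \by) = \Cone(\bx) \concat_h \Cone(\by)$ together with the reversed group operation $\bX \concat \bY = \bY \concat_h \bX$ on $\thingroup_2(V)$ makes $\Cone$ an anti-homomorphism, contrary to what you assert in your first step. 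Both defects are cured simultaneously by replacing $\Cone(\bx)_{s,t} = t\bx_s$ in~\eqref{eq:smooth_cone} with $\Cone(\bx)_{s,t} = t\bx_{1-s}$ (coning off the reversed loop, which also matches what the realization of $\Cone_{\PL}$ actually produces); after that one change your first step holds as stated and the rest is, as you say, formal and identical to the piecewise linear case.
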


The map $\Cone$ extends the piecewise linear cone in the sense that $\Cone \circ \realization_0 = \realization_1 \circ \Cone_{\PL}$. This can be verified by checking equality after precomposing with $\eta_V$ and applying~\Cref{prop:univ_property_loops}. Using this fact, we can show that the surface signature also preserves the decompositions of $\thingroup_2(V)$ and $\com{K}_1(V)$. The main property to show is that cones are mapped to $\com{K}_1^{\cE}(V)$ under the surface signature.

\begin{lemma}
    The map $\Sig \circ \Cone : \thingroup_1^{\cl}(V) \to \com{K}_1(V)$ is valued in $\com{K}_1^{\cE}(V)$.
\end{lemma}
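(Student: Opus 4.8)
The plan is to reduce to the piecewise linear case handled in the previous lemma and then pass to the limit, using that piecewise linear loops have dense image in $\thingroup_1^{\cl}(V)$ and that both the cone construction and the surface signature are continuous for the Lipschitz topology. First I would treat a piecewise linear loop $\bb \in \PL_0^{\cl}(V) \subseteq \planarloop(V)$: combining the compatibility $\Cone \circ \realization_0 = \realization_1 \circ \Cone_{\PL}$ recorded above with the factorization $\SigPL = \Sig \circ \realization_1$ from~\Cref{prop:factor_ssig_nt} gives
\begin{align}
    \Sig\big(\Cone(\realization_0(\bb))\big) = \Sig\big(\realization_1(\Cone_{\PL}(\bb))\big) = \SigPL\big(\Cone_{\PL}(\bb)\big),
\end{align}
and the right-hand side lies in $\com{K}_1^{\cE}(V)$ by the previous lemma. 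Thus $\Sig \circ \Cone$ already takes values in $\com{K}_1^{\cE}(V)$ on the subset $\realization_0(\PL_0^{\cl}(V)) \subseteq \thingroup_1^{\cl}(V)$.

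Next I would approximate an arbitrary class of $\thingroup_1^{\cl}(V)$ by such loops. Represent it by a based loop $\bx \in C^1_0([0,1], V)$ with $\bx_0 = \bx_1 = 0$ and, exactly as in the proof of~\Cref{thm:unique_smooth_surface_signature}, let $\bb^k \in \PL_0(V)$ be the piecewise linear interpolant of $\bx$ on partitions with mesh tending to $0$; since the sampled increments sum to $\bx_1 - \bx_0 = 0$, each $\bb^k$ actually lies in $\PL_0^{\cl}(V)$, and $\realization_0(\bb^k) \to \bx$ in the Lipschitz topology because $\bx$ is $C^1$ on a compact interval. The cone map, realised on representatives by $\bx \mapsto \big((s,t) \mapsto \psi(t)\,\bx_s\big)$ for a fixed reparametrization $\psi$ with sitting instants, is continuous from based loops in $C^1_0([0,1],V)$ to $C^1_0([0,1]^2,V)$ for the Lipschitz topologies, since its partial derivatives $\psi(t)\bx'_s$ and $\psi'(t)\bx_s$ are controlled by the Lipschitz size of $\bx$. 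Hence $\Cone(\realization_0(\bb^k)) \to \Cone([\bx])$ in the Lipschitz topology, and by continuity of the surface signature (\Cref{prop:surface_signature_continuous}) we obtain $\Sig(\Cone(\realization_0(\bb^k))) \to \Sig(\Cone([\bx]))$ in $\com{K}_1(V)$.

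Finally I would note that $\com{K}_1^{\cE}(V)$ is closed in $\com{K}_1(V)$: under the isomorphism of~\Cref{cor:kapranov_decomposition_E} it is the image under $\exp$ of the closed subalgebra $\fcone\big(\widehat{\LCS}_2(\fk_0(V))\big) = \ker\big(\com{\fk}_1(V) \to \com{\Gamma}_2^{\cl}(V)\big)$, with $\exp$ a homeomorphism onto its image on these pro-nilpotent groups. Since each $\Sig(\Cone(\realization_0(\bb^k)))$ lies in $\com{K}_1^{\cE}(V)$, so does the limit $\Sig(\Cone([\bx]))$, and as the class was arbitrary this proves the lemma. The hard part is not conceptual but the bookkeeping around the approximation: one must check that the interpolants are genuinely loops (so that the previous lemma applies to them verbatim) and that inserting the sitting-instant reparametrization $\psi$ into $\Cone$ keeps it continuous in the Lipschitz topology — both routine, and the same approximation scheme already underlies the proof of~\Cref{thm:unique_smooth_surface_signature}.
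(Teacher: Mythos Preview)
Your proof is correct and follows essentially the same approach as the paper: reduce to the piecewise linear case via the compatibility $\Cone \circ \realization_0 = \realization_1 \circ \Cone_{\PL}$ and~\Cref{prop:PLsig_decomposition}, approximate an arbitrary loop by piecewise linear ones in Lipschitz norm, and pass to the limit using continuity of $\Cone$ and of $\Sig$ together with closedness of $\com{K}_1^{\cE}(V)$. One small slip: you write $\PL_0^{\cl}(V) \subseteq \planarloop(V)$, but the inclusion goes the other way; this does not affect the argument since you only need the result for general piecewise linear loops, which is exactly what the decomposition lemma for $\SigPL$ provides.
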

\begin{proof}
    Recall that $\fk_1(V) = \Gamma_2^{\cl}(V) \oplus \cE(V)$. By projection, this decomposition also holds for truncations $\fk_1^{(n)}(V) = \Gamma_2^{\cl, (n)}(V) \oplus \cE^{(n)}(V)$. Then, by~\cite[Section 2.7, Problem 4]{rossmann_lie_2002}, the exponential group $\com{K}_1^{\cE, (n)}(V)$ is a closed subgroup in $\com{K}_1^{(n)}(V)$. \medskip
    
    Next, let $\bx \in C^1([0,1], V)$ be a smooth loop such that $\bx_0 = \bx_1 = 0$. Then, by the same reasoning as the proof of~\Cref{thm:unique_smooth_surface_signature}, there exists a sequence $\{\bx^k\}_{k=1}^\infty$ of piecewise linear loops such that $\bx^k \xrightarrow{\Lip} \bx$ as $k \to \infty$. This implies that $\Cone(\bx^k) \rightarrow \Cone(\bx)$ in Lipschitz norm as $k \to \infty$ as well. Then, since $\Cone(\bx^k)$ is the cone of a piecewise linear loop, \Cref{prop:PLsig_decomposition} shows that $\Sig(\Cone(\bx^k)) \in \com{K}_1^{\cE}(V)$. Finally, since the (truncated) surface signature is continuous with respect to the Lipschitz topology (\Cref{prop:surface_signature_continuous}), we have $\Sig(\Cone(\bx)) \in \com{K}_1^{\cE}(V)$. 
\end{proof}

Thus, the analogue of~\Cref{prop:PLsig_decomposition} holds also in the setting of smooth surfaces. Combining this with the description of $\com{K}_1^{\cE}(V)$ from~\Cref{ssec:kapranovdecompsection}, and~\Cref{thm:ssig_equiv_to_abelianization} on the signature of closed surfaces, we obtain the following explicit formula for the surface signature, which is the main result of this section. 

\begin{theorem} \label{thm:main_computation_result}
    There is a canonical embedding 
    \begin{align}
        \com{K}_1(V) \to \com{\Gamma}_2(V) \times T\ps{V},
    \end{align}
    from the group $\com{K}_1(V)$ of formal surfaces to the product of $\com{\Gamma}_2(V)$, the vector space of formal $2$-currents, and $T\ps{V}$, the algebra of formal non-commutative power series. With respect to this embedding, the smooth surface signature decomposes as follows
    \begin{align}
        \Sig = (\Sig^{\Gamma}, \Sig^{\cE}): \thingroup_2(V) \to   \com{\Gamma}_2(V) \times T\ps{V}.
    \end{align}
    Given a smooth surface $\bX \in \thingroup_2(V)$, the two components of the signature are given by
    \begin{itemize}
        \item the path signature of the boundary path of $\bX$
         \begin{align}
        \Sig^{\cE}(\bX) = \sig(\partial(\bX)) \in T\ps{V},
        \end{align}
        \item the sum of surface integrals 
        \begin{align}
            \Sig^{\Gamma}(\bX) = \sum_{\alpha \in \N^n, \ i < j} \frac{1}{\alpha !} \left( \int_{\mathcal{C}(\bX)} z^{\alpha} dz_{i} \wedge dz_{j} \right) e^{\alpha} \otimes e_{i} \wedge e_{j}  
        \end{align}
        where, given linear coordinates $z_{i}$ on $V$, the sum runs over the set of all monomial $2$-forms $z^{\alpha} dz_{i} \wedge dz_{j}$ and where $e^{\alpha} \otimes e_{i} \wedge e_{j}$ are the dual polynomial $2$-currents. Furthermore, the integrals are taken over 
        \begin{align}
            \mathcal{C}(\bX) = \bX \concat \Cone(\partial(\bX))^{-1} \in \thingroup^{\cl}_2(V),
        \end{align} 
        the closed surface obtained by coning off the boundary of $\bX$.
    \end{itemize}
\end{theorem}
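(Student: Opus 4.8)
The plan is to assemble the statement from three ingredients already established: the canonical splitting of Kapranov's crossed module (\Cref{cor:kapranov_decomposition_E} and the resulting group decomposition \eqref{eq:kapranov_decomposition}), the decomposition of the surface signature into a boundary component and a closed-surface component (the smooth analogue of \Cref{prop:PLsig_decomposition}, whose proof was outlined via the cone lemma together with continuity), and the abelianized formula for the signature of a closed surface (\Cref{thm:ssig_equiv_to_abelianization}).

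First I would make the embedding explicit. By \eqref{eq:kapranov_decomposition} we have $\com{K}_1(V) = \com{K}_1^{\Gamma}(V) \times \com{K}_1^{\cE}(V)$. Since $\com{K}_1^{\Gamma}(V) = \ker(\delta)$ is abelian, \Cref{thm:rho_isomorphism} — extended to completions, which is legitimate because $\rho$ is $\GL(V)$-equivariant and hence preserves the weight grading — identifies it with $\com{\Gamma}_2^{\cl}(V) \subset \com{\Gamma}_2(V)$, while $\delta$ restricts to an isomorphism of $\com{K}_1^{\cE}(V)$ onto $\exp(\widehat{\LCS}_2(\fk_0(V))) \subset \com{K}_0(V) \subset T\ps{V}$, with inverse supplied by $\fcone$. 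Composing these two identifications yields the asserted canonical embedding $\com{K}_1(V) \hookrightarrow \com{\Gamma}_2(V) \times T\ps{V}$, and functoriality in $V$ is inherited from the functoriality of each constituent construction.

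Next I would read off the two components of $\Sig$. The factorization $\Sig = (\Sig^{\Gamma}, \Sig^{\cE})$ is the smooth form of \Cref{prop:PLsig_decomposition}, coming from the isomorphism $\Phi$ of \eqref{eq:smooth_surface_decomposition} together with the lemma that $\Sig \circ \Cone$ takes values in $\com{K}_1^{\cE}(V)$. Under the embedding above, the $\cE$-component is extracted by applying $\delta$: since $\bS = (\Sig, \sig)$ is a morphism of crossed modules, $\delta(\Sig(\bX)) = \sig(\delta(\bX)) = \sig(\partial \bX)$, and because $\Sig^{\Gamma}(\bX) \in \ker(\delta)$ this gives $\Sig^{\cE}(\bX) = \sig(\partial \bX) \in T\ps{V}$. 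For the $\Gamma$-component, $\Sig^{\Gamma}(\bX) = \Sig(\bX \concat \Cone(\partial \bX)^{-1})$; since $\delta$ is a homomorphism and $\delta \circ \Cone = \id$, one has $\delta(\mathcal{C}(\bX)) = \partial(\bX) \cdot (\partial \bX)^{-1} = e$, so $\mathcal{C}(\bX) \in \thingroup_2^{\cl}(V)$ and we may choose a representative whose four boundary paths are all the constant path at the origin. Applying \Cref{thm:ssig_equiv_to_abelianization} to this closed surface then yields $\Sig(\mathcal{C}(\bX)) = \sum_{\alpha,\, i<j} \frac{1}{\alpha!}\big(\int_{\mathcal{C}(\bX)} z^{\alpha}\, dz_i \wedge dz_j\big)\, e^{\alpha} \otimes e_i \wedge e_j$ in $\com{\Gamma}_2(V)$, which is exactly the claimed expression for $\Sig^{\Gamma}(\bX)$.

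I expect the main difficulty to be bookkeeping rather than any new idea: one must check that all of the identifications ($\Psi$, $\rho$, $\delta$, $\fcone$, and the $\GL(V)$-equivariant pairing with its $\alpha!$ normalization from \eqref{eq:dualbasisfactorial}) are mutually compatible, so that the ``boundary'' factor of $\com{K}_1(V)$ really receives the path signature of the boundary loop and the closed-surface factor computed through $\rho$ matches the current formula of \Cref{thm:ssig_equiv_to_abelianization} on the nose. A secondary point is the small analytic input behind the smooth form of \Cref{prop:PLsig_decomposition} — density of piecewise linear loops and continuity of $\Sig^{(n)}$ from \Cref{prop:surface_signature_continuous} — but this has already been handled in the lemma immediately preceding the theorem.
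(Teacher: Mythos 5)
Your proposal is correct and follows the same route the paper sketches (which is stated there only implicitly as ``combining'' Corollary \ref{cor:kapranov_decomposition_E}, the smooth analogue of Proposition \ref{prop:PLsig_decomposition}, and Theorem \ref{thm:ssig_equiv_to_abelianization}): you identify the embedding via $\rho$ on the $\ker(\delta)$-factor and $\delta$ on the $\cE$-factor, read off the $\cE$-component from the crossed-module morphism property, and apply Theorem \ref{thm:ssig_equiv_to_abelianization} to the closed surface $\mathcal{C}(\bX)$ after straightening its boundary. No gaps; the remark about choosing a representative of $\mathcal{C}(\bX)$ with genuinely constant boundary (not just thinly trivial) is the right thing to flag so that Theorem \ref{thm:ssig_equiv_to_abelianization} applies.
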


\begin{remark} \label{rem:canonicalexp}
    The expression for the abelian component $\Sig^{\Gamma}(\bX)$ of the signature of a surface $\bX$ from ~\Cref{thm:main_computation_result} has the following simple coordinate-free description. Observe that the linear structure on $V$ allows us to write 
    \[
    \Omega^2(V) = C^{\infty}(V) \otimes \Lambda^2V^*,
    \]
    which in turn allows us to view the surface integral as a map 
    \[
        \int_{\mathcal{C}(\bX)} : C^{\infty}(V) \to \Lambda^2V.
    \]
    The identity map on $V$ and its powers can be viewed as functions $(\mathrm{id}_V)^{k} \in C^{\infty}(V) \otimes S^{k}(V)$. Therefore 
    \[
    \int_{\mathcal{C}(\bX)} (\mathrm{id}_V)^{k} \in S^{k}(V) \otimes \Lambda^2V = \poly{\Gamma}_2(V)_{k+2}. 
    \]
    Then, using the multinomial theorem, the abelian component of the surface signature can be shown to have the following expression 
    \[
    \Sig^{\Gamma}(\bX) = \sum_{k \geq 0} \frac{1}{k!} \int_{\mathcal{C}(\bX)} (\mathrm{id}_V)^{k}.
    \]
\end{remark}

\section{Thin Homotopy of Piecewise Linear Surfaces} \label{sec:PL_thin_homotopy}
In this section, our aim is to show the following injectivity result.

\begin{theorem} \label{thm:PL_ssig_injectivity}
    The map $\SigPL: \PL_1(V) \to \com{K}_1(V)$ is injective. 
\end{theorem}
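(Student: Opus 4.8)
The plan is to reduce to closed piecewise linear surfaces, translate the vanishing of the signature into a homological statement about the image complex of the surface using the abelianization results of \Cref{sec:abelianization}, and then run a Hurewicz argument in the fundamental crossed module of a suitable $2$-complex, transporting the conclusion back to $\PL_1(V)$ via Whitehead's freeness theorem. For the reduction: by the isomorphism $\PL_1(V) \cong \PL_1^{\cl}(V) \times \PL_0^{\cl}(V)$ of \Cref{cor:PL_decomposition} and the decomposition $\SigPL = (\SigPL^{\Gamma}, \SigPL^{\cE})$ of \Cref{prop:PLsig_decomposition}, we have $\SigPL^{\cE}(\bX) = \sigPL(\delta(\bX))$, and the piecewise linear path signature $\sigPL = \sig \circ \realization_0$ is injective since $\realization_0$ is injective (\Cref{lem:realization0_injective}) and $\sig = S_0$ is injective on $\thingroup_1(V)$ by condition \ref{S1} of \Cref{thm:path_thin_homotopy}. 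Hence $\SigPL(\bX) = 0$ forces $\delta(\bX) = \emptyset_0$, i.e.\ $\bX \in \PL_1^{\cl}(V)$, and $\SigPL(\bX) = \SigPL^{\Gamma}(\bX)$. So it suffices to show that a closed piecewise linear surface $\bX \in \PL_1^{\cl}(V)$ with $\SigPL(\bX) = 0$ is trivial.

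\emph{From vanishing signature to a trivial $2$-cycle.} By \Cref{prop:factor_ssig_nt}, $\SigPL(\bX) = \Sig(\realization_1(\bX))$, so $\realization_1(\bX)$ is a closed $C^1$ surface of vanishing signature. Choosing the piecewise linear representative $f \colon S^2 \to V$ assembled from the affine triangles underlying $\bX$ (cf.\ \Cref{lem:surface_realization_planar_loop}), its image $C \coloneqq \im(f)$ is a finite $2$-dimensional simplicial complex whose $2$-simplices are affinely embedded triangles; fix this simplicial structure with basepoint the origin, so that $\bX$ determines a simplicial $2$-chain $z_\bX \in C_2(C;\Z)$ which is a cycle (its boundary $\delta(\bX) = \emptyset_0$). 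By \Cref{cor:trivial_ssig_current}, $\int_f \omega = 0$ for every compactly supported $2$-form $\omega$ on $V$; taking $\omega$ supported in a small ball meeting $C$ only in the interior of a single $2$-simplex $\sigma$ and nondegenerate there, this integral computes the signed multiplicity of $\sigma$ in $z_\bX$ up to a nonzero factor, whence $z_\bX = 0$. In particular $f_*[S^2] = 0$ in $H_2(C;\Z)$.

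\emph{Killing $\pi_1$ and applying Hurewicz.} Since $C$ is a finite complex, $\pi_1(C,0)$ is finitely generated; pick piecewise linear loops $\gamma_1, \dots, \gamma_r$ in the $1$-skeleton $C_1$ generating it, and form the $2$-complex $C'$ by coning each $\gamma_i$ off to a point of $V$, so that $C \subset C'$ is a subcomplex, every $2$-cell of $C'$ is again an affine triangle, $C'$ still maps to $V$, and $\pi_1(C') = 1$. Because $\pi_2(C'_1) = 0$, the exact sequence of the pair $(C', C'_1)$ identifies $\pi_2(C')$ with $\ker\big(\partial \colon \pi_2(C', C'_1) \to \pi_1(C'_1)\big)$, and the Hurewicz theorem gives $\pi_2(C') \xrightarrow{\ \sim\ } H_2(C';\Z)$. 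Viewing $\bX$ as a map $\big([0,1]^2, \partial[0,1]^2\big) \to (C', C'_1)$ yields a class $[\bX] \in \pi_2(C', C'_1)$ with $\partial[\bX] = 1$ (its boundary loop is trivial), hence $[\bX] \in \pi_2(C')$; its Hurewicz image is the image of $f_*[S^2]$ in $H_2(C';\Z)$, which vanishes by the previous step. Therefore $[\bX] = 0$ in $\pi_2(C', C'_1)$.

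\emph{Transport back to $\PL_1(V)$.} By Whitehead's theorem the fundamental crossed module $\bpi(C', C'_1)$ (\Cref{ex:fund_crossed_module}) is the free crossed module on the $2$-cells of $C'$. Sending each edge of $C'_1$ to the corresponding linear segment of $V$ and each affine-triangle $2$-cell to the associated kite (\Cref{def:kite}) defines, via the universal property of free crossed modules of groups (\Cref{thm:free_xgrp}), a morphism of crossed modules $\Phi \colon \bpi(C', C'_1) \to \cmPL(V)$; by the choice of simplicial structure, $\Phi([\bX]) = \bX$. Applying $\Phi$ to the conclusion of the previous step gives $\bX = \Phi(0) = \emptyset_1$, which proves the theorem. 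The main obstacle is this final step: making the construction of $\Phi$ precise so that it is compatible with the realization transformation $\cmrealization$ and genuinely returns $\bX$ on the tautological class $[\bX]$, and, to a lesser extent, the verification that the analytic vanishing of \Cref{cor:trivial_ssig_current} localizes so as to kill each simplex multiplicity; the remaining ingredients are direct applications of results already established above.
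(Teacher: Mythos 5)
Your overall road map — reduce to closed surfaces, build a simplicial $2$-complex from a representative of $\bX$, localize the vanishing of $\int_{\bX}\omega$ to show the image of $[S^2]$ dies in $H_2$, cone off to kill $\pi_1$, apply Hurewicz and Whitehead freeness, and transport back to $\cmPL(V)$ via a morphism from the fundamental crossed module — is the same as the paper's, and the reduction via $\Cref{cor:PL_decomposition}$ and $\Cref{prop:PLsig_decomposition}$ is a harmless variant of the paper's $\Cref{lem:reduce_to_kernel}$. However, there is a genuine gap at the step you dismiss as bookkeeping. You assert that for a piecewise linear representative $f\colon S^2\to V$ of $\realization_1(\bX)$, the image $C=\im(f)$ is automatically a finite simplicial complex whose $2$-simplices are affinely embedded triangles. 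This is false in general: two triangular kites in $\bX$ can intersect in $V$ along a set that is not a common face (two triangles can cross transversally, or meet along a partial edge), so the union of their images is not a simplicial complex, and there is no useful $1$-skeleton $C_1$ or cellular chain group $C_2(C;\Z)$ in which to interpret $z_{\bX}$. This is precisely the \emph{compatibility} issue that the paper isolates in $\Cref{def:compatible}$ and resolves in $\Cref{thm:existence_compatible_representative}$, whose proof occupies all of $\Cref{apx:triangulations_matching}$ and requires carefully refining both the representative $r(\bX)$ and its simplicial model. Without a compatible model, two further steps of your argument break: (i) the bump-form localization — there need not exist a form supported in the interior of a single simplex and disjoint from all others, which is exactly the content of $\Cref{lem:disjoint_2_forms}$ and explicitly assumes compatibility; and (ii) the identification $[\bX]\in\pi_2(C')$ from the triviality of its boundary, which in the paper's argument goes through the injectivity of $W_0\colon\pi_1(C_1,c_0)\to\PL_0(V)$ ($\Cref{lem:W0_injective}$), itself a consequence of the $1$-skeleton embedding in $V$ ($\Cref{lem:compatible_embedding}$), again requiring compatibility.

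There is also a secondary structural flaw in the cone-off step: you form $C'$ by coning each generating loop $\gamma_i$ of $\pi_1(C,0)$ to a point while simultaneously asserting that every $2$-cell of $C'$ is an affine triangle. Those two claims are incompatible, since a cone on a general piecewise-linear loop is not an affine triangle, and a non-triangular $2$-cell has no corresponding kite, so the map $\Phi\colon\bpi(C',C'_1)\to\cmPL(V)$ cannot be defined by your recipe. The paper instead cones each \emph{edge} $[p_i,p_j]$ of $C_1$ to a single apex $x$ chosen off all lines through those edges ($\Cref{lem:cw_complex_with_trivial_pi1}$), producing only triangles $[x,p_i,p_j]$ and hence a genuine PLSC; $\pi_1(\hC)=0$ then follows from van Kampen rather than by fiat. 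Finally, the claim "$\Phi([\bX])=\bX$" is the right target but is not a formality: in the paper it is $\Cref{prop:def_Y}$ together with the compatibility of $\bW$ and $\widehat{\bW}$ under nested spanning trees, and both of those depend on the compatible model you assumed away. In short, the central difficulty of the paper's proof — producing a compatible simplicial model — is exactly the step your proposal leaves out.
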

Because of the factorization of $\SigPL$ in~\Cref{prop:factor_ssig_nt}, this result has two main implications. First, it shows that our algebraic construction of $\PL_{1}(V)$ is rich enough to faithfully encode piecewise linear surfaces modulo thin homotopy. 
\begin{corollary}
    The realization map $\realization_1: \PL_1(V) \to \tau_2(V)$ is injective. 
\end{corollary}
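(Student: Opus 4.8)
The plan is to obtain this as a purely formal consequence of \Cref{thm:PL_ssig_injectivity} together with the factorization recorded in \Cref{prop:factor_ssig_nt}. Recall that the latter asserts the equality of natural transformations $\bS_{\PL} = \bS \circ \cmrealization : \cmPL \Rightarrow \com{\cmK}$. Comparing the components $\PL_1(V) \to \com{K}_1(V)$ of these natural transformations at a fixed vector space $V$, this says precisely that the relevant group homomorphisms fit into a commuting triangle
\begin{align}
    \SigPL = \Sig \circ \realization_1 : \PL_1(V) \to \com{K}_1(V).
\end{align}

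Given this identity, I would argue exactly as one argues that the left factor of an injective map is injective. Suppose $\bX, \bY \in \PL_1(V)$ with $\realization_1(\bX) = \realization_1(\bY)$. Applying $\Sig$ to both sides and using the triangle above gives $\SigPL(\bX) = \SigPL(\bY)$. Since $\SigPL$ is injective by \Cref{thm:PL_ssig_injectivity}, we conclude $\bX = \bY$, so $\realization_1$ is injective.

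There is no genuine obstacle in the corollary itself: all of the work is carried out in \Cref{thm:PL_ssig_injectivity}, whose proof occupies the remainder of \Cref{sec:PL_thin_homotopy}, and in the uniqueness statement \Cref{prop:factor_ssig_nt}, which was deduced from the universal property of $\cmPL$. If I wanted to say a little more, I would combine this with the injectivity of $\realization_0$ on paths (\Cref{lem:realization0_injective}) to record that $\cmrealization = (\realization_1, \realization_0)$ is a monomorphism of crossed modules, so that $\cmPL(V)$ is a faithful algebraic model of piecewise linear surfaces up to thin homotopy and translation.
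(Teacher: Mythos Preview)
Your proof is correct and matches the paper's approach exactly: the paper states this corollary immediately after \Cref{thm:PL_ssig_injectivity}, prefacing it with the remark that ``because of the factorization of $\SigPL$ in~\Cref{prop:factor_ssig_nt}, this result has two main implications,'' and then records the injectivity of $\realization_1$ as the first of these without further argument. Your write-up simply makes explicit the one-line diagram chase that the paper leaves implicit.
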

In particular, this implies that there are two types of cancellations that can occur via thin homotopy of PL surfaces. 
\begin{enumerate}
    \item \textbf{Local Cancellations (Folds).} When two linear components with opposite orientation are adjacent, they are cancelled via the equivalence relation in~\ref{PL1.1}.
    \item \textbf{Non-local Cancellations.} When two linear components with opposite orientation are not adjacent, they may sometimes be cancelled via the Peiffer identity in~\eqref{eq:PL_peiffer}.
\end{enumerate}
Second, it shows that the surface signature can detect whether two piecewise linear surfaces are thinly homotopic to each other. Hence, this generalizes the signature condition \ref{S1}. 
\begin{corollary} \label{cor:smooth_sig_injective_on_PL}
    The restriction of the surface signature $\Sig: \thingroup_{2}(V) \to \com{K}_1(V)$ to $\im(\realization_1)$ is injective. 
\end{corollary}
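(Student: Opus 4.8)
The statement is an immediate consequence of the injectivity of $\SigPL$ (Theorem \ref{thm:PL_ssig_injectivity}) together with the factorization of the piecewise linear signature through the realization map (Proposition \ref{prop:factor_ssig_nt}), so the plan is simply to chase the diagram. Recall from Proposition \ref{prop:factor_ssig_nt} that $\SigPL = \bS \circ \cmrealization$ as natural transformations; at the level of the relevant components this reads
\begin{align}
    \SigPL = \Sig \circ \realization_1 : \PL_1(V) \to \com{K}_1(V).
\end{align}
Now suppose $\bX, \bY \in \im(\realization_1) \subseteq \thingroup_2(V)$ satisfy $\Sig(\bX) = \Sig(\bY)$. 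Choose preimages $\bX', \bY' \in \PL_1(V)$ with $\realization_1(\bX') = \bX$ and $\realization_1(\bY') = \bY$. Then
\begin{align}
    \SigPL(\bX') = \Sig(\realization_1(\bX')) = \Sig(\bX) = \Sig(\bY) = \Sig(\realization_1(\bY')) = \SigPL(\bY').
\end{align}
By Theorem \ref{thm:PL_ssig_injectivity}, $\SigPL$ is injective, so $\bX' = \bY'$ in $\PL_1(V)$, and applying $\realization_1$ gives $\bX = \realization_1(\bX') = \realization_1(\bY') = \bY$. Hence $\Sig$ restricted to $\im(\realization_1)$ is injective.

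There is no real obstacle here: the corollary is a formal bookkeeping consequence of the main theorem and the established factorization, with the entire content residing in Theorem \ref{thm:PL_ssig_injectivity}. (One should note for context that $\realization_1$ is itself injective by the first corollary stated after Theorem \ref{thm:PL_ssig_injectivity}, so the image $\im(\realization_1)$ is a faithful copy of $\PL_1(V)$ sitting inside $\thingroup_2(V)$, and the statement is exactly the assertion that $\Sig$ separates piecewise linear surfaces modulo thin homotopy — the two-dimensional analogue of condition \ref{S1}.) The only care needed is to invoke the factorization at the correct components of the natural transformations, which is why the argument is phrased entirely in terms of the maps $\SigPL$, $\realization_1$, and $\Sig$ associated to a fixed vector space $V$.
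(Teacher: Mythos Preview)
Your proof is correct and matches the paper's approach exactly: the corollary is stated immediately after Theorem~\ref{thm:PL_ssig_injectivity} as a direct consequence of the factorization $\SigPL = \Sig \circ \realization_1$ from Proposition~\ref{prop:factor_ssig_nt}, and the paper gives no further argument beyond pointing to that factorization. Your diagram chase is precisely the intended reasoning.
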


\begin{remark} 
Consider the 2-truncated polynomial de Rham complex of $V$
\begin{align}
     \poly{\Omega}^0(V) \to \poly{\Omega}^1(V) \to \poly{\Omega}^2(V).
\end{align}
In order to prove a de Rham theorem for this complex, the appropriate replacement for the singular chains in degree $2$ appears to be the thin homotopy group of closed piecewise linear surfaces, $\PL_1^{\cl}(V) \coloneqq \ker(\delta: \PL_1(V) \to \PL_0(V))$ (or its smooth analogue). Indeed, there is a well-defined pairing given by integration 
\begin{align}
    \langle \cdot , \cdot \rangle : \PL_1^{\cl}(V) \times \frac{\poly{\Omega}^{2}(V)}{d\poly{\Omega}^1(V)} \to \R, \quad \quad \langle \bX, \omega \rangle = \int_\bX \omega.
\end{align}
Recalling the isomorphism \begin{align}
    \coker(d: \poly{\Omega}^1(V) \to \poly{\Omega}^2(V))^* \cong \com{\Gamma}_2^{\cl}(V),
\end{align}
where $\com{\Gamma}_2^{\cl}(V)$ denotes the closed formal 2-currents on $V$, the pairing induces a map $\PL_1^{\cl}(V) \to \com{\Gamma}_2^{\cl}(V)$, which we identify as the surface signature by ~\Cref{thm:main_computation_result}. Therefore, ~\Cref{thm:PL_ssig_injectivity} implies that this map is injective. Showing that this map is surjective, at least up to a finite degree in the weight grading, amounts to proving a generalization of Chow's theorem~\cite{chow_uber_1940} (see also~\cite[Theorem 7.28]{friz_multidimensional_2010}). While we do not pursue this direction further, we leave this as an interesting avenue for future work.
\end{remark}

To begin the proof of~\Cref{thm:PL_ssig_injectivity}, we reduce the problem to the case of closed surfaces by a general result on crossed modules.

\begin{lemma} \label{lem:reduce_to_kernel}
    Let $\cmG = (\delta_G: G_1 \to G_0)$ and $\cmH = (\delta_H: H_1 \to H_0)$ be crossed modules of groups, and let $F = (F_1, F_0) : \cmG \to \cmH$ be a morphism of crossed modules. Suppose $F_0: G_0 \to H_0$ and $F_1: \ker(\delta_G) \to H_1$ are injective. Then $F_1: G_1 \to H_1$ is injective.
\end{lemma}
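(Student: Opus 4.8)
The claim is a standard diagram-chase for crossed modules: injectivity of $F_1$ on all of $G_1$ follows from injectivity of $F_0$ on $G_0$ together with injectivity of $F_1$ restricted to the kernel $\ker(\delta_G)$. The key structural fact I would exploit is that $\ker(\delta_G)$ sits inside the \emph{center} of $G_1$ — this is a general consequence of the Peiffer identity: if $E \in \ker(\delta_G)$ then for any $E' \in G_1$ we have $E * E' * E^{-1} = \delta_G(E) \gt E' = e \gt E' = E'$.

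The plan is as follows. Suppose $E \in G_1$ with $F_1(E) = e_{H_1}$. First apply $\delta_H$: since $F$ is a morphism of crossed modules, $F_0(\delta_G(E)) = \delta_H(F_1(E)) = e_{H_0}$, and because $F_0$ is injective we get $\delta_G(E) = e_{G_0}$, i.e. $E \in \ker(\delta_G)$. Then injectivity of $F_1$ on the kernel immediately forces $E = e_{G_1}$. This is essentially the whole argument — there is no real obstacle, just the two reductions in sequence. I would write it as:

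\begin{proof}
    Let $E \in G_1$ with $F_1(E) = e_{H_1}$. Applying $\delta_H$ and using that $F$ is a morphism of crossed modules, we obtain
    \begin{align}
        F_0(\delta_G(E)) = \delta_H(F_1(E)) = \delta_H(e_{H_1}) = e_{H_0}.
    \end{align}
    Since $F_0$ is injective, this gives $\delta_G(E) = e_{G_0}$, so $E \in \ker(\delta_G)$. By hypothesis, the restriction $F_1: \ker(\delta_G) \to H_1$ is injective, and $F_1(E) = e_{H_1}$, so $E = e_{G_1}$. Hence $F_1: G_1 \to H_1$ is injective.
\end{proof}

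\textbf{On applicability.} In the paper's situation $\cmG = \cmPL(V)$, $\cmH = \com{\cmK}(V)$, and $F = \bS_{\PL}$; the hypothesis that $F_0 = \sigPL: \PL_0(V) \to \com{K}_0(V)$ is injective is the injectivity of the piecewise linear \emph{path} signature (which follows from~\Cref{lem:realization0_injective} together with the injectivity of the smooth path signature, or directly), while injectivity of $F_1$ on $\ker(\delta^{\PL}) = \PL_1^{\cl}(V)$ is the closed-surface case, which is handled separately via the abelianization results of~\Cref{sec:abelianization} and the topological argument in this section. So this lemma is precisely the bookkeeping step that lets one focus the remaining work on closed surfaces; the only mildly subtle point worth flagging in the write-up is that one does not even need the full Peiffer identity here — just the morphism compatibility $\delta_H \circ F_1 = F_0 \circ \delta_G$ — but the Peiffer identity is what makes the hypothesis ``$F_1$ injective on $\ker(\delta_G)$'' a natural and checkable one, since $\ker(\delta_G)$ is then central and abelian.
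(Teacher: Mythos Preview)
Your proof is correct and essentially identical to the paper's: both take $E \in \ker(F_1)$, use the compatibility $\delta_H \circ F_1 = F_0 \circ \delta_G$ together with injectivity of $F_0$ to force $E \in \ker(\delta_G)$, and then conclude by the assumed injectivity of $F_1$ on the kernel. Your additional remarks on the Peiffer identity and the applicability to $\bS_{\PL}$ are accurate context but not needed for the argument itself.
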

\begin{proof}
    Let $x \in G_1$ such that $F_1(x) = 1$. Because $F$ is a morphism of crossed modules, we have $\delta_H \circ F_1 = F_0 \circ \delta_G$, so $F_0( \delta_G(x)) = 1$. But since $F_0$ is injective, this implies that $x \in \ker(\delta_G)$. By injectivity of $F_1$ on the kernel, we have $x = 1$. 
\end{proof}

By ~\Cref{lem:realization0_injective} and the injectivity of the path signature, $S_{\PL,0}$ is injective. As a result, ~\Cref{lem:reduce_to_kernel} reduces the proof of~\Cref{thm:PL_ssig_injectivity} to showing that 
\begin{align}
    \SigPL: \PL_1^{\cl}(V) \to \com{K}_1(V)
\end{align}
is injective, where $\PL_1^{\cl}(V) = \ker(\delta: \PL_1(V) \to \PL_0(V))$. Given $\bX \in \PL^{\cl}_1(V)$ such that $\SigPL(\bX) = 0$, our strategy will be to consider various representatives $r(\bX)$ of $\bX$ in order to show that $\bX = 0$ in $\PL_1(V)$.

\subsection{Piecewise Linear Simplicial Complexes}
In this section, we consider simplicial complexes $C$ which can be realized in a vector space $V$, and explain how to construct maps from their fundamental crossed modules to $\cmPL(V)$.

\begin{definition} \label{def:plsc}
    A \emph{piecewise linear simplicial complex (PLSC) in $V$} is a finite ordered simplicial complex $C$ whose vertices lie in $V$. In particular, the set of vertices $C_0 \subset V$ is equipped with a total ordering.
    Each $n$-simplex $\sigma$ of $C$ is an ordered set\footnote{Our convention is to use square brackets to denote simplices. This is to distinguish between the \emph{point} representations used for simplices as opposed to the \emph{edge} representations for piecewise linear paths.} $\sigma = [p_0, \ldots, p_n]$ of points $p_i \in V$, and hence, is equipped with a linear characteristic map $\Phi_\sigma: \Delta^n_{\sigma} \to V$. Let $|\sigma| \subset V$ denote the image of $\Phi_{\sigma}$. It is the convex hull of the points of $\sigma$ in $V$. We say that an $n$-simplex $\sigma$ is \emph{degenerate} if there exists some $(n-1)$-dimensional affine hyperplane $U \subset V$ such that $|\sigma| \subset U$, and we say that $\sigma$ is \emph{non-degenerate} otherwise. We say that $C$ is \emph{non-degenerate} if all its simplices are non-degenerate. 
    
    We will abuse notation and use $C$ to refer both to the abstract simplicial complex as well as its geometric realization as a topological space, and we let $C_n$ denote the $n$-skeleton of $C$. Furthermore, we use $|C|$ to denote the \emph{piecewise linear realization} of $C$ in $V$, defined as the union of the subsets $|\sigma|$ 
     \begin{align}
        |C| \coloneqq  \bigcup_{\sigma \in C} |\sigma| \subset V. 
    \end{align}
    The complex is equipped with a continuous realization map $\Phi: C \to |C|.$

    \begin{remark} \label{rem:nondegone}
        Note that because a 1-simplex is a pair of distinct points $[p_0, p_1]$, $p_{i} \in V$, it is automatically non-degenerate.
    \end{remark}

\end{definition}

Furthermore, we will require a notion of compatible complexes.

\begin{definition} \label{def:compatible}
    Suppose $C$ is a PLSC in $V$. We say that $C$ is \emph{compatible} if for every pair of simplices $\sigma, \sigma' \in C$, their intersection in $V$ satisfies
    \begin{align} \label{eq:simplex_intersection}
        |\sigma| \cap |\sigma'| = |\tau| 
    \end{align}
    where $\tau \subseteq \sigma \cap \sigma'$ is a common subsimplex of both $\sigma$ and $\sigma'$, or $\tau = \emptyset$ in which case $|\sigma|$ and $|\sigma'|$ are disjoint. 
\end{definition}
\begin{figure}[!h]
    \includegraphics[width=1.0\linewidth]{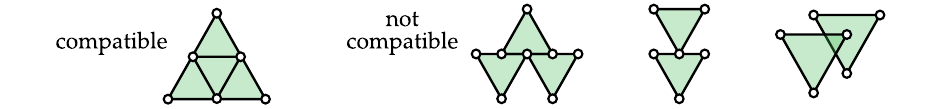}
\end{figure}
\begin{lemma} \label{lem:compatible_embedding}
    Let $C$ be a compatible non-degenerate PLSC. Then the realization map $\Phi: C \to |C|$ is a homeomorphism. 
\end{lemma}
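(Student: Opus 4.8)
\textbf{Proof plan for Lemma \ref{lem:compatible_embedding}.}

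The plan is to show that $\Phi : C \to |C|$ is a continuous bijection from a compact space to a Hausdorff space, hence a homeomorphism. Continuity of $\Phi$ and compactness of $C$ are already part of \Cref{def:plsc} (a finite simplicial complex is compact, and the realization map is continuous), and $|C| \subset V$ is Hausdorff as a subspace of a vector space, so the entire content of the lemma is the assertion that $\Phi$ is a bijection, i.e.\ injective and surjective. Surjectivity is immediate from the definition $|C| = \bigcup_{\sigma} |\sigma|$ together with the fact that each $|\sigma|$ is the image of the characteristic map $\Phi_\sigma$; so the real work is injectivity.

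First I would reduce injectivity to a statement about pairs of simplices. Suppose $x, y \in C$ with $\Phi(x) = \Phi(y) =: p$. Since $C$ is a simplicial complex, $x$ lies in the interior of a unique simplex $\sigma$ and $y$ lies in the interior of a unique simplex $\sigma'$. Then $p \in |\sigma| \cap |\sigma'|$, so by compatibility (\Cref{def:compatible}) there is a common subsimplex $\tau \subseteq \sigma \cap \sigma'$ with $|\sigma| \cap |\sigma'| = |\tau|$, and in particular $p \in |\tau|$. The key point is now that non-degeneracy makes each characteristic map $\Phi_\sigma : \Delta^n_\sigma \to V$ injective: the vertices $p_0, \ldots, p_n$ of a non-degenerate $n$-simplex are affinely independent in $V$ (if they were affinely dependent, their convex hull would lie in an affine hyperplane of dimension $< n$, contradicting non-degeneracy), so the affine map $\Phi_\sigma$ sending the standard simplex to the convex hull of the $p_i$ is a bijection onto $|\sigma|$. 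The same holds for $\Phi_{\sigma'}$ and $\Phi_\tau$.

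Using these injectivity facts, I would argue as follows. Since $p \in |\tau|$ and $\Phi_\tau$ is a bijection onto $|\tau|$, there is a unique point $z$ in the (geometric realization of the) face $\tau$ with $\Phi_\tau(z) = p$; because $\tau$ is a face of both $\sigma$ and $\sigma'$, and the characteristic maps are compatible with face inclusions, $z$ viewed inside $\sigma$ satisfies $\Phi_\sigma(z) = p$, and likewise $\Phi_{\sigma'}(z) = p$. But $\Phi_\sigma$ is injective, so $x = z$; and $\Phi_{\sigma'}$ is injective, so $y = z$; hence $x = y$. (Alternatively, once one knows $\Phi_\sigma$ is injective for every $\sigma$, injectivity of $\Phi$ on each closed simplex is automatic, and compatibility is exactly what is needed to glue these into global injectivity: two points in different open simplices mapping to the same point of $V$ must both lie in the overlap $|\sigma|\cap|\sigma'| = |\tau|$, where the previous sentence applies.) This establishes that $\Phi$ is a continuous bijection from a compact space onto a Hausdorff space, so it is a homeomorphism.

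The main obstacle — really the only non-formal step — is the passage from ``$C$ non-degenerate'' to ``each $\Phi_\sigma$ is injective,'' i.e.\ the equivalence between non-degeneracy of a simplex and affine independence of its vertices, and then the bookkeeping to ensure that the point $z \in |\tau|$ producing $p$ really is the \emph{same} abstract point of $C$ whether we regard $\tau$ as sitting inside $\sigma$ or inside $\sigma'$. This latter point is where compatibility (\Cref{def:compatible}) is essential: without it, $|\sigma|$ and $|\sigma'|$ could overlap in a region that is not a common face, and two genuinely distinct points of $C$ could be identified in $V$. I expect the write-up to be short, since all the ingredients (compactness, Hausdorffness, the closed-map characterization of homeomorphisms) are standard; the care is entirely in invoking \Cref{def:compatible} and \Cref{rem:nondegone} at the right moments.
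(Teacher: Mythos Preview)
Your proposal is correct and follows essentially the same argument as the paper: reduce to injectivity via the compact-to-Hausdorff bijection criterion, use non-degeneracy to get injectivity of each $\Phi_\sigma$, and use compatibility to find a common subsimplex $\tau$ containing a preimage $z$ that must equal both $x$ and $y$. If anything, your write-up is slightly more explicit about why non-degeneracy gives affine independence of the vertices, which the paper leaves implicit.
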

\begin{proof}
    Because $C$ is compact and $|C|$ is Hausdorff, it suffices to prove injectivity. For each simplex $\sigma$, the map $\Phi: \Delta_\sigma \to |\sigma|$ is injective since $\sigma$ is nondegenerate. Hence, let $p \in \Delta_\sigma$, $q \in \Delta_\sigma'$ be points in different simplices such that $x = \Phi(p) = \Phi(q)$. Hence, $|\sigma|$ and $|\sigma'|$ intersect non-trivially. Therefore, there is a subsimplex $\tau$ of both $\sigma$ and $\sigma'$ such that $|\tau| = |\sigma| \cap |\sigma'|$. Thus, there is a point $r \in \Delta_\tau$ such that $\Phi(r) = x$. 
    Because $\tau \subseteq \sigma$ is a subsimplex we have $r \in \Delta_\sigma$. By injectivity of $\Phi|_{\Delta_\sigma}$, we conclude that $r = p$. Similarly, $r = q$. Therefore, $p = q$ and $\Phi$ is injective. 
\end{proof}

Let $C$ be a 2-dimensional PLSC. We denote the set of vertices by $C_0$, the set of $1$-simplices by $E$, and the set of $2$-simplices by $L$. The fundamental groupoid of the $1$-skeleton, $\Pi_1(C_1, C_0)$, is free on the set of edges $E$. We define a homomorphism 
\begin{align} \label{eq:tW_0}
\widetilde{W}_0 : \Pi_1(C_1, C_0) \to \PL_0(V)
\end{align}
by sending an edge $\epsilon = [p_0, p_1]$ to the element $\eta_V(p_1 - p_0) \in \PL_0(V)$. Composing with the realization map $\realization_0 : \PL_{0}(V) \to \thingroup_1(V)$, the edge $\epsilon$ gets sent to $\realization_0 \circ \widetilde{W}_0(\epsilon)$, the straight line path in $V$ from $0$ to $p_{1} - p_{0}$. Let $c_0 \in V$ be a basepoint. Restricting $\widetilde{W}_0$ to the fundamental group based at $c_0$ gives rise to a group homomorphism 
\begin{align} \label{eq:W0_def}
    W_0 : \pi_1(C_1, c_0) \to \PL_0(V).
\end{align}

\begin{lemma} \label{lem:W0_injective}
    Suppose $C$ is a PLSC whose $1$-skeleton $C_1$ is compatible and non-degenerate. Then the map $W_0: \pi_1(C_1,c_0) \to \PL_0(V)$ is injective.
\end{lemma}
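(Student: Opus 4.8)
\textbf{Proof plan for \Cref{lem:W0_injective}.}

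The plan is to reduce the statement to the corresponding fact about piecewise linear \emph{paths}, namely the injectivity of the realization map $\realization_0: \PL_0(V) \to \thingroup_1(V)$ (\Cref{lem:realization0_injective}), together with the fact that a compatible non-degenerate PLSC embeds in $V$ (\Cref{lem:compatible_embedding}). First I would take a nontrivial element $\bw \in \pi_1(C_1, c_0)$ and write it as a reduced edge-loop $\epsilon_1 \cdots \epsilon_k$ in the free groupoid $\Pi_1(C_1, C_0)$, where ``reduced'' means no $\epsilon_{i+1} = \epsilon_i^{-1}$; since $C_1$ is a free groupoid on the edge set $E$, such a reduced representative exists and is unique up to nothing, and being nontrivial it has $k \geq 1$. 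Writing $\epsilon_i = [p_{i-1}, p_i]$ with $p_0 = p_k = c_0$, we get $W_0(\bw) = (p_1 - p_0,\, p_2 - p_1,\, \ldots,\, p_k - p_{k-1}) \in \PL_0(V)$ as a word in $\FMon(V)$.

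The key point is then to show this word is already a \emph{minimal representative} in the sense of \Cref{prop:minimal_pl_paths}, i.e. each $p_i - p_{i-1} \neq 0$ and each consecutive pair $(p_i - p_{i-1},\, p_{i+1} - p_i)$ is linearly independent. Non-vanishing is immediate because each $1$-simplex $[p_{i-1}, p_i]$ is non-degenerate, hence $p_{i-1} \neq p_i$ (see \Cref{rem:nondegone}). For linear independence of a consecutive pair: suppose $p_i - p_{i-1}$ and $p_{i+1} - p_i$ were dependent. Then the three points $p_{i-1}, p_i, p_{i+1}$ are collinear in $V$. Using compatibility (\Cref{def:compatible}) and the embedding $\Phi: C_1 \xrightarrow{\cong} |C_1|$ from \Cref{lem:compatible_embedding}, I would argue that the two edges $|\epsilon_i| = [p_{i-1}, p_i]$ and $|\epsilon_{i+1}| = [p_i, p_{i+1}]$ are distinct edges meeting at the vertex $p_i$, so their intersection in $V$ is exactly the common subsimplex $\{p_i\}$; but collinearity forces the segments $[p_{i-1},p_i]$ and $[p_i, p_{i+1}]$ to overlap in more than the point $p_i$ (one segment contains part of the other, since they share the endpoint $p_i$ and lie on a common line), unless $p_{i+1} = p_{i-1}$ — and $p_{i+1} = p_{i-1}$ is exactly the excluded case $\epsilon_{i+1} = \epsilon_i^{-1}$ in the reduced word. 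This contradiction establishes minimality. Once $W_0(\bw)$ is a nonempty minimal word, \Cref{prop:minimal_pl_paths} (uniqueness of minimal representatives, with the empty word representing the identity) shows $W_0(\bw) \neq \emptyset_0$ in $\PL_0(V)$, so $\ker(W_0)$ is trivial.

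I expect the main obstacle to be the collinearity argument in the middle step: one must carefully use compatibility and non-degeneracy of $C_1$ to rule out that two consecutive distinct edges of a reduced edge-loop are collinear. The subtlety is that compatibility only controls pairwise intersections of simplices as \emph{sets} in $V$, so the argument must translate ``the two closed segments $[p_{i-1},p_i]$ and $[p_i,p_{i+1}]$ lie on a common affine line and share the endpoint $p_i$'' into ``their set intersection strictly contains $\{p_i\}$,'' which then violates $|\epsilon_i| \cap |\epsilon_{i+1}| = |\{p_i\}|$. One edge case to handle explicitly is when $p_{i-1}$, $p_i$, $p_{i+1}$ are collinear with $p_i$ \emph{not} between the other two or the segments nested — but sharing a common endpoint on a line always produces a nondegenerate overlap unless the far endpoints coincide, which is the reduced-word hypothesis; spelling this out is elementary planar geometry. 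Everything else (freeness of $\Pi_1(C_1,C_0)$ on edges, existence of reduced representatives, the structure of $W_0$ on edge-loops, and invoking \Cref{prop:minimal_pl_paths}) is routine.
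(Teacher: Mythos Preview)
Your argument has a genuine gap at the step where you claim minimality of $W_0(\bw)$. The assertion ``sharing a common endpoint on a line always produces a nondegenerate overlap unless the far endpoints coincide'' is false. Take $p_{i-1}=(-1,0)$, $p_i=(0,0)$, $p_{i+1}=(1,0)$ in $\R^2$: the points are collinear with $p_{i+1}\neq p_{i-1}$, yet the segments $[p_{i-1},p_i]$ and $[p_i,p_{i+1}]$ meet only at $\{p_i\}$. Compatibility is perfectly satisfied here, since $\{p_i\}$ is the common subsimplex. Nothing in \Cref{def:compatible} prevents a PLSC from containing two distinct edges that are collinear and share an endpoint with the shared vertex strictly between the far endpoints. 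In that situation the consecutive pair $(p_i-p_{i-1},\,p_{i+1}-p_i)$ is linearly dependent, so your word fails to be minimal, and the argument stalls. After such a reduction you would need to track further possible cancellations in $\PL_0(V)$ and argue they never collapse the word to $\emptyset_0$; this is no longer elementary, since the resulting vectors need not correspond to edges of $C_1$.

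The paper avoids this combinatorial difficulty entirely with a short topological argument. Given $\gamma\in\pi_1(C_1,c_0)$ with $W_0(\gamma)=0$, the realized loop $\realization_0\circ W_0(\gamma)$ is thinly null-homotopic in $V$ and has image contained in $|C_1|$. By the image condition \ref{I1} of \Cref{thm:path_thin_homotopy} there is a null-homotopy contained in $|C_1|$. Since $\Phi:C_1\to|C_1|$ is a homeomorphism by \Cref{lem:compatible_embedding}, this null-homotopy lifts to $C_1$, forcing $\gamma=0$. This route never needs to analyze minimal words or collinearity of edges.
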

\begin{proof}
    Let $\gamma \in \pi_1(C_1,c_0)$ be such that $W_0(\gamma) = 0$. Then the realization $\realization_0 \circ W_0(\gamma) \in \thingroup_1(V)$ is a loop in $|C_{1}|$ which is thinly null homotopic in $V$. By the image condition \ref{I1}, there exists a null homotopy $h$ whose image is contained in the image of $\realization_0 \circ W_0(\gamma)$, and therefore in $|C_{1}|$. Because the map $\Phi: C_1 \to |C_1|$ is a homeomorphism by \Cref{lem:compatible_embedding}, the null homotopy $h$ can be lifted to $C_1$. As a result, $\gamma = 0$.  
  \end{proof}

Free crossed modules were originally developed by Whitehead~\cite{whitehead_combinatorial_1949} to provide an algebraic description of second relative homotopy groups. We consider a special case of Whitehead's result for 2-dimensional CW-complexes, also discussed in~\cite{brown_identities_1982, brown_second_1980}. 

\begin{theorem}{\cite{whitehead_combinatorial_1949, brown_identities_1982}} \label{thm:whitehead_isomorphism}
    Let $(C,c_0)$ be a connected based 2-dimensional CW complex with 1-cells $E$ and 2-cells $L$. Fix a spanning tree $T \subset C_1$ of the 1-skeleton. For each 2-cell $\lambda \in L$, choose a basepoint $c_\lambda \in C_{0}$, and let $\phi_\lambda \in \pi_1(C_1,c_\lambda)$ be the attaching map of the cell. Furthermore, let $\omega_\lambda \in \Pi_1(C_1,C_0)$ be the unique path in $T$ connecting $c_0$ to $c_\lambda$. 
    Define $\rho: L \to \pi_1(C_1, c_0)$ by $\rho(\lambda) = \omega_\lambda \concat \phi_\lambda \concat \omega_\lambda^{-1}$. Then, the fundamental crossed module of the pair $(C,C_1)$
    \begin{align}
        \bpi(C, C_1) = \Big(\partial: \pi_2(C, C_1, c_0) \to \pi_1(C_1, c_0)\Big)
    \end{align}
    is the free crossed module on $\rho$. Let $\eta: L \to \pi_2(C, C_1, c_0)$ denote the map which sends each 2-cell $\lambda \in L$ to the corresponding \emph{generator} $\eta(\lambda)$ of $\pi_2(C, C_1, c_0)$.
\end{theorem}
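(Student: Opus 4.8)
The plan is to produce the comparison morphism from the free crossed module via its universal property, and then show its top component is bijective, splitting this into surjectivity (by a cellular normal-form argument) and injectivity (by induction on the $2$-cells). For the comparison, note that the characteristic map of a $2$-cell $\lambda$, based at $c_\lambda$ and transported along the tree-path $\omega_\lambda$ to $c_0$, defines the generator $\eta(\lambda)\in\pi_2(C,C_1,c_0)$, whose crossed-module boundary (Definition~\ref{ex:fund_crossed_module}) is $\partial(\eta(\lambda))=\omega_\lambda\concat\phi_\lambda\concat\omega_\lambda^{-1}=\rho(\lambda)$. Hence $(\eta,\id)$ is a morphism $(\rho\colon L\to\pi_1(C_1,c_0))\to\For(\bpi(C,C_1))$ in $\SG$, and Theorem~\ref{thm:free_xgrp} yields a unique crossed-module morphism $\Theta=(\Theta_1,\id)\colon\Fr(\rho)\to\bpi(C,C_1)$ with $\Theta_1\circ\eta_\rho=\eta$. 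Everything then reduces to proving $\Theta_1$ is a bijection.

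\emph{Surjectivity.} A class in $\pi_2(C,C_1,c_0)$ is represented by a map $\bX\colon(I^2,\partial I^2,\{\text{corners}\})\to(C,C_1,c_0)$ whose left, bottom and right boundary paths are null-homotopic in $C_1$. By cellular approximation rel $\partial I^2$ one may assume $\bX^{-1}(C\setminus C_1)$ is a finite disjoint union of open discs, each carried homeomorphically onto an open $2$-cell. Joining these discs to a basepoint by arcs in $\bX^{-1}(C_1)$ and reading off the boundary words, one writes $[\bX]$ as a product of conjugates $\bx\gt\eta(\lambda)^{\pm1}$ with $\bx\in\pi_1(C_1,c_0)$, so $[\bX]\in\im(\Theta_1)$.

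\emph{Injectivity.} One first reduces to $\lvert L\rvert<\infty$: both $\Fr(\rho)$ and $\bpi(C,C_1)$ are filtered colimits, over the finite subsets $L'\subseteq L$, of the corresponding objects for the subcomplexes $C_1\cup\bigcup_{\lambda\in L'}\lambda$ (topologically, because a homotopy of maps of the compact cube $I^3$ meets only finitely many cells). Then one inducts on $\lvert L\rvert$; the base $L=\emptyset$ is trivial, since $\bpi(C_1,C_1)=(\id\colon\pi_1(C_1,c_0)\to\pi_1(C_1,c_0))$ is free on the empty set. In the inductive step, attaching a single $2$-cell along $\phi\colon S^1\to C'_1$ exhibits $C=C'\cup_\phi D^2$ as a pushout of spaces, and the $2$-dimensional Seifert--van Kampen theorem of Brown--Higgins identifies $\bpi(C,C_1)$ with the crossed module obtained from $\bpi(C',C_1)$ by freely adjoining one generator of boundary $\rho(\lambda)$ — which is exactly the recipe by which $\Fr(\rho)$ is built from $\Fr(\rho|_{L'})$. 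Compatibility of $\Theta$ with these presentations closes the induction.

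The hardest step is this last one: showing that attaching a $2$-cell is a pushout of crossed modules, which is essentially the content of the $2$-dimensional van Kampen theorem and is where the combinatorics of identities among relations enters. The kernel of $\Theta_1$ is automatically Peiffer-trivial, and proving it vanishes is the delicate core of Whitehead's theorem. An alternative to invoking van Kampen is a covering-space argument: pass to the universal cover $\widetilde{C}\to C$ with deck group $G=\pi_1(C,c_0)$, use $\pi_2(C,C_1)\cong\pi_2(\widetilde{C},\widetilde{C}_1)$ together with the relative Hurewicz theorem and the cellular description of $H_2(\widetilde{C},\widetilde{C}_1)$ as the free $\Z[G]$-module on the $2$-cells to control the abelianized crossed module, and then bootstrap to the full statement; the delicate point is again the identities-among-relations part, handled combinatorially in Whitehead's original proof.
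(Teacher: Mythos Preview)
The paper does not supply a proof of this theorem: it is stated with the citations \cite{whitehead_combinatorial_1949, brown_identities_1982} and used as a black box in the construction of the map $\bW$ in Corollary~\ref{cor:def_W}. So there is no proof in the paper to compare against.

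Your outline is a correct sketch of the standard arguments. The comparison map $\Theta$ is set up correctly via Theorem~\ref{thm:free_xgrp}, and the surjectivity step by cellular approximation is the usual one. For injectivity you offer two routes: the Brown--Higgins $2$-dimensional van Kampen theorem (which indeed gives the pushout description directly and makes the induction work), and the covering-space/relative-Hurewicz approach that goes through $H_2(\widetilde{C},\widetilde{C}_1)$ as a free $\Z[G]$-module. Both are valid and both appear in the literature you cite. One caution: invoking Brown--Higgins is heavy machinery relative to the statement, and in some expositions Whitehead's theorem is used as an input to, or a special case of, that theorem, so if you are writing this up as a self-contained proof you should make sure your chosen version of the $2$-dimensional van Kampen theorem has an independent proof. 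The covering-space argument is closer to Whitehead's original and is more self-contained, though as you note the ``identities among relations'' step (controlling the kernel of the abelianized map and lifting back to the crossed module) is where the real work lies; your sketch correctly flags this but does not carry it out.
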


Let $C$ be a $2$-dimensional PLSC. We will use Whitehead's theorem to construct a map $\bW: \bpi(C, C_1) \to \cmPL(V)$. First, choose a basepoint $c_{0} \in V$ and fix a spanning tree $T \subset C_{1}$ of the $1$-skeleton. Let $W_0$ be the homomorphism from ~\eqref{eq:W0_def}. By ~\Cref{thm:whitehead_isomorphism}, $\bpi(C, C_1)$ is free on the set of 2-cells $L$ of $C$, and so by the universal property of~\Cref{thm:free_xgrp}, the map $\bW$ is specified by a function $f: L \to \PL_1(V)$ such that $\delta \circ f = W_{0} \circ \rho$. 

Each 2-simplex $\lambda \in L$ has the form $\lambda = [p_0, p_1, p_2]$. Let $\phi_{\lambda} \in \pi_{1}(C_1, p_0)$ be its boundary loop. Under $\widetilde{W}_{0}$ it gets sent to the following planar loop 
\begin{align}
    \bb_\lambda = \widetilde{W}_{0}(\phi_{\lambda}) = (p_1 - p_0, p_2 - p_1, p_0 - p_2) \in \planarloop(V).
\end{align}
Let $\omega_\lambda \in \Pi(C_{1},C_0)$ be the unique path in $T$ connecting $c_{0}$ to $p_0$, and let $\bw_{\lambda} = \widetilde{W}_{0}(\omega_{\lambda}) \in \PL_0(V)$. Then $(\bw_\lambda, \bb_\lambda) \in \Kite(V)$ is a kite and we define 
\begin{align}
    f: L \to \PL_1(V), \qquad \lambda \mapsto (\bw_\lambda, \bb_\lambda). 
\end{align}
It is then clear from the construction that $\delta \circ f(\lambda) = W_{0}(\omega_\lambda \concat \phi_\lambda \concat \omega_\lambda^{-1}) = W_{0} \circ \rho(\lambda)$. We therefore obtain the desired morphism $\bW$. 
\begin{corollary} \label{cor:def_W}
    Let $C$ be a 2-dimensional connected PLSC in $V$. The maps $f$ and $W_0$ above induce a morphism of crossed modules $\bW = (W_1, W_0): \bpi(C, C_1) \to \cmPL(V)$ such that $f = W_1 \circ \eta$, where $\eta: L \to \pi_2(C, C_1)$ is the map sending each 2-cell to its generator. 
\end{corollary}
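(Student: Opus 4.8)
The plan is to assemble $\bW$ directly from the two data $W_0$ and $f$ by invoking the universal property of free crossed modules of groups. First I would recall that, since $C$ is a connected $2$-dimensional PLSC, its $1$-skeleton $C_1$ is connected, so a spanning tree $T$ exists and $\pi_1(C_1,c_0)$ is a free group. By \Cref{thm:whitehead_isomorphism}, $\bpi(C,C_1)$ is identified with the free crossed module $\Fr(\rho)$ on the map $\rho: L \to \pi_1(C_1,c_0)$, $\rho(\lambda) = \omega_\lambda \concat \phi_\lambda \concat \omega_\lambda^{-1}$, and the map $\eta: L \to \pi_2(C,C_1)$ of the corollary statement is exactly the unit $\eta_\rho$ sending each $2$-cell to its generator. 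By \Cref{thm:free_xgrp}, a morphism of crossed modules $\bW = (W_1,W_0): \Fr(\rho) \to \cmPL(V)$ with $W_1 \circ \eta_\rho = f$ exists and is unique as soon as the pair $(f, W_0)$ defines a morphism $(\rho: L \to \pi_1(C_1,c_0)) \to \For(\cmPL(V))$ in $\SG$; that is, as soon as $W_0: \pi_1(C_1,c_0) \to \PL_0(V)$ is a group homomorphism and $f: L \to \PL_1(V)$ is a set map satisfying $\delta \circ f = W_0 \circ \rho$.

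The homomorphism $W_0$ is exactly the one from \eqref{eq:W0_def}, so nothing further is needed there. For $f$ I would first check well-definedness of the values: for $\lambda = [p_0,p_1,p_2]$ the word $\bb_\lambda = (p_1-p_0,\ p_2-p_1,\ p_0-p_2)$ has endpoint $t(\bb_\lambda) = 0$ and its minimal representative spans the linear span of $\{p_1-p_0,\ p_2-p_0\}$, which has dimension $\leq 2$; hence $\bb_\lambda \in \planarloop(V)$, so $(\bw_\lambda, \bb_\lambda)$ is a genuine element of $\Kite(V)$ and determines an element $f(\lambda) \in \PL_1(V)$ via the generators of $\tPL_1(V)$ and the quotient to $\PL_1(V)$. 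Then I would verify the compatibility square: by the definition of $\delta$ on kites, $\delta(f(\lambda)) = \bw_\lambda \concat \bb_\lambda \concat \bw_\lambda^{-1}$, and since $\bw_\lambda = \widetilde{W}_0(\omega_\lambda)$, $\bb_\lambda = \widetilde{W}_0(\phi_\lambda)$ and $\widetilde{W}_0$ (hence its restriction $W_0$) is a (groupoid) homomorphism, the right-hand side equals $W_0(\omega_\lambda) \concat W_0(\phi_\lambda) \concat W_0(\omega_\lambda)^{-1} = W_0(\omega_\lambda \concat \phi_\lambda \concat \omega_\lambda^{-1}) = W_0(\rho(\lambda))$. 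Applying \Cref{thm:free_xgrp} then yields the morphism $\bW = (W_1,W_0)$ with $W_1 \circ \eta = f$, and uniqueness of the factorization is part of that theorem.

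The argument is essentially bookkeeping once \Cref{thm:whitehead_isomorphism} and \Cref{thm:free_xgrp} are in hand, so I do not expect a serious obstacle; the only points requiring care are (i) that triangular boundary loops land in $\planarloop(V)$ and not merely in $\PL_0^{\cl}(V)$ --- needed for $f$ to take values in $\PL_1(V)$ at all --- and (ii) matching the orientation and basepoint conventions, in particular that the conjugation by $\omega_\lambda$ in $\rho$ corresponds precisely to the tail path $\bw_\lambda$ of the kite under the crossed module action $\bx \gt (\bw,\bb) = (\bx \concat \bw, \bb)$, and that the counter-clockwise boundary convention for $\phi_\lambda$ matches the cyclic order $(p_1-p_0,\ p_2-p_1,\ p_0-p_2)$ used for $\bb_\lambda$. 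Both are immediate from the constructions of $\widetilde{W}_0$ and of $\delta$ on kites, but I would state them explicitly to keep the conventions straight.
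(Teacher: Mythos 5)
Your argument matches the paper's construction exactly: it identifies $\bpi(C,C_1)$ with the free crossed module $\Fr(\rho)$ via \Cref{thm:whitehead_isomorphism}, then invokes the universal property of \Cref{thm:free_xgrp} once one checks that $(f, W_0)$ is a morphism $(\rho \colon L \to \pi_1(C_1,c_0)) \to \For(\cmPL(V))$ in $\SG$, i.e.\ $\delta \circ f = W_0 \circ \rho$, which the paper verifies by the same direct computation with $\delta$ applied to the kite $(\bw_\lambda, \bb_\lambda)$. The extra care you take (planarity of $\bb_\lambda$, convention matching between $\omega_\lambda$, $\bw_\lambda$, and the action on tail paths) is implicit in the paper's discussion and correctly filled in here.
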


\subsection{Proof of Injectivity} \label{Injectivitysection}
In this section, we will prove~\Cref{thm:PL_ssig_injectivity}. Throughout this section, we fix $\bX \in \PL_1^{\cl}(V)$ such that $\SigPL(\bX) = 0$. The proof consists of three main steps. 
\begin{enumerate}
    \item Construct an appropriate representative $r(\bX) = (X_1, \ldots, X_k)$ which induces a \emph{compatible} PLSC $C$ (\Cref{def:compatible}). Then construct an element $Y \in \pi_2(C)$ such that $W_1(Y) = \bX$. 
    
    \item The trivial signature condition will provide a \emph{matching} of kites of opposite orientation, implying that under the Hurewicz map $\hurewicz: \pi_2(C) \to H_2(C)$, we have $\hurewicz(Y) = 0$.

    \item Construct a simply connected PLSC $\hC$ such that $C \hookrightarrow \hC$, and apply the Hurewicz theorem to show that $W_1(Y) = 0$. 
\end{enumerate}
\medskip

\subsubsection{Construct Simplicial Model}
Our first step is to construct a simplicial model for $\bX$. This is a PLSC $C$ whose fundamental crossed module contains an element $Y$ which gets sent to $\bX$ under the map $W_{1}$ constructed in \Cref{cor:def_W}. Such a simplicial model will be induced by a \emph{triangulated representative} of $\bX$. 
In order to define this representative, we begin by defining the set of \emph{marked kites} \label{pg:marked_kites}
\begin{align}
    \Kite^\times(V) \coloneqq \FMon(V) \times \planarloop(V),
\end{align}
where $\FMon(V)$ is the free monoid generated by $V$. In other words, these are kites for which we have additionally chosen the data of a representative of the tail path. There are naturally defined surjective monoid homomorphisms 
\begin{align}
    \FMon(\Kite^\times(V)) \to \FMon(\Kite(V)) \to \PL_{1}(V). 
\end{align}

\begin{definition} \label{def:marked_kite}
A \emph{marked representative} (or simply \emph{representative}) of an element $\bX \in \PL_{1}(V)$ is a lift of this element to $\FMon(\Kite^\times(V))$. A marked kite $(\bw, \bb) \in \Kite^\times(V)$ is \emph{triangular} if $\bb = (b_1, b_2, b_3)_{\min}$. Therefore, a representative $(X_{1}, ..., X_{k}) \in \FMon(\Kite^\times(V))$ of $\bX$ is said to be \emph{triangulated} if each marked kite $X_{i}$ is triangular. %
\end{definition}

We now explain how to construct a PLSC $C$ given the data of a triangulated representative. First, let $X = (\bw, \bb) \in \Kite^{\times}(V)$ be a marked triangular kite. Therefore $\bw = (w_{1}, ..., w_{m}) \in \FMon(V)$ and $\bb = (b_1, b_2, b_3)_{\min} \in \PL_{0}^{\cl}(V)$. Define $\hw_0 = 0 \in V$, and given $1 \leq k \leq m$, define $\hw_{k} = \sum_{i = 1}^{k} w_{i}.$ Finally, define 
\begin{align}
    \hb_{1} = \hw_m + b_1 \andd \hb_2 = \hb_1 + b_2.
\end{align}
The \emph{piecewise linear simplicial complex associated to $X$}, denoted $\Delta(X)$, is given by 
\begin{align}
        \Delta_0(X) &= \{\hw_0, \hw_1, \ldots, \hw_m, \hb_1, \hb_2\}\\
        \Delta_1(X) & =  \{[\hw_i, \hw_{i+1}]\}_{i=0}^{m-1} \cup \{ [\hw_m, \hb_1], [\hb_1, \hb_2], [\hb_2, \hw_m]\} \\
        \Delta_2(X) & = \{ [\hw_m, \hb_1, \hb_2]\}.
\end{align}
More precisely, we take the set of vertices to be $C_{0} = \Delta_0(X) \subset V$, with the induced order. Since this is a subset of $V$, any repetitions are automatically deleted. The simplicial complex is the union of $\Delta_0(X), \Delta_1(X),$ and $\Delta_2(X)$ in the power set of $C_{0}$. As a result, any repetitions are removed, regardless of orientation or ordering in the above description. The \emph{2-simplex associated to $X$} is the unique 2-simplex $\sigma_X = [\hw_m, \hb_1, \hb_2]$ of $\Delta(X)$. Note that $\sigma_X$ is non-degenerate and that $\Delta(X)$ is connected.

Next, let $r(\bX) = (X_{1}, ..., X_{k}) \in \FMon(\Kite^\times(V))$ be a triangulated representative. The \emph{PLSC associated to $r(\bX)$}, denoted $\Delta(r(\bX))$, is given by the union \label{pg:DeltabX}
\begin{align}
\Delta(r(\bX)) \coloneqq \bigcup_{j=1}^k \Delta(X_j).
\end{align}
More precisely, the set of vertices $C_{0} \subset V$ is the union of the vertices from each $\Delta(X_{j})$, with the ordering such that the new vertices of $\Delta(X_{j+1})$ come after those of $\Delta(X_{j})$. The simplicial complex is then the union of each $\Delta(X_{j})$ in the power set of $C_{0}$. Note that $\Delta(r(\bX))$ is connected because each complex $\Delta(X_{i})$ is connected and contains the origin as a vertex. 

The following definition will play an important role when we `match' the kites in a representative of an element $\bX \in \PL_{1}(V)$. 
\begin{definition}
    Let $\bX \in \PL_1(V)$. 
    A \emph{compatible representative} is a triangulated representative $r(\bX) \in \FMon(\Kite^\times(V))$ such that $\Delta(r(\bX))$ is compatible in the sense of~\Cref{def:compatible}.
\end{definition}
\begin{remark}
    Recall from~\Cref{rem:nondegone} that a $1$-simplex in a PLSC is always non-degenerate. Furthermore, as noted above, the 2-simplex associated to a triangular kite is also non-degenerate. Therefore, the simplicial complex $\Delta(r(\bX))$ is automatically non-degenerate. 
\end{remark}

We prove the following in~\Cref{apx:triangulations_matching} by carefully considering subdivisions.

\begin{theorem} \label{thm:existence_compatible_representative}    
    There exists a compatible  representative $r(\bX) \in \FMon(\Kite^{\times}(V))$ of every $\bX \in \PL_1(V)$.
\end{theorem}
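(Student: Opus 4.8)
\textbf{Proof proposal for Theorem \ref{thm:existence_compatible_representative}.}

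The plan is to start from an arbitrary marked triangulated representative of $\bX$ and repair it into a compatible one by iteratively subdividing the offending simplices. First I would observe that one can always produce \emph{some} triangulated representative: pick any representative $(Y_1, \ldots, Y_m) \in \FMon(\Kite(V))$ of $\bX$, choose a marking of each tail path, and then, for each kite $(\bw, \bb)$, barycentrically subdivide the triangular region bounded by $\bb$. A subdivision of the planar region $\bb$ into finitely many triangles $\bb^1, \ldots, \bb^r$ (all lying in the $2$-plane $\SPAN(\bb)$) lets us rewrite $(\bw, \bb)$ using \ref{PL1.1} and \ref{PL1.3} as a product of triangular kites whose tail paths run from the origin, along $\bw$, and then through the edges of the subdivision to reach the relevant sub-triangle. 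Hence triangulated representatives exist; the content is making $\Delta(r(\bX))$ \emph{compatible}.

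The core of the argument is a finite subdivision procedure. Given a triangulated representative $r(\bX) = (X_1, \ldots, X_k)$, the complex $\Delta(r(\bX))$ is non-degenerate (by the remark preceding the theorem), so the only way it fails to be compatible is that for some pair of simplices $\sigma, \sigma'$ the intersection $|\sigma| \cap |\sigma'|$ is a convex polytope that is \emph{not} a common face. I would take a common refinement: let $P$ be the (finite) collection of all the affine hulls of all the simplices $|\sigma|$, $\sigma \in \Delta(r(\bX))$; the arrangement these hyperplanes cut out partitions each $|\sigma|$ into finitely many convex cells, and a barycentric-type triangulation of this arrangement, restricted to each $|\sigma|$, gives a simplicial subdivision $\Delta'$ of $\Delta(r(\bX))$ which \emph{is} compatible — this is a standard fact about triangulating hyperplane arrangements / iterated subdivisions (e.g. via pulling/pushing triangulations or the standard construction of a common simplicial refinement of a polyhedral complex). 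The key point is to then check that this geometric subdivision can be realized at the level of $\FMon(\Kite^\times(V))$: each original triangular kite $X_j = (\bw_j, \bb_j)$ gets replaced by a product of triangular kites indexed by the sub-triangles of $|\sigma_{X_j}|$ in $\Delta'$, with tail paths obtained by prepending to $\bw_j$ a PL path (inside $\SPAN(\bb_j)$) running from the apex of $\sigma_{X_j}$ to the apex of the sub-triangle; that this product equals $X_j$ in $\PL_1(V)$ follows by repeatedly applying \ref{PL1.1} and \ref{PL1.3}, exactly as in the initial triangulation step, together with the fact (\Cref{lem:tail_of_planar_kite_is_planar}, \Cref{cor:composition_of_kites}) that all these auxiliary paths stay in the relevant $2$-plane. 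Since each such replacement is a valid rewriting in $\FMon(\Kite^\times(V))$ projecting to the same element of $\PL_1(V)$, concatenating them over $j = 1, \ldots, k$ yields a new representative $r'(\bX)$ with $\Delta(r'(\bX)) = \Delta'$ compatible.

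I expect the main obstacle to be the bookkeeping in the last step: producing the common simplicial refinement $\Delta'$ is classical, but one must be careful that (a) the refinement is induced \emph{simplex by simplex} in a way compatible with gluing (so that $\Delta'$ really is a subdivision of the abstract complex $\Delta(r(\bX))$, not just a subdivision of the point set $|C|$), and (b) the tail paths chosen for the new kites are consistent across adjacent sub-triangles so that the telescoping identifications via \ref{PL1.1} actually go through. A clean way to handle (a) is to fix, once and for all, a point in the interior of each face of the arrangement and use the induced "pulling" triangulation, which is automatically compatible with restriction to subcomplexes; for (b), since within a single $2$-plane $\PL_1$ restricted to that plane is isomorphic to $\PL_0^{\cl}$ of the plane (\Cref{lem:dim2_planar_loop_iso}), any choice of connecting paths inside the plane gives the same element, so consistency is free. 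These are the details deferred to~\Cref{apx:triangulations_matching}; at the level of this proposal the structure is: (i) triangulated representatives exist; (ii) geometrically refine to a compatible polyhedral/simplicial complex; (iii) lift the refinement to a rewriting in $\FMon(\Kite^\times(V))$ using \ref{PL1.1} and \ref{PL1.3}, preserving the class in $\PL_1(V)$.
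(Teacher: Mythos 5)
Your outline matches the paper's proof at every stage: (i) produce a triangulated representative, (ii) build a compatible common refinement of the geometric pieces, (iii) rewrite each kite along the refinement via \ref{PL1.1} and \ref{PL1.3}. Two steps do not, however, go through as stated. First, ``the affine hulls of all the simplices'' are not hyperplanes once $\dim V > 3$: they are $2$-planes, lines, and points, and there is no off-the-shelf hyperplane-arrangement triangulation to invoke (two generic $2$-planes in $\R^5$ meet in a single point, not a codimension-two flat, so the arrangement does not behave like a hyperplane arrangement). The paper's \Cref{lem:compatible_triangulation} has to build the triangulation plane-by-plane: it collects the supporting $2$-planes, extends boundary edges and pairwise plane-plane intersections to lines, intersects lines within each plane, augments with extra lines so that every vertex lies on a line in its plane, and then triangulates each resulting planar arrangement separately. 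None of this is a corollary of the hyperplane case, and it is exactly where the geometric content lives.

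Second, you keep each tail path $\bw_j$ unchanged and only append a path inside $\SPAN(\bb_j)$ to reach the sub-triangles. But the edges $[\hw_i, \hw_{i+1}]$ of $\bw_j$ are themselves $1$-simplices of $\Delta(r(\bX))$, and the refinement will in general subdivide them or run new $1$- and $2$-simplices across them. If $\bw_j$ is not rewritten along the subdivided edges, then $\Delta(r'(\bX))$ is not a subcomplex of the compatible complex, and it can fail to be compatible — an unsubdivided tail edge can pierce the interior of a new $2$-simplex in a non-face way. The paper's proof handles this explicitly by replacing each generator $u_i$ of $\bw_j$ with the sequence of $1$-simplices of the compatible PLSC $C$ into which the edge $[\hat u_i, \hat u_{i+1}]$ decomposes. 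With those two repairs, your (i)–(iii) is the same argument as in \Cref{apx:triangulations_matching}.
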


Now let $\bX \in \PL_{1}(V)$, fix a compatible representative $r(\bX)$, and let $C \coloneqq \Delta(r(\bX))$ be the associated PLSC. Equip $C$ with the basepoint $c_{0} = 0 \in V$. All homotopy groups will be taken with respect to this basepoint. By~\Cref{cor:def_W}, we may construct a morphism of crossed modules $\bW = (W_1, W_0): \bpi(C, C_1) \to \cmPL(V)$.
Since $C$ is compatible and non-degenerate, the map $W_{0}$ is injective by~\Cref{lem:W0_injective}. 
Next, we lift $\bX$ to an element of $\bpi(C, C_1)$. 

\begin{proposition} \label{prop:def_Y}
    There exists an element $Y \in \pi_2(C, C_1)$ such that $W_1(Y) = \bX$. Furthermore, if $\SigPL(\bX) = 0$, then $Y \in \pi_2(C)$. 
\end{proposition}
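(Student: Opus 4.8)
The plan is to build $Y$ directly from the compatible triangulated representative $r(\bX) = (X_1, \ldots, X_k)$ of $\bX$. Each marked triangular kite $X_j = (\bw_j, \bb_j)$ determines a $2$-simplex $\sigma_{X_j}$ of $C = \Delta(r(\bX))$, and by Whitehead's theorem (\Cref{thm:whitehead_isomorphism}) together with \Cref{cor:def_W}, each such $2$-cell $\lambda = \sigma_{X_j}$ has a generator $\eta(\lambda) \in \pi_2(C, C_1)$ with $W_1(\eta(\lambda)) = (\bw_\lambda, \bb_\lambda)$, where $\bw_\lambda$ is the tail path along the chosen spanning tree $T$ and $\bb_\lambda$ is the boundary loop of the simplex. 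The first step is to reconcile the tail path $\bw_j$ coming from the representative with the spanning-tree tail path $\bw_\lambda$: since both $\realization_0 \circ \widetilde W_0$ of $\bw_j$ and of $\bw_\lambda$ are paths in $|C_1|$ from $0$ to the same vertex $\hw_m^{(j)}$, their concatenation $\bw_j \concat \bw_\lambda^{-1}$ is a loop in $C_1$, giving an element $g_j \in \pi_1(C_1)$ with $W_0(g_j) = \bw_j \concat \bw_\lambda^{-1}$ in $\PL_0(V)$; using the crossed module relation \ref{PL1.3} (the identity $(\bw, \bx \concat \bb \concat \bx^{-1}) \sim (\bw \concat \bx, \bb)$) and the action, one checks that $W_1(g_j \gt \eta(\sigma_{X_j})) = (\bw_j, \bb_j) = X_j$ in $\PL_1(V)$. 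Then define
\begin{align}
    Y \coloneqq (g_1 \gt \eta(\sigma_{X_1})) \concat (g_2 \gt \eta(\sigma_{X_2})) \concat \cdots \concat (g_k \gt \eta(\sigma_{X_k})) \in \pi_2(C, C_1),
\end{align}
using the (reversed horizontal) multiplication of the fundamental crossed module. Since $\bW = (W_1, W_0)$ is a morphism of crossed modules and hence $W_1$ is a group homomorphism, $W_1(Y) = X_1 \concat \cdots \concat X_k = \bX$ in $\PL_1(V)$, which is exactly the statement that $r(\bX)$ is a representative of $\bX$.

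For the second claim, suppose $\SigPL(\bX) = 0$; we must show $Y \in \pi_2(C)$, i.e.\ that $\partial(Y) = 0$ in $\pi_1(C_1)$, where $\partial: \pi_2(C,C_1) \to \pi_1(C_1)$ is the crossed module boundary. Because $\bW$ is a morphism of crossed modules, $W_0(\partial(Y)) = \delta^{\PL}(W_1(Y)) = \delta^{\PL}(\bX)$. Now $\bX \in \PL_1^{\cl}(V) = \ker(\delta^{\PL})$ — recall that the proof of \Cref{thm:PL_ssig_injectivity} has already reduced to this case via \Cref{lem:reduce_to_kernel} — so $\delta^{\PL}(\bX) = \emptyset_0$ in $\PL_0(V)$. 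Hence $W_0(\partial(Y)) = \emptyset_0$, and since $W_0: \pi_1(C_1) \to \PL_0(V)$ is injective by \Cref{lem:W0_injective} (applicable because $C$ is compatible and non-degenerate, so its $1$-skeleton is compatible and non-degenerate), we conclude $\partial(Y) = 1$, i.e.\ $Y \in \pi_2(C)$.

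The main obstacle I anticipate is the bookkeeping in the first step: carefully verifying that the product $Y$ of tree-corrected generators maps to $\bX$ and not merely to something conjugate to it, since the fundamental crossed module multiplication is the \emph{reversed} horizontal composition and the generators $\eta(\sigma_{X_j})$ come with tail paths along $T$ that differ from the marked tails $\bw_j$. One must track the action of $\pi_1(C_1)$ and use relations \ref{PL1.1}--\ref{PL1.3} to show the tail-path discrepancies are absorbed correctly. The rest — that $\SigPL(\bX) = 0$ forces $\bX$ to be a closed loop (hence $Y$ relative-homotopic to an absolute class) — is essentially a formal consequence of $\bW$ being a crossed module morphism together with the injectivity of $W_0$, modulo the observation that we have already reduced to $\bX \in \PL_1^{\cl}(V)$.
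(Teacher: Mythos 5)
Your approach is essentially the same as the paper's: factor $\bX$ via the compatible representative $r(\bX)$, lift each marked triangular kite $X_j$ to a $\pi_1(C_1)$-translate of a generator $\eta(\sigma_{X_j})$ of $\pi_2(C, C_1)$, assemble $Y$ as the product of these lifts, and for the second claim use that $\bW$ is a crossed module morphism together with injectivity of $W_0$ (\Cref{lem:W0_injective}) to deduce $\partial Y = 1$.

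One small omission in your bookkeeping: the generator $\eta(\sigma_{X_j})$ maps under $W_1$ to a kite whose boundary loop $\bb_\lambda = \widetilde W_0(\phi_\lambda)$ traverses the simplex in the order dictated by the fixed total ordering of the vertex set $C_0$, and this orientation need not agree with the orientation of the triangular loop $\bb_j$ appearing in $X_j$. The paper corrects for this with a sign $s_j \in \{\pm 1\}$, writing $W_1\bigl((\omega_j \concat \nu_j^{-1}) \gt \eta(\sigma_{X_j})^{s_j}\bigr) = X_j$; without this you would only land on $X_j$ up to replacing $\bb_j$ by $\bb_j^{-1}$. This is the kind of discrepancy you already flag as "the main obstacle," but it is an orientation mismatch rather than a tail-path mismatch, and it is not absorbed by the $\pi_1(C_1)$-action. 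A second cosmetic difference: for the closedness of $\bX$, you appeal to the prior reduction $\bX \in \PL_1^{\cl}(V)$ from \Cref{lem:reduce_to_kernel}, while the paper re-derives $\delta(\bX) = 0$ directly from $\SigPL(\bX) = 0$ via $\sigPL(\delta(\bX)) = \delta(\SigPL(\bX)) = 0$ and injectivity of the path signature; both are valid in context.
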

\begin{proof}
    The representative $r(\bX)= (X_{1}, ..., X_{k})$ allows us to factor $\bX$ into a product of kites in $\PL_{1}(V)$. Hence, it suffices to show that each $X_{i} = (\bw_{i}, \bb_{i})$ is in the image of $W_{1}$. Each marked triangular kite $X_{i}$ has an associated 2-simplex $\sigma_{X_{i}} \in C$ and hence determines a generator $\eta(\sigma_{X_{i}}) \in \pi_{2}(C,C_1)$. Under the map $W_{1}$ this generator is sent to 
    \begin{align}
        W_{1}(\eta(\sigma_{X_{i}})^{s_i}) = \bv_{i} \gt \bb_i,
    \end{align}
    where $s_{i} = \pm 1$ is an appropriate sign and where $\bv_{i} = \widetilde{W}_{0}(\nu_{i})$, for $\nu_{i} \in \Pi_1(C_1, C_0)$ a path in $C_1$ connecting $c_0$ to the basepoint $p_i$ of $\bb_i$. The element $\bw_{i}$ can also be realized as $\bw_{i} = \widetilde{W}_0(\omega_{i})$, for $\omega_{i} \in \Pi_1(C_1, C_0)$ a path in $C_1$ connecting $c_0$ to $p_i$. Therefore, $\omega_i \concat \nu_i^{-1} \in \pi_1(C_1)$ and 
    \begin{align}
        W_{1}( (\omega_i \concat \nu_i^{-1}) \gt \eta(\sigma_{X_{i}})^{s_i}) = (\bw_i \concat \bv_{i}^{-1}) \gt ( \bv_i \gt \bb_i) = X_i. 
    \end{align}
    This shows that a preimage $Y$ exists. By the injectivity of the path signature and $\SigPL(\bX) = 0$, we have $\delta(\bX) = 0$. Thus, $W_0(\partial Y) = 0$, and so $\partial Y = 0$ by the injectivity of $W_{0}$. Hence $Y \in \pi_2(C)$.
\end{proof}

\subsubsection{Matching kites}
Consider the Hurewicz map
\begin{align} \label{eq:hurewicz}
    \hurewicz : \pi_2(C) \to H_2(C).
\end{align}
Applying it to the element $Y \in \pi_2(C)$ constructed in~\Cref{prop:def_Y}, we obtain an element $\hurewicz(Y) \in H_2(C)$. The next step is to show that this element is $0$. 

Let $L$ denote the set of 2-simplices of $C$. The cellular chain complex of $C$ is given by 
\[
C_2(C) = \bigoplus_{\lambda \in L} \mathbb{Z}\lambda. 
\]
Given the representative $r(\bX) = (X_{1}, ..., X_{k})$, each marked triangular kite $X_{i}$ has an associated 2-simplex $\sigma_{X_{i}}$ which is an element of $L$. This defines a function $\alpha: [k] \to L$ sending the index $i$ to $\sigma_{X_{i}}$. Since the ordering on the triangular loop in $X_i$ might not agree with the ordering of simplices in $L$, we must also record this information as orientation data. This is a function $s: [k] \to \{ \pm 1 \}$ such that $s(i) = +1$ if and only if the ordering on the triangular loop in $X_{i}$ matches the fixed order on $\alpha(i)$. We call the pair of functions $(\alpha, s)$ the \emph{simplex mapping} of $r(\bX)$ and we note that \label{pg:simplex_mapping}
\begin{align}
    \hurewicz(Y) = \sum_{i = 1}^{k} s(i) \alpha(i). 
\end{align}
The pair $(\alpha, s)$ is a \emph{simplex matching} if $\hurewicz(Y) = 0$, since this means that the simplices in $r(\bX)$ are ``matched up'' in pairs of opposite orientation.
By hypothesis, the signature of the realization of $\bX$ is trivial, $\Sig(\realization_1(\bX)) = 0$, and by~\Cref{cor:trivial_ssig_current}, this implies that 
\begin{align} \label{eq:realization_trivial_integrals}
    \int_{\realization_1(\bX)} \omega = 0
\end{align}
for all $\omega \in \Omega^2_c(V)$. We will use this fact to show that $(\alpha,s)$ is indeed a simplex matching. 

\begin{proposition} \label{prop:value_Hur_vanishes}
    Let $Y \in \pi_2(C)$ be the element from~\Cref{prop:def_Y}. Then $\hurewicz(Y) = 0$.
\end{proposition}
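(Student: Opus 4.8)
The plan is to turn the vanishing of all integrals $\int_{\realization_1(\bX)} \omega$ for $\omega \in \Omega^2_c(V)$ into a combinatorial statement about the simplex mapping $(\alpha, s)$. Recall from \Cref{prop:def_Y} that $Y = \prod_i (\omega_i \concat \nu_i^{-1}) \gt \eta(\sigma_{X_i})^{s(i)}$, so that $\hurewicz(Y) = \sum_{i=1}^k s(i)\alpha(i) \in C_2(C)$; since $Y \in \pi_2(C)$ this chain is automatically a cycle, and we must show it is zero. As $C_2(C) = \bigoplus_{\lambda \in L} \Z \lambda$ is free, it suffices to show that the integer coefficient $n_\lambda = \sum_{i : \alpha(i) = \lambda} s(i)$ of each $2$-simplex $\lambda \in L$ vanishes.

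First I would fix a $2$-simplex $\lambda = [p_0, p_1, p_2] \in L$ with affine span the plane $P \subset V$, and isolate it from the rest of the complex. The key geometric input is that $C = \Delta(r(\bX))$ is \emph{compatible} (\Cref{thm:existence_compatible_representative}): for any other $2$-simplex $\lambda'$, the intersection $|\lambda'| \cap |\lambda|$ is a common face, hence has empty interior relative to $P$. Therefore I can choose a point $x_0$ in the relative interior of $|\lambda|$ and a small $2$-ball $B \subset P$ around $x_0$ such that $B$ meets no other simplex of $C$, and no edge or vertex of $\lambda$ itself. Then pick a compactly supported $2$-form $\omega$ on $V$ supported near $B$ whose restriction to $P$ is a bump form with $\int_P \omega|_P = 1$ (oriented by the ordering of $\lambda$), e.g.\ $\omega = f(z) \, dz_1 \wedge dz_2$ in coordinates adapted so that $P = \{z_3 = \cdots = z_d = 0\}$ and $f$ is a bump function concentrated near $x_0$. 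Each marked kite $X_i$ realizes (after $\realization_1$) to a surface consisting of its tail path (a $1$-dimensional set, contributing nothing to a $2$-form integral) together with the planar triangle $|\sigma_{X_i}| = |\alpha(i)|$ traversed with orientation $s(i)$. Hence $\int_{\realization_1(\bX)} \omega = \sum_{i=1}^k s(i) \int_{|\alpha(i)|} \omega = \sum_{i : \alpha(i) = \lambda} s(i) \int_{|\lambda|} \omega = n_\lambda$, because for $\alpha(i) \neq \lambda$ the simplex $|\alpha(i)|$ is disjoint from $\supp(\omega) \cap |\alpha(i)|$ (by the compatibility-and-smallness choice of $B$), while for $\alpha(i) = \lambda$ the integral is $\pm 1$ according to $s(i)$. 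Combined with \eqref{eq:realization_trivial_integrals} this gives $n_\lambda = 0$.

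Running this argument for every $\lambda \in L$ yields $\hurewicz(Y) = \sum_\lambda n_\lambda \lambda = 0$, which is the claim.

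The main obstacle I anticipate is the careful bookkeeping in the paragraph above: verifying that one can genuinely choose the bump form $\omega$ so that its support is disjoint from every simplex of $C$ except the interior of the chosen $\lambda$. This needs compatibility of $C$ (to rule out another $2$-simplex sharing a $2$-dimensional overlap with $\lambda$), non-degeneracy (so $|\lambda|$ is an honest triangle with nonempty relative interior), and the fact that $|\lambda|$ meets $C \setminus \lambda$ only in its boundary faces, which is exactly what \Cref{def:compatible} provides. One must also be slightly careful that the orientation convention relating $s(i)$ to the sign of $\int_{|\lambda|} \omega$ is consistent with the convention defining the simplex mapping; but this is a matter of tracking the definition of $s$ as ``$s(i) = +1$ iff the ordering on the triangular loop in $X_i$ matches the fixed order on $\alpha(i)$,'' together with the standard relation between simplex orientation and the orientation induced on $P$. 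No deep input beyond \Cref{cor:trivial_ssig_current}, \Cref{prop:def_Y}, and \Cref{thm:existence_compatible_representative} is needed.
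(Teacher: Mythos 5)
Your proposal is correct and takes essentially the same approach as the paper. The paper abstracts the construction of the isolating bump form into a separate lemma (Lemma~\ref{lem:disjoint_2_forms}, via orthogonal projection and a cutoff function vanishing on all other simplices), whereas you build it inline with coordinates adapted to the plane of $\lambda$, but the core argument — reduce to vanishing of each coefficient $n_\lambda = \sum_{i : \alpha(i) = \lambda} s(i)$ in the free abelian group $C_2(C)$, then detect it with a compactly supported $2$-form whose support is isolated from all other $2$-simplices by compatibility of $C$, and invoke~\eqref{eq:realization_trivial_integrals} — is identical.
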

\begin{proof}
    Let $\omega \in \Omega^2_{c}(V)$ be a compactly supported form. Then 
    \begin{align}
        \int_{\realization_1(\bX)} \omega = \int_{\realization_1(W_1(Y))} \omega = \sum_{i = 1}^{k} s(i) \int_{\alpha(i)} \omega. 
    \end{align}
    Given a 2-simplex $\sigma \in L$, let $\omega_{\sigma} \in \Omega_{c}^2(V)$ be a compactly supported form with the property that $\int_{\sigma} \omega_{\sigma} = 1$ and whose support is disjoint from all other simplices. Such a form exists by~\Cref{lem:disjoint_2_forms}. Then  
    \begin{align}
        \sum_{i = 1}^{k} s(i) \int_{\alpha(i)} \omega = \sum_{i \ : \ \alpha(i) = \sigma} s(i). 
    \end{align}
    This sum vanishes by ~\eqref{eq:realization_trivial_integrals}. Therefore $\hurewicz(Y) = 0$. 
\end{proof}

\subsubsection{Show that $W_1(Y) = 0$.}

Our objective now is to apply the Hurewicz theorem to show that $W_1(Y) = 0$. While $C$ may not be simply-connected in general, we can add 2-cells to kill off the fundamental group.

\begin{lemma} \label{lem:cw_complex_with_trivial_pi1}
    Let $C$ be a 2-dimensional, connected PLSC in $V$. There exists a 2-dimensional PLSC $\hC$ such that $C \subset \hC$ and $\pi_1(\hC) = 0$.
\end{lemma}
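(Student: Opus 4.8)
The plan is to kill the fundamental group of $C$ by attaching finitely many 2-cells, each realized as a compatible PLSC, while being careful that the attaching maps and the new triangles can be chosen to live in $V$ without destroying compatibility.

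First I would observe that since $C$ is a finite connected 2-dimensional PLSC, its fundamental group $\pi_1(C, c_0)$ is finitely generated; indeed $\pi_1(C_1, c_0)$ is free on the (finite) set of edges not in a fixed spanning tree $T$, and $\pi_1(C)$ is a quotient of this. Pick finitely many loops $\gamma_1, \ldots, \gamma_m$ in the 1-skeleton $C_1$ representing a generating set of $\pi_1(C, c_0)$; each $\gamma_i$ is an edge-loop, hence a piecewise linear loop in $V$. The idea is then to cone off each $\gamma_i$: choose a point $p_i \in V$ and form the cone on $\gamma_i$ with apex $p_i$, which is a disc whose boundary is $\gamma_i$. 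Attaching these $m$ discs to $C$ produces a complex $\hC$ with $\pi_1(\hC) = \pi_1(C)/\langle\langle \gamma_1, \ldots, \gamma_m\rangle\rangle = 0$ by van Kampen. The key point is that each cone can be triangulated (subdivide $\gamma_i$ as needed and take the join with $p_i$), so $\hC$ is again a 2-dimensional PLSC containing $C$ as a subcomplex.

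The main obstacle — and the step I would spend the most care on — is ensuring that the apex points $p_i$ and the resulting cone triangles can be placed so that the enlarged complex remains a genuine \emph{embedded} simplicial complex in $V$, i.e.\ that distinct simplices meet only in common faces. One has to check the statement does not actually require compatibility of $\hC$: re-reading the lemma, it only asserts $C \subset \hC$ with $\pi_1(\hC) = 0$, and $\hC$ is a PLSC in the sense of \Cref{def:plsc}, which does not demand compatibility. Nevertheless the vertices of the cones must genuinely lie in $V$ and the cone triangles must be non-degenerate, so I would argue as follows: since $V$ has dimension $\geq 1$ (and the interesting case is $\dim V \geq 3$, as the cones need "room"), a generic choice of apex $p_i$ makes each cone triangle $[p_i, q, q']$ (for $[q,q']$ an edge of $\gamma_i$) non-degenerate, and by a further generic perturbation of the apices one avoids unwanted coincidences; if $\dim V$ is too small to embed the cones disjointly one may first subdivide $\gamma_i$ more finely, but since we only need a PLSC and not a compatible one, we may even allow the cones to overlap each other and $C$ arbitrarily, as the simplicial structure is recorded abstractly. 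In fact the cleanest route is: take the abstract simplicial cone $\mathrm{Cone}(\gamma_i)$ on a sufficiently fine subdivision of $\gamma_i$, choose \emph{any} point $p_i \in V$ not already a vertex, and extend the realization linearly; the only requirement from \Cref{def:plsc} is that vertices lie in $V$, which holds. Then set $\hC = C \cup \bigcup_i \mathrm{Cone}(\gamma_i)$, glued along the subdivided copies of $\gamma_i$ in $C_1$.

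Finally I would verify $\pi_1(\hC) = 0$: by construction $\hC$ deformation retracts onto $C$ with $m$ 2-cells attached along the loops $\gamma_i$ (the subdivision of $\gamma_i$ does not change its homotopy class), so van Kampen's theorem gives $\pi_1(\hC, c_0) \cong \pi_1(C, c_0)/N$ where $N$ is the normal subgroup generated by $[\gamma_1], \ldots, [\gamma_m]$; since these were chosen to generate $\pi_1(C, c_0)$, we get $N = \pi_1(C, c_0)$ and hence $\pi_1(\hC, c_0) = 0$. This completes the proof; the routine parts are the subdivision bookkeeping and the genericity argument for non-degeneracy, neither of which presents a real difficulty.
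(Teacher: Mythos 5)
Your overall strategy — attach cones to kill $\pi_1$ — is the same in spirit as the paper's, but the paper takes a simpler shortcut that sidesteps the bookkeeping you worry about. Rather than choosing a generating set of loops $\gamma_1, \ldots, \gamma_m$ and coning them off separately with apices $p_1, \ldots, p_m$, the paper chooses a \emph{single} point $x \in V \setminus C_0$ (generic enough that each new triangle $[x,p_i,p_j]$ is non-degenerate) and cones off the \emph{entire $1$-skeleton $C_1$} from $x$. The resulting cone $D$ is a contractible PLSC, $\hC = C \cup D$, and $C \cap D = C_1$; since $\pi_1(C_1) \to \pi_1(C)$ is surjective, van Kampen kills all of $\pi_1(C)$ in one step. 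This avoids choosing generators and the attendant question of which subgraph of $C_1$ each $\gamma_i$ traces out.

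There is a genuine technical slip in your version worth naming. You write that ``we may even allow the cones to overlap \dots as the simplicial structure is recorded abstractly,'' and you propose to take the abstract simplicial cone on a subdivision of $\gamma_i$ and realize it by placing the apex $p_i$ in $V$. But in a PLSC as defined in the paper, a simplex \emph{is} a subset of $C_0 \subset V$: two abstract vertices that land on the same point of $V$ are the same vertex, and two abstract $2$-simplices with the same vertex set in $V$ are the same $2$-simplex. So if the edge-loop $\gamma_i$ retraces an edge or revisits a vertex (as generators of a free group on edges generally do), the ``abstract cone on the subdivided circle'' collapses in $V$: what you actually get is the cone on the \emph{subgraph} $\mathrm{im}(\gamma_i) \subset C_1$, not a disc attached along $\gamma_i$. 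Consequently the claim that $\hC$ ``deformation retracts onto $C$ with $m$ $2$-cells attached along $\gamma_i$'' is not literally true. Your argument can be repaired — the cone on $\mathrm{im}(\gamma_i)$ is still contractible and still contains $\gamma_i$, so van Kampen kills $[\gamma_i]$ (and possibly more, which does no harm) — but this requires saying so rather than relying on the disc picture. The paper's version never encounters this issue because it cones the whole graph $C_1$ at once.
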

\begin{proof}
    Let $C_{0} = \{p_0, ..., p_{n}\}$ be the set of vertices of $C$ and let $E$ be the set of 1-simplices. Choose a point $x \in V - C_{0}$ such that for each edge $\epsilon = [p_i, p_j] \in E$ the triple $\{ x, p_{i}, p_{j} \}$ is not contained in a line. For each vertex $p_i \in C_{0}$, define a new 1-simplex $\hepsilon(p_{i}) = [x, p_i]$, and for each edge $\epsilon = [p_i, p_j] \in E$, define a new 2-simplex $\hlambda(\epsilon) = [x, p_i, p_j]$. Now define a new PLSC $D$ with vertex set $D_0 = C_{0} \cup \{ x \}$, set of 1-simplices $E \cup \{ \hepsilon(p_{i}) \}_{p_i \in C_0}$, and set of 2-simplices $\{ \hlambda(\epsilon) \}_{\epsilon \in E }$. Extend the order on $C_{0}$ to $D_0$ so that $x$ comes after all other vertices. By construction, $D$ is contractible. 
    
    Now define $\hC$ to be the union of $C$ and $D$. This is a PLSC that contains $C$ and $D$ as subcomplexes. Furthermore, $C$ and $D$ intersect along $C_1$, the $1$-skeleton of $C$. Therefore, by the van Kampen theorem, $\pi_1(\hC) = 0$. 
\end{proof}

Next, we prove a general relationship between the kernels of $W_1$ and the Hurewicz map which will imply our main injectivity result.

\begin{proposition} \label{prop:kerW1_equal_hurewicz}
    Let $C$ be a 2-dimensional connected, compatible, and non-degenerate PLSC in $V$. Let $W_1: \pi_2(C, C_1) \to \PL_1(V)$ be the homomorphism defined in~\Cref{cor:def_W}, and let $\hurewicz : \pi_2(C) \to H_2(C)$ be the Hurewicz map from~\eqref{eq:hurewicz}. Then %
    \begin{align}
        \ker(\hurewicz) = \ker(W_1). 
    \end{align}
\end{proposition}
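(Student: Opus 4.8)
\textbf{Proof plan for Proposition \ref{prop:kerW1_equal_hurewicz}.}

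The plan is to prove the two inclusions separately, using Whitehead's theorem (\Cref{thm:whitehead_isomorphism}) to have explicit control over the free crossed module $\bpi(C,C_1)$.

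For the inclusion $\ker(\hurewicz) \subseteq \ker(W_1)$, I would pass to the simply-connected enlargement $\hC \supseteq C$ from \Cref{lem:cw_complex_with_trivial_pi1}. The key point is that the inclusion $C \hookrightarrow \hC$ induces a map of fundamental crossed modules, and since $C_1$ is a subcomplex of both, there is a commuting square relating $\pi_2(C,C_1)$, $\pi_2(\hC, C_1)$ (or $\hC_1$), $\pi_1(C_1)$ etc. Since $\pi_1(\hC) = 0$, the Hurewicz theorem gives that $\hurewicz_{\hC}: \pi_2(\hC) \to H_2(\hC)$ is an isomorphism. Given $Y \in \ker(\hurewicz) \subseteq \pi_2(C) \subseteq \pi_2(C,C_1)$, one needs to show $W_1(Y) = 0$. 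Here I would use that $W_1$ factors: the map $\bW: \bpi(C,C_1) \to \cmPL(V)$ extends to a map $\hat{\bW}: \bpi(\hC, \hC_1) \to \cmPL(\hat V)$ for a suitably enlarged ambient vector space $\hat V$ containing $V$ and the new vertex $x$ — and crucially, by compatibility of \Cref{cor:def_W}, $\hat{\bW}$ restricted to the generators coming from $C$ still gives $\bX$-type values. Actually the cleaner route: the image of $Y$ under $\pi_2(C) \to \pi_2(\hC)$ lies in $\ker(\hurewicz_{\hC})$ (the map on $H_2$ is induced functorially and $\hurewicz$ is natural), hence is $0$ in $\pi_2(\hC)$. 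So $Y$ maps to $0$ in $\pi_2(\hC, \hC_1)$, and therefore $W_1(Y)$, which factors through this map (since $W_1$ itself factors through $\bpi(\hC,\hC_1)$ after composing with the functorially-induced map $\cmPL(\hat V) \to \cmPL(V)$ given by a linear retraction $\hat V \to V$ — or more simply, since $\hat{\bW}$ restricted to $C$-generators recovers $\bW$), must be $0$.

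For the reverse inclusion $\ker(W_1) \subseteq \ker(\hurewicz)$, I would argue directly on the simplicial level. Let $Y \in \pi_2(C,C_1)$ with $W_1(Y) = 0$. Since $\bpi(C,C_1)$ is free on the $2$-cells $L$ (Whitehead), $Y$ is a product of conjugates of generators $\eta(\lambda)^{\pm 1}$, and since $\partial Y$ projects to $\bX = W_1(Y)$-related data, we first use injectivity of $W_0$ (\Cref{lem:W0_injective}) to deduce $\partial Y = 0$, so $Y \in \pi_2(C)$ and $\hurewicz(Y)$ makes sense. Now $\hurewicz(Y)$ is the cellular $2$-cycle $\sum_\lambda n_\lambda \lambda \in C_2(C)$ recording (with multiplicity) which generators appear in $Y$. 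The vanishing of $W_1(Y)$ in $\PL_1(V)$ forces, via the integration pairing against the disjointly-supported forms $\omega_\sigma$ from \Cref{lem:disjoint_2_forms} (exactly as in \Cref{prop:value_Hur_vanishes}, run in reverse), that for each simplex $\sigma$ the signed count $\sum_{\lambda = \sigma} n_\lambda$ vanishes — since $\int_{\realization_1(W_1(Y))}\omega_\sigma$ depends only on this count and $W_1(Y) = 0$ makes the left side zero. Hence every $n_\lambda = 0$, i.e. $\hurewicz(Y) = 0$.

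The main obstacle I anticipate is the first inclusion: making precise the claim that $W_1$ is "compatible" with passing to the enlarged complex $\hC$. One must either (i) enlarge the ambient $V$ to accommodate the auxiliary vertex $x$ and check that the enlarged $\hat{\bW}$ restricts correctly — which requires re-examining the construction in \Cref{cor:def_W} (choice of spanning tree, basepoint paths $\omega_\lambda$) to ensure consistency between the tree of $C_1$ and the tree of $\hC_1$ — or (ii) argue more abstractly that $\ker(\pi_2(C) \to \pi_2(\hC))$ is killed by any map out of $\bpi(C,C_1)$ that extends to $\hC$, which again needs the naturality set-up. The homological bookkeeping (that $\hurewicz(Y)$ is precisely $\sum s(i)\alpha(i)$, already recorded in the excerpt) and the Hurewicz isomorphism for the simply-connected $\hC$ are routine; the delicate part is threading the simplicial-model construction through the inclusion $C \hookrightarrow \hC$ coherently.
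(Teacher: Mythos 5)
Your proof matches the paper's on both inclusions: for $\ker(\hurewicz)\subseteq\ker(W_1)$ you pass to the simply-connected enlargement $\hC$, use naturality of the Hurewicz map together with the compatibility $W_1=\hW_1\circ\pi_2(\iota)$, and invoke the Hurewicz isomorphism on $\hC$; for the reverse inclusion you use injectivity of $W_0$ to land in $\pi_2(C)$ and then run the disjointly-supported-form pairing from \Cref{prop:value_Hur_vanishes}. The only thing you overcomplicate is the ambient space: in \Cref{lem:cw_complex_with_trivial_pi1} the auxiliary vertex $x$ is chosen generically in $V\setminus C_0$ (so that no new triangle degenerates), hence $x$ already lies in $V$ and no enlarged $\hat V$ is needed; the ``delicate part'' you anticipate is handled, exactly as you suspect, simply by choosing the spanning tree of $\hC_1$ to extend that of $C_1$ so that generators of $\pi_2(C,C_1)$ map to generators of $\pi_2(\hC,\hC_1)$.
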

\begin{proof}
    First, we show $\ker(\hurewicz) \subset \ker(W_1)$. Let $\hC$ be the PLSC from~\Cref{lem:cw_complex_with_trivial_pi1} with inclusion map $\iota: C \hookrightarrow \hC$.
    The map $\iota$ induces a map $\bpi(\iota): \bpi(C, C_1) \to \bpi(\hC, \hC_1)$. 
    Applying~\Cref{cor:def_W} to $\hC$, we have another morphism of crossed modules
    $\widehat{\bW} = (\hW_1, \hW_0) : \bpi(\hC, \hC_1) \to \cmPL(V)$ such that the following diagram commutes,

    \begin{equation}
        \begin{tikzcd}
            \bpi(C, C_1) \ar [rr, "\bpi(\iota)"] \ar[dr, swap,"\bW"] & &\bpi(\hC, \hC_1) \ar[dl, "\widehat{\bW}"]\\
            & \cmPL(V) &.
        \end{tikzcd}
    \end{equation}
    Indeed, examining the construction of $W_0$ in \eqref{eq:W0_def}, it is clear that $\hW_0 \circ \pi_1(\iota) = W_0$. The construction of $W_1$ leading up to~\Cref{cor:def_W} depends on the choice of a spanning tree $T \subset C_1$. Therefore, to ensure that $\hW_1 \circ \pi_2(\iota) = W_1$, we choose the spanning tree $\widehat{T} \subset \hC_1$ to be an extension of $T$. This implies that the chosen generators of $\pi_2(C, C_1)$ are sent to the chosen generators of $\pi_2(\hC, \hC_1)$ under the map $\pi_2(\iota)$.

    Next, by naturality of the Hurewicz map, the following diagram commutes.
    \begin{equation}
        \begin{tikzcd}
            \pi_2(C) \ar [r, "\pi_2(\iota)"] \ar[d, swap,"\hurewicz"] & \pi_2(\hC) \ar[d,"\hurewicz"]\\
            H_2(C) \ar[r, swap, "H_2(\iota)"]& H_2(\hC),
        \end{tikzcd}
    \end{equation}
    Let $Y \in \ker(\hurewicz) \subset \pi_2(C)$. Because $\hurewicz: \pi_2(\hC) \to H_2(\hC)$ is an isomorphism by the Hurewicz theorem, and $\hurewicz(Y) = 0$, we have $\pi_2(\iota)(Y) = 0$. Therefore, this implies that $W_1(Y) = \hW_1 \circ \pi_2(\iota) (Y) = 0$. \medskip

    It remains to show that $\ker(W_1) \subset \ker(\hurewicz)$. 
    Suppose $Y \in \ker(W_1)$. Then, we have
    \begin{align}
        W_0 \circ \partial(Y) = \delta \circ W_1(Y) = 0.
    \end{align}
    Since $C$ is compatible and non-degenerate, $W_{0}$ is injective by~\Cref{lem:W0_injective}. Hence it follows that $Y \in \pi_2(C)$. Furthermore, the signature $\SigPL \circ W_1(Y) = 0$ is trivial. Then, the same argument as~\Cref{prop:value_Hur_vanishes} shows that $\hurewicz(Y) = 0$.

\end{proof}

\begin{remark}
    In light of~\Cref{prop:embedding_free_group}, a natural question is whether one can embed free crossed modules into the piecewise linear crossed module $\cmPL(V)$. In contrast to~\Cref{prop:embedding_free_group}, \Cref{prop:kerW1_equal_hurewicz} shows that this is not possible in general for crossed modules, as the kernel $\ker(W_1)$ may be nontrivial. 
\end{remark}

Now, we can prove our main injectivity result.

\begin{proof}[Proof of~\Cref{thm:PL_ssig_injectivity}]
    By~\Cref{lem:reduce_to_kernel}, it suffices to show that $\SigPL : \PL_1^{\cl}(V) \to \com{K}_1(V)$ is injective. Let $\bX \in \PL_1^{\cl}(V)$ such that $\SigPL(\bX) = 0$. Let $r(\bX)$ be a compatible representative of $\bX$, $C = \Delta(r(\bX))$, and $Y \in \pi_2(C)$ be element from~\Cref{prop:def_Y} such that $W_1(Y) = \bX$. By~\Cref{prop:value_Hur_vanishes}, we have $\hurewicz(Y) = 0$, and finally by~\Cref{prop:kerW1_equal_hurewicz}, $W_1(Y) = 0$, so $\bX = 0$. 
\end{proof}

\subsection{Equivalent Conditions}

In this section, we complete the generalization of~\Cref{thm:path_thin_homotopy} for piecewise linear surfaces, and consider appropriate extensions of the remaining definitions of thin homotopy. We begin by defining the class of geometric surfaces that we will consider. 

\begin{definition} \label{def:smooth_pl_surface}
    A \emph{smooth piecewise linear surface} is a surface $\bX \in C^1([0,1]^2, V)$ such that 
    \begin{itemize}
        \item only the top boundary is non-trivial, $\partial_l \bX = \partial_b \bX = \partial_r \bX = 0$, and
        \item there exists a compatible PLSC $T$ in $\R^2$ such that $|T| = [0,1]^2$, and the restriction of $\bX$ to any $2$-simplex $\sigma \in T_2$ is the composition of a smooth reparametrization $\psi_\sigma: |\sigma| \to |\sigma|$ with sitting instants, and an affine linear function $f_\sigma: |\sigma| \to V$,
        \begin{align}
            \bX|_{|\sigma|} : |\sigma| \xrightarrow{\psi_\sigma} |\sigma| \xrightarrow{f_\sigma} V.
        \end{align}
    \end{itemize}
    We denote the space of smooth piecewise linear surfaces by $C_{\PL}^1([0,1]^2, V)$. 
\end{definition}

The following lemma shows that $C^1_{\PL}([0,1]^2, V)$ consists of surfaces which lie in the thin homotopy classes defined by the realization of $\PL_1(V)$. 

\begin{lemma} \label{lem:smooth_pl_representative}
    Let $\bX \in C^1_{\PL}([0,1]^2, V)$. There exists some $\bZ \in \PL_1(V)$ such that $\bX$ is in the thin homotopy class of $\realization_1(\bZ)$. 
\end{lemma}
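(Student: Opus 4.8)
The plan is to produce $\bZ$ by reading off the combinatorial data of the triangulation $T$ of $[0,1]^2$ together with the affine maps $f_\sigma$, and then to verify that the given surface $\bX$ is thinly homotopic to the realization of this element. First I would fix a compatible PLSC $T$ in $\R^2$ as in \Cref{def:smooth_pl_surface}, with $|T| = [0,1]^2$, and fix the basepoint to be a corner of the square, say the origin. Since $\partial_l\bX = \partial_b\bX = \partial_r\bX = 0$, the surface is determined up to thin homotopy by how it fills in its top boundary loop. The idea is to cut the square into its $2$-simplices and treat each simplex as a (reparametrized) affine triangle in $V$: for each $\sigma \in T_2$ with $\sigma = [q_0,q_1,q_2]$ ordered compatibly with $T$, the image $f_\sigma(|\sigma|)$ is a triangle with vertices $f_\sigma(q_0), f_\sigma(q_1), f_\sigma(q_2) \in V$, so it determines a triangular planar loop $\bb_\sigma = (f_\sigma(q_1) - f_\sigma(q_0),\, f_\sigma(q_2) - f_\sigma(q_1),\, f_\sigma(q_0) - f_\sigma(q_2)) \in \planarloop(V)$ together with a tail path $\bw_\sigma \in \PL_0(V)$ recording the straight-line image of a chosen path in the $1$-skeleton of $T$ from the origin to $q_0$. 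This gives a kite $(\bw_\sigma, \bb_\sigma) \in \Kite(V)$, and ordering the $2$-simplices of $T$ compatibly (for instance, using the ordering induced by how a ``scanning'' of the square visits the triangles, the same bookkeeping as in the construction of $\bW$ in \Cref{cor:def_W}) with appropriate signs $s_\sigma$, we set
\begin{align}
    \bZ \coloneqq \prod_{\sigma \in T_2} (\bw_\sigma, \bb_\sigma)^{s_\sigma} \in \PL_1(V).
\end{align}

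Next I would show $\bX \sim_{\thinhom} \realization_1(\bZ)$. Here the key observation is that the triangulation $T$, together with the affine maps $f_\sigma$, literally exhibits $\bX$ as built by horizontal concatenation of the pieces $\bX|_{|\sigma|}$ along shared edges of $T$, up to reparametrization; compatibility of $T$ guarantees that adjacent pieces glue along full $1$-simplices, so this decomposition is valid in $\thingroup_2(V)$. On the other side, $\realization_1$ applied to a kite $(\bw,\bb)$ is, by \Cref{prop:cmrealization} and \Cref{lem:surface_realization_planar_loop}, thinly homotopic to a surface of the form $\sigma^{\bw} \concat_h (\bw_1 + \bX^{\bb}) \concat_h (\bw_1 + \sigma^{\bw^{-1}})$, where $\bX^{\bb}$ is the reparametrized affine filling of the triangle $\bb$ — which is exactly $\bX|_{|\sigma|}$ up to the reparametrization $\psi_\sigma$ and up to translation absorbed by the tail. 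Composing the pieces in the correct order, the tail paths telescope (each ``outgoing'' tail cancels with the ``incoming'' tail of the next triangle in the scan, up to reparametrization and fold cancellation, precisely the moves encoded in \ref{PL1.1}--\ref{PL1.3}), leaving a surface thinly homotopic to the original $\bX$. Since reparametrizations (including the $\psi_\sigma$) are thin homotopies, and the degenerate strips $\sigma^{\bw}$ contribute rank-$1$ differential, all the intermediate homotopies built are genuinely thin. Formally I would package this as: $\bX$ is thinly homotopic to the horizontal concatenation of its affine-triangle pieces (by reparametrization, using sitting instants), and this concatenation equals $\realization_1(\bZ)$ in $\thingroup_2(V)$ by the compatibility of the realization natural transformation with concatenation and the action of $\thingroup_1(V)$.

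The main obstacle I anticipate is the bookkeeping of orientations, orderings, and basepaths: one must choose a consistent linear order on the triangles of $T$ and tail paths within the $1$-skeleton so that the resulting product in $\PL_1(V)$ does not depend (up to the relations \ref{PL1.1}--\ref{PL1.3} and the Peiffer identity) on these choices, and so that the telescoping of tails actually works out. This is essentially the same difficulty already confronted in \Cref{cor:def_W}, where a spanning tree of the $1$-skeleton and a scan of the $2$-cells are fixed; the cleanest route is probably to invoke that construction directly. Indeed, since $T$ is a compatible non-degenerate PLSC in $\R^2$ with a single ``outer'' boundary loop, and $\bX$ descends (after reparametrizing each $\psi_\sigma$ away) to a PL-simplicial map $T \to |C| \subset V$ for $C$ the image complex, I would define $Y \in \pi_2(C, C_1)$ to be the class of this simplicial map relative to the $1$-skeleton (which is legitimate since the left, bottom, right boundary paths are trivial) and set $\bZ = W_1(Y)$ using \Cref{cor:def_W}. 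Then $\realization_1(\bZ) = \realization_1(W_1(Y))$ is thinly homotopic to $\bX$ because both are obtained by realizing the same simplicial filling of the same boundary loop, which is exactly the content of the naturality of $\cmrealization$ and the fact that $\realization_1 \circ W_1$ on a generator $\eta(\sigma)$ gives the affine filling of $\sigma$. This reduces the lemma to a clean statement about simplicial surfaces and avoids reproving the combinatorics from scratch.
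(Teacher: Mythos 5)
Your proposal has the right overall picture — read off triangular kites from $T$ and the affine maps $f_\sigma$, form their product in $\PL_1(V)$, and show the realization is thinly homotopic to $\bX$ — but the route you pick to handle the bookkeeping has a gap, and in fact the paper's proof avoids that route entirely.

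The problem is in your third paragraph, where you propose to invoke \Cref{cor:def_W} with ``$C$ the image complex.'' The images $f_\sigma(|\sigma|) \subset V$ for distinct simplices $\sigma \in T_2$ can intersect arbitrarily (two image triangles may overlap in their interiors, or two vertices of $T$ may map to the same point of $V$), so there is no reason for the image of $\bX$ to be a compatible or even non-degenerate PLSC in $V$. Without compatibility, $\bW$ exists as a crossed module morphism, but $W_0$ is not known to be injective (\Cref{lem:W0_injective}), and more importantly you cannot ``define $Y \in \pi_2(C,C_1)$ to be the class of this simplicial map'' because that requires a legitimate target complex. You would also still owe a separate argument that $\realization_1(W_1(Y)) \sim_{\thinhom} \bX$, which does not follow merely from ``realizing the same simplicial filling''; $W_1$ factors through relative homotopy classes, not through parametrized surfaces.

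The paper dodges all of this by working entirely in $\R^2$, where the triangulation $T$ is a compatible PLSC \emph{by hypothesis}. It decomposes not $\bX$ but the identity map $I: [0,1]^2 \to \R^2$, viewing the boundary square $P$ as a kite $(\emptyset, P)$ and applying \Cref{lem:triangulation_decomposition_of_kite} to $T$ to obtain a compatible representative $\bY \in \FMon(\Kite^\times(\R^2))$. The verification $I \sim_{\thinhom} \realization_1(\bY)$ is then \emph{free}: in two dimensions, any two surfaces with thinly equivalent boundary data are thinly homotopic (\Cref{lem:unique_thin_surface_2d}), so there is no tail-telescoping or scan-ordering to check. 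Finally the affine maps $f_\sigma$ are applied to the kites of $\bY$ to produce $\bZ$, and the thin homotopy is transported by post-composition. The two insights you are missing are (i) the verification should be done in the source $\R^2$, not the target $V$, because that is where the PLSC is guaranteed compatible, and (ii) \Cref{lem:unique_thin_surface_2d} makes the verification in $\R^2$ automatic, so the combinatorial difficulties you flag as ``the main obstacle'' never arise.
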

\begin{proof}
    Let $P = \Big( (1,0), (0, 1), (-1, 0), (0, -1)\Big) \in \PL_0(\R^2)$ be the boundary of the unit square. Let $T$ be a compatible PLSC in $\R^2$ which satisfies the conditions of~\Cref{def:smooth_pl_surface}, and note that it is a compatible triangulation of $[0,1]^2$ in the sense of~\Cref{def:triangulation}. Then, viewing $(\emptyset, P) \in \Kite^\times(\R^2)$ as a kite, the proof of~\Cref{lem:triangulation_decomposition_of_kite} implies there exists a compatible representative $\bY= (Y_1, \ldots, Y_k) \in \FMon(\Kite^\times(\R^2))$ of $(\emptyset, P) \in \PL_1(\R^2)$. Then, applying the maps $f_\sigma$ from~\Cref{def:smooth_pl_surface} to $\bY$, we obtain an element $\bZ = (Z_1, \ldots, Z_k) \in \PL_1(V)$. 
    Finally, by~\Cref{lem:unique_thin_surface_2d}, the identity map $I: [0,1]^2 \to \R^2$ is in the thin homotopy class of $\realization_1(\bY) \in \tau_2(\R^2)$, so $\bX$ is in the thin homotopy class of $\realization_1(\bZ) \in \tau_2(V)$.
\end{proof}

We now propose generalizations of the various definitions of thin homotopy for paths to the setting of smooth piecewise linear surfaces. The rank condition \ref{R1} was generalized in~\Cref{def:2d_thin_homotopy}, the analytic condition \ref{A1} was generalized in~\Cref{rem:analyticcond}, and the signature condition \ref{S1} is generalized using the surface signature from~\Cref{def:ssig}. Here, we comment on the remaining definitions before stating our generalization of ~\Cref{thm:path_thin_homotopy}. 

First, we generalize the word condition \ref{W1}. Given a surface $\bX \in C^1_{\PL}([0,1]^2, V)$, there exists a representative $r(\bX) \in \FMon(\Kite^\times(V))$ by \Cref{lem:smooth_pl_representative}. We say that the surface $\bX$ is \emph{word reduced} if the representative $r(\bX)$ is trivial in $\PL_1(V)$. This is analogous to the condition that the transfinite word associated to a path is reducible to the trivial word. Next, to generalize the holonomy condition \ref{H1}, we must consider a class of crossed modules that are analogous to semisimple Lie groups. We use the following condition.

\begin{definition}
    A crossed module $\cmg = (\delta: \fg_1 \to \fg_0, \gt) \in \XLie$ is \emph{holonomy nondegenerate} if
    \begin{enumerate} 
        \item there exists a semisimple Lie algebra $\fh$ such that $\fh \subset \im(\delta)$, and
        \item $\ker(\delta)$ is nontrivial. 
    \end{enumerate}
    A crossed module of Lie groups $\cmG \in \XLGrp$ is \emph{holonomy nondegenerate} if its associated crossed module of Lie algebras $\cmg \in \XLie$ is holonomy nondegenerate.
\end{definition}
\begin{remark}
    In the above definition, the first condition is used to ensure that the surface holonomy is rich enough to distinguish boundary paths, while the second condition allows us to distinguish between closed surfaces.
\end{remark}

To formulate the image condition \ref{I1}, the naive generalization of the definition for paths suggests that two surfaces $\bX$ and $\bY$ are equivalent if there is a homotopy that is constrained to lie in the union of the images of $\bX$ and $\bY$. However, as we saw in~\Cref{cor:rp2_nonexist}, this definition does not work because of nonlocal cancellations. Instead, we propose to use the transitive closure of this relation. Finally, our generalization of the factorization condition \ref{F1} will make use of factorizations through 2-dimensional CW complexes. However, simply requiring that a surface $\bX$ factors through such a complex is not enough. We must also require that the factorization is trivial in homology.  

\begin{theorem} \label{thm:surface_thin_homotopy}
    Let $\bX \in C^1_{\PL}([0,1]^2, V)$ be a piecewise linear surface. It is \emph{thinly null-homotopic} if any of the following equivalent definitions hold:
    \begin{enumerate}
        \item[\mylabel{W2}{(\textbf{W2})}] \textbf{Word Condition.} There is a marked representative $r(\bX) \in \FMon(\Kite^\times(V))$ of $\bX$ which is trivial in $\PL_1(V)$. 
        \item[\mylabel{H2}{(\textbf{H2}$_{\cmG}$)}] \textbf{Holonomy Condition.} For a holonomy nondegenerate $\cmG = (\delta: G_1 \to G_0, \gt)$, the surface holonomy along $\bX$ of every smooth fake-flat $\cmg$-connection $(\con, \Con)$ is trivial. 
        \item[\mylabel{R2}{(\textbf{R2})}] \textbf{Rank Condition.} There exists a smooth thin homotopy $H: [0,1]^3 \to V$, as defined in~\Cref{def:2d_thin_homotopy} from $\bX$ to the constant path at $0$.
        \item[\mylabel{I2}{(\textbf{I2})}] \textbf{Image Condition.} There exists a smooth homotopy $H: [0,1]^3 \to V$ between $\bX$ and the constant surface at the origin such that it satisfies \ref{I1} on the boundary of $[0,1]^2$ and such that
        \begin{align}
            \im(H) \subset \bigcup_{i=0}^k \im(H([0,1]^2 \times \{t_i\})),
        \end{align}
        for a collection of times $0 = t_0 < t_1 \ldots < t_k \leq 1$.
        \item[\mylabel{F2}{(\textbf{F2})}] \textbf{Factorization Condition.} The surface $\bX$ has trivial boundary and, after modifying the boundary using a thin homotopy, there exists a 2-dimensional CW complex $C$ and a map $\Theta: C \to V$, which is smooth when restricted to each cell, such that $\bX$ factors as
        \begin{align}
            \bX: [0,1]^2 \xrightarrow{q} S^2 \xrightarrow{f} C \xrightarrow{\Theta} V
        \end{align}
        and such that $H_2(f)([S^2]) = 0$, where $q: [0,1]^2 \to S^2$ is a map which covers the sphere and sends the boundary to a basepoint $* \in S^2$.
        \item[\mylabel{A2}{(\textbf{A2})}] \textbf{Analytic Condition.} The surface $\bX$ has trivial boundary, and for all compactly supported $\omega \in \Omega^2_c(V)$, we have $\int_\bX \omega = 0$. 
        \item[\mylabel{S2}{(\textbf{S2})}] \textbf{Surface Signature Condition.} The surface signature $\Sig$ of $\bX$ is trivial, $\Sig(\bX) = 1$.
    \end{enumerate}
\end{theorem}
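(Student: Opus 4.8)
The plan is to establish \Cref{thm:surface_thin_homotopy} by proving a cycle of implications among the seven conditions, leaning heavily on the machinery already developed: the injectivity of $\SigPL$ (\Cref{thm:PL_ssig_injectivity}), the decomposition of the surface signature (\Cref{thm:main_computation_result}), the abelianization results (\Cref{cor:trivial_ssig_current}), and the correspondence between representatives, PLSCs, and elements of $\pi_2$ built in~\Cref{Injectivitysection}. The natural skeleton is
\[
\text{\ref{R2}} \iff \text{\ref{W2}} \iff \text{\ref{S2}} \iff \text{\ref{A2}}, \qquad \text{\ref{W2}} \implies \text{\ref{I2}} \implies \text{\ref{R2}}, \qquad \text{\ref{W2}} \implies \text{\ref{F2}} \implies \text{\ref{A2}}, \qquad \text{\ref{S2}} \iff \text{\ref{H2}}.
\]
First, by \Cref{lem:smooth_pl_representative}, any $\bX \in C^1_{\PL}([0,1]^2,V)$ lies in the thin homotopy class of $\realization_1(\bZ)$ for some $\bZ \in \PL_1(V)$. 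This reduces every condition that is thin-homotopy-invariant to a statement about $\bZ \in \PL_1(V)$; in particular \ref{S2} becomes $\SigPL(\bZ) = 0$ and \ref{W2} becomes $\bZ = 0 \in \PL_1(V)$. The equivalence \ref{W2} $\iff$ \ref{S2} is then exactly \Cref{thm:PL_ssig_injectivity}, and \ref{W2} $\iff$ \ref{R2} is the statement that $\realization_1$ is injective (the corollary of \Cref{thm:PL_ssig_injectivity}) together with the fact that a representative being trivial in $\PL_1(V)$ means $\bX$ is obtained from the constant surface by the moves \ref{PL1.1}--\ref{PL1.3} and the Peiffer relation, each of which is realized by an explicit smooth thin homotopy. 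The equivalence \ref{S2} $\iff$ \ref{A2} follows from \Cref{cor:trivial_ssig_current} once one knows $\partial \bX = 0$: by \Cref{thm:main_computation_result} the boundary component $\Sig^{\cE}(\bX) = \sig(\partial \bX)$, which is trivial iff $\partial\bX$ is thinly null-homotopic (injectivity of the path signature, \ref{S1}), and given that, the abelian component vanishes iff $\int_{\mathcal C(\bX)}\omega = 0$ for all polynomial (equivalently compactly supported) $2$-forms, which by Stokes equals $\int_{\bX}\omega$.

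For the homotopy-flavoured conditions I would argue geometrically. For \ref{W2} $\implies$ \ref{I2}: if a representative is trivial in $\PL_1(V)$, unwind the word reduction step by step; each application of \ref{PL1.1} or a fold cancellation produces a homotopy supported in the image of finitely many "slices" (intermediate piecewise linear surfaces), and concatenating these over the finitely many reduction steps gives a homotopy whose image is contained in a finite union of slice-images, which is the content of \ref{I2}. For \ref{I2} $\implies$ \ref{R2}: a homotopy contained in a finite union of images of $2$-dimensional surfaces automatically has $\rank(dH) \le 2$, and the boundary hypothesis handles the edges, so it is already a thin homotopy. For \ref{W2} $\implies$ \ref{F2}: the PLSC $C = \Delta(r(\bX))$ from \Cref{thm:existence_compatible_representative} together with the realization map $\Theta = |C| \hookrightarrow V$ and the element $Y \in \pi_2(C)$ from \Cref{prop:def_Y} gives the factorization $[0,1]^2 \xrightarrow{q} S^2 \xrightarrow{f} C \xrightarrow{\Theta} V$ with $f$ representing $Y$; by \Cref{prop:kerW1_equal_hurewicz}, triviality of $\bX = W_1(Y)$ forces $\hurewicz(Y) = 0$, i.e.\ $H_2(f)([S^2]) = 0$. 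For \ref{F2} $\implies$ \ref{A2}: if $\bX$ factors through $\Theta: C \to V$ with $H_2(f)([S^2]) = 0$, then for any closed $2$-form $\omega$ on $V$ the integral $\int_{\bX}\omega = \langle H_2(f)([S^2]), [\Theta^*\omega]\rangle = 0$; for a general compactly supported $\omega$ one uses that $\bX$ is closed to replace $\omega$ by a cohomologous (on a neighbourhood of the compact image) closed form, or simply invokes the already-established equivalence with \ref{S2}/\ref{A2} via \Cref{cor:trivial_ssig_current}. Finally, \ref{S2} $\iff$ \ref{H2}: the forward direction is that a holonomy nondegenerate crossed module has $\im(\delta)$ containing a semisimple Lie algebra (so that, as in the proof of \Cref{thm:path_thin_homotopy}, triviality of the boundary holonomy in all such forces $\partial\bX$ thinly null; the argument runs through the $\SL_2$ iterated-integral computation of \cite{chen_integration_1958}) and $\ker(\delta)$ nontrivial (so that surface holonomy of abelian connections valued there detects $\int_{\mathcal C(\bX)}\omega$, matching the abelianization picture of \Cref{thm:ssig_equiv_to_abelianization}); conversely, $\Sig$ itself is the surface holonomy of the universal $2$-connection, which is holonomy nondegenerate since $\fk_1$ has a semisimple piece in $\im(\delta)$ after projection and $\fa_1 = \ker(\delta) \neq 0$, so \ref{H2} $\implies$ \ref{S2} is immediate.

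The main obstacle I anticipate is the holonomy condition \ref{H2}, specifically the direction \ref{H2} $\implies$ \ref{S2} in full generality and the careful formulation of "holonomy nondegenerate" so that the two separate tasks—detecting that $\partial\bX$ is tree-like, and detecting that the closed surface $\mathcal C(\bX)$ bounds in the current sense—can both be read off from surface holonomy. For the boundary, one needs the $1$-connection underlying $(\con,\Con)$ to see into a semisimple quotient, which is why condition (1) asks for $\fh \subset \im(\delta)$; one then restricts $(\con,\Con)$ appropriately and invokes the path case. For the interior, the cleanest route is to abelianize as in \Cref{sec:abelianization}: choose $(\con,\Con)$ so that, after the gauge transformation of \eqref{eq:kap_gtrans}--\eqref{eq:kap_Gtrans}, the relevant abelianized curvature pairs nontrivially with a given polynomial $3$-form, and conclude that nonvanishing of $\int_{\mathcal C(\bX)}\omega$ for some $\omega$ produces a nontrivial surface holonomy; the subtlety is that this must be done with $\con$ and $\Con$ \emph{smooth} rather than formal, which requires truncating and exhibiting a genuine finite-dimensional holonomy nondegenerate crossed module (e.g.\ a suitable truncation $\cmk^{(n)}$, whose $\ker(\delta)$ is the finite-dimensional $\fa_1^{(n)} \neq 0$). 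A secondary obstacle is making the homotopies in \ref{W2} $\implies$ \ref{I2} genuinely smooth and corner-preserving with the right behaviour on edges; this is routine but tedious, and I would handle it by exhibiting the elementary smooth thin homotopies realizing each of \ref{PL1.1}, \ref{PL1.3}, the fold cancellation, and the Peiffer move once and for all, then composing.
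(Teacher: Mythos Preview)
Your skeleton and most of the implications match the paper's proof closely: the equivalences \ref{W2} $\iff$ \ref{R2} $\iff$ \ref{S2}$\iff$ \ref{A2} via \Cref{thm:PL_ssig_injectivity} and \Cref{cor:trivial_ssig_current}, the route \ref{F2} $\implies$ \ref{A2} by pairing with $\Theta^*\omega$, and \ref{S2} $\implies$ \ref{F2} via the PLSC $C$ and \Cref{prop:value_Hur_vanishes}/\Cref{prop:kerW1_equal_hurewicz} are all exactly what the paper does.

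There is, however, a genuine gap in your treatment of \ref{H2}. Your claim that ``$\com{\cmK}$ is holonomy nondegenerate since $\fk_1$ has a semisimple piece in $\im(\delta)$'' is false: $\fk_0^{(\leq n)}$ is nilpotent, so $\im(\delta) \subset \fk_0^{(\leq n)}$ contains no nonzero semisimple subalgebra, and the same holds in the completion. Thus the universal $2$-connection is \emph{not} an instance of \ref{H2}, and the direct route \ref{H2} $\implies$ \ref{S2} you propose does not work. The paper closes this differently: \ref{R2} $\implies$ \ref{H2} is the trivial direction (surface holonomy is thin-homotopy invariant, \Cref{thm:sh_cm_morphism}), while \ref{H2} $\implies$ \ref{A2} uses exactly the two ingredients you identified in your ``forward direction'' paragraph. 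For the boundary, pick any $\con \in \Omega^1(V,\fh)$ with $\fh \subset \im(\delta)$ semisimple, set $\Con = s(\curv^{\con})$ for a linear section $s : \fh \to \fg_1$ so that $(\con,\Con)$ is fake-flat, and read off $\hol^{\con}(\partial\bX) = \delta \Hol^{\con,\Con}(\bX) = 1$; varying $\con$ gives \ref{H1} for $\partial\bX$. For the interior, pick a one-dimensional subalgebra $\R \subset \ker(\delta)$ and take the abelian $2$-connection $(0, W^{\ab})$ with $W^{\ab} \in \Omega^2(V,\R)$; then $\Hol^{0,W^{\ab}}(\bX)$ is either $\int_\bX W^{\ab}$ or its exponential, and triviality for all $W^{\ab}$ gives \ref{A2}. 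You should reroute your argument this way and drop the universal-crossed-module claim.

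A smaller difference: your \ref{W2} $\implies$ \ref{I2} by ``unwinding the word reduction step by step'' is not the paper's argument, and the Peiffer move is exactly where it becomes delicate (cf.\ \Cref{cor:rp2_nonexist}). The paper instead reuses the simply-connected extension $\hC$ from \Cref{lem:cw_complex_with_trivial_pi1}: it builds $\hbX$ by appending fold-pairs $\bX_i \concat_h \bX_i^{-1}$ for the added $2$-simplices $\hlambda_i$, so that $\im(\hbX) = |\hC|$; then $\bX \sim \hbX$ by local fold cancellation, and the thin null-homotopy of $\hbX$ produced in the proof of \Cref{thm:PL_ssig_injectivity} already lives inside $|\hC| = \im(\hbX)$. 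This gives \ref{I2} with a single intermediate slice $\hbX$, avoiding any case analysis on the Peiffer relation.
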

\begin{proof}
    We have proved the equivalence of \ref{W2}, \ref{R2}, and \ref{S2} in~\Cref{thm:PL_ssig_injectivity}. Moreover, \Cref{cor:trivial_ssig_current} shows the equivalence between \ref{A2} and \ref{S2} for the case of closed surfaces. However, since \ref{S2} implies that $\bX$ is closed, the equivalence holds in general. \medskip

    Now we prove the equivalence of \ref{H2} for a fixed holonomy nondegenerate $\cmG = (\delta: G_1 \to G_0, \gt) \in \XLGrp$ with associated $\cmg = (\delta: \fg_1 \to \fg_0, \gt) \in \XLie$. Because surface holonomy is invariant with respect to thin homotopies~\cite{baez_higher_2004,schreiber_smooth_2011,martins_surface_2011} as defined by \ref{R2}, it follows that \ref{R2} $\Rightarrow$ \ref{H2}. Now, suppose \ref{H2} holds. 
    Let $\fh \subset \im(\delta)$ be the semi-simple subalgebra. Choose a linear section $s: \fh \to \fg_1$. Given any 1-connection $\con \in \Omega^1(V, \fh)$, we can consider the 2-connection $\Con = s(\curv^{\con}) \in \Omega^2(V, \fg_1)$ which is fake-flat by definition. Because surface holonomy is a morphism of crossed modules by~\Cref{thm:sh_cm_morphism}, we have
    \begin{align}
        \delta\Hol^{\con, \Con}(\bX) = \hol^{\con}(\partial \bX).
    \end{align}
    Hence, by the assumption of \ref{H2}, this implies that $\hol^{\con}(\partial\bX)$ is trivial for all $\con \in \Omega^1(V, \fh)$. Thus, \ref{H1} holds for $G = H$, where $H$ is a Lie group with Lie algebra $\fh$. This implies that the boundary of $\bX$ is trivial, so $\bX$ is closed. Next, let $\fa = \ker(\delta: \fg_1 \to \fg_0)$, which is nontrivial by assumption. Let $\mathbb{R} \subseteq \fa$ be a $1$-dimensional subalgebra and consider an abelian 2-connection $(0, W^{\ab})$, where $W^{\ab} \in \Omega^2(V, \mathbb{R}) \subseteq \Omega^2(V, \fa)$ is valued in the subalgebra. Let $A$ be the integration of $\fa$. The subalgebra $\mathbb{R}$ integrates to a subgroup which is either $\mathbb{R}$ or $S^1$. Because the connection is abelian, the surface holonomy (\Cref{def:sh}) is given by either
    \begin{align}
        \Hol^{0, W^{\ab}}(\bX) = \int_{\bX} W^{\ab} \in \mathbb{R} \,\,  \quad \text{or} \quad \Hol^{0, W^{\ab}}(\bX) = \exp\left(i\int_{\bX} W^{\ab}\right) \in S^1.
    \end{align}
    Because $W^{\ab}$ is an arbitrary 2-form, either of these will imply \ref{A2}.\medskip

    Next we consider the factorization condition \ref{F2}. Suppose \ref{S2} holds. Our proof of~\Cref{thm:PL_ssig_injectivity} that \ref{S2} $\Rightarrow$ \ref{W2} provided the desired factorization of $\bX$ through a PLSC $C$, where $\Theta$ is linear when restricted to each simplex. Furthermore in~\Cref{prop:def_Y}, we constructed a $Y \in \pi_2(C)$ such that $W_1(Y) = \bZ$, and thus $H_2(f)([S^2])=\hurewicz(Y) = 0$ by~\Cref{prop:value_Hur_vanishes}. Now, suppose \ref{F2} holds. This implies that $\bX$ is closed. Given a $2$-form $\omega \in \Omega^2(V)$, integrating the pullback $\Theta^{*}(\omega)$ along the cells of $C$ defines a map $\phi: H_2(C, \mathbb{Z}) \to \mathbb{R}$. Hence, the fact that $H_{2}(f)([S^2]) = 0$ implies that \ref{A2} holds. \medskip

    Next, we consider the image condition \ref{I2}, where it is immediate that \ref{I2} $\Rightarrow$ \ref{R2}. Now, suppose \ref{S2} holds, and once again we consider the proof of~\Cref{thm:PL_ssig_injectivity}. There, we show that $\bX$ factors through a PLSC $C$, and add 2-simplices $\hL = \{\hlambda_1, \ldots, \hlambda_m\}$ to build a simply-connected CW complex $\hC$ in~\Cref{lem:cw_complex_with_trivial_pi1}. The result of~\Cref{thm:PL_ssig_injectivity} implies that there is a smooth thin homotopy $h$ satisfying \ref{R2} which is contained in the image of $\hC$. Suppose $\lambda_i = [p_0, p_1, p_2]$. By choosing some $\bw_i \in \PL_0(V)$ which connects the basepoint $p_0$ of $\hlambda_i$ to the origin, we can interpret each $\hlambda_i \in \hL$ as a kite $\hlambda_i \in \Kite(V)$. Using~\Cref{lem:surface_realization_planar_loop} and translating the basepoint, we can realize each $2$-simplex $\hlambda_i$ as a surface $\bY_i \in C^1([0,1]^2, V)$. We can incorporate the tail paths by using the formula for the action given in~\eqref{eq:thin_group_action} to obtain a surface $\bX_i \in C^1([0,1]^2, V)$ defined by
    \begin{align}
        \bX_i \coloneqq \sigma^{\bw_i} \concat_h \bY \concat_h ((\bw_i)_1 + \sigma^{\bw_i^{-1}}).
    \end{align}
    Let $\bx_i \concat \bx_i^{-1}$ denote the top boundary of $\bX_i \concat_h \bX_i^{-1}$, and let $H_i: [0,1]^2 \to V$ be a thin homotopy of paths from $\bx_i \concat \bx_i^{-1}$ (on the bottom face of $H$) to $0$ (on the top face of $H$). 
    Then, we define
    \begin{align}
        \bF_i = (\bX_i \concat_h \bX_i^{-1}) \concat_v H_i \andd \hbX = \bX \concat_h \bF_1 \concat_h \ldots \concat_h \bF_m,
    \end{align}
    where concatenations are performed from left to right. Here, each $\bF_i$ is a surface which represents the fold $\bX_i \concat_h \bX_i^{-1}$ such that the boundary of $\bF_i$ is trivial. Note that the image of $\hbX$ is $|\hC|$ by definition, and since $\bX$ and $\hbX$ differ only by folds, there exists a homotopy between them which is contained in the image of $\hbX$, and whose boundary is contained in the image of $\partial \bX$. Then, the image of the sequence of homotopies
    \begin{align}
        \bX \to \hbX \to \bX \xrightarrow{h} 0,
    \end{align}
    is contained in the image of $\hbX$, and thus \ref{I2} is satisfied.     
\end{proof}

\section{Thin Null Homotopy and Group Homology} \label{sec:group_homology}

In this section, we build a connection between thinly null homotopic surfaces and the group homology $H_3(G)$. On the one hand, this allows us to geometrically interpret $H_3(G)$ in terms of surfaces; on the other, it allows us to further classify thinly null homotopic behavior. 
Given a compatible PLSC $C$ in $V$, we use~\Cref{cor:def_W} to obtain a morphism
\begin{align}
    \bW = (W_1, W_0): \cmfundgroup(C) \to \cmPL(V).
\end{align}
Recall from~\Cref{prop:kerW1_equal_hurewicz} that $\ker(\hurewicz)= \ker(W_1)$. 
Now, we aim to characterize $\ker(\hurewicz)$ in terms of group homology. We provide a brief exposition to the required background on group homology, and refer the reader to~\cite{brown_cohomology_1982} for further details. \medskip

\begin{definition}
    Let $G$ be a group and $M$ be a $G$-module. Let $\epsilon: \Z G \to \Z$ be the augmentation map defined by $\epsilon(g) = 1$ for all $g \in G$. The group of \emph{co-invariants} of $M$ is defined by $M_G \coloneqq M/IM$, where $I = \ker(\epsilon)$ is the augmentation ideal generated by $g-1$ for $g \in G$. In other words, $M_G$ is the quotient of $M$ by elements of the form $m - g\cdot m$ for $g\in G$ and $m \in M$. 
\end{definition}

\begin{definition}
    Let $G$ be a group and let $(F_\bullet, d)$ be a projective resolution of $\Z$ over $\Z G$, where
    \begin{align}
        \ldots \to F_n \to \ldots \to F_0 \to \Z \to 0
    \end{align}
    is exact. Then, we define the \emph{group homology of $G$} by
    \begin{align}
        H_n(G) \coloneqq H_n(F_G).
    \end{align}
\end{definition}

The following is an exact sequence, originally due to Hopf~\cite{hopf_fundamentalgruppe_1941}, which relates the homology and homotopy groups of a CW-complex $C$ with the homology of its fundamental group $G = \pi_1(C)$ through the Hurewicz map. We state a version from~\cite{brown_cohomology_1982}.

\begin{theorem}{\cite[Exercise 1, Section 2.5]{brown_cohomology_1982}}
    Let $C$ be an $n$-dimensional CW-complex such that $\pi_i(C) = 0 $ for $ 1 < i < n$ for some $n \geq 2$. Let $G = \pi_1(C)$. Then, the sequence
    \begin{align} \label{eq:hopf_exact_sequence}
        0 \to H_{n+1}(G) \to (\pi_n(C))_G \xrightarrow{\hurewicz_G} H_n(C) \to H_n(G) \to 0
    \end{align}
    is exact, where $\hurewicz_G: \pi_n(C)_G \to H_n(C)$ is the co-invariants functor applied to the Hurewicz map and $H_n(C)$ is equipped with the trivial $G$-action. 
\end{theorem}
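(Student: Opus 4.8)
The plan is to prove the Hopf sequence by passing to the universal cover of $C$ and manufacturing an explicit free resolution of $\Z$ over $\Z G$ out of its cellular chain complex, so that all four terms of the sequence become readable from that resolution.

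\emph{Step 1 (universal cover).} Let $p : \widehat{C} \to C$ be the universal cover; it is a free $G$-CW complex of dimension $n$. Since covering maps are isomorphisms on $\pi_i$ for $i \geq 2$ and $\widehat{C}$ is simply connected, the hypothesis $\pi_i(C) = 0$ for $1 < i < n$ gives that $\widehat{C}$ is $(n-1)$-connected. As $n \geq 2$, the Hurewicz theorem yields a $\Z G$-module isomorphism $\pi_n(C) = \pi_n(\widehat{C}) \xrightarrow{\ \cong\ } H_n(\widehat{C})$, where $G = \pi_1(C)$ acts on everything via deck transformations. Let $D_\bullet = C^{\mathrm{cell}}_\bullet(\widehat{C})$: this is a complex of finitely-or-infinitely generated \emph{free} $\Z G$-modules, concentrated in degrees $0,\dots,n$, with $H_0(D_\bullet) = \Z$, with $H_i(D_\bullet)=0$ for $0<i<n$, and — because there are no cells in degree $n+1$ — with $H_n(D_\bullet) = Z_n := \ker(d_n : D_n \to D_{n-1})$. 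Thus $Z_n \cong \pi_n(C)$ as $\Z G$-modules. Moreover $(D_\bullet)_G \cong C^{\mathrm{cell}}_\bullet(C)$, so $H_i((D_\bullet)_G) \cong H_i(C)$, and since $C$ is $n$-dimensional we have the identification $H_n(C) = \ker\bigl(\bar d_n\bigr)$, where $\bar d_n$ is the induced differential on coinvariants.

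\emph{Step 2 (build a resolution).} Choose a free resolution $Q_\bullet \to Z_n$ of the $\Z G$-module $Z_n$, and splice it onto $D_\bullet$ along the composite $Q_0 \twoheadrightarrow Z_n \hookrightarrow D_n$, producing a free (hence projective) resolution $F_\bullet \to \Z$ over $\Z G$ with $F_i = D_i$ for $i \leq n$ and $F_{n+1+j} = Q_j$. Then $H_\bullet(G) = H_\bullet\bigl((F_\bullet)_G\bigr)$ by definition of group homology.

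\emph{Step 3 (read off the sequence).} Denote by $j : Q_0 \to D_n$ the spliced map and by $\partial_1 : Q_1 \to Q_0$ the first differential of $Q_\bullet$. Applying the right-exact coinvariants functor to $Q_1 \xrightarrow{\partial_1} Q_0 \twoheadrightarrow Z_n \to 0$ gives exactness of $(Q_1)_G \xrightarrow{\bar\partial_1} (Q_0)_G \xrightarrow{\bar q} (Z_n)_G \to 0$. By naturality of the Hurewicz map applied to $p$, the map $H_n(\widehat{C}) \to H_n(C)$ is the canonical map $Z_n \to (Z_n)_G = Z_n\bigl((D_\bullet)_G\bigr) \subseteq H_n(C)$, and its coinvariants is precisely $\hurewicz_G : (\pi_n(C))_G \to H_n(C)$; consequently $\bar j = \hurewicz_G \circ \bar q$, where $\bar q : (Q_0)_G \twoheadrightarrow (\pi_n(C))_G$ is surjective with $\ker\bar q = \im\bar\partial_1$. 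Now one simply computes the homology of $(F_\bullet)_G$ in degrees $n$ and $n+1$. Since $\bar d_n \circ \bar j = 0$ and $\ker\bar d_n = H_n(C)$:
\begin{align}
H_n(G) &= \ker\bar d_n / \im \bar j = H_n(C)/\im(\hurewicz_G) = \coker(\hurewicz_G),\\
H_{n+1}(G) &= \ker\bar j / \im\bar\partial_1 \cong \ker(\hurewicz_G),
\end{align}
the last isomorphism because $\ker\bar j = \bar q^{-1}(\ker\hurewicz_G)$ and $\bar q$ is surjective with kernel $\im\bar\partial_1$. Tracking the maps shows that the composite $H_{n+1}(G) \cong \ker(\hurewicz_G) \hookrightarrow (\pi_n(C))_G$ is the inclusion appearing in the statement, that exactness at $(\pi_n(C))_G$ is exactly $\ker\hurewicz_G = \im(\text{incl})$, that exactness at $H_n(C)$ is $\im\hurewicz_G = \ker(H_n(C)\to H_n(G))$, and that $H_n(C) \to H_n(G)$ is surjective. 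This gives the four-term exact sequence.

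\emph{Expected main obstacle.} The homotopy-theoretic ingredients are standard. The delicate point is the bookkeeping in Step 3: confirming that the coinvariants of the spliced differential is genuinely $\hurewicz_G$ composed with the canonical surjection $(Q_0)_G \twoheadrightarrow (\pi_n(C))_G$, which rests on identifying the chain-level map $D_\bullet \to (D_\bullet)_G$ with $p_*$ and on applying naturality of Hurewicz to the covering projection, and on using that only right-exactness of $(-)_G$ is invoked where it matters. Once that compatibility square is pinned down, the rest is a direct extraction of homology from the resolution $F_\bullet$.
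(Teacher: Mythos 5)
The paper does not prove this statement; it cites it as an exercise from Brown's \emph{Cohomology of Groups}, so there is no internal proof to compare against. Your argument is correct and is the standard solution to that exercise: pass to the universal cover, observe that $D_\bullet = C^{\mathrm{cell}}_\bullet(\widehat{C})$ is a complex of free $\Z G$-modules whose only nonvanishing positive-degree homology is $Z_n = \ker d_n \cong \pi_n(C)$ (using the Hurewicz theorem on the $(n-1)$-connected space $\widehat{C}$), splice a free resolution $Q_\bullet \to Z_n$ onto $D_\bullet$ to obtain a projective resolution $F_\bullet \to \Z$, and compute $H_n$ and $H_{n+1}$ of $(F_\bullet)_G$. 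The resulting identifications $H_n(G) \cong \coker(\hurewicz_G)$ and $H_{n+1}(G) \cong \ker(\hurewicz_G)$ assemble into the four-term sequence, and your naturality argument correctly identifies the relevant chain map with $\hurewicz_G$.

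Two small points of hygiene. First, the displayed identity $(Z_n)_G = Z_n((D_\bullet)_G)$ is not literally an equality: $(-)_G$ is only right exact, so the natural map $(Z_n)_G \to (D_n)_G$ induced by $Z_n \hookrightarrow D_n$ need not be injective. What you actually use — and what is true — is only that the composite $Z_n \to (Z_n)_G \to \ker\bar d_n = H_n(C)$ agrees with the Hurewicz map under the identification $\pi_n(C) \cong Z_n$, so that its coinvariants is $\hurewicz_G$. Second, it is worth stating explicitly that the spliced sequence is exact at $D_n$ and at $Q_0$ (the former because $\im(Q_0 \to D_n) = Z_n = \ker d_n$, the latter because $Z_n \hookrightarrow D_n$ is injective so $\ker(Q_0 \to D_n) = \ker(Q_0 \to Z_n) = \im\partial_1$); this is what licenses calling $F_\bullet$ a resolution. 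Neither point affects the validity of the argument.
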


In our current setting of a 2-dimensional CW complex $Z$, the connectivity hypothesis is trivially satisfied, and thus we obtain the exact sequence
\begin{align} \label{eq:hopf_exact_seq_our_case}
    0 \to H_3(G) \xrightarrow{\phi} (\pi_2(Z))_G \xrightarrow{\cH_G} H_2(Z) \to H_2(G) \to 0.
\end{align}
The kernel of $\cH_G : (\pi_2(Z))_G \to H_2(Z)$ is exactly $\phi(H_3(G))$. Thus, we obtain the following characterization of $\ker(W_1)$. 

\begin{proposition} \label{prop:W1_kernel_decomposition}
    The kernel of $W_1: \pi_2(C) \to \PL_1(V)$ is
    \begin{align} \label{eq:W1_kernel_decomposition}
        \ker(W_1) = I\cdot \pi_2(C) + \Z G \im(\phi),
    \end{align}
    where $I = \ker(\epsilon: \Z G \to \Z)$ is the augmentation ideal.
\end{proposition}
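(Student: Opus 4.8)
The plan is to combine the Hopf exact sequence \eqref{eq:hopf_exact_seq_our_case} with the identity $\ker(\hurewicz) = \ker(W_1)$ from \Cref{prop:kerW1_equal_hurewicz}. Write $G = \pi_1(C)$ and let $\hurewicz : \pi_2(C) \to H_2(C)$ be the (ordinary, not co-invariant) Hurewicz map, while $\hurewicz_G : (\pi_2(C))_G \to H_2(C)$ is the map appearing in \eqref{eq:hopf_exact_seq_our_case}. Let $q : \pi_2(C) \to (\pi_2(C))_G$ be the quotient by $I \cdot \pi_2(C)$. The key point is that $\hurewicz$ factors as $\hurewicz_G \circ q$: indeed $H_2(C)$ carries the trivial $G$-action, so $\hurewicz$ kills $m - g\cdot m$ for every $g \in G$ and $m \in \pi_2(C)$, hence descends through the co-invariants. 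Therefore $\ker(\hurewicz) = q^{-1}(\ker(\hurewicz_G))$.

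Next I would identify $\ker(\hurewicz_G)$. Exactness of \eqref{eq:hopf_exact_seq_our_case} at $(\pi_2(C))_G$ says precisely that $\ker(\hurewicz_G) = \im(\phi)$, where $\phi : H_3(G) \to (\pi_2(C))_G$ is the injection in the sequence. Pulling back along $q$ then gives
\begin{align}
    \ker(W_1) = \ker(\hurewicz) = q^{-1}(\im(\phi)).
\end{align}
It remains to unwind $q^{-1}(\im(\phi))$ as a subgroup of $\pi_2(C)$. Since $q$ is surjective with kernel $I \cdot \pi_2(C)$, an element $Y \in \pi_2(C)$ lies in $q^{-1}(\im(\phi))$ iff $q(Y) \in \im(\phi)$, i.e. iff $Y$ differs from a lift of some element of $\im(\phi)$ by an element of $I \cdot \pi_2(C)$. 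Writing $\im(\phi)$ for a chosen set of representatives in $\pi_2(C)$ (abusing notation as the statement does), this says $Y \in I\cdot\pi_2(C) + \Z G\!\cdot\!\im(\phi)$; conversely every such element maps under $q$ into $\im(\phi)$, since $q$ is $\Z G$-linear and $\Z G$ acts trivially on the image of the co-invariants, so $\Z G \cdot \im(\phi)$ and $\im(\phi)$ have the same image. This yields \eqref{eq:W1_kernel_decomposition}.

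The only genuine subtlety — and the step I'd treat most carefully — is the bookkeeping around $\Z G \cdot \im(\phi)$: strictly speaking $\im(\phi)$ is a subgroup of the $\Z$-module $(\pi_2(C))_G$, so to write $\Z G \cdot \im(\phi)$ inside $\pi_2(C)$ one must first choose a (not canonical) set-theoretic lift of $\im(\phi)$ to $\pi_2(C)$, and then the $\Z G$-span of this lift is well-defined modulo $I \cdot \pi_2(C)$, which is exactly what makes the sum $I\cdot\pi_2(C) + \Z G\,\im(\phi)$ independent of the choice of lift. Verifying this independence, and checking that $q(\Z G \cdot \widetilde{\im(\phi)}) = \im(\phi)$ for any such lift $\widetilde{\im(\phi)}$, is a short diagram-chase using only that $q$ is $\Z G$-equivariant onto the co-invariants. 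Everything else is a direct application of the already-quoted exact sequence and \Cref{prop:kerW1_equal_hurewicz}; no further topology is needed.
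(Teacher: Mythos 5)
Your proposal follows the paper's own proof exactly: factor the Hurewicz map as $\hurewicz = \hurewicz_G \circ q$ through the co-invariants, use exactness of the Hopf sequence \eqref{eq:hopf_exact_seq_our_case} to get $\ker(\hurewicz_G) = \im(\phi)$, and pull back along $q$ to conclude $\ker(W_1) = \ker(\hurewicz) = q^{-1}(\im(\phi)) = I\cdot\pi_2(C) + \Z G\,\im(\phi)$. The only difference is that you spell out why the factorization through co-invariants holds (trivial $G$-action on $H_2(C)$) and flag the mild abuse of notation in writing $\Z G\,\im(\phi)$ inside $\pi_2(C)$, both of which the paper leaves implicit; your extra care is sound and does not change the argument.
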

\begin{proof}
    By~\Cref{prop:kerW1_equal_hurewicz}, $\ker(W_1) = \ker(\hurewicz)$. Then, we can factor the Hurewicz map by
    \begin{align}
        \hurewicz : \pi_2(C) \xrightarrow{q} \pi_2(C)_G \xrightarrow{\hurewicz_G} H_2(C).
    \end{align}
    By~\eqref{eq:hopf_exact_seq_our_case}, $\ker(\hurewicz_G) = \im(\phi)$. Then, $\ker(\hurewicz) = q^{-1}(\im(\phi)) = I \cdot \pi_2(C) + \Z G \im(\phi)$. 
\end{proof}

This result allows us to classify thinly null homotopic surfaces whose image lies in an embedding of $C$. Suppose $\bX : [0,1]^2  \to V$ 
factors as 
\begin{align}
    \bX : [0,1]^2 \xrightarrow{q} S^2 \xrightarrow{Y} C \xrightarrow{|\cdot|} V. 
\end{align}
If $\bX$ is thinly null-homotopic, then $W_1(Y) = 0$. Then, depending on how $Y$ is killed based on~\eqref{eq:W1_kernel_decomposition}, the thin homotopy exhibits different behaviors.
\begin{enumerate}
    \item \textbf{(Folds)} $Y = 0 \in \pi_2(C)$: The map $Y$ is null homotopic within its image in $C$. This is the case if $Y$ exhibits only folds.
    \item \textbf{(Nonlocal Path Conjugation)} $Y = 0 \in \pi_2(C)_G$: The map $Y$ is null-homotopic within its image in $C$ up to conjugation by paths $\gamma \in \pi_1(C)$. This is an example of a \emph{nonlocal} cancellation because the thin null-homotopy must move through the simply connected extension $\hC$. 
    \item \textbf{(Nonlocal Surface Cancellation)} $\hurewicz_G (Y) = 0$: The map $Y$ is null homotopic within the simply connected extension $\hC$, and is classified by $H_3(G)$.
\end{enumerate}

\begin{example}
    Consider the example of the group $G = \Z/2$ with the presentation $\langle x \, | \, x^2 \rangle$, where the associated CW complex\footnote{While this CW complex is not a PLSC, we can refine it to a simplicial complex, and construct a piecewise linear map to $\R^n$ for sufficiently large $n$.} is $C = \RP^2$. In this case, we have
    \begin{align}
        H_2(\Z/2) =0, \quad H_3(\Z/2) = \Z/2, \quad H_2(\RP^2) = 0, \quad \pi_2(\RP^2) = \Z.
    \end{align}
    By~\eqref{eq:hopf_exact_seq_our_case}, this implies that $H_3(\Z/2) \cong \pi_2(\RP^2)_{\Z/2} = \Z/2$, and must represent a thinly null-homotopic surface which factors through $\RP^2$. Consider the surface $\bX$ from~\Cref{prop:rp2_example}. Note that the map $Y: S^2 \to \RP^2$ from the factorization~\eqref{eq:rp2_example_factorization} represents a generator of $\pi_2(\RP^2)$, and is also nontrivial when we pass to the co-invariants $\pi_2(\RP^2)_{\Z/2}$. Thus, the nontrivial element in $H_3(\Z/2)$ represents the thinly null homotopic surface $\bX$. 
\end{example}

\appendix
\clearpage
\section{Notation and Conventions}

{\small
\begin{longtable}
    {ccc}
     \toprule
    Symbol & Description & Page\\ \midrule
    \multicolumn{3}{c}{Categories} \\ \midrule
    $\Vect$ & category of finite-dimensional vector spaces  & \pageref{pg:vect}\\
    $\Lie$ & category of Lie algebras & \\
    $\XGrp, \XLGrp$ & category of crossed modules of groups (resp. Lie groups) & \pageref{pg:XGrp}\\
    $\XLie$ & category of crossed modules of Lie algebras & \pageref{pg:XLie} \\
    $\VL$ & comma category $(\id \downarrow \For)$ associated to $\id: \Vect \to \Vect$ and $\For: \Lie \to \Vect$ & \pageref{pg:VL} \\
    $\SG$ & comma category $(\id \downarrow \For)$ associated to  $\id: \Set \to \Set$ and $\For: \Grp \to \Set$ & \pageref{pg:SG} \\
    \midrule \multicolumn{3}{c}{Crossed Modules} \\ \midrule
    $\cmPL(V)$ & piecewise linear crossed module & \pageref{def:cmpl}\\
    $\cmthingroup(V)$ & thin crossed module (thin homotopy and translation equivalence classes) & \pageref{ex:thin_cm}\\
    $\bpi(C, C_1)$ & fundamental crossed module of CW-complex $C$ relative to the 1-skeleton $C_1$ & \pageref{ex:fund_crossed_module} \\
    $\cmk(V), \com{\cmk}(V)$ & Kapranov's free crossed module of Lie algebras and its completion & \pageref{eq:free_cmla_vector_space}, \pageref{pg:com_cmk}\\
    $\com{\cmK}(V)$ & formal integration of $\com{\cmk}(V)$ as a crossed module of groups &\pageref{pg:com_cmK}\\
    \midrule \multicolumn{3}{c}{Differential Forms and Currents} \\ \midrule
    $\Omega^k(V)$ & smooth differential $k$-forms on $V$ & \\
    $\Omega^k_c(V)$ & smooth compactly supported differential $k$-forms on $V$ & \\
    $\poly{\Omega}^k(V), \poly{\Gamma}_k(V)$ & polynomial differential $k$-forms and $k$-currents on $V$ & \pageref{eq:poly_forms}, \pageref{eq:poly_currents} \\
    $\com{\Omega}^k(V), \com{\Gamma}_k(V)$ & completion of polynomial differential $k$-forms and $k$-currents on $V$ & \pageref{eq:poly_forms}, \pageref{eq:poly_currents} \\
    \midrule \multicolumn{3}{c}{Holonomy and Signatures} \\ \midrule
    $\con, \Con$ & connection/2-connection & \pageref{def:con}, \pageref{def:Con}\\
    $\conk, \Conk$ & universal translation-invariant connection/2-connection& \pageref{eq:univ_connection}, \pageref{eq:univ_2con}\\
    $\curv^{\con}$ & curvature of the connection $\con$ & \pageref{eq:1_curvature} \\
    $\curv^{\con, \Con}, \Curv^{\con, \Con}$ & curvature/2-curvature of the 2-connection $(\con, \Con)$ & \pageref{pg:Curv}\\
    $\hol, \Hol$ & path/surface holonomy & \pageref{def:ph}, \pageref{def:sh}\\
    $\sig, \Sig$ & smooth path/surface signature & \pageref{def:psig}, \pageref{def:ssig}\\
    $\sigPL, \SigPL$ & piecewise linear path/surface signature & \pageref{prop:psigpl_nt}, \pageref{thm:unique_surface_signature}\\
    $\realization_0, \realization_1$ & realization of PL paths/surfaces as thin homotopy equivalence classes & \pageref{eq:realization0_nt}, \pageref{prop:cmrealization}\\
    \midrule \multicolumn{3}{c}{Misc} \\ \midrule
    $\planarloop(V)$ & planar piecewise linear loops in $V$ & \pageref{pg:planarloop} \\
    $\Kite(V)$ & set of kites $\Kite(V) \coloneqq \PL_0(V) \times \planarloop(V)$ in $V$ & \pageref{def:kite} \\
    $\Kite^\times(V)$ & set of marked kites $\Kite^\times(V) \coloneqq \FMon(V) \times \planarloop(V)$ & \pageref{pg:marked_kites} \\
    $\Loop(V)$ & piecewise linear loops generated by triangular loops & \pageref{pg:loop}\\
    $\Pair(V)$ & pair groupoid of $V$ & \pageref{pg:pair_gpd}\\
    $\Cone_{\PL}, \Cone$ & piecewise linear and smooth cone map & \pageref{pg:PLcone}, \pageref{pg:smooth_cone}\\
    $\hurewicz$ & Hurewicz map  & \\
    $\bW = (W_1, W_0)$ & morphism of crossed modules $\bW: \bpi(C, C_1) \to \cmPL(V)$ & \pageref{cor:def_W} \\
    $\Delta(r(\bX))$ & PLSC associated to a representative $r(\bX) \in \FMon(\Kite^\times(V))$ of $\bX$ & \pageref{pg:DeltabX} \\
    $(\alpha, s)$ & simplex mapping associated to compatible $r(\bX) \in \FMon(\Kite^\times(V))$ & \pageref{pg:simplex_mapping}\\
    $\concat$ & concatenation of paths / group operation in $\thingroup_2(V)$ & \pageref{pg:path_concat},\pageref{pg:thin_cm_composition}\\
    $\concat_h$, $\concat_v$ & horizontal/vertical concatenation of (thin homotopy classes) of surfaces & \pageref{eq:strict_surface_concatenation}, \pageref{pg:thin_cm_composition}\\
    \bottomrule
\end{longtable}
}
\clearpage

\section{Piecewise Linear Paths and Loops} \label{apx:pl_paths_loops}

\subsection{Minimal Representatives of PL Paths} \label{apxsec:minimal_pl_paths}
In this section, we prove~\Cref{prop:minimal_pl_paths}, which shows that elements in $\PL_0(V)$ have a unique minimal representative. 
This will be done by developing a rewriting theory on the free monoid $\FMon(V)$ generated by $V$. To simplify the notation, we will omit the symbol $\concat$ for monoid multiplication in this section. 
Now we consider rewriting on the free monoid $\FMon(V)$ generated by $V$. We define two rewriting steps corresponding to the relations for $\PL_0(V)$. For arbitrary words $\ba, \bb \in \FMon(V)$ and linearly dependent $v, w \in V$, we define
\begin{align} \label{eq:pl_paths_rewriting}
    \ba(v,w)\bb \xrightarrow{\ref{PL0.1}} \ba(v+w)\bb \andd \ba(0)\bb \xrightarrow{\ref{PL0.2}} \ba\bb.
\end{align}
Given two words $\ba, \bb \in \FMon(V)$, we write $\ba \rightarrow \bb$ for applying exactly one of the rewriting steps above and write $\ba\xrightarrow{*}\bb$ if we can get from $\ba$ to $\bb$ by a sequence of rewriting steps (possibly zero steps). 
Note that removing a $0$ can always be realized by the \ref{PL0.1} rewriting step, unless both $\ba$ and $\bb$ are empty words. 
Let $l(\ba) \in \mathbb{N}$ be the length of the word $\ba$. Note that if $\ba\xrightarrow{*}\bb$ then $l(\bb) \leq l(\ba)$. If this involves a non-zero number of rewrite steps, then $l(\bb) < l(\ba)$, so that rewriting strictly decreases the length of words. The only words for which we cannot apply a rewriting step are those described in~\Cref{prop:minimal_pl_paths}. We will call these \emph{minimal words}. 
It is clear that because rewriting decreases the length of words, the process must eventually terminate. Our goal is to show that it always terminates at the same minimal word. 

\begin{lemma} \label{lem:paths_rewritestep}
    Let $\ba = (v_1, \ldots, v_n) \in \FMon(V)$ and suppose that $\ba \to \bb$ and $\ba \to \bb'$ are two rewriting steps. Then there is a word $\bc$ such that $\bb\xrightarrow{*}\bc$ and $\bb'\xrightarrow{*}\bc$.
\end{lemma}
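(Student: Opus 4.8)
\textbf{Plan for the proof of Lemma~\ref{lem:paths_rewritestep}.}

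The statement is a local confluence (weak diamond) property for the rewriting system on $\FMon(V)$ defined by the two rules in~\eqref{eq:pl_paths_rewriting}. The standard approach is to do a case analysis on the relative positions of the two redexes to which the rewriting steps $\ba \to \bb$ and $\ba \to \bb'$ are applied. Since each rewrite step acts on a contiguous block of one or two consecutive letters (a single $0$ for \ref{PL0.2}, or a linearly dependent pair $(v_i, v_{i+1})$ for \ref{PL0.1}, and recall that removing a $0$ can also be done via \ref{PL0.1} when the word is nonempty), the two redexes either occupy \emph{disjoint} positions in $\ba$ or \emph{overlapping} positions.

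First I would handle the disjoint case, which is the easy one: if the two steps modify blocks of letters that do not share a position, they commute. That is, applying the first step to $\bb'$ and the second step to $\bb$ yields the same word $\bc$, because each rewrite only depends on the letters inside its own block and leaves the rest of the word untouched. (One minor bookkeeping point: the index ranges shift when a \ref{PL0.1} merge occurs to the left of the other redex, but the relevant pair of letters is still linearly dependent, so the second rule still applies.) Then I would turn to the overlapping case. Two \ref{PL0.1}-redexes on pairs $(v_i,v_{i+1})$ and $(v_{i+1},v_{i+2})$ overlap in the letter $v_{i+1}$. The key observation is that if both $(v_i,v_{i+1})$ and $(v_{i+1},v_{i+2})$ are linearly dependent pairs and $v_{i+1} \neq 0$, then all three of $v_i, v_{i+1}, v_{i+2}$ lie on a common line, so $v_i, v_{i+1}, v_{i+2}$ are pairwise dependent and in fact $v_i + v_{i+1}$ and $v_{i+2}$ are dependent, as are $v_i$ and $v_{i+1}+v_{i+2}$. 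Hence from $\bb = \ba'(v_i+v_{i+1})(v_{i+2})\bb'$ one further step gives $\ba'(v_i+v_{i+1}+v_{i+2})\bb'$, and symmetrically from $\bb'$, so $\bc = \ba'(v_i+v_{i+1}+v_{i+2})\bb'$ works. If $v_{i+1} = 0$, the situation degenerates: one redex is essentially a \ref{PL0.2}-style deletion, and a short direct check produces the common reduct. Finally, the case of a \ref{PL0.2}-redex overlapping a \ref{PL0.1}-redex (the $0$ being one of the two letters of the dependent pair) is similar and small — deleting the $0$ and then, if needed, merging, versus merging first (which produces $v+0 = v$ or $0+w = w$) and then deleting, land on the same word.

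I expect the main obstacle to be purely organizational rather than mathematical: enumerating all the overlap configurations carefully (which rule is on the left, which letter is shared, whether the shared letter is zero, whether $\ba$ or $\bb$ is empty so that a $0$-removal cannot be rerouted through \ref{PL0.1}) without missing a case, and checking that in every configuration a genuinely \emph{common} reduct $\bc$ exists — not just that both reduce to something. The linear-algebra input is light and uniform throughout: the only fact used repeatedly is that if $u,v$ are dependent and $v,w$ are dependent with $v \neq 0$, then $u,v,w$ are all scalar multiples of a single vector, so any $\Z$-linear combination of them is again dependent on that line. Once the case analysis is laid out this way, each case is a one- or two-line verification.
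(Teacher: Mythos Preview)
Your proposal is correct and follows essentially the same approach as the paper: a case analysis on disjoint versus overlapping redexes, with the overlapping case resolved by the observation that if $v_{i+1}\neq 0$ then $v_i,v_{i+1},v_{i+2}$ are collinear, and if $v_{i+1}=0$ then the two reducts already coincide. The only organizational difference is that the paper first observes that \ref{PL0.2} is only needed when $\ba$ has length~$1$ (since otherwise a $0$-removal can be absorbed into a \ref{PL0.1} step with an adjacent letter), which lets it treat \emph{all} rewrites as \ref{PL0.1} merges and avoid your separate \ref{PL0.2}-overlap cases; this streamlines the bookkeeping but is not a substantively different argument.
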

\begin{proof}
    Since the rewriting step \ref{PL0.2} only applies if the length of $\ba$ is $1$, we can assume that we are applying \ref{PL0.1}. Hence, the two rewriting steps have the form 
    \begin{align}
    (v_{1}, ..., v_{i}, v_{i+1}, ..., v_{j}, v_{j+1}, ..., v_{n}) \to (v_{1}, ..., v_{i}+ v_{i+1}, ..., v_{j}, v_{j+1}, ..., v_{n})
    \end{align}
    and 
    \begin{align}
     (v_{1}, ..., v_{i}, v_{i+1}, ..., v_{j}, v_{j+1}, ..., v_{n}) \to (v_{1}, ..., v_{i}, v_{i+1}, ..., v_{j}+ v_{j+1}, ..., v_{n}).
    \end{align}
    We can assume that $i \leq j$. If $i = j$, then the two rewrite steps are identical. If $i+1<j$ then we take $\bc = (v_{1}, ..., v_{i}+ v_{i+1}, ..., v_{j}+ v_{j+1}, ..., v_{n})$. If $j = i+1$, then each pair $v_{i}, v_{i+1}$ and $v_{i+1}, v_{i+2}$ is contained in a common line. There are two cases here. First, we could have $v_{i+1} = 0$. But then the two rewrites $\bb$ and $\bb'$ are the same. Second, if $v_{i+1} \neq 0$, then $v_{i}, v_{i+1}, v_{i+2}$ are all contained in the line $\mathrm{span}(v_{i+1})$. Then we take $c = (v_{1}, ..., v_{i}+ v_{i+1} + v_{i+2}, ..., v_{n})$.
\end{proof}

\begin{lemma} \label{lem:path_strongrewrite}
    Let $\ba \in \FMon(V)$ and suppose that $\ba\xrightarrow{*}\bb$ and $\ba\xrightarrow{*}\bb'$. Then there is a word $\bc$ such that $\bb\xrightarrow{*}\bc$ and $\bb'\xrightarrow{*}\bc$.
\end{lemma}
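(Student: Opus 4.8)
The plan is to prove \Cref{lem:path_strongrewrite} by establishing that the one-step rewriting relation on $\FMon(V)$ is \emph{confluent}, using the local confluence established in \Cref{lem:paths_rewritestep} together with the fact that rewriting strictly decreases word length and hence is \emph{terminating} (strongly normalizing). This is a textbook application of Newman's Lemma (the Diamond Lemma): a terminating relation that is locally confluent is confluent. I would first recall this principle, then carry out the induction explicitly so the argument is self-contained.

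The key steps, in order, are as follows. First I would set up the induction on $l(\ba)$, the length of the starting word. If $\ba\xrightarrow{*}\bb$ and $\ba\xrightarrow{*}\bb'$ each via zero rewriting steps, then $\bb=\bb'=\ba$ and we take $\bc=\ba$; so we may assume at least one of the two chains is nontrivial, and by symmetry that $\ba\xrightarrow{*}\bb'$ uses at least one step. Write the two chains as $\ba \to \ba_1 \xrightarrow{*} \bb$ and $\ba \to \ba_1' \xrightarrow{*} \bb'$, where $\ba_1, \ba_1'$ are obtained from $\ba$ by a single rewriting step (if the first chain is trivial, set $\ba_1 = \ba$ and the argument degenerates harmlessly). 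By \Cref{lem:paths_rewritestep}, there is a word $\bd$ with $\ba_1 \xrightarrow{*} \bd$ and $\ba_1' \xrightarrow{*} \bd$. Now I would apply the inductive hypothesis: since $l(\ba_1) < l(\ba)$ (a single rewriting step strictly decreases length), and we have the two reductions $\ba_1 \xrightarrow{*} \bb$ and $\ba_1 \xrightarrow{*} \bd$, there is a word $\be$ with $\bb \xrightarrow{*} \be$ and $\bd \xrightarrow{*} \be$. Similarly, $l(\ba_1') < l(\ba)$ and we have $\ba_1' \xrightarrow{*} \bb'$ and $\ba_1' \xrightarrow{*} \bd \xrightarrow{*} \be$, so by the inductive hypothesis again there is a word $\bc$ with $\bb' \xrightarrow{*} \bc$ and $\be \xrightarrow{*} \bc$. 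Then $\bb \xrightarrow{*} \be \xrightarrow{*} \bc$ and $\bb' \xrightarrow{*} \bc$, which is the desired conclusion.

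One subtlety to handle carefully: in the degenerate case where the first chain $\ba \xrightarrow{*} \bb$ takes zero steps, we have $\bb = \ba$, and then confluence reduces to showing $\ba \xrightarrow{*} \bc$ and $\bb' \xrightarrow{*} \bc$ for some $\bc$ — which is immediate by taking $\bc = \bb'$, since $\ba \xrightarrow{*} \bb'$ already. So the induction only does real work when both chains are nontrivial, and the base case $l(\ba) = 0$ (or $l(\ba)=1$) is then vacuous or trivial since no rewriting step applies to a word which is already minimal; more precisely, whenever $\ba$ is minimal, both chains are trivial and we are in the degenerate case.

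I do not expect a genuine obstacle here — the lemma is a standard consequence of local confluence plus termination — but the point requiring the most care is bookkeeping in the induction: making sure that each invocation of the inductive hypothesis is applied to a word of strictly smaller length than $\ba$, which is exactly why we pass through the intermediate words $\ba_1$ and $\ba_1'$ obtained by a \emph{single} rewriting step rather than trying to induct directly. An alternative, equally valid, would be to simply cite Newman's Lemma; I would likely state it and give the short proof above so the argument is self-contained, since the termination hypothesis (strict length decrease) and the local confluence (\Cref{lem:paths_rewritestep}) have already been verified.
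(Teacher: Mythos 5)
Your proof is correct and takes essentially the same approach as the paper: induction on the length of $\ba$, reducing the general case to the one-step case via \Cref{lem:paths_rewritestep}, which is exactly the standard Newman's Lemma argument (terminating plus locally confluent implies confluent). The only difference is a minor streamlining: after producing $\be$ with $\bb \xrightarrow{*} \be$ and $\bd \xrightarrow{*} \be$, you apply the inductive hypothesis to $\ba_1'$ with the two reductions $\ba_1' \xrightarrow{*} \bb'$ and $\ba_1' \xrightarrow{*} \bd \xrightarrow{*} \be$ to obtain $\bc$ directly, using two applications of the inductive hypothesis where the paper uses three (the paper introduces a second intermediate word $\be'$ and then applies induction once more to $\bd$ to merge $\be$ and $\be'$). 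Both versions are valid; yours is a touch leaner, and your explicit handling of the degenerate zero-step cases is also fine.
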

\begin{proof}
    We prove this by induction on the length of the word $\ba$. We can assume that both $\ba\xrightarrow{*}\bb$ and $\ba\xrightarrow{*}\bb'$ involve a non-zero number of rewriting steps, since otherwise the claim is obvious. Furthermore, if the first rewrite step is the same, meaning that $\ba \to \ba_{1} \xrightarrow{*}\bb$ and $\ba \to \ba_{1}\xrightarrow{*}\bb'$, then the result follows by induction since $l(\ba_{1}) < l(\ba)$, so that we can apply the induction hypothesis to $\ba_1$. As a result, we may assume the rewrites to have the form 
    \begin{align}
    \ba \to \ba_{1} \xrightarrow{*}\bb, \qquad \text{ and } \qquad \ba \to \ba_{1}' \xrightarrow{*}\bb'.
    \end{align}
    and $l(\ba_{1})< l(\ba)$ and $l(\ba_{1}')< l(\ba)$. By Lemma \ref{lem:paths_rewritestep}, there is a word $\bd$ such that $\ba_{1} \xrightarrow{*} \bd$ and $\ba'_{1} \xrightarrow{*} \bd$. Now $l(\ba_{1}) < l(\ba)$, and we have $\ba_{1} \xrightarrow{*} \bb$ and $\ba_{1} \xrightarrow{*} \bd$. So by the induction hypothesis, there is a word $\be$ such that $\bb \xrightarrow{*} \be$ and $\bd \xrightarrow{*} \be$. Similarly, $l(\ba_{1}') < l(\ba)$, $\ba'_{1} \xrightarrow{*} \bb'$ and $\ba'_{1} \xrightarrow{*} \bd$, so by the induction hypothesis, there is a word $\be'$ such that $\bb' \xrightarrow{*} \be'$ and $\bd \xrightarrow{*} \be'$. Finally, $l(\bd) \leq l(\ba_{1}) < l(\ba)$ and $\bd \xrightarrow{*} \be$ and $\bd \xrightarrow{*} \be'$. So by the induction hypothesis, there is a word $\bc$ such that  $\be \xrightarrow{*} \bc$ and  $\be' \xrightarrow{*} \bc$. Combining the rewriting steps we have 
    \begin{align}
    \bb \xrightarrow{*} \be \xrightarrow{*} \bc \qquad \text{ and } \qquad \bb' \xrightarrow{*} \be' \xrightarrow{*}\bc.
    \end{align}
\end{proof}

\begin{corollary} \label{cor:path_rewrite_minimal}
    If $\ba \xrightarrow{*} \bb$ and $\ba \xrightarrow{*} \bb'$ are rewritings such that $\bb$ and $\bb'$ are minimal words, then $\bb = \bb'$. In other words, any word reduces to a unique minimal word. 
\end{corollary}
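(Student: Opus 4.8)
\textbf{Proof proposal for Corollary \ref{cor:path_rewrite_minimal}.}

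The plan is to deduce this immediately from the confluence result established in \Cref{lem:path_strongrewrite}, together with the observation that minimal words are exactly the normal forms of the rewriting system. First I would note that, by definition, a word is \emph{minimal} precisely when no rewriting step of \eqref{eq:pl_paths_rewriting} applies to it: indeed, a rewriting step \ref{PL0.1} is available whenever some consecutive pair $(v_i, v_{i+1})$ is linearly dependent (including the case where one of them is $0$), and a step \ref{PL0.2} is available whenever the word is the single letter $(0)$. The condition in \Cref{prop:minimal_pl_paths} that all $v_i \neq 0$ and every consecutive pair is linearly independent is therefore exactly the statement that no rewrite applies. So ``minimal word'' = ``word admitting no further rewriting''.

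Now suppose $\ba \xrightarrow{*} \bb$ and $\ba \xrightarrow{*} \bb'$ with $\bb, \bb'$ both minimal. By \Cref{lem:path_strongrewrite} (confluence of $\xrightarrow{*}$ from a common source), there exists a word $\bc$ with $\bb \xrightarrow{*} \bc$ and $\bb' \xrightarrow{*} \bc$. Since $\bb$ is minimal, no rewriting step can be applied to it, so the only rewriting sequence $\bb \xrightarrow{*} \bc$ is the empty one, forcing $\bc = \bb$. Likewise $\bc = \bb'$. Hence $\bb = \bb'$, which is the claim; combined with the termination remark (rewriting strictly decreases word length, hence every word reduces in finitely many steps to some minimal word), this shows every word reduces to a \emph{unique} minimal word.

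There is no real obstacle here: the entire content has already been packaged into \Cref{lem:path_strongrewrite}, and what remains is the standard Newman-type argument that a terminating, (locally) confluent rewriting system has unique normal forms. The only point requiring a line of care is the identification of ``minimal word'' with ``irreducible word'', i.e. checking that the two clauses defining minimality in \Cref{prop:minimal_pl_paths} match the availability conditions of the two rewrite rules — and in particular that the degenerate case of the one-letter word $(0)$, handled by \ref{PL0.2}, is consistent with the $v_i \neq 0$ clause. This being routine, the corollary follows directly.
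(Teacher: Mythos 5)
Your argument is correct and follows exactly the paper's route: apply the confluence statement of \Cref{lem:path_strongrewrite} to obtain a common reduct $\bc$, then observe that minimality of $\bb$ and $\bb'$ forces $\bb=\bc=\bb'$. The extra remarks about minimal words being precisely the irreducible ones and the Newman-lemma framing are accurate elaborations of what the paper leaves implicit, not a different proof.
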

\begin{proof}
    By Lemma \ref{lem:path_strongrewrite}, there is a word $\bc$ such that $\bb\xrightarrow{*}\bc$ and $\bb'\xrightarrow{*}\bc$. But $\bb$ and $\bb'$ are minimal, implying that $\bb = \bc = \bb'$. 
\end{proof}

\begin{proof}[Proof of~\Cref{prop:minimal_pl_paths}]
Let $g \in \PL_0(V)$. We claim that there is a unique \emph{minimal} representative, namely a representative $\ba = (v_1, ..., v_{n})$ such that each $v_i \neq 0$ and each consecutive pair $v_{i}, v_{i+1}$ are linearly independent. Note first that such a representative must exist: given an arbitrary representative of $g$, the process of applying the rewriting steps must terminate, since each step shortens the word. But rewriting can only terminate at a minimal word. To prove uniqueness, suppose that $\ba = (v_{1}, ..., v_{n})$ and $\bb = (u_{1}, ..., u_{m})$ are two minimal representatives. Since $\ba$ and $\bb$ are both words representing $g$, there is a sequence of rewriting steps, and their inverses, going from $\ba$ to $\bb$. By Lemma \ref{lem:path_strongrewrite}, there is a word $\bc$ such that $\ba\xrightarrow{*}\bc$ and $\bb\xrightarrow{*}\bc$. But because $\ba$ and $\bb$ are minimal, it is not possible to apply any non-trivial rewriting step. Hence $\ba = \bc = \bb$. 
\end{proof}
\begin{remark}
    Note that the minimal representative of an element $g \in \PL_0(V)$ is also the unique representative with minimal length. This is because a shortest-length word must be minimal.  
\end{remark}
\subsection{Piecewise Linear Loops} \label{apxsec:pl_loops}
The goal of this section is to prove~\Cref{thm:pl_loop_iso}. Let $\tilde{\eta}_{V}: \Pair(V) \to \Loop(V)$ be the natural map sending a generator $(v,u)$ to the corresponding element in the group. This map satisfies the conditions of ~\Cref{prop:univ_property_loops}. In fact, the universal property of this proposition holds for $\tilde{\eta}_{V}: \Pair(V) \to \Loop(V)$ because of the defining relations. We will need to make use of this property in what follows.

The first step in proving ~\Cref{thm:pl_loop_iso} is to show that $\Loop(V)$ has minimal representatives, analogous to~\Cref{prop:minimal_pl_paths}. 

\begin{lemma} \label{lem:loop_minimal}
    An element $\bx \in \Loop(V)$ has a unique minimal representative
    \begin{align}
        \bx = \Big( (u_1, v_1), \ldots, (u_n, v_n)\Big)_{\min}.
    \end{align}
    This is a word such that the following conditions are satisfied
    \begin{enumerate}
        \item for each $i$, the vectors $u_i$ and $v_i$ are linearly independent, and
        \item for each $i < n$, either $v_i \neq u_{i+1}$ or $v_i - u_i$ and $v_{i+1} - u_{i+1}$ are linearly independent. 
    \end{enumerate}
\end{lemma}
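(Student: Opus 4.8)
\textbf{Proof plan for Lemma~\ref{lem:loop_minimal}.}
The plan is to mimic exactly the rewriting-theory argument used for paths in Appendix~\ref{apxsec:minimal_pl_paths}, replacing the alphabet $V$ by $V^2$ and the two rewrite steps~\eqref{eq:pl_paths_rewriting} by the two relations \ref{L1} and \ref{L2} that define $\Loop(V)$. Concretely, I would work in the free monoid $\FMon(V^2)$ and introduce the rewriting steps
\begin{align}
    \ba\, (v,u)(u,r)\, \bb \xrightarrow{\ref{L1}} \ba\, (v,r)\, \bb \qquad\text{(when $v,u,r$ are colinear),}\qquad
    \ba\,(v,u)\,\bb \xrightarrow{\ref{L2}} \ba\bb \quad\text{(when $v,u$ are linearly dependent),}
\end{align}
writing $\ba\to\bb$ for one step and $\ba\xrightarrow{*}\bb$ for a finite chain. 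Each step strictly decreases word length (the \ref{L1} step decreases it by one, the \ref{L2} step by one), so the process terminates, and a word admits no applicable step precisely when it satisfies conditions (1) and (2) of the statement — these are exactly the negations of the hypotheses of \ref{L2} and \ref{L1} respectively. So \emph{minimal words} in this sense are exactly the words described in the Lemma, and existence of a minimal representative is immediate by termination.

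For uniqueness, I would prove the analogues of Lemmas~\ref{lem:paths_rewritestep} and~\ref{lem:path_strongrewrite} (local confluence and then full confluence by Newman's lemma / induction on length), after which Corollary-style reasoning as in~\Cref{cor:path_rewrite_minimal} finishes the job: any element of $\Loop(V)$ reduces to a unique minimal word, since any two minimal representatives are joined by a sequence of rewrites and their inverses, hence by confluence have a common reduct, which (being reachable from minimal words) must equal both. The only genuinely new content is the local confluence lemma, where one checks all the ways two rewrite steps can overlap in a word $\big((u_1,v_1),\ldots,(u_n,v_n)\big)$.

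The main obstacle — and the one point where this differs from the path case — is the case analysis in local confluence when two \ref{L1} steps overlap on a shared factor, i.e. a configuration $(v,u)(u,r)(r,s)$ with $v,u,r$ colinear and $u,r,s$ colinear: one step produces $(v,r)(r,s)$, the other $(v,u)(u,s)$, and one must produce a common reduct. If $u\neq r$ then all four of $v,u,r,s$ lie on the single affine line through $u$ and $r$, so both reduce via \ref{L1} to $(v,s)$; if $u=r$ the two results already coincide. A similar but easier subcase handles an \ref{L1} step adjacent to an \ref{L2} step (a colinear triple one of whose vectors is degenerate). Once these overlap cases are dispatched — entirely parallel to the three cases in the proof of Lemma~\ref{lem:paths_rewritestep} — the inductive globalization and the uniqueness conclusion go through verbatim, so I expect the bulk of the writing to be this bounded, routine geometric case-check.
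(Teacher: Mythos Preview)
Your proposal is correct and follows essentially the same approach as the paper: the paper likewise sets up the two rewriting steps in $\FMon(V^2)$ corresponding to \ref{L1} and \ref{L2}, states the local confluence lemma (\Cref{lem:loop_rewritestep}) with the remark that it is proved by the same case-check as \Cref{lem:paths_rewritestep}, and then reruns the induction-on-length argument of \Cref{lem:path_strongrewrite} and \Cref{prop:minimal_pl_paths}. In fact you supply more detail on the overlapping-\ref{L1} case than the paper does, and your treatment of that overlap (splitting on $u=r$ versus $u\neq r$) is exactly the right geometric observation.
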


Similar to~\eqref{eq:pl_paths_rewriting}, we define two rewriting steps in $\FMon(V^2)$. Let $\ba, \bb \in \FMon(V^2)$, let $(v_1, v_2), (v_2, v_3) \in V^2$ be such that $v_1, v_2, v_3$ lie on a common affine line, and let $(u_1, u_2) \in V^2$ be such that $u_1, u_2$ are linearly dependent. The two rewriting steps are
\begin{align}
    \ba \concat (v_1, v_2) \concat (v_2, v_3) \concat \bb \xrightarrow{\ref{L1}} \ba \concat (v_1, v_3) \concat \bb \andd \ba \concat (u_1, u_2) \concat \bb \xrightarrow{\ref{L2}} \ba \concat \bb.
\end{align}
Note that the minimal words defined in ~\Cref{lem:loop_minimal} are precisely the words which cannot be shortened using these rewriting steps. 
We follow the notation from~\Cref{apxsec:minimal_pl_paths} and use $\ba \rightarrow \bb$ and $\ba \xrightarrow{*} \bb$ to denote a single and arbitrary (possibly zero) rewriting steps, respectively. The following lemma is proved in the same manner as~\Cref{lem:paths_rewritestep} by checking all cases, so we omit the details.

\begin{lemma} \label{lem:loop_rewritestep}
    Let $\ba \in \FMon(V^2)$ and suppose $\ba \to \bb$ and $\ba \to \bb'$ are two rewriting steps. Then, there is a word $\bc$ such that $\bb \xrightarrow{*} \bc$ and $\bb' \xrightarrow{*} \bc$. 
\end{lemma}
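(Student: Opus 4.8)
The statement to prove is \Cref{lem:loop_rewritestep}: if $\ba \in \FMon(V^2)$ admits two single rewriting steps $\ba \to \bb$ and $\ba \to \bb'$, then there is a common word $\bc$ with $\bb \xrightarrow{*} \bc$ and $\bb' \xrightarrow{*} \bc$ (local confluence). This is the direct two-dimensional analogue of \Cref{lem:paths_rewritestep}, and the plan is to mimic that proof by a finite case analysis on the relative positions of the two rewritten subwords. First I would set up notation: write $\ba = (p_1,q_1) \concat \cdots \concat (p_N,q_N)$ and observe that each rewriting step acts on either a single letter (rule \ref{L2}) or an adjacent pair of letters (rule \ref{L1}). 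Let the first step modify the letters in positions forming an interval $J$ and the second step modify an interval $J'$, where $|J|,|J'| \in \{1,2\}$. The argument then splits according to whether $J$ and $J'$ are disjoint or overlap.

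The disjoint case is immediate: if $J \cap J' = \emptyset$, the two rewrites act on non-interacting portions of the word, so they commute — applying the other rewrite to each of $\bb,\bb'$ yields the same word $\bc$. (One small point: a \ref{L2}-step deletes a letter and so shifts indices, but since the intervals are disjoint this does not affect the applicability of the other step; I would phrase this carefully.) The overlapping case is where the work lies. Here $|J \cap J'| \geq 1$, and since $|J|,|J'|\leq 2$ the overlap is quite constrained. The subcases are: (i) both steps are \ref{L2} on the same single letter — then $\bb = \bb'$ and we take $\bc = \bb$; (ii) one step is \ref{L2} on a letter, the other is \ref{L1} on a pair containing that letter — then the \ref{L2}-letter is $(u_1,u_2)$ with $u_1,u_2$ linearly dependent, which forces one of the \ref{L1}-triple relations to degenerate, and I can chase the two outcomes to a common word (much as in the $j=i+1$ case of \Cref{lem:paths_rewritestep}, using that three collinear affine points with a repeated/degenerate one still reduce consistently); (iii) both steps are \ref{L1} on overlapping adjacent pairs, i.e. positions $\{i,i+1\}$ and $\{i+1,i+2\}$ — then $q_i, q_{i+1}=p_{i+1}, q_{i+2}$ and the affine-line conditions of the two steps together say $p_i,q_i,q_{i+1}$ lie on a line and $q_{i+1},q_{i+2},?$... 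I would verify that in this situation all of $p_i, q_i, q_{i+1}, q_{i+2}$ are forced to be collinear (or that the relevant letters degenerate via \ref{L2}), so that the single letter $(p_i, q_{i+2})$ is a common reduct; take $\bc$ accordingly.

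The main obstacle, as in the $1$-dimensional analogue, is case (iii): one must check that the hypotheses enabling the two competing \ref{L1}-contractions genuinely force enough collinearity for the contractions to agree, rather than producing two incompatible words. The subtlety is that in $V^2$-words a "letter" $(p,q)$ records a pair of endpoints, and the collinearity constraint for rule \ref{L1} involves three points $p, q, r$ sharing an \emph{affine} line; when two such constraints share a common point one needs to argue the lines coincide (using that the common letter is nondegenerate, else rule \ref{L2} applies and we reduce to case (ii)). This is exactly the affine-geometry bookkeeping that the excerpt defers with "proved in the same manner as~\Cref{lem:paths_rewritestep} by checking all cases," so I would present case (iii) in full and dispatch (i), (ii), and the disjoint case briefly. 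Once local confluence is established, the global confluence statement \Cref{lem:loop_minimal} (uniqueness of minimal representatives) follows by the same Newman-style induction on word length as \Cref{lem:path_strongrewrite} and \Cref{cor:path_rewrite_minimal}.
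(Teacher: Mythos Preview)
Your proposal is correct and follows exactly the approach the paper indicates (the paper itself omits the details, saying only that the lemma ``is proved in the same manner as~\Cref{lem:paths_rewritestep} by checking all cases''). One small sharpening of your case (iii): the two overlapping \ref{L1}-constraints share \emph{both} endpoints $q_i, q_{i+1}$ of the middle letter, so when $q_i \neq q_{i+1}$ the two affine lines coincide automatically and all four points $p_i, q_i, q_{i+1}, q_{i+2}$ are collinear, giving the common reduct $(p_i, q_{i+2})$; when $q_i = q_{i+1}$ the two \ref{L1}-outputs are literally equal, so there is no need to route through case (ii).
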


\begin{proof}[Proof of~\Cref{lem:loop_minimal}.]
    Using~\Cref{lem:loop_rewritestep} instead of ~\Cref{lem:paths_rewritestep}, we prove the analogue of ~\Cref{lem:path_strongrewrite}. Using this result, we then repeat the argument from the proof of ~\Cref{prop:minimal_pl_paths}.
\end{proof}

Now, we can use this to prove~\Cref{thm:pl_loop_iso}.

\begin{proof}[Proof of~\Cref{thm:pl_loop_iso}.]
    As noted below the definition of $\eta_{V}$ from ~\eqref{eq:eta_V_loops}, this map verifies the assumptions of ~\Cref{prop:univ_property_loops}. Using the universal property of the group $\Loop(V)$, there is a unique map $F: \Loop(V) \to \PL_0^{\cl}(V)$ such that $F \circ \tilde{\eta}_{V} = \eta_{V}$. It sends the element $(v,u)$ to the triangular loop $\eta_V(v,u) = (v, u-v, -u)$. This map is surjective since we can factor any element of $\PL_0^{\cl}(V)$ as a product of triangular loops. It remains to show injectivity. Suppose $\bx = \big((u_1, v_1), \ldots, (u_n, v_n)\big)_{\min}$ is the minimal representative of an element of $\Loop(V)$. Under $F$, this gets sent to the equivalence class for 
    \begin{align}
        F(\bx) = \big( u_1, v_1 - u_1, -v_1, \ldots, u_n, v_n - u_n, v_n\big).
    \end{align}
    Let $l(\cdot)$ denote the length of the minimal representative for both $\Loop(V)$ and $\PL_0(V)$. While there may be cancellations for $F(\bx)$ in $\PL_0^{\cl}$, we must have $l(\bx)\leq l(F(\bx))$ because $\bx$ is minimal in $\Loop(V)$. Indeed, the only reductions in $F(\bx)$ can occur between $-v_{i}$ and $u_{i+1}$. Suppose these vectors are linearly dependent. If $v_{i} \neq u_{i+1}$, then the word reduces to $(..., u_{i+1} - v_{i}, ...)$ and there are no further reductions at this locus. If, on the other hand, $v_{i} = u_{i+1}$, then the word reduces to $(..., v_{i} - u_{i}, v_{i+1} - u_{i+1}, ...)$ and there are no further reductions at this locus. Hence, the minimal representative for $F(\bx)$ will contain all $v_{i} - u_{i}$.

    If $\bx \in \ker(F)$, we must have $l(\bx) \leq l(F(\bx)) = 0$. This implies that $l(\bx) = 0$ and thus $\bx = \emptyset$. Hence, $F$ is injective.

\end{proof}

\subsection{Kites and Planar Loops}

In this section, we collect several useful results about kites and planar loops.

\begin{lemma} \label{lem:tail_of_planar_kite_is_planar}
    Let $\bb \in \planarloop(V)$ be a non-trivial planar loop with span $U \subset V$, let $\bx \in \PL_{0}(V)$ be a path such that $\bx \concat \bb \concat \bx^{-1} \in \planarloop(V)$ is planar, then $\bx \in \PL_0(U)$.
\end{lemma}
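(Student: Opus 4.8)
The claim is that if $\bb$ is a non-trivial planar loop with span $U$ (necessarily two-dimensional), and $\bx$ is a piecewise linear path for which the conjugate $\bx \concat \bb \concat \bx^{-1}$ is again planar, then in fact $\bx$ already lies in $\PL_0(U)$. The strategy is to work with minimal representatives and track spans, using \Cref{prop:minimal_pl_paths} and \Cref{cor:span_pres}. Write $\bx = (x_1, \ldots, x_m)_{\min}$ and $\bb = (b_1, \ldots, b_\ell)_{\min}$. Since $\bb$ is a non-trivial loop spanning the two-dimensional space $U$, and its conjugate $\by \coloneqq \bx \concat \bb \concat \bx^{-1}$ is planar with $\dim \SPAN(\by) \le 2$, the first step is to observe that $\SPAN(\by) \supseteq \SPAN(\bb) = U$ (because $\bb$ is, up to reparametrization, a sub-segment of $\by$ after any cancellations are accounted for, but one must be careful here — see below), hence $\SPAN(\by) = U$ exactly. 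In particular every edge appearing in a minimal representative of $\by$ lies in $U$.

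\textbf{Key steps.} First I would reduce to analyzing the word $\bx \concat \bb \concat \bx^{-1}$ in $\PL_0(V)$ and its minimal form. The concatenation $(x_1, \ldots, x_m, b_1, \ldots, b_\ell, -x_m, \ldots, -x_1)$ need not be minimal: cancellations can occur at the two seams $x_m$–$b_1$ and $b_\ell$–$(-x_m)$. The crucial point is that because $\bb$ is a \emph{loop}, $b_1 + \cdots + b_\ell = 0$, and $\bb$ is non-trivial, so $\bb$ cannot be entirely cancelled by retracing from either side: a retracing from the left can only cancel an initial segment of $\bb$ against a final segment of $\bx^{-1}$... wait, the orientations must be checked carefully. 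I would argue by induction on $m = l(\bx)$. If $m = 0$ there is nothing to prove. For $m \ge 1$, consider the outermost edges $x_1$ and $-x_1$ of $\by$. After full reduction to the minimal representative $\by_{\min}$, either $x_1$ survives as the first edge of $\by_{\min}$ — in which case $x_1 \in \SPAN(\by) = U$, and then $\bx' \coloneqq (x_2, \ldots, x_m)$ satisfies $\bx' \concat \bb \concat (\bx')^{-1}$ still planar (it equals $\by$ translated/with $x_1$ stripped, which differs from a planar loop by... one needs that stripping the outer edge preserves planarity, true since the inner conjugate spans a subspace of $U$), allowing the induction hypothesis to apply to the shorter path $\bx'$ after translating $b$-vectors appropriately; or $x_1$ is cancelled entirely during reduction. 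In the latter case $x_1$ must cancel against $-x_1$ at the far right, which forces (by minimality of $\bx$, so $x_1 \ne x_2$ are linearly independent, and minimality of $\bb$) that the entire "middle" $\bx'' \concat \bb \concat (\bx'')^{-1}$ with $\bx'' = (x_2,\ldots,x_m)$ reduces to a multiple of $x_1$ — but this contradicts $\bb$ being a non-trivial loop, since a loop conjugated by a path cannot reduce to a non-loop linear segment. So the first case always holds, and induction closes.

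\textbf{Main obstacle.} The delicate part is the combinatorial bookkeeping of cancellations at the two seams and showing rigorously that a non-trivial conjugated loop $\bx'' \concat \bb \concat (\bx'')^{-1}$ can never reduce (in $\PL_0(V)$) to a single linear segment, i.e. to something of length $\le 1$ that is not itself a loop. This is intuitively clear — the reduction of a loop is a loop — but making it airtight requires either invoking the group structure of $\PL_0(V)$ (the image under $t: \PL_0(V) \to V$ of a conjugated loop is $0$, so its minimal representative is a loop, hence has length $0$ or $\ge 2$) together with a dimension count, or a direct argument via \Cref{cor:span_pres} that $\SPAN$ of the reduced word must contain $U$. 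I expect the cleanest route is: the minimal representative of $\by$ is a loop (since $t(\by) = 0$), its span is contained in $U$ (by \Cref{cor:span_pres} applied along the inclusion, plus the original bound $\dim \SPAN(\by)\le 2$ with $U \subseteq \SPAN(\by)$ forcing equality), so every edge of $\by_{\min}$ is in $U$; then a careful argument shows the edges of $\bx$ that are \emph{not} in $U$ cannot all be cancelled, yielding a contradiction unless there are none. I would structure the final write-up around this span-containment observation to minimize the case analysis.
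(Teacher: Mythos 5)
Your proposal rests on the claim $U = \SPAN(\bb) \subseteq \SPAN(\by)$, which you flag yourself as needing care, and this is exactly where the argument has a genuine gap. You offer no proof of it, and the suggested route through \Cref{cor:span_pres} does not work: that corollary gives $\SPAN(\PL_0(\phi)(\bz)) \subseteq \phi(\SPAN(\bz))$, an inclusion in the wrong direction for your purpose, and in any case $\by$ is obtained from $\bb$ by \emph{conjugation}, not by applying a linear map. In fact, establishing $U \subseteq \SPAN(\by)$ is essentially equivalent to the lemma itself: writing $\bb = \bx^{-1} \concat \by \concat \bx$ and applying the lemma with $\by$ in the role of the planar loop and $\bx^{-1}$ in the role of the tail would give $\bx \in \PL_0(\SPAN(\by))$ and hence $U = \SPAN(\bb) \subseteq \SPAN(\by)$ — so assuming this inclusion up front is circular.

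There is a second, more structural issue: you induct by stripping the \emph{outer} letter $x_1$ of $\bx$, but in the concatenated word $(x_1, \ldots, x_m, b_1, \ldots, b_\ell, -x_m, \ldots, -x_1)$ the only places a rewrite step can initially fire are the two inner seams $(x_m, b_1)$ and $(b_\ell, -x_m)$, since $\bx$ and $\bb$ are already minimal. To know anything about $x_1$ you must already understand what happened at those seams, which is precisely what your dichotomy tries to skip. Moreover, the dichotomy "$x_1$ survives as the first letter of $\by_{\min}$ or $x_1$ cancels entirely" is not exhaustive: $x_1$ can merge nontrivially with a collinear vector produced after reduction of the middle, so that $\by_{\min}$ begins with $x_1 + z$ for some $z \ne -x_1$. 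The paper's proof avoids all of this by inducting on the \emph{innermost} letter $x_m$: if $x_m \notin U$ then $x_m$ is collinear with neither $b_1$ nor $b_\ell$, so no rewrite applies, the concatenated word is already minimal, and its span contains $U + \SPAN(x_m)$, contradicting planarity; hence $x_m \in U$, and one conjugates $\bb$ by $x_m$ (staying inside $U$) and recurses on $(x_1, \ldots, x_{m-1})$. You should redo the induction from the inner end; working outward cannot be made to close without importing essentially the same inner-seam analysis.
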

\begin{proof}
    Let $\bb = (u_1, ..., u_p)$ and $\bx = (w_1, ..., w_{s})$ be minimal representatives. We will prove this by induction on the length $s$. Consider 
    \begin{align}
    \bx \concat \bb \concat \bx^{-1} = (w_1, ..., w_s, u_1, ..., u_p, -w_s, ..., -w_1).
    \end{align}
    The only reductions can occur between $w_{s}$ and $u_1$, or between $u_p$ and $-w_s$. Hence, if $w_s \not \in U$, then the word is minimal. But then the span of $\bx \concat \bb \concat \bx^{-1}$ contains $U$ and $w_{s}$, contradicting the assumption that $\bx \concat \bb \concat \bx^{-1}$ is planar. Hence, $w_s \in U$. Let $\by = (w_1, ..., w_{s-1})$, which is minimal, and let $\bc = w_{s} \concat \bb \concat (-w_s)$, which is a non-trivial planar loop with span $U$. Then $\by \concat \bc \concat \by^{-1} = \bx \concat \bb \concat \bx^{-1}$ is planar. By induction $\by \in \PL_0(U)$, and therefore, $\bx = \by \concat w_s \in \PL_0(U)$.
\end{proof}

\begin{lemma} \label{lem:product_of_two_planar_loops}
    Let $\bb_1, \bb_2 \in \planarloop(V)$ be non-trivial planar loops with spans $U_1, U_{2} \subseteq V$. If $\bb_1 \concat \bb_2 \in \planarloop(V)$ is planar, then $U_1 = U_2$. 
\end{lemma}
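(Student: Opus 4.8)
The plan is to argue directly with minimal representatives and to track how reductions propagate when the two loops are concatenated. Write $\bb_1 = (u_1,\ldots,u_p)_{\min}$ and $\bb_2 = (w_1,\ldots,w_q)_{\min}$, so that $U_i = \SPAN(\bb_i)$ is by definition the span of the listed vectors, and recall that the span of a non-trivial planar loop is two-dimensional, so $\dim U_1 = \dim U_2 = 2$. The raw word $(u_1,\ldots,u_p,w_1,\ldots,w_q)$ is already reduced except possibly at the junction $(u_p,w_1)$, and the first step is to observe that a reduction there can propagate only through a chain of complete cancellations $u_{p-i+1} = -w_i$ followed by at most one final merge: as in the proof of \Cref{prop:minimal_pl_paths}, merging a linearly dependent pair produces a vector spanning the same line, which by minimality of $\bb_1$ and $\bb_2$ is independent from its new neighbours and so cannot reduce again. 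Hence the minimal representative of $\bb_1 \concat \bb_2$ has the shape $(u_1,\ldots,u_{p-m},w_{m+1},\ldots,w_q)$, or this same word with $u_{p-m},w_{m+1}$ replaced by their sum, for some $m \geq 0$ with $u_{p-i+1} = -w_i$ for $i \leq m$ (the degenerate cases where the chain exhausts all of $\bb_1$ or all of $\bb_2$ are allowed). I expect this bookkeeping step, including its edge cases, to be the main obstacle; it may be cleanest to phrase it via the confluent rewriting system of \Cref{apxsec:minimal_pl_paths}.

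Next I would extract the relevant subspaces: set $P = \SPAN\{u_1,\ldots,u_{p-m}\}$, $Q = \SPAN\{w_1,\ldots,w_m\}$ (so $Q = 0$ when $m = 0$), and $R = \SPAN\{w_{m+1},\ldots,w_q\}$. From the description of the minimal representative one reads off $U_1 = P + Q$, $U_2 = Q + R$, and $\SPAN(\bb_1 \concat \bb_2) = P + R$; in particular planarity of $\bb_1 \concat \bb_2$ gives $\dim(P+R) \leq 2$. I would also record the identity $v_0 := \sum_{i \leq p-m} u_i = \sum_{j \leq m} w_j$, which follows from $\sum_i u_i = 0$ together with the cancellation relations, and note $v_0 \in P \cap Q$.

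The proof then finishes with a short dimension count according to $\dim Q$. If $\dim Q = 2$, then since $Q \subseteq U_1$ and $Q \subseteq U_2$ and all three subspaces are two-dimensional, $Q = U_1 = U_2$. If $\dim Q \leq 1$, I claim $Q \subseteq P$: this is vacuous when $Q = 0$ (which forces $m = 0$), and when $\dim Q = 1$ the minimality of $\bb_2$ prevents two consecutive $w$'s from lying on the line $Q$, forcing $m = 1$, so that $Q = \SPAN\{w_1\} = \SPAN\{v_0\} \subseteq P$ because $v_0 = w_1 \neq 0$ and $v_0 \in P$. Given $Q \subseteq P$ we get $U_1 = P + Q = P$, hence $\dim P = 2$ and $P = U_1$; then $\dim(P + R) \leq 2$ forces $R \subseteq U_1$, and $U_2 = Q + R \subseteq P + U_1 = U_1$, so $U_2 = U_1$, as desired. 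Apart from the first step, everything here is elementary linear algebra.
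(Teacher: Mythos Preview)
Your proof is correct, but it takes a more elaborate route than the paper's. The paper argues locally at the junction: if $v_1 \notin U_1$ then no reduction is possible and the span is already too big; symmetrically $u_s \in U_2$. If $u_s, v_1$ are independent, they span $U_1 \cap U_2$ and we are done; otherwise one reduces once more and checks that $v_2 \in U_1$ by the same contradiction, using only that $s, r \geq 3$ so that $u_1, u_2$ survive and still span $U_1$. Two steps suffice.

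Your approach instead classifies the entire minimal representative of $\bb_1 \concat \bb_2$ as a chain of $m$ exact cancellations $u_{p-i+1} = -w_i$ followed by at most one merge, introduces the subspaces $P, Q, R$, and finishes with a clean dimension count. This is valid and the bookkeeping is right (in particular your verification that the final merge does not change $P + R$, and the observation that $\dim Q = 1$ forces $m = 1$ by minimality of $\bb_2$, are both correct). What you gain is a structural description of the reduced word that could be reused; what you pay is length, since the paper never needs to know how far the cancellation chain goes. The paper's argument is essentially your case $m \leq 1$ analyzed directly, stopping as soon as two independent vectors of $\bb_2$ land in $U_1$.
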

\begin{proof}
    Let $\bb_{1} = (u_1, ..., u_{s})$ and $\bb_{2} = (v_{1}, ..., v_{r})$ be minimal representatives. Then 
    \begin{align}
    \bb_{1} \concat \bb_2 = (u_{1}, ..., u_{s}, v_{1}, ..., v_{r}). 
    \end{align}
    If $v_1 \not \in U_1$, then this word is minimal. But then the span contains $U_1$ and $v_{1}$, contradicting the assumption that the loop is planar. Hence $v_1 \in U_1 \cap U_2$. For the same reason, $u_s \in U_1 \cap U_2$. If $v_1$ and $u_s$ are linearly independent, then $U_1 = U_2$ and we're done. Hence, we assume that they are colinear, allowing us to reduce further: 
    \begin{align}
    \bb_{1} \concat \bb_2 = (u_{1}, ..., u_{s-1}, u_{s}+ v_{1}, v_2, ..., v_{r})
    \end{align}
    Now if $v_2 \not \in U_1$, then the word is minimal (possibly after removing $u_s + v_1$ if this vector is $0$). 
    Because both $\bb_1$ and $\bb_2$ are non-trivial loops, we have $s,r \geq 3$, and both $(u_1, u_2)$ and $(v_1, v_2)$ are linearly independent pairs of vectors.
    Hence, the span of $\bb_1 \concat \bb_2$ contains $U_1$ and $v_2$, again contradicting planarity of $\bb_1 \concat \bb_2$. Thus, $v_1, v_2 \in U_1$, implying that $U_1 = U_2$. 
\end{proof}

\begin{lemma} \label{lem:planar_loops_to_kites}
    Let $\ba, \bb \in \planarloop(V)$ be non-trivial planar loops with spans $U_a, U_b \subseteq V$. If $\ba \concat \bb$ is a kite $\bv \concat \bc \concat \bv^{-1}$, where $\bv \in \PL_0(V)$ and $\bc \in \planarloop(V)$, then $U_a = U_b$. 
\end{lemma}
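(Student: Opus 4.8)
\textbf{Proof plan for \Cref{lem:planar_loops_to_kites}.}
The plan is to reduce the claim to \Cref{lem:product_of_two_planar_loops} by showing that the hypotheses force the connecting path $\bv$ to live inside the common plane, so that the kite relation becomes a genuine statement about products of planar loops in a single $2$-dimensional subspace. First I would dispose of the trivial case: if $\bv\concat\bc\concat\bv^{-1}$ is the trivial loop, then $\ba\concat\bb=\emptyset_0$ in $\PL_0(V)$, whence $\bb=\ba^{-1}$; since $\ba$ is a nontrivial planar loop, its span and that of $\ba^{-1}$ coincide, and $U_a=U_b$. So assume $\bc$ is nontrivial, with span $U_c$. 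By \Cref{lem:tail_of_planar_kite_is_planar} applied to the planar loop $\bc$ and the path $\bv$ (note $\bv\concat\bc\concat\bv^{-1}=\ba\concat\bb$ is planar by hypothesis, being a planar loop), we get $\bv\in\PL_0(U_c)$. Hence $\delta(\bv,\bc)=\bv\concat\bc\concat\bv^{-1}\in\PL_0^{\cl}(U_c)$ is a planar loop with span contained in $U_c$; in fact, since conjugation by a path inside $U_c$ cannot enlarge the span, $\SPAN(\ba\concat\bb)\subseteq U_c$, and because $\ba\concat\bb$ is a nontrivial planar loop (as $\ba,\bb$ are nontrivial, their concatenation in minimal form, reasoning as in the proof of \Cref{lem:product_of_two_planar_loops}, still spans a $2$-dimensional space) we conclude $\SPAN(\ba\concat\bb)=U_c$.

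The remaining step is then exactly \Cref{lem:product_of_two_planar_loops}: since $\ba\concat\bb$ is planar, that lemma gives $U_a=U_b$, which is the desired conclusion. So the logical skeleton is short — the real content is already packaged in Lemmas~\ref{lem:tail_of_planar_kite_is_planar} and~\ref{lem:product_of_two_planar_loops}. The one point requiring a little care, and which I expect to be the main (minor) obstacle, is the verification that $\ba\concat\bb$ is itself a \emph{nontrivial} planar loop whenever $\ba$ and $\bb$ are nontrivial: one must rule out the degenerate possibility that $\ba\concat\bb$ reduces to the empty word or to a word spanning a line. For the first, $\ba\concat\bb=\emptyset_0$ forces $\bb=\ba^{-1}$, already handled. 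For the second, one argues as in \Cref{lem:product_of_two_planar_loops}: in the minimal representative of $\ba\concat\bb$ the only cancellations occur at the single junction, and since each of $\ba$, $\bb$ has at least three letters with a linearly independent consecutive pair, after all junction cancellations at least one independent pair survives, so $\dim\SPAN(\ba\concat\bb)\ge 2$, hence exactly $2$ by planarity.

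Thus the proof proceeds: (1) handle the trivial-$\bc$ case directly; (2) apply \Cref{lem:tail_of_planar_kite_is_planar} to conclude $\bv\in\PL_0(U_c)$ and hence $\SPAN(\ba\concat\bb)\subseteq U_c$; (3) check $\ba\concat\bb$ is a nontrivial planar loop, so its span is exactly $2$-dimensional; (4) invoke \Cref{lem:product_of_two_planar_loops} to get $U_a=U_b$. No new machinery beyond the cited lemmas is needed, and the span-monotonicity facts used are immediate from the definition of $\SPAN$ via minimal representatives together with \Cref{cor:span_pres}.
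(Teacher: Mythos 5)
The central step of your argument does not go through. You write ``(note $\bv\concat\bc\concat\bv^{-1}=\ba\concat\bb$ is planar by hypothesis, being a planar loop)'' and use this to invoke \Cref{lem:tail_of_planar_kite_is_planar} and conclude $\bv\in\PL_0(U_c)$. But the hypothesis of \Cref{lem:planar_loops_to_kites} does \emph{not} assert that $\ba\concat\bb$ is a planar loop; it asserts only that $\ba\concat\bb$ equals the boundary $\bv\concat\bc\concat\bv^{-1}$ of some kite. Such a boundary need not be planar: whenever the tail $\bv$ has a direction outside $\SPAN(\bc)$, the element $\bv\concat\bc\concat\bv^{-1}$ spans a subspace of dimension $\geq 3$. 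So you cannot apply \Cref{lem:tail_of_planar_kite_is_planar}, whose hypothesis explicitly requires $\bx\concat\bb\concat\bx^{-1}\in\planarloop(V)$. Note also that if $\ba\concat\bb$ \emph{were} planar by hypothesis, the lemma would follow immediately from \Cref{lem:product_of_two_planar_loops} alone and there would be no reason to state a separate result about kites; the whole point of \Cref{lem:planar_loops_to_kites} is to handle products of planar loops that equal a possibly non-planar kite boundary.

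Your steps (1), (3), (4) are fine, but with step (2) removed there is no bridge to \Cref{lem:product_of_two_planar_loops}. The paper's actual proof fills this gap by a direct comparison of minimal representatives. It writes the minimal word for $\ba\concat\bb$ once as a concatenation $\ba'\concat\bb'$ of subwords of $\ba$ and $\bb$ (controlling the junction cancellation), and once as a reduced kite $\bv'\concat\bc'\concat(\bv')^{-1}$ with $v'_n\notin U_c$, then performs a case analysis on where the $\ba'/\bb'$ transition falls relative to the $\bc'$ block. In one case it obtains $\bv\in\PL_0(U_a\cap U_b)$ and expresses $\bc=(\bv^{-1}\ba\bv)\concat(\bv^{-1}\bb\bv)$ as a product of two planar loops of spans $U_a$, $U_b$, so \Cref{lem:product_of_two_planar_loops} applied to $\bc$ gives $U_a=U_b$. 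In the other case it deduces $\bv,\bc\in\PL_0(U_a)$ or $\PL_0(U_b)$, which \emph{then} establishes planarity of $\ba\concat\bb$, and \Cref{lem:product_of_two_planar_loops} applies directly. Your plan has no substitute for this comparison of reduced word forms, so the gap is genuine and not a matter of polish.
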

\begin{proof}
    Suppose $\ba \concat \bb = \bv \concat \bc \concat \bv^{-1}$, where $\bc \in \planarloop(V)$ with $\SPAN(\bc) = U_c$. 
    If $\bc$ is trivial, then $\bb = \ba^{-1}$ and we are done. Hence, we assume $\bc$ to be non-trivial.
    Let
    \begin{align}
        \ba = (a_1, \ldots, a_s)_{\min} \andd \bb = (b_1, \ldots, b_r)_{\min},
    \end{align}
    where $s,r \geq 3$ since they are both nontrivial loops. 
    Consider a factorization of the minimal representative of the concatenation
    \begin{align} \label{eq:kite_comp_word1}
        \ba \concat \bb = (a_1, \ldots, a_{s'}, b'_l, \ldots, b_{r})_{\min} = \ba' \concat \bb',
    \end{align}
    where $\ba' \in \PL_0(U_a)$ and $\bb' \in \PL_0(U_b)$. To obtain the minimal representative, the only possible simplification is by applying \ref{PL0.1} to $(a_s, b_1)$. If $(a_s, b_1) \sim \emptyset$, then we may apply \ref{PL0.1} again to $(a_{s-1}, b_2)$. If both steps are possible, this implies that $a_{s-1}, a_s \in U_b$, and thus $U_a=U_b$, completing the proof. Therefore, we assume that at most two rewriting steps are possible (i.e. one instance of \ref{PL0.1} and one instance of \ref{PL0.2}), and hence $s' \geq 2$ and $l \leq 2$. \medskip
    
    Next, let $\bv = (v_1, \ldots, v_n)_{\min}$ and $\bc = (c_1, \ldots, c_m)_{\min}$ be minimal representatives. We claim that it is possible to assume that $v_{n} \not \in U_c$. Indeed, let $v_{k}$ be the first element from the right which is not in $U_c$, so that $\bu = (v_{k+1}, \ldots, v_n) \in \PL_0(U_c)$. Define $\bv' = (v_1, ..., v_{k})$, which is minimal, and let $\bc' = \bu \concat \bc \concat \bu^{-1} \in \PL_{0}^{\cl}(U_{c})$, which is a non-trivial planar loop with span $U_c$. Then $\bv = \bv' \concat \bu$ and hence
    \begin{align}
        \bv \concat \bc \concat \bv^{-1} = \bv' \concat (\bu \concat \bc \concat \bu^{-1}) \concat (\bv')^{-1} = \bv' \concat \bc' \concat (\bv')^{-1}.
    \end{align}
    Thus, since $\bv' \concat \bc' \concat (\bv')^{-1}$ is also a kite, we may simply replace $\bv$ with $\bv'$ and $\bc$ with $\bc'$, establishing the claim. The minimal representative for the kite is therefore 
    \begin{align}
        (v_{1}, ..., v_{n}, c_{1}, ..., c_{m}, -v_{n}, ..., -v_{1})_{\min}.
    \end{align} 
   So far, we have obtained two descriptions of the minimal word representative of the product $\ba \concat \bb$. Hence, we have equality of minimal words $\ba' \concat \bb' = \bv \concat \bc \concat \bv^{-1}$. There are two cases to consider, depending on where the transition between $\ba'$ and $\bb'$ occurs relative to $\bc$. 
    \begin{enumerate}
        \item First, the transition occurs in $\bc$. Then there are inclusions of words $\bv \subset \ba'$ and $\bv^{-1} \subset \bb'$. This implies that $\bv \in \PL_{0}(U_{a} \cap U_{b})$. In this case, we have $\bc = (\bv^{-1} \ba \bv) \concat (\bv^{-1} \bb \bv)$, and by ~\Cref{lem:product_of_two_planar_loops}, this implies that $U_a = U_b$.
        \item Otherwise, the transition occurs in $\bv$, implying the inclusion of words $\bc \concat \bv^{-1} \subset \bb'$, or it occurs in $\bv^{-1}$, implying the inclusion of words $\bv \concat \bc \subset \ba'$. In either case, we have $\bv, \bc \in \PL_{0}(U_i)$ for a common $i = a$ or $b$. But then $\ba \concat \bb = \bv \concat \bc \concat \bv^{-1}$ is planar, and so by ~\Cref{lem:product_of_two_planar_loops}, $U_a = U_b$.
    \end{enumerate}

\end{proof}

\begin{corollary}
\label{cor:composition_of_kites}
    Let $\bb_1, \bb_2 \in \planarloop(V)$ be non-trivial planar loops with spans $U_1, U_2 \subseteq V$. Let $\bu \in \PL_0(V)$ be a path. If $\bb_1 \concat \bu \concat \bb_2 \concat \bu^{-1} \in \planarloop(V)$ is a planar loop, then $U_1 = U_2 = U$ and $\bu \in \PL_0(U)$.
\end{corollary}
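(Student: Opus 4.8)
\textbf{Proof plan for Corollary~\ref{cor:composition_of_kites}.}
The strategy is to reduce the composite $\bb_1 \concat \bu \concat \bb_2 \concat \bu^{-1}$ to a product of two planar loops with the same span, so that Lemma~\ref{lem:product_of_two_planar_loops} applies. First I would dispose of the degenerate cases: if $\bu$ is trivial, then the loop is $\bb_1 \concat \bb_2$ and the claim is exactly Lemma~\ref{lem:product_of_two_planar_loops}; similarly if either $\bb_i$ is trivial the statement collapses to the one-loop case or to Lemma~\ref{lem:tail_of_planar_kite_is_planar}. So assume $\bb_1, \bb_2$ non-trivial and $\bu$ non-trivial.

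The main idea is to group the tail with $\bb_2$: set $\bc \coloneqq \bu \concat \bb_2 \concat \bu^{-1}$. This is, by definition, a kite with tail path $\bu$ and planar loop $\bb_2$. We are assuming $\bb_1 \concat \bc = \bb_1 \concat \bu \concat \bb_2 \concat \bu^{-1}$ is planar, i.e.\ lies in $\planarloop(V)$. Now $\bb_1$ is a non-trivial planar loop with span $U_1$, and $\bc$, being a kite over the planar loop $\bb_2$ which has span $U_2$, is itself planar exactly when $\bu \in \PL_0(U_2)$ — this follows from Lemma~\ref{lem:tail_of_planar_kite_is_planar} applied to $\bb_2$ and $\bu$ (we know $\bc$ is planar because it is a sub-segment of the planar loop $\bb_1 \concat \bc$, and one checks that a sub-loop of a planar loop is planar by inspecting minimal representatives). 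Hence $\bu \in \PL_0(U_2)$, and $\bc$ is a non-trivial planar loop with span equal to $\SPAN(\bu \concat \bb_2 \concat \bu^{-1}) = U_2$ (it is $U_2$ and not larger precisely because $\bu \in \PL_0(U_2)$; it is not smaller since conjugating $\bb_2$ by $\bu$ inside $U_2$ cannot shrink the span, as $\bb_2$ is recovered by conjugating back). Then $\bb_1 \concat \bc$ is a product of two non-trivial planar loops with spans $U_1$ and $U_2$, and it is planar, so Lemma~\ref{lem:product_of_two_planar_loops} gives $U_1 = U_2 =: U$. Since $\bu \in \PL_0(U_2) = \PL_0(U)$, the proof is complete.

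The one technical point that needs care — and the likely main obstacle — is the claim that a contiguous sub-segment of a planar loop which is itself a loop (here $\bc$, and also $\bb_1$) is planar, so that Lemma~\ref{lem:tail_of_planar_kite_is_planar} can be invoked on $\bb_2, \bu$. This requires passing to the minimal representative of $\bb_1 \concat \bu \concat \bb_2 \concat \bu^{-1}$ and tracking where cancellations can occur at the four junctions $\bb_1|\bu$, $\bu|\bb_2$, $\bb_2|\bu^{-1}$, $\bu^{-1}|\bb_1$, exactly the kind of case analysis carried out in the proofs of Lemmas~\ref{lem:tail_of_planar_kite_is_planar}, \ref{lem:product_of_two_planar_loops}, and \ref{lem:planar_loops_to_kites}. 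Concretely, after reducing $\bu$ so that its last letter lies outside $\SPAN(\bb_2)$ (as in the reduction step in the proof of Lemma~\ref{lem:planar_loops_to_kites}), the minimal representative of $\bc = \bu \concat \bb_2 \concat \bu^{-1}$ retains a copy of $\bb_2$ in its interior, so $\SPAN(\bc) \supseteq U_2$; combined with planarity of $\bb_1 \concat \bc$ and the fact that $\bb_1$ contributes $U_1$, one forces $\bu \in \PL_0(U_2)$ and $\SPAN(\bc) = U_2$. Alternatively — and this may be cleaner — one can cite Lemma~\ref{lem:planar_loops_to_kites} directly: writing $\bb_1 \concat \bb_2' = \bu^{-1} \concat(\bb_1 \concat \bu \concat \bb_2 \concat \bu^{-1})\concat \bu$ is not quite of the right form, so the better route is to observe that $\bb_1 \concat \bu \concat \bb_2 \concat \bu^{-1}$, being planar, is in particular a kite (trivial tail), and that $\bb_1 \concat(\bu \concat \bb_2 \concat \bu^{-1})$ exhibits it as a product of the planar loop $\bb_1$ with the kite $\bc$ over $\bb_2$; Lemma~\ref{lem:planar_loops_to_kites} with the roles suitably arranged then yields $U_1 = U_2$, after which $\bu \in \PL_0(U)$ follows from Lemma~\ref{lem:tail_of_planar_kite_is_planar} applied to the now-planar loop $\bb_1 \concat \bu \concat \bb_2 \concat \bu^{-1}$ of span $U$ together with the observation that conjugating $\bb_2$ out requires $\bu \in \PL_0(U)$.
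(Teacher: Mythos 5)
Your plan has a genuine gap at exactly the place you flag. The claim that ``a contiguous sub-segment of a planar loop which is itself a loop is planar'' is false as a general principle: if $\ba$ and $\bd$ are non-trivial planar loops with \emph{different} spans, then $\bc := \ba^{-1} \concat \bd$ is a loop and $\ba \concat \bc = \bd$ is planar, yet $\bc$ is a product of two non-trivial planar loops with different spans, which by Lemma~\ref{lem:product_of_two_planar_loops} cannot itself be planar. So planarity of $\bc = \bu \concat \bb_2 \concat \bu^{-1}$ cannot be read off from planarity of $\bb_1 \concat \bc$; it is part of what must be proved, and your main route (establish $\bc$ planar, then invoke Lemma~\ref{lem:product_of_two_planar_loops}) is circular. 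Your alternative route has the same problem: to apply Lemma~\ref{lem:planar_loops_to_kites} to $\bb_1 \concat \bc$ as a kite with trivial tail, you would need \emph{both} factors $\bb_1$ and $\bc$ to already be non-trivial planar loops, and $\bc$ is the unknown. The ``concrete'' cancellation argument you sketch (push letters of $\bu$ past $\bb_2$ and argue the span blows up) is exactly the kind of bookkeeping that the proof of Lemma~\ref{lem:planar_loops_to_kites} already does, and re-deriving it inline is both redundant and incomplete as written (you do not treat the possibility of cancellations propagating in from the $\bb_1$ side).

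The fix, and the route the paper takes, is to rearrange so that Lemma~\ref{lem:planar_loops_to_kites} is fed two loops that are \emph{already known} to be planar. Set $\bb_3 := \bb_1 \concat \bu \concat \bb_2 \concat \bu^{-1}$; this is planar with span $U_3$ by hypothesis. If $\bb_3$ is trivial, then $\bu \concat \bb_2 \concat \bu^{-1} = \bb_1^{-1}$ is planar, Lemma~\ref{lem:tail_of_planar_kite_is_planar} gives $\bu \in \PL_0(U_2)$, and comparing spans gives $U_1 = U_2$. Otherwise $\bb_1^{-1}$ and $\bb_3$ are two non-trivial planar loops whose product
\begin{align}
\bb_1^{-1} \concat \bb_3 = \bu \concat \bb_2 \concat \bu^{-1}
\end{align}
is visibly a kite (tail $\bu$, loop $\bb_2$). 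Now Lemma~\ref{lem:planar_loops_to_kites} applies directly and gives $U_1 = U_3$, whence $\bu \concat \bb_2 \concat \bu^{-1} \in \PL_0^{\cl}(U_1)$ is planar. Only now does Lemma~\ref{lem:tail_of_planar_kite_is_planar} deliver $\bu \in \PL_0(U_2)$, and comparing spans gives $U_1 = U_2$. In short, you had the right two lemmas in hand but applied Lemma~\ref{lem:planar_loops_to_kites} with the wrong casting: the kite should be the thing you do not yet understand ($\bu \concat \bb_2 \concat \bu^{-1}$), not one of the two planar loops.
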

\begin{proof}
    Let $\bb_3 = \bb_1 \concat \bu \concat \bb_2 \concat \bu^{-1}$, which we assume to be a planar loop. If this loop is trivial, then $\bu \concat \bb_2 \concat \bu^{-1} = \bb_1^{-1}$. Then, since $\bu \concat \bb_2 \concat \bu^{-1}$ is planar, $\bu \in \PL_0(U_2)$ by~\Cref{lem:tail_of_planar_kite_is_planar} and hence $U_1 = U_2$. 
    Next, assuming that $\bb_3$ is non-trivial, let $U_3$ be its span. Then $\bb_1^{-1}, \bb_3$ are non-trivial planar loops with the property that $\bb_1^{-1} \concat \bb_3 = \bu \concat \bb_2 \concat \bu^{-1}$ is a kite. Then $U_1 = U_3$ by~\Cref{lem:planar_loops_to_kites} and $\bu \concat \bb_2 \concat \bu^{-1}$ is a planar loop. By ~\Cref{lem:tail_of_planar_kite_is_planar}, $\bu \in \PL_0(U_2)$. Hence $U_1 = U_2$. 
\end{proof}

\section{Free Crossed Modules of Lie Algebras} \label{apx:xlie}

In this section, we fix a field $\K$ of characteristic $0$, either $\K = \R, \C$. 

\subsection{Free Lie Algebras and Representations}
In this section, we begin with a brief discussion of the free Lie algebra generated by a vector space. Let $\Vect$ be the category of vector spaces over $\K$, and $\Lie$ be the category of Lie algebras over $\K$. There exists a natural forgetful functor
\begin{align}
    \For: \Lie \to \Vect,
\end{align}
and here we describe the corresopnding left adjoint functor
\begin{align}
     \FL: \Vect \to \Lie
\end{align}
which sends a vector space $V$ to the \emph{free Lie algebra over $V$}, denoted $\FL(V)$. Let $T(V)$ be the tensor algebra, the free associative algebra over $V$, equipped with the shuffle coproduct $\Delta: T(V) \to T(V) \otimes T(V)$, which is the algebra map defined on $V$ by $\Delta(v) = v \otimes 1 + 1 \otimes v$. We define $\FL(V)$ to be the set of \emph{primitive elements} of $T(V)$,
\begin{align}
    \FL(V) \coloneqq \Prim(T(V)) = \{ w \in T(V) \, : \, \Delta(w) = w \otimes 1 + 1 \otimes w\}.
\end{align}
Let $B$ be a basis of $V$, and let $\FL(B)$ be the free Lie algebra generated by the \emph{set} $B$. By~\cite[Theorem IV.4.2]{serre_lie_2009}, there is an isomorphism $U(\FL(B)) \cong T(V)$, where $U(\cdot)$ is the universal enveloping algebra. Then, by~\cite[Theorem III.5.4]{serre_lie_2009}, $\Prim(U(\FL(B))) \cong \FL(B)$, allowing us to identify $\FL(V) \cong \FL(B)$. Hence, our notion of free Lie algebra coincides with the usual definition in terms of sets.

\subsection{Free Crossed Modules of Lie Algebras}

Now, we move on to consider free crossed modules of Lie algebras. Consider the subcategory $\XLie(\fg)$ of crossed modules
\begin{align}
    (\delta: \fh \to \fg, \gt),
\end{align}
where $\fg$ is fixed and where morphisms are the identity on $\fg$. Taking the underlying vector space of $\fg$, we define the slice category $\slice{\Vect}{\fg}$ consisting of linear map $s: V \to \fg$. There is a forgetful functor
\begin{align} \label{eq:forgetful_Xfg_vectfg}
    \For: \XLie(\fg) \to \slice{\Vect}{\fg},
\end{align}
and in this section, we will construct a free functor
\begin{align}
    \Fr: \slice{\Vect}{\fg} \to \XLie(\fg)
\end{align}
as a left adjoint to $\For$. We will break up this construction into two steps by factoring the forgetful functor as
\begin{align}
    \XLie(\fg) \to \slice{\Rep(\fg)}{\fg} \to \slice{\Vect}{\fg},
\end{align}
where $\Rep(\fg)$ is the category of $\fg$-representations, and $\slice{\Rep(\fg)}{\fg}$ is the slice category, consisting of $\fg$-equivariant maps $s: V \to \fg$, where the target is the adjoint representation. \medskip

\subsubsection{Step 1}
Recall that $\fg$-representations can be equivalently described as $U(\fg)$-modules, $\Rep(\fg) \cong \Mod(U(\fg))$, for $U(\fg)$ the universal enveloping algebra of $\fg$. Hence, the forgetful functor $\For: \Rep(\fg) \to \Vect$ can be viewed as restricting the $U(\fg)$-action along the map $\K \to U(\fg)$. Its left adjoint is therefore the extension of scalars functor
\begin{align}
    \sE: \Vect \to \Rep(\fg), \quad V \mapsto U(\fg) \otimes_{\K} V.
\end{align}
We can slice this functor to obtain a left adjoint to $\For: \slice{\Rep(\fg)}{\fg} \to \slice{\Vect}{\fg}$, which we also denote by $E: \slice{\Vect}{\fg} \to \slice{\Rep(\fg)}{\fg}$. It sends an object $s: V \to \fg$ to the $\fg$-equivariant morphism
\begin{align}
    U(\fg) \otimes_\K V \xrightarrow{\id \otimes s} U(\fg) \otimes_\K \fg \to \fg,
\end{align}
where the second map is induced by the adjoint action. \medskip

\subsubsection{Step 2}
Next, we will construct the left adjoint to the forgetful functor $\For: \XLie(\fg) \to \slice{\Rep(\fg)}{\fg}$. Let $(f:W \to \fg) \in \slice{\Rep(\fg)}{\fg}$ be an object, where $W$ is a $\fg$-representation, and $f$ is an equivariant map. In order to define a crossed module, we must construct a Lie algebra structure based on $W$, such that the Peiffer identity holds. In fact, we will use the Peiffer identity to define the Lie bracket. 
We define the \emph{Peiffer pairing} $\langle \cdot, \cdot \rangle: W \times W \to W$ by
\begin{align}
    \langle u, v\rangle = f(u) \gt v + f(v) \gt u.
\end{align}
This pairing is symmetric and bilinear, and satisfies $x \gt \langle u, v\rangle = \langle x \gt u, v\rangle + \langle u, x \gt v\rangle$. In other words, it defines a $\fg$-equivariant map $\Pf: S^2(W) \to W$. We denote the image of this map by $\Pf(W)$, which is a subrepresentation of $W$ lying in the kernel of $f$. We define
\begin{align}
    \sQ(W) \coloneqq W / \Pf(W).
\end{align}

Next, we define a preliminary bracket
\begin{align}
    [\cdot,\cdot] : W \times W \to \sQ(W) \quad \text{by} \quad [u,v] = f(u) \gt v,
\end{align}
which is bilinear and skew-symmetric. Furthermore, we have
\begin{align}
    [u, \langle v, w\rangle] = f(u) \gt \langle v, w\rangle = \langle f(u) \gt v, w\rangle + \langle v, f(u) \gt w\rangle \in \Pf(W), 
\end{align}
implying that the bracket descends to
\begin{align} \label{eq:QV_bracket}
    [\cdot, \cdot] : \Lambda^2 \sQ(W) \to \sQ(W). 
\end{align}
This leads to the desired crossed module of Lie algebras.

\begin{lemma}
    The bracket $[\cdot, \cdot]$ defined in~\eqref{eq:QV_bracket} defines a Lie algebra structure on $\sQ(V)$, $f$ descends to an equivariant morphism of Lie algebras $\delta: \sQ(V) \to \fg$, and the $\fg$-action on $\sQ(V)$ is a derivation of $[\cdot, \cdot]$. In particular,
    $(\delta: \sQ(V) \to \fg, \gt)$ is a crossed module of Lie algebras. 
\end{lemma}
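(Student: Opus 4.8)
The statement to be proved is a packaging lemma: given an object $f : W \to \fg$ of $\slice{\Rep(\fg)}{\fg}$, the data $(\delta : \sQ(W) \to \fg, \gt)$ constructed via the Peiffer pairing is a crossed module of Lie algebras. My plan is to verify, in order, the three packages of axioms that make up Definition of a crossed module of Lie algebras: (i) $\sQ(W)$ is a Lie algebra under the bracket $[\cdot,\cdot]$ from~\eqref{eq:QV_bracket}; (ii) $\delta$ is a well-defined $\fg$-equivariant Lie algebra morphism satisfying $\delta(x \gt E) = [x, \delta(E)]_0$; and (iii) the $\fg$-action is by derivations and the Peiffer identity $\delta(E) \gt E' = [E,E']$ holds. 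Throughout I would work on representatives in $W$ and check that every identity only fails modulo $\Pf(W)$, then descend.

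First I would establish bilinearity and skew-symmetry of $[u,v] = f(u) \gt v$ on $\sQ(W)$ — bilinearity is immediate from linearity of $f$ and the action, and skew-symmetry $[u,v] + [v,u] = f(u)\gt v + f(v) \gt u = \langle u, v\rangle \in \Pf(W)$ is exactly what the quotient by $\Pf(W)$ is designed to kill. For the Jacobi identity, the key computation is
\[
[[u,v],w] = f(f(u)\gt v)\gt w = [f(u), f(v)]_\fg \gt w,
\]
using that $f$ is a Lie algebra morphism (which I would note follows from equivariance of $f$ into the adjoint representation: $f(x \gt v) = [x, f(v)]_\fg$, hence $f(f(u)\gt v) = [f(u), f(v)]_\fg$). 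Then Jacobi for $[\cdot,\cdot]$ on $\sQ(W)$ reduces, after expanding all three cyclic terms, to the Jacobi identity for $[\cdot,\cdot]_\fg$ acting on $w$ — so Jacobi in $\fg$ transfers to Jacobi in $\sQ(W)$. That $\delta = f$ (descended) is a Lie algebra morphism is then the identity $\delta([u,v]) = f(f(u)\gt v) = [f(u),f(v)]_\fg = [\delta u, \delta v]_\fg$, and $\fg$-equivariance of $\delta$ is just $f(x\gt v) = [x, f(v)]_\fg$ again; this gives both the morphism property and the pre-crossed module compatibility $\delta(x \gt E) = [x, \delta E]_0$. The derivation property $x \gt [u,v] = [x\gt u, v] + [u, x \gt v]$ unwinds to $x \gt (f(u)\gt v) = f(x\gt u)\gt v + f(u)\gt(x\gt v) = [x, f(u)]_\fg \gt v + f(u) \gt (x \gt v)$, which is precisely the statement that the action $\fg \curvearrowright W$ is a module action (i.e.\ $[x,y]_\fg \gt v = x\gt(y\gt v) - y\gt(x\gt v)$ rearranged). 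Finally the Peiffer identity is true \emph{on the nose} for representatives: $\delta(u) \gt v = f(u)\gt v = [u,v]$ by definition of the bracket, so nothing needs to be checked after descent.

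The one genuinely delicate point — the \textbf{main obstacle} — is the well-definedness of the bracket claimed in~\eqref{eq:QV_bracket}, i.e.\ that $[u,v] = f(u)\gt v$ descends to a map $\Lambda^2 \sQ(W) \to \sQ(W)$, not merely a map $W \times W \to \sQ(W)$. The excerpt records the needed computation $[u, \langle v,w\rangle] = \langle f(u)\gt v, w\rangle + \langle v, f(u)\gt w\rangle \in \Pf(W)$, handling the second slot; but one must \emph{also} check the first slot, $[\langle u, u'\rangle, v] = f(\langle u,u'\rangle)\gt v$, vanishes in $\sQ(W)$, and here the argument is different: one uses that $\Pf(W) \subset \ker(f)$ (stated in the excerpt just after the definition of $\Pf$), so $f(\langle u, u'\rangle) = 0$ in $\fg$ and hence $f(\langle u,u'\rangle)\gt v = 0$ already in $W$. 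So the descent to $\Lambda^2 \sQ(W)$ needs both observations — the second-slot one passing through $\Pf$, the first-slot one passing through $\ker(f)$. Once this is in place everything else is, as indicated above, a transfer of the corresponding identity from $\fg$ or a restatement of the module axioms, so I would present those verifications compactly rather than in full detail.
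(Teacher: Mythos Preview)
Your proposal is correct in spirit and matches the paper's verification scheme, but the Jacobi step as stated does not go through: after $[[u,v],w] = [f(u),f(v)]_\fg \gt w$, the three cyclic terms act on $w$, $u$, $v$ respectively, giving $[f(u),f(v)]\gt w + [f(v),f(w)]\gt u + [f(w),f(u)]\gt v$, which is not an instance of the Jacobi identity in $\fg$ acting on a single element. The paper avoids this by reversing your order: it proves the derivation identity $x\gt[u,v] = [x\gt u,v] + [u,x\gt v]$ first (precisely your computation), and then observes that the Jacobi identity is exactly the statement that $[u,-]$ is a derivation of the bracket --- but $[u,-] = f(u)\gt$, which has just been shown to be one, so Jacobi follows with no further computation. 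Since you already verify the derivation property later in your sketch, the fix is simply this reordering. Your explicit handling of well-definedness in the first slot (via $\Pf(W)\subset\ker f$) is correct and is a point the paper leaves implicit.
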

\begin{proof}
    First, since $\Pf(W)$ is a subrepresentation of $W$ contained in the kernel of $f$, it is clear that we have an induced equivariant map $\delta: \sQ(V) \to \mathfrak{g}$. Then using the equivariance, we get 
    \begin{align}
    \delta([u,v]) = f( f(u) \gt v) = f(u) \gt f(v) = [f(u), f(v)] = [ \delta(u), \delta(v)], 
    \end{align}
    showing that $\delta$ is bracket preserving. To show that $\mathfrak{g}$ acts as a derivation of the bracket,
    \begin{align}
        x \gt [u,v] &= x \gt (f(u) \gt v) \\
        &= f(u) \gt (x \gt v) + [x, f(u)] \gt v \\
        &= f( x \gt u ) \gt v + [u, x \gt v] \\ 
        &= [ x \gt u, v] + [u, x \gt v]. 
    \end{align}
    To show that $[ \cdot,\cdot ]$ defines a Lie bracket, we must show that it satisfies the Jacobi identity. This is equivalent to showing that $[u, - ]$ is a derivation of the bracket. But $[u, - ] = f(u) \gt $, which we have just shown is a derivation. 
        Finally, to show that $\sQ(W)$ is a crossed module, we must check the Peiffer identity. But this holds by construction. 
\end{proof}

The construction of $\sQ(W)$ described above is functorial, providing the functor
\begin{align}
    \sQ: \slice{\Rep(\fg)}{\fg} \to \XLie(\fg).
\end{align}

\begin{lemma}
    The functor $Q$ is left adjoint to $\For: \XLie(\mathfrak{g}) \to \slice{\Rep(\fg)}{\fg}$. 
\end{lemma}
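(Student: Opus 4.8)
The plan is to verify the universal property of the adjunction directly: given a crossed module $(\delta: \fh \to \fg, \gt) \in \XLie(\fg)$ and a morphism $g = (g_W, \id): (f: W \to \fg) \to \For(\delta: \fh \to \fg, \gt)$ in $\slice{\Rep(\fg)}{\fg}$ (so $g_W: W \to \fh$ is $\fg$-equivariant and $\delta \circ g_W = f$), I would produce a unique morphism of crossed modules $\bar{g}: \sQ(W) \to (\delta: \fh \to \fg, \gt)$ covering the identity on $\fg$ and satisfying $\bar{g} \circ \pi = g_W$, where $\pi: W \to \sQ(W)$ is the quotient map. The natural candidate is the map induced by $g_W$ after checking it kills the Peiffer subrepresentation $\Pf(W)$.

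First I would show $g_W(\Pf(W)) = 0$. For $u,v \in W$, equivariance of $g_W$ together with $\delta \circ g_W = f$ gives
\begin{align}
    g_W(\langle u, v\rangle) = g_W(f(u) \gt v + f(v) \gt u) = \delta(g_W(u)) \gt g_W(v) + \delta(g_W(v)) \gt g_W(u),
\end{align}
and since the target satisfies the Peiffer identity, $\delta(E) \gt E' = [E,E']_{\fh}$ for $E, E' \in \fh$, this equals $[g_W(u), g_W(v)]_{\fh} + [g_W(v), g_W(u)]_{\fh} = 0$. Hence $g_W$ factors through $\pi$ to give a $\fg$-equivariant linear map $\bar{g}: \sQ(W) \to \fh$ with $\bar{g}\circ \pi = g_W$. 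Next I would check that $\bar{g}$ is a Lie algebra morphism: for classes $[u],[v] \in \sQ(W)$, using the definition $[[u],[v]] = \pi(f(u)\gt v)$ and the same Peiffer-in-$\fh$ computation,
\begin{align}
    \bar{g}([[u],[v]]) = g_W(f(u)\gt v) = \delta(g_W(u)) \gt g_W(v) = [g_W(u), g_W(v)]_{\fh} = [\bar{g}[u], \bar{g}[v]]_{\fh}.
\end{align}
Then $\delta^{\fh} \circ \bar{g} = \delta^{\sQ(W)}$ follows because both compose with $\pi$ to $f$, and $\bar{g}$ is equivariant by construction, so $(\bar{g}, \id_{\fg})$ is a morphism of crossed modules. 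Uniqueness is immediate: any such morphism $h$ must satisfy $h \circ \pi = g_W$ and $\pi$ is surjective.

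I would then remark that this pointwise construction is natural in the object $(f: W \to \fg)$ — a morphism in $\slice{\Rep(\fg)}{\fg}$ induces, by functoriality of $\sQ$ established just above, a commuting square — so the bijection $\Hom_{\XLie(\fg)}(\sQ(W), -) \cong \Hom_{\slice{\Rep(\fg)}{\fg}}((f: W\to\fg), \For(-))$ is a natural isomorphism, establishing the adjunction. The only mildly delicate point is bookkeeping: one must be careful that the bracket on $\sQ(W)$ is well-defined on the quotient (already done in the preceding lemma) and that every map in sight is genuinely $\fg$-equivariant, but there is no real obstacle — the Peiffer identity in the target is exactly the leverage that makes $g_W$ annihilate $\Pf(W)$ and respect brackets simultaneously. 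Composing this adjunction with the Step 1 adjunction $\sE \dashv \For$ then yields $\Fr = \sQ \circ \sE \dashv \For: \XLie(\fg) \to \slice{\Vect}{\fg}$, recovering the description of $\Fr(s)$ given before Theorem~\ref{thm:free_xlie}.
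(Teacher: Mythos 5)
Your proof is correct, but it takes a genuinely different route from the paper's. You establish the adjunction by directly verifying the universal property: for every $\mathfrak{g}$-equivariant $g_W: W \to \fh$ with $\delta \circ g_W = f$, you show $g_W$ annihilates $\Pf(W)$ (the calculation $\delta(g_W u) \gt g_W v + \delta(g_W v) \gt g_W u = [g_W u, g_W v] + [g_W v, g_W u] = 0$ is exactly right), so it factors uniquely through the quotient $\pi: W \to \sQ(W)$, and the induced map is a morphism of crossed modules. The paper instead exhibits the unit and counit directly: it observes that $\sQ \circ \For = \id_{\XLie(\fg)}$ because the Peiffer subspace of a genuine crossed module is zero, takes the counit to be the identity, the unit to be the projection $\pi$, and checks the triangle identities. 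The paper's approach is shorter precisely because the observation $\sQ \circ \For = \id$ trivializes half of the work; your approach is more self-contained and makes the mechanism transparent (the Peiffer identity in the target is exactly what makes $g_W$ kill $\Pf(W)$ and respect brackets). Both are standard ways to prove an adjunction and both are sound here.

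One small remark: you note naturality in the source object $(f: W \to \fg)$ but not explicitly in the crossed-module target. This is harmless — a universal arrow at every object of the source automatically assembles into an adjunction with the bijection natural in both variables — but if you wanted to be fully explicit you would note that the bijection is given by postcomposition with $\eta_W = \pi$ and hence natural in the target for free.
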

\begin{proof}
    To exhibit the adjunction we describe the counit $\epsilon : \sQ \circ \For \to \id_{\XLie(\mathfrak{g})}$ and unit $\eta : \id_{\slice{\Rep(\fg)}{\fg}} \to \For \circ \sQ$ natural transformations. On the one hand, we have that $Q \circ\For = id_{\mathrm{XMod}(\mathfrak{g})}$, because $P(\mathfrak{h}) = 0$ when $\mathfrak{h} \to \mathfrak{g}$ arises from a crossed module. Therefore, we take $\epsilon$ to be the identity. 
    On the other hand, the unit natural transformation $\eta$ must have components 
    \begin{align}
    \eta_{W} : W \to W/\Pf(W)
    \end{align}
    which we take to be the natural projection maps. The counit-unit equations follow easily from the fact that $Q(\eta_{W}) = id_{Q(W)}$ and that $\eta_{\mathfrak{h}} = \id_{\mathfrak{h}}$, when $\mathfrak{h}$ arises from a crossed module. 
\end{proof}

\subsubsection{Universal Property}

Now, we can put these two functors together to obtain a left adjoint to the forgetful functor in~\eqref{eq:forgetful_Xfg_vectfg}.

\begin{theorem} \label{thm:free_cmlie_local}
    The forgetful functor $\For: \XLie(\fg) \to \slice{\Vect}{\fg}$ has a left adjoint
    \begin{align}
        \Fr = \sQ \circ \sE : \slice{\Vect}{\fg} \to \XLie(\fg),
    \end{align}
    which sends a linear map $s : V \to \fg$ to the crossed module
    \begin{align} \label{eq:free_cmlie_def}
        \Fr(s) \coloneqq \Big(\delta: (U(\fg) \otimes_\K V)/\Pf(U(\fg) \otimes_\K V) \to \fg, \gt\Big).
    \end{align}
\end{theorem}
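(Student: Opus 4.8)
The statement to be proved, \Cref{thm:free_cmlie_local}, asserts that the composite $\Fr = \sQ \circ \sE$ is left adjoint to the forgetful functor $\For : \XLie(\fg) \to \slice{\Vect}{\fg}$, and that it has the explicit form in~\eqref{eq:free_cmlie_def}. The plan is essentially to assemble the two adjunctions already established in Steps 1 and 2, then unwind the composite to read off the stated formula.

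First, I would recall the factorization of the forgetful functor $\For : \XLie(\fg) \to \slice{\Vect}{\fg}$ through $\slice{\Rep(\fg)}{\fg}$, as set up just before Step~1. Step~1 gives that the sliced extension-of-scalars functor $\sE : \slice{\Vect}{\fg} \to \slice{\Rep(\fg)}{\fg}$, $V \mapsto U(\fg)\otimes_\K V$, is left adjoint to $\For : \slice{\Rep(\fg)}{\fg} \to \slice{\Vect}{\fg}$ (this is just slicing the standard extension/restriction of scalars adjunction along $\K \to U(\fg)$, together with the observation that the structure maps transform correctly under the slice). Step~2 established that $\sQ : \slice{\Rep(\fg)}{\fg} \to \XLie(\fg)$ is left adjoint to $\For : \XLie(\fg) \to \slice{\Rep(\fg)}{\fg}$ (via the explicit unit/counit description given there). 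Since a composite of left adjoints is left adjoint to the composite of the corresponding right adjoints, $\Fr = \sQ \circ \sE$ is left adjoint to the composite forgetful functor, which is precisely $\For : \XLie(\fg) \to \slice{\Vect}{\fg}$. The unit of the composite adjunction at $s : V \to \fg$ is $\eta_s$, obtained by composing the unit $V \to U(\fg)\otimes_\K V$ of $\sE$ (inclusion $v \mapsto 1 \otimes v$) with $\For$ applied to the unit of $\sQ$ (the projection $U(\fg)\otimes_\K V \to (U(\fg)\otimes_\K V)/\Pf$).

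Second, I would verify the explicit description~\eqref{eq:free_cmlie_def}. Applying $\sE$ to $s : V \to \fg$ gives the $\fg$-equivariant map $U(\fg)\otimes_\K V \xrightarrow{\id \otimes s} U(\fg)\otimes_\K \fg \to \fg$, i.e.\ $(x_1\otimes\cdots\otimes x_k)\otimes v \mapsto [x_1,[\ldots,[x_k, s(v)]\ldots]]$, which matches the map $\delta$ described in~\Cref{ssec:free_xlie}. Then applying $\sQ$ quotients by the Peiffer subrepresentation $\Pf(U(\fg)\otimes_\K V)$, with Lie bracket given by $[X,Y] = \delta(X)\gt Y$ on the quotient, exactly as in~\eqref{eq:free_cmlie_def}. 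So $\Fr(s)$ is the crossed module displayed in the theorem.

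The main obstacle, such as it is, is bookkeeping rather than a genuine difficulty: one must check that the slice constructions in Step~1 are genuinely well-defined (that $\sE$ lands in $\slice{\Rep(\fg)}{\fg}$ with the correct structure morphism, and that the adjunction bijection is natural in the slice, not merely in the underlying objects), and that composing the two units yields a natural transformation satisfying the triangle identities for the composite. Since both constituent adjunctions were proved above with explicit units and counits, the triangle identities for the composite follow formally (this is the standard fact that adjunctions compose), so no new calculation is needed — the only care required is to confirm that the counit of $\sQ\circ\For$ being the identity, established in Step~2, together with the slice structure, makes everything consistent. I would therefore keep the proof short: cite the composition-of-adjoints principle, invoke Steps~1 and~2, and unwind the composite to obtain~\eqref{eq:free_cmlie_def}.
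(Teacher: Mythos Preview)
Your proposal is correct and matches the paper's approach exactly: the theorem is stated immediately after Steps~1 and~2 as the evident consequence of composing the two adjunctions $\sE \dashv \For$ and $\sQ \dashv \For$, and the explicit formula \eqref{eq:free_cmlie_def} is precisely $\sQ$ applied to $\sE(s)$. The paper does not even include a separate proof block for this theorem, treating it as a direct assembly of the preceding two steps, which is exactly what you propose.
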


This free crossed module satisfies the following universal property. 

\begin{corollary} \label{cor:univ_property_cmlie_fixed}
    The unit of the adjunction provides a distinguished map
\begin{align} \label{eq:unit_adjunction_cmlie}
    \eta_s: V \to \sQ(U(\fg) \otimes V) \in \slice{\Vect}{\fg}
\end{align}
given by including the vector $v \in V$ into the equivalence class of $1 \otimes v$.
For any crossed module $(\delta' : \fh \to \fg, \gt)$ with a map $f: V \to \fh$ in $\slice{\Vect}{\fg}$, there exists a unique map $F: \Fr(V) \to \fh$ in $\XLie(\fg)$ such that
\begin{align}
    F \circ \eta_s = f.
\end{align}
\end{corollary}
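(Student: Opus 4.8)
The statement to prove is the \emph{local} universal property of the free crossed module over a fixed Lie algebra $\fg$: given $s : V \to \fg$ in $\slice{\Vect}{\fg}$, a crossed module $(\delta' : \fh \to \fg, \gt)$, and a morphism $f : V \to \fh$ in $\slice{\Vect}{\fg}$ (so $\delta' \circ f = s$), there is a unique $F : \Fr(s) \to \fh$ in $\XLie(\fg)$ with $F \circ \eta_s = f$. The plan is to obtain this as a formal consequence of the adjunction $\Fr = \sQ \circ \sE \dashv \For$ established in Theorem \ref{thm:free_cmlie_local}, for which the two constituent adjunctions $\sE \dashv \For$ (extension of scalars against restriction, Step 1) and $\sQ \dashv \For$ (Step 2) were each verified in the excerpt.

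First I would recall that, by definition of an adjunction, $\eta_s : V \to \For(\Fr(s))$ is the component at $(s : V \to \fg)$ of the unit natural transformation $\eta : \id_{\slice{\Vect}{\fg}} \Rightarrow \For \circ \Fr$. The universal property of the unit then says precisely: for every object $(\delta' : \fh \to \fg) \in \XLie(\fg)$ and every morphism $f : (s : V \to \fg) \to \For(\delta' : \fh \to \fg)$ in $\slice{\Vect}{\fg}$, there is a unique morphism $F : \Fr(s) \to (\delta' : \fh \to \fg)$ in $\XLie(\fg)$ with $\For(F) \circ \eta_s = f$. Since the forgetful functor $\For : \XLie(\fg) \to \slice{\Vect}{\fg}$ acts as the identity on underlying maps, the condition $\For(F) \circ \eta_s = f$ is literally $F \circ \eta_s = f$ as a statement about the underlying linear maps, which is what the Corollary asserts. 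Concretely, $\eta_s$ is the composite of the two unit components: the Step 1 unit sends $v \mapsto 1 \otimes v \in U(\fg) \otimes_\K V$, and the Step 2 unit is the projection $U(\fg) \otimes_\K V \to (U(\fg) \otimes_\K V)/\Pf(\cdot)$, so $\eta_s(v) = [1 \otimes v]$, matching the description in \eqref{eq:unit_adjunction_cmlie}.

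The only substantive point to record — and the step I expect to require the most care — is that a morphism $F$ in $\XLie(\fg)$ is exactly a morphism of crossed modules which is the identity on $\fg$; that is, $F$ must satisfy $\delta' \circ F = \delta$ and $F(x \gt E) = x \gt F(E)$ for $x \in \fg$. These are automatically encoded in the hom-sets of $\XLie(\fg)$, so the abstract unit property already delivers an $F$ of this form, but it is worth spelling out that the naturality square making the bijection $\Hom_{\XLie(\fg)}(\Fr(s), \fh) \cong \Hom_{\slice{\Vect}{\fg}}(V, \fh)$ an adjunction isomorphism is precisely what guarantees both existence and uniqueness. Uniqueness: if $F, F'$ both satisfy $F \circ \eta_s = F' \circ \eta_s = f$, then under the adjunction bijection they have the same transpose, hence $F = F'$. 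Existence: take $F$ to be the transpose of $f$. No computation beyond unwinding these identifications is needed, since all the genuine work — constructing $\sE$, constructing $\sQ$ via the Peiffer pairing, and checking each is a left adjoint — was carried out in Theorem \ref{thm:free_cmlie_local} and the two preceding lemmas.
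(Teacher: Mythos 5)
Your proposal is correct and matches the paper's (implicit) approach: the paper states the corollary without proof precisely because it is the standard universal-mapping-out property of the unit of the adjunction $\Fr \dashv \For$ established in Theorem~\ref{thm:free_cmlie_local}, which is exactly the formal unwinding you carry out. Your identification of $\eta_s$ as the composite of the Step~1 unit $v \mapsto 1 \otimes v$ and the Step~2 quotient map, and your existence/uniqueness argument via the adjunction bijection, are all as intended.
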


\begin{remark}
    This construction of the free crossed module is equivalent to other definitions, for instance in~\cite[Section 3.6]{cirio_categorifying_2012}, which first construct a free pre-crossed module (by taking the free Lie algebra of $\sE(V)$), and then quotienting out by the Peiffer identity (as we have done with $\sQ$). Our construction uses the fact that the Peiffer identity fully determines the Lie bracket, and bypasses the need to go through the free pre-crossed module. 
\end{remark}

\subsection{Global Universal Property}

In this this section, we will show that the free crossed module satisfies a stronger universal property within the entire category $\XLie$ of crossed modules. \medskip

\subsubsection{Pullback Property}
We will start by describing a pullback construction that leads to a factorization of maps. Suppose we have the following map of crossed modules
\begin{align}
    f = (f_1, f_0): \cmh = (\delta: \fh_1 \to \fh_0, \gt_\fh) \to \cmg = (\delta: \fg_1 \to \fg_0, \gt_\fg).
\end{align}
Our aim is to define the pullback crossed module $f_0^*\cmg$ such that this map factors as
\begin{align}
    f: \cmh \xrightarrow{u(f)} f_0^* \cmg \xrightarrow{\tf} \cmg.
\end{align}

First, consider the composition
\begin{align}
    \fh_0 \xrightarrow{f_0} \fg_0 \xrightarrow{\gt_\fg} \Der(\fg_1),
\end{align}
which allows us to define the semi-direct product $\fh_0 \ltimes \fg_1$, equipped with a Lie algebra map
\begin{align}
    \pi: \fh_0 \ltimes \fg_1 \to \fg_0, \quad \text{defined by} \quad (X, a) \mapsto f_0(X) + \delta(a).
\end{align}
Then, we define the pullback by
\begin{align}
    f_0^*\fg_1 \coloneqq \ker(\pi) = \left\{ (X,a) \, : \, f_0(X) = - \delta(a) \right\}.
\end{align}
By direct computation, one can check that the Lie bracket on $f_0^*\fg_1$ is given by
\begin{align}
    [(X,a), (Y,b)] = \left( [X, Y], -[a,b]\right), 
\end{align}
which implies that there are natural Lie algebra maps
\begin{align}
    \delta: f_0^* \fg_1 \to \fh_0, \quad (X,a) \mapsto X \andd \tf_1: f_0^* \fg_1 \to \fh_1, \quad (X,a) \mapsto -a 
\end{align}
which satisfy $\delta \circ \tf_1 = f_0 \circ \delta$. Furthermore, one can check that the action
\begin{align}
    X \gt (Y, b) \coloneqq \left( [X, Y], f_0(X) \gt b\right)
\end{align}
satisfies the properties of a crossed module. 

\begin{lemma} \label{lem:pullback_cmlie}
    The pullback defined by $f_0^*\cmg = (\delta: f_0^*\fg_1 \to \fh_0, \gt)$ is a crossed module of Lie algebras. Furthermore, $\tf = (\tf_1, f_0) : f_0^*\cmg \to \cmg$ is a morphism of crossed modules. 
\end{lemma}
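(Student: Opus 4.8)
The statement to prove is \Cref{lem:pullback_cmlie}: that $f_0^*\cmg = (\delta: f_0^*\fg_1 \to \fh_0, \gt)$ is a crossed module of Lie algebras, and that $\tf = (\tf_1, f_0): f_0^*\cmg \to \cmg$ is a morphism of crossed modules.

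\textbf{Plan of proof.} The proof is a direct verification of axioms. I would proceed in four stages, each a routine computation using the already-established Lie algebra structure on the semidirect product $\fh_0 \ltimes \fg_1$ and the assumption that $f = (f_1,f_0)$ and $\cmh$, $\cmg$ are crossed modules.

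\emph{Stage 1: $f_0^*\fg_1$ is a Lie subalgebra and the bracket formula.} Since $f_0^*\fg_1 = \ker(\pi)$ and $\pi$ is a Lie algebra homomorphism, $\ker(\pi)$ is automatically a Lie ideal, hence a Lie algebra. I would then confirm the asserted bracket $[(X,a),(Y,b)] = ([X,Y], -[a,b])$ by computing in $\fh_0 \ltimes \fg_1$: the semidirect product bracket is $[(X,a),(Y,b)] = ([X,Y], (f_0(X)\gt_\fg b) - (f_0(Y)\gt_\fg a) + [a,b]_{\fg_1})$, and then using that on $\ker(\pi)$ we have $f_0(X) = -\delta(a)$ and $f_0(Y) = -\delta(b)$ together with the Peiffer identity in $\cmg$, namely $\delta(a)\gt_\fg b = [a,b]_{\fg_1}$, to simplify $(f_0(X)\gt b) - (f_0(Y)\gt a) + [a,b] = -[a,b] + [a,b] + [a,b] = ... $; I should be careful with signs here. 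Actually $(f_0(X)\gt b) = -\delta(a)\gt b = -[a,b]$ and $-(f_0(Y)\gt a) = \delta(b)\gt a = [b,a] = -[a,b]$, so the sum is $-[a,b] - [a,b] + [a,b] = -[a,b]$, confirming the formula. This also re-confirms closure: the second component $-[a,b]$ pairs with first component $[X,Y]$ to lie in $\ker(\pi)$ since $f_0([X,Y]) = [f_0(X),f_0(Y)] = [-\delta(a),-\delta(b)] = [\delta(a),\delta(b)] = \delta([a,b]) = -\delta(-[a,b])$.

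\emph{Stage 2: $\delta$ and $\tf_1$ are Lie algebra maps with $\delta \circ \tf_1 = f_0 \circ \delta$.} From the bracket formula, $\delta([(X,a),(Y,b)]) = [X,Y] = [\delta(X,a),\delta(Y,b)]$ and $\tf_1([(X,a),(Y,b)]) = -(-[a,b]) = [a,b] = [\tf_1(X,a),\tf_1(Y,b)]$. Wait: $\tf_1(X,a) = -a$, so $[\tf_1(X,a),\tf_1(Y,b)] = [-a,-b] = [a,b]$ and $\tf_1([(X,a),(Y,b)]) = -(-[a,b]) = [a,b]$ — consistent. The identity $\delta(\tf_1(X,a)) = \delta(-a) = -\delta(a) = f_0(X) = f_0(\delta(X,a))$ holds by the defining relation of $f_0^*\fg_1$.

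\emph{Stage 3: the action axioms for $X \gt (Y,b) = ([X,Y], f_0(X)\gt_\fg b)$.} I would check: (i) it lands in $f_0^*\fg_1$, using $f_0([X,Y]) = [f_0(X),f_0(Y)] = [f_0(X),-\delta(b)] = -[f_0(X),\delta(b)] = -\delta(f_0(X)\gt_\fg b)$; (ii) it is an action by derivations, i.e. $X\gt[(Y,b),(Z,c)] = [X\gt(Y,b),(Z,c)] + [(Y,b),X\gt(Z,c)]$ and $[X,X']\gt(Y,b) = X\gt(X'\gt(Y,b)) - X'\gt(X\gt(Y,b))$ — both follow componentwise from the Jacobi identity in $\fh_0$ and from $\gt_\fg$ being an action by derivations of $\fg_0$ on $\fg_1$ composed with the Lie algebra map $f_0$; (iii) the equivariance $\delta(X\gt(Y,b)) = [X,\delta(Y,b)]$, i.e. $[X,Y] = [X,Y]$; (iv) the Peiffer identity $\delta(X,a)\gt(Y,b) = [(X,a),(Y,b)]$, i.e. $X\gt(Y,b) = ([X,Y], f_0(X)\gt_\fg b)$ must equal $([X,Y],-[a,b])$, which requires $f_0(X)\gt_\fg b = -[a,b]$; and indeed $f_0(X) = -\delta(a)$ so $f_0(X)\gt_\fg b = -\delta(a)\gt_\fg b = -[a,b]$ by the Peiffer identity in $\cmg$. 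This is the one place where I genuinely use that $\cmg$ satisfies the Peiffer identity (not just the pre-crossed module axioms).

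\emph{Stage 4: $\tf = (\tf_1, f_0)$ is a morphism of crossed modules.} I must check $\delta \circ \tf_1 = f_0 \circ \delta$ (done in Stage 2) and $\tf_1(X \gt (Y,b)) = f_0(X) \gt_\fg \tf_1(Y,b)$, i.e. $-(f_0(X)\gt_\fg b) = f_0(X)\gt_\fg(-b)$, which is immediate by linearity of the action.

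\textbf{Main obstacle.} There is no serious conceptual obstacle — this is a bookkeeping lemma. The only real pitfalls are sign conventions: the minus sign in $\tf_1(X,a) = -a$ and in the bracket's second component $-[a,b]$ must be tracked consistently through the semidirect-product bracket, the Peiffer identity, and the equivariance of $\delta$. I would organize the verification so that the Peiffer identity in $\cmg$ (in the form $\delta(a)\gt_\fg b = [a,b]_{\fg_1}$) is invoked cleanly at exactly the two points where the second components need simplification (deriving the bracket formula in Stage 1 and verifying the Peiffer identity for $f_0^*\cmg$ in Stage 3), and otherwise work componentwise, reducing everything to the known axioms in $\fh_0$, $\fg_1$, and the map $f_0$.
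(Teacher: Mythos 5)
Your verification is correct, and it is exactly the "direct computation" that the paper asserts without spelling out: identify $f_0^*\fg_1$ as $\ker(\pi)$ to get closure under the bracket for free, simplify the semidirect-product bracket on the kernel using the Peiffer identity in $\cmg$ to obtain $[(X,a),(Y,b)] = ([X,Y],-[a,b])$, and then check the crossed-module axioms and the morphism conditions componentwise. All your sign computations check out, including the two places (the bracket formula and the Peiffer identity for $f_0^*\cmg$) where the Peiffer identity in $\cmg$ is genuinely invoked.
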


Then, the desired factorization is immediate.

\begin{lemma} \label{lem:factorization_cmlie}
    Let
    \begin{align}
        f = (f_1, f_0): \cmh = (\delta: \fh_1 \to \fh_0, \gt_\fh) \to \cmg = (\delta: \fg_1 \to \fg_0, \gt_\fg)
    \end{align}
    be a morphism of crossed modules. There exists a unique map $u(f) = (u_1(f), \id_{\fh_0}) : \cmh \to f_0^* \cmg$ in $\XLie(\fh_0)$ defined by
    \begin{align}
        u_1(f) : \fh_1 \to f_0^* \fg_1, \quad a \mapsto (\delta(a), -f_1(a)),
    \end{align}
    such that $f = \tf \circ u(f)$, where $\tf$ is defined in~\Cref{lem:pullback_cmlie}.
\end{lemma}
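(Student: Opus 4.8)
\textbf{Proof proposal for Lemma~\ref{lem:factorization_cmlie}.}

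The plan is to verify directly that the proposed formula for $u_1(f)$ lands in the pullback, is a morphism in $\XLie(\fh_0)$, and makes the triangle $f = \tf \circ u(f)$ commute, and then to deduce uniqueness from the fact that $\tf_1$ is injective on the relevant subspace. First I would check well-definedness: for $a \in \fh_1$ the pair $(\delta(a), -f_1(a))$ must lie in $f_0^*\fg_1 = \ker(\pi)$, i.e.\ we need $f_0(\delta(a)) = -\delta(-f_1(a)) = \delta(f_1(a))$. This is exactly the first crossed-module-morphism identity $\delta \circ f_1 = f_0 \circ \delta$ satisfied by $f$, so $u_1(f)$ is well defined.

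Next I would check that $u(f) = (u_1(f), \id_{\fh_0})$ is a morphism of crossed modules over $\fh_0$. There are three things: (i) $u_1(f)$ is a Lie algebra map — using the bracket formula $[(X,a),(Y,b)] = ([X,Y], -[a,b])$ on $f_0^*\fg_1$, one computes $[u_1(f)(a), u_1(f)(b)] = ([\delta(a),\delta(b)], -[f_1(a), f_1(b)])$, which equals $u_1(f)([a,b]) = (\delta([a,b]), -f_1([a,b]))$ because $\delta$ and $f_1$ are both Lie algebra maps; (ii) compatibility with the boundary maps, namely $\delta_{f_0^*\cmg} \circ u_1(f) = \id_{\fh_0} \circ \delta_\fh$, which is immediate since $\delta_{f_0^*\cmg}(X,a) = X$ gives $\delta_{f_0^*\cmg}(u_1(f)(a)) = \delta(a)$; (iii) equivariance, $u_1(f)(X \gt_\fh a) = X \gt u_1(f)(a)$, where the action on $f_0^*\fg_1$ is $X \gt (Y,b) = ([X,Y], f_0(X)\gt b)$ — so the right side is $([X,\delta(a)], f_0(X) \gt (-f_1(a)))$, and the left side is $(\delta(X \gt_\fh a), -f_1(X \gt_\fh a)) = ([X, \delta(a)], -f_0(X)\gt f_1(a))$ using the crossed-module axiom $\delta(X\gt_\fh a) = [X,\delta(a)]$ in $\cmh$ and the equivariance of $f_1$. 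These match.

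Then the factorization $f = \tf \circ u(f)$ is a one-line check: on the base, $f_0 \circ \id_{\fh_0} = f_0$, and on the top, $\tf_1(u_1(f)(a)) = \tf_1(\delta(a), -f_1(a)) = -(-f_1(a)) = f_1(a)$, using $\tf_1(X,a) = -a$. Finally, uniqueness: any morphism $u' = (u_1', \id_{\fh_0}): \cmh \to f_0^*\cmg$ with $\tf \circ u' = f$ must satisfy, writing $u_1'(a) = (X_a, b_a)$, the conditions $\delta_{f_0^*\cmg}(u_1'(a)) = \delta_\fh(a)$ (morphism over $\fh_0$), which forces $X_a = \delta(a)$, and $\tf_1 \circ u_1'(a) = f_1(a)$, which forces $-b_a = f_1(a)$; hence $u_1' = u_1(f)$.

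I do not expect a genuine obstacle here — the statement is a routine diagram-chase packaging of the semidirect-product construction, and every identity used is one of the crossed-module axioms for $\cmg$, $\cmh$, or the morphism $f$ (all recorded in Definition of crossed modules of Lie algebras and in~\Cref{lem:pullback_cmlie}). The only point requiring a moment's care is the sign bookkeeping in $f_0^*\fg_1 = \ker(\pi)$ with $\pi(X,a) = f_0(X) + \delta(a)$: the defining condition is $f_0(X) = -\delta(a)$, which is why the second component of $u_1(f)$ is $-f_1(a)$ rather than $f_1(a)$, and the same sign propagates through the equivariance check via the bracket $([X,Y], -[a,b])$. Keeping that convention consistent throughout is the whole content of the verification.
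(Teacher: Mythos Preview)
Your proposal is correct and is precisely the direct verification that the paper declares ``immediate'' without spelling out. There is no alternative approach here: the paper simply omits the routine checks, and you have carried them out correctly, including the sign bookkeeping with the convention $\pi(X,a) = f_0(X) + \delta(a)$.
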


\subsubsection{Global Universal Property} 

In this section, we will consider a more general universal property for free crossed modules. Let $\VL$ denote the comma category associated to the functors $\id: \Vect \to \Vect$ and $\For: \Lie \to \Vect$. An object of $\VL$ is given by the data of a vector space $V$, a Lie algebra $\fg$, and a linear map $s : V \to \fg$. A morphism $f = (f_1, f_0): (s : V \to \fg) \to (t: W \to \fh)$ consists of a linear map $f_1: V \to W$ and a Lie algebra morphism $f_0: \fg \to \fh$ such that $t \circ f_1 = f_0 \circ s$ as linear maps. In this section, we will exclusively refer to the natural forgetful functor
\begin{align}
\For : \XLie \to \VL.
\end{align}
Furthermore, given $(s : V \to \fg) \in \VL$, the unit $\eta_s$ will refer to a morphism $(\eta_s, \id_\fg) : (s:V \to \fg) \to \For(\Fr(s))$ in $\VL$.
We are now ready to state and prove the enhanced universal property.

\begin{theorem} \label{apxthm:free_xlie_global}
    Let $(s: V \to \fh) \in \VL$, $\cmg = (\delta: \fg_1 \to \fg_0, \gt) \in \XLie$, and let
    \begin{align}
    f = (f_1, f_0) : (s: V \to \fh) \to \For(\delta: \fg_1 \to \fg_0, \gt).
    \end{align}
    Then there is a unique morphism of crossed modules 
    \begin{align}
    F = (F_1, F_0): \Fr(s) \to \cmg,
    \end{align}
    such that $\For(F) \circ \eta_{s} = f$. In particular $F_1 \circ \eta_s = f_1$ and $F_0 = f_0$.
\end{theorem}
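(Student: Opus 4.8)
The plan is to deduce the global universal property (\Cref{apxthm:free_xlie_global}) from the local one (\Cref{cor:univ_property_cmlie_fixed}) via the pullback construction, mirroring exactly the proof of \Cref{thm:free_xgrp} in the group setting. The central idea is that the local statement only allows morphisms that are the identity on the base Lie algebra; to handle an arbitrary base morphism $f_0 : \fh \to \fg_0$, we factor through the pullback crossed module $f_0^*\cmg$, which lives over $\fh$, apply the local universal property there, and then compose with the canonical map $\tf : f_0^*\cmg \to \cmg$.

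First I would recall from \Cref{lem:pullback_cmlie} that the pullback $f_0^*\cmg = (\delta : f_0^*\fg_1 \to \fh, \gt)$ is a crossed module of Lie algebras over $\fh$, and that $\tf = (\tf_1, f_0) : f_0^*\cmg \to \cmg$ is a morphism of crossed modules. Next, given the data $f = (f_1, f_0) : (s : V \to \fh) \to \For(\cmg)$, I would build a map $u_1(f) : V \to f_0^*\fg_1$ in $\slice{\Vect}{\fh}$ by $v \mapsto (s(v), -f_1(v))$; this lands in the kernel defining $f_0^*\fg_1$ precisely because $f_0 \circ s = \delta \circ f_1$, which is the commutativity condition built into $f$ being a morphism in $\VL$. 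One checks $\delta \circ u_1(f) = s$, so $u(f) = (u_1(f), \id_\fh)$ is a morphism $(s : V \to \fh) \to \For(f_0^*\cmg)$ in $\slice{\Vect}{\fh}$, and by construction $\For(\tf) \circ u(f) = f$ (using $\tf_1(X,a) = -a$). Then the local universal property \Cref{cor:univ_property_cmlie_fixed} applied to $f_0^*\cmg \in \XLie(\fh)$ yields a unique morphism $g : \Fr(s) \to f_0^*\cmg$ in $\XLie(\fh)$ with $g \circ \eta_s = u_1(f)$ (here the base components are all $\id_\fh$). Setting $F = \tf \circ g : \Fr(s) \to \cmg$ gives a morphism of crossed modules with $\For(F) \circ \eta_s = \For(\tf) \circ \For(g) \circ \eta_s = \For(\tf) \circ u(f) = f$; in particular $F_0 = f_0$ and $F_1 \circ \eta_s = f_1$.

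For uniqueness, suppose $F' = (F_1', F_0') : \Fr(s) \to \cmg$ is another such morphism. Then $F_0' = f_0$ is forced, and by \Cref{lem:factorization_cmlie} the morphism $F'$ factors uniquely as $F' = \tf \circ u(F')$ through the pullback $f_0^*\cmg$, where $u(F') = (u_1(F'), \id_\fh)$. A direct check shows $u_1(F') \circ \eta_s = u_1(f)$, so $u(F')$ is a morphism in $\XLie(\fh)$ satisfying the defining equation of $g$; by the uniqueness clause of \Cref{cor:univ_property_cmlie_fixed}, $u(F') = g$, hence $F' = \tf \circ g = F$. I do not expect any serious obstacle here: the entire argument is a formal transport along adjunctions, and the only points requiring care are the bookkeeping of which slice category each object lives in and verifying that the commutativity condition on $f$ is exactly what makes $u_1(f)$ well-defined. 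The mild subtlety worth spelling out is that $\Fr(s)$, being the image of the free functor $\sQ \circ \sE$ built over the fixed base $\fh$, has base component $\fh$, so composing with $\tf$ (which changes the base to $\fg_0$) is the only step that alters the base, and this is precisely where $f_0$ enters.
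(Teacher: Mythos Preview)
Your proposal is correct and follows essentially the same route as the paper: factor through the pullback crossed module $f_0^*\cmg$ over $\fh$, apply the local universal property of \Cref{cor:univ_property_cmlie_fixed} to obtain $g$, set $F = \tf \circ g$, and for uniqueness invoke \Cref{lem:factorization_cmlie} to reduce to the uniqueness clause of the local property. The paper's proof differs only in minor bookkeeping (it writes out the chain $\For(\tf) \circ l = f$ explicitly to conclude $l = u(f)$), but the argument is the same.
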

\begin{proof}
    Given the Lie algebra map $f_{0} : \fh \to \fg_0$, we consider the pullback crossed module $f_0^*\cmg$, and the corresponding morphism of crossed modules from~\Cref{lem:pullback_cmlie},
    \begin{align}
    \tf: f_0^* \cmg \to \cmg.
    \end{align}
    Mimicking the construction in~\Cref{lem:factorization_cmlie}, let $u_1(f) : V \to f_{0}^{*}\fg_1$ be defined as $u_1(f)(v) = (s(v), -f_{1}(v)). $ This defines a map  
    \begin{align}
    u(f) = (u_1(f), \id_\fh) : (s: V \to \fh) \to \For(f_0^*\cmg)
    \end{align}
    in $\slice{\Vect}{\fh}$ with the property that $\For(\tf) \circ u(f) = f$. Now using the universal property of $\Fr(s)$ in $\XLie(\fh)$ from~\Cref{cor:univ_property_cmlie_fixed}, there is a unique map 
    \begin{align}
    g: \Fr(s) \to f_0^*\cmg
    \end{align}
    in $\XLie(\fh)$ such that $\For(g) \circ \eta_{s} = u(f)$, where $\eta_s$ is the unit from~\eqref{eq:unit_adjunction_cmlie}. Now, we define 
    \begin{align}
    F = \Fr(s) \xrightarrow{g} f_0^* \cmg \xrightarrow{\tf} \cmg,
    \end{align}
    which is a morphism of crossed modules and it satisfies 
    \begin{align}
    \For(F) \circ \eta_{s} = \For(\tilde{f}) \circ \For(g) \circ \eta_{s} = \For(\tilde{f}) \circ u(f) = f. 
    \end{align}
    Hence, $F$ is the desired morphism. 
    
    It therefore only remains to show that $F$ is unique. To this end, let $G$ be another such morphism. By the identity $\mathrm{For}(G) \circ \eta_{s} = f$, we must have $G_{0} = f_{0}$. Then, by~\Cref{lem:factorization_cmlie}, $G$ factors as $G = \tilde{f} \circ u(G)$, where 
    \begin{align}
    u(G)= (u_1(G), \id_{\fh}) : \Fr(s) \to f_0^*\cmg
    \end{align}
    is in the category $\XLie(\fh)$. Now consider the map $l = \For(u(G)) \circ \eta_{s} : V \to f_{0}^{*}\fg_1$ in $\slice{\Vect}{\fh}$. It satisfies 
    \begin{align}
    \mathrm{For}(\tilde{f}) \circ l = \mathrm{For}(\tilde{f}) \circ \mathrm{For}(u(G)) \circ \eta_{s} = \mathrm{For}(\tilde{f} \circ u(G)) \circ \eta_{s} = \mathrm{For}(G) \circ \eta_{s} = f. 
    \end{align}
    This this implies that $l = u(f)$, or that $\mathrm{For}(u(G)) \circ \eta_{s} = u(f)$. But now appealing to the universal property of $\Fr(s)$, we see that we must have $u(G) = g$, the map defined above. Hence, 
    \begin{align}
    G = \tilde{f} \circ u(G) = \tilde{f} \circ g = F.
    \end{align}
    This establishes uniqueness. 
\end{proof}

We can rephrase the above theorem as the existence of an adjunction. 

\begin{corollary} \label{Fullfree}
    The forgetful functor $\For : \XLie \to \VL$ has a left adjoint 
\begin{align}
\Fr : \VL \to \XLie.
\end{align}
Restricting to the subcategory $\slice{\Vect}{\fg}$ this gives the functor $\sQ \circ \sE$ defined in~\Cref{thm:free_cmlie_local}. Therefore, it is given by sending a linear map $s: V \to \fg$ to the crossed module $\Fr(s)$ as defined in~\eqref{eq:free_cmlie_def}.
\end{corollary}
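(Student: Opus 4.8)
The plan is to read off this corollary directly from the global universal property established in \Cref{apxthm:free_xlie_global}. That theorem says precisely that for each object $(s: V \to \fg)$ of $\VL$, the morphism $\eta_s : (s: V \to \fg) \to \For(\Fr(s))$ is a \emph{universal arrow} from $(s:V\to\fg)$ to the functor $\For : \XLie \to \VL$: every morphism from $(s:V\to\fg)$ into the image of $\For$ factors uniquely through $\eta_s$. By the standard correspondence between a pointwise choice of universal arrows and the existence of a left adjoint, all that remains is the routine verification that these universal arrows glue into a functor and an adjunction with unit $\eta$.

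Carrying this out, I would first extend $\Fr$ to morphisms. Given $\phi : (s: V \to \fg) \to (t: W \to \fh)$ in $\VL$, the composite $\eta_t \circ \phi$ is a morphism $(s:V\to\fg) \to \For(\Fr(t))$, so by the uniqueness clause of \Cref{apxthm:free_xlie_global} there is a unique morphism of crossed modules $\Fr(\phi) : \Fr(s) \to \Fr(t)$ with $\For(\Fr(\phi)) \circ \eta_s = \eta_t \circ \phi$. The equations $\Fr(\psi \circ \phi) = \Fr(\psi) \circ \Fr(\phi)$ and $\Fr(\id) = \id$ then follow from the same uniqueness, since both sides satisfy the defining relation; and $\For(\Fr(\phi)) \circ \eta_s = \eta_t \circ \phi$ is exactly the naturality of $\eta : \id_{\VL} \Rightarrow \For \circ \Fr$. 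Finally, for any $\cmg = (\delta : \fg_1 \to \fg_0, \gt) \in \XLie$ the assignment $F \mapsto \For(F) \circ \eta_s$ gives a map
\[
    \operatorname{Hom}_{\XLie}\big(\Fr(s), \cmg\big) \longrightarrow \operatorname{Hom}_{\VL}\big((s: V \to \fg),\, \For(\cmg)\big),
\]
which is surjective by the existence clause and injective by the uniqueness clause of \Cref{apxthm:free_xlie_global}; naturality in both variables is immediate from the formulas. Hence $\Fr$ is left adjoint to $\For$ with unit $\eta$.

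For the second assertion, I would note that an object of $\slice{\Vect}{\fg}$ is an object $(s: V \to \fg)$ of $\VL$ whose Lie algebra component is $\fg$, and that the crossed module $\Fr(s)$ recorded in \Cref{apxthm:free_xlie_global} is, by construction,
\[
    \Fr(s) = \Big(\delta : (U(\fg) \otimes_\K V)/\Pf(U(\fg) \otimes_\K V) \to \fg,\ \gt\Big),
\]
which is literally the crossed module $\sQ(\sE(s))$ of \Cref{thm:free_cmlie_local}, with the same unit $\eta_s$. Thus the restriction of $\Fr$ along the inclusion $\slice{\Vect}{\fg} \hookrightarrow \VL$ coincides with the inclusion of $\sQ \circ \sE : \slice{\Vect}{\fg} \to \XLie(\fg)$ into $\XLie$ on objects, and on morphisms by the uniqueness part of the universal property (the induced maps $\Fr(\phi)$ and $\sQ(\sE(\phi))$ satisfy the same characterizing equation). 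Since all the genuine content — both the construction and its universal property — is already contained in \Cref{apxthm:free_xlie_global}, there is no real obstacle here; the only care needed is the formal bookkeeping of turning pointwise universal arrows into an adjoint functor and checking that the restriction statement is phrased for the matching pair of forgetful functors.
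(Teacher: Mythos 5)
Your proposal is correct and is exactly the argument the paper intends: Corollary~\ref{Fullfree} is stated without a written proof precisely because Theorem~\ref{apxthm:free_xlie_global} exhibits $\eta_s$ as a universal arrow from $(s:V\to\fg)$ to $\For$, and the passage from a pointwise family of universal arrows to a left adjoint (defining $\Fr$ on morphisms by uniqueness, checking functoriality and the hom-set bijection) is the standard categorical bookkeeping you carry out. Your observation that the restriction to $\slice{\Vect}{\fg}$ agrees with $\sQ \circ \sE$ on objects by construction and on morphisms by the uniqueness clause is also the right way to close the second assertion.
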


In~\eqref{eq:free_cmla_vector_space_functor}, we consider a special class of such free crossed modules given by the functor $\cmk: \Vect \to \XLie$. 
In fact, this crossed module has additional structure, since there is a natural action of $\GL(V)$ on $V$. For a group $G$, we define a $G$-Lie algebra to be a Lie algebra $\fh$ which is also a $G$-representation such that the action $\gtd$ of $G$ on $\fh$ preserves the bracket,
\begin{align} \label{eq:g_lie_algebra}
    g \gtd[x,y] = [g \gtd x, g \gtd y].
\end{align}

\begin{corollary} \label{cor:free_cm_equivariant}
    For any $V \in \Vect$, $\cmk(V)$ is a crossed module in the category of $\GL(V)$-Lie algebras. 
\end{corollary}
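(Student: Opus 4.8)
The plan is to deduce the corollary from the global universal property of the free crossed module functor, \Cref{apxthm:free_xlie_global} (equivalently \Cref{Fullfree}), together with the observation that the construction $\cmk = \Fr \circ \wedge$ is built out of functors that are themselves $\GL(V)$-equivariant in the appropriate sense. Fix $V \in \Vect$ and a group element $g \in \GL(V)$. The natural action of $\GL(V)$ on $V$ induces, via functoriality, an action on $\FL(V)$ by Lie algebra automorphisms (each $g$ gives a linear automorphism of $V$, hence $\FL(g): \FL(V) \to \FL(V)$), and an action on $\Lambda^2 V$ by linear automorphisms. The point to check first is that these are compatible with the structure map $s_V : \Lambda^2 V \to \FL(V)$, $u \wedge v \mapsto [u,v]$; that is, $s_V \circ \Lambda^2(g) = \FL(g) \circ s_V$. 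This is immediate since $s_V$ is defined using only the bracket, which $\FL(g)$ preserves. Hence $g$ defines an automorphism $(\Lambda^2(g), \FL(g))$ of the object $(s_V : \Lambda^2 V \to \FL(V))$ in the comma category $\VL$, functorially in $g$, so we obtain a homomorphism $\GL(V) \to \Aut_{\VL}(s_V)$.

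Next I would apply the free functor $\Fr : \VL \to \XLie$. Since $\Fr$ is a functor, it sends the automorphism $(\Lambda^2(g), \FL(g))$ of $s_V$ to an automorphism $\Fr(\Lambda^2(g), \FL(g))$ of the crossed module $\cmk(V) = \Fr(s_V)$ in the category $\XLie$, and this assignment is again functorial in $g$ by functoriality of $\Fr$. Thus $\GL(V)$ acts on $\cmk(V) = (\delta : \fk_1(V) \to \fk_0(V), \gt)$ by crossed module automorphisms. Unwinding what this means concretely: each $g$ acts on $\fk_0(V) = \FL(V)$ and on $\fk_1(V) = (T(V) \otimes \Lambda^2 V)/\Pf(\cdots)$ by linear automorphisms, these are Lie algebra automorphisms of each (so \eqref{eq:g_lie_algebra} holds for both $\fk_0(V)$ and $\fk_1(V)$, making each a $\GL(V)$-Lie algebra), and the morphism property of $\Fr(\Lambda^2(g), \FL(g))$ says precisely that $\delta$ is $\GL(V)$-equivariant and that the action $\gt$ of $\fk_0(V)$ on $\fk_1(V)$ satisfies $g \gtd (x \gt E) = (g \gtd x) \gt (g \gtd E)$. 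These are exactly the conditions asserting that $\cmk(V)$ is a crossed module object in the category of $\GL(V)$-Lie algebras.

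The main obstacle, such as it is, is bookkeeping rather than mathematics: one must be careful that "crossed module in $\GL(V)$-Lie algebras" really is the same data as "a $\GL(V)$ worth of crossed module automorphisms of $\cmk(V)$ that is strictly compatible with all the structure," and in particular that the action on $\fk_1(V)$ descends from the one on $T(V)\otimes\Lambda^2 V$ — but this is automatic since the Peiffer subspace $\Pf(U(\fg)\otimes V)$ is cut out $\GL(V)$-equivariantly (it is spanned by $\delta(X)\gt Y + \delta(Y) \gt X$, an equivariant expression), so the quotient inherits the action. Concretely I would phrase the proof as: apply $\Fr$ to the evident functorial action of $\GL(V)$ on $s_V \in \VL$ to get a functorial action on $\cmk(V) \in \XLie$; observe that the underlying linear actions make $\fk_0(V)$ and $\fk_1(V)$ into $\GL(V)$-Lie algebras and that the crossed module axioms, being expressed by the morphisms $\delta$ and $\gt$ which are now $\GL(V)$-equivariant, upgrade to axioms in the category of $\GL(V)$-Lie algebras. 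This completes the proof.
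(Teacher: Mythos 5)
Your proposal is correct, and it takes a cleaner, more conceptual route than the paper's own argument. The paper's proof works concretely: it equips $\FL(V)$ and $T(V)\otimes\Lambda^2V$ with their evident $\GL(V)$-actions, checks by hand that $\sE(s_V)$ is equivariant, that the $\FL(V)$-action on $T(V)\otimes\Lambda^2V$ is equivariant, and that the Peiffer subspace is $\GL(V)$-invariant so the quotient $\sQ(T(V)\otimes\Lambda^2V)$ inherits the action. You instead observe that $g\in\GL(V)$ defines an automorphism of $s_V:\Lambda^2V\to\FL(V)$ in the comma category $\VL$ (because $s_V$ is the bracket, which $\FL(g)$ preserves), that this assignment is a group homomorphism $\GL(V)\to\Aut_{\VL}(s_V)$, and that applying the free functor $\Fr$ from \Cref{Fullfree} transports this to a group homomorphism $\GL(V)\to\Aut_{\XLie}(\cmk(V))$; unwinding the definition of a crossed module morphism then gives exactly the data of a crossed module object in $\GL(V)$-Lie algebras. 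What your approach buys is that the invariance of the Peiffer subspace and the equivariance of $\delta$ and $\gt$ come for free from functoriality, rather than needing separate verification; what it costs is nothing, since \Cref{Fullfree} is already available. Your concluding "unwinding" paragraph, which re-derives the concrete statements (in particular the $\GL(V)$-invariance of $\Pf$) is a useful sanity check but is logically redundant given the functorial argument, and in fact closely reproduces what the paper does directly. One small point worth flagging: when you say the action on $\fk_1(V)$ "descends" because $\Pf$ is cut out equivariantly, the equivariance of the expression $\delta(X)\gt Y+\delta(Y)\gt X$ itself presupposes the equivariance of $\delta$ and of the $\fk_0$-action on $T(V)\otimes\Lambda^2V$ at the pre-crossed-module level; this is true and is exactly what the paper checks, but if you lean on the functorial argument you do not need to invoke it at all.
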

\begin{proof}
    First, we note that $\FL(V)$ has a natural $\GL(V)$ action by extending the $\GL(V)$ action on $V$ using~\eqref{eq:g_lie_algebra}. The vector space $T(V) \otimes \Lambda^2 V$ also has a natural $\GL(V)$ action via the usual tensor and wedge products of representations. The map $\sE(s_V): T(V) \otimes \Lambda^2 V \to \FL(V)$ is easily seen to be equivariant. Furthermore, the embedding $\FL(V) \to U(\FL(V)) = T(V)$ is equivariant, and thus the $\FL(V)$ action on $T(V) \otimes \Lambda^2 V$ is equivariant,
    \begin{align}
        g \gtd (X \gt A) = (g \gtd X) \gt (g \gtd A),
    \end{align}
    for $g \in \GL(V)$, $X \in \FL(V)$ and $A \in T(V) \otimes \Lambda^2(V)$.
    It is also clear to check that the Peiffer subspace is invariant under the $\GL(V)$ action, $\GL(V) \gtd \Pf(T(V) \otimes \Lambda^2 V) \subset \Pf(T(V) \otimes \Lambda^2 V)$ so that the $\GL(V)$ action descends to $\sQ(T(V) \otimes \Lambda^2 V)$. The equivariance of $\delta$ and the $\FL(V)$ action $\gt$ are also preserved. 
\end{proof}

\section{Compatible Triangulations} \label{apx:triangulations_matching}
In this appendix, we provide details on triangulations, which are used to prove~\Cref{thm:existence_compatible_representative} and~\Cref{prop:value_Hur_vanishes}. Here, we must carefully address the relationship between refinements of triangulated representatives $ r(\bX) = (X_1, \ldots, X_k) \in \FMon(\Kite^{\times}(V))$ and triangulations of their corresponding PLSCs $\Delta(r(\bX))$. First, we show that triangulated representatives exist.

\begin{lemma} \label{lem:triangulated_representative}
    There exists a triangulated representative $r(\bX)  \in \FMon(\Kite^{\times}(V))$ of every $\bX \in \PL_1(V)$.
\end{lemma}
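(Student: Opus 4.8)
The goal is to show that any element $\bX \in \PL_1(V)$ admits a \emph{triangulated} marked representative, i.e.\ a lift to $\FMon(\Kite^\times(V))$ in which every kite $(\bw,\bb)$ has $\bb$ a minimal triangular loop $\bb = (b_1,b_2,b_3)_{\min}$. The strategy is to start from \emph{any} marked representative and then triangulate each kite individually, using the fact (\Cref{lem:dim2_planar_loop_iso}) that in a $2$-dimensional subspace every planar loop is an element of the group $\PL_0^{\cl}(U)$, together with \Cref{thm:pl_loop_iso}, which identifies $\PL_0^{\cl}(U) \cong \Loop(U)$ as a group generated by triangular loops.

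First I would fix a marked representative $(X_1,\dots,X_k) \in \FMon(\Kite^\times(V))$ of $\bX$, which exists since the monoid homomorphism $\FMon(\Kite^\times(V)) \to \PL_1(V)$ is surjective by construction. It therefore suffices to prove the statement for a single marked kite $X = (\bw, \bb) \in \Kite^\times(V)$, and then concatenate the resulting triangulated words. If $\bb$ is trivial, then $X \sim \emptyset_1$ in $\PL_1(V)$ by~\ref{PL1.2} and we are done, so assume $\bb$ is a nontrivial planar loop with $2$-dimensional span $U = \SPAN(\bb)$. By~\Cref{lem:dim2_planar_loop_iso}, $\bb \in \PL_0^{\cl}(U)$, and by~\Cref{thm:pl_loop_iso} we may write $\bb$, as an element of the group $\PL_0^{\cl}(U) \cong \Loop(U)$, as a product of triangular loops
\begin{align}
    \bb = \eta_U(v_1, u_1) \concat \eta_U(v_2, u_2) \concat \cdots \concat \eta_U(v_m, u_m),
\end{align}
where each $\eta_U(v_j, u_j) = (v_j, u_j - v_j, -u_j)$ is a triangular loop based at the origin with span contained in $U$.

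Next I would use relation~\ref{PL1.1} to split the single kite $(\bw, \bb)$ into a product of triangular kites. Concretely, set $\bu = \emptyset_0$ (so the tail paths of adjacent kites agree) and apply~\ref{PL1.1} repeatedly in the form $(\bw, \bc_1 \concat \bc_2) \sim (\bw, \bc_1) \concat (\bw, \bc_2)$, valid whenever the relevant concatenations remain planar — which they do here since $\bc_1, \bc_2$ and all partial products lie in $\planarloop(U)$. This yields
\begin{align}
    (\bw, \bb) \sim (\bw, \eta_U(v_1,u_1)) \concat (\bw, \eta_U(v_2,u_2)) \concat \cdots \concat (\bw, \eta_U(v_m,u_m))
\end{align}
in $\PL_1(V)$. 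Each factor $(\bw, \eta_U(v_j,u_j))$ is a triangular kite (possibly trivial, if $v_j$ and $u_j - v_j$ happen to be linearly dependent, in which case the loop reduces to $\emptyset_0$ and the factor is dropped via~\ref{PL1.2}). Choosing a marked lift of each $\bw$ to $\FMon(V)$ — which we already have, since $X$ was a marked kite — produces a marked triangulated representative of $(\bw,\bb)$. Concatenating over $i = 1, \dots, k$ gives the desired triangulated representative $r(\bX)$ of $\bX$.

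The only subtlety, and the step I would be most careful with, is verifying that relation~\ref{PL1.1} legitimately applies at each splitting: it requires the partial products $\bc_1, \bc_2, \bc_1\concat\bc_2 \in \planarloop(V)$, and here one must check these are all genuinely planar (span $\le 2$). This is immediate because $U$ is $2$-dimensional and all loops in question are built from the $\eta_U(v_j,u_j)$, whose spans lie in $U$; hence every partial product has span contained in $U$ and is therefore planar. A second minor point is that the decomposition from~\Cref{thm:pl_loop_iso} is a product in the \emph{group} $\PL_0^{\cl}(U)$, but the relation~\ref{PL1.1} only lets us split along loop concatenations of elements of $\planarloop(V)$; since every element of $\PL_0^{\cl}(U)$ is planar and a group product of triangular loops is just their concatenation in $\PL_0^{\cl}(U)$, the two notions coincide and no obstruction arises. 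This completes the plan.
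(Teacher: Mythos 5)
Your proof is correct, and the high-level structure matches the paper's: reduce to a single marked kite $(\bw,\bb)$, express the planar loop $\bb$ as a product of triangular loops within the $2$-dimensional span $U$, and then use~\ref{PL1.1} to split the kite into triangular kites sharing the common tail $\bw$ (discarding degenerate factors via~\ref{PL1.2}). You correctly flag and resolve the two subtleties: that every partial product stays in $\planarloop(V)$ because all loops live in the $2$-dimensional $U$, and that~\ref{PL1.1} splits along group products in $\PL_0^{\cl}(U)$ rather than literal word concatenations.

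The one genuine difference is how the triangular decomposition of $\bb$ is produced. You invoke the isomorphism $\PL_0^{\cl}(U) \cong \Loop(U)$ of~\Cref{thm:pl_loop_iso} as a black box, pulling $\bb$ back to a word in triangular generators. The paper instead constructs the decomposition explicitly from the minimal representative $\bb = (b_1,\dots,b_k)_{\min}$: it cones from the basepoint, defining the fan of triangles $\tbb_r = (\hb_r, b_{r+1}, -\hb_{r+1})$ whose telescoping product recovers $\bb$. Both routes are valid; the paper's is more self-contained and gives a concrete (and in fact minimal-length) triangulation, which is pleasant because the resulting kites are later fed into an explicit simplicial complex $\Delta(r(\bX))$, while yours is shorter to state and reuses existing machinery at the cost of being less constructive about which triangles actually appear. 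Since the surjectivity half of~\Cref{thm:pl_loop_iso} is itself proved by essentially the same fan factorization, the two arguments are ultimately the same idea packaged differently.
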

\begin{proof}
    It suffices to show that each marked kite $X = (\bw, \bb)$ has a triangulated representative. Suppose $\bb = (b_1, \ldots, b_k)_{\min} \in \planarloop(V)$ is minimal with point representation $\bb = [\hb_0, \ldots, \hb_m]$ where $\hb_0 = 0$ and $\hb_k = \sum_{i=1}^k b_i$.

    \begin{figure}[!h]
        \includegraphics[width=\linewidth]{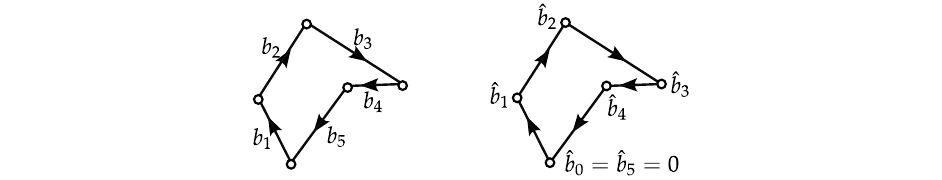}
    \end{figure}
    \noindent
    Then, define
    \begin{align}
        \tbb_r \coloneqq (\hb_r, b_{r+1}, -\hb_{r+1}) \in\planarloop(V) \andd \tX_r \coloneqq (\bw, \tbb_r).
    \end{align}
    Then, we have $\tbX = (\tX_1, \ldots, \tX_{k-2}) \sim_{\ref{PL1.1}} (X)$. The kites $\tX_r$ are either triangular, or else have $\tbb_r = \emptyset_0$. In the later case $\tX_{r} \sim_{\ref{PL1.2}} \emptyset_1$ and hence can be removed from the list. Let $\tbX'$ be the resulting element of $\FMon(\Kite^{\times}(V))$. It is a triangulated representative of the marked kite $X$. 
\end{proof}

Let $r(\bX)$ be a triangulated representative of $\bX \in \PL_{1}(V)$. In general, there is no guarantee that this representative is compatible. Therefore, in order to prove~\Cref{thm:existence_compatible_representative}, we must modify, or refine, $r(\bX)$. We will do this by triangulating the PLSC $\Delta(r(\bX))$.  

Recall that a \emph{convex polygon in $V$} is the convex hull $\mathrm{Conv}(P)$ of a finite set of points $P$ which are all contained in a $2$-dimensional affine plane $U \subset V$. Such a polygon is \emph{non-degenerate} if the points $P$ are not contained in any affine line $L \subset V$. By an edge in $V$, we simply mean a line segment in $V$ connecting a pair of distinct points. Furthermore, given an edge or a polygon $a$, we let $a^{\circ}$ denote its interior. 

\begin{definition} \label{def:triangulation}
    Let $E = \{e^1, \ldots, e^q\}$ be a collection of edges in $V$ and let $P = \{P^1, \ldots, P^m\}$ be a collection of nondegenerate convex polygons. A \emph{compatible triangulation} for $(E, P)$ is a compatible non-degenerate PLSC $T(E,P)$ such that 
    \begin{enumerate}
        \item for each $1$-simplex $\epsilon \in T(E,P)$ and each edge $e^{i} \in E$, if $|\epsilon|^\circ \cap (e^i)^\circ \neq \emptyset$, then $|\epsilon| \subseteq e^i$.
        \item for each 2-simplex $\tau \in T(E,P)$ and each polygon $P^{i} \in P$, if $|\tau|^\circ \cap (P^i)^{\circ} \neq \emptyset$, then $|\tau| \subseteq P^i$. 
        \item $|T(E,P)| = (\bigcup_{i=1}^q e^i ) \ \cup \ (\bigcup_{i=1}^m P^i )$.
        \item the edges in $E$ and in the boundaries of the polygons of $P$ are contained in $|T(E,P)_{1}|$, the realization of the 1-skeleton. 
    \end{enumerate}
\end{definition}

\begin{lemma} \label{lem:induced_triangulation}
    Let $T(E,P)$ be a compatible triangulation for a pair $(E,P)$. Given an edge in $E$ or a polygon in $P$, denoted $Q$, the subset
    \begin{align}
        T(Q) \coloneqq \{ \sigma \in T(E,P) \, : \, |\sigma| \subseteq Q\}
    \end{align}
    defines a compatible triangulation of $Q$. 
\end{lemma}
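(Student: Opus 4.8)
\textbf{Proof plan for \Cref{lem:induced_triangulation}.}

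The plan is to verify, one by one, the four defining properties of a compatible triangulation (\Cref{def:triangulation}) for the subcomplex $T(Q)$, where $Q$ is a single edge from $E$ or a single polygon from $P$. First I would observe that $T(Q)$ is indeed a subcomplex of $T(E,P)$: if $\sigma \in T(Q)$ and $\tau \subseteq \sigma$ is a face, then $|\tau| \subseteq |\sigma| \subseteq Q$, so $\tau \in T(Q)$. Since $T(E,P)$ is a compatible, non-degenerate PLSC and any subcomplex of such inherits compatibility (the intersection condition \eqref{eq:simplex_intersection} is stable under passing to a sub-collection of simplices) and non-degeneracy, $T(Q)$ is automatically a compatible non-degenerate PLSC. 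So the content is to check that $T(Q)$ actually \emph{triangulates} $Q$ in the sense of the four numbered conditions, now applied to the pair $(E_Q, P_Q)$ which is either $(\{Q\},\varnothing)$ if $Q$ is an edge or $(\varnothing,\{Q\})$ if $Q$ is a polygon (or, more precisely, one takes $E_Q$ to consist of the edges of $Q$ if $Q$ is a polygon, so that condition (4) makes sense; I would state it with whichever convention makes \Cref{apx:triangulations_matching} go through).

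The two interior-compatibility conditions (1) and (2) are immediate by restriction: they already hold for every simplex of $T(E,P)$ against every edge of $E$ and every polygon of $P$, hence in particular for the simplices of $T(Q)$ against $Q$ itself. Condition (4) likewise follows because the edges of $Q$ (and $Q$'s boundary edges, if it is a polygon) are among the edges forced into $|T(E,P)_1|$, and their realization lies inside $Q$, so it lies inside $|T(Q)_1|$ once we know $|T(Q)|$ is all of $Q$ — which is exactly condition (3). So the only real step is condition (3): $|T(Q)| = Q$. The inclusion $|T(Q)| \subseteq Q$ is trivial from the definition of $T(Q)$. For the reverse inclusion, take a point $x \in Q$. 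By condition (3) for $T(E,P)$, $x \in |\sigma|$ for some $\sigma \in T(E,P)$; the issue is that this $\sigma$ need not satisfy $|\sigma| \subseteq Q$.

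The main obstacle, then, is producing, for each $x \in Q$, a simplex $\sigma$ of $T(E,P)$ with $x \in |\sigma| \subseteq Q$. I would handle this using conditions (1) and (2) of \Cref{def:triangulation} together with compatibility. The argument: pick the unique simplex $\sigma$ of $T(E,P)$ whose \emph{open} realization $|\sigma|^\circ$ contains $x$ (uniqueness and existence of such a $\sigma$ is a standard fact for compatible non-degenerate PLSCs — the open simplices partition $|T(E,P)|$, which I would justify briefly from \Cref{lem:compatible_embedding}). If $Q$ is a polygon: since $x \in Q^\circ$ forces $|\sigma|^\circ \cap Q^\circ \neq \varnothing$ and then $|\sigma| \subseteq Q$ by condition (2) when $\dim\sigma = 2$; if $x$ lies on an edge of $Q$, apply condition (1) to the relevant boundary edge of $Q$; if $x$ is a vertex of $Q$ it is a vertex of $T(E,P)$ lying in $Q$, done. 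The case $Q$ an edge is the same argument using only condition (1). In every case the chosen $\sigma$ has $x\in|\sigma|\subseteq Q$, so $\sigma \in T(Q)$ and $x \in |T(Q)|$. Finally I would note that the realization map $T(Q) \to |T(Q)| = Q$ is a homeomorphism by \Cref{lem:compatible_embedding}, which is what one actually uses downstream. I expect the partition-into-open-simplices point to be the only place requiring a genuine (short) argument; everything else is bookkeeping.
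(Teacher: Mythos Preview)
Your plan matches the paper's proof in structure: both observe that $T(Q)$ is a subcomplex inheriting compatibility and non-degeneracy, that conditions (1), (2), (4) are essentially inherited, and that condition (3) is the only substantive point. For (3), the paper argues slightly differently: rather than chasing the unique open simplex containing $x$, it observes that the $2$-simplices $\tau$ with $|\tau|^\circ \cap Q^\circ = \emptyset$ cannot contribute area to $Q$, so by condition (2) the $2$-simplices in $T(Q)$ alone already cover $Q$ (via a closure argument).

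Your pointwise argument has a small gap. When $x \in Q^\circ$ and the unique open simplex containing $x$ happens to be a $1$-simplex $\sigma$ (this can occur: an interior edge of the triangulation of $Q$, or a transversal edge from $E$ hitting $Q^\circ$), condition (2) does not apply since $\dim\sigma \neq 2$, and condition (1) does not apply since $Q^\circ$ is not the interior of an edge in $E$. Similarly, ``apply condition (1) to the relevant boundary edge of $Q$'' is not directly available, since boundary edges of polygons in $P$ need not belong to $E$. Both cases are easily repaired using compatibility: pick a nearby point $y \in Q^\circ$ lying in an open $2$-simplex $|\tau|^\circ$ (such $y$ exist by a measure-zero argument in the supporting plane), get $|\tau| \subseteq Q$ from condition (2), and then use the compatibility condition between $\sigma$ and $\tau$ to force $\sigma \subseteq \tau$, hence $|\sigma| \subseteq Q$. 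Once you fill this in, your argument and the paper's converge.
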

\begin{proof}
   First, we note that $T(Q)$ is indeed a PLSC, since for any $\sigma \in T(E,P)$ such that $|\sigma| \subset Q$, the faces of $\sigma$ must also be contained in $Q$. The properties of being compatible and non-degenerate are automatically inherited. Next, we note that conditions (1) and (2) of~\Cref{def:triangulation} are properties of the simplices in $T(E,P)$, and thus are inherited by $T(Q) \subset T(E,P)$. For condition (3), note first that $|T(Q)| \subseteq Q$ follows from the construction. If $Q$ is a polygon, then $Q$ is contained in the union of all 2-simplices of $T(E,P)$. And by condition (2), the simplices which are not in $T(Q)$ have their interiors disjoint from $Q^o$ and so cannot contribute any area. As a result $Q \subset |T(Q)|$. A similar argument applies if $Q$ is an edge, using condition (1) instead. Condition (4) follows essentially for topological reasons. 
\end{proof}

\begin{lemma}\label{lem:compatible_triangulation}
    Let $E$ and $P$ be, respectively, a set of edges and nondegenerate convex polygons in $V$. Then there exists a compatible triangulation for $(E, P)$. 
\end{lemma}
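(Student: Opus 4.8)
\textbf{Proof proposal for Lemma~\ref{lem:compatible_triangulation}.} The plan is to build the triangulation by a two-step stratification: first create a compatible arrangement of the ambient geometric data inside each $2$-plane carrying a polygon (together with the lower-dimensional data of the edges), and then triangulate each piece of that arrangement. Concretely, let $U_1, \ldots, U_N$ be the (finitely many) distinct $2$-dimensional affine planes spanned by the polygons in $P$, and let $\mathcal{L}$ be the finite collection of affine lines consisting of: (a) all lines through pairs of vertices of polygons in $P$ lying in a common plane $U_i$, i.e.\ the lines supporting the edges of each polygon; (b) the intersection lines $U_i \cap U_j$ whenever this intersection is a line; (c) the lines supporting the edges in $E$; and (d) the lines $U_i \cap (\text{plane of an edge/line from (c)})$ when relevant. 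Within each plane $U_i$, the traces of all lines of $\mathcal{L}$ lying in $U_i$, together with all the polygon boundaries and edge segments that lie in $U_i$, cut $U_i$ into a finite arrangement of convex cells, edges, and vertices. The key point is that this arrangement is \emph{consistent across planes}: because we included the pairwise intersection lines $U_i \cap U_j$ in $\mathcal{L}$, the induced subdivision of $U_i \cap U_j$ from the $U_i$ side agrees with that from the $U_j$ side.

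First I would make precise the combinatorial object being built: a regular cell complex $\mathcal{A}$ in $V$ whose cells are the (relatively open) faces of the arrangements just described, intersected across all the planes, together with the finitely many isolated points and edge-fragments coming from $E$ that are not already inside some $U_i$. One checks that $\mathcal{A}$ is a finite polyhedral complex in the usual sense: the closure of each cell is a union of cells, and the intersection of the closures of two cells is a common face (this is exactly where the inclusion of all pairwise intersection lines pays off). Then I would triangulate $\mathcal{A}$ without introducing new vertices, by the standard ordered-simplicial-subdivision (``pulling'' / barycentric-free) construction: fix a total order on the vertex set $\mathcal{A}_0$, and for each polyhedral cell, triangulate it by coning from its least vertex over the already-chosen triangulations of its proper faces; induct on dimension. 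Because the order is global, the triangulations of shared faces automatically agree, so the result is a genuine ordered simplicial complex $T$. Non-degeneracy is immediate: each $2$-simplex is a triangle spanned by three affinely independent points (no three on a line, since all degeneracies were absorbed into lower strata), and each $1$-simplex is a genuine segment by Remark~\ref{rem:nondegone}. Compatibility in the sense of~\Cref{def:compatible} follows because $T$ is a simplicial subdivision of the polyhedral complex $\mathcal{A}$, so for simplices $\sigma,\sigma'$ the set $|\sigma|\cap|\sigma'|$ is a face of both — this is a classical property of such subdivisions.

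It remains to verify the four conditions of~\Cref{def:triangulation}. Conditions (3) and (4) are built in by construction: $|T| = |\mathcal{A}|$ equals the union of all the polygons and edges precisely because $\mathcal{A}$ was assembled as the union of the arrangements in the planes $U_i$ plus the leftover edge-fragments, and the polygon boundaries and edges of $E$ were put in as $1$-dimensional strata, hence lie in $|T_1|$. For condition (1): if a $1$-simplex $\epsilon$ of $T$ has $|\epsilon|^\circ$ meeting the interior of an edge $e^i\in E$, then $|\epsilon|^\circ$ meets the relative interior of the $1$-stratum of $\mathcal{A}$ supported on $e^i$; since strata of $\mathcal{A}$ are disjoint and $|\epsilon|$ is contained in a single stratum-closure, $|\epsilon|$ must lie in $e^i$. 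Condition (2) is the same argument one dimension up, using that the $2$-cells of $\mathcal{A}$ refine the polygons of $P$. I would also record~\Cref{lem:induced_triangulation} as a corollary, though it is stated separately.

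\textbf{Main obstacle.} The genuinely delicate point is the \emph{cross-plane consistency}: ensuring that when two polygon-planes $U_i, U_j$ (or a polygon-plane and an edge) intersect in a line or a point, the subdivisions induced on that intersection from the two sides coincide, so that $\mathcal{A}$ really is a well-defined polyhedral complex rather than two mismatched ones glued along a seam. This is handled by being careful to close up the family $\mathcal{L}$ under taking all these pairwise intersections \emph{before} subdividing, and by doing the simplicial refinement with one fixed global vertex order; but writing this cleanly — in particular checking the ``intersection of closures is a common face'' axiom for $\mathcal{A}$ in full generality in $\R^d$ rather than just in the plane — is where the real work of the proof lies. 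Everything after that (the existence of an ordered simplicial subdivision of a finite polyhedral complex with no new vertices, and the routine verification of (1)--(4)) is standard.
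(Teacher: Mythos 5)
Your overall strategy mirrors the paper's: form the arrangement of lines inside each polygon-supporting plane (using polygon edges, edges from $E$, and pairwise plane-intersection lines), which cuts each plane into convex cells, then triangulate each cell without introducing new vertices and assemble the result into a compatible PLSC. You correctly identify cross-plane consistency as the delicate point. However, your proposed resolution — "close up the family $\mathcal{L}$ under taking all these pairwise intersections" — does not actually resolve it, because the relevant pairwise intersections can be \emph{points}, not lines, and a point cannot be added to a family of lines.

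Here is the concrete failure mode. Suppose an edge $e^j \in E$ (or the supporting line $U_i \cap U_{i'}$ of two planes) meets the plane $U_i$ transversally at a single point $p$ lying in the interior of one of the convex cells $Q$ of the line arrangement in $U_i$. The $1$-strata arising from $e^j$ have $p$ as an endpoint, so $p$ must be a $0$-cell of $\mathcal{A}$. But $p$ is not a vertex of any line of $\mathcal{L}$ restricted to $U_i$, so after you triangulate $Q$ using only its boundary vertices, $p$ lies in the interior of some $2$-simplex $\tau$. Then the $1$-simplex of $e^j$ terminating at $p$ and the $2$-simplex $\tau$ intersect in exactly $\{p\}$, which is not a common face of either — so $\mathcal{A}$ is \emph{not} a polyhedral complex and the resulting $T$ is not compatible in the sense of Definition~\ref{def:compatible}. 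Your list $\mathcal{L}$ only ever accumulates \emph{lines}, and item (d) ("$U_i \cap$ plane of an edge") is not well-defined since an edge spans a line, not a plane; none of (a)--(d) force such a piercing point $p$ to be a vertex of the arrangement in $U_i$.

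The paper's proof has an explicit extra step you are missing: after collecting the candidate vertex set inside each plane $H_i$, it checks whether some vertex $p$ lies in $H_i$ but on no line of the arrangement restricted to $H_i$, and if so it \emph{adds a new line through $p$ inside $H_i$}, then iterates (this terminates because the new line creates no new stray vertices in other planes). This guarantees that every vertex in a plane is a genuine vertex of that plane's line arrangement, so piercing points become vertices of the induced triangulation of each cell they touch, and the closure-intersection-is-a-common-face property holds. Adding this step to your construction — or equivalently, after forming the arrangement, subdividing any $2$-cell that contains a stray $0$-cell in its interior by coning from that $0$-cell — closes the gap. Your use of a global vertex order and the "pulling" subdivision is a nice concrete way to ensure triangulations of shared faces agree, which the paper leaves implicit, but it cannot substitute for ensuring $\mathcal{A}$ is a polyhedral complex in the first place.
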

\begin{proof}
    The general idea behind this proof is to maximally subdivide all polygons and edges in order to obtain the compatible triangulation.
    The first step is to construct sets of affine planes $H$,  affine lines $L$, and vertices $C_{0}$. 

    \begin{itemize}
        \item[Step 1] The polygons in $P$ are supported on a finite set of affine planes, which we take to be $H$. 
        \item[Step 2] The set of lines $L$ is constructed as follows. The boundary of each polygon $P^i \in P$ is a collection of line segments. Extend these to lines and add them to $L$. Extend each edge in $E$ to a line and add it to $L$. Finally, if two distinct planes $H_{i}, H_{j} \in H$ intersect along a line, add it to $L$. 
        \item[Step 3] To construct the set of vertices $C_{0}$, we start with the endpoints of edges in $E$ and the extremal points of the polygons in $P$. If two lines in $L$ intersect at a point, we add this to $C_{0}$. If a line in $L$ intersects a plane in $H$ at a point, we add this to $C_{0}$. Finally, if two planes in $H$ intersect at a point, we add this to $C_{0}$. 
    \end{itemize}

    Restrict attention to a single plane $H_{i} \in H$. This plane will contain a subset $L_{i} \subseteq L$ of lines and a subset $C_{i} \subseteq C_0$ of vertices. Suppose that there is a point $p \in C_{i}$ which is not contained in any line from $L_{i}$. Then add a new line $l$ to $L$ which is contained in $H_{i}$ and which contains the point $p$. This will lead to new vertices in $C_{0}$ arising from the intersections between $l$ and other lines from $L$. However, there will not be further new vertices arising from intersecting $l$ with planes $H_{j} \neq H_{i}$. For this reason, adding $l$ will not introduce any \emph{new} vertices which are contained within a plane $H_{j} \in H$, but not within a line from $L$ lying in $H_{j}$. Therefore, by iterating this process, we may assume that every vertex from $C_{i}$ is contained in some line from $L_{i}$. 

    Now observe that using the vertices in $C_{0}$, each line $l \in L$ is decomposed into a finite number of closed segments $\epsilon \subset l$. Since each edge $e^{i} \in E$ is a subset of some line in $L$ lying between two vertices from $C_{0}$, it is decomposed into a union of such line segments $\epsilon$. Let $\mathcal{E} = \{ \epsilon^{1}, ..., \epsilon^{p} \}$ denote the collection of all finite line segments whose interiors intersect the interior of one of the edges from $E$. Note that $\cup_{i = 1}^{p} \epsilon^{i} = \cup_{j = 1}^{q} e^{j}$. Indeed, each $\epsilon^{i}$ is contained in some $e^{j}$, whereas each $e^{j}$ decomposes into a union of some $\epsilon^{i}$. 

    Next, consider the plane $H_{i}$ and let $L_{i} \subset L$ denote the subset of lines that lie in $H_{i}$. Let $Z_{i} = H_{i} \setminus \cup_{l \in L_{i}} l$ denote the complement of the set of lines $L_{i}$. It is an open set with the property that the closure of each bounded component is a convex polygon $Q$ in $H_{i}$. Since each $P^{i} \in P$ is the intersection of a collection of half-planes bounded by a subset of the lines $l \in L_{i}$, $P^i$ is decomposed into a union of convex polygons $Q$. Let $\mathcal{P} = \{Q^1, ..., Q^t \}$ denote the collection of all convex polygons in all planes $H_{i}$ whose interiors intersect the interior of one of the polygons from $P$. Note that the extremal points of $Q^{i}$ lie in $C_{0}$, but that there may be additional points from $C_0$ on the boundary. Note also that $\cup_{i = 1}^{t}Q^{i} = \cup_{j = 1}^{m} P^{j}$. Indeed, each $Q^{i}$ is contained in some $P^{j}$, whereas each $P^{j}$ decomposes into a union of some $Q^{i}$. 

   We now construct a PLSC $T(E,P)$ which will be our compatible triangulation. 
   \begin{enumerate}
       \item Let $T_{0} \subset V$ be the collection of all vertices $p \in C_{0}$ which are either contained in an edge $\epsilon^{i} \in \mathcal{E}$, or in a convex polygon $Q^{i} \in \mathcal{P}$. Endow $T_{0}$ with an arbitrary order. This will form the set of vertices of our PLSC. 
       \item Given each convex polygon $Q^{i} \in \mathcal{P}$, choose a triangulation that makes use of precisely the set of vertices in $C_{0}$ lying on its boundary. Define $\Delta_{2}(T)$, the set of $2$-simplices in $T(E,P)$, to be the collection of all triples $[p_0, p_{1}, p_{2}]$ such $|[p_{0}, p_{1}, p_{2}]|$ is one of the triangles from a polygon $Q^{i}$. 
       \item We define $\Delta_{1}(T)$, the set of $1$-simplices in $T(E,P)$, to consist of all faces of all $2$-simplices in $\Delta_{2}(T)$, as well as all pairs $[p_{0}, p_{1}]$ such that $|[p_{0}, p_{1}]|$ is one of the edges in $\mathcal{E}$. 
   \end{enumerate}    
   By construction $T(E,P)$ is a compatible non-degenerate PLSC whose piecewise linear realization satisfies 
   \begin{align}
       |T(E,P)| = (\bigcup_{i=1}^p \epsilon^i ) \ \cup \ (\bigcup_{i=1}^t Q^i ) =(\bigcup_{j=1}^q e^j ) \ \cup \ (\bigcup_{i=1}^m P^j ). 
   \end{align}
   The remaining conditions for a compatible triangulation are then easy to check. 
\end{proof}

The strategy to prove~\Cref{thm:existence_compatible_representative} is to start with a triangulated representative $r(\bX)$ and take its associated PLSC, $\Delta(r(\bX))$, which may not be compatible. Then, we produce a compatible triangulation $T$ and use it to construct a new representative $r'(\bX)$. This new representative will have the property that its associated PLSC is a subcomplex of $T$, thereby assuring that it is compatible. 

The following lemma starts by treating the case of a triangular kite.  

\begin{lemma} \label{lem:triangulation_decomposition_of_kite}
    Let $X = (\emptyset, \bb) \in \Kite^\times(V)$ be a triangular loop, viewed as a marked triangular kite, and let $|\sigma_{X}|$ be its associated 2-simplex, viewed as a convex polygon in $V$. Given a compatible triangulation $T(|\sigma_{X}|)$, there exists a triangulated representative $r(X) = (X_{1}, ..., X_{k})$ such that $r(X) \sim (X)$ in $\PL_1(V)$, and such that $\Delta(r(X)) \subseteq T(|\sigma_{X}|)$.

\end{lemma}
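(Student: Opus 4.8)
The plan is to induct on the triangulation $T(|\sigma_X|)$, peeling off one boundary triangle at a time, so I would set up an induction on the number of $2$-simplices in $T(|\sigma_X|)$. The base case is when $T(|\sigma_X|)$ has a single $2$-simplex, which must equal $\sigma_X$ itself (since the realization of $T(|\sigma_X|)$ is $|\sigma_X|$), and then $r(X) = (X)$ works trivially. For the inductive step, the idea is to choose an ``ear'' of the triangulation: a $2$-simplex $\tau \in T(|\sigma_X|)_2$ that shares at least one full edge with the boundary of the polygon $|\sigma_X|$, or more precisely one whose removal, together with any now-dangling lower simplices, still leaves a compatible triangulation of a smaller convex polygon. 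One standard way to do this is to pick a vertex $p$ of the triangulated polygon lying on $\partial|\sigma_X|$ that is \emph{not} an extremal point of $|\sigma_X|$ if one exists, and otherwise an extremal vertex of minimal star; removing the triangles in its star peels off a sub-polygon. I expect the cleanest route is: order the boundary vertices of $|\sigma_X|$ cyclically, find a triangle $\tau = [p_0,p_1,p_2]$ of $T(|\sigma_X|)$ two of whose vertices are consecutive on $\partial|\sigma_X|$ (an ``ear''), so that $|\sigma_X| = |\sigma_X'| \cup |\tau|$ where $|\sigma_X'|$ is again a convex polygon, and $T(|\sigma_X'|) := \{\sigma \in T(|\sigma_X|) : |\sigma| \subseteq |\sigma_X'|\}$ is a compatible triangulation of $|\sigma_X'|$ by an argument parallel to Lemma~\ref{lem:induced_triangulation}.

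Once the ear $\tau$ is removed, I would write the boundary loop $\bb$ of $X$ as a concatenation. Geometrically, if $\bb$ traverses $\partial |\sigma_X|$ and $\tau$ is attached along part of that boundary path, then up to the relations \ref{PL1.1} and \ref{PL1.3} we can factor $(\emptyset,\bb) \sim (\emptyset, \bb') \concat (\bw_\tau, \bb_\tau)$ in $\PL_1(V)$, where $\bb_\tau$ is the triangular loop of $\tau$, $\bw_\tau$ is a PL path (a ``tail'') running along $\partial|\sigma_X'|$ from the basepoint to the basepoint of $\bb_\tau$, and $\bb'$ is the boundary loop of the smaller polygon $|\sigma_X'|$. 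Writing $(\bw_\tau,\bb_\tau)$ as a marked kite (with the tail path represented by a specific word in $\FMon(V)$ tracking the boundary vertices), and applying the inductive hypothesis to $(\emptyset,\bb')$ with its induced compatible triangulation $T(|\sigma_X'|)$, we obtain a triangulated representative $r(X') = (X_1,\dots,X_{k-1})$ with $\Delta(r(X')) \subseteq T(|\sigma_X'|)$. Then $r(X) := (X_1,\dots,X_{k-1}, (\bw_\tau,\bb_\tau))$ is a triangulated representative of $X$ with $r(X) \sim (X)$, and since every simplex appearing in $\Delta(r(X))$ is either a simplex of $\Delta(r(X')) \subseteq T(|\sigma_X'|) \subseteq T(|\sigma_X|)$ or a face of $\tau$, we get $\Delta(r(X)) \subseteq T(|\sigma_X|)$ as required.

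The main obstacle I anticipate is \textbf{bookkeeping the tail paths and verifying the relation $r(X)\sim(X)$ holds in $\PL_1(V)$} — that is, making sure that the decomposition $(\emptyset,\bb) \sim (\emptyset,\bb')\concat(\bw_\tau,\bb_\tau)$ is genuinely a consequence of \ref{PL1.1}, \ref{PL1.2}, \ref{PL1.3} (or, after passing to the crossed module, the Peiffer identity~\eqref{eq:PL_peiffer}), and that all the intermediate loops that appear (such as $\bw_\tau^{-1}\concat\bw_\tau$-type conjugators and the loop $\bb_1\concat\bu\concat\bb_2\concat\bu^{-1}$ in \ref{PL1.1}) are actually \emph{planar}, which is where Corollary~\ref{cor:composition_of_kites} and Lemma~\ref{lem:tail_of_planar_kite_is_planar} come in: both $\bb'$ and $\bb_\tau$ lie in the $2$-dimensional plane supporting $|\sigma_X|$, so their spans coincide and the tail path $\bw_\tau$ lies in that same plane, so all the relations apply. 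A secondary but genuine subtlety is the choice of ear: one must argue such a triangle always exists — the standard two-ears/fan argument for triangulations of convex polygons handles this, but the triangulation $T(|\sigma_X|)$ may have boundary vertices that are not extremal points of $|\sigma_X|$ (i.e. subdivided edges), so the peeling must be set up to handle collinear boundary vertices, which is where relation \ref{PL0.1} on paths (merging collinear segments) is used implicitly. Modulo these combinatorial details the induction is routine.
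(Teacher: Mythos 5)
Your proposal takes a genuinely different route from the paper: you induct by peeling off an ``ear'' of the triangulation, whereas the paper works with the $1$-skeleton $\tilde Z$ of $T(|\sigma_X|)$, observes that $\pi_1(\tilde Z,c)$ is a free group with a generating set $\{a_\tau\}$ indexed by the $2$-simplices (each $a_\tau = p_\tau \concat \phi_\tau \concat p_\tau^{-1}$ built from a fixed spanning tree), factors the image of the boundary loop $\gamma$ under $\pi_1(\iota)\colon \pi_1(Z,c)\to\pi_1(\tilde Z,c)$ as a word in the $a_\tau$'s, and pushes this factorization forward along $W_0$. The paper's approach is uniform in the triangulation and requires no geometric peeling argument.

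There is a concrete gap in your inductive step: you assert that removing an ear $\tau$ leaves a compatible triangulation $T(|\sigma_X'|)$ of a \emph{smaller convex polygon}. This fails whenever $T(|\sigma_X|)$ has interior vertices, which it generally does --- the compatible triangulations produced by Lemma~\ref{lem:compatible_triangulation} include intersection points of all the lines involved, and such intersections can land in the interior of $|\sigma_X|$. Concretely, take $\sigma_X = [A,B,C]$ triangulated as the fan from an interior point $P$ into $[A,B,P]$, $[B,C,P]$, $[C,A,P]$. Removing the ear $[A,B,P]$ leaves the region $[B,C,P]\cup[C,A,P]$, a quadrilateral $B\,C\,A\,P$ that is \emph{reflex} at $P$ (the interior angle there is $360^\circ - \angle APB > 180^\circ$), hence non-convex. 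Your induction hypothesis, phrased in terms of convex polygons $|\sigma_X'|$, no longer applies, and the subtleties you flagged (collinear boundary vertices, subdivided edges) do not touch this. To repair the argument you would need to strengthen the inductive hypothesis to arbitrary triangulated PL discs (say via a shelling order, which exists for triangulations of convex polygons), but then the relation between the boundary loop of a non-convex partial union and the relations \ref{PL1.1}--\ref{PL1.3} and the Peiffer identity becomes exactly the bookkeeping you wanted to avoid, and at that point one has essentially rederived the paper's argument via $\pi_1(\tilde Z)$.
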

\begin{proof}
    The PLSC $T(|\sigma_{X}|)$ is homeomorphic to $|T(|\sigma_{X}|)| = |\sigma_{X}|$, which is a 2-simplex. Its 1-skeleton $\tilde{Z}$ is a connected $1$-dimensional simplicial complex which contains $Z$, the boundary of $\sigma_{X}$, as a subspace. Let $c = 0$ be the basepoint of $\sigma_{X}$. Because $\tilde{Z}$ is 1-dimensional, its fundamental group $\pi_{1}(\tilde{Z}, c)$ is free. We will need a generating set which is indexed by the 2-simplices of $T(|\sigma_{X}|)$. For this, we choose a spanning tree $T$ of $\tilde{Z}$. For each 2-simplex $\tau = [t_0, t_1, t_2]$, let $\phi_\tau$ be the boundary loop of $\tau$ based at $t_0$, and let $p_{\tau}$ be the unique path in $T$ connecting $c$ to $t_0$. Define 
    \begin{align}
        a_\tau = p_\tau \concat \phi_{\tau} \concat p_{\tau}^{-1} \in \pi_{1}(\tilde{Z}, c),
    \end{align} 
    which form a generating set for $\pi_{1}(\tilde{Z},c)$. The inclusion $\iota: Z \to \tilde{Z}$ induces a homomorphism 
    \begin{align}
    \pi_1(\iota): \pi_{1}(Z, c) \to \pi_1(\tilde{Z},c). 
    \end{align}
    Therefore, given the generator $\gamma \in \pi_{1}(Z,c)$, its image under $\pi_{1}(\iota)$ admits a factorization 
    \begin{align}
        \pi_{1}(\iota)(\gamma) = a_{\tau_{i_1}}^{\pm} \concat ... \concat a_{\tau_{i_k}}^{\pm}. 
    \end{align}
    Now let $W_{0} : \pi_{1}(\tilde{Z}, c) \to \PL_0(V)$ be the homomorphism from ~\eqref{eq:W0_def}. Then $W_{0}(\pi_1(\iota)) = \bb$ and 
    \begin{align}
        W_{0}(a_{\tau}) = \bw_{\tau} \concat \bb_{\tau} \concat \bw_{\tau}^{-1}, 
    \end{align}
    where $\bw_{\tau} = \widetilde{W}_0(p_{\tau}) \in \PL_0(V)$ and where $\bb_{\tau} = \widetilde{W}_{0}(\phi_{\tau}) \in \PL_0^{\cl}(V)$ is a triangular loop. Then, since all paths are contained in the plane spanned by $|\sigma_{X}|$, we obtain a factorization in $\PL_1(V)$,
    \begin{align}
        (X) = ((\bw_{\tau_{i_1}}, \bb_{\tau_{i_{1}}}^{\pm}), ..., (\bw_{\tau_{i_k}}, \bb_{\tau_{i_{k}}}^{\pm})). 
    \end{align}
    Since $p_{\tau}$ is a path in $T$ connecting vertices of $T(|\sigma_{X}|)$, we can choose a lift of $\bw_{\tau}$ to $\FMon(V)$ consisting of vectors in $V$ representing edges of $T(|\sigma_{X}|)$. This gives us the desired triangulated representative $r(X)$. 
\end{proof}

Finally, we treat the general case. %

\begin{proof}[Proof of~\Cref{thm:existence_compatible_representative}]
    Let $\bX \in \PL_1(V)$. By \Cref{lem:triangulated_representative}, there exists a triangulated representative $r(\bX) = (X_1, \ldots, X_n) \in \FMon(\Kite^\times(V))$. Let $\Delta(r(\bX))$ be the associated PLSC, let $E= |\Delta_{1}(r(\bX))|$ denote the set of 1-simplices realized as edges in $V$, and let $P =|\Delta_{2}(r(\bX))|$ denote the set of 2-simplices realized as convex polygons in $V$. Then, by~\Cref{lem:compatible_triangulation}, there exists a compatible triangulation $C \coloneqq T(E, V)$. 
    
    Given each marked triangular kite $X_{i} = (\bw_{i}, \bb_{i})$, the associated 2-simplex $|\sigma_{X_{i}}|$ is a polygon in $P$. By~\Cref{lem:induced_triangulation}, the subcomplex $T(|\sigma_{X_{i}}|) \subseteq C$ is a compatible triangulation of $|\sigma_{X_{i}}|$. Therefore, by~\Cref{lem:triangulation_decomposition_of_kite}, there is a triangulated representative 
    \begin{align}
        r(\bb_{i}) = ( (\bv_{1}^{i}, \bc_{1}^{i}), ..., (\bv_{q_i}^{i}, \bc_{q_i}^{i}))
    \end{align} such that $r(\bb_{i}) \sim (\emptyset, \bb_i)$ in $\PL_{1}(V)$ and such that $\Delta(r(\bb_i)) \subseteq T(|\sigma_{X_{i}}|)$ (after shifting $\Delta(r(\bb_i))$ by the displacement of $\bw_{i}$). Furthermore, $\bw_{i} = (u_{1}, ..., u_{k})$ consists of a collection of vectors in $V$, which give rise to a subset of edges from $E$. Each edge has the form $[\hat{u}_{i}, \hat{u}_{i+1}]$, where $\hat{u}_{i+1} = \hat{u}_{i} + u_{i}$. Because $C$ is a compatible triangulation, each such edge decomposes into a sequence of edges $|\epsilon|$, such that $\epsilon \in C$ is a 1-simplex. As a result, we may replace each $u_{i}$ with a sequence of vectors $(s_{1}^i, ..., s_{r_i}^i)$ which sum to $u_{i}$ and such that the corresponding edges $[\hat{s}_{j}^{i}, \hat{s}_{j+1}^{i}]$ are 1-simplices of $C$. Let $\bw_{i}' \in \FMon(V)$ denote the resulting word and note that $\bw_{i}$ and $\bw_{i}'$ are equivalent in $\PL_0(V)$. Now define 
    \begin{align}
        r(X_{i}) = ( (\bw_{i}' \concat \bv_{1}^{i}, \bc_{1}^{i}), ..., (\bw_{i}' \concat \bv_{q_i}^{i}, \bc_{q_i}^{i})) \in \FMon(\Kite^{\times}(V)). 
    \end{align}
    By construction, $r(X_{i})$ is equivalent to $X_{i}$ in $\PL_{1}(V)$ and $\Delta(r(X_{i}))$ is a subcomplex of $C$. Finally, define $r(\bX)'$ to be the product of the $r(X_{i})$, for $i = 1,..., n$. This is a triangulated representative of $\bX$ and its associated PLSC $\Delta(r(\bX)')$ is a subcomplex of $C$. It is therefore non-degenerate and compatible. Hence $r(\bX)'$ is a compatible representative for $\bX$. 
\end{proof}

Next, we prove a lemma which is required for~\Cref{prop:value_Hur_vanishes}.  

\begin{lemma} \label{lem:disjoint_2_forms}
    Suppose $C$ is a 2-dimensional non-degenerate compatible PLSC whose set of $2$-simplices is denoted by $L$. For each 2-simplex $\sigma \in L$, there exists a compactly supported 2-form $\omega_\sigma \in \Omega^2_c(V)$ such that $\int_\sigma \omega_\sigma = 1$ and such that the support of $\omega_\sigma$ is disjoint from all other 2-simplices,
    \begin{align}
        \supp(\omega_\sigma) \cap |\tau| = \emptyset
    \end{align}
    for all $\tau \in L$ such that $\tau \neq \sigma$. 
\end{lemma}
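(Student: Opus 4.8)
The plan is to use the fact that, by compatibility and non\-degeneracy, the $2$\-dimensional relative interior $|\sigma|^\circ$ of the simplex $\sigma$ inside the affine plane it spans is disjoint from every other simplex of $C$, and then to build $\omega_\sigma$ as a bump form which is supported in a thin tube around a compact piece of $|\sigma|^\circ$ and is constant in the directions transverse to that plane. The heart of the matter is the geometric disjointness statement; the analytic construction afterwards is a standard partition\-of\-unity\-type argument.

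First I would record the geometric input. Since a PLSC is a \emph{finite} complex, the set $K \coloneqq \bigcup_{\tau \in L,\ \tau \neq \sigma} |\tau|$ is a finite union of compact convex hulls, hence compact and in particular closed. Because $\sigma$ is non\-degenerate, its three vertices are affinely independent, so $|\sigma|$ spans a genuine $2$\-dimensional affine plane $A_\sigma \subset V$ and $|\sigma|^\circ$ is a nonempty open subset of $A_\sigma$. For any $\tau \in L$ with $\tau \neq \sigma$, compatibility (\Cref{def:compatible}) gives $|\sigma| \cap |\tau| = |\rho|$ for a common subsimplex $\rho$ of $\sigma$ and $\tau$ (possibly $\rho = \emptyset$); since the only $2$\-dimensional face of the $2$\-simplex $\tau$ is $\tau$ itself and $\tau \neq \sigma$, the simplex $\rho$ is a proper face of $\sigma$, so $|\rho| \subseteq \partial|\sigma| = |\sigma| \setminus |\sigma|^\circ$. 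Hence $|\sigma|^\circ \cap |\tau| = \emptyset$ for every such $\tau$, i.e.\ $|\sigma|^\circ \cap K = \emptyset$.

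Next I would set up coordinates adapted to $\sigma$: pick a vertex $p_0$ of $\sigma$, a basis $v_1, v_2$ of the direction space of $A_\sigma$, and complete to a basis $v_1, \ldots, v_n$ of $V$; let $z_1, \ldots, z_n$ be the associated affine coordinates centred at $p_0$, so that $A_\sigma = \{z_3 = \cdots = z_n = 0\}$, and equip $V$ with the Euclidean metric these coordinates determine. Choose a smooth nonnegative bump $h$ on the $(z_1,z_2)$\-plane with $\supp(h) \subset |\sigma|^\circ$ and $\int_{|\sigma|} h \, dz_1\, dz_2 = 1$ (possible since $|\sigma|^\circ$ is a nonempty planar open set). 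As $\supp(h)$ is compact and disjoint from the closed set $K$, the quantity $\delta \coloneqq \operatorname{dist}(\supp(h), K)$ is strictly positive (set $\delta = 1$ if $K = \emptyset$). Fix $0 < \epsilon < \delta$ and a smooth compactly supported $\psi$ on the $(z_3,\ldots,z_n)$\-space with $\psi \equiv 1$ near the origin and $\supp(\psi) \subset \{|(z_3,\ldots,z_n)| \le \epsilon\}$, and define
\[
    \omega_\sigma \coloneqq h(z_1,z_2)\, \psi(z_3,\ldots,z_n)\, dz_1 \wedge dz_2 \in \Omega^2_c(V).
\]

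It then remains to verify the two required properties, both of which are now routine. Restricting to $A_\sigma$ sets $z_3 = \cdots = z_n = 0$, so $\psi$ evaluates to $1$ and $\int_\sigma \omega_\sigma = \int_{|\sigma|} h \, dz_1\, dz_2 = 1$. For the support, $\supp(\omega_\sigma) = \supp(h) \times \supp(\psi)$, and any point $z$ of this product agrees with the point $w$ obtained by zeroing its last $n-2$ coordinates — which lies in $\supp(h) \subseteq |\sigma|^\circ$ — up to distance $|z - w| = |(z_3,\ldots,z_n)| \le \epsilon < \delta$; hence $\supp(\omega_\sigma)$ lies in the $\epsilon$\-neighbourhood of $\supp(h)$ and is therefore disjoint from $K = \bigcup_{\tau \neq \sigma} |\tau|$. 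The only step that is not bookkeeping is the geometric claim $|\sigma|^\circ \cap K = \emptyset$, which is precisely where compatibility and non\-degeneracy enter; I expect that to be the only point worth writing out with care.
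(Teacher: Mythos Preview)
Your proof is correct and follows essentially the same approach as the paper: both construct $\omega_\sigma$ by taking a bump $2$-form on the affine plane $A_\sigma$ supported inside $|\sigma|^\circ$, pulling it back along the projection to $A_\sigma$, and cutting off in the transverse directions. Your version is more explicit than the paper's in two respects --- you spell out the geometric disjointness $|\sigma|^\circ \cap \bigcup_{\tau\neq\sigma}|\tau| = \emptyset$ as a consequence of compatibility and non-degeneracy, and you use an explicit tube-neighbourhood cutoff via a distance argument rather than simply asserting the existence of a suitable bump function --- but the underlying idea is identical.
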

\begin{proof}
    Let $\sigma \in L$ and let $H \subset V$ denote the 2-plane which supports $\sigma$. Let $K \subset H$ denote a closed ball such that $K \subset |\sigma|^\circ$. Consider a compactly supported 2-form $\nu \in \Omega^2_c(H)$ on $H$ which is supported in $K$ and satisfies $\int_\sigma \nu = 1$. Using a metric on $V$, pullback $\nu$ along the orthogonal projection $\pi: V \to H$ to obtain a form $\pi^*(\nu)$. Then, choose a compactly supported smooth bump function $\rho : V \to \mathbb{R}$ such that $\rho|_{K} \equiv 1$ and which vanishes along each $\tau \in L$, $\tau \neq \sigma$. Then, we define 
    \begin{align}
        \omega_{\sigma} \coloneqq \rho \ \pi^*(\nu) \in \Omega^2_{c}(V).
    \end{align}
    This is the desired $2$-form. 
    
\end{proof}

\bibliographystyle{plain}
\bibliography{thin_homotopy}

\end{document}